\numberwithin{equation}{subsection}
\theoremstyle{plain}
\newtheorem{theorem}[subsubsection]{Theorem}
\newtheorem{lemma}[subsubsection]{Lemma}
\newtheorem{prop}[subsubsection]{Proposition}
\newtheorem{cor}[subsubsection]{Corollary}
\newtheorem{conj}[subsubsection]{Conjecture}
\newtheorem*{claim}{Claim}
\newtheorem{ansatz}[subsubsection]{Ansatz}
\theoremstyle{definition}
\newtheorem{defn}[subsubsection]{Definition}
\newtheorem{remark}[subsubsection]{Remark}
\newtheorem{exam}[subsubsection]{Example}
\newtheorem{ex}[subsubsection]{Example}
\newtheorem{notation}[subsubsection]{Notation}
\def\RMod{\operatorname{RMod}}
\def\LMod{\operatorname{LMod}}
\def\Bimod{\operatorname{Bimod}}
\def\AA{\mathbb{A}}
\def\BB{\mathbb{B}}
\def\CC{\mathbb{C}}
\def\DD{\mathbb{D}}
\def\GG{\mathbb{G}}
\def\LL{\mathbb{L}}
\def\QQ{\mathbb{Q}}
\def\RR{\mathbb{R}}
\def\TT{\mathbb{T}}
\def\ZZ{\mathbb{Z}}
\def\calG{\mathcal{G}}
\def\calH{\mathcal{H}}
\def\calL{\mathcal{L}}
\def\calN{\mathcal{N}}
\def\calO{\mathcal{O}}
\def\calW{\mathcal{W}}
\newcommand\cA{\mathcal{A}}
\newcommand\cB{\mathcal{B}}
\newcommand\cC{\mathcal{C}}
\newcommand\cD{\mathcal{D}}
\newcommand\cE{\mathcal{E}}
\newcommand\cF{\mathcal{F}}
\newcommand\cG{\mathcal{G}}
\newcommand\cH{\mathcal{H}}
\newcommand\cI{\mathcal{I}}
\newcommand\cJ{\mathcal{J}}
\newcommand\cK{\mathcal{K}}
\newcommand\cL{\mathcal{L}}
\newcommand\cM{\mathcal{M}}
\newcommand\cN{\mathcal{N}}
\newcommand\cO{\mathcal{O}}
\newcommand\cP{\mathcal{P}}
\newcommand\cQ{\mathcal{Q}}
\newcommand\cR{\mathcal{R}}
\newcommand\cS{\mathcal{S}}
\newcommand\cT{\mathcal{T}}
\newcommand\cU{\mathcal{U}}
\newcommand\cV{\mathcal{V}}
\newcommand\cW{\mathcal{W}}
\newcommand\cY{\mathcal{Y}}
\newcommand\cZ{\mathcal{Z}}
\def\bR{\mathbf{R}}
\newcommand\frA{\mathfrak{A}}
\newcommand\frB{\mathfrak{B}}
\newcommand\frC{\mathfrak{C}}
\newcommand\frD{\mathfrak{D}}
\newcommand\frE{\mathfrak{E}}
\newcommand\frF{\mathfrak{F}}
\newcommand\frG{\mathfrak{G}}
\newcommand\frP{\mathfrak{P}}
\newcommand\frR{\mathfrak{R}}
\newcommand\frS{\mathfrak{S}}
\newcommand\frT{\mathfrak{T}}
\newcommand\frZ{\mathfrak{Z}}
\newcommand\frb{\mathfrak{b}}
\newcommand\frg{\mathfrak{g}}
\newcommand\frh{\mathfrak{h}}
\newcommand\fri{\mathfrak{i}}
\newcommand\frj{\mathfrak{j}}
\newcommand\frl{\mathfrak{l}}
\newcommand\frakm{\mathfrak{m}}
\newcommand\frn{\mathfrak{n}}
\newcommand\frs{\mathfrak{s}}
\newcommand\frt{\mathfrak{t}}
\newcommand\fru{\mathfrak{u}}
\newcommand\frz{\mathfrak{z}}
\newcommand\tilW{\widetilde{W}}
\newcommand\tilw{\widetilde{w}}
\newcommand\Aff{\textup{Aff}}
\newcommand\Alg{\textup{Alg}}
\newcommand\bimon{\mathit{bimon}}
\newcommand{\Bun}{\textup{Bun}}
\newcommand\Gal{\textup{Gal}}
\newcommand\IC{\textup{IC}}
\newcommand\id{\textup{id}}
\renewcommand{\Im}{\textup{Im}}
\newcommand{\Ind}{\textup{Ind}}
\newcommand\Irr{\textup{Irr}}
\newcommand\Lie{\textup{Lie}}
\newcommand\Loc{\textup{Loc}}
\newcommand\Mod{\textup{Mod}}
\newcommand{\QCoh}{\textup{QCoh}}
\newcommand{\qc}{\textup{QC}}
\newcommand{\red}{\textup{red}}
\newcommand{\reg}{\textup{reg}}
\newcommand{\Res}{\textup{Res}}
\newcommand\Span{\textup{Span}}
\newcommand\Spec{\textup{Spec }}
\newcommand\St{\mathit{St}}
\newcommand\Stab{\textup{Stab}}
\newcommand\Supp{\textup{Supp}}
\newcommand\Sym{\textup{Sym}}
\newcommand\Tot{\textup{Tot}}
\newcommand{\tr}{\textup{tr}}
\newcommand{\univ}{\mathit{univ}}
\newcommand{\Vect}{\textup{Vect}}
\newcommand\Aut{\textup{Aut}}
\newcommand\Hom{\textup{Hom}}
\newcommand\End{\textup{End}}
\newcommand\Mor{\textup{Mor}}
\newcommand\uHom{\underline{\Hom}}
\newcommand\GL{\textup{GL}}
\newcommand\gl{\mathfrak{gl}}
\newcommand\PGL{\textup{PGL}}
\newcommand\SL{\textup{SL}}
\newcommand{\Gm}{\GG_m}
\def\Ga{\GG_a}
\newcommand{\ad}{\textup{ad}}
\newcommand{\Ad}{\textup{Ad}}
\renewcommand\sc{\textup{sc}}
\newcommand{\der}{\textup{der}}
\newcommand\xch{\mathbb{X}^*}
\newcommand\xcoch{\mathbb{X}_*}
\renewcommand\a\alpha
\renewcommand\b\beta
\newcommand\g\gamma
\newcommand\G\Gamma
\renewcommand\d\delta
\newcommand\D\Delta
\newcommand{\e}{\epsilon}
\newcommand{\ep}{\epsilon}
\renewcommand{\th}{\theta}
\renewcommand{\k}{\kappa}
\newcommand{\ph}{\varphi}
\newcommand{\Sig}{\Sigma}
\newcommand{\s}{\sigma}
\renewcommand{\t}{\tau}
\renewcommand{\l}{\lambda}
\renewcommand{\L}{\Lambda}
\newcommand{\om}{\omega}
\newcommand{\Om}{\Omega}
\newcommand{\io}{\iota}
\newcommand{\incl}{\hookrightarrow}
\newcommand{\isom}{\stackrel{\sim}{\to}}
\newcommand{\bij}{\leftrightarrow}
\newcommand{\surj}{\twoheadrightarrow}
\newcommand{\mt}{\mapsto}
\renewcommand{\j}[1]{\langle{#1}\rangle}
\newcommand{\wt}[1]{\widetilde{#1}}
\newcommand{\wh}[1]{\widehat{#1}}
\newcommand\quash[1]{}
\newcommand\un{\underline}
\newcommand\ov{\overline}
\newcommand\bs{\backslash}
\newcommand\ot{\otimes}
\newcommand{\op}{\oplus}
\newcommand{\tl}[1]{[\![#1]\!]}
\newcommand{\lr}[1]{(\!(#1)\!)}
\newcommand\upH{\textup{H}}
\newcommand\dG{G^{\vee}}
\newcommand\dB{B^{\vee}}
\newcommand\dH{H^{\vee}}
\newcommand\dT{T^{\vee}}
\newcommand\dL{L^{\vee}}
\newcommand\dg{\mathfrak{g}^{\vee}}
\newcommand\pt{\textup{pt}}
\newcommand\vn{\varnothing}
\newcommand\hs{\heartsuit}
\newcommand\xr{\xrightarrow}
\renewcommand\c{\circ}
\newcommand{\Wh}{\textup{Wh}}
\newcommand{\Coh}{\textup{Coh}}
\newcommand{\Sh}{\mathit{Sh}}
\newcommand{\triv}{\mathit{triv}}
\newcommand{\new}{\mathit{new}}
\newcommand{\old}{\mathit{old}}
\newcommand{\beq}{\begin{equation}}
\newcommand{\eeq}{\end{equation}}
\newcommand{\ssupp}{\mathit{ss}}
\newcommand{\oo}{\infty}
\newcommand{\ol}{\overline}
\newcommand{\colim}{\operatorname{colim}}
\newcommand{\Fun}{\operatorname{Fun}}
\newcommand\sss{\subsubsection}
\newcommand\Cat{\operatorname{Cat}}
\newcommand\Corr{\operatorname{Corr}}
\renewcommand\r{\rho}
\newcommand\bu{\bullet}
 \renewcommand{\new}{\mathit{new}}
 \renewcommand{\old}{\mathit{old}}
 \newcommand\ch{\textup{ch}}
 \newcommand\sne{\subsetneqq}
\newcommand\f{\frac}
\newcommand\Wa{W^{a}}
\newcommand\Fac{\textup{Fac}}
\newcommand\Crit{\textup{Crit}}
\newcommand\nb{\nabla}
\newcommand\bn{\mathbf{n}} 
\newcommand\pl{\partial}
\newcommand\NP{\textup{NP}}
\newcommand\opp{\textup{opp}}
\newcommand\Sets{\mathcal{S}ets}
\newcommand\sft{\subset_{ft}} 
\newcommand\GSpr{\mathrm{GSpr}}
\newcommand\lmod{\textup{-mod}}
\newcommand\bi{\mathbf{i}}
\newcommand\im{\imath}
\newcommand\str{\mathit{star}}
\newcommand\dGder{G^{\vee,\der}}
\newcommand\dGsc{G^{\vee,\sc}}
\newcommand\dLder{L^{\vee,\der}}
\renewcommand\dg{\mathfrak{g}^{\vee}}
\newcommand\dt{\mathfrak{t}^{\vee}}
\newcommand\Nil{\mathrm{Nil}}
\newcommand\Spr{\mathrm{Spr}}
\newcommand\sign{\mathit{sign}}
\newcommand{\scI}{\mathscr{I}}
\newcommand{\bw}{\mathbf{w}}
\def\hatW{\widehat{W}}
\def\hatfrC{\widehat{\mathfrak{C}}}
\def\tilfrC{\widetilde{\mathfrak{C}}}
\def\tilfrS{\widetilde{\mathfrak{S}}}
\def\hatfrS{\widehat{\mathfrak{S}}}
\newcommand{\scrQCoh}{\mathscr{QC}\textup{oh}}
\title{Functions on the commuting stack via  Langlands duality}
\author{Penghui Li}
\address{Yau Mathematical Sciences Center\\Tsinghua University\\Beijing, China}
\email{lipenghui@mail.tsinghua.edu.cn}
\author{David Nadler}
\address{Department of Mathematics\\University of California, Berkeley\\Berkeley, CA  94720-3840}
\email{nadler@math.berkeley.edu}
\author{Zhiwei Yun}
\address{Department of Mathematics, MIT,  77 Massachusetts Ave., Cambridge, MA 02139}
\email{zyun@mit.edu}
\date{\today}
\subjclass[]{}
\dedicatory{}	
\date{\today}
\keywords{}
\begin{document}


\begin{abstract}
For a complex reductive group, 
we construct a semi-orthogonal decomposition of  the cocenter of the universal variant of its affine Hecke category. We use this to  calculate the endomorphisms of a Whittaker object in the cocenter via a diagram organizing  parabolic induction of character sheaves. 
Assuming a universal variant of Bezrukavnikov's spectral description of the affine Hecke category, we deduce a formula for  the dg algebra of global functions on commuting stacks of complex reductive groups. 
In particular, the formula shows that the ring of invariant functions on the commuting scheme is reduced. 
\end{abstract}

\maketitle

\tableofcontents



\section{Introduction}

This paper is part of 
a broader study of the cocenter of the universal affine Hecke category.
Its main results include (i) a semi-orthogonal decomposition of the cocenter, as found in Theorem~\ref{thm:main in intro}, whose distinguished case spelled out in Theorem~\ref{thm:intro ff} proves a conjecture of~\cite{liUniformizationSemistableBundles2021}; and (ii)  the calculation of endomorphisms of a distinguished Whittaker object in the cocenter, as found in Theorem~\ref{thm:intro end in crit},
which builds on work of~\cite{liDerivedCategoriesCharactera,liDerivedCategoriesCharacterb}.
In a sequel, we will combine the results of this paper with those of~\cite{nadlerAutomorphicGluingFunctor} to identify the cocenter of the universal affine Hecke category with the genus one automorphic category in Betti Geometric Langlands, 
giving an elliptic realization of affine character sheaves.
Our aim in this paper is to show that,  assuming a universal variant of Bezrukavnikov's spectral description of the affine Hecke category as formalized in Ansatz~\ref{ans:intro univ aff equiv},
we obtain a formula, as found in Theorem~\ref{intro main cor},  for the dg algebra of global functions on the commuting stack of a reductive group. When the corresponding derived group is simply-connected, the formula was
 conjectured in~\cite[Conjecture 1]{berestRepresentationHomologyTopological2022}, but to our knowledge is  new in general.
 In particular, it shows that the ring of invariant functions on the commuting scheme is reduced. (See Remark~\ref{rem:history} for a brief discussion of prior work on this question.)

In the rest of the introduction, we will first explain the main results of the paper,  then turn to more details of this application. 

%
%
%


\subsection{Background}
Our results apply to a general reductive group $G$ over the complex numbers $\mathbb C$.
But in much of the introduction, we will make the simplifying assumption that $G$ is {\em almost simple and simply-connected}. 
To begin, we summarize known results about the universal variant of the affine Hecke category that will be used in this paper.


\sss{Universal affine Hecke category}\label{sss:intro univ Hk}
Let $B\subset G$ be a Borel subgroup, $U \subset B$ its unipotent radical, 
and $H = B/U$ the universal Cartan. 
Let $\cG = G\lr{t}$ be the loop group, $\cI\subset \cG$ the standard Iwahori subgroup corresponding to $B$,  $\cI^u\subset \cI$ its pro-unipotent radical,
and note $H \simeq \cI/\cI^u$.

Fix a maximal torus $T\subset B \subset G$. Let $W = N_G(T)/T$ be the Weyl group of $G$, $\xcoch(T) = \Hom(\GG_m, T)$ the coweight lattice of $T$, and  $\tilW = N_{\cG}(T)/T\tl{t} \simeq \xcoch(H)\rtimes W$ the extended affine Weyl group of $\cG$.
Let $W^{a}\subset \tilW$ be the affine Weyl group generated by affine simple reflections, 
and set $\Omega = \wt W/W^a\cong N_{\cG}(\cI)/\cI$.


 The universal finite Hecke category of $G$ (resp. universal affine Hecke category of $G$) is the convolution monoidal dg derived category
  of $H$-bimonodromic complexes of sheaves of $\CC$-modules
\begin{equation*}
\cH_{G} =
\Sh_\bimon(U \bs G/U)
\qquad 
\text{(resp. $\cH_{\cG} = \colim_{w\in \tilW}\Sh_\bimon(\cI^u \bs \cG_{\le w}/\cI^u)$ 
)}
\end{equation*}
where the latter is with respect to the natural ind-scheme structure  $ \cG = \colim_{w\in \tilW}  \cG_{\le w}$.

\sss{Cocenter}
We will regard  $\cH_G$ and $\cH_{\cG}$ as algebra objects in $\CC$-linear stable presentable $\oo$-categories $\St^{L}_\CC$ (where morphisms are left adjoints) and make constructions therein. It is also sometimes useful to regard 
 $\cH_G$ and $\cH_{\cG}$ as algebra objects in the   $\CC$-linear stable presentable bimodule $\oo$-category $\Bimod_{\cH_H}(\St^{L}_\CC)$ where the finite Hecke category $\cH_H = \Sh_{mon}(H)$ for the
 universal Cartan  naturally acts by left and right convolution. We will formulate our results in the absolute setting, but most hold in this relative setting as well; in fact, certain proofs are made easier by shifting to the relative setting.

Recall 
the cocenter of a monoidal category $\cA$  is the Hochschild homology category
\begin{equation*}
hh(\cA) =   \cA  \otimes_{\cA  \otimes \cA^{op}} \cA
\end{equation*}
The trace map is the natural projection
\begin{equation*}
\xymatrix{
\tr: \cA \ar[r] &  \cA  \otimes_{\cA  \otimes \cA^{op}} \cA = hh(\cA ) 
}
\end{equation*}
induced by the unit of $\cA$ in the first factor of the tensor.

The closed embedding $G\subset \cG$ induces a natural fully faithful monoidal functor $\cH_G \to \cH_{\cG}$, and in turn a natural map on cocenters we will denote by
\begin{equation}\label{intro a}
\xymatrix{a:hh(\cH_G) \ar[r] &  hh(\cH_{\cG}).}
\end{equation}

As a special case of a more general coequalization result in Section~\ref{sect:horo desc},  we show for the universal finite Hecke category $\cH_G$, there  is a natural identification of its cocenter (see Theorem~\ref{thm:CS G})
\begin{equation}\label{intro CS}
hh(\cH_G) \simeq \Sh_\cN(G/G)
\end{equation}
where $\Sh_\cN(G/G)$ is the dg derived category of complexes of  sheaves of $\CC$-modules with nilpotent singular support on the adjoint-quotient $G/G$. One can think of $\Sh_\cN(G/G)$ as a version of character sheaves on $G$.  (Similar results have been shown for fixed central character and $\cD$-modules in
 \cite{ben-zviCharacterTheoryComplex}, \cite{bezrukavnikovCharacterDmodulesDrinfeld2012} and \cite{lusztigTruncatedConvolutionCharacter}.)

A guiding goal is to arrive at a similar geometric description of  
the  cocenter $hh(\cH_{\cG})$ of the universal affine Hecke category $\cH_{\cG}$.
As a starting point, we will use a universal monodromic version of a result of Tao-Travkin~\cite{taoAffineHeckeCategory}. In Appendix~\ref{app:TT}, we provide the necessary extensions to apply their arguments  to the universal monodromic case. 

Let $I$ be the set of simple roots of $G$ and $I^a$ be the set of affine simple roots of $G$. For each proper subset  $J\sne I^a$, let $L_J \subset \cG$ be the corresponding Levihoric, and $ \cH_{L_J} \subset \cH_{\cG}$ the universal finite Hecke category of $L_J$. For $J \subset J' \sne I^a$, we have a natural diagram of  monoidal inclusions $\cH_{L_J} \subset \cH_{L_{J'}}$.

\begin{theorem}[Tao-Travkin~\cite{taoAffineHeckeCategory}, see Appendix~\ref{app:TT}]\label{thm:intro colim}
Assume $G$ is almost simple and simply-connected. Then the natural maps induce a monoidal equivalence
\begin{equation*}
\xymatrix{
\colim^{\ot}_{J \sne I^a} \cH_{L_J} \ar[r]^-\sim  & \cH_{\cG}
}
\end{equation*}
where the colimit is of monoidal categories.
\end{theorem}


\subsection{Main automorphic results}
Now we formulate our main results regarding the cocenter $hh(\cH_{\cG})$.

\sss{Whittaker objects} By the Whittaker functional on   $ \Sh_\cN(G/G)$, we mean the functor 
\begin{equation*}
\xymatrix{
\cW_{G/G}: \Sh_\cN(G/G)\ar[r] & \Mod_\CC & \cW_G(\cF) = \varphi_0 \chi_* r^!(\cF)
}
\end{equation*}
given by the right-adjoint transport $ \chi_* r^!$ across the Whittaker correspondence
\begin{equation*}
\xymatrix{
G/G & \ar[l]_-r U/U \ar[r]^-\chi & \AA^1
}
\end{equation*}
followed by vanishing cycles $\varphi_0$ for the coordinate function on $\AA^1$.
Here $\chi$ is induced from a generic character $U\to U/[U, U] \simeq \oplus_{i\in I} \AA^1_i \to \AA^1$.

By the  Whittaker object  $\Wh_{G/G} \in  \Sh_\cN(G/G)$, we mean the object corepresenting $\cW_{G/G}$ in the sense of a natural equivalence $\cW_{G/G}(\cF) \simeq \Hom(\Wh_{G/G}, \cF)$ for $\cF\in \Sh_\cN(G/G)$.

Recall the natural maps $a: \Sh_\cN(G/G) \simeq hh(\cH_G) \to hh(\cH_{\cG})$ in \eqref{intro a} and \eqref{intro CS}. For the purpose of introduction, we define the cocenter  Whittaker object $\Wh_{\cG/\cG}$ as
\beq
\Wh_{\cG/\cG} := a(\Wh_{G/G}) \in hh(\cH_\cG).
\eeq
The actual definition of $\Wh_{\cG/\cG}$ in the main text uses the notion of descended trace  (see Definition~\ref{def:desc tr}), which makes the above identity a nontrivial theorem (see Theorem~\ref{thm: dtr of whit}).


\sss{Fully faithful embedding from colimit of character sheaves to affine cocenter}
To calculate the derived endomorphism ring of $\Wh_{\cG/\cG}$, we will identify a full subcategory of $hh(\cH_{\cG})$ containing $\Wh_{\cG/\cG}$, in which the endomorphism ring is easier to compute.

Recall  the notion of the cocenter of a monoidal category  $\cA$ generalizes to the Hochschild homology category of any $\cA$-bimodule $\cM$. One defines
\begin{equation*}
hh(\cA, \cM) =   \cA  \otimes_{\cA  \otimes \cA^{op}} \cM
\end{equation*}
with trace map the natural projection
\begin{equation*}
\xymatrix{
\tr: \cM \ar[r] &  \cA  \otimes_{\cA  \otimes \cA^{op}} \cM = hh(\cA, \cM) 
}
\end{equation*}
induced by the unit of $\cA$ in the first factor of the tensor. So the cocenter $hh(\cA)$ is the Hochschild homology category of the regular $\cA$-bimodule category $\cA$.

To begin, from Theorem~\ref{thm:intro colim}, for the bimodule category $\cH_{\cG}$, we  obtain an  equivalence after passing to Hochschild homology categories
\begin{equation*}
\xymatrix{
\colim_{J \sne I^a} hh(\cH_{L_J}, \cH_{\cG}) \ar[r]^-\sim  & hh(\cH_{\cG}, \cH_{\cG}) = hh(\cH_{\cG}).
}
\end{equation*}
We can restrict the domain of this equivalence to obtain a functor relating cocenters
\begin{equation*}
\xymatrix{
\colim_{J \sne I^a} hh(\cH_{L_J}) =  \colim_{J \sne I^a} hh(\cH_{L_J}, \cH_{L_{J}})\ar[r]  & hh(\cH_{\cG}, \cH_{\cG}) = hh(\cH_{\cG}). 
}
\end{equation*}

Our first main  result is the following theorem  conjectured in~\cite[see Claim 1.12]{liUniformizationSemistableBundles2021}.

\begin{theorem}\label{thm:intro ff} Recall $G$ is assumed to be almost simple and simply-connected for simplicity.
The natural map is a fully faithful embedding
\begin{equation*}
\xymatrix{
\colim_{J \sne I^a} hh(\cH_{L_J}) =  \colim_{J \sne I^a} hh(\cH_{L_J}, \cH_{L_{J}})\ar@{^(->}[r]  & hh(\cH_{\cG}, \cH_{\cG}) = hh(\cH_{\cG}). 
}
\end{equation*}
\end{theorem}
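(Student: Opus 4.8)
The plan is to reduce the fully faithfulness of the natural map
\[
F:\colim_{J\sne I^a} hh(\cH_{L_J}) \to hh(\cH_{\cG})
\]
to a Whittaker-localization statement together with an explicit computation of mapping spaces in the target. The starting point is the equivalence $\colim_{J\sne I^a} hh(\cH_{L_J},\cH_{\cG})\isom hh(\cH_{\cG})$ coming from Theorem~\ref{thm:intro colim} by taking Hochschild homology of the bimodule $\cH_{\cG}$. Since $F$ is obtained from this by restricting each term along $hh(\cH_{L_J})=hh(\cH_{L_J},\cH_{L_J})\to hh(\cH_{L_J},\cH_{\cG})$, the issue is twofold: (a) understand the image of $F$ term-by-term, i.e. for fixed $J$ whether $hh(\cH_{L_J})\to hh(\cH_{L_J},\cH_{\cG})$ is fully faithful; and (b) control the mapping spaces across different indices $J,J'$ in the colimit, i.e. show the colimit diagram does not collapse more than the individual terms. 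For step (a), I would use the semiorthogonal/character-sheaf description: $hh(\cH_{L_J})\simeq \Sh_{\cN}(L_J/L_J)$ by the finite analogue of \eqref{intro CS}, and the target $hh(\cH_{L_J},\cH_{\cG})$ should be identified as a parabolic-induction/averaging of character sheaves of $L_J$ into $hh(\cH_{\cG})$; one wants the averaging functor to be fully faithful on the $\cN$-singular-support subcategory, which is a cleanness/gluing property analogous to the fact that Eisenstein/parabolic-induction functors in the character-sheaf setting are fully faithful after passing to the appropriate support condition.

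The key mechanism for step (b) is the colimit formula for mapping spaces in a filtered (more precisely, over the poset of proper subsets of $I^a$) colimit of categories. For objects $X\in hh(\cH_{L_J})$, $Y\in hh(\cH_{L_{J'}})$ mapped into the colimit, one has $\Map_{hh(\cH_{\cG})}(FX,FY)=\colim_{K\supseteq J, K\supseteq J'} \Map_{hh(\cH_{L_K})}(\text{-},\text{-})$ provided the transition functors are themselves nice enough; the subtlety is that this colimit might combine contributions incorrectly. Here I would exploit the \emph{diagonal/stratified} structure: the poset $\{J\sne I^a\}$ is the face poset of the affine Coxeter complex minus the top cell, and the colimit over it matches the decomposition of $\cG/\cI^u$ into Levihoric pieces; the fact that $hh(\cH_{L_K})$ for $K$ ranging over an interval assembles compatibly (a Mayer--Vietoris / descent statement for cocenters along the Tao--Travkin decomposition) is what forces the colimit of mapping spaces to be computed as expected. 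Concretely I expect to prove: the natural map $\Map_{\colim}(X,Y)\to \Map_{hh(\cH_{\cG})}(FX,FY)$ is an equivalence by checking it after the Tao--Travkin colimit presentation of the target, reducing to the same statement for each Levihoric $\cH_{L_K}$, where it is either trivial or follows from (a).

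The step I expect to be the main obstacle is (a): showing that the parabolic-induction functor $hh(\cH_{L_J})\to hh(\cH_{\cG})$ (equivalently, the inclusion of the character-sheaf category of a Levihoric into the affine cocenter, via averaging along $\cI^u$) is fully faithful on the nilpotent-singular-support subcategory. This is a genuine geometric input, not formal: it requires a vanishing statement for the relevant Hom-complex, i.e. that higher averaging contributions vanish, which I would attack either via a contraction/hyperbolic-localization argument on $\cG/\cI$ (exhibiting the averaging as a clean extension along a contracting $\GG_m$-action so that $p_!=p_*$ and the projection formula kills the extra terms), or, after invoking Ansatz~\ref{ans:intro univ aff equiv}, on the spectral side where it becomes a statement about $\Ind\Coh$ of a closed substack of the Steinberg stack and can be checked by a base-change computation. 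I would prefer the automorphic argument so as to keep Theorem~\ref{thm:intro ff} independent of the Ansatz, matching the paper's stated emphasis that this result is conjectured in~\cite{liUniformizationSemistableBundles2021} and proved unconditionally.
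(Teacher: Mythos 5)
Your decomposition into steps (a) and (b) inverts where the difficulty actually lies, and step (b), which is the genuine obstacle, is left at the level of an incorrect heuristic. Let me explain both points.

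Step (a), the full faithfulness of $hh(\cH_{L_J})\to hh(\cH_{L_J},\cH_{\cG})$ for a fixed $J$, is essentially formal once one has the geometric identifications $hh(\cH_{L_J})\simeq\Sh_{\cN}(L_J/L_J)$ (the finite case of \eqref{intro CS}) and $hh(\cH_{L_J},\cH_{\cG})\simeq\cH_{\cG,J}$ (Theorem~\ref{thm:intro geom coeq J}). Under these, the map is $i_{J,1*}$: the $*$-pushforward of nilpotent sheaves along the closed embedding of the geometric piece $\cY(\tfrac{1}{J})\simeq (L_J/L_J)\mbox{-gerbe}$ into $\cY_J$ (Proposition~\ref{p:SSN on pieces}). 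Pushforward along a closed embedding is automatically fully faithful, so no cleanness, hyperbolic localization, or appeal to Ansatz~\ref{ans:intro univ aff equiv} is needed. Your proposed ``hard geometric input'' for (a) is attacking a problem that isn't there.

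Step (b) is where the difficulty really lives, and your sketch contains a concrete error: the formula
\[
\Map_{hh(\cH_{\cG})}(FX,FY)\;\simeq\;\colim_{K\supseteq J,\,K\supseteq J'}\Map_{hh(\cH_{L_K})}(\cdot,\cdot)
\]
is not valid. A colimit in $\St^L_k$ is a limit in $\St^R_k$ after passing to right adjoints, and when you transport an object $X\in hh(\cH_{L_J})$ to the colimit category, computing $\Map(FX,FY)$ requires a limit of Hom-complexes over all stages, each involving the right adjoints of the transition functors $\ch_J^{J'}$, not a filtered colimit over common refinements. The point is precisely that the parabolic induction functors $\ch_J^{J'}$ do \emph{not} send the character-sheaf piece $\Sh_{\cN}(L_J/L_J)\subset\cH_{\cG,J}$ into the character-sheaf piece $\Sh_{\cN}(L_{J'}/L_{J'})\subset\cH_{\cG,J'}$: they hit higher geometric pieces. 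So the colimit of subcategories does not a priori sit inside the colimit of ambient categories, and verifying that it does is exactly the content of the theorem.

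The paper's approach is quite different. Rather than verifying a Hom-space identity, it constructs a semi-orthogonal decomposition of $hh(\cH_{\cG})$ (Theorem~\ref{thm:main in intro}, proved via Theorem~\ref{th:semi-orth hh G neutral}) indexed by the quantity $\langle 2\rho,\nu\rangle$ attached to Newton points, and identifies the bottom piece $hh(\cH_{\cG})_0$ with $\colim_J\Sh_{\cN}(L_J/L_J)$; full faithfulness is then an automatic consequence of the recollement structure. The proof of the semi-orthogonal decomposition rests on three ingredients you would not reach from your sketch: (i) Lusztig's stratification of $\cY_J$ by geometric $J$-pieces and Proposition~\ref{p:SSN on pieces} giving a recollement of $\cH_{\cG,J}$; (ii) He's closure and cyclic-shift results (Theorems~\ref{th:He closure} and~\ref{th:ch}, the latter from~\cite{heGENERALIZATIONCYCLICSHIFT}) showing that $\ch_J^{J'}$ respects the length filtration and is locally constant along pieces of fixed length; (iii) a categorical contraction principle (Theorem~\ref{th:contracting cosheaf}, plus the topological input that the essential locus $\frB^\hs_{\nu}$ is a deformation retract of downward subspaces via the He--Nie gradient flow, Proposition~\ref{p:contractible}). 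Your ``Mayer--Vietoris/descent'' intuition gestures in the direction of (iii), but what is actually needed is not a descent statement for cocenters but a Morse-theoretic collapse of the colimit diagram, and making this rigorous requires the entire combinatorial apparatus of Section~\ref{s:hn subcats} and Appendix~\ref{s:str sh}. You should regard these as the essential missing ideas in your plan.
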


In fact, Theorem~\ref{thm:intro ff} is the ``semistable" part of a more general result that describes a semi-orthogonal decomposition of $hh(\cH_{\cG})$ indexed by ``Harder-Narasimhan" strata.
The generalization is not needed for the specific applications of this paper, but 
is a key input to the work in progress \cite{liCocenterAffineHecke}.  We will give a precise statement of the semi-orthogonal decomposition in Section~\ref{ss:intro semi-orth}.

Recall  $\Wh_{\cG/\cG}$ is the image of $\Wh_{G/G} \in  \Sh_\cN(G/G)\simeq hh(\cH_G)$ under the map $a$ defined in \eqref{intro a}. Note $G = L_I$, for $I \subset I^a$ the finite simple roots. Let us write  $\Wh_{G, I} $ for the image of $\Wh_{G/G}$ under the natural map 
\begin{equation*}
\xymatrix{
\Sh_\cN(G/G) \simeq hh(\cH_G) = hh(\cH_{L_I}) \ar[r] & \colim_{J \sne I^a} hh(\cH_{L_J}) \simeq \colim_{J \sne I^a}\Sh_{\cN}(L_{J}/L_{J}).
}
\end{equation*}
By Theorem~\ref{thm:intro ff}, to calculate the derived endomorphism ring of $\Wh_{\cG/\cG}$, it remains to calculate the derived endomorphism ring of $\Wh_{G, I}$ as an object in $\colim_{J \sne I^a}\Sh_{\cN}(L_{J}/L_{J})$.

\sss{Calculation of endomorphisms of $\Wh_{G, I}$}\label{sss:intro cal endo}
To state the output of our calculation of $\End(\Wh_{G,I})$, we need to recall some constructions from generalized Springer theory. Here we will first see the appearance of the dual group $G^\vee$.


Let $Z(G)$ be the center of $G$, and $\Irr(Z(G))$ be the group of irreducible complex characters of $Z(G)$.  
To each $\chi \in \textup{Irr}(Z(G))$, under the generalized Springer correspondence, the local system on the regular nilpotent orbit of $G$ with central character $\chi$ appears in the parabolic induction of a cuspidal local system on a Levi subgroup $L_{\chi}\subset G$, unique up to conjugation. Set  $T_\chi$ to be the connected center of $L_\chi$ with Lie algebra $\frt_\chi$, and let $T_\chi^\vee$ denote the dual torus with Lie algebra $\frt^\vee_\chi$.   Set $W_\chi = N_G(L_\chi)/L_\chi$ to be the relative Weyl group which acts on $T_{\chi}$ and $\frt_{\chi}$,  hence also on $T^{\vee}_{\chi}$ and $\frt^{\vee}_{\chi}$. 

\begin{theorem}[See Theorem~\ref{thm:end_of_Wh}]\label{thm:intro end in crit}  There is a canonical equivalence of dg algebras
\begin{equation*}
\End(\Wh_{G, I}) \simeq
 \oplus_{\chi \in \Irr(Z(G))} \cO(T^\vee_\chi \times T^\vee_\chi \times \frt^\vee_\chi[-1])^{W_\chi}.
\end{equation*}
\end{theorem}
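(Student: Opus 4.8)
The plan is to compute $\End(\Wh_{G,I})$ by working in the colimit category $\colim_{J \sne I^a} \Sh_\cN(L_J/L_J)$ and leveraging the structure of the object $\Wh_{G,I}$, which is the image of the finite Whittaker sheaf $\Wh_{G/G}$ under the natural functor from the $J=I$ term. First I would unwind the colimit: since morphism spaces in a colimit of categories (in $\St^L_\CC$) are computed as a colimit over the relevant diagram of simplices of $\Subsets$ of $I^a$ of morphism spaces in the individual $\Sh_\cN(L_J/L_J)$, the ring $\End(\Wh_{G,I})$ acquires a description as a (homotopy) colimit of complexes $\RHom_{\Sh_\cN(L_J/L_J)}(p_J^* \Wh_{G/G}, p_J^* \Wh_{G/G})$ — but this requires first propagating $\Wh_{G/G}$ from the $I$-vertex to each $J$ via the parabolic-induction/restriction functors that define the colimit. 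The key geometric input should be that the image of the finite Whittaker object under parabolic induction to a bigger Levi decomposes, via the generalized Springer correspondence, into Whittaker objects of the cuspidal Levis $L_\chi$ indexed by $\chi \in \Irr(Z(G))$; this is exactly the ``diagram organizing parabolic induction of character sheaves'' mentioned in the abstract and builds on \cite{liDerivedCategoriesCharactera}.

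The heart of the computation is then to identify, for each $\chi$, the contribution of the Whittaker sheaf attached to the cuspidal datum $(L_\chi, \text{cuspidal local system})$ to the colimit. Here I would use the fact (from generalized Springer theory) that the $W_\chi$-equivariant endomorphisms of the relevant induced object are governed by the coinvariant/equivariant cohomology of $T_\chi$, together with the extra odd generators coming from the shift $[-1]$ — geometrically, the $\frt^\vee_\chi[-1]$ factor arises because $\End$ of the constant sheaf on $T^\vee_\chi \times T^\vee_\chi$ in the relevant derived category of (nilpotent singular support) sheaves on a torus-quotient contributes an exterior algebra on $(\frt^\vee_\chi)^*[1]$, i.e. the cohomology of the classifying-type factor or the self-Ext of the skyscraper along the commuting condition. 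One expects $\End$ of the $\chi$-piece of $\Wh_{G,I}$ to be $\cO(T^\vee_\chi \times T^\vee_\chi \times \frt^\vee_\chi[-1])^{W_\chi}$, matching the spectral side $\cO(Z^2_{T_\chi})^{W_\chi}$; taking the direct sum over $\chi$ gives the theorem. The orthogonality between different $\chi$-summands follows because distinct cuspidal supports give disjoint-support character sheaves, so there are no cross terms.

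Concretely, the steps in order are: (i) express $\End(\Wh_{G,I})$ as a limit/colimit over the poset $\{J \sne I^a\}$ using the colimit presentation of $hh(\cH_\cG)$ and the fully faithfulness from Theorem~\ref{thm:intro ff}, reducing to a computation inside the honest (non-affine) categories $\Sh_\cN(L_J/L_J)$; (ii) for each inclusion $L_J \subset L_{J'}$, compute the effect of the transition functor on the Whittaker object using the projection/base-change formula for the Whittaker correspondence and the compatibility of Whittaker functionals with parabolic induction; (iii) invoke the generalized Springer correspondence to see that after inducing up to a Levi, the Whittaker object breaks into pieces indexed by cuspidal pairs, i.e. by $\chi \in \Irr(Z(G))$, with the $\chi$-piece supported on (a substack built from) $L_\chi$; (iv) compute $\End$ of each $\chi$-piece directly on the torus model, getting $\cO(T^\vee_\chi \times T^\vee_\chi)$ from the two monodromies and the exterior algebra $\Sym^*((\frt^\vee_\chi)^*[1])$ from the self-Ext along the commutator equation, then take $W_\chi$-invariants; (v) assemble.

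The main obstacle I anticipate is step (ii)–(iii): controlling precisely how the Whittaker object transforms under the transition maps of the colimit and identifying the cuspidal decomposition with the correct equivariance and the correct shifts. The bookkeeping of the $W_\chi$-actions and of the homological shift producing $\frt^\vee_\chi[-1]$ (as opposed to $\frt^\vee_\chi[+1]$ or a polynomial factor) is where sign/degree errors are easy to make, and verifying that the colimit over $\{J \sne I^a\}$ genuinely collapses to the direct sum over $\chi$ — rather than leaving higher coherence terms — will require the deformation/cleanness properties of the generalized Springer sheaves established in \cite{liDerivedCategoriesCharactera}. I would treat this as the technical core and isolate it as the key lemma, deferring the torus-model $\End$ computation (step iv) as a routine, if slightly involved, calculation.
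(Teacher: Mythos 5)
Your opening move — ``morphism spaces in a colimit of categories in $\St^L_k$ are computed as a colimit over the relevant diagram of $\Hom$-complexes'' — is not correct as stated, and this is where the proposal would stall. For a colimit in $\St^L_k$ one has $\cC := \colim_J \cC_J \simeq \lim_{J^{\opp}} \cC_J$ after passing to right adjoints of the transition functors, so $\End_\cC(\iota_I x)$ is governed by a \emph{limit} of complexes $\End_{\cC_J}\big(\iota_J^R \iota_I x\big)$, with the objects being the right-adjoint transports and not the images of $x$ under the transition (induction) functors. Since the poset $\{J \sne I^a\}$ is not filtered, there is no free commutation of $\Hom$ with the colimit; and the terms $\iota_J^R\iota_I(\Wh_{G/G})$ are genuinely different from $\Ind_I^J(\Wh_{G/G})$. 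Your steps (ii)--(iii), which track the transition-functor images of the Whittaker object, therefore do not feed directly into a valid $\End$ computation, and the ``defer the coherences to cleanness'' plan is precisely the hard part left open.

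The paper avoids all of this by replacing the abstract colimit with a concrete model. The key structural input is Theorem~\ref{thm_gluing_character_sheaves}: $\colim_{J\in \cD} \Sh_\cN(L_J/L_J)$ is identified with $\Gamma\big(\frt, \scrQCoh_{\hatfrC}\big)^{\hatW}$, global $\hatW$-equivariant sections of a sheaf of categories over the affine space $\frt$, with blocks given by smash products $k[\hatW^\e]\#\Sym(\frz_\e[1])\lmod$ (this extends the Lie-algebra and group models from \cite{liDerivedCategoriesCharactera,liDerivedCategoriesCharacter}). In this model the transition functors are exactly the $\Ind/\Res$ functors for the poset of open stars of facets in the alcove, so the colimit is a strict gluing and no lax/coherence bookkeeping survives. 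The second ingredient you are missing is how the Whittaker object is pinned down: the paper passes to the Lie algebra via $\beta_1$ (Proposition~\ref{prop:wh_G_and_wh_g}), applies the Fourier--Sato transform to identify $\Wh_{\frg/G}$ with a shift of the skyscraper at a regular nilpotent (Proposition~\ref{prop:Fourier_transform}), and then reads off via the generalized Springer correspondence plus $W^J$-equivariant Koszul duality that $\Wh_{\frg/G}$ is $\bigoplus_{\chi} \Sym(\frz_{J_\chi}[1])[-r]$ supported on exactly the blocks $(J_\chi, F_\chi)$ with cuspidal datum of full nilpotent support, indexed by $\Irr(Z(G)/Z(G)^\circ)$ (Lemma~\ref{l:IrrZ Dreg}, Proposition~\ref{p:Wg image}). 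Your heuristic that ``the Whittaker object breaks along cuspidal pairs'' is correct in spirit, but the Fourier--Sato mechanism is what makes it precise, including the homological shifts. Finally, once $\LL_\cG(\Wh_{G,I}) = \bigoplus_\chi \Ind_{W^{J_\chi}}^{\hatW^{J_\chi}} \Sym(\frz_{J_\chi}[1])$ is in hand, the endomorphism computation is not a torus sheaf computation but elementary Frobenius reciprocity for smash-product algebras: $\Res \Ind$ over $\hatW^{J_\chi}/W^{J_\chi} \cong \Lambda_{J_\chi}\times\Lambda_{J_\chi}$ produces the group algebra $k[\Lambda_{J_\chi}\times\Lambda_{J_\chi}] = \cO(T^\vee_\chi\times T^\vee_\chi)$, the exterior factor $\Sym(\frz_{J_\chi}[1]) = \cO(\frt^\vee_\chi[-1])$ is already there, and taking $W_\chi$-invariants finishes. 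So the generalized-Springer and dual-torus intuitions you articulate are right, but the proposal lacks both the combinatorial model that makes the colimit computable and the Fourier-transform identification that pins down the Whittaker object; as written, the first step (the Hom-colimit formula) does not hold.
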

To explain some of the additional notation in the theorem: $\frt^{\vee}_{\chi}[-1]$ denotes the affine derived scheme with coordinate ring $\cO(\frt^{\vee}_{\chi}[-1])$ equal to the exterior algebra $
\Sym^*((\frt_\chi^{\vee})^*[1])$ generated in degree $-1$.

Note that $\Irr(Z(G))$ can be canonically identified with $\pi_{1}(\dG)$, therefore the index set of the above decomposition can be replaced with $\pi_{1}(\dG)$, which is what appeared in Theorem \ref{intro main cor}.

The proof of the theorem uses generalized Springer theory and the decomposition of character sheaves of~\cite{liDerivedCategoriesCharactera,liDerivedCategoriesCharacterb}.

Theorems \eqref{thm:intro ff} and \ref{thm:intro end in crit} together provide the following description of the derived endomorphism ring of the cocenter  Whittaker object  $\Wh_{\cG/\cG}\in hh(\cH_\cG)$.

  

\begin{theorem}[See Corollary~\ref{c:end_of_Wh}]\label{thm:intro end calc} There is an equivalence of dg algebras
\begin{equation*}
\End(\Wh_{\cG/\cG})  \simeq \oplus_{\chi \in \Irr(Z(G))} \cO(T^\vee_\chi \times T^\vee_\chi \times \frt^\vee_\chi[-1])^{W_\chi}.
\end{equation*}
\end{theorem}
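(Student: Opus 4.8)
The plan is to combine the two preceding theorems exactly as the text announces. First I would invoke Theorem~\ref{thm:intro ff}: the natural map
\[
\colim_{J \sne I^a} hh(\cH_{L_J}) \hookrightarrow hh(\cH_{\cG})
\]
is fully faithful, so the derived endomorphism ring of any object in the image is computed inside the colimit category $\colim_{J\sne I^a}\Sh_\cN(L_J/L_J)$. Since $\Wh_{\cG/\cG} = a(\Wh_{G/G})$ is by construction the image of $\Wh_{G,I}$ under this fully faithful embedding (using $G = L_I$), full faithfulness gives a canonical equivalence of dg algebras
\[
\End_{hh(\cH_\cG)}(\Wh_{\cG/\cG}) \simeq \End_{\colim_{J\sne I^a}\Sh_\cN(L_J/L_J)}(\Wh_{G,I}).
\]

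Next I would apply Theorem~\ref{thm:intro end in crit}, which identifies the right-hand side with
\[
\bigoplus_{\chi\in\Irr(Z(G))}\cO(T^\vee_\chi\times T^\vee_\chi\times \frt^\vee_\chi[-1])^{W_\chi}.
\]
Composing the two equivalences yields the asserted description of $\End(\Wh_{\cG/\cG})$. One should note that this composite is \emph{unconditional}: Theorem~\ref{thm:intro ff} and Theorem~\ref{thm:intro end in crit} are both proved without appeal to Ansatz~\ref{ans:intro univ aff equiv} (only the subsequent identification with $\cO(Z^2_{\dG})$ uses the Ansatz), so the statement of Theorem~\ref{thm:intro end calc} carries no hypotheses.

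The only genuine point to check carefully — and the one I expect to be the main obstacle in writing this up honestly — is that the two occurrences of the Whittaker object really match up, i.e.\ that $\Wh_{\cG/\cG}$, defined in the main text via descended trace, coincides with the image of $\Wh_{G,I}$ under the fully faithful functor of Theorem~\ref{thm:intro ff}. This is precisely the content of Theorem~\ref{thm: dtr of whit} (the assertion that the descended-trace definition agrees with $a(\Wh_{G/G})$), together with the compatibility of the map $a$ in \eqref{intro a} with the factorization through $\colim_{J\sne I^a} hh(\cH_{L_J})$. Granting that compatibility, the remaining steps are purely formal. I would therefore structure the proof as: (1) cite Theorem~\ref{thm: dtr of whit} to identify $\Wh_{\cG/\cG}$ with the image of $\Wh_{G,I}$; (2) apply full faithfulness from Theorem~\ref{thm:intro ff} to reduce to $\End(\Wh_{G,I})$ in the colimit category; (3) quote Theorem~\ref{thm:intro end in crit}; and (4) remark that nothing in this chain uses the Ansatz, so the result is unconditional.
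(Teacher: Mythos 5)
Your proposal matches the paper's proof of Corollary~\ref{c:end_of_Wh} step for step: identify $\Wh_{\cG/\cG}$ with $a(\Wh_{G/G})$ via Theorem~\ref{thm: dtr of whit}, use full faithfulness from Theorem~\ref{thm:intro ff} to transfer the $\End$-computation into $\colim_{J\in\cD}\Sh_\cN(L_J/L_J)$, and then quote Theorem~\ref{thm:intro end in crit}. Your remark that the result is unconditional (the Ansatz is not used in this chain) is also consistent with the paper.
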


%
%


%
%
%
%
%


\subsection{Parabolic character sheaves and semi-orthogonal decomposition of the cocenter}\label{ss:intro semi-orth} 
We describe here some of the  arguments that lead to the proof of Theorem \ref{thm:intro ff}.  The reader interested in applications could skip to Section~\ref{s:intro app} without any disruption. 

The results we are about to present here are not used in full strength in the proof of Theorem \ref{thm:intro ff}, but they will be used  in future work to identify the affine cocenter with the genus one Betti geometric Langlands automorphic category. 

For simplicity, we will continue to assume here $G$ is almost simple and simply-connected.


\sss{Parabolic character sheaves and Hochschild homology}
In Section~\ref{sect:horo desc}, we give a geometric realization of the Hochschild homology $hh(\cH_{L_J}, \cH_{\cG})$ in terms of Lusztig's theory of parabolic (or rather parahoric) character sheaves.

For $J \sne I^a$, let $\cP_J \subset \cG$ be the standard parahoric subgroup with Levi quotient $L_{J}$, and $\cP_J^u \subset \cP_J$ its unipotent radical.  Let $\cH_{\cG,J}$ be the full subcategory
\begin{equation*}
\cH_{\cG,J}=\Sh_\cN\left( \f{\cP_{J}^{u}\bs\cG/\cP_J^u}{\Ad(L_J)}\right)
\end{equation*}
where $\Sh_{\cN}(-)$ means sheaves whose singular support is nilpotent under the moment map for the left (or right) $L_{J}$-action when pulled back to $\cP_{J}^{u}\bs\cG/\cP_J^u$. Then $\cH_{\cG,J}$ can be viewed as a Betti version of Lusztig's parabolic character sheaves for loop groups.

\begin{theorem}[see Theorem \ref{thm:coequal J}]\label{thm:intro geom coeq J}
For $J\sne I^{a}$, there is a canonical equivalence
\begin{equation*}
hh(\cH_{L_J}, \cH_{\cG})\simeq \cH_{\cG,J}.
\end{equation*}
\end{theorem}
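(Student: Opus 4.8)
The plan is to realize both sides of the claimed equivalence as invariants/coinvariants of the same group action on sheaves on the loop group, and then match the two descriptions.

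\medskip

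\emph{Step 1: Unwind the Hochschild homology as a quotient by conjugation.}
By definition $hh(\cH_{L_J},\cH_{\cG}) = \cH_{\cG}\otimes_{\cH_{L_J}\otimes\cH_{L_J}^{op}}\cH_{L_J}$, where $\cH_{L_J}$ acts on $\cH_{\cG}$ by left and right convolution. The first task is to give a geometric model for this relative tensor product. The universal finite Hecke category $\cH_{L_J}$ is a monodromic version of $\Sh(\cP_J^u\bs\cP_J/\cP_J^u)$, which is the monoidal category controlling ``$L_J$-equivariance up to the universal Cartan". Convolution of $\cH_{\cG}=\colim_w\Sh_\bimon(\cI^u\bs\cG_{\le w}/\cI^u)$ with $\cH_{L_J}$ on the left replaces the left $\cI^u$-equivariance by left $\cP_J^u\rtimes(\text{monodromic }L_J)$-equivariance, and similarly on the right; so after convolving on both sides we expect
\[
\cH_{L_J}\otimes_{\cH_{L_J}}\cH_{\cG}\otimes_{\cH_{L_J}}\cH_{L_J}\simeq \Sh_{\cN}(\cP_J^u\bs\cG/\cP_J^u)
\]
with a residual $L_J\times L_J$ action through the two Cartans, but \emph{before} taking the diagonal conjugation quotient. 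Then the Hochschild homology further imposes that the two $L_J$-actions are identified along the diagonal and the action is ``traced out", i.e.\ one takes the (homotopy) quotient by the diagonal adjoint $L_J$-action. This yields exactly
\[
hh(\cH_{L_J},\cH_{\cG})\simeq \Sh_{\cN}\!\left(\frac{\cP_J^u\bs\cG/\cP_J^u}{\Ad(L_J)}\right) = \cH_{\cG,J}.
\]
The role of the singular-support condition $\cN$ is that the monodromic (as opposed to fully monodromic with arbitrary singular support) condition defining $\cH_{L_J}$ translates, after the convolution and the quotient, precisely into nilpotence of the singular support under the $L_J$-moment maps; this is the loop-group analogue of the identification \eqref{intro CS}, $hh(\cH_G)\simeq\Sh_\cN(G/G)$, from Theorem~\ref{thm:CS G}, and I would deduce the present statement by the same ``horocycle/coequalizer" mechanism of Section~\ref{sect:horo desc} applied with $G$ replaced by the (pro-algebraic) group $\cP_J$ acting on the ind-scheme $\cG$.

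\medskip

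\emph{Step 2: Set up the coequalizer/horocycle correspondence.}
Concretely I would use that for a group $K$ acting on a space $X$, there is a general identification of the cocenter of $\Sh(K\bs X\bs K)$-type categories with $\Sh$ of the ``adjoint quotient" $X/_{\!\Ad}K$, via the correspondence $X/_{\!\Ad}K \leftarrow (\text{horocycle space}) \to (X\times^K X)$, together with a descent/coequalizer argument showing the trace map realizes $hh$ as the colimit of the two pullbacks along this correspondence. Here $K=L_J$ (or $\cP_J$, with $\cP_J^u$ contracted away) and $X = \cP_J^u\bs\cG/\cP_J^u$. Since $\cG$ is an ind-scheme, I'd run this at each finite stage $\cG_{\le w}$ and pass to the colimit, using that all the functors in sight commute with the relevant colimits (this is where the presentability hypotheses in $\St^L_\CC$ are used). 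The monodromic bookkeeping — keeping track of the $H$-monodromy on the $\cI^u$-level versus the $L_J$-level — is handled exactly as in the finite case treated in Section~\ref{sect:horo desc} and Appendix~\ref{app:TT}.

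\medskip

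\emph{Step 3: Match singular-support conditions and conclude.}
The last step is to check that the singular support condition that comes out of the Hochschild homology computation (coming from the monodromic, i.e.\ nilpotent-along-$H$, nature of $\cH_{L_J}$) is exactly the condition $\cN$ used to define $\cH_{\cG,J}$, namely nilpotence under the $L_J$-moment maps for the left/right actions on $\cP_J^u\bs\cG/\cP_J^u$. This is a local statement about moment maps and can be checked one Schubert cell at a time, reducing to the finite-type assertion already implicit in \eqref{intro CS}.

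\medskip

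\textbf{Main obstacle.} I expect the genuinely hard part to be Step 1/Step 2: rigorously producing the geometric model of the relative tensor product $\cH_{\cG}\otimes_{\cH_{L_J}\otimes\cH_{L_J}^{op}}\cH_{L_J}$ as sheaves on an adjoint quotient — i.e.\ proving the descent (coequalizer) statement along the horocycle correspondence in the \emph{ind}-scheme, \emph{monodromic} setting, with the singular-support constraints. The finite case $hh(\cH_G)\simeq\Sh_\cN(G/G)$ is already nontrivial; the new subtleties are (a) the pro-unipotent radical $\cP_J^u$ is infinite-dimensional, so one must be careful that contracting it away is harmless (a cohomological-contractibility argument), and (b) one is taking Hochschild homology of $\cH_{\cG}$ as a \emph{bimodule} over the smaller algebra $\cH_{L_J}$, not over itself, so the standard cocenter formalism must be applied in the bimodule form $hh(\cA,\cM)$ recalled in the introduction. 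Once the coequalizer description is in hand, identifying it with $\cH_{\cG,J}$ and checking the singular-support condition are comparatively formal.
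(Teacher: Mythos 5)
Your outline matches the paper's strategy: Theorem~\ref{thm:intro geom coeq J} (i.e.\ Theorem~\ref{thm:coequal J}) is deduced as a special case of a general descent statement (Theorem~\ref{thm: geom descent ind}) which identifies $hh(\cH_L, \Sh_\L(N\bs Z/N))\simeq \Sh_\L(Z/\D L)$ for a finite-dimensional reductive $L$ acting on a (possibly ind-)stack $Z$; here one takes $L=L_J$ and $Z=\cP_J^u\bs\cG/\cP_J^u$, so that $U_J\bs Z/U_J = \cI^u\bs\cG/\cI^u$ and $Z/\D L_J = \cY_J$, and the trace map is realized by the horocycle functor $\pi_{J!}\d_J^*$. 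Your Step~1 heuristic, the reduction to a descent/coequalizer along the horocycle correspondence, and the handling of the ind-structure by controlling singular support stratum by stratum all follow the paper.

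What is left unaddressed is precisely the ``main obstacle'' you flag, and it is the genuine content of the theorem: whether the augmented simplicial object built from the horocycle/unit correspondences actually exhibits $\Sh_\L(Z/\D L)$ as the colimit of the relative Hochschild complex. The paper handles this by constructing the augmented simplicial object termwise inside $\Corr^{H\times H}$ and then in $\Bimod_{\cH_H}(\St_k^L)$ (Lemmas~\ref{l:aug hoch}, \ref{l:aug hoch gps}, \ref{l:Cbu}), passing to the augmented cosimplicial object of right adjoints, and verifying Lurie's comonadic descent criterion \cite[Cor.~4.7.6.3]{lurieHigherAlgebra2012}. Two of the three hypotheses in that criterion are soft (continuity of $hc_*$ comes from the Lagrangian singular-support continuity statement of Proposition~\ref{p:cont 4 functors}; left adjointability of the relevant cosimplicial squares is a base-change check). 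The nontrivial geometric input is \emph{conservativity} of $hc_*$: the paper shows $ch\circ hc_*$ contains the identity as a direct summand, which amounts to the fact that the Springer sheaf $\pi_!k$ on $G/\Ad(G)$ (here $L_J/\Ad(L_J)$) contains a shifted skyscraper at $1$ as a direct summand. Your proposal, while correctly isolating the hard step, does not supply this mechanism, so the ``coequalizer yields exactly $\cH_{\cG,J}$'' assertion in Step~1 remains to be proved. One further caveat worth noting is that the continuity step requires the singular support condition to be a conic Lagrangian (conormal to a stratification) on each piece of the ind-stack; for $\cH_{\cG,J}$ this follows from the stratification by geometric pieces and the identification of $\Sh_\cN(\cY(\tfrac{u}{J}))$ via cyclic reduction.
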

Moreover, for $J\subset J'\sne I^{a}$, the natural functor $hh(\cH_{L_J}, \cH_{\cG})\to hh(\cH_{L_J'}, \cH_{\cG})$ gets transported under the above equivalence to the functor $\cH_{\cG,J}\to \cH_{\cG,J'}$ given by a natural horocycle correspondence.

Under the identifications in the theorem, the diagram of full subcategories
\begin{equation*}
\xymatrix{
J \sne I^a \mapsto  hh(\cH_{L_J}, \cH_{L_J})  =\Sh_{\cN}(L_{J}/L_{J})
}
\end{equation*}
is identified with the full subcategory of $\cH_{\cG,J}$ of sheaves supported on $\f{\cP_{J}^{u}\bs \cP_{J}/\cP^{u}_{J}}{L_{J}}$, which is essentially the adjoint quotient $L_{J}/L_{J}$. The transition functors for $J\subset J'$ are given by parabolic induction. 

Theorem \ref{thm:intro geom coeq J} is a consequence of a very general result we prove in Section~\ref{sect:horo desc}, which says that for very general $\cH_{L}$-bimodules $\cM$ coming from geometry (where $L$ is a reductive group), the trace map $\cM\to hh(\cH_{L}, \cM)$ can be geometrically realized as the pull-push functor along a horocycle correspondence.

\sss{Semi-orthogonal decomposition of the cocenter}\label{sss:intro semi}

To state the semi-orthogonal decomposition of $hh(\cH_{\cG})$, we need the notion of Newton points. First, there is a Newton point map $\nu : W^{a} \to \xcoch(T)_{\QQ}^{+}$ from  the affine Weyl group $W^{a}$ to rational dominant coweights: for any $w\in W^a$, and sufficiently divisible $n$, we have $w^n\in  \xcoch(T)$, and set $\nu(w)\in \xcoch(T)_{\QQ}^{+}$ to be the rational dominant coweight so that $n\nu(w)$ and $w^n$ are in the same $W$-orbit.
The Newton point map $\nu$ is invariant under conjugation by $W^{a}$; we denote by $\NP\subset \xcoch(T)_{\QQ}^{+}$ its image. Note $\NP$ has  a natural positive coroot partial ordering but we will work with a coarser order.

Next, to each Newton point $\nu \in \NP$, we associate a finite simplicial complex\footnote{Strictly speaking, $\frB^\hs_\nu$ may  only be a simplicial complex after a barycentric subdivision: as naturally constructed, the intersection of two simplices in $\frB^\hs_\nu$ may be a union of more than one simplex.}    $\frB^\hs_\nu$.  For example, when $\nu=0$, $\frB^\hs_0$ recovers the fundamental alcove of $\frA$. For each facet $\s$ of $\frB^{\hs}_{\nu}$, we attach a category of (possibly twisted) character sheaves $\Sh_\cN(\cY_{\nu, \sigma})$. Together they form a cosheaf of categories on the poset opposite to the set of facets of $\frB^\hs_\nu$, where the transition functors are given by (twisted) parabolic induction. For example, over $\frB^\hs_0$, we recover the cosheaf 
of categories  $J\mapsto\Sh_\cN(L_J/L_{J})$ for $J \sne I^a$, with transition maps given by the usual parabolic induction.


\begin{theorem}\label{thm:main in intro}  The cocenter category $hh(\cH_{\cG})$ has a semi-orthogonal decomposition indexed by non-negative integers $n\ge0$ with the $n$-th associated graded category of the form
\begin{equation*}
hh(\cH_{\cG})_{n}=\bigoplus_{\nu\in\NP, \j{2\r,\nu}=n}hh(\cH_{\cG})_{\nu}
\end{equation*}
and
\begin{equation*}
hh(\cH_{\cG})_{\nu}\simeq \colim_{\sigma \subset \frB^\hs_\nu }\Sh_{\cN}(\cY_{\nu, \sigma})
\end{equation*}
\end{theorem}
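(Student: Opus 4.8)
The plan is to build the semi-orthogonal decomposition of $hh(\cH_{\cG})$ by organizing the colimit presentation $hh(\cH_{\cG})\simeq\colim_{J\sne I^{a}}\cH_{\cG,J}$ (combining Theorem~\ref{thm:intro colim} and Theorem~\ref{thm:intro geom coeq J}) according to Newton strata. The starting point is that the affine flag variety $\cI^{u}\bs\cG/\cI^{u}$, and more generally each parahoric double-coset space appearing in $\cH_{\cG,J}$, carries a stratification by $\tilW$-orbits, and the Newton point map $\nu:W^{a}\to\xcoch(T)^{+}_{\QQ}$ gives a $W^{a}$-conjugation-invariant function on this indexing set. First I would fix a linear refinement of the natural partial order on $\NP$ (coarser than the positive coroot order, as announced) and, for each threshold $n$, define $hh(\cH_{\cG})_{\le n}$ to be the full subcategory generated by the images under the trace maps of objects supported on strata whose Newton point $\nu$ satisfies $\langle 2\rho,\nu\rangle\le n$. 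One must check these are compatible with the transition functors (twisted parabolic induction raises or preserves the support stratum in a controlled way), so that they assemble into an exhaustive filtration of $hh(\cH_{\cG})$ with associated graded the claimed $hh(\cH_{\cG})_{n}=\bigoplus_{\langle 2\rho,\nu\rangle=n}hh(\cH_{\cG})_{\nu}$.

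The core geometric input is the analysis of a single Newton stratum. For a fixed $\nu\in\NP$, the locus in the adjoint-quotient of the loop group with Newton point exactly $\nu$ should be described, via the theory of affine Springer fibers / Kottwitz's classification of isocrystals with additional structure, as a union of pieces indexed by the facets $\sigma$ of the simplicial complex $\frB^{\hs}_{\nu}$, with the piece attached to $\sigma$ being (a twisted form of) the adjoint quotient of a Levi/parahoric and hence contributing $\Sh_{\cN}(\cY_{\nu,\sigma})$. I would then argue that the trace map realizes $hh(\cH_{\cG})_{\nu}$ as the homotopy colimit of this cosheaf of categories over the poset of facets of $\frB^{\hs}_{\nu}$ — this is where Theorem~\ref{thm:intro geom coeq J}'s identification of $hh(\cH_{L_{J}},\cH_{\cG})$ with parahoric character sheaves, together with the horocycle-correspondence description of the transition functors, gets applied stratum-by-stratum, localized to the $\nu$-part. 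The case $\nu=0$ is exactly the semistable part: $\frB^{\hs}_{0}$ is the fundamental alcove, the cosheaf is $J\mapsto\Sh_{\cN}(L_{J}/L_{J})$, and $hh(\cH_{\cG})_{0}\simeq\colim_{J\sne I^{a}}\Sh_{\cN}(L_{J}/L_{J})$ is the subcategory that Theorem~\ref{thm:intro ff} asserts embeds fully faithfully.

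For the semi-orthogonality itself, I would verify that for $\nu\ne\nu'$ with $\langle 2\rho,\nu\rangle\le\langle 2\rho,\nu'\rangle$ there are no maps in one direction between the corresponding subcategories: this reduces to a support/dimension estimate showing that pull-push along the relevant horocycle correspondences cannot move a sheaf from a lower stratum to a higher one while remaining nonzero, a standard contraction/hyperbolic-localization argument once the strata are identified. Finally one checks exhaustiveness (every object of $hh(\cH_{\cG})$ is built from finitely many strata, using that any double coset lies in $\cG_{\le w}$ for some $w$) and that the filtration is separated. The main obstacle I expect is the precise geometric identification of the $\nu$-stratum with the cosheaf over $\frB^{\hs}_{\nu}$ — i.e.\ matching the combinatorics of affine Weyl conjugacy classes of fixed Newton point with the facets of $\frB^{\hs}_{\nu}$ and showing the transition functors are exactly twisted parabolic induction — since this is where the nontrivial input from the structure theory of loop groups (Newton stratification, $\sigma$-conjugacy, the geometry of the twisted forms $\cY_{\nu,\sigma}$) enters, and where the footnoted subtlety about $\frB^{\hs}_{\nu}$ only being simplicial after barycentric subdivision must be handled carefully so that the colimit over facets is well-defined.
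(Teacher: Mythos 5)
Your high-level organizing idea — present $hh(\cH_{\cG})$ via the colimit $\colim_{J}\cH_{\cG,J}$, then build a filtration whose associated graded is governed by Newton points, with the $\nu$-piece being a colimit of (possibly twisted) character sheaf categories over the facets of $\frB^{\hs}_{\nu}$ — is the right silhouette. But as written the argument would stall at two specific points, and the proposed geometric inputs are not the ones that actually make the proof go.

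First, the filtering variable. You propose to define $hh(\cH_{\cG})_{\le n}$ as the subcategory generated by objects on strata with $\langle 2\rho,\nu\rangle\le n$. But for fixed $\nu$ (e.g.~$\nu=0$) there are geometric pieces of arbitrarily large length — all pieces indexed by torsion conjugacy classes of $W^{a}$ — so your proposed $hh(\cH_{\cG})_{\le 0}$ would not be an honest ``small'' filtration step in any visible way, and in particular it is not obvious why its quotient by the empty category should be $\colim_{J}\Sh_{\cN}(L_{J}/L_{J})$. The paper instead filters by the \emph{length} of the combinatorial piece, defining $\cC_{J,\le n}=\Sh_{\cN}(\cY_{J,\le n})$ for $\cY_{J,\le n}=\bigcup_{\ell(u)\le n}\cY(\tfrac{u}{J})$, which is closed by He's closure theorem. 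The Newton point only appears at the associated-graded level: the interesting fact is that within each length-$n$ slice, only the \emph{essential} (straight) pieces, i.e.\ those with $\ell(u)=\langle 2\rho,\nu(u)\rangle$, survive to the colimit. Your direct Newton-point filtration could in the end coincide with this, but only \emph{after} exactly the contraction argument you have not supplied, so you cannot use it as a starting definition.

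Second, and more seriously: you treat the transition functors as if they respected the stratification into geometric pieces (``localized to the $\nu$-part,'' ``applied stratum-by-stratum''). They do not, and this is precisely the central difficulty the whole argument is built to overcome. The functors $\ch^{J'}_{J}$ send a sheaf on a length-$n$ piece to a sheaf supported on a union of pieces of length $\le n$, and they can and do drop strictly. What actually makes the theorem true is a pair of facts with no analogue of a ``standard hyperbolic localization'': (i) He's theorem on cyclic reduction and the behavior of $\ch^{J'}_{J}$ on pieces, which says the functor either strictly drops length (for non-quasi-$J'$-reduced pieces) or factors through twisted parabolic induction (for quasi-$J'$-reduced pieces), and is an \emph{equivalence} on $J'$-reduced pieces; (ii) a categorical contraction principle for cosheaves on posets, applied to the gradient flow of the He--Nie quadratic function $f_{\nu}$ on $\frB_{\nu}$, which shows the inclusion of the essential subcomplex $\frB^{\hs}_{\nu}\hookrightarrow\frB_{\nu}$ induces an equivalence on colimits. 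Neither ingredient is visible in your sketch. Your suggestion that semi-orthogonality comes from ``a support/dimension estimate showing that pull-push cannot move a sheaf from a lower stratum to a higher one'' understates the problem: the issue is not one-directional support control but that the cosheaf's pieces at a given $\nu$ live in infinitely many conjugacy classes of varying length, and one must actively contract the non-essential ones away.

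Third, the cited geometric inputs are not the right ones. Kottwitz's theory of isocrystals and affine Springer fibers do not appear; the geometric realization of the $\nu$-stratum is via Lusztig's parabolic character sheaves on $\cY_{J}=\tfrac{\cP_{J}^{u}\backslash\cG/\cP_{J}^{u}}{L_{J}}$, B\'edard's combinatorial description of the piece decomposition, and the cyclic-reduction description of each piece as a (pro-unipotent) gerbe over a twisted adjoint quotient $\tfrac{uL_{K}}{L_{K}}$. The Newton point enters purely combinatorially through the enhanced Newton map on $W^{a}$-conjugacy classes in $\tilW$. So the ``matching the combinatorics with facets of $\frB^{\hs}_{\nu}$'' step you flag as the main obstacle is in fact handled at the level of the affine Weyl group, not at the level of $\sigma$-conjugacy classes in the loop group. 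Without the two ingredients (i) and (ii) above, the proposal does not recover the theorem.

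One thing you do correctly flag: the barycentric-subdivision caveat about $\frB^{\hs}_{\nu}$ is a real technical point, and the paper handles it by working with the poset $\Tot(\cS^{\hs}_{\wt\nu})$ of facets rather than the $\Delta$-complex directly, using that $|\Tot(\cS)|$ is always a genuine simplicial complex.
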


In particular, the first filtered piece $hh(\cH_{\cG})_{0}$ is a full subcategory of $hh(\cH_{\cG})$, and the theorem identifies it with the colimit $\colim_{J\sne I^{a}}\Sh_{\cN}(L_{J}/L_{J})$. This leads to the fully faithfulness asserted in Theorem~\ref{thm:intro ff}.

Theorem \ref{thm:main in intro} is a categorification of a result of Xuhua He \cite{heCocenterPadicGroups2018} where, among other things, he gave a decomposition of the cocenter of the affine Hecke algebra indexed by Newton points.

%
%
%
%

\sss{Idea of proof of Theorem \ref{thm:main in intro}}
The essential idea of the proof of Theorem~\ref{thm:main in intro} is to perform a categorical version of Morse theory on a cosheaf on a certain topological space $\frB_{\nu}$ (or rather the poset of its facets) that encodes the combinatorics of conjugacy classes in the affine Weyl group $W^a$. 

We sketch the construction of $\frB_{\nu}$. Let $\frA$ be the apartment  associated to $T$ in the building of $\cG$. We regard $\frA$ as a labelled simplicial complex, with each facet labelled by its type $J \subsetneq I^a$. We view the labelling as a simplicial map $\frA \to \frA/W^{a} \simeq \D$ where $\D$ is the fundamental alcove whose facets are in bijection with  $J \sne I^a$.

Given a conjugacy class $[w]\subset W^a$ of an element $w \in W^a$, with centralizer $C_w \subset W^a$, we may identify $[w]\simeq W^a/C_w$   with the open alcoves (open facets, or equivalently, $J$-facets with $J =\vn$) in the quotient space $X_{[w]} = \frA/C_w$ which depends only on the conjugacy class $[w]$. In general, the  $J$-facets of $X_w$, for any $J \sne I^a$,  index the image of $\cO_{[w]}$ under the natural projection to the  adjoint quotient $W^a \to W^a/\Ad(W_J)$

For each $\nu \in \NP$, we glue together the $X_{[w]}$, for conjugacy classes of $[w]$ with Newton point $\nu$, into a simplicial complex \footnote{Same comment as above:  $\frB_\nu$ may only be a simplicial complex after a barycentric subdivision: as naturally constructed, the intersection of two simplices in $\frB_\nu$ may be a union of more than one simplex.} $\frB_{\nu}$. The gluing procedure relies on the combinatorics called {\em pieces} for the affine Weyl group (see Section~\ref{sss:comb piece}). The space $\frB^{\hs}_{\nu}$ mentioned in Section~\ref{sss:intro semi} is a subspace of $\frB_{\nu}$ which we call the {\em essential part} of $\frB_{\nu}$. 

There is a natural function $f_\nu: \frB_\nu \to \RR$ obtained  by gluing a certain quadratic function on $X_{[w]}$ introduced by He and Nie~\cite{heMinimalLengthElements2014} in their work on minimal length elements in conjugacy classes. 
The critical locus $\Crit(f_{\nu})\subset \frB_{\nu}$ is contained in the subspace $\frB^{\hs}_{\nu}\subset \frB_{\nu}$.

For example, when $\nu=0$, $\frB_{0}$ is obtained by gluing $X_{[w]}$ for all conjugacy classes $[w]$ of finite order. The critical locus of $f_{0}$, which coincides with $\frB^{\hs}_{0}$ in this case, is exactly the image of $X_{[1]}\to \frB_{0}$, which can be identified with the fundamental alcove $\D$. 

%

For each $J$, the category of parahoric character sheaves $\cH_{\cG,J}$ has a semi-orthogonal decomposition  given by the stratification of $\f{\cP_{J}^{u}\bs \cG/\cP^{u}_{J}}{L_{J}}$ by {\em geometric pieces}, which were introduced by Lusztig \cite{lusztigParabolicCharacterSheaves}. The strata in $\cH_{\cG,J}$ are indexed by $J$-facets of $\frB=\coprod_{\nu\in\NP}\frB_{\nu}$. When we try to compute the colimit $\colim_{J}\cH_{\cG,J}$, the complication is that the transition functors do not respect the semi-orthogonal decompositions. However, by performing a categorical version of Morse theory on $\frB_\nu$, we are able to show that only the pieces of $ \cH_{\cG, J}$ indexed by the essential part   $\frB_\nu^\hs  \subset \frB_\nu$ contribute to the cocenter.

There are two underlying reasons we are able to implement  categorical Morse theory (and specifically, a contraction principle) in our situation. One is that the strata categories of parahoric character sheaves attached to facets of $\frB_{\nu}$ have a certain local constancy property. This is a consequence of a geometric result proved by He \cite{heGENERALIZATIONCYCLICSHIFT}. Another is that the embedding $\frB^{\hs}_{\nu}\incl \frB_{\nu}$ is a homotopy equivalence, which uses the gradient flow of the function $f_{\nu}$. In Appendix \ref{s:str sh}, we collect general methods of calculating colimits indexed by posets, and prove a general contraction principle for cosheaves of categories (see Theorem \ref{th:contracting cosheaf}).

\subsection{Application to commuting stacks}\label{s:intro app}
Here we turn to the application of our prior results to commuting stacks for the dual group $G^\vee$. The results are proved in Section~\ref{s:comm} in the text, conditional on a  
universal variant of Bezrukavnikov's spectral description of the affine Hecke category~\cite{bezrukavnikovTwoGeometricRealizations2016} stated below as Ansatz~\ref{ans:intro univ aff equiv}.  
We expect a proof of Ansatz~\ref{ans:intro univ aff equiv}  to appear in forthcoming 
work of Chen-Dhillon-Taylor, building on Taylor's universal variant~\cite{taylor} of Soergel's Endomorphismensatz.

In this subsection, the main player is $\dG$, a  connected reductive group over $\CC$.

\sss{Spectral realization of universal affine Hecke category}

Given a Borel subgroup  $\dB\subset \dG$, 
the universal Steinberg stack is the fiber product of adjoint quotients
\begin{equation*}
\St_{\dG} = \dB/\dB \times_{\dG/\dG} \dB/\dB
\end{equation*} 
(Here the derived and naive fiber product agree.)
More geometrically, it is the moduli
\begin{equation*}
\St_{\dG}  \simeq  \Loc_{\dG, \dB }(S^1 \times [0,1], S^1 \times \{0, 1\})
\end{equation*} 
of $\dG$-local systems on the cylinder $S^1 \times [0,1]$ with $\dB$-reductions along the boundary $S^1 \times \{0,1\}$.

The universal spectral affine Hecke category is the monoidal convolution category
$
\Ind\Coh(\St_{\dG})  
$
of ind-coherent sheaves on the universal Steinberg stack.

\begin{ansatz}\label{ans:intro univ aff equiv}
There is a monoidal equivalence of universal affine Hecke categories
\begin{equation}
\label{univ aff equiv}
\xymatrix{
\Phi:\Ind\Coh(\St_{\dG}) \ar[r]^-\sim &   \cH_{\cG}
}
\end{equation} 
with the following properties:
\begin{enumerate}
\item  $\Phi$ identifies the structure sheaf $\cO_{\St_{\dG}}$ with the universal affine  Whittaker object $\Wh_{\cG}$ as coalgebra objects. (See Section~\ref{section:decended_trace_O}  for 
the coalgebra structure on $\cO_{\St_{\dG}}$, and Section~\ref{sss:aff whit} for the definition of $\Wh_{\cG}$ and its coalgebra structure.)

\item $\Phi$ is naturally an equivalence of  algebras in bimodules over $\qc(H^\vee)$. 
(Here we regard the $\cH_{H}$-bimodule $\cH_{\cG}$ as a $\qc(H^\vee)$-bimodule using the canonical monoidal equivalence $\cH_{H}\simeq  \qc(H^\vee)$.)

\end{enumerate}
\end{ansatz}

\begin{remark}
 Ansatz~\ref{ans:intro univ aff equiv} (2) is not used in this paper, we only record it for conceptual completeness.
\end{remark}

\sss{Derived commuting stacks} We will state both derived and underived assertions about commuting stacks. We deduce the  underived versions from the derived, and so begin there.

The derived commuting stack $Z^2_{\dG}$ is the derived moduli
of pairs $g_1, g_2 \in \dG$ with $g_1g_2g_1^{-1} g_2^{-1} =1$ up to conjugation.
More formally, it is the adjoint quotient of the derived fiber product
\begin{equation*}
Z^2_{\dG} \simeq  (( \dG\times \dG) \times^{\bR}_{\dG} \{1\} )/\dG 
\end{equation*} 
with respect to the
 commutator map  $\dG\times \dG\to \dG$, $(g_1, g_2) \mapsto g_1g_2g_1^{-1} g_2^{-1}$ and unit element $1\in \dG$. The underlying classical stack of $Z^{2}_{\dG}$ is $\frC^{2}_{\dG}/\dG$. More geometrically, it is the moduli
\begin{equation*}
Z^2_{\dG} \simeq  \Loc_{\dG}(T^2)
\end{equation*} 
of $\dG$-local systems on the two-torus $T^2$, where $g_1, g_2$ are the monodromies around the two factors.

Our access to the commuting stack is via a result of~\cite{ben-zviSpectralIncarnationAffine2017}:

\begin{theorem}[\cite{ben-zviSpectralIncarnationAffine2017}]
There is a canonical equivalence
\begin{equation}\label{intro cocenter equiv}
\Ind\Coh_{\cN}(Z^{2}_{\dG}) \simeq hh(\Ind\Coh(\St_{\dG})).
\end{equation}

Hence assuming Ansatz \eqref{ans:intro univ aff equiv}, there is a canonical equivalence
\begin{equation}\label{intro cocenter equiv}
\Ind\Coh_{\cN}(Z^{2}_{\dG})\simeq hh(\cH_{\cG}).
\end{equation}
\end{theorem}

Within this identification, we match  the structure sheaf $\cO_{Z^{2}_{\dG}}$ and  cocenter Whittaker object $\Wh_{\cG/\cG}$:

\begin{theorem}[See Corollary~\ref{c:OZ Wh}] Assuming Ansatz \eqref{ans:intro univ aff equiv}, under the equivalence \eqref{intro cocenter equiv},  the structure sheaf $\cO_{Z^{2}_{\dG}}$ corresponds to the cocenter Whittaker object $\Wh_{\cG/\cG}$.
\end{theorem}

Now we can transport the calculation of the dg algebra $\End(\Wh_{\cG/\cG})$ of Theorem~\ref{thm:intro end calc} to calculate the dg algebra of global functions $\cO(Z^{2}_{\dG})$, since the latter is the derived endomorphism ring of $\cO_{Z^{2}_{\dG}}$.

Following \cite{borelAlmostCommutingElements}, $Z^{2}_{\dG}$ decomposes into connected components indexed by $\pi_{1}(\dGder)$, the fundamental group of the derived group $G^{\vee,\der}$ of $\dG$. For each $c\in \pi_{1}(\dGder)$, one defines a torus $\dT_{c}$, as the abelianization of a Levi subgroup $\dL_{c}$ of $\dG$,  with an action of $W_{c}=N_{\dG}(\dL_{c})/\dL_{c}$, the relative Weyl group of $\dL_{c}$. When $c=1\in \pi_{1}(G^{\vee,\der})$ is the identity, $\dT_{c}=\dT$ is a maximal torus of $\dG$, and $W_{c}$ is the usual Weyl group $W=W(\dG,\dT)$. For more details, we refer to Section~\ref{ss:alm comm}.


\begin{theorem}[Corollary \ref{c:derived fun comm c}]\label{intro main cor} 
Let $\dG$ be a connected reductive group over $\CC$. Assume  Ansatz~\ref{ans:intro univ aff equiv} holds.  

Then there is an equivalence of dg algebras
\begin{equation*}
\cO(Z^2_{\dG} ) \simeq\bigoplus_{c\in \pi_{1}(G^{\vee,\der})}\cO(Z^2_{\dT_{c}} )^{W_{c}}\simeq 
 \bigoplus_{c\in \pi_{1}(G^{\vee,\der})} \cO(T^\vee_c\times T^\vee_c \times \frt^\vee_c[-1])^{W_c}.
\end{equation*}
\end{theorem}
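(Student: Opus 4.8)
The plan is to deduce Theorem~\ref{intro main cor} from the classical statement Theorem~\ref{thm:intro classical functions} together with a derived deformation-to-the-normal-cone argument, but the cleanest route is instead to prove a \emph{derived} enhancement directly and then \emph{recover} the classical statement by passing to $\pi_0$. So first I would establish the second equivalence, which is purely a local computation: for a torus $\dT_c$ we have $Z^2_{\dT_c}\simeq \Loc_{\dT_c}(T^2)$, and since $\dT_c$ is abelian the commutator map is trivial, so $Z^2_{\dT_c}\simeq (\dT_c\times\dT_c\times \mathfrak t^\vee_c[-1])/\dT_c$ with trivial $\dT_c$-action on the first two factors (the $[-1]$ shift coming from the derived fiber of the commutator map, i.e.\ from $H^2(T^2)$); taking functions and then $W_c$-invariants (exact since we are over $\CC$, char $0$) gives $\cO(T^\vee_c\times T^\vee_c\times \mathfrak t^\vee_c[-1])^{W_c}$. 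This step is routine.

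The substance is the first equivalence $\cO(Z^2_{\dG})\simeq \bigoplus_c \cO(Z^2_{\dT_c})^{W_c}$. The maps $\io_c$ of the introduction upgrade to maps of derived stacks $\tilde\io_c: Z^2_{\dT_c}/W_c\to Z^2_{\dG}$, inducing a pullback dg algebra map $\cO(Z^2_{\dG})\to \prod_c \cO(Z^2_{\dT_c})^{W_c}$; one checks it lands in the direct sum using the Borel--Friedman--Morgan-type decomposition of $\frC^2_{\dG}$ indexed by $\pi_1(\dGder)$ (Section~\ref{ss:alm comm}), which already separates the idempotents. To prove this map is an equivalence I would \emph{not} argue on the coherent side but transport the question through Langlands duality: by the main body of the paper (the semi-orthogonal decomposition of the cocenter of the universal affine Hecke category, Theorem~\ref{thm:main in intro}, together with Ansatz~\ref{ans:intro univ aff equiv}), $\cO(Z^2_{\dG})$ is identified with the (derived) endomorphisms of the unit, or the relevant trace/Hochschild invariant, of the automorphic/Hecke category for $G$, and the semi-orthogonal decomposition decomposes this into pieces indexed by cuspidal data — i.e.\ by Levi subgroups up to the relevant equivalence, matching $c\in\pi_1(\dGder)$ — with the $c$-th piece computed by the endomorphisms of the distinguished Whittaker object, which by Theorem~\ref{thm:intro end in crit} is exactly $\cO(Z^2_{\dT_c})^{W_c}$ (the critical/monodromic cocenter of the torus Hecke category). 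Assembling these identifications, checking they are compatible with the $\io_c$ and with algebra structures, yields the claimed equivalence.

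The main obstacle is the compatibility bookkeeping in that last assembly: one must verify that the abstract semi-orthogonal pieces produced by the spectral-side decomposition are carried, under the Langlands equivalence of Ansatz~\ref{ans:intro univ aff equiv}, precisely to the geometric subsheaves $\Im(\tilde\io_c)$ and that the induced maps on function rings are the pullbacks $\tilde\io_c^*$ rather than merely abstractly isomorphic. This requires matching the parabolic induction diagrams on the automorphic side (organizing character sheaves, as in the second main theorem) with the Borel-type stratification of $\frC^2_{\dG}$ on the spectral side, including the appearance of the relative Weyl groups $W_c=N_{\dG}(\dL_c)/\dL_c$ on both sides. Once this dictionary is pinned down, the dg algebra structure transports for free (all functors are symmetric monoidal where needed), and reducedness of $\cO(\frC^2_{\dG})^{\dG}=\pi_0\cO(Z^2_{\dG})$ follows immediately since each $\pi_0\cO(Z^2_{\dT_c}\times\!)^{W_c}=\cO(\dT_c\times\dT_c)^{W_c}$ is a ring of functions on a (reduced) affine variety, giving Theorem~\ref{thm:intro classical functions} as a corollary.
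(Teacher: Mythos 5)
Your high-level strategy --- transport through Ansatz~\ref{ans:intro univ aff equiv} and compute a derived endomorphism ring on the automorphic side --- matches the paper's, but two of your intermediate identifications are wrong, and these are not cosmetic. First, $\cO(Z^2_{\dG})=\End(\cO_{Z^2_{\dG}})$ does \emph{not} correspond under the cocenter equivalence to the endomorphisms of the unit: the structure sheaf corresponds to the descended trace of the Whittaker \emph{coalgebra}, $\Wh_{\cG/\cG}$ (Corollary~\ref{c:OZ Wh}, which rests on Ansatz~\ref{ans:intro univ aff equiv}(1) and Proposition~\ref{prop:descended_O}), whereas the trace of the unit $\tr_{\cG}(e_{\cG})$ corresponds to $p_*\cO_{Z^2_{\dB}}$, with endomorphism algebra $k[W]\#\cO(T^\vee\times T^\vee\times\frt^\vee[-1])$ (Theorem~\ref{thm:more endo}(2)) --- a different ring. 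Second, the semi-orthogonal decomposition of Theorem~\ref{thm:main in intro} is indexed by Newton points, not by cuspidal data or by $\pi_1(\dGder)$; its only role in this proof is to yield the full faithfulness $\colim_{\cD}\Sh_\cN(L_J/L_J)\hookrightarrow hh(\cH_\cG)$ (Theorem~\ref{th:ff}), reducing the computation to the colimit of character sheaves. The decomposition by $\chi\in\Irr(Z(G)/Z(G)^{\c})$ in Theorem~\ref{thm:intro end in crit} arises from the \emph{block} structure of character sheaves together with the Fourier--Sato/generalized-Springer computation placing the Whittaker sheaf in the regular-nilpotent-support blocks (Theorems~\ref{thm:combinatorial_descrpition_character_sheaves}, \ref{thm_gluing_character_sheaves}, Proposition~\ref{p:Wg image}) --- not from any semi-orthogonal decomposition.

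The reindexing $\chi\leftrightarrow c\in\pi_1(\dGder)$, which you correctly flag as the main obstacle, is not resolved in the paper by matching abstract automorphic pieces against the geometric maps $\wt\io_c$; that delicate compatibility is avoided entirely. The paper instead proves the classical Theorem~\ref{thm:function c part} separately (using the Borel--Friedman--Morgan setup, Richardson's result on closed orbits, and even Corollary~\ref{c:derived fun comm} itself for nonemptiness and irreducibility of the pieces), thereby obtaining two a priori unrelated decompositions of $H^0\cO(Z^2_{\dG})$, and matches summands by a Krull-dimension count (Proposition~\ref{prop:chi=c}). Corollary~\ref{c:derived fun comm c} asserts only a canonical isomorphism, not that it is realized by $\wt\io_c^*$; proving the stronger geometric compatibility you outline would require substantial additional work and is not needed for the stated result.
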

Here, $\frt^{\vee}_{c}=\Lie \dT_{c}$ and  the derived scheme $\frt^\vee_c[-1]$ is defined similarly as $\frt^{\vee}_{\chi}[-1]$ (see the paragraph after Theorem \ref{thm:intro end in crit}). 


\begin{remark}
When the derived group $G^{\vee,\der}$ is simply-connected, we have a simple equality
\begin{equation*}
\cO(Z^2_{\dG} ) \simeq \cO(T^\vee\times T^\vee \times \frt^\vee[-1])^{W}.
\end{equation*}
In this case, the above isomorphism was conjectured in~\cite[Conjecture 1]{berestRepresentationHomologyTopological2022}.
\end{remark}

\sss{Classical commuting stacks}
Now we turn to underived statements.

Let $\frC^{2}_{\dG}$ be the (underived) commuting scheme of $\dG$, the (underived)  closed subscheme of $\dG\times\dG$ consisting of $(g_{1},g_{2})\in \dG\times\dG$ satisfying the equation $g_1g_2g_1^{-1} g_2^{-1} =1$. The group $\dG$ acts on $\frC^{2}_{\dG}$ by simultaneous conjugation, and the quotient stack $\frC^{2}_{\dG}/\dG$ is the classical stack underlying the derived commuting stack $Z^{2}_{\dG}$. 

As above, following \cite{borelAlmostCommutingElements}, $\frC^{2}_{\dG}$ decomposes into connected components indexed by $\pi_{1}(\dGder)$, the fundamental group of the derived group $G^{\vee,\der}$ of $\dG$. For each $c\in \pi_{1}(\dGder)$, one defines a torus $\dT_{c}$, as the abelianization of a Levi subgroup $\dL_{c}$ of $\dG$,  
and a map of stacks
\begin{equation*}
\io_{c}:  (\dT_{c}\times \dT_{c})/W_{c}\to \frC^{2}_{\dG}/\dG
\end{equation*}
wheref $W_{c}=N_{\dG}(\dL_{c})/\dL_{c}$ is the relative Weyl group of $\dL_{c}$. 
For more details, we refer to Section~\ref{ss:alm comm}.

%
%


\begin{theorem}[See Theorem~\ref{thm:function c part}]\label{thm:intro classical functions} Let $\dG$ be a connected reductive group over $\CC$. Assume  Ansatz~\ref{ans:intro univ aff equiv} holds.  

Then the maps $\io_{c}$ for $c\in \pi_{1}(\dGder)$ induce an isomorphism on rings of invariant functions
\begin{equation*}
\cO(\frC^2_{\dG} ) ^{\dG}\simeq \bigoplus_{c \in \pi_{1}(\dGder)} \cO(\dT_c \times \dT_c)^{W_c}.
\end{equation*}
In particular, $\cO(\frC^2_{\dG})^{\dG}$ is reduced.
\end{theorem}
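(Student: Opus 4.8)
The plan is to compute the dg algebra of functions on the commuting stack via the spectral description of the cocenter of the universal affine Hecke category and the Ansatz, and then to apply the semi-orthogonal decomposition (Theorem~\ref{thm:main in intro}) together with the Whittaker endomorphism calculation (Theorem~\ref{thm:intro end in crit}). The commuting stack $\frC^{2}_{\dG}/\dG$ is the Betti moduli of $\dG$-local systems on the two-torus, and $\QCoh$ of it (or the corresponding $\mathrm{IndCoh}$ category) is the spectral realization of the cocenter $\mathrm{hh}(\cH)$ of the universal affine Hecke category $\cH$, under which the structure sheaf corresponds to a distinguished object $\cE$, the image of the monoidal unit. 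By Ansatz~\ref{ans:intro univ aff equiv} the endomorphism dg algebra $\End_{\mathrm{hh}(\cH)}(\cE)\simeq R\Gamma(\frC^{2}_{\dG}/\dG,\cO)$ may equally be computed in the automorphic realization of the cocenter, where Theorems~\ref{thm:main in intro} and~\ref{thm:intro end in crit} apply; note that for the classical stack $\frC^{2}_{\dG}/\dG$ one has $H^{0}R\Gamma(\frC^{2}_{\dG}/\dG,\cO)=\cO(\frC^{2}_{\dG})^{\dG}$ because $\dG$ is linearly reductive over $\CC$. So it suffices to compute this endomorphism algebra on the automorphic side and to check it is concentrated in degree zero.

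Next I would separate the components indexed by $\pi_{1}(\dGder)$. Borel's clopen decomposition $\frC^{2}_{\dG}=\bigsqcup_{c}\frC^{2}_{\dG,c}$ gives $\cO(\frC^{2}_{\dG})^{\dG}=\bigoplus_{c}\cO(\frC^{2}_{\dG,c})^{\dG}$, and each $\io_{c}$ factors through $\frC^{2}_{\dG,c}/\dG$; under duality the cocenter decomposes accordingly, $\mathrm{hh}(\cH)=\bigoplus_{c}\mathrm{hh}(\cH)_{c}$, the $c$-summand being the relevant block $\cC_{c}$ of the semi-orthogonal decomposition of Theorem~\ref{thm:main in intro}, with $\cE=\bigoplus_{c}\cE_{c}$ and $\cE_{c}\in\cC_{c}$. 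It therefore suffices to prove, for each fixed $c$, that $\io_{c}^{*}\colon\cO(\frC^{2}_{\dG,c})^{\dG}\to\cO(\dT_{c}\times\dT_{c})^{W_{c}}$ is an isomorphism --- equivalently, that $\End_{\cC_{c}}(\cE_{c})\simeq\cO(\dT_{c}\times\dT_{c})^{W_{c}}$ and that this agrees with $\io_{c}^{*}$ --- which is the block-by-block form stated in Theorem~\ref{thm:function c part}.

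For fixed $c$, this is where Theorem~\ref{thm:intro end in crit} enters: in the block $\cC_{c}$ the object $\cE_{c}$ is a Whittaker sheaf in the diagram organizing parabolic induction of character sheaves from $\dL_{c}$, and that theorem identifies its self-$\Ext$ algebra with $\cO(\dT_{c}\times\dT_{c})^{W_{c}}$, the functions on $(\dT_{c}\times\dT_{c})/W_{c}$; in particular it is concentrated in degree zero, matching the classical concentration noted above. One then checks that this isomorphism is the one induced by $\io_{c}$ --- both being, after the duality identifications, restriction to the dense locus of commuting pairs contained in a common maximal torus of $\dL_{c}$. Summing over $c$ gives the isomorphism of the theorem. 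Reducedness is then immediate: $\cO(\dT_{c}\times\dT_{c})$ is the Laurent polynomial ring of a $\CC$-torus, hence reduced; invariants of a reduced ring under a finite group are reduced; and a finite direct sum of reduced rings is reduced.

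I expect the main obstacle to be the per-component identification $\End_{\cC_{c}}(\cE_{c})\simeq\cO(\dT_{c}\times\dT_{c})^{W_{c}}$, i.e.\ Theorem~\ref{thm:intro end in crit}: controlling Betti character-sheaf theory on $\dL_{c}$, unwinding the Whittaker averaging and the induction diagram, and ruling out both higher $\Ext$ and any invariant functions on $\dT_{c}\times\dT_{c}$ beyond those fixed by $W_{c}$. Two further points require care: that the semi-orthogonal decomposition of Theorem~\ref{thm:main in intro} is compatible under duality with the Borel clopen decomposition, so that the filtration it induces on $\End_{\mathrm{hh}(\cH)}(\cE)$ splits as the asserted direct sum rather than a nontrivial iterated extension; and that the spectral description of the cocenter together with the Ansatz introduces no spurious higher cohomology, which uses linear reductivity of $\dG$.
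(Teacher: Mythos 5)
Your high-level plan — use Ansatz~\ref{ans:intro univ aff equiv} plus the cocenter calculation to compute $\cO(Z^2_{\dG})$, then take $H^0$ — matches the route to Corollary~\ref{c:derived fun comm}. But several of the steps are incorrect or unproved, and the most important step of the actual theorem is missing entirely.

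First, a few factual errors. The structure sheaf $\cO_{Z^2_{\dG}}$ does \emph{not} correspond to the image of the monoidal unit; by Corollary~\ref{c:OZ Wh} it corresponds to the descended trace of the Whittaker object $\Wh_{\cG/\cG}$ (the descended trace of the unit corresponds to $p_*\cO_{Z^2_{\dB}}$, a different sheaf — see Theorem~\ref{thm:more endo}(2)). Second, the endomorphism dg algebra is \emph{not} concentrated in degree zero: by Theorem~\ref{thm:intro end in crit}, $\End(\Wh_{\cG/\cG})\simeq\oplus_\chi\cO(T^\vee_\chi\times T^\vee_\chi\times\frt^\vee_\chi[-1])^{W_\chi}$, and the factor $\frt^\vee_\chi[-1]$ contributes a nontrivial exterior algebra; this is exactly the derived structure of the fiber product in $Z^2_{\dG}$. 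What linear reductivity gives is only that $H^0\cO(Z^2_{\dG})=\cO(\frC^2_{\dG})^{\dG}$; you do not need (and cannot have) concentration in degree zero. Third, the semi-orthogonal decomposition of Theorem~\ref{thm:main in intro} is indexed by Newton points (non-negative integers), not by $\pi_1(\dGder)$, and there is no a priori direct-sum decomposition $hh(\cH_\cG)=\oplus_c hh(\cH_\cG)_c$ by $c\in\pi_1(\dGder)$; the $\pi_1(\dGder)$-grading only emerges at the level of $\End(\Wh_{\cG/\cG})$ through the block structure of character sheaves (and the categorical decomposition by $\Omega=\pi_1(\dG)$ in Section~\ref{sss:HG comp} is by a different group in general). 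Only Theorem~\ref{th:ff} (fully faithfulness of the semistable part), not the full Theorem~\ref{thm:main in intro}, is needed.

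The serious gap is your treatment of the final step, where you write ``one then checks that this isomorphism is the one induced by $\io_c$ --- both being restriction to a dense locus.'' This is the entire content of the theorem and the paper does not prove it by a direct comparison of the two maps. Instead, the paper's Theorem~\ref{thm:function c part} uses the duality-side identification $H^0\cO(Z^2_{\dG})\simeq\oplus_\chi\cO(T^\vee_\chi\times T^\vee_\chi)^{W_\chi}$ only to extract abstract properties: the spectrum has exactly $\#\pi_1(\dGder)$ connected components, each normal, irreducible and reduced. Separately, Borel--Friedman--Morgan gives the clopen decomposition $Z^2_{\dG}=\coprod_c Z^2_{\dG}(c)$, and a connectedness argument (via Proposition~\ref{p:BFM}) forces the two decompositions to match up component by component, proving nonemptiness of each $Z^2_{\dG}(c)$. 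Then $\io_c'$ on coarse moduli is shown to be a bijection on closed points using Richardson's theorem on closed orbits in $n$-tuple varieties together with Proposition~\ref{p:BFM}(3), and finite by comparison with the Chevalley map to $(\dT\sslash W)^2$; combined with the normality of the target (extracted from the duality calculation) this forces $\io_c'$ to be an isomorphism. None of this appears in your proposal, and the compatibility you would need for your more direct approach — that the Langlands-duality identification of $H^0\cO(Z^2_{\dG})$ literally coincides with $\oplus\io_c^*$ — is not established in the paper and would be a substantial additional claim. You also do not address why $\chi\mapsto c$ under the canonical bijection $\Irr(Z(G)/Z(G)^\circ)\simeq\pi_1(\dGder)$ actually matches the Borel--Friedman--Morgan Levi $\dL_c$ with the cuspidal Levi $L_\chi$; this is Proposition~\ref{prop:chi=c}, proved in the paper by a dimension count that itself uses Theorem~\ref{thm:function c part}.
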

%
%


From  Theorem~\ref{thm:intro classical functions}, we also deduce the following description of invariant functions on the Lie algebra commuting scheme 
and
 Lie algebra-Lie group  commuting scheme.

\begin{theorem}[See Theorem \ref{thm:Lie alg} and Theorem \ref{thm:mix}]\label{thm:intro Lie mix}
Let $\dG$ be a connected reductive group over $\CC$ with maximal torus $\dT \subset \dG$, and respective Lie algebras $\dt \subset \dg$.  Assume  Ansatz~\ref{ans:intro univ aff equiv} holds.

Let $\frC^{2}_{\dg}$ be the Lie algebra commuting scheme of  pairs
$(X_{1},X_{2})\in \dg\times\dg$ satisfying $\ad_{X_1}(X_2) = 0$,  and $\frC_{\dg,\dG}$  the Lie algebra-Lie group 
commuting scheme of pairs $(X,g)\in \dg\times\dG$ satisfying $\Ad_g(X) = X$.

Then restrictions to $\dt\times \dt$ and $\dt\times \dT$ respectively give isomorphisms  on  rings of invariant functions
\begin{eqnarray*}
\cO(\frC^{2}_{\dg})^{\dG}\isom \cO(\dt\times\dt)^{W},\\
\cO(\frC_{\dg,\dG})^{\dG}\isom \cO(\dt\times\dT)^{W}.
\end{eqnarray*}
\end{theorem}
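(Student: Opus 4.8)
The plan is to deduce both isomorphisms from Theorem~\ref{thm:intro classical functions} by degenerating the group $\dG$ to its Lie algebra $\dg$. The geometric input is that $\frC^2_\dg$ and $\frC_{\dg,\dG}$ are governed by the tangent and normal cones of the group commuting scheme $\frC^2_\dG$ along the $\dG$-stable subschemes $\{(e,e)\}$ and $\{e\}\times\dG$: writing $g_1=\exp(X)$ near the identity and expanding via Baker--Campbell--Hausdorff, the leading term of the defining equation $g_1g_2g_1^{-1}g_2^{-1}=1$ of $\frC^2_\dG$ along $\{(e,e)\}$ is the bracket relation $[X,X_2]=0$ defining $\frC^2_\dg$, and along $\{e\}\times\dG$ it is $\Ad_{g_2}X-X=0$ defining $\frC_{\dg,\dG}$. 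Passing to initial forms therefore produces surjections of graded rings $\cO(\frC^2_\dg)\twoheadrightarrow\gr_{\frakm}\cO(\frC^2_\dG)$ and $\cO(\frC_{\dg,\dG})\twoheadrightarrow\gr_{\scI}\cO(\frC^2_\dG)$, where $\frakm$ is the maximal ideal of $(e,e)$ and $\scI$ the ideal of $\{e\}\times\dG$.

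The next step is to compute the targets using Theorem~\ref{thm:intro classical functions}. Over $\CC$ the group $\dG$ is linearly reductive, so $(-)^\dG$ is exact and commutes with the associated graded of any $\dG$-stable filtration; hence $\gr_\frakm\cO(\frC^2_\dG)^\dG=\gr_\frakm\bigl(\bigoplus_{c}\cO(\dT_c\times\dT_c)^{W_c}\bigr)$, and similarly for $\scI$. In each case only the summand $c=1$ survives: the locus $\{(e,e)\}$, respectively $\{e\}\times\dG$, lies on the $c=1$ component of $\frC^2_\dG$, so the idempotents cutting out the remaining components vanish to infinite order along it and die in the associated graded. Since $\dT\times\dT$ is \emph{smooth}, the associated graded of $\cO(\dT\times\dT)^W$ along $(e,e)$ is $\cO(\dt\times\dt)^W$, and the associated graded along $\{e\}\times\dT$ is $\cO(\dt\times\dT)^W$ (the normal bundle of $\{e\}\times\dT$ in $\dT\times\dT$ being the trivial $\dt$-bundle, with its evident $W$-action). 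Chasing conormal spaces at $(e,e)$ then identifies the resulting surjections $\cO(\frC^2_\dg)^\dG\twoheadrightarrow\cO(\dt\times\dt)^W$ and $\cO(\frC_{\dg,\dG})^\dG\twoheadrightarrow\cO(\dt\times\dT)^W$ with the restriction maps appearing in the statement.

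It remains to upgrade these surjections to isomorphisms, and this is the crux. The kernel of each restriction map is exactly the nilradical: a $\dG$-invariant function killed by restriction to $\dt\times\dt$, respectively $\dt\times\dT$, vanishes by invariance on the $\dG$-orbit of that locus, which is dense in $\frC^2_\dg$ by Richardson's irreducibility of the commuting variety, respectively dense in $\frC_{\dg,\dG}$ since the smooth locus where the group coordinate is regular semisimple is dense there and is contained in $\dG\cdot(\dt\times\dT)$; such a function therefore restricts to zero on a dense reduced open, hence is nilpotent. So the remaining content is precisely that $\cO(\frC^2_\dg)^\dG$ and $\cO(\frC_{\dg,\dG})^\dG$ are reduced, and I expect this reducedness to be the main obstacle: it does not follow from the flat degeneration alone --- the associated graded of a reduced ring need not be reduced --- and must instead be extracted from the explicit group-side answer of Theorem~\ref{thm:intro classical functions}, whose summands $\cO(\dT_c\times\dT_c)^{W_c}$ are invariant rings of smooth affine varieties under finite groups and hence reduced, the point being to transport this reducedness through the degeneration (e.g.\ by comparing completions at closed points). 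Once reducedness is in place, surjectivity together with injectivity yields the two isomorphisms, proving Theorem~\ref{thm:Lie alg} and Theorem~\ref{thm:mix}.
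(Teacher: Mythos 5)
Your reduction of injectivity to reducedness of $\cO(\frC^{2}_{\dg})^{\dG}$ (and of $\cO(\frC_{\dg,\dG})^{\dG}$) is correct, but you stop there, and the reducedness \emph{is} the theorem, not a technicality that can be deferred. The mechanism you propose — "transporting reducedness through the degeneration by comparing completions at closed points" — is precisely where the gap lies: the associated graded of the reduced ring $\cO(\frC^{2}_{\dG})^{\dG}$ at $(e,e)$ need not be reduced, so your tangent-cone surjection $\cO(\frC^{2}_{\dg})^{\dG}\twoheadrightarrow\cO(\dt\times\dt)^{W}$ carries no reducedness information; and the points of $\frC^{2}_{\dg}$ at which you would want to compare completions are not just the origin (where $\exp$ is available) but arbitrary closed points, where no comparison with the group side is on offer. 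You also do not actually verify that the surjection you build via initial forms is the restriction map of the statement — you only "chase conormal spaces."

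The paper's proof sidesteps reducedness entirely and is structured differently. Rather than taking the tangent cone at the single point $(e,e)$, it forms the formal completion $\wh\frC^{2}_{\dg}$ of $\frC^{2}_{\dg}$ along the entire fiber over $(0,0)$ of the invariant map $\frs:\frC^{2}_{\dg}\to(\dt\sslash W)^{2}$ (the scheme of commuting nilpotent pairs), and likewise completes $\frC^{2}_{\dG}$ along commuting unipotent pairs. Lemma~\ref{l:formal completion} shows, by a functor-of-points argument on nilpotent thickenings, that $\exp$ gives a $\dG$-equivariant isomorphism $\wh\frC^{2}_{\dg}\isom\wh\frC^{2}_{\dG}$ of formal schemes. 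Taking $\dG$-invariants and invoking Theorem~\ref{thm:function c part} then shows the $\frS_{+}$-adic completion of $\io^{*}_{\dg}$ is an isomorphism, where $\frS=\cO(\dt)^{W}\otimes\cO(\dt)^{W}$. The passage back from the completion is the real extra input: using the $\Gm$-scaling grading on $\frR=\cO(\frC^{2}_{\dg})^{\dG}$ and the fact that $\frR/\frS_{+}\frR$ is finite-dimensional over $\CC$ (again a consequence of Theorem~\ref{thm:function c part}, via the finiteness of the central fiber of $(\dT\times\dT)\sslash W\to(\dT\sslash W)^{2}$), the $\frS_{+}$-adic completion is identified with the grading completion $\prod_{n}\frR_{n}$, so that the completed isomorphism becomes a degree-wise one and descends. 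That finiteness step is the key lemma your approach has no counterpart of; and reducedness comes out as a corollary of the isomorphism, rather than being an input.
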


In fact,  it is  possible to adapt our methods and prove Theorem~\ref{thm:intro Lie mix}
 directly. With this approach, we do not need to assume  Ansatz~\ref{ans:intro univ aff equiv},
 but can simply appeal to Bezrukavnikov's equivalence \cite{bezrukavnikovTwoGeometricRealizations2016}. 
 Since we do not take this approach in this paper, we include Ansatz~\ref{ans:intro univ aff equiv} in the statement of  Theorem~\ref{thm:intro Lie mix}.

{
\begin{remark}\label{rem:history} The question of reducedness of the scheme of commuting pairs of matrices has a long history, and it is still open. The reducedness of the ring of adjoint-invariant functions on $\frC^{2}_{\dG}, \frC^{2}_{\dg}$ and $\frC^{2}_{\dg,\dG}$ for a general reductive group $\dG$ was also an open question considered by many authors.  It was known by Joseph \cite{joseph1997harish} 
and Etingof-Ginzburg \cite{etingofSymplecticReflectionAlgebras2002} 
that the {\em reduced} ring of $\cO(\frC^{2}_{\frg^{\vee}})^{\dG}$ is isomorphic to $\cO(\dt\times\dt)^{W}$. Prior to our work, the reduceness of $\cO(\frC^{d}_{\dg})^{\dG}$ was known for classical groups (see the next paragraph). We are not aware of results on $\frC^{2}_{\dG}$ or $\frC^{2}_{\dg,\dG}$ other than type $A$. Our results settle these reducedness questions completely and uniformly across all Lie types.

One may wonder about the reducedness of $\cO(\frC^{d}_{\dg})^{\dG}$, where $\frC^{d}_{\dg}$ is the scheme of commuting $d$-tuples in $\dg$, for any $d\ge2$. This is known to be true for classical groups. For $\dg=\gl_{n}$ and $d=2$,  this was proved by 
Gan and Ginzburg \cite{ganAlmostcommutingVarietyDmodules2010};
for general $d$, this was proved independently by Domokos \cite{domokosVectorInvariantsClass2009} 
and Vaccarino \cite{vaccarinoLinearRepresentationsSymmetric2007}. 
Recently, T-H. Chen and B-C.Ng\^o \cite{chenInvariantTheoryCommuting}
proved the case of symplectic Lie algebras for any $d$. More recently, L.Song, X.Xia and J.Xu \cite{songHigherdimensionalChevalleyRestriction} proved the case of orthogonal Lie algebras for any $d$.
\end{remark}
}


\subsection{Further results}
In Section~\ref{ss:additional app}, we use similar techniques to calculate the derived endomorphism rings of two other natural objects in $hh(\cH_{\cG})$ in terms of spectral data. One of the calculations can be interpreted as a derived spherical Hecke algebra, and the other one is the endomorphism ring of the universal Eisenstein series in the genus one automorphic category.

The  methods of this paper lead not only to a calculation of the endomorphisms of objects in the cocenter but to a full description of the cocenter of the universal affine Hecke category. The primary additional inputs are:

\begin{enumerate}
\item The automorphic gluing construction under nodal degenerations of curves~\cite{nadlerAutomorphicGluingFunctor}.
\item The description of nilpotent sheaves on degree zero semistable $G$-bundles on a  genus one curve~\cite{liUniformizationSemistableBundles2021}.
\end{enumerate}

Combined with the methods of this paper, we are able to prove the following to appear in a sequel~\cite{liCocenterAffineHecke}.

For $X$  a smooth projective curve, its Betti automorphic  category
$\Sh_\cN(\Bun_G(X))$ is the dg derived category of complexes of sheaves of $\CC$-modules on the moduli 
of $G$-bundles on $X$.

\begin{conj}\label{thm:betti genus one}
For $E$  a smooth projective genus one curve,
there is a natural equivalence from the cocenter of the universal affine Hecke category to the Betti automorphic  category
\begin{equation*}
\xymatrix{
hh(\cH_\cG) \ar[r]^-\sim & \Sh_\cN(\Bun_G(E))
}
\end{equation*}
\end{conj}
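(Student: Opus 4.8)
The plan is to realize $\Sh_\cN(\Bun_G(E))$ as a Hochschild homology category by degenerating $E$ to a nodal curve and invoking the automorphic gluing functor of \cite{nadlerAutomorphicGluingFunctor}, and then to upgrade the resulting functor to an equivalence by matching two natural stratifications: the Harder--Narasimhan stratification of $\Bun_G(E)$ and the Newton-point semi-orthogonal decomposition of $hh(\cH_\cG)$ from Theorem~\ref{thm:main in intro}.

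First I would set up the degeneration. Choose a family of projective curves over a disk with generic fiber $E$ and special fiber the nodal cubic $E_0$, whose normalization is $\PP^{1}$ with the two marked points $0,\infty$ lying over the node. Using uniformization at the node --- rational functions on $\PP^{1}$ regular away from $\{0,\infty\}$ form the Laurent-polynomial group --- the moduli of $G$-bundles on $\PP^{1}$ with Iwahori level structure at $0$ and at $\infty$ is an affine-flag double quotient, and its category $\Sh_\cN(\Bun_G(\PP^{1};0,\infty))$, regarded as a bimodule over the affine Hecke categories acting by Hecke modifications at the two points, is the regular $\cH_\cG$-bimodule $\cH_\cG$. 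Verifying this identification at the level of the universal-monodromic and nilpotent-singular-support conditions uses exactly the refinements established here, e.g.\ the horocycle description of Theorem~\ref{thm:intro geom coeq J}. Gluing the two points to form the node amounts to coequalizing this bimodule over $\cH_\cG$, so \cite{nadlerAutomorphicGluingFunctor} produces a functor
\[
\Glue:\ hh(\cH_\cG)=hh(\cH_\cG,\cH_\cG)\ \longrightarrow\ \Sh_\cN(\Bun_G(E_0)),
\]
and the gluing construction is designed so that smoothing the node in the family promotes this to a functor $hh(\cH_\cG)\to\Sh_\cN(\Bun_G(E))$.

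Next I would prove $\Glue$ is an equivalence by a stratified argument. Both sides carry a filtration indexed by the same combinatorics: $\Bun_G(E)$ has the Atiyah--Bott/Harder--Narasimhan stratification by instability type, which for a genus one curve is indexed by dominant rational coweights (slopes), while $hh(\cH_\cG)$ has the semi-orthogonal decomposition of Theorem~\ref{thm:main in intro} indexed by Newton points $\nu\in\NP$, and the Newton point matches the HN type. One checks $\Glue$ respects these filtrations, so it suffices to see it is an equivalence on each associated graded piece. For the open, semistable piece $\nu=0$: Theorems~\ref{thm:intro ff} and~\ref{thm:main in intro} give $hh(\cH_\cG)_0\simeq\colim_{J\sne I^{a}}\Sh_\cN(L_J/L_J)$, which by \cite{liUniformizationSemistableBundles2021} is exactly $\Sh_\cN(\Bun_G^{\mathrm{ss},0}(E))$, and $\Glue$ realizes this identification. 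For a general $\nu$, the graded piece $hh(\cH_\cG)_\nu\simeq\colim_{\sigma\subset\frB^\hs_\nu}\Sh_\cN(\cY_{\nu,\sigma})$ is assembled from (twisted) semistable character-sheaf categories of Levi subgroups, while the corresponding HN stratum of $\Bun_G(E)$ is, by HN recollement and parabolic induction, assembled from semistable loci on $E$ for the corresponding Levis; applying \cite{liUniformizationSemistableBundles2021} to the Levis --- with twists and the cosheaf structure on the essential building $\frB^\hs_\nu$ matching the combinatorics of the recollement --- identifies the two, compatibly with $\Glue$. A filtered functor that is an equivalence on each graded piece is an equivalence.

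The main obstacle is the compatibility of $\Glue$ with the nilpotent singular support condition and with the two stratifications simultaneously: one must control how the gluing-and-smoothing operation transports singular support (the subscript $\cN$), and one must match the twisted-parabolic-induction cosheaf on $\frB^\hs_\nu$ with the genuine geometry of unstable $G$-bundles of Newton type $\nu$ on $E$ --- in effect, promoting the numerical coincidence ``Newton points $=$ HN types'' to a functorial identification of recollement data. A secondary difficulty is the uniformization identification of $\Sh_\cN(\Bun_G(\PP^{1};0,\infty))$ with the regular $\cH_\cG$-bimodule at the level of monodromicity and singular-support conditions, which is where the universal-monodromic technology developed in this paper, rather than the classical statements, is required. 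These are precisely the points carried out in the sequel \cite{liCocenterAffineHecke}.
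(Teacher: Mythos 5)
The statement you are trying to prove is presented in the paper as Conjecture~\ref{thm:betti genus one}, not as a theorem: the authors explicitly defer its proof to the sequel~\cite{liCocenterAffineHecke}, naming as the two additional inputs precisely the automorphic gluing functor of~\cite{nadlerAutomorphicGluingFunctor} and the description of nilpotent sheaves on degree-zero semistable $G$-bundles from~\cite{liUniformizationSemistableBundles2021}. Your proposal follows exactly that announced roadmap --- build the gluing functor $hh(\cH_\cG)\to\Sh_\cN(\Bun_G(E))$ via nodal degeneration and uniformization, then upgrade it to an equivalence by matching the Newton-point semi-orthogonal decomposition of Theorem~\ref{thm:main in intro} with the Harder--Narasimhan stratification of $\Bun_G(E)$ and invoking~\cite{liUniformizationSemistableBundles2021} on each Levi --- so it agrees with the authors' stated plan rather than offering an alternative; since the paper contains no proof here, there is nothing to compare beyond noting this agreement, and you have correctly identified (and honestly flagged) the genuine remaining technical burden: compatibility of the gluing with nilpotent singular support and the functorial (not merely numerical) match between the cosheaf on $\frB^\hs_\nu$ and the geometry of unstable bundles.
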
 

We can invoke the spectral calculations of~\cite{ben-zviSpectralIncarnationAffine2017} to deduce the Betti geometric Langlands conjecture in genus one.
Recall for $X$ be a smooth projective curve, there is a natural spectral action on its 
Betti automorphic  category
$\Sh_\cN(\Bun_G(X))$
by the tensor category $\QCoh(\Loc_{\dG} (E))$
of quasi-coherent complexes on  the moduli 
of $\dG$-local systems on $X$.

\begin{cor}[of Conjecture~\ref{thm:betti genus one}]
For $E$  a smooth projective genus one curve,
the Betti geometric Langlands conjecture holds: there is an equivalence of $\QCoh(\Loc_{\dG} (E))$-module categories
\begin{equation*}
\xymatrix{
\Ind\Coh_\cN(\Loc_{\dG} (E)) \ar[r]^-\sim & \Sh_\cN(\Bun_G(E))
}
\end{equation*}
\end{cor}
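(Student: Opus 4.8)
The plan is to obtain the stated equivalence by concatenating three equivalences and then promoting the composite to a module equivalence by tracking the spectral actions through each step. First I would invoke Conjecture~\ref{thm:betti genus one}, which gives an equivalence $hh(\cH_\cG) \isom \Sh_\cN(\Bun_G(E))$. Next, assuming Ansatz~\ref{ans:intro univ aff equiv}, I would use the spectral incarnation of the affine cocenter recorded in~\eqref{intro cocenter equiv}, namely $\Ind\Coh_\cN(Z^{2}_{\dG}) \isom hh(\cH_\cG)$, which follows from~\cite{ben-zviSpectralIncarnationAffine2017}. Finally I would use that a smooth projective genus one curve $E$ over $\CC$ has underlying topological space homotopy equivalent to the two-torus $T^2$; since Betti $\dG$-local systems depend only on the homotopy type of the underlying space, this gives $\Loc_{\dG}(E) \simeq \Loc_{\dG}(T^2) \simeq Z^{2}_{\dG}$, whence $\Ind\Coh_\cN(\Loc_{\dG}(E)) \simeq \Ind\Coh_\cN(Z^{2}_{\dG})$ --- the geometric identification of $Z^{2}_{\dG}$ already recorded above. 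Composing the three yields an equivalence of underlying categories $\Ind\Coh_\cN(\Loc_{\dG}(E)) \isom \Sh_\cN(\Bun_G(E))$.

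The remaining, essential, point is to upgrade this to an equivalence of $\QCoh(\Loc_{\dG}(E))$-module categories. On the automorphic side the $\QCoh(\Loc_{\dG}(E))$-action is assembled from derived Satake and Hecke functors at the points of $E$; on the spectral side it is the tautological tensoring action. The plan is to check compatibility step by step: (i) the equivalence of Conjecture~\ref{thm:betti genus one}, as produced in~\cite{liCocenterAffineHecke} via the automorphic gluing functor of~\cite{nadlerAutomorphicGluingFunctor}, should by construction intertwine the Hecke action on $\Sh_\cN(\Bun_G(E))$ with a natural spectral action on the cocenter $hh(\cH_\cG)$ coming from the monoidal structure of $\cH_\cG$; (ii) the spectral incarnation~\eqref{intro cocenter equiv} should carry this action to the tautological $\QCoh(\Loc_{\dG}(T^2))$-action on $\Ind\Coh_\cN(Z^{2}_{\dG})$, by the comparison of~\cite{ben-zviSpectralIncarnationAffine2017}; and (iii) the topological identification $E \simeq T^2$ is compatible with point-insertion of Hecke operators, because a point of $E$ and a point of $T^2$ induce homotopic maps $\pt \to E \simeq T^2$ and hence the same restriction of local systems. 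Combining (i)--(iii) makes the composite $\QCoh(\Loc_{\dG}(E))$-linear, which is the assertion.

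The hard part is exactly this module-structure verification, and in particular step (i). It is not formal: the spectral action on the genus one automorphic category is built from local Hecke modifications spread out along the curve $E$, whereas the affine cocenter $hh(\cH_\cG)$ only remembers a single ``pair of commuting monodromies''. Reconciling the two requires a precise understanding of how the automorphic gluing functor of~\cite{nadlerAutomorphicGluingFunctor} intertwines Hecke actions with the monoidal structure of $\cH_\cG$, and of the resulting action on $hh(\cH_\cG)$ --- both of which are part of the package established in~\cite{liCocenterAffineHecke} in combination with~\cite{ben-zviSpectralIncarnationAffine2017}. Granting these compatibilities, the composite equivalence is $\QCoh(\Loc_{\dG}(E))$-linear, which is the statement of the corollary.
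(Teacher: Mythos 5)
Your chain of equivalences exactly reconstructs the paper's one-line justification: invoke Conjecture~\ref{thm:betti genus one}, the spectral incarnation $\Ind\Coh_\cN(Z^2_{\dG}) \simeq hh(\cH_\cG)$ (for which you rightly note Ansatz~\ref{ans:intro univ aff equiv} is needed), and the topological identification $\Loc_{\dG}(E) \simeq Z^2_{\dG}$, with the $\QCoh(\Loc_{\dG}(E))$-linearity deferred to~\cite{liCocenterAffineHecke} together with~\cite{ben-zviSpectralIncarnationAffine2017}. This is essentially the same approach as the paper's.
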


%
%
\subsection{Conventions} In the rest of the paper, we will use the following notations.

\sss{} We fix a field $k$ of characteristic $0$ as the coefficient field for our sheaves and categories.

\sss{} We denote by $\St^{L}$ (resp. $\St^{R}$) the $\infty$-category of stable presentable categories with morphisms left adjoints (resp. right adjoints). 

We denote by $\St_k^{L}$ (resp. $\St^{R}_{k}$) the $\infty$-category of stable presentable $k$-linear categories with morphisms left adjoints (resp. right adjoints). In the many body of the paper, we will be working primarily with $\St^{L}_{k}$. By a monoidal category, we will typically mean an algebra object in $\St^{L}_k$.

 \sss{}
 For a complex algebraic stack $X$, we  denote by $\Sh(X)$ the $k$-linear dg derived category of complexes of sheaves in $k$-vector spaces on $X$ under the analytic topology. We will refer to objects in $\Sh(X)$ as {\em sheaves}.

If $X$ is smooth and $\L\subset T^{*}X$ is a conical closed subset, we denote by $\Sh_{\L}(X)$ the full subcategory of $\Sh(X)$ consisting of sheaves with singular support in $\L$. In particular, using $0$ to denote the zero section, $\Sh_{0}(X)$ is the full subcategory of sheaves with locally constant cohomology sheaves.

\sss{}\label{sss: g notation} Let $G$ be a connected reductive group over $\CC$. When needed, we will choose a maximal torus $T$ and a pair of opposite Borel subgroups $B$ and $B^{-}$ containing $T$. Let $U$ and $U^{-}$ be the unipotent radicals of $B$ and $B^{-}$. The quotient $H = B/U$ (the universal Cartan) is canonically independent of the choice of $B$. Let $r=\dim H$ be the rank of $G$. Let $W=W(G,T)$ be the Weyl group of $G$.  Let $\frg, \frt, \frb, \fru,\cdots$ denote the Lie algebras of $G, T, B, U, \cdots$.

We will fix an $\Ad(G)$-invariant non-degenerate symmetric bilinear form on $\frg$ to identify $\frg$ and $\frg^{*}$.

\sss{} For an affine algebraic group $L$ over $\CC$, let $\BB L=[(\Spec \CC)/L]$ be its classifying space, regarded as an Artin stack. 

%


\subsection{Acknowledgements}
We thank David Ben-Zvi and Quoc P. Ho for inspiring discussions, and Tsao-Hsien Chen, Peter Haine, Nick Rozenblyum and James Tao for generous technical help. We thank Xuhua He especially for providing a key geometric ingredient needed in this paper in the form of \cite{heGENERALIZATIONCYCLICSHIFT}. We thank Pavel Etingof for comments on the history of the problem of reducedness for commuting schemes.

PL was partially supported by the National Natural Science Foundation of China (Grant No. 12101348).
DN was partially supported by NSF grant DMS-2101466. ZY was partially supported by the Simons Investigatorship and the Packard Fellowship.


\section{Universal affine Hecke category}

In this section, we provide more details about the universal affine Hecke category as introduced in Section~\ref{sss:intro univ Hk}. But here and in the rest of the paper, unless otherwise stated, we follow the setup of Section~\ref{sss: g notation}, and work with $G$ a general {\em  connected complex reductive group}.


\subsection{Hecke categories}

\subsubsection{Finite Hecke categories}

Consider the quotient stack $U\bs G/U$,  and its convolution diagram
\begin{equation*}
\xymatrix{
&\ar[dl]_-{p_1} U\bs G/U \times U\bs G/U \ar[dr]^-{p_2} & \ar[l]_-\d U\bs G \times^U  G/U  \ar[r]^-\pi & U\bs G/U \\
U\bs G/U  && U\bs G/U   &
}
\end{equation*}

Note $\delta$ is smooth (a base change of the diagonal $\BB U\to \BB U \times \BB U$). The map $\pi$ is given by the multiplication on $G$. It is a base change of the projection $\BB U\to \BB G$ with fibers isomorphic to $G/U$. It behaves like a proper map for $H$-monodromic sheaves on the source. More precisely, $\pi$ has a factorization
\begin{equation*}
\xymatrix{
\pi:U\bs G \times^U  G/U  \ar[r]^-{h} &  U\bs G \times^B  G/U  \ar[r]^-{\pi'}  & U\bs G/U 
}
\end{equation*}
where $h$ is an $H$-torsor (a base change of $\BB U \to \BB B$  with fibers isomorphic to $H= B/U$), 
 and $\pi'$ is proper (a base change of the projection $\BB B\to \BB G$ with fibers isomorphic to $G/B$).
 In general, for any $H$-torsor $p:E\to X$, and a sheaf  $\cF \in \Sh(E) $ that is $H$-monodromic, so locally constant along the fibers of $p$, we have a canonical isomorphism
 \beq\label{push H torsor}
 p_!\cF \simeq p_*\cF[-r]
 \eeq
 where as usual $r=\dim H$ is the rank of $G$.
Indeed, we write $H(\CC)=H_{>0}H_{c}$ where  $H_{c}$ is the compact real form of $H$ and $H_{>0}$ the neutral component of the split real form of $H$. Then $p$ factors as
\begin{equation*}
\xymatrix{
p:E \ar[r]^{p_{0}} & E/H_{>0} \ar[r]^{p_{c}} & X.
}
\end{equation*}
Now $p_{c}$ is proper and $H_{>0}$ is contractible so $\cF$ descends to $\ov\cF\in \Sh(E/H_{>0})$. We have $p_{!}\cF=p_{c!}p_{0!}\cF=p_{c*}p_{0!}(p_{0}^{*}\ov\cF)\simeq p_{c*}(\ov\cF\ot p_{0!}k)$,  and $p_{*}\cF\simeq p_{c*}\ov\cF$. The relative fundamental class of $p_{0}$ gives a canonical isomorphism $p_{0!}k\simeq k[-r]\in \Sh(E/H_{>0})$, hence a natural isomorphism $p_{!}\cF\simeq p_{*}\cF[-r]$.

Returning to the convolution diagram, for any sheaf  $\cF \in \Sh(U\bs G \times^U  G/U) $ that is $H$-monodromic for the action $t\cdot (g_{1},g_{2})=(g_{1}t^{-1},tg_{2})$ (for $t\in H$, $g_{1},g_{2}\in G$), so locally constant along the fibers of $h$, we have canonical isomorphisms
 \begin{equation*}
 \pi_!\cF\simeq  \pi'_! h_!\cF \simeq  \pi'_! h_*\cF[-r] \simeq  \pi'_* h_*\cF[-r] \simeq  \pi_*\cF[-r].
 \end{equation*}
 Thus for such sheaves, the pushforward $\pi_!$ satisfies all the base change identities of a proper map.
 
 Consider the closed embedding of the unit coset
\begin{equation*}
\xymatrix{
U\bs B/U  \ar[r]^-u & U\bs G/U. 
}
\end{equation*}
 Let $q_{H}: U\bs B/U\to H$ be the natural projection, which is a $U$-gerbe.
Let  $\exp:\frh \to H$ be the universal cover, and introduce the universal local system
\begin{equation*}
\cL_\univ =  q^{*}_{H}\exp_! \cD_{\frh} \in \Sh_0(U\bs B/U)
\end{equation*} 
where $\cD_\frh  \simeq k_{\frh}[r] \in \Sh_0(\frh)$ is the Verdier dualizing sheaf. Note that $\cL_{\univ}$ is concentrated in degree $-r$ with stalks isomorphic to the group algebra $k[\xcoch(H)]$.

\begin{defn} The {\em universal finite  Hecke category} of $G$ is the  monoidal category of 
$H$-bimonodromic sheaves on $U\bs G/U$ (under the left and right translations of $H$)
\begin{equation*}
\cH_{G} =  \Sh_{\bimon}(U\bs G/U) 
\end{equation*}
equipped with convolution
\begin{equation*}
\xymatrix{
\star: \cH_{G}\otimes \cH_{G} \ar[r] &  \cH_{G}
&
\cF_1\star \cF_2 = \pi_!\delta^*(p_1^*\cF_1 \boxtimes p_2^*\cF_2)
}\end{equation*}
and unit object $e = u_!\cL_\univ$.
\end{defn}

It is easy to see that $H$-bimonodromic sheaves on $U\bs G/U$ are exactly 
$U\times U$-equivariant sheaves with nilpotent singular support when pulled back to $G$. Therefore we also denote $\Sh_{\bimon}(U\bs G/U)$ by $\Sh_\cN(U\bs G/U)$.

\begin{ex}
For the torus $H$, we have $\cH_{H} = \Sh_0(H)$ the dg derived category of locally constant sheaves on $H$. Convolution is simply the pushforward $\cL_1 \star \cL_2 = m_!(\cL_1 \boxtimes \cL_2)$ along the multiplication map $m:H \times H\to H$, and  
the unit object is the universal local system $e = \cL_\univ = \exp_!\cD_\frh$.
\end{ex}

\begin{remark}
We can also naturally regard $\cH_{G}$ as a monoidal category in $\cH_{H}$-bimodule categories. 
\end{remark}


\sss{Loop group and parahorics}
Let $\cG = G\lr{t}$ be the loop group of $G$, $\cI\subset \cG$ the Iwahori subgroup given by the preimage of $B$ under the reduction mod $t$ map $G\tl{t}\to G$. Let $I^a$ be the set of simple (affine) roots of $\cG$ with respect to $\cI$, and $I\subset I^{a}$  the subset of simple roots of $G$ with respect to $B$.

Let $\tilW=\xcoch(H)\rtimes W$ be the extended affine Weyl group of $\cG$ and $W^{a}\subset \tilW$  the affine Weyl group generated by affine simple reflections. We think of $\tilW$ as acting on the standard apartment $\frA=\xcoch(T)_{\RR}$. For a simple reflection $s\in W^{a}$, let $\a_{s}\in I^{a}$ denote the corresponding affine simple root.

A subset $J\subset I^{a}$ is of {\em finite type} if the subgroup $W_{J}$ generated by simple reflections $s$ for $\a_{s}\in J$ is finite.  We use the notation $J\sft I^{a}$ to mean that $J$ is a finite type subset of $I^{a}$. When $G$ is almost simple, $J\sft I^{a}$ simply means that $J\sne I^{a}$.
 
Given $J \sft I^a$, let $\cP_J\subset \cG$ be the standard parahoric subgroup containing $\cI$ of type $J$, i.e., if $\cP^u_J \subset \cP_J$ denotes its pro-unipotent radical, 
and $L_J = \cP_J/\cP^u_J$ its Levi quotient, then $J$ are the simple roots of $L_J$. 
Note that $B_J = \cI/(\cI \cap \cP_J^u)$ is a Borel subgroup of $L_J$, with unipotent radical $U_J = \cI^u/(\cI^u \cap \cP_J^u)$.

When $J= \vn$, we have $\cI = \cP_\vn $ and $H = L_\vn$,  and we write $\cI^u = \cP^u_\vn$. When $J=I$, we have $\cP_{I}=G\tl{t}$, $L_{I}=G$, $B = B_I$ and $U = U_I$.

We identify $\tilW$ with $N_{\cG}(T)/T\tl{t}$. For $w\in \tilW$ and any lift $\dot w\in N_{\cG}(T)$, the subspace $\cI\dot w \cI\subset \cG$ is independent of the choices of $T$ and $\dot w$, and we denote it by $\cG_{w}$. Let $\le $ denote the Bruhat order on $W^{a}$ extended to $\tilW$ by declaring $w_{1}$ and $w_{2}$ are incomparable if they are in different cosets of $W^{a}$.  Let $\cG_{\le w}=\cup_{w'\le w}\cG(w')$. It is well-known that $\cG_{w}/\cI\subset \cG/\cI$ is isomorphic to an affine space of dimension $\ell(w)$, and its closure is $\cG_{\le w}/\cI$.

\subsubsection{Affine Hecke categories}
As in the finite-dimensional case, 
consider the quotient stack $\cI^u\bs \cG/\cI^u$,  and its convolution diagram
\begin{equation*}
\xymatrix{
&\ar[dl]_-{p_1} \cI^u\bs \cG/\cI^u \times \cI^u\bs \cG/\cI^u\ar[dr]^-{p_2} & \ar[l]_-\d \cI^u\bs \cG \times^{\cI^u} \cG/\cI^u  \ar[r]^-\pi & \cI^u\bs \cG/\cI^u \\
\cI^u\bs \cG/\cI^u  && \cI^u\bs \cG/\cI^u&
}
\end{equation*}

Note $\delta$ is pro-smooth (a base change of the diagonal $\BB\cI^u\to \BB\cI^u \times \BB\cI^u$), and $\pi$ has the similar ``almost" ind-proper property as in the finite-dimensional case for $H$-monodromic sheaves. 
More precisely, consider the factorization
\begin{equation*}
\xymatrix{
\pi:\cI^u\bs \cG \times^{\cI^u}  \cG/\cI^u  \ar[r]^-{h} &   \cI^u\bs \cG \times^B   \cG/\cI^u  \ar[r]^-{\pi'}  & \cI^u \bs \cG/\cI^u
}
\end{equation*}
where $h$ is an $H$-torsor (a base change of $\BB\cI^u \to \BB\cI$  with fibers isomorphic to $H\simeq \cI/\cI^u$), 
 and $\pi'$ is ind-proper (a base change of the projection $\BB\cI\to \BB\cG$ with fibers isomorphic to the affine flag variety $\cG/\cI$).
For any sheaf  $\cF \in \Sh(\cI^u\bs \cG \times^{\cI^u}  \cG/\cI^u) $ that is $H$-monodromic with respect to the action $t\cdot (g_{1},g_{2})=(g_{1}t^{-1},tg_{2})$, so locally constant along the fibers of $h$, we have a canonical isomorphism by \eqref{push H torsor}
\begin{equation*}
h_!\cF \simeq   h_*\cF[-r].
\end{equation*}
On the other hand, if in addition $\cF$ is supported on $\cI^u\bs \cG_{\le w_{1}} \times^{\cI^u}  \cG_{\le w_{2}} /\cI^u $ for some $w_{1},w_{2}\in \tilW$, then since $\pi'$ is proper when restricted to $\cI^u\bs \cG_{\le w_{1}} \times^{\cI}  \cG_{\le w_{2}} /\cI^u $, we have canonical isomorphisms
 \begin{equation*}
 \pi_!\cF\simeq  \pi'_! h_!\cF \simeq  \pi'_! h_*\cF[-r] \simeq  \pi'_* h_*\cF[-r] \simeq  \pi_*\cF[-r]
 \end{equation*}
Thus for $H$-monodromic sheaves $\cF$ suppored on some $\cI^u\bs \cG_{\le w_{1}} \times^{\cI^u}  \cG_{\le w_{2}} /\cI^u $, the pushforward $\pi_!$ satisfies all the base change identities of a proper map.
 
 Consider the closed embedding of the unit coset
\begin{equation*}
\xymatrix{
\cI^u \bs \cI/\cI^u  \ar[r]^-u & \cI^u\bs \cG/\cI^u. 
}
\end{equation*}
Let $q_{H}: \cI^u\bs \cI/\cI^u\to H$ be the natural projection, which is a $\cI^{u}$-gerbe. 
Introduce the universal local system
\begin{equation*}
\cL_\univ =  q^{*}_{H}\exp_! \cD_{\frh} \in \Sh_0(\cI^u\bs \cI/\cI^u)
\end{equation*} 
where $\cD_\frh  \simeq k_{\frh}[r] \in \Sh_0(\frh)$ is the Verdier dualizing sheaf.

\begin{defn}  The {\em universal affine Hecke category} of $G$ is the colimit of $H$-bimonodromic sheaves on $\cI^u \bs \cG_{\le w}/\cI^u$ for $w\in \tilW$ 
\begin{equation*}
\cH_{\cG}=\colim_{w\in \tilW}\Sh_\bimon(\cI^u \bs \cG_{\le w}/\cI^u) 
\end{equation*}
with respect to the full embeddings $\Sh_\bimon(\cI^u \bs \cG_{\le w_{1}}/\cI^u)\incl  \Sh_\bimon(\cI^u \bs \cG_{\le w_{2}}/\cI^u)$ whenever $w_{1}\le w_{2}$.

It is equipped with a monoidal structure given by convolution
\begin{equation*}
\xymatrix{
\star: \cH_{\cG}\otimes \cH_{\cG} \ar[r] &  \cH_{\cG}
&
\cF_1\star \cF_2 = \pi_!\delta^*(p_1^*\cF_1 \boxtimes p_2^*\cF_2)
}\end{equation*}
and unit object $e = u_!\cL_\univ$.
\end{defn}

\begin{ex}
For the torus $H$, we have $\cH_{\cH} = \Sh_0(H \times \xcoch(H))$ the dg derived category of locally constant sheaves. Convolution is simply the pushforward $\cL_1 \star \cL_2 = m_!(\cL_1 \boxtimes \cL_2)$ along the multiplication and addition map $m:(H \times \xcoch(H)) \times (H \times \xcoch(H)) \to H \times \xcoch(H)$, and  
the unit object is the universal local system $e = \cL_\univ = \exp_!\cD_\frh$ supported on $H\times \{0\}$.
\end{ex}

\begin{remark}
As with $\cH_{G}$, we can also naturally regard $\cH_{\cG}$ as a monoidal category in $\cH_{H}$-bimodule categories. 
\end{remark}


\subsection{Whittaker objects}\label{ss:whit}

Fix a maximal torus $T\subset B \subset G$, and let $B^-\subset G$ be the opposite Borel subgroup, 
and $U^- \subset B^-$ its unipotent radical.

\subsubsection{Finite case}\label{sss:Whit G}

Consider the diagram 
\begin{equation*}
\xymatrix{
\AA^1 & \ar[l]_-\chi U^- \ar[r]^-{r_-} & U\bs G/U 
}
\end{equation*}
where $r_-$ is induced by the inclusion $U^- \subset G$, and $\chi: U \to U/[U, U] \to \AA^1$ is a non-degenerate character (i.e., nontrivial on each simple root group).

Consider the natural factorization of $r_{-}$
\begin{equation*}
\xymatrix{
r_{-}: U^-  \ar[r]^-{i_{-}} & G/U \ar[r]^-{q} & U\bs G/U 
}
\end{equation*} 
Note $i_{-}$ is a closed embedding transverse to the $B$-orbits in $G/U$, and $q$ is smooth (a base change of $pt \to \BB U$). 
Thus for any $\cF\in \Sh(  U\bs G/U)$ that is left $H$-monodromic, so locally constant along the left $H$-orbits in $U\bs G/U$, 
we have canonical isomorphisms
\begin{equation*}
r_-^! \cF \simeq i^!_{-} q^!\cF \simeq i^*_{-}q^*\cF[2\dim r_{-}] \simeq r_-^* \cF[2\dim r_{-}]
\end{equation*}

Let  $\varphi_{\chi, 1}: \Sh(U^-) \to k\lmod$ denote the vanishing cycles at the identity $1\in U^-$ with respect  to the non-degenerate character $\chi:U^-\to \AA^1$.

\begin{defn} \label{def: univ fin whit}
\begin{enumerate}
\item The Whittaker functor is  the composition
\begin{equation*}
\xymatrix{
\cW_G: \cH_G\ar[r] &  k\lmod
&
\cW_G(\cF) = \varphi_{\chi, 1}  r_-^!\cF[-\dim r_{-}].
}\end{equation*}

 \item The universal finite Whittaker sheaf $\Wh_G \in \cH_G$ is the object corepresenting the Whittaker functor
 \begin{equation*}
\xymatrix{
\cW_G(\cF) \simeq \Hom_{\cH_G}(\Wh_G, \cF), & 
\text{for all } \cF \in \cH_G.
}\end{equation*}

\end{enumerate}
\end{defn}

\begin{remark}  The shift in the definition of $\cW_{G}$ is chosen to make  $\cW_{G}$ exact for the perverse $t$-structure on $\cH_{G}$. In particular, for the unit object $e\in \cH_{G}$,  we have $\cW_{G}(e)\cong  k[\xcoch(H)]$ is concentrated in degree $0$.
\end{remark}

\subsubsection{Coalgebra structure on  Whittaker sheaf}	
\label{sec:Whittaker_coalgebra}
Here we define the natural coalgebra structure on $\Wh_G$. 

First,  introduce the analogous Whittaker functor on $U\backslash G \times^U G/U$:
\begin{equation*}
\xymatrix{
	\cW_{U\backslash G \times^U G/U}: Sh(U\backslash G \times^U G/U) \ar[r] &  k\lmod &
\cW_{U\backslash G \times^U G/U}(\cF) = \varphi_{\chi+ \chi, 1}  r_-^!\cF[-\dim r_{-}].
}\end{equation*}
where 
\begin{equation*}
\xymatrix{
\AA^1 & \ar[l]_-{\chi+ \chi} U^- \times U^-  \ar[r]^-{r_-} & U\backslash G \times^U G/U
}
\end{equation*}

Now observe the Whittaker functor $\cW_G$ is naturally lax monoidal: for any $\cF_1, \cF_2 \in \cH_G$, we have a natural map:
\begin{equation*}
\begin{gathered}
	\cW_G(\cF_1) \otimes \cW_G(\cF_2) \simeq \cW_{G \times G}(\cF_1 \boxtimes \cF_2) \simeq  
\cW_{U\backslash G \times^U G/U}(\delta^*(\cF_1 \boxtimes \cF_2))  \\
 \longrightarrow \cW_{U\backslash G \times^U G/U}(\pi^! \pi_! \delta^*(\cF_1 \boxtimes \cF_2)) \simeq \cW_G(\cF_1 \star \cF_2)
\end{gathered}
\end{equation*}

In other words, we have a natural transformation of functors 
\begin{equation*} 
\cW_G \boxtimes \cW_G \to \cW_G \circ m : \cH_G \otimes \cH_G \to k\lmod 
\end{equation*}
This induces a map between co-representing objects:
\beq 
\label{eq:map_alpha}
\alpha : m^\ell(\Wh_G) \longrightarrow \Wh_G \otimes \Wh_G 
\eeq
By adjunction, we obtain a map:
\begin{equation*}
\beta : \Wh_G \longrightarrow \Wh_G \star \Wh_G
\end{equation*}
which defines a comultiplication on $\Wh_G$. 

The counit and  coherences can be constructed analogously.
 In fact, 
	the full additive $\infty$-subcategory of $\cH_G$ generated by the monoidal unit $e$ and Whittaker object $\Wh_G$ is closed under convolution and in fact a  classical category (i.e.~equivalent as  an $\infty$-category to a discrete category, since its Homs are  concentrated in degree $0$). The coalgebra structure on $\Wh_G$ comes from a coalgebra structure  in this classical subcategory, and thus its    coherences are  all strictly determined.

\subsubsection{Microlocal description for Whittaker sheaf}

We will not need the following microlocal interpretation but mention it for conceptual clarity. We will use an analogue for character sheaves discussed in Section~\ref{ss:FS Wh} below.

Consider the cotangent bundle $T^*(G/U)$, and note its fiber at the identity coset $U/U \in G/U$ is naturally isomorphic to $(\frg/\fru)^*$. Thus the differential $d\chi:\fru^- \to \AA^1$ gives a covector 
\begin{equation*}
\xymatrix{
\xi: \frg/\fru \ar[r] &  \frg/\frb \simeq \fru^- \ar[r]^-{d\chi} &  \AA^1.
}\end{equation*}

Let  $\Xi: \cH_G\to k\lmod$ denote the $*$-pullback along $q:G/U \to U \bs G/U$, followed by the microstalk at $\xi\in T_{U/U}^*(G/U)$ (normalized so that the microstalks  of perverse sheaves are concentrated in degree $0$).

The following is a standard calculation. For a general version in Betti Geometric Langlands, see~\cite{nadlerWhittakerFunctionalShifted}.

\begin{lemma}
The Whittaker functor is naturally isomorphic to the microstalk functor
\begin{equation*}
\xymatrix{
\cW_G \simeq \Xi: \cH_G\ar[r] &  k\lmod .
}
\end{equation*}
\end{lemma}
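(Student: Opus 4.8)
The statement identifies the Whittaker functor $\cW_G$ with the microstalk functor $\Xi$ at the covector $\xi \in T^*_{U/U}(G/U)$. My plan is to reduce this to the standard fact that vanishing cycles of a function on a smooth variety, taken at a point where the function has an isolated nondegenerate (or more generally, suitably transverse) critical point relative to the singular support, compute a microstalk. First I would unwind both functors on $\cH_G = \Sh_\cN(U\bs G/U)$ by $*$-pulling back along the smooth map $q : G/U \to U\bs G/U$; since $\cF$ is left $H$-monodromic, the isomorphism $r_-^!\cF \simeq r_-^*\cF[2\dim r_-]$ recorded just before the statement lets me replace $\cW_G(\cF) = \varphi_{\chi,1} r_-^!\cF[-\dim r_-]$ with $\varphi_{\chi,1}$ applied to the $*$-restriction of $q^*\cF$ along $i_- : U^- \incl G/U$, up to shift. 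So both sides are expressed as operations on $q^*\cF \in \Sh(G/U)$, whose singular support is contained in the global nilpotent cone $\cN \subset T^*(G/U)$ (the union of conormals to $B$-orbits).

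Next I would invoke the microlocal characterization of vanishing cycles: for a function $f$ on a smooth variety $X$ and a sheaf $\cG$ with $SS(\cG) \subset \Lambda$, if the graph of $df$ meets $\Lambda$ transversely (or at least only at the point over which we take vanishing cycles, in an isolated way), then $\varphi_f(\cG)$ at that point is naturally the microstalk of $\cG$ at the covector $df$ at that point — this is, e.g., the Kashiwara--Schapira picture relating vanishing cycles and the microlocalization functor. Here $X = G/U$, the function is (an extension of) $\chi$ along the transverse slice $U^- \subset G/U$, and the relevant covector at $U/U$ is exactly $\xi$, the image of $d\chi$ under $(\frg/\fru)^* \simeq \fru^- \xleftarrow{} \frg/\frb$. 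The key geometric input I need is that $\xi$ is a \emph{smooth point of $\cN$ lying off all conormals except the big-cell one}, equivalently that $(U/U, \xi)$ is a nondegenerate covector for $\cN$; this is the classical statement that the Whittaker covector is generic, and it is what forces the microstalk to be concentrated in a single degree and makes the transversality hypothesis for vanishing cycles hold. The normalization shifts on both functors (the $[-\dim r_-]$ in $\cW_G$, and the perverse normalization of the microstalk) are arranged precisely so that no residual shift appears.

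Concretely, the steps in order: (1) pull back along $q$ and use $H$-monodromicity to rewrite $\cW_G$ purely in terms of $q^*\cF$, $i_-$, $\chi$, and $\varphi_{\chi,1}$; (2) recall $SS(q^*\cF) \subset \cN$ and identify $T^*_{U/U}(G/U) \cong (\frg/\fru)^*$, locating $\xi$ and checking it is a generic (smooth, off-diagonal) point of $\cN$; (3) observe that restriction to the transverse slice $U^-$ followed by $\varphi_\chi$ is, by the local structure of $\cN$ near $\xi$ (a single smooth Lagrangian branch, to which $U^-$ is transverse and along which $\chi$ restricts to a Morse-type function), computed by the microstalk at $\xi$ — citing the standard vanishing-cycles/microlocalization comparison; (4) match shifts and the functoriality in $\cF$ to upgrade the pointwise isomorphism to a natural isomorphism of functors on $\cH_G$; and (5) remark that one may alternatively just cite the general Betti Geometric Langlands computation of \cite{nadlerWhittakerFunctionalShifted}. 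The main obstacle is Step (3): making the transversality/genericity of $\xi$ with respect to $\cN$ precise enough that the microlocal characterization of vanishing cycles applies cleanly, and being careful that the slice $U^-$ (rather than a generic linear slice) is the correct transverse to use — this is exactly the classical normalization behind Whittaker genericity, but spelling it out for the stack $U\bs G/U$ and the nilpotent singular support condition requires a little care.
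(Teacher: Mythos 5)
The paper does not prove this lemma: it labels it ``a standard calculation'' and points to \cite{nadlerWhittakerFunctionalShifted}, so there is no in-paper argument to compare against. Your outline is exactly what that standard calculation consists of: pass to $q^*\cF$ on $G/U$, identify $SS(q^*\cF)$ inside the conormal variety $\cN = \bigcup_w T^*_{BwU/U}(G/U)$, locate $\xi$ in the conormal to the closed orbit $B/U$, and then compare vanishing cycles along a transverse test function to the microstalk.

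One thing worth making explicit in your step (2)/(3), beyond what you wrote: the reason $\xi$ is a smooth point of $\cN$, and the reason vanishing cycles there are concentrated in one degree, is that under the invariant form $d\chi \in (\fru^-)^*$ corresponds to a \emph{regular} nilpotent in $\fru$ (this is where nondegeneracy of $\chi$ enters). Regularity is what guarantees $\xi \notin T^*_{BwU/U}(G/U)$ for $w \neq e$, i.e.\ the only component of $\cN$ through $\xi$ is the conormal to $B/U$, so that the graph of the test function is transverse to $\cN$ at a single smooth point and the Kashiwara–Schapira comparison of vanishing cycles with microstalks applies. You have correctly flagged the transversality of the nonlinear slice $U^-$ as the delicate point; spelling out the regularity argument above is precisely what resolves it. Otherwise your plan is sound and matches the intended ``standard calculation.''
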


\subsubsection{Affine case}\label{sss:aff whit}

Consider the natural closed embedding
\begin{equation*}
\xymatrix{
i: U\bs G/U \ar[r] & \cI^u\bs \cG/\cI^u 
}
\end{equation*}
induced by the inclusion of constant loops $G\subset \cG$.

\begin{defn}  \label{def: univ whit}
\begin{enumerate}
\item The affine Whittaker functor is  the composition
\begin{equation*}
\xymatrix{
\cW_\cG: \cH_\cG\ar[r] &  k\lmod &
\cW_{\cG}(\cF) = \cW_G (i^!\cF).
}\end{equation*}

 \item The universal affine Whittaker sheaf $\Wh_\cG \in \cH_\cG$ is the object corepresenting the affine Whittaker functor
 \begin{equation*}
\xymatrix{
\cW_\cG(\cF) \simeq \Hom_{\cH_\cG}(\Wh_\cG, \cF), & 
\text{for all } \cF \in \cH_\cG.
}\end{equation*}

\end{enumerate}
\end{defn}

\begin{lemma}\label{l:const loops}
The universal affine Whittaker sheaf is the pushforward of the finite Whittaker finite sheaf
\begin{equation*}
\xymatrix{
\Wh_\cG \simeq i_!\Wh_G
}
\end{equation*}
In particular, for the unit object $e\in \cH_{\cG}$, we have $\cW_{\cG}(e)\cong k[\xcoch(H)]$.
\end{lemma}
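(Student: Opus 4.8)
The statement to prove is Lemma~\ref{l:const loops}: that $\Wh_\cG \simeq i_!\Wh_G$, where $i: U\bs G/U \hookrightarrow \cI^u\bs\cG/\cI^u$ is the closed embedding induced by the constant loops $G\subset \cG$. The strategy is to identify corepresenting objects by chasing adjunctions. By definition, $\Wh_\cG$ corepresents $\cW_\cG = \cW_G\circ i^!: \cH_\cG \to k\lmod$, while $\Wh_G$ corepresents $\cW_G: \cH_G\to k\lmod$. So it suffices to exhibit, for every $\cF\in\cH_\cG$, a natural equivalence
\begin{equation*}
\Hom_{\cH_\cG}(i_!\Wh_G, \cF) \simeq \cW_G(i^!\cF) \simeq \Hom_{\cH_G}(\Wh_G, i^!\cF),
\end{equation*}
and the right-hand identification is exactly the defining property of $\Wh_G$. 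Thus the whole lemma reduces to the adjunction $(i_!, i^!)$, i.e.\ to checking that $i_!: \cH_G\to\cH_\cG$ is left adjoint to $i^!: \cH_\cG\to\cH_G$ \emph{as functors between the monodromic Hecke categories}, not merely on the ambient sheaf categories.

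\textbf{Key steps.} First I would record that on the level of all sheaves, $i_! \dashv i^!$ is the standard adjunction for the closed embedding $i$ of (pro-)stacks; this is formal. Second, I need to verify that both functors preserve the monodromicity (equivalently, the nilpotent singular support) conditions defining $\cH_G$ and $\cH_\cG$, so that the adjunction restricts to these full subcategories. For $i_!$: pushing forward a bimonodromic sheaf along the closed embedding of constant loops lands in the relevant category since $i_!$ of a $U\times U$-equivariant nilpotent-singular-support sheaf on $U\bs G/U$ is supported on the constant-loop locus and remains equivariant and nilpotent for the $\cI^u$-action — this uses that $U\bs G/U \subset \cI^u\bs\cG/\cI^u$ is closed and the $H$-actions are compatible. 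For $i^!$: one checks that $i^!$ of a bimonodromic sheaf on $\cI^u\bs\cG/\cI^u$ is again bimonodromic on $U\bs G/U$; since $i$ is $H\times H$-equivariant for the left/right translations, $i^!$ commutes with the monodromy conditions. Third, since $\cH_\cG = \colim_w \Sh_\bimon(\cI^u\bs\cG_{\le w}/\cI^u)$ is a colimit over $\tilW$, I would observe that $i$ factors through $\cI^u\bs\cG_{\le w}/\cI^u$ for any $w$ with $\cG_{\le w}\supset G$ (e.g.\ $w$ the longest element of $W\subset\tilW$), so $i_!\Wh_G$ is a well-defined object of $\cH_\cG$ and the Hom-computation is unambiguous. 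Once the restricted adjunction is in place, the chain of equivalences above proves $i_!\Wh_G$ corepresents $\cW_\cG$, hence $\Wh_\cG\simeq i_!\Wh_G$ by uniqueness of corepresenting objects. Finally, for the last sentence: $\cW_\cG(e) = \cW_G(i^! e)$, and since $e = u_!\cL_\univ\in\cH_\cG$ is the pushforward from the unit coset which lies in the constant loops, $i^! e \simeq e_G$ (the finite unit object, up to the appropriate shift built into the conventions), so $\cW_\cG(e)\cong\cW_G(e_G)\cong k[\xcoch(H)]$ by the remark following Definition~\ref{def: univ fin whit}.

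\textbf{Main obstacle.} The genuinely nontrivial point is the interplay of $i^!$ with the ``almost ind-proper'' behavior of the convolution maps and the passage to the colimit: one must be careful that $i^!$, a $!$-restriction, is the correct functor appearing in $\cW_\cG$ (as opposed to $i^*$), and that the adjunction $(i_!,i^!)$ is genuinely an adjunction of functors of stable $\infty$-categories at the level of the monodromic subcategories, which requires knowing these subcategories are stable under the relevant operations — a point that is essentially local-to-global and follows from the singular-support formalism, but deserves to be stated carefully. A secondary subtlety is that $\cW_G$ itself is defined with a vanishing-cycles/microlocal ingredient (via $r_-^!$ and $\varphi_{\chi,1}$); however, since $i$ is an inclusion of constant loops transverse to nothing problematic, $i^!$ commutes with the finite-group-theoretic correspondence defining $\cW_G$, so no compatibility of vanishing cycles with $i^!$ beyond base change is needed. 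I expect the proof to be short once these functoriality points are dispatched.
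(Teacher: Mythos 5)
Your argument is correct and is essentially the paper's proof: both reduce the claim to the $(i_!,i^!)$ adjunction via the chain $\cW_\cG(\cF)=\cW_G(i^!\cF)\simeq \Hom_{\cH_G}(\Wh_G,i^!\cF)\simeq \Hom_{\cH_\cG}(i_!\Wh_G,\cF)$. You additionally spell out the functoriality checks (that the adjunction restricts to the bimonodromic subcategories and is compatible with the colimit presentation of $\cH_\cG$) and the computation $i^!e\simeq e_G$ that the paper leaves implicit, but the core adjunction step is identical.
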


\begin{proof}
By adjunction,
\begin{equation*}
\cW_\cG(\cF) = 
 \cW_G (i^!\cF) 
\simeq  \Hom_{\cH_G}(\Wh_G, i^!\cF) \simeq 
\Hom_{\cH_G}(i_! \Wh_G, \cF)
\end{equation*}
\end{proof}

Note that $i_!$ is monoidal, and therefore $\Wh_\cG$ has the induced coalgebra structure from $\Wh_G$.


\subsection{Langlands duality}\label{s:ld}

In Section~\ref{s:comm}, we will deduce spectral consequences of our automorphic results on the universal affine Hecke category  $\cH_{\cG}$ and universal affine Whittaker sheaf $\Wh_\cG$. 
We only introduce Langlands duality and the spectral side at this juncture, rather than in  Section~\ref{s:comm}, as a  conceptual guide 
for our automorphic arguments. With the exception of the similarly placed Section~\ref{ss:spec realization}, all of our results until Section~\ref{s:comm} will take place on the automorphic side.

Let $\dG$ be the dual group of $G$, viewed as a split group over $k$. Let $\dB\subset \dG$ be the distinguished Borel subgroup,
$U^\vee \subset \dB$ its unipotent radical, 
and $\dH = \dB/U^\vee$ its universal Cartan. We may identify the Weyl group of $\dG$ with $W$.

Recall  the canonical identification $\dB/\dB \simeq \wt \dG/\dG$ where $\wt \dG/\dG$ is the Grothendieck-Springer stack of pairs $(g, E)$ of an element $g\in \dG$, and a Borel subgroup $E \subset \dG$, such that $g\in E$, all up to conjugation. 
Recall under the above identification, the map of adjoint quotients $\dB/\dB \to \dG/\dG$ corresponds to 
the Grothendieck-Springer map $\wt \dG/\dG \to  \dG/\dG$ that forgets the Borel subgroup $E \subset \dG$.

The projection $\dB\to \dH$ factors through $\dB/\dB \to \dH$, which corresponds to the projection $\wt \dG/\dG \to  \dH$ that takes a pair $(g, E)$ to the class $[g] \in E/E^u \simeq \dH$.

\begin{defn}
\begin{enumerate}
\item
The universal Steinberg stack is the derived fiber product of adjoint quotients
\begin{equation*}
\St_{\dG} = \dB/\dB \times^{\bR}_{\dG/\dG} \dB/\dB
\end{equation*}

\item The universal spectral affine Hecke category is the monoidal convolution category
\begin{equation*}
\Ind\Coh(\St_{\dG})  = \Ind\Coh(\dB/\dB \times^{{\bR}}_{\dG/\dG} \dB/\dB)
\end{equation*} 

\end{enumerate}
\end{defn}
\begin{remark} The smallness of the Grothendieck alteration $\wt \dG\to \dG$ implies that the underived fiber product $ \dB/\dB \times_{\dG/\dG} \dB/\dB$ has the expected dimension zero. Since $\dG/\dG$ and $\dB/\dB$ are both smooth, we see that the derived structure on $\St_{\dG}$ is in fact trivial.
\end{remark}

\begin{remark}
Note we  can also naturally regard $\Ind\Coh(\St_{\dG})$ as a monoidal category in $\qc(\dH)$-bimodule categories.\end{remark}

In order to deduce spectral consequences of our automorphic results, 
in Section~\ref{s:comm}, we will assume  the following universal variant of Bezrukavnikov's spectral description of the affine Hecke category~\cite{bezrukavnikovTwoGeometricRealizations2016} 
We expect a proof  to appear in forthcoming 
work of Chen-Dhillon-Taylor, building on Taylor's universal variant~\cite{taylor} of Soergel's Endomorphismensatz.
We mention it here for  conceptual clarity, but it will not be invoked until we turn to applications in Section~\ref{s:comm}.

\begin{ansatz}\label{ans:univ aff equiv}
There is a monoidal equivalence of universal affine Hecke categories
\beq
\xymatrix{
\Phi:\Ind\Coh(\St_{\dG}) \ar[r]^-\sim &   \cH_{\cG}
}
\eeq 
with the following properties:
\begin{enumerate}
\item  $\Phi$ identifies the structure sheaf $\cO_{\St_{\dG}}$ with the universal affine Whittaker object $\Wh_{\cG}$ as coalgebra objects.
	\item $\Phi$ is naturally an equivalence of  algebras in bimodules over $\qc(H^\vee)$. (Here we regard the $\cH_{H}$-bimodule $\cH_{\cG}$ as a $\qc(H^\vee)$-bimodule using the canonical monoidal equivalence $\cH_{H}\simeq \Sh_{0}(H)\simeq \qc(H^\vee)$.)

\end{enumerate}
\end{ansatz}

\begin{remark}
 In fact, Ansatz~\ref{ans:univ aff equiv} (2) is not used in Section~\ref{s:comm}, we only record it for conceptual completeness.
\end{remark}


\subsection{Coxeter presentation}

To make calculations in the cocenter of $\cH_{\cG}$, we will use a universal monodromic version of a result of Tao-Travkin~\cite{taoAffineHeckeCategory}.
In Appendix~\ref{app:TT}, we provide the necessary extensions to apply their arguments  to the universal monodromic case. 

Let $\cG^{\c}$ be the neutral component of the loop group $\cG$, and let $\cH_{\cG^{\c}}\subset \cH_{\cG}$ be the full subcategory of objects supported on $\cG^{\c}$. Then $\cH_{\cG^{\c}}$ is a monoidal subcategory of $\cH_{\cG}$.

For each finite type subset $J\sft I^{a}$, the finite universal Hecke  category $\cH_{L_{J}}$ embeds into $\cH_{\cG^{\c}}$ as a monoidal full subcategory of sheaves supported on $\cI^{u}\bs \cP_{J}/\cI^{u}$, which is a pro-unipotent gerbe over $U_{J}\bs L_{J}/U_{J}$.  These embeddings are compatible with inclusions $J\subset J'\sft I^{a}$ in an obvious sense, and induce a monoidal functor 
\beq\label{colim fin Hk}
\xymatrix{
 \colim^{\ot}_{J \sft I^a} \cH_{L_J} \ar[r]  & \cH_{\cG^{\c}}
}
\eeq
Here we regard all the Hecke categories as  objects in  stable presentable $k$-linear categories $\St^{L}_k$
with morphisms left adjoints,
and we write $\colim^{\ot}$ to emphasize that the colimit is of monoidal categories in $\cH_H$-bimodues, i.e.~of algebra objects in $\cH_H$-bimodues in $\St^{L}_k$.

\begin{theorem}[Tao-Travkin~\cite{taoAffineHeckeCategory}, see Appendix~\ref{app:TT}]\label{thm:colim}
The functor \eqref{colim fin Hk} is a monoidal equivalence of $\cH_H$-bimodule categories.
\end{theorem}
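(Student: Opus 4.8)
The statement to prove is Theorem~\ref{thm:colim}: that the natural monoidal functor $\colim^{\ot}_{J \sft I^a} \cH_{L_J} \to \cH_{\cG^{\c}}$ is an equivalence of $\cH_H$-bimodule categories. The strategy is to reduce this to the non-monodromic statement of Tao--Travkin \cite{taoAffineHeckeCategory}, whose proof is a Coxeter-presentation argument for the affine Hecke 2-category, and then to upgrade it to the universal monodromic setting using the appendix machinery. First I would set up the comparison at the level of generators and relations: the colimit $\colim^{\ot}_{J\sft I^a}\cH_{L_J}$ is, by a standard presentation of colimits of algebra objects, the monoidal category generated by the finite-type pieces $\cH_{L_J}$ subject to the relations that they glue compatibly along overlaps $\cH_{L_J \cap L_{J'}} = \cH_{L_J}\cap \cH_{L_{J'}}$. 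One then exhibits the target $\cH_{\cG^{\c}}$ as generated by the same pieces (the wall-crossing / Bott--Samelson generators $B_{s}$ for affine simple reflections $s$, together with the monodromy directions coming from $H$), and checks that all relations in $\cH_{\cG^{\c}}$ are consequences of relations already visible inside rank-one and rank-two parabolic subcategories $\cH_{L_J}$, $|J|\le 2$ — this is exactly the content of the Coxeter presentation.

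\textbf{Key steps, in order.} (1) Recall/establish that $\cH_{\cG^{\c}}$ is generated under convolution and colimits by the objects supported on $\cI^u\bs \cP_J/\cI^u$ for $J\sft I^a$ — equivalently, generated by the unit $e$, the monodromic local systems from $\cH_H$, and the Bott--Samelson objects attached to affine simple reflections; this is a standard stratification/dévissage argument, using that $\cG_{\le w}/\cI$ is built from affine-space cells indexed by $\tilW$ and that any $w\in \tilW$ has a reduced expression in the affine simple reflections (up to an element of $\Omega$, which is trivial on the neutral component for the generated subcategory purposes). (2) Invoke Tao--Travkin's theorem in its original (non-universal-monodromic) form, which identifies the affine Hecke 2-category with the colimit of finite parabolic Hecke 2-categories; this gives the statement ``on the nose'' in the constant-coefficient case. (3) Run the universal monodromic upgrade: work relative to $\cH_H\simeq \Sh_0(H)$ and its dual description $\qc(H^\vee)$, observe that everything in sight is $\cH_H$-bilinear, and use the results collected in Appendix~\ref{app:TT} to transport Tao--Travkin's generators-and-relations analysis verbatim — the key point is that monodromy is a ``free'' direction that commutes with the convolution combinatorics, so the rank-one and rank-two relations are the same relations tensored up over $\cH_H$. (4) Conclude that the functor \eqref{colim fin Hk} is essentially surjective (by step 1) and fully faithful (by comparing Hom-complexes, which by $\cH_H$-bilinearity and the cell filtration reduce to computations inside finite parabolics, matched via steps 2--3), hence an equivalence, and that it is automatically monoidal and $\cH_H$-bilinear by construction.

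\textbf{Main obstacle.} The hard part is step (3): making the monodromic variant of Tao--Travkin's argument rigorous. In the constant case one uses concrete combinatorial facts about the Hecke category — the Bott--Samelson calculus, the braid and quadratic relations, Soergel-type descriptions of Hom spaces between standard objects — and one must check each of these survives the replacement of $\cI$-equivariant constructible sheaves by $\cI^u$-equivariant $H$-monodromic sheaves with nilpotent singular support, and survives being promoted to an identity of algebras in $\cH_H$-bimodules rather than plain monoidal categories. In particular one must verify that passing to the colimit commutes with the relevant forgetful/monodromy functors, that the ind-structure $\cG = \colim_{w}\cG_{\le w}$ interacts well with the colimit over $J$, and that no new relations are introduced by the larger centre $\cH_H$. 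This is precisely the bookkeeping that Appendix~\ref{app:TT} is designed to handle, so the proof proper will consist of citing Theorem of \cite{taoAffineHeckeCategory} and then pointing to the appendix for the extension, rather than redoing the Coxeter-presentation combinatorics from scratch.
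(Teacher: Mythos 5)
Your broad framing is right: the proof of Theorem~\ref{thm:colim} in the body of the paper really does amount to citing Tao--Travkin and pointing to Appendix~\ref{app:TT} for the necessary modifications, as you say at the end. But your sketch of what those modifications consist of does not match what the appendix actually does, and I think your proposed route would not go through as described.

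You describe the monodromic upgrade as ``transporting Tao--Travkin's generators-and-relations analysis verbatim,'' the key point being that ``monodromy is a free direction that commutes with the convolution combinatorics, so the rank-one and rank-two relations are the same relations tensored up over $\cH_H$,'' and you propose to conclude by separately checking essential surjectivity and full faithfulness. Neither of these reflects the actual mechanism. The universal-monodromic Hecke category $\cH_{L_J}$ is not obtained from the bi-$\cI$-equivariant one considered in \cite{taoAffineHeckeCategory} by base change or tensoring over $\cH_H$ (nor is there any evident functor realizing such a reduction), so the ``tensor it all up'' step is not available as a black box. Likewise, Tao--Travkin's argument is not structured as ``essential surjectivity plus full faithfulness of a comparison functor''; it establishes the monoidal colimit via an h-descent statement along Bott--Samelson-type resolutions, organized by the convolution Schubert $1$-category $\scI$ (their $\textup{Word}^1_{\mathrm{fr}}$). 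The proof in Appendix~\ref{app:TT} re-runs this descent argument directly in the monodromic setting: one defines $\Sh'(X_{\bw})$, the category of sheaves on the convolution product $X_\bw$ that are locally constant on each Schubert stratum, observes (the authors cite it) that the body of Tao--Travkin's Theorem~5.4.3 goes through for the resulting functor $\scH\colon\scI\to\St^L_k$, and then isolates as the \emph{only} new verification the analogue of their Corollary~5.4.2: that for a birational map $\bw_1\to\bw_2$ in $\scI$ the square comparing $\Sh'(\partial X_{\bw_i})$ to $\Sh'(X_{\bw_i})$ is coCartesian. This is proved via the recollement lemma (the morphism of recollements is an equivalence on the open strata because $X^\circ_{\bw_1}\cong X^\circ_{\bw_2}$ for a birational map, and then one applies Corollary~\ref{c:coprod}). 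So the actual difficulty is re-verifying a single pushout square in the monodromic sheaf theory, not re-proving generators-and-relations or checking full faithfulness of a functor.

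A secondary point: the colimit in Theorem~\ref{thm:colim} is a colimit of monoidal categories (algebras in $\cH_H$-bimodules), not an ordinary colimit in $\St^L_k$, and the descent/co-descent machinery handles this directly. Arguing by first identifying the underlying plain category and then matching the monoidal structure (as your step (4) suggests) would require additional work to show the comparison functor is compatible with the monoidal unit and multiplication, which the Tao--Travkin formalism gives for free. I'd recommend replacing steps (3) and (4) by the statement that one must re-verify the coCartesian-square/h-descent step in the $\Sh'$ setting, and noting that this is what the recollement argument in the appendix achieves.
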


In~\cite{taoAffineHeckeCategory}, the authors worked with the bi-$\cI$-equivariant version of the affine Hecke category and assumed $G$ was simply-connected. We  sketch the necessary modifications to their argument in Appendix~\ref{app:TT}.

\begin{remark}
Formulating a generalization of Theorem~\ref{thm:colim} for the whole category $\cH_{\cG}$ when $\cG^{\c}\sne\cG$ is more combinatorially complicated. We will not need it since Theorem~\ref{thm:colim} will suffice for our application to the cocenter of any reductive $G$. See Sect.~\ref{sss:general}.
\end{remark}

\subsection{Hochschild homology and cocenters }\label{ss:hh}

We  work in the setting of stable presentable $k$-linear categories $\St^{L}_k$ with morphisms left adjoints. All higher algebra constructions will be following~\cite{lurieHigherAlgebra2012}. 
We could also work in  the setting of
presentable $k$-linear categories $\Pr^{L}_k$ by the following well-known fact:

\begin{lemma}
	The inclusion $\St^{L}_k \to \Pr^{L}_k$ preserves colimits.
\end{lemma}
\begin{proof}
Since fully-faithful functor respect colimits, suffices to show that the colimit of stable categories in $\Pr^{L}_k$ is again stable. By the equivalence $\Pr^{L,op}_k \simeq \Pr^{R}_k$, and the fact that the forgetful functor $\Pr^{R}_k \to \Cat_{\infty}$ preserves limits (where $\Pr^{R}_k$ is the category of presentable $k$-linear categories with morphism right adjoints and $\Cat_{\infty}$ is the category of ($\infty$-)categories), suffices to show that limit of stable categories (along with exact functors) in $\Cat_{\infty}$ is stable. The latter statement follows from \cite[Theorem 1.1.4.4]{lurieHigherAlgebra2012}.
\end{proof}

\begin{defn}\label{def:hh}
Let $\cA$ be an algebra object  in $\St^{L}_k$, and $\cA^{op}$ the opposite algebra.

\begin{enumerate}
\item Let $\cM$ be an $\cA$-bimodule, i.e.~an $\cA \otimes \cA^{op}$-module.

The Hochschild homology of $\cA$ with values in $\cM$ is the tensor 
\begin{equation*}
hh(\cA, \cM) = \cA \otimes_{\cA \otimes \cA^{op}} \cM
\end{equation*}

The trace map is the natural projection
\begin{equation*}
\xymatrix{
\tr: \cM \ar[r] &   \cA \otimes_{\cA \otimes \cA^{op}} \cM = hh(\cA, \cM) 
}
\end{equation*}
induced by the unit of $\cA$ in the first factor of the tensor.

\item 
The cocenter of $\cA$ is the Hocschild homology of $\cA$ with values in the regular bimodule
\begin{equation*}
hh(\cA) =  hh(\cA, \cA) = \cA \otimes_{\cA \otimes \cA^{op}} \cA
\end{equation*}

The trace map is the natural projection
\begin{equation*}
\xymatrix{
\tr: \cA \ar[r] &   \cA \otimes_{\cA \otimes \cA^{op}} \cA = hh(\cA, \cA) = hh(\cA) 
}
\end{equation*}
induced by the unit of $\cA$ in the first factor of the tensor.

\end{enumerate}

\end{defn}

Recall one typically   calculates $hh(\cA, \cM)$ as  the geometric realization of the Hochschild complex (simplicial object)
\begin{equation*}
 \xymatrix{
\cB_\bullet(\cA, \cM) = 
  [ \cM & \ar@<-0.5ex>[l]\ar@<0.5ex>[l] \cM \otimes \cA & \ar@<-1ex>[l]  \ar[l] \ar@<1ex>[l] 
  \cM \otimes \cA \otimes \cA \cdots]
 }
 \end{equation*}
 This results from tensoring the regular $\cA$-bimodule with the bar resolution
   \begin{equation*}
 \xymatrix{
\cM 
  & \ar[l]
  [ \cM \otimes \cA & \ar@<-0.5ex>[l]\ar@<0.5ex>[l] \cM \otimes \cA\otimes \cA & \ar@<-1ex>[l]  \ar[l] \ar@<1ex>[l] 
  \cM \otimes \cA \otimes \cA\otimes \cA \cdots]
 }
 \end{equation*}
 
 Given a monoidal subcategory $\cA' \subset \cA$, there is a minor variation on  the Hochschild complex that equally well 
 calculates $hh(\cA, \cM)$. 
  Note we can regard $\cA$ as an algebra in $\cA'$-bimodules, and likewise, 
 regard $\cM$ as an $\cA$-bimodule in $\cA'$-bimodules. Then to calculate  $hh(\cA, \cM)$, we can form  the relative Hochschild complex 
\begin{equation*}
 \xymatrix{
\cB_\bullet(\cA, \cM)_{\cA'} = 
 [ hh(\cA', \cM) & \ar@<-0.5ex>[l]\ar@<0.5ex>[l] hh(\cA', \cM \otimes_{\cA'} \cA) & \ar@<-1ex>[l]  \ar[l] \ar@<1ex>[l] 
  hh(\cA', \cM \otimes_{\cA'} \cA \otimes_{\cA'} \cA) \cdots]
 }
 \end{equation*} 
 This results from tensoring the regular $\cA$-bimodule with the relative bar resolution
  inside of $\cA'$-bimodules 
   \begin{equation*}
 \xymatrix{
\cM 
  & \ar[l]
  [ \cM \otimes_{\cA'} \cA & \ar@<-0.5ex>[l]\ar@<0.5ex>[l] \cM \otimes_{\cA'} \cA\otimes_{\cA'} \cA & \ar@<-1ex>[l]  \ar[l] \ar@<1ex>[l] 
  \cM  \otimes_{\cA'}\cA \otimes_{\cA'}\cA \otimes_{\cA'} \cA  \cdots]
 }
 \end{equation*}
 In particular, we can take $\cA' =\langle 1_\cA\rangle \subset \cA$ to be the full subcategory generated by the unit.
  
%
%


\subsubsection{Hochschild homology and colimits}

On the one hand, suppose 
 $\cA$ is an algebra object  in $\St^{L}_k$, and $\cM = \colim_i \cM_i$ is a colimit diagram of $\cA$-bimodules. Then by commuting colimits, we observe that the natural map is an equivalence 
$$
\xymatrix{
\colim_i hh(\cA,  \cM_i) \ar[r]^-\sim & hh(\cA, \cM)
}$$

On the other hand, suppose $\cA = \colim \cA_i$ is a
colimit diagram of algebra objects  in $\St^{L}_k$, and $\cM$ is an $\cA$-bimodule. 
Then we have a natural map
$$
\xymatrix{
\colim_i hh(\cA_i,  \cM) \ar[r] & hh(\cA, \cM)
}$$
 which is not an equivalence in general. 

 But one can deduce it is an equivalence for {\em contractible} colimit diagrams using
 the following lemma, whose proof we learned from Nick Rozenblyum. 
 
 Some set theory preliminaries: Let $\kappa$ be an uncountable regular cardinal,
and let $\Pr^L_\kappa \subset \Pr^L$ denote the subcategory  of objects that are $\kappa$-presentable, i.e.,
generated by $\kappa$-compact objects, with functors that preserve $\kappa$-compact objects, i.e.,
whose right adjoint preserves $\kappa$-filtered colimits.

By \cite[Lemmas 5.3.2.9 and 5.3.2.11]{lurieHigherAlgebra2012}, one knows that $\Pr^L_\kappa$ is a presentable symmetric monoidal category whose tensor product commutes with colimits in
each variable. Moreover, the inclusion $\Pr^L_\kappa \to \Pr^L$ is symmetric monoidal and preserves
colimits. Note that $\Mod_k$ is a commutative algebra object in  $\Pr^L_{\kappa}$, and so the $k$-linear version  $\Pr^L_{\kappa, k} = \Mod_{\Mod_k}(\Pr^L_{\kappa})  \to \Mod_{\Mod_k}(\Pr^L)  =  \Pr^L_k$  is also symmetric monoidal and preserves
colimits. Hence the same is true on algebra objects $\Alg(\Pr^L_{\kappa, k}) \to \Alg(\Pr^L_k)$.

\begin{lemma}
Suppose $\cA = \colim \cA_i$ is a contractible colimit diagram of algebra objects  in $\Pr^{L}_k$,
 and $\cM$ is an $\cA$-module. 
 Then the natural map
 is an equivalence
$$
\xymatrix{
\colim_i \cA \otimes_{\cA_i}  \cM  \ar[r]^-\sim & \cM
}$$

\end{lemma}

\begin{proof}
Following the assumptions on set theory of \cite[Sect. 1.2.15]{lurieHigherToposTheory2009a}, we can choose a regular 
cardinal $\kappa$ such that the diagram $ \colim \cA_i\to \cA$ lies in $\Alg(\Pr^L_{\kappa, k})$
and $\cM$ lies in $\Mod_{\cA}(\Pr^L_{\kappa, k})$.
Since $\Alg(\Pr^L_{\kappa, k}) \to \Alg(\Pr^L_k)$ preserves colimits, the natural map is an equivalence $ \colim A_i \isom \cA$ when the colimit is taken
 in $\Alg(\Pr^L_{\kappa, k})$.

Now applying \cite[Corollary 4.8.5.13]{lurieHigherAlgebra2012} to the presentable category $\Pr^L_{\kappa, k}$,
we see that passing to modules
 $\Mod^*: \Alg(\Pr^L_{\kappa, k}) \to \Mod_{\Pr^L_{\kappa, k}}(\Pr^L)$
preserves contractible colimits.  In particular, the composite
 $$ 
 \Alg({\Pr}^L_{\kappa, k}) \to  \Mod_{{\Pr}^L_{\kappa, k}}({\Pr}^L) \to {\Pr}^L \simeq ({\Pr}^R)^{op} \to \Cat^{op}
 $$
preserves all contractible colimits (where the equivalence $\Pr^L \simeq (\Pr^R)^{op}$ is given by
passing to right adjoints).  Thus we conclude that
  $\Mod_*: \Alg(\Pr^L_{\kappa, k})^{op} \to \Cat$
preserves contractible limits.

This implies that the natural map 
$c: \colim_i \cA \otimes_{\cA_i} \cM \to \cM $
is an equivalence
of objects in $\Mod_\cA(\Pr^L_{\kappa, k}),$ where the colimit and relative tensor product is
taken in $\Pr^L_{\kappa, k}$.  We want to see that the the  map $c$ is also an equivalence in $\Mod_\cA(\Pr^L_{k}),$
i.e., after applying the inclusion
$\Pr^L_{\kappa, k}\to \Pr^L_{ k}$ to each term.  For this, it suffices to forget the module structure and show the natural map $c$ is an equivalence in $\Pr^L_{k}$.
Since  the inclusion
$\Pr^L_{\kappa, k}\to \Pr^L_{ k}$
is symmetric monoidal and preserves colimits, it also preserves relative tensor products.
This gives the result.
\end{proof}

\begin{cor}\label{cor: general hh of colimit of algs}
Suppose $\cA = \colim \cA_i$ is a contractible colimit diagram of algebra objects  in $\Pr^{L}_k$,
 and $\cM$ is an $\cA$-bimodule. 
 Then the natural map
 is an equivalence
$$
\xymatrix{
\colim_i hh(\cA_i, \cM)  \ar[r]^-\sim & hh(\cA, \cM)
}$$

\end{cor}

\begin{proof}
Apply the lemma to the $\cA$-bimodule $\cM$ viewed as a left $\cA$-module to see
that the natural map
of $\cA$-bimodules is an equivalence 
$$
\xymatrix{
\colim_i \cA \otimes_{\cA_i}  \cM  \ar[r]^-\sim & \cM
}$$
Then take $hh(\cA, -)$ of both sides observing the natural isomorphism $hh(\cA, \cA \otimes_{\cA_i}  \cM ) \simeq hh(\cA_i, \cM)$.
\end{proof}


\subsubsection{Descended trace}

The following general construction will  provide a useful way to characterize certain objects of the cocenter of a monoidal category. 
Let $\Delta$ denote the simplex category of non-empty finite ordered sets $[n] = \{0, \ldots, n\}$, $n\geq 0$. 

Suppose $\cA$ is a monoidal category with product denoted by $\star$. 

Given an algebra object  $a\in \cA$, let $\cL = \LMod_a(\cA)$ and $\cR = \RMod_a(\cA)$ denote the category of left and right $a$-modules in $\cA$. Let $\cB = \Bimod_a(\cA)$ denote the category of $a$-bimodules in $\cA$. Note $\cB$ is naturally monoidal with product given by

\begin{equation*}
 \xymatrix{
m \star_a n =    
 \colim_{\Delta^{op}} [ m\star n & \ar@<-0.5ex>[l]\ar@<0.5ex>[l] m \star a \star n & \ar@<-1ex>[l]  \ar[l] \ar@<1ex>[l] 
  m \star a \star a \star n  \cdots]
 }
 \end{equation*}
Similarly, $\cL$ is a $\cB \otimes \cA$-bimodule, and $\cR$ is a $\cA\ot \cB$-bimodule. In fact, $\cL$ and $\cR$ are in duality with unit and counit maps denoted by
\beq\label{eq:duality of l and r}
\xymatrix{
u:\cB \ar[r] & \cL \ot_\cA \cR
&
c: \cR \ot_{\cB} \cL \ar[r] & \cA
}
\eeq 
Here, $u$ is the inverse of the natural functor $\cL\ot_{\cA}\cR\to \cB$ (sending $x\ot y$ to $x\star y$), which is an equivalence by \cite[Prop. 4.1]{ben-zviIntegralTransformsDrinfeld2010}.

Recall the dual bimodules $\cL$ and $\cR$, with their unit and counit maps, provide a map on cocenters
\beq\label{eq:taut map on cocenters}
\xymatrix{
h:hh(\cB)  = \cB \otimes_{\cB \otimes \cB^{op}} \cB  \ar[r]^-{u'} &   (\cL \ot_\cA \cR) \otimes_{\cB \otimes \cB^{op}} \cB\simeq
  \cA \otimes_{\cA \otimes \cA^{op}} (\cR \ot_{\cB} \cL)  \ar[r]^-{c'}  &
  \cA \otimes_{\cA \otimes \cA^{op}} \cA = hh(\cA)
}
\eeq
where $u'$ is induced by $u$, and $c'$ by $c$.

To avoid confusion, let us denote the usual trace maps by
\begin{equation*}
\xymatrix{
\tr_\cA: \cA \ar[r] &   hh(\cA) 
&
\tr_\cB: \cB \ar[r] &   hh(\cB) 
}
\end{equation*}

Given  $m\in \cB$, we can take its $\cB$-trace $\tr_\cB(m)\in hh(\cB)$ then its image   
$h(\tr_\cB(m))\in hh(\cA)$.

\begin{defn}\label{def:desc tr}
Given  $m\in \cB$, 
we call 
\begin{equation*}
\ol \tr(m):= h(\tr_\cB(m)) \in hh(\cA)
\end{equation*}  
the {\em descended trace} of $m$.
\end{defn}


\subsubsection{Functoriality of descended trace}

Suppose $f:\cA \to \cA'$ is a monoidal functor of monoidal categories. Then there is an evident commutative diagram
\beq\label{eq:tr funct}
\xymatrix{
\ar[d]_-{\tr} \cA \ar[r]^-f & \cA' \ar[d]^-{\tr} \\ 
hh(\cA) \ar[r]^-{hh(f)} & hh(\cA')   
}
\eeq

Suppose in addition  $a\in \cA$ is an algebra object with image algebra object $a' = f(a)\in \cA'$.  
Then applying $f$ to bimodules provides a natural monoidal functor $F:\Bimod_a(\cA) \to \Bimod_{a'}(\cA')$.
We have an evident commutative diagram
\beq\label{eq:tr bimod funct}
\xymatrix{
\ar[d]_-{\tr} \Bimod_a(\cA) \ar[r]^-F & \Bimod_{a'}(\cA') \ar[d]^-{\tr} \\ 
\ar[d]_-h hh(\Bimod_a(\cA)) \ar[r]^-{hh(F)} & hh(\Bimod_{a'}(\cA')) \ar[d]^-h   \\
hh(\cA) \ar[r]^-{hh(f)} & hh(\cA')   
}
\eeq
where each $h$ denotes the  map~\eqref{eq:taut map on cocenters} from the cocenter of bimodules to the cocenter induced by the left and right module categories.
  
We immediately conclude:

 \begin{lemma}\label{l: dtr inv} 
 Suppose $f:\cA \to \cA'$ is a monoidal functor of monoidal categories.
 
 Suppose $a\in \cA$ is an algebra object with image algebra object $a' = f(a)\in \cA'$. 
 
 Under the induced functor $hh(f): hh(\cA) \to hh(\cA')$, we have an identification of descended traces
 \begin{equation*}
 hh(f)( \ol \tr(m)) \simeq \ol \tr({F}(m)) \qquad m \in \Bimod_a(\cA).
 \end{equation*}
 In particular, for the regular bimodule $m=a$ with image $F(m)= f(a)=a'$,  an identification of descended traces
 \begin{equation*}
 hh(f)( \ol \tr(a)) \simeq \ol \tr(a') 
 \end{equation*}

 \end{lemma}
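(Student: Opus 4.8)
The plan is to show that everything in sight is already assembled into the two commutative diagrams \eqref{eq:tr funct} and \eqref{eq:tr bimod funct}, so the lemma reduces to tracing an element around a commutative square. Concretely, recall that the \emph{descended trace} of $m\in\Bimod_a(\cA)$ is by Definition~\ref{def:desc tr} the composite $\ol\tr(m)=h(\tr_{\cB}(m))$, where $\cB=\Bimod_a(\cA)$, $\tr_{\cB}:\cB\to hh(\cB)$ is the tautological trace map for the monoidal category $\cB$, and $h:hh(\cB)\to hh(\cA)$ is the map~\eqref{eq:taut map on cocenters} built from the duality between $\cL=\LMod_a(\cA)$ and $\cR=\RMod_a(\cA)$.

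First I would verify that the lower two squares of \eqref{eq:tr bimod funct} commute, since the top square is just an instance of \eqref{eq:tr funct} applied to the monoidal functor $F:\Bimod_a(\cA)\to\Bimod_{a'}(\cA')$. The commutativity of the top square is formal: \eqref{eq:tr funct} is functoriality of Hochschild homology $hh(-)$ as a functor out of monoidal categories, applied to $F$. The commutativity of the bottom square is the substantive point, and I expect this to be the main obstacle: one must check that the monoidal functor $f:\cA\to\cA'$ intertwines the map $h$ for $\cA$ with the map $h$ for $\cA'$. This amounts to checking that $f$ carries the duality data $(u,c)$ of \eqref{eq:duality of l and r} for $a\in\cA$ to the corresponding duality data for $a'=f(a)\in\cA'$. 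Since $u$ is characterized as the inverse of the canonical functor $\cL\ot_\cA\cR\to\cB$ (which is an equivalence by \cite[Prop.~4.1]{ben-zviIntegralTransformsDrinfeld2010}) and $c$ is the relative tensor/evaluation, and since $f$ being monoidal means it commutes with all relative tensor products and sends $a$-modules to $a'$-modules compatibly, this compatibility is a diagram chase; the care needed is purely bookkeeping of the various base-changed tensor products. I would phrase it by noting that $F$, $f$, and the induced functors on $\cL,\cR$ form a morphism of the whole ``duality datum,'' so the induced maps on cocenters commute.

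Once \eqref{eq:tr bimod funct} is established, the lemma is immediate: starting from $m\in\Bimod_a(\cA)$ in the top-left corner, going down the left column gives $\tr_{\cB}(m)$ and then $h(\tr_{\cB}(m))=\ol\tr(m)$ in $hh(\cA)$, and then across the bottom gives $hh(f)(\ol\tr(m))$; going across the top first gives $F(m)$, and then down the right column gives $\ol\tr(F(m))$ in $hh(\cA')$. Commutativity of the (outer) rectangle yields
\begin{equation*}
hh(f)(\ol\tr(m))\simeq \ol\tr(F(m)),
\end{equation*}
which is the first assertion. The ``in particular'' is the special case $m=a$, the regular bimodule, for which $F(a)=f(a)=a'$ by construction of $F$, giving $hh(f)(\ol\tr(a))\simeq\ol\tr(a')$. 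No further computation is required; the content is entirely in assembling the two diagrams and checking that a monoidal functor respects the bimodule-duality data underlying the map $h$.
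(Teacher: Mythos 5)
Your proof is correct and takes essentially the same route as the paper, which simply records diagrams \eqref{eq:tr funct} and \eqref{eq:tr bimod funct} as evident and then states the lemma as an immediate consequence. You helpfully identify what the paper leaves implicit --- that the nontrivial content is the compatibility of the duality data $(u,c)$ with the monoidal functor $f$, i.e.\ commutativity of the bottom square of \eqref{eq:tr bimod funct} --- but the overall argument is the same diagram chase.
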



\subsubsection{Formula for descended trace} 
 We provide here a formula for the descended trace. 

Let $\Delta_+$ denote the augmented simplex category of (possibly empty) finite ordered sets $[n]= \{0, \ldots, n\}$, $n\geq -1$.  
%
%
Given   $m\in \cB =\Bimod_{a}(\cA)$, consider its bar augmented simplicial object $m_\bullet: \Delta_+^{op} \to \cB$ given by the  assignments: $m_n = m\star a^{\star n}$, with    
 each face map a contraction via the multiplication of $a$, and each degeneracy map  an insertion of the unit of $a$. We can picture the diagram of face maps in the usual way
  \begin{equation*}
 \xymatrix{
m_\bullet = [  m & \ar[l] m\star a & \ar@<-0.5ex>[l]\ar@<0.5ex>[l] m \star a \star a & \ar@<-1ex>[l]  \ar[l] \ar@<1ex>[l] 
  m \star a \star a \star a  \cdots]
 }
 \end{equation*}
 and the augmentation descends to an equivalence on the geometric realization
   \begin{equation*}
 \xymatrix{
  m & \ar[l]_-\sim  \colim_{\Delta^{op}} [ m\star a & \ar@<-0.5ex>[l]\ar@<0.5ex>[l] m \star a \star a & \ar@<-1ex>[l]  \ar[l] \ar@<1ex>[l] 
  m \star a \star a \star a  \cdots]
 }
 \end{equation*}

 Now let us take the descended trace of  the   bar augmented simplicial object  to obtain a resolution
  \begin{equation*}
 \xymatrix{
\ol \tr (m) & \ar[l]_-\sim  \colim_{\Delta^{op}} [ \ol\tr(m\star a) & \ar@<-0.5ex>[l]\ar@<0.5ex>[l] \ol\tr(m \star a \star a) & \ar@<-1ex>[l]  \ar[l] \ar@<1ex>[l] 
  \ol\tr(m \star a \star a \star a)  \cdots]
 }
 \end{equation*}
 using that we work in the setting of continuous functors.
 
Unwinding the definitions, for any $a$-bimodule of the form $\ell \star r$ where $\ell
\in\cL = \LMod_a(\cA)$ and $r\in \cR = \RMod_a(\cA)$, we have a natural isomorphism  $\ol \tr(\ell \star r) \simeq \tr_\cA(r \star_a \ell)$. 
 Thus the above resolution gives a concrete formula for the descended trace
  \beq\label{eq:desc tr formula}
 \xymatrix{
\ol \tr (m) & \ar[l]_-\sim  \colim_{\Delta^{op}} [ \tr_\cA(m) & \ar@<-0.5ex>[l]\ar@<0.5ex>[l] \tr_\cA(m \star a) & \ar@<-1ex>[l]  \ar[l] \ar@<1ex>[l] 
  \tr_\cA(m \star a \star a)  \cdots]
 }
 \eeq
 where the face maps are
 given by
contractions via the multiplication of $a$ (and  degeneracy maps given by  an insertion of the unit of $a$). In particular, the new face map  $\tr_\cA(m\star a^{\star (n+1)}) \to  \tr_\cA(m\star a^{\star n})  $ is induced by the left multiplication of the last factor on the first available thanks to the cyclic symmetry of the trace.

\subsubsection{Descended trace for coalgebras}

One can repeat the  construction of the descended trace starting with a coalgebra $c\in \cA$ rather than an algebra $a\in \cA$. We only prefer the algebra formulation due to our lack of familiarity with ``bi-comodules". But in any case, we will only work with the  coalgebra $c$ itself.  In this case, we can take the following concrete formula as definition of the descended trace
  \begin{equation*}
 \xymatrix{
\ol \tr (c) :=   \lim_{\Delta} [ \tr_\cA(c)  \ar@<-0.5ex>[r]\ar@<0.5ex>[r] & \tr_\cA(c \star c)  \ar@<-1ex>[r]  \ar[r] \ar@<1ex>[r] &  
  \tr_\cA(c \star c \star c)  \cdots]
 }
 \end{equation*}
 where the coface maps are
 given by
expansions via the comultiplication of $c$ (and  degeneracy maps given by  an insertion of the counit of $c$). In particular, the new coface map  $\tr_\cA(c^{\star n}) \to  \tr_\cA(c^{\star n+1})  $ is induced by the left comultiplication of the last factor on the first available thanks to the cyclic symmetry of the trace.

In our applications, where some additional hypotheses hold, we can relate the above definition for coalgebras with the prior theory for algebras, and in particular take advantage of the prior recorded functoriality. 

Assume that  $\cA$ is compactly generated, and the monoidal product preserves the category of compact objects $\cA_c \subset \cA$, so that $\cA \simeq \Ind \cA_c$ as monoidal categories. Then the categories $\cL,\cR,\cB,hh(\cB), hh(\cA)$ are all compactly generated, and the functor $\ol \tr$ preserves compact objects.
 Suppose our coalgebra is compact  $c\in \cA_c$, and denote  by $a \in \cA_c^{op}$ the same object regarded as an algebra in the dual monoidal category $ \cA^\vee \simeq
 \Ind \cA_c^{op}$.  
 Suppose in addition $a \in \Bimod_a(\cA^\vee)$ is compact (for example, it is a summand of $a \star a$). Then chasing definitions, under the canonical equivalence $hh(\cA^\vee)_c \simeq hh(\cA_c^{op}) \simeq hh(\cA_c)^{op} \simeq hh(\cA)^\vee_c  $,  the colimit calculating the algebra descended trace $\ol \tr(a) \in hh(\cA^{op}_c)$ is equivalent to the limit calculating the coalgebra descended trace $\ol \tr(c) \in  hh(\cA_c)$.


\subsection{Spectral realization}\label{ss:spec realization}

As with Section~\ref{s:ld}, the results of this section will not be used until we turn in Section~\ref{s:comm}
 to spectral consequences of our automorphic results. 
We include them here  as a  conceptual guide 
for our automorphic arguments.

We return to the
universal spectral affine Hecke category
 $\Ind\Coh(\St_{\dG})$, and now consider its cocenter, and the descended trace of the structure sheaf $\cO_{\St_{\dG}}$ as a coalgebra object.
The arguments are quite general, so we will work with an abstract setup.  We stress that in this subsection, all fiber products of stacks are {\em derived fiber products}.

Let $X \to Y$ be a proper morphism between smooth stacks. Then $\Ind\Coh( X \times_Y X)$ is naturally a monoidal category via convolution: $\cF \star \cG = m_* \d_{23}^*(\cF \boxtimes \cG) $ where $\d_{23}$ is the diagonal map and $m$ the forgetful map in the correspondence 
\begin{equation*}
\xymatrix{
 X \times_Y X \times  X \times_Y X  &  \ar[l]_-{\d_{23}}   X \times_Y X \times_Y X   \ar[r]^-{m}  &  X \times_Y X
}
\end{equation*}

\begin{ex}\label{ex:BBGG}
In our application, 
we will take the induction map $X =  \dB/\dB\to  \dG/\dG = Y$ so that 
$ X \times_Y X = \St_{\dG}$.
\end{ex}


Alternatively, we could define the $!$-convolution:  $\cF \star^! \cG = m_* \d_{23}^!(\cF \boxtimes \cG) $. We will denote by $\Ind\Coh( X \times_Y X)^!$ the resulting monoidal category with  the $!$-convolution. Note that $\d_{23}$ is quasi-smooth, so $\d_{23}^!$ and $\d_{23}^*$, and hence $\star$ and $\star^!$ as well, only differ by an invertible twist.

Serre duality gives an equivalence of compact objects
$\Coh( X \times_Y X)^{op}  \simeq   \Coh( X \times_Y X)$
and hence an equivalence of their ind-completions
$\Ind\Coh( X \times_Y X)^\vee  \simeq   \Ind\Coh( X \times_Y X).
$

Note  the dual
$\Ind\Coh( X \times_Y X)^\vee  $ inherits a monoidal structure from $\Ind\Coh( X \times_Y X)$.
 The identification $\Ind\Coh( X \times_Y X)^\vee  \simeq   \Ind\Coh( X \times_Y X)$ naturally lifts to an equivalence of monoidal categories
\beq
\label{eq:Serre_duality_Hecke}
\xymatrix{
 \DD^{\textup{Serre}}_{X \times_Y X}: 	\Ind\Coh( X \times_Y X)^\vee  \ar[r]^-{\sim} &   \Ind\Coh( X \times_Y X)^!  
}
\eeq 

 Now 
denote by $LY = \Hom(S^1, Y)$ the derived loop space of $Y$, and $\Lambda_{X/Y} = \pi_* \delta^!(T^{*-1}_{X \times_Y X}) \subset T^*(LY)$ the Lagrangian defined via the correspondence
\begin{equation}\label{corr loop}
    \xymatrix{ X \times_Y X & \ar[l]_-{\delta}  (X \times_Y X) \times_{X \times X} X  \simeq  LY \times_Y X  \ar[r]^-{\pi}  &  LY  }
\end{equation}

\begin{ex}\label{ex:loop St}
 Continuing in the setup of Example~\ref{ex:BBGG}, we find the commuting stack 
\begin{equation*}
LY=L(\dG/\dG) = 
Z^2_{\dG} =   (\dG/\dG \times \dG/\dG) \times_{\dG/\dG} (\{1\}/\dG )
\simeq  (( \dG\times \dG) \times_{\dG} \{1\} )/\dG
\end{equation*} 
the derived fiber product of the commutator map $c:\dG\times \dG \to \dG$, and the inclusion of the identity $1\in \dG$, all up to conjugation. We also find the Lagrangian of nilpotent codirections $\Lambda_{X/Y} = \cN$
as calculated in  \cite{ben-zviSpectralIncarnationAffine2017}. 
\end{ex}

Thanks to \cite{ben-zviSpectralIncarnationAffine2017}, we have the following:  

\begin{theorem}\label{thm:spec tr}
		\begin{enumerate}
			\item The functor  
			$\pi_*\delta^*$ lands in $ \Ind\Coh_{\Lambda_{X/Y}}(LY)$ and fits into a commutative diagram with the trace map $\tr$ inducing a horizontal equivalence
			\begin{equation*}
			\xymatrix{
			\ar[d]_-{\tr} \Ind\Coh(X \times_Y X) \ar[dr]^-{\pi_*\delta^*}\\
			hh(\Ind\Coh(X \times_Y X) ) \ar[r]^-\sim & \Ind\Coh_{\Lambda_{X/Y}}(LY)
			}
			\end{equation*} 
			\item Similarly, the functor  
			$\pi_*\delta^!$ lands in $ \Ind\Coh_{\Lambda_{X/Y}}(LY)$ and fits into a commutative diagram with the trace map $\tr$ inducing a horizontal equivalence
			\begin{equation*}
			\xymatrix{
			\ar[d]_-{\tr} \Ind\Coh(X \times_Y X)^! \ar[dr]^-{\pi_*\delta^!}\\
			hh(\Ind\Coh(X \times_Y X)^!)  \ar[r]^-\sim & \Ind\Coh_{\Lambda_{X/Y}}(LY)
			}
			\end{equation*} 
		\end{enumerate}
		
		\end{theorem}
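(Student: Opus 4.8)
The plan is to deduce the theorem from the general trace computations of~\cite{ben-zviSpectralIncarnationAffine2017}, after recasting $\cA := \Ind\Coh(X\times_Y X)$ as an endomorphism category. First I would record that, since $f\colon X\to Y$ is proper with $X,Y$ smooth, $\Ind\Coh(X)$ is a dualizable $\Ind\Coh(Y)$-module, self-dual via the relative Serre functor: properness makes $f_*$ both left and right adjoint to $f^!$, and $f^!$, $f^*$ differ only by the invertible twist by the relative dualizing complex $\omega_{X/Y}$. Feeding this self-duality into the Beck--Chevalley isomorphism for the Cartesian square defining $X\times_Y X$ --- valid, again, by properness --- identifies the convolution monoidal category $(\cA,\star)$ with $\End_{\Ind\Coh(Y)}(\Ind\Coh(X))$, the algebra of $\Ind\Coh(Y)$-linear endofunctors of $\Ind\Coh(X)$, with $\star$ matching composition. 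The same argument through the $!$-duality identifies $(\cA,\star^!)=\Ind\Coh(X\times_Y X)^!$ with the same endomorphism algebra, the discrepancy between the two identifications being precisely the $\omega$-twist relating $\star$ and $\star^!$; since $\delta$ is quasi-smooth this twist is by an invertible object, hence harmless for comparing essential images below.

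Next I would invoke the loop-space formalism (as in~\cite{ben-zviSpectralIncarnationAffine2017} and its antecedents): the cocenter of $\End_{\cD}(\cM)$ for a self-dual dualizable module $\cM$ over a symmetric monoidal $\cD$ is computed by the two-step correspondence $\cD\xrightarrow{\mathrm{coev}}\cM\otimes_\cD\cM^\vee\xrightarrow{\mathrm{ev}}\cD$, and the trace map $\tr$ of Definition~\ref{def:hh} is this composite after the identification $\End_\cD(\cM)\simeq\cM\otimes_\cD\cM^\vee$. Specializing to $\cD=\Ind\Coh(Y)$, $\cM=\Ind\Coh(X)$: the coevaluation is (up to the twist above) $\Delta_{*}f^{*}$ for the relative diagonal $\Delta\colon X\to X\times_Y X$, while the evaluation is, by the self-duality and properness, pull-push along the correspondence~\eqref{corr loop}. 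Thus the functor realizing $\tr$ is $\pi_*\delta^*$, and $hh(\cA)$ is identified with $\Ind\Coh$ of the derived mapping space it cuts out; via $(X\times_Y X)\times_{X\times X}X\simeq LY\times_Y X$ this gives a functor to $\Ind\Coh(LY)$. The substantive input from~\cite{ben-zviSpectralIncarnationAffine2017} is that this functor is an \emph{equivalence onto} the full subcategory $\Ind\Coh_{\Lambda_{X/Y}}(LY)$: the singular-support bound is forced because $X\times_Y X$ is only quasi-smooth, so pushing a coherent complex forward along $\pi$ and pulling it back along $\delta$ creates singularities exactly along $\pi_*\delta^!(T^{*-1}_{X\times_Y X})=\Lambda_{X/Y}$, and loc.~cit.\ show that no further relations survive the coend. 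This proves (1); in the Grothendieck--Springer case $X=\dB/\dB\to\dG/\dG=Y$ of Examples~\ref{ex:BBGG} and~\ref{ex:loop St}, loc.~cit.\ additionally identify $\Lambda_{X/Y}$ with the global nilpotent cone $\cN\subset T^*(Z^2_{\dG})$, the form used elsewhere in the paper.

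For (2) I would run the argument of the previous paragraph verbatim with the $!$-convolution $\star^!$ and $\delta^!$ in place of $\star$ and $\delta^*$: by the first paragraph $\Ind\Coh(X\times_Y X)^!$ is likewise $\End_{\Ind\Coh(Y)}(\Ind\Coh(X))$, now through the $!$-duality, and the loop-space formalism then produces the functor $\pi_*\delta^!$ factoring the trace, which~\cite{ben-zviSpectralIncarnationAffine2017} again show induces an equivalence $hh(\Ind\Coh(X\times_Y X)^!)\simeq\Ind\Coh_{\Lambda_{X/Y}}(LY)$. Equivalently, (2) follows from (1) by applying $hh(-)$ to the monoidal equivalence $\DD^{\textup{Serre}}_{X\times_Y X}$ of~\eqref{eq:Serre_duality_Hecke} and composing with Serre duality on $\Ind\Coh(LY)$, which preserves $\Ind\Coh_{\Lambda_{X/Y}}(LY)$ because $\Lambda_{X/Y}$ is conical, hence stable under $\xi\mapsto-\xi$; since $\delta$ is quasi-smooth, $\delta^!\simeq\delta^*\otimes\omega_\delta[\dim\delta]$ with $\omega_\delta$ invertible, so the two descriptions of the trace functor agree on essential images.

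The hard part will be the equivalence assertion in the second paragraph --- that pull-push along~\eqref{corr loop} induces $hh(\cA)\xrightarrow{\sim}\Ind\Coh_{\Lambda_{X/Y}}(LY)$ with image exactly the prescribed singular-support subcategory. Surjectivity of $\tr$ onto its essential image is formal; full faithfulness --- that no relations beyond the singular-support constraint survive the coend $\cA\otimes_{\cA\otimes\cA^{op}}\cA$ --- is the delicate point. It requires an explicit identification of this coend with $\Ind\Coh$ of a derived fiber product, for which the stacks in play must be tame enough for $\Ind\Coh$ to satisfy the needed K\"unneth, base-change, and $\Ind\Coh/\QCoh$-comparison statements, together with the microlocal computation pinning down the codirections swept out by $\pi_*$ and $\delta^*$ as $\Lambda_{X/Y}$ (equivalently, for $X=\dB/\dB$, $Y=\dG/\dG$, as $\cN$). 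All of this is carried out in~\cite{ben-zviSpectralIncarnationAffine2017}, so the proof in practice is a citation of loc.~cit.\ together with the translations above.
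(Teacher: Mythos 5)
The paper offers no proof of Theorem~\ref{thm:spec tr} at all: the preceding sentence reads ``Thanks to \cite{ben-zviSpectralIncarnationAffine2017}, we have the following,'' and the result is a direct citation. Your reconstruction---identifying $\Ind\Coh(X\times_Y X)$ with $\End_{\QCoh(Y)}(\Ind\Coh(X))$ via properness and self-duality, computing the cocenter by the coevaluation/evaluation correspondence, and matching the resulting pull-push along \eqref{corr loop} with $\pi_*\delta^*$ before deferring the fully-faithfulness/singular-support computation to~\cite{ben-zviSpectralIncarnationAffine2017}---is a faithful outline of how that reference proves it, and your two derivations of part (2), either by rerunning the argument with $\delta^!$ and $\star^!$ or by applying Serre duality to part (1), are both consistent with the remark the paper places immediately after the theorem (diagram~\eqref{eq:Serre_duality_trace}). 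So the proposal is correct and takes essentially the same route the paper intends; the only caveat is that, as you already note, it is not and cannot be a self-contained proof---the substantive content (the $\Ind\Coh$ K\"unneth/base-change theory for quasi-smooth stacks, the microlocal identification of the essential image with $\Ind\Coh_{\Lambda_{X/Y}}(LY)$) lives in the cited reference, exactly as in the paper.
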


\begin{remark}
It follows that the equivalence on trace categories induced by \eqref{eq:Serre_duality_Hecke} is also given by Serre duality, i.e, the following diagram naturally commutes
\beq
\label{eq:Serre_duality_trace}
\xymatrix{
	hh(\Ind\Coh(X \times_Y X) ) ^\vee  \ar[rr]^{hh(\DD^{\textup{Serre}}_{X \times_Y X})} \ar[d]_-\sim && hh(\Ind\Coh(X \times_Y X)^!)  \ar[d]_-\sim  \\
	\Ind\Coh_{\Lambda_{X/Y}}(LY)^\vee   \ar[rr]^{\DD^{\textup{Serre}}_{LY}}  &&	\Ind\Coh_{\Lambda_{X/Y}}(LY)
}
\eeq 
\end{remark}

\begin{ex}\label{ex:tr O} In the above situation, we compute $\tr(\D_{*}\cO_{X})$, where $\D: X\to X\times_{Y}X$ is the diagonal map. We have a commutative diagram with a derived Cartesian square on the left
\begin{equation*}
\xymatrix{ X\ar[d]^{\D} & \ar[l]_{p_{X}} LX\ar[d]^{\ph} \ar[dr]^{Lf}\\
X\times_{Y}X & \ar[l]_-{\d} LY\times_{Y}X \ar[r]^-{\pi}& LY
}
\end{equation*}
By base change we have
\begin{equation*}
\tr(\D_{*}\cO_{X})=\pi_{*}\d^{*}\D_{*}\cO_{X}\simeq \pi_{*}\ph_{*}p_{X}^{*}\cO_{X}=(Lf)_{*} \cO_{LX}.
\end{equation*}
\end{ex}

\subsubsection{Decended trace of structure/dualizing sheaf.}
\label{section:decended_trace_O}

We continue with the above general setup.

The structure sheaf $ \cO_{X \times_Y X} $ is naturally a coalgebra object in $\Ind\Coh(X \times_Y X)$ under convolution: its comultiplication is given by the unit of adjunction
\beq
\label{eq:coalgebra_O}
\xymatrix{
	\cO_{ X \times_Y X } \ar[r] &  m_* m^*\cO_{ X \times_Y X } \simeq 	\cO_{X \times_Y X}  \star \cO_{X \times_Y X}.  
}
\eeq

Similarly, the dualizing sheaf $\omega_{X \times_Y X}$ is naturally an algebra object in
 in $\Ind\Coh(X \times_Y X)^!$
 under $!$-convolution: its multiplication is given by the counit of adjunction
\begin{equation*}
\xymatrix{
	\omega_{X \times_Y X}  \star^! \omega_{X \times_Y X} \simeq m_* m^! \omega_{X \times_Y X}  \ar[r] &  \omega_{X \times_Y X}
}
\end{equation*}

In fact, the coalgebra $\cO_{X \times_Y X}$, viewed as an algebra in $\Ind\Coh(X \times_Y X)^\vee$, and the algebra $\omega_{X \times_Y X}$ are identified under the monoidal equivalence \eqref{eq:Serre_duality_Hecke}. 

Now we have the following calculations of the descended traces of  $\cO_{X \times_Y X}$ and $\omega_{X \times_Y X}$:

\begin{prop}  
	\label{prop:descended_O}
	Assume further that $X \to Y$ is surjective.
	 \begin{enumerate}
	\item Under the equivalence of Theorem~\ref{thm:spec tr}(1)
	\begin{equation*}
	\xymatrix{
	hh(\Ind\Coh(X \times_Y X))  \ar[r]^-\sim & \Ind\Coh_{\Lambda_{X/Y}}(LY)
	}
	\end{equation*} 
	there is a canonical identification of the descended trace of the coalgebra object $\cO_{X \times_Y X}$
	with the structure sheaf
	\begin{equation*}
	\ol \tr( \cO_{X \times_Y X}) \simeq \cO_{LY} .
	\end{equation*}
	\item Similarly, under the equivalence  of Theorem~\ref{thm:spec tr}(2)
	\begin{equation*}
	\xymatrix{
	hh(\Ind\Coh(X \times_Y X)^!)  \ar[r]^-\sim & \Ind\Coh_{\Lambda_{X/Y}}(LY)
	}
	\end{equation*} 
	there is a canonical identification of the descended trace of the algebra object $\omega_{X \times_Y X}$
	with the dualizing sheaf
	\begin{equation*}
	\ol \tr( \omega_{X \times_Y X}) \simeq \omega_{LY} .
	\end{equation*}
\end{enumerate}	
\end{prop}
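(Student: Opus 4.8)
The plan is to prove part (1) in detail and deduce part (2) by applying Serre duality via the commuting square \eqref{eq:Serre_duality_trace}, together with the fact recalled after the statement that $\cO_{X\times_Y X}$ (as an algebra in the dual monoidal category) and $\omega_{X\times_Y X}$ correspond under $\DD^{\textup{Serre}}_{X\times_Y X}$. So the heart of the matter is (1), and the heart of (1) is to identify the limit
\begin{equation*}
\ol\tr(\cO_{X\times_Y X}) \simeq \lim_{\Delta}\bigl[\tr(\cO_{X\times_Y X}) \rightrightarrows \tr(\cO_{X\times_Y X}\star\cO_{X\times_Y X}) \cdots\bigr]
\end{equation*}
with $\cO_{LY}$. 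First I would compute the terms of this cosimplicial object. Writing $X^{[n]} = X\times_Y\cdots\times_Y X$ ($n$ factors) and using that convolution of structure sheaves along proper maps is computed by pushforward, one gets $\cO_{X\times_Y X}^{\star n}\simeq m_{n,*}\cO_{X^{[n+1]}}$ where $m_n\colon X^{[n+1]}\to X\times_Y X$ remembers only the first and last factors; hence by the base-change computation of Example~\ref{ex:tr O} (or rather its evident iteration, replacing $\D$ by the map $X^{[n+1]}\to X\times_Y X$), each term $\tr(\cO_{X\times_Y X}^{\star n})$ is $(Lf_n)_*\cO_{LX^{[n+1]}}$ for the appropriate map to $LY$, where $LX^{[n+1]}$ is really $X^{[n+1]}\times_{X\times X}X = $ the space of loops in $Y$ together with a chain of $n+1$ points of $X$ over the basepoint. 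In other words the cosimplicial object is the $*$-pushforward to $LY$ of the Čech nerve of the (surjective, since $X\to Y$ is surjective and proper) augmented simplicial object $\{LX^{[n+1]}\}\to LY$.

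Next I would invoke cohomological descent: because $X\to Y$ is proper and surjective between smooth stacks, the map $LX = X\times_{X\times X}X\to LY$ (this is the $n=0$ term, $LX\simeq X\times_Y X$ wait — more precisely the map $\pi\colon LY\times_Y X\to LY$ of \eqref{corr loop}) is proper and surjective, so its Čech nerve in $\Ind\Coh$ computes $\cO_{LY}$ by $*$-descent for proper surjective morphisms. Concretely, $\lim_\Delta [\pi_*\cO \rightrightarrows \pi_*\pi'_*\cO\cdots]\simeq \cO_{LY}$ where the $n$-th term is the pushforward of the structure sheaf of the $(n+1)$-fold fiber power of $LY\times_Y X$ over $LY$; and this $(n+1)$-fold fiber power over $LY$ is exactly $LY\times_Y X^{[n+1]}$, matching the terms computed above. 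The cosimplicial structure maps (expansions via the comultiplication \eqref{eq:coalgebra_O}, which is the unit of the $(m^*,m_*)$-adjunction) match the Čech coface maps by construction. So the limit is $\cO_{LY}$, and one checks the resulting identification lands in and agrees with the equivalence $hh(\Ind\Coh(X\times_Y X))\simeq \Ind\Coh_{\Lambda_{X/Y}}(LY)$ of Theorem~\ref{thm:spec tr}(1) — this is automatic since that equivalence is induced by $\tr$ and $\pi_*\delta^*$, and our cosimplicial object is built from $\tr$ applied to the bar object of the coalgebra.

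The main obstacle I anticipate is making the descent argument rigorous in $\Ind\Coh$ rather than $\QCoh$: $*$-pullback and cohomological descent for $\Ind\Coh$ along non-representable (stacky), merely proper-surjective morphisms requires some care — one wants a statement of the form "$\cO_Y\simeq \lim_\Delta f_{\bullet *}\cO_{X_\bullet}$ for $f\colon X\to Y$ proper surjective, $X_\bullet$ the Čech nerve". For schemes/algebraic spaces this is classical ($h$-descent, or proper descent à la Deligne); for the stacks appearing here (global quotients $\dG/\dG$, $\dB/\dB$) one reduces to the underlying schemes after a smooth atlas, or alternatively argues directly: since $X\to Y$ is proper, $f_*$ has a continuous right adjoint behaviour making the comonad $f_*f^*$ compute descent, and smoothness of $X,Y$ plus the smallness remark after the definition of $\St_{\dG}$ controls the derived structure so that $f^*$ and $f^!$ differ by a line bundle and everything is well-behaved. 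A secondary, purely bookkeeping obstacle is matching the cosimplicial \emph{coface} maps on the two sides — the coalgebra comultiplication side versus the Čech side — but both are the units of the same adjunction $(m^*, m_*)$ iterated, so this is a diagram chase rather than a real difficulty. Finally, for part (2) the only thing to verify beyond transporting along \eqref{eq:Serre_duality_trace} is that $\DD^{\textup{Serre}}_{LY}(\cO_{LY})\simeq\omega_{LY}$, which is the definition of the dualizing sheaf.
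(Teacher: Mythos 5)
Your overall strategy -- compute the bar/cobar resolution, recognize it as a Čech nerve of $\pi\colon LY\times_Y X\to LY$, and invoke descent for proper surjective maps -- is the right one, and your bookkeeping of the cosimplicial terms is correct. But you have chosen to do the hard half directly, and the "obstacle" you flag is genuine and does not go away with the remedies you gesture at. The paper proves part~(2) directly and deduces~(1) by Serre duality, which is exactly the opposite order from yours, and this choice is not cosmetic.

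Here is why. For part~(2) the relevant resolution is the \emph{simplicial} object (a colimit) built from the comonad $\pi_*\pi^!$. The comonadic Barr--Beck theorem applies cleanly because $\pi^!$ is conservative (as $\pi$ is proper and surjective) and preserves colimits (as $\pi$ is quasi-smooth); no further input is needed. For part~(1) the relevant resolution is the \emph{cosimplicial} object (a limit) built from the monad associated to the adjunction $(\pi^*,\pi_*)$. To run the monadic Barr--Beck argument you would need $\pi_*$ to be conservative, and it is not: already for $X=\PP^1\to Y=\mathrm{pt}$ we have $\pi_*\cO(-1)=0$. So you cannot invoke "$*$-descent" as a formal consequence of Barr--Beck; what you would need is a specific theorem of proper $*$-descent for structure sheaves in $\Ind\Coh$ (a genuine and nontrivial statement, closer to Deligne's cohomological descent or proper $h$-descent), and your proposed reductions -- smooth atlas, control of the derived structure, twisting $f^*$ against $f^!$ -- do not by themselves produce it. The paper sidesteps all of this by proving the $\omega$-statement first: conservativity of $\pi^!$ is free, and then $\cO_{LY}$ is obtained from $\omega_{LY}$ by the monoidal Serre duality equivalence \eqref{eq:Serre_duality_Hecke} and its effect on traces \eqref{eq:Serre_duality_trace}, exactly as you plan to do in the other direction. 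So flip your argument around: establish~(2) by the $\pi_*\pi^!$-bar resolution, then deduce~(1) by duality, and the troublesome $*$-descent disappears from the proof entirely. (The paper also records a second, even more direct proof of~(2) in the following remark, identifying $\Bimod_{\cO_{X\times_Y X}}(\Ind\Coh(X\times_Y X))$ with $\QCoh(Y)$ by $!$-descent and then using the identification $hh(\QCoh(Y))\simeq\QCoh(LY)$, which may be worth knowing, but the main proof is the bar resolution just described.)
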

\begin{proof}
	We prove (2); then (1) follows from the Serre duality equivalences \eqref{eq:Serre_duality_Hecke}, \eqref{eq:Serre_duality_trace}. 
	
	For the natural map $\pi: LY \times_Y X \to LY$, we have the adjunction 
	$$  \pi_*: \Ind\Coh(LY \times_Y X) \rightleftarrows \Ind\Coh_{{\pi_*(T^{-1}_{LY \times_Y X})}}(LY): \pi^!$$
	Moreover, $\pi^!$ is conservative (because $\pi$ is proper, and by \cite[Proposition 7.8.3]{arinkinSingularSupportCoherent2015}) and preserves colimits (because $\pi$ is quasi-smooth). Also the object $\omega_{LY}$ belongs to  $\Ind\Coh_{{\pi_*(T^{-1}_{LY \times_Y X})}}(LY) $ because $\pi$ is surjective and hence $\pi_*(T^{-1}_{LY \times_Y X})$ contains the zero section, which is the singular support of $\omega_{LY}$. Therefore by Berr-Beck, the canonical resolution associated to the comonad $\pi_*\pi^!$ is an equivalence:
	\begin{equation*}
	\xymatrix{
		\omega_{LY} & \ar[l]_-\sim  \colim_{\Delta^{op}} [ \pi_*\pi^!\omega_{LY} & \ar@<-0.5ex>[l]\ar@<0.5ex>[l] 
		\pi_*\pi^!\pi_*\pi^!\omega_{LY} & \ar@<-1ex>[l]  \ar[l] \ar@<1ex>[l] 
		\pi_*\pi^!\pi_*\pi^!\pi_*\pi^!\omega_{LY} \cdots]
	}
	\end{equation*}
	By standard identities including base-change, the canonical resolution can be  identified with 
	the simplicial object 	\begin{equation*}
	\xymatrix{
		 \tr (\omega_{X \times_Y X}) & \ar@<-0.5ex>[l]\ar@<0.5ex>[l] 
		\tr(\omega_{X \times_Y X} \star^! \omega_{X \times_Y X} ) & \ar@<-1ex>[l]  \ar[l] \ar@<1ex>[l] 
		\tr(\omega_{X \times_Y X} \star^! \omega_{X \times_Y X} \star^!  \omega_{X \times_Y X} ) \cdots}
	\end{equation*}
	computing the descended trace  of $\om_{Y\times_{X}Y}$.
\end{proof}

\begin{remark} Here is a more direct proof of (2) of the theorem that in fact motivates the definition of the descended trace.

Set $\cA = \Ind\Coh(X \times_Y X)$,  $\cB = \Bimod_{ \cO_{X \times_Y X}}( \Ind\Coh(X \times_Y X))$.
By $!$-descent, we have a monoidal equivalence
$
\cB \simeq 
\QCoh(Y)
$
 where  the monoidal structure on $\QCoh(Y)$ is given by the tensor product.
Moreover, under this equivalence the regular bimodule $ \cO_{X\times_Y X} \in \cB$ corresponds to the structure sheaf
$ \cO_{Y}\in \QCoh(Y)$.

Thanks to~\cite{ben-zviIntegralTransformsDrinfeld2010}, we have a commutative diagram 
\begin{equation*}
\xymatrix{
\ar[d]_-{\tr}  \QCoh(Y) \ar[dr]^-{p^*}\\
hh( \QCoh(Y))  \ar[r]^-\sim & \QCoh(LY)
}
\end{equation*} 
where $p:LY \to Y$ is the natural base-point projection. 
 Hence we have an equivalence
\begin{equation*}
\QCoh(LY)\simeq hh(\QCoh(Y))\simeq hh(\cB)
\end{equation*}
Under this equivalence, the natural map $h: hh(\cB)\to hh( \cA)$ is identified with the inclusion $i: \QCoh(LY)  \incl \Ind\Coh_{\L_{X/Y}}(LY)$,
where we view $ \QCoh(LY) \subset  \Ind\Coh(LY)$ as ind-coherent sheaves with singular support in the zero-section.


Thus we conclude 
\begin{equation*}
\ol \tr( \cO_{X \times_Y X}) = h(\tr_\cB(  \cO_{X \times_Y X} ))  \simeq i( \tr(\cO_{Y}) )\simeq p^*\cO_{Y} \simeq  \cO_{LY}.
\end{equation*}
\end{remark}

\subsubsection{Application to Steinberg stack}\label{s:app to steinberg}

Now we specialize our prior setup to  $X \to Y$ the induction map $\dB/\dB \to \dG/\dG$
so that $X\times_Y X =  \St_{\dG}$.  By Example~\ref{ex:loop St}, we have $LY = Z^2_{\dG}$ and $\L_{X/Y}=\cN$ the nilpotent codirections in $T^{*-1}Z^2_{\dG}$.
We immediately conclude the following:


\begin{cor}\label{c:OZ OSt}
	Under the equivalence
	\beq\label{spec cocenter}
	\xymatrix{
		hh(\Ind\Coh(\St_{\dG}))  \ar[r]^-\sim & \Ind\Coh_\cN(Z^2_{\dG})
	}
	\eeq 
	there is a canonical identification of the descended trace of the coalgebra object $\cO_{\St_{\dG}}$
	with the structure sheaf
	\begin{equation*}
	\ol \tr( \cO_{\St_{\dG}}) \simeq \cO_{Z^2_{\dG}} .
	\end{equation*}
\end{cor}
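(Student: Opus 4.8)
The plan is to obtain the corollary as a direct specialization of Proposition~\ref{prop:descended_O}(1). First I would put $X = \dB/\dB$ and $Y = \dG/\dG$, with $X\to Y$ the induction map of adjoint quotients induced by $\BB\dB\to\BB\dG$; under the identification $\dB/\dB\simeq\wt{\dG}/\dG$ this is the Grothendieck--Springer map forgetting the Borel reduction. Then $X\times_Y X = \St_{\dG}$ by definition (Example~\ref{ex:BBGG}), $LY = L(\dG/\dG) = Z^2_{\dG}$, and $\Lambda_{X/Y} = \cN$, the nilpotent codirections in $T^{*-1}Z^2_{\dG}$, by Example~\ref{ex:loop St} (which invokes \cite{ben-zviSpectralIncarnationAffine2017}); the equivalence \eqref{spec cocenter} is then Theorem~\ref{thm:spec tr}(1) for this choice of $X\to Y$.

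The only thing to check before applying the proposition is that $X\to Y$ satisfies its hypotheses: $X$ and $Y$ are smooth stacks, and $X\to Y$ is proper and surjective. Smoothness of $\dG/\dG$ and $\dB/\dB$ is immediate, since each is the quotient of a smooth variety by a smooth group acting by conjugation. Properness holds because $X\to Y$ is a base change of $\BB\dB\to\BB\dG$, whose fibers are the flag variety $\dG/\dB$; surjectivity holds because every element of $\dG$ lies in some Borel subgroup, so $\wt{\dG}\to\dG$ is surjective. (One may also note that, by smallness of the Grothendieck alteration $\wt{\dG}\to\dG$, the derived structure on $\St_{\dG}$ is trivial, so no care is needed about derived versus naive fiber products here.) With these verifications, Proposition~\ref{prop:descended_O}(1) applies verbatim and gives $\ol\tr(\cO_{\St_{\dG}})\simeq\cO_{Z^2_{\dG}}$ under \eqref{spec cocenter}.

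I do not expect any genuine obstacle: all of the substance has been front-loaded into Proposition~\ref{prop:descended_O} (whose proof uses Barr--Beck descent along $\pi : LY\times_Y X\to LY$ and the Serre-duality compatibility \eqref{eq:Serre_duality_trace}) and into the geometric identifications of \cite{ben-zviSpectralIncarnationAffine2017}. If forced to name the most delicate point, it would be bookkeeping: making sure that the coalgebra structure on $\cO_{\St_{\dG}}$ used here is exactly the one of \eqref{eq:coalgebra_O} for convolution, so that the descended trace is computed with the correct comultiplication; this is, however, already part of the statement of Proposition~\ref{prop:descended_O}(1).
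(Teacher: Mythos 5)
Your proposal is correct and matches the paper's own proof exactly: the corollary is obtained by specializing Proposition~\ref{prop:descended_O}(1) to $X=\dB/\dB\to Y=\dG/\dG$, with $X\times_Y X=\St_{\dG}$, $LY=Z^2_{\dG}$, and $\Lambda_{X/Y}=\cN$ via Examples~\ref{ex:BBGG} and~\ref{ex:loop St}. The extra verifications you spell out (smoothness, properness, surjectivity) are left implicit in the paper but are indeed the hypotheses needed.
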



\subsection{Automorphic realization}


In this subsection, we give a presentation of the cocenter of the universal Hecke category $\cH_{\cG}$ using ``partial cocenters''. The partial cocenters are then interpretated geometrically using parabolic character sheaves introduced by Lusztig.

\sss{Character sheaves as cocenter}\label{sss:CS G} 
We first review the interpretation of character sheaves on $G$ as the cocenter of $\cH_{G}$. Various versions of this statement appear in the literature, see \cite{ben-zviCharacterTheoryComplex}, \cite{bezrukavnikovCharacterDmodulesDrinfeld2012} and \cite{lusztigTruncatedConvolutionCharacter} 
We will state here a universal monodromic version.

Let $\Sh_{\cN}(G/G)$ be the full subcategory of $G$-equivariant sheaves on $G$ with nilpotent singular support. This is a universally monodromic version of character sheaves. Consider the horocycle correspondence 
\beq\label{horo diagram G}
\xymatrix{
U\bs G/U 
& \ar[l]_-{\d_G}  \f{G }{U}   \ar[r]^-{\pi_G} &  \f{G}{G} 
}
\eeq
Then we have the horocycle functor
\beq\label{horo G}
\xymatrix{
{\g:=\pi_{G!}\delta_G^*} : \cH_G = \Sh_{\bimon}(U\bs G/U)  \ar[r] & \Sh_\cN(G/G).
 }
\eeq

The following result is a special case of Theorem~\ref{thm: geom descent} which we will prove in Section~\ref{sect:horo desc}.

\begin{theorem}\label{thm:CS G} There is a canonical equivalence
\begin{equation*}
\xymatrix{hh(\cH_{G})\ar[r]^-\sim &  \Sh_\cN(G/G)}
\end{equation*}
such that the composition $\cH_{G}\xr{\tr_{G}} hh(\cH_{G})\simeq \Sh_\cN(G/G)$ is identified with $\g$.
\end{theorem}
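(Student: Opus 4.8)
The statement is the special case $L=G$, applied to the regular bimodule, of the general geometric-descent theorem (Theorem~\ref{thm: geom descent}) proved in Section~\ref{sect:horo desc}; I will describe how that result is obtained, running parallel to the $\Ind\Coh$ computation of Section~\ref{ss:spec realization} with the correspondence $X\to Y$ there replaced by $\BB U\to\BB G$ and $\Ind\Coh$ replaced by $\Sh_\cN$. First I would record the geometric setup. Since $U\bs G/U\simeq\BB U\times_{\BB G}\BB U$, and convolution on $\cH_G=\Sh_\cN(\BB U\times_{\BB G}\BB U)$ is pull-push along $\BB U\times_{\BB G}\BB U\times_{\BB G}\BB U$, the monoidal category $\cH_G$ is precisely the convolution category attached to $\BB U\to\BB G$ -- the constructible analogue of $\Ind\Coh(\St_{\dG})$ in Example~\ref{ex:BBGG}. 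Its loop space is $L\BB G=\BB G\times_{\BB G\times\BB G}\BB G\simeq G/G$, the inertia/adjoint quotient, and one computes $L\BB G\times_{\BB G}\BB U\simeq\f{G}{U}$; moreover the loop correspondence $\BB U\times_{\BB G}\BB U\xleftarrow{}L\BB G\times_{\BB G}\BB U\xrightarrow{}L\BB G$ of \eqref{corr loop} becomes, under these identifications, exactly the horocycle correspondence~\eqref{horo diagram G}, with left leg $\delta_G$ (the double-coset map) and right leg $\pi_G$. Thus the expected formula for the trace map -- pull-push along the loop correspondence, by analogy with Theorem~\ref{thm:spec tr} and Proposition~\ref{prop:descended_O} -- is $\pi_{G!}\delta_G^*=\g$; that $\pi_{G!}$ preserves the nilpotent singular-support condition and agrees with $\pi_{G*}$ up to shift on monodromic sheaves is checked by factoring $\pi_G$ through the $H$-monodromic fibration $\f{G}{U}\to\f{G}{B}$ and the proper map $\f{G}{B}\to G/G$, exactly as in the finite-Hecke discussion around~\eqref{push H torsor}.

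Next I would prove the equivalence by presenting $hh(\cH_G)$ through its cyclic bar complex $hh(\cH_G)\simeq\colim_{[n]\in\Delta^{\mathrm{op}}}\cH_G^{\otimes(n+1)}$ (relative to the unit subcategory, as in Section~\ref{ss:hh}), whose face maps are the convolution correspondences and whose ``wrap-around'' face map uses the two-sided $\cH_G$-module structure. Using the gluing/K\"unneth properties of universal monodromic sheaf categories on stacks assembled from copies of $\BB U$ and $\BB H$ -- i.e.\ that these tensor products of sheaf categories realize the corresponding fiber products of stacks, compatibly with the singular-support conditions -- one identifies this simplicial category with $[n]\mapsto\Sh_\cN(Z_n)$, where $Z_\bullet$ is the \emph{cyclic nerve} of the horocycle fibration $\f{G}{U}\to G/G$: concretely $Z_n$ is the stack of $(n{+}1)$-gons in $\BB G$ with vertices lifted to $\BB U$. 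The one genuinely nontrivial point in this matching is that the wrap-around face map is realized by the \emph{conjugation} gluing that closes the polygon; this is exactly what makes the geometric realization the \emph{adjoint} quotient $G/G=L\BB G$ rather than $G$, and it is the combinatorial core of the identification. Since $\BB U\to\BB G$ is a smooth surjection, the augmented simplicial stack $Z_\bullet\to G/G$ exhibits $G/G$ as $|Z_\bullet|$; passing to sheaves, this becomes the \emph{codescent} equivalence $\Sh_\cN(G/G)\simeq\colim_{[n]\in\Delta^{\mathrm{op}}}\Sh_\cN(Z_n)$, with structure functors the $!$-pushforwards along the (proper, once the $H$-monodromic directions are removed) face maps -- the constructible analogue of the Bar--Beck argument in the proof of Proposition~\ref{prop:descended_O}. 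What makes such a colimit statement hold for the a priori limit-type sheaf theory $\Sh_\cN$ is the \emph{unipotence} of $U$ (a contraction principle): sheaves are locally constant along $\BB U$-directions, so pushforward there is an equivalence onto the monodromic subcategory and leaves the singular-support condition untouched. Combining the codescent equivalence with the cyclic-bar identification yields the canonical equivalence $hh(\cH_G)\simeq\Sh_\cN(G/G)$, and tracking the structure functor for the $0$-simplex -- $\cH_G=\Sh_\cN(Z_0)=\Sh_\cN(\f{G}{U}?)$ via $\delta_G,\pi_G$ -- identifies the trace map $\tr_G$ with $\pi_{G!}\delta_G^*=\g$, which is the second assertion.

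The formal skeleton above (cyclic bar $\Rightarrow$ cyclic nerve of the horocycle fibration $\Rightarrow$ loop space, trace map $\Rightarrow$ horocycle pull-push) is essentially forced; the real work, carried out in full in Section~\ref{sect:horo desc}, is the sheaf-theoretic bookkeeping: establishing the K\"unneth/gluing isomorphisms for universal monodromic categories, and, above all, the codescent equivalence for $\Sh_\cN$ along the non-representable, non-proper horocycle maps while keeping the nilpotent singular-support condition under control uniformly in the simplicial degree, together with all the coherences assembling the cyclic structure and matching face maps with the geometric gluings. The unipotence of $U$ is what makes the codescent hold at all, but turning the heuristic ``$\Sh_\cN$ is insensitive to $\BB U$-directions'' into the required statement about colimits of categories-with-singular-support is the delicate part, and is the principal obstacle.
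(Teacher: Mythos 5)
Your high-level strategy matches the paper's: the theorem is a special case of the general geometric descent theorem (Theorem~\ref{thm: geom descent}), whose proof identifies the relative Hochschild bar complex of $\cH_G$ (relative to $\cH_H$) with sheaf categories on the cyclic nerve of the horocycle fibration, then augments by $\Sh_\cN(G/G)$ and verifies Lurie's cosimplicial-limit criterion \cite[Cor.~4.7.6.3]{lurieHigherAlgebra2012}. The identification of the trace map with $\gamma=\pi_{G!}\delta_G^*$ falls out exactly as you describe, by tracking the augmentation. The K\"unneth/gluing for monodromic categories with prescribed singular support (\cite[Lemma A.4.3]{nadlerAutomorphicGluingFunctor}) is indeed where you say it is.

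However, you misidentify the mechanism that makes the codescent work. The genuine bottleneck is proving conservativity of the right adjoint $hc_*=\delta_*\varepsilon^!$ of the horocycle augmentation, and this is \emph{not} a consequence of ``unipotence of $U$'' or the heuristic that $\Sh_\cN$ is insensitive to $\BB U$-directions. Indeed, for a smooth map $\delta$ with affine-space fibers, $\delta_*$ alone is not conservative (take $j:\AA^1\smallsetminus\{0\}\hookrightarrow\AA^1$; then $R\Gamma(\AA^1,j_!k)=0$), so some input beyond unipotence is needed. In the paper, conservativity is established by the strictly stronger assertion that $ch\circ hc_*$ contains the identity functor as a direct summand. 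The proof unwinds this composite into $\pi_{1*}\uHom(\pi_!k,a_1^!\cF)[-2\nu]$, where $\pi$ is a base change of the Springer resolution $\frac{N}{\Ad(B)}\to\frac{G}{\Ad(G)}$, and then uses the classical fact that the Springer sheaf $\pi_!k$ contains the skyscraper at $1\in G$ as a direct summand. This is a genuinely Springer-theoretic input, not a formal contraction principle; it is also where the nilpotent singular-support hypothesis does real work. Contrast with the spectral side: in the analogous Barr--Beck argument for $\Ind\Coh(\St_{\dG})$ (Proposition~\ref{prop:descended_O}), conservativity of the comonad comes for free from $\pi$ being proper and surjective, because $\pi^!$ is conservative for coherent sheaves; on the Betti side, the horocycle maps factor through an $H$-torsor (so are not proper), and the Springer-sheaf summand is the substitute. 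The remaining ingredients of Lurie's criterion -- continuity, from the Lagrangian hypothesis via Proposition~\ref{p:cont 4 functors}, and left adjointability of the relevant squares -- are base-change checks, as you anticipate. So your outline is structurally right, but the ``principal obstacle'' is not what you say it is: it is the Springer-theoretic conservativity, not a unipotence-driven contraction.
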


\subsubsection{Hochschild homology under $\cH_{\cG^{\c}}$}

Recall $\cG^{\c}$ is the neutral component of the loop group $\cG$ and $\cH_{\cG^{\c}}\subset \cH_{\cG}$ is the full subcategory of objects supported on $\cG^{\c}$. 

For any finite type $J\sft I^{a}$, we have the (universal) finite Hecke category $\cH_{L_J}$ of the Levihoric $L_{J}$, which lies inside $\cH_{\cG^{\c}}$. Any $\cH_{\cG^{\c}}$-bimodule can be viewed as a $\cH_{L_{J}}$-bimodule and we can form the Hochschild homology $hh(\cH_{L_{J}}, \cM)$. For $J\subset J'$ both of finite type, we have a natural functor $hh(\cH_{L_{J}}, \cM)\to hh(\cH_{L_{J'}}, \cM)$.

\begin{cor}[of Theorem~\ref{thm:colim}]\label{cor:hh of colim}

For any $\cH_{\cG^{\c}}$-bimodule $\cM$,  the natural maps induce an equivalence
\beq\label{colim hh LJ}
\xymatrix{
\colim_{{J \sft I^a}} hh(\cH_{L_J}, \cM) \ar[r]^-\sim  & hh(\cH_{\cG^{\c}}, \cM)
}
\eeq

Moreover, for each $J \sft I^a$, the equivalence 
naturally extends to a commutative diagram
\begin{equation*}
\xymatrix{
&\ar[dl]_-{{\tr_{J}}} \ar[d] \cM \ar[dr]^-{\tr}\\
hh(\cH_{L_J}, \cM)  \ar[r] &  \colim_{J' \sft I^a} hh(\cH_{L_{J'}}, \cM) \ar[r]^-\sim  & hh(\cH_{\cG^{\c}}, \cM)
}
\end{equation*} 
where the diagonal arrows are traces, and the left horizontal arrow is the natural map. 
\end{cor}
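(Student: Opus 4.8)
The plan is to obtain the corollary by applying the Hochschild homology functor $hh(-,\cM)$ to the colimit presentation $\cH_{\cG^\c}\simeq\colim^{\ot}_{J\sft I^a}\cH_{L_J}$ furnished by Theorem~\ref{thm:colim}. First one observes that for each inclusion $J\subset J'$ of finite type subsets, restricting $\cM$ along the monoidal embeddings $\cH_{L_J}\subset\cH_{L_{J'}}\subset\cH_{\cG^\c}$ produces a map of pairs $(\cH_{L_J},\cM)\to(\cH_{L_{J'}},\cM)$ of an algebra together with a bimodule over it, hence a transition functor $hh(\cH_{L_J},\cM)\to hh(\cH_{L_{J'}},\cM)$ and a compatible family of trace maps $\tr_J\colon\cM\to hh(\cH_{L_J},\cM)$; this is what makes the left-hand colimit and the asserted triangle meaningful. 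The colimit of Theorem~\ref{thm:colim} is formed in algebra objects in $\cH_H$-bimodules, but since every functor in sight is $\cH_H$-linear and the forgetful functor from algebras under $\cH_H$ to $\mathrm{Alg}(\St^L_k)$ creates colimits of our (nonempty) poset-indexed diagram, one may equally regard $\cH_{\cG^\c}\simeq\colim_{J\sft I^a}\cH_{L_J}$ as a colimit in $\mathrm{Alg}(\St^L_k)$ and work absolutely.

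The crux is the general fact that Hochschild homology with coefficients in a \emph{fixed} bimodule sends colimits of algebras to colimits: if $\cA\simeq\colim_i\cA_i$ in $\mathrm{Alg}(\St^L_k)$ and $\cM\in\Bimod_\cA(\St^L_k)$, then the natural map $\colim_i hh(\cA_i,\Res_{\cA_i}\cM)\to hh(\cA,\cM)$ is an equivalence. I would prove this by viewing $hh$ as the functor on the total category $\int_{\cA}\Bimod_\cA(\St^L_k)\to\St^L_k$ sending $(\cA,\cM)$ to $\cM\otimes_{\cA\otimes\cA^{op}}\cA$: this functor inverts cocartesian edges (the base change identity $hh(\cA',\Ind_{\cA\to\cA'}\cM)\simeq hh(\cA,\cM)$) and preserves all colimits — one checks it on filtered colimits and on coproducts/pushouts, using that $\otimes$ on $\St^L_k$ is cocontinuous in each variable — while the diagram $i\mapsto(\cA_i,\Res_{\cA_i}\cM)$ has colimit $(\cA,\cM)$ because $\cM\simeq\colim_i\Ind_{\cA_i\to\cA}(\Res_{\cA_i}\cM)$ in $\Bimod_\cA(\St^L_k)$, a fact that comes out of the bar resolution and is also an instance of the general colimit computations assembled in Appendix~\ref{s:str sh}. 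Combining this with the first paragraph yields $\colim_{J\sft I^a}hh(\cH_{L_J},\cM)\simeq hh(\cH_{\cG^\c},\cM)$.

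The ``moreover'' part is then formal: the commuting squares~\eqref{eq:tr funct} for the morphisms $(\cH_{L_J},\cM)\to(\cH_{\cG^\c},\cM)$ exhibit the $\tr_J$ as a natural transformation from the constant diagram $\cM$ to $J\mapsto hh(\cH_{L_J},\cM)$; passing to the colimit and using the equivalence just established identifies the induced map with the trace $\tr\colon\cM\to hh(\cH_{\cG^\c},\cM)$, which is precisely the stated commutative diagram.

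The only step carrying genuine content is the colimit-preservation of $hh(-,\cM)$; the reduction of the first paragraph and the trace compatibility of the third are formal. The point to be careful about is that one must feed in restrictions of the \emph{ambient} bimodule $\cM$ rather than the regular bimodules $\cH_{L_J}$: the superficially similar assertion ``$hh$ commutes with colimits of algebras'' is simply false — already for free products of ordinary rings, where $HH_0$ fails to preserve coproducts — and it is the total-space/cocartesian-fibration formulation above that isolates the correct statement and lets the base change identity do the work.
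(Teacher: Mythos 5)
Your proposal is correct and in substance the same as the paper's argument: both reduce the statement to (i) the colimit presentation $\cH_{\cG^\c}\simeq\colim^{\ot}_J\cH_{L_J}$ from Theorem~\ref{thm:colim}, (ii) an expression of $\cM$ as a colimit of bimodules induced from the $\cH_{L_J}$'s, and (iii) the Hochschild base-change (shuffle) identity. The paper phrases step (ii) as the one-sided relative tensor identity $\cM\simeq\cH_{\cG^\c}\ot_{\cH_{\cG^\c}}\cM\simeq\colim_J\cH_{\cG^\c}\ot_{\cH_{L_J}}\cM$ (derived from $M_1\ot_{\cH_{\cG^\c}}M_2\simeq\colim_J M_1\ot_{\cH_{L_J}}M_2$), and then applies $hh(\cH_{\cG^\c},-)$ and cancels to get $hh(\cH_{\cG^\c},\cH_{\cG^\c}\ot_{\cH_{L_J}}\cM)\simeq\cH_{\cG^\c}\ot_{\cH_{L_J}\ot\cH^{op}_{\cG^\c}}\cM\simeq hh(\cH_{L_J},\cM)$. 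Your version packages the same content in cocartesian-fibration language over $\Alg(\St^L_k)$, with (ii) appearing as the two-sided statement $\cM\simeq\colim_J\Ind_{\cH_{L_J}\to\cH_{\cG^\c}}\Res_{\cH_{L_J}}\cM$ and (iii) as the cocartesian-edge inversion. The one place to tighten is your sub-claim that $hh$ ``preserves all colimits'' on the total Grothendieck construction: the clean way to say it is that a colimit in the total category is computed by first pushing each term forward to the fiber over $\colim\cA_i$ along cocartesian edges (which $hh$ inverts by base change) and then taking the colimit in that fiber (which $hh$ preserves, being a relative tensor product), rather than by ``checking on filtered colimits and pushouts.'' Both proofs, including the paper's, leave the relative-tensor colimit identity (ii) as a cited consequence of the monoidal colimit rather than spelling it out, so your appeal to the bar resolution there is in the same spirit. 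The ``moreover'' part is handled in the same formal way in both.
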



\begin{proof}
By Theorem~\ref{thm:colim}, we have $\cH_{\cG^{\c}}\simeq \colim_{J\sft I^{a}}\cH_{L_{J}}$ where  the colimit is taken within monoidal categories.  
Applying Corollary~\ref{cor: general hh of colimit of algs}, we  deduce
\begin{equation*}
\colim_{J\sft I^{a}}hh(\cH_{L_{J}},  \cM) \isom hh(\cH_{\cG^{\c}}, \cM).
\end{equation*}
The rest of the assertions are clear.
\end{proof}


\subsubsection{Hochschild homology under $\cH_{\cG}$}\label{sss:general}
Let $G$ be a connected reductive group. Now we would like to calculate $hh(\cH_{\cG}, \cM)$ for a $\cH_{\cG}$-bimodule $\cM$ in terms of Hochschild homology under various finite Hecke categories. 

First we recall some constructions of Varshavsky.  Let $\cD^\circ$ be the poset of {finite type subsets $J \sft I^a$} under inclusion. Consider the abelian group $\Om = N_{\cG}(\cI)/\cI$ with its action on $I^a$ and induced action on $\cD^\circ$. Let $\cD$ be the category with objects $J \sft I^a$, morphisms $J \to J'$ given by $\om \in \Om$ with $\om(J)  \subset J'$, and compositions induced by multiplication in $\Om$. In other words,  $\cD$ is the groupoid $\cD^{\c}/\Om$. Note the natural (faithful but not full) functor $i:\cD^\circ \to \cD$ which  is an equivalence if and only if $G$ is simply-connected.

%

For each $\om\in\Om$ we define a monoidal auto-equivalence of $\cH_{\cG}$ as follows. Choose a lifting $\dot\om$ of $\om$ in $N_{\cG}(\cI)$. Using $\dot\om$ as the base point, we identify $\cI\dot\om\cI/\cI^{u}$ with $H$  (via $\dot\om h\bij h$). Let $C_{\dot\om}\in \cH_{\cG}$ be the extension by zero of $\cL_{\univ}$ supported on $\cI\dot\om\cI/\cI^{u}\simeq H$. Note that $C_{\dot\om^{-1}}$ is the inverse of $C_{\dot\om}$ under the monoidal structure of $\cH_{\cG}$. We get a monoidal auto-equivalence
 \begin{equation*}
\xymatrix{c_{\dot\om}: \cH_{\cG} \ar[rr]^-{C_{\dot \om}\star(-)\star C_{\dot \om^{-1}}} & & \cH_{\cG}.}
\end{equation*}
We claim that $c_{\dot\om}$ is canonically independent of the choice of the lifting $\dot\om$. Indeed, for a different lifting $\ddot\om=\dot\om h$ for some $h\in H$, we have a canonical isomorphism 
\begin{equation*}
C_{\dot\om}\simeq C_{\ddot\om}\ot_{R}\cL_{\univ,h}.
\end{equation*}
Here $R=k[\xcoch(H)]$ and $\cL_{\univ,h}$ is the stalk of $\cL_{\univ}$ at $h\in H$, an invertible $R$-module. On the other hand, 
\begin{equation*}
C_{\dot\om^{-1}}\simeq C_{\ddot\om^{-1}}\ot_{R}\cL_{\univ,h^{-1}}.
\end{equation*}
Since $\cL_{\univ,h}$ and $\cL_{\univ,h^{-1}}$ are inverse to each other as invertible $R$-modules, the operations $c_{\dot \om}=C_{\dot \om}\star(-)\star C_{\dot \om^{-1}}$ and $c_{\ddot\om}=C_{\ddot \om}\star(-)\star C_{\ddot \om^{-1}}$ are canonically identified. 
Therefore we get a canonical monoidal auto-equivalence
\begin{equation}\label{Om action HG}
c_{\om}: \cH_{\cG} \to \cH_{\cG}.
\end{equation}
The canonicity of $c_{\om}$ implies that  they together give an action of $\Om$ on $\cH_{\cG}$ as a monoidal category.

The same construction shows that:  for any $\cH_{\cG}$-bimodule $\cM$, there is a canonical action of $\Om$ on $\cM$ such that $\om\in\Om$ acts by $C_{\dot \om}\star(-)\star C_{\dot \om^{-1}}$, for any lifting $\dot \om$ of $\om$ in $N_{\cG}(\cI)$.

If $\om\in \Hom_{\cD}(J,J')$, $c_{\om}$ sends $\cH_{L_{J}}$ to $\cH_{L_{J'}}$. Therefore, the diagram of Hecke categories $J \mapsto \cH_{L_J}$, for  $J \sft I^a$, naturally extends along $i$  to a functor from $\cD$ to monoidal categories. Using these functors,  for any $\cH_\cG$-bimodule $\cM$, restriction to $\cH_{L_J}$ for  $J \sft I^a$, 
naturally extends to a functor from $\cD$ to bimodules.

%
%

\begin{prop}\label{p:hh of colim gen} Let $G$ be a reductive group.  For any $\cH_\cG$-bimodule $\cM$,  the natural maps induce an equivalence
\begin{equation*}
\xymatrix{
\colim_{\cD} hh(\cH_{L_J}, \cM) \ar[r]^-\sim  & hh(\cH_{\cG}, \cM)
}
\end{equation*}

Moreover, for each $J \sft I^a$, the equivalence 
naturally extends to a commutative diagram
\begin{equation*}
\xymatrix{
&\ar[dl]_-{\tr} \ar[d]_-{\tr_J} \cM \ar[dr]^-{\tr}\\
hh(\cH_{L_J}, \cM)  \ar[r] &  \colim_{\cD} hh(\cH_{L_J}, \cM) \ar[r]^-\sim  & hh(\cH_{\cG}, \cM)
}
\end{equation*} 
where the diagonal arrows are traces, and the left horizontal arrow is the natural map. 

\end{prop}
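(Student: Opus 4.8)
The plan is to deduce the statement from Corollary~\ref{cor:hh of colim} by realizing $\cH_{\cG}$ as a crossed product of $\cH_{\cG^{\c}}$ with $\Omega$, and matching the passage from the poset $\cD^{\c}$ to the category $\cD=\cD^{\c}/\Omega$ on indexing categories with the corresponding passage on Hochschild homologies. First I would record the crossed-product structure. The connected components of $\cG$ are the cosets $\cG^{\c}\dot\omega$, $\omega\in\Omega$ (as $\pi_{0}(\cG)\cong\tilW/W^{a}=\Omega$), so there is an $\Omega$-grading $\cH_{\cG}=\bigoplus_{\omega\in\Omega}\cH_{\cG,\omega}$ with $\cH_{\cG,0}=\cH_{\cG^{\c}}$ a monoidal subcategory, each graded piece $\cH_{\cG,\omega}\simeq\cH_{\cG^{\c}}\star C_{\dot\omega}$ an invertible $\cH_{\cG^{\c}}$-bimodule, and $\cH_{\cG,\omega}\star_{\cH_{\cG^{\c}}}\cH_{\cG,\omega'}\simeq\cH_{\cG,\omega\omega'}$. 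Together with the canonical auto-equivalences $c_{\omega}$ of \eqref{Om action HG}, this gives a monoidal equivalence $\cH_{\cG}\simeq\cH_{\cG^{\c}}\rtimes\Omega$ in $\Alg(\St^{L}_{k})$, with no nontrivial cocycle precisely because the $c_{\omega}$ and the identifications $C_{\dot\omega}\star C_{\dot\omega'}\simeq C_{\dot\omega\dot\omega'}$ are canonical; correspondingly an $\cH_{\cG}$-bimodule $\cM$ is the $\cH_{\cG^{\c}}$-bimodule $\cM|_{\cH_{\cG^{\c}}}$ equipped with the canonical $\Omega$-action of the text, i.e.\ a module over $(\cH_{\cG^{\c}}\otimes\cH_{\cG^{\c}}^{op})\rtimes(\Omega\times\Omega)$.

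Next I would invoke the general identity for the Hochschild homology of a crossed product: for $\cB\in\Alg(\St^{L}_{k})$ with a $\Gamma$-action and a $(\cB\rtimes\Gamma)$-bimodule $\cM$, one has
\[
hh(\cB\rtimes\Gamma,\cM)\simeq hh(\cB,\cM|_{\cB})_{h\Gamma},
\]
with $\Gamma$ acting diagonally on $hh(\cB,\cM|_{\cB})=\cB\otimes_{\cB\otimes\cB^{op}}\cM$ through its actions on $\cB$ and on $\cM$. This is standard; one quick proof writes $\cB^{e}=\cB\otimes\cB^{op}$ and checks $\cB\rtimes\Gamma\simeq\Ind^{\cB^{e}\rtimes(\Gamma\times\Gamma)}_{\cB^{e}\rtimes\Delta\Gamma}\cB$ as $\bigl(\cB^{e}\rtimes(\Gamma\times\Gamma)\bigr)$-modules (the $(\Gamma\times\Gamma)$-orbit of the unit grading is all of $\Gamma$, with stabilizer the diagonal $\Delta\Gamma$, acting on the unit piece $\cB$ by the given $\Gamma$-action), whence by base change $hh(\cB\rtimes\Gamma,\cM)\simeq\cB\otimes_{\cB^{e}\rtimes\Delta\Gamma}\cM\simeq(\cB\otimes_{\cB^{e}}\cM)_{h\Gamma}$, using that tensoring over a crossed product $C\rtimes\Gamma$ computes the homotopy $\Gamma$-orbits of tensoring over $C$; alternatively one compares relative bar complexes over $\cB$ as in Section~\ref{ss:hh}. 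Applied with $\cB=\cH_{\cG^{\c}}$, $\Gamma=\Omega$, this yields $hh(\cH_{\cG},\cM)\simeq hh(\cH_{\cG^{\c}},\cM)_{h\Omega}$, the $\Omega$-action being diagonal through $c_{\omega}$ on $\cH_{\cG^{\c}}$ and the canonical action on $\cM$.

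On the other side, $\cD$ is by construction the Grothendieck construction of $B\Omega\to\Cat$ picking out the $\Omega$-action $J\mapsto\omega(J)$ on $\cD^{\c}$, and the functor $J\mapsto hh(\cH_{L_{J}},\cM)$ on $\cD$ is the combination of the analogous functor on $\cD^{\c}$ with the $\Omega$-action (via $c_{\omega}$ on the $\cH_{L_{J}}$ and on $\cM$). Since colimits over a Grothendieck construction are computed fiberwise and then over the base, $\colim_{\cD}hh(\cH_{L_{J}},\cM)\simeq\bigl(\colim_{\cD^{\c}}hh(\cH_{L_{J}},\cM)\bigr)_{h\Omega}$ (cf.\ the general methods of Appendix~\ref{s:str sh}); and by Corollary~\ref{cor:hh of colim}, which one checks is $\Omega$-equivariant — its equivalence intertwines the $\Omega$-action on the colimit with the above diagonal $\Omega$-action on $hh(\cH_{\cG^{\c}},\cM)$, since every step of its proof is built from the canonical data $c_{\omega}$ — the inner colimit is identified $\Omega$-equivariantly with $hh(\cH_{\cG^{\c}},\cM)$. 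Combining with the previous paragraph gives $\colim_{\cD}hh(\cH_{L_{J}},\cM)\simeq hh(\cH_{\cG^{\c}},\cM)_{h\Omega}\simeq hh(\cH_{\cG},\cM)$, and chasing the trace maps $\tr_{J}:\cM\to hh(\cH_{L_{J}},\cM)$, the colimit structure maps, and the identifications of Steps 1--2 through this chain produces the asserted commutative diagram.

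The main obstacle I anticipate is the $\Omega$-equivariance bookkeeping: establishing that $\cH_{\cG}\simeq\cH_{\cG^{\c}}\rtimes\Omega$ carries no nontrivial cocycle, and that the equivalence of Corollary~\ref{cor:hh of colim} is $\Omega$-equivariant. Both are ``soft'' statements, but they require care in the $\infty$-categorical setting because colimits, the relative tensor products defining Hochschild homology, and the $\Omega$-action must all be tracked coherently at once; by comparison the crossed-product Hochschild formula and the Fubini formula for colimits over a Grothendieck construction are standard inputs.
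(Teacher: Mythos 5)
Your proposal is correct in outline but genuinely different from the paper's proof, and it relocates the real work into two lemmas you name but do not carry out. You realize $\cH_{\cG}\simeq \cH_{\cG^{\c}}\rtimes\Om$ in $\Alg(\St^{L}_{k})$, invoke a crossed-product Hochschild formula $hh(\cB\rtimes\Gamma,\cM)\simeq hh(\cB,\cM|_{\cB})_{h\Gamma}$, combine with the identification $\colim_{\cD}\simeq(\colim_{\cD^{\c}})_{h\Om}$ (which is exactly what the paper deploys in the proof of Corollary~\ref{c:coin} via~\cite[Prop.~4.3.3.10]{lurieHigherToposTheory2009a}), and finish by asserting an $\Om$-equivariant enhancement of Corollary~\ref{cor:hh of colim}. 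The paper's proof instead makes a single reduction to the universal bimodule $\cM=\cH_{\cG}\otimes\cH_{\cG}$: since the comparison map for general $\cM$ is obtained from this one by $-\otimes_{\cH_{\cG}\otimes\cH_{\cG}^{op}}\cM$, it suffices to check one equivalence of $\cH_{\cG}$-bimodules, and this is done by computing the left Kan extension along $i:\cD^{\c}\to\cD$ termwise: at a fixed $J$ one has $(i_{!}\Phi)(J)=\bigoplus_{\om\in\Om}\cH_{\cG^{\c}}\otimes_{\cH_{L_{\om(J)}}}\cH_{\cG}$, and the map to $\cH_{\cG}\otimes_{\cH_{L_{J}}}\cH_{\cG}$ is an explicit direct sum of embeddings $i_{\om}$ that is visibly an equivalence. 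In effect the paper performs your $\Om$-decomposition once, at a fixed $J$, where it is an elementary pointwise isomorphism of categories with no higher coherence to track.

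The trade-off is the one you yourself flag at the end. Your route buys conceptual clarity (the crossed-product Hochschild formula is clean and reusable, and your sketch of its proof --- inducing $\cB\rtimes\Gamma$ from $\cB^{e}\rtimes\Delta\Gamma$ and applying base change, then peeling off the $\Gamma$-coinvariants --- is sound). But it requires establishing (i) the coherent $\Om$-graded algebra structure on $\cH_{\cG}$ in $\Alg(\St^{L}_{k})$ with trivial cocycle, for which the paper's construction of the canonical auto-equivalences $c_{\om}$ in \eqref{Om action HG} is the right raw material but is not packaged as a crossed product; and (ii) the $\Om$-equivariance of the equivalence in Corollary~\ref{cor:hh of colim}, which ultimately requires upgrading Theorem~\ref{thm:colim} to an $\Om$-equivariant statement about monoidal categories. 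Both are plausible but carry genuine coherence bookkeeping in the $\infty$-categorical setting; the paper's reduction to $\cM=\cH_{\cG}\otimes\cH_{\cG}$ and the termwise computation of the Kan extension are designed precisely to avoid ever needing (i) or (ii). So your plan would work once those two items are supplied, but as written they are the content, not the bookkeeping.
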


\begin{proof}
For any $\cH_{\cG}$-bimodule $\cM$, there is a canonical map
\begin{equation*}
\xymatrix{
c_{\cM}:\colim_{\cD} hh(\cH_{L_J},  \cM) \ar[r]  & hh(\cH_{\cG},  \cM). 
}
\end{equation*}
We need to show that this is an equivalence. It suffices to set $M = \cH_\cG \otimes \cH_\cG$ (where $ \cH_\cG$ acts on the first factor of $\cH_\cG \otimes \cH_\cG$ on the right and the second on the left) and prove  the natural map
\beq\label{eq:bimod isom}
\xymatrix{
c:\colim_{\cD} hh(\cH_{L_J},  \cH_\cG \otimes \cH_\cG)\simeq \colim_{\cD} \cH_\cG \otimes_{\cH_{L_{J}}}\cH_\cG\ar[r]  & \cH_{\cG}\simeq hh(\cH_{\cG},  \cH_\cG \otimes \cH_\cG) 
}
\eeq
is an equivalence of $\cH_\cG$-bimodules  (where the  $ \cH_\cG$-action on both sides is induced by its action on the left of the first factor of  $\cH_\cG \otimes \cH_\cG$ and  on the right of the second).
Note $c$ is induced by $\cH_\cG$-bimodule maps, so is naturally an $\cH_\cG$-bimodule map. Thus it suffices to check $c$ is an equivalence.

Consider the following commutative diagram
\begin{equation*}
\xymatrix{\colim_{\cD^{\c}}\cH_{\cG^{\c}} \otimes_{\cH_{L_{J}}} \cH_\cG\ar[d]^{\fri} \ar[r]^{\sim}&
\colim_{\cD^\circ} hh(\cH_{L_J},  \cH_{\cG^{\c}} \otimes \cH_\cG) \ar[r]^-\sim  & hh(\cH_{\cG^{\c}}, \cH_{\cG^{\c}} \otimes \cH_{\cG})\ar[d]^{\sim}  \\
\colim_{\cD}\cH_{\cG} \otimes_{\cH_{L_{J}}} \cH_\cG\ar[r]^{\sim} & \colim_{\cD} hh(\cH_{L_J},  \cH_\cG \otimes \cH_\cG) \ar[r]^-c  & hh(\cH_{\cG},  \cH_\cG \otimes \cH_\cG)}
\end{equation*}
Here the top middle arrow is an equivalence by Corollary~\ref{cor:hh of colim}, the right vertical map is the evident induction equivalence (both are equivalent to $\cH_{\cG}$), and $\fri$ is the natural map induced by $i:\cD^\circ \to \cD$ and the inclusion $\cH_{\cG^{\c}}\incl\cH_{\cG}$. Thus to show $c$ is an equivalence, it suffices to show $\fri$ is an equivalence.

Let $\Phi: \cD^{\c}\to \Cat_{\oo}$ be the functor given by $J\mapsto \cH_{\cG^\circ} \otimes_{\cH_{L_J}} \cH_\cG$. The forgetful functor $i^{!}: \Fun(\cD, \Cat_{\oo})\to \Fun(\cD^{\c}, \Cat_{\oo})$ admits a left adjoint (left Kan extension along $i$) $i_{!}: \Fun(\cD^{\c}, \Cat_{\oo})\to \Fun(\cD, \Cat_{\oo})$, so that $\colim_{\cD^{\c}}\Phi=\colim_{\cD}i_{!}\Phi$. Using this we can rewrite $\fri$ as
\begin{equation}\label{rewrite colimD}
\colim_\cD (i_! \Phi)(J)\to \colim_{\cD}  \cH_\cG  \otimes_{\cH_{L_J}} \cH_\cG
\end{equation}
induced by the termwise functor $\phi_{J}: (i_! \Phi)(J)\to \cH_\cG  \otimes_{\cH_{L_J}} \cH_\cG$ for $J\in \cD$. We show that $\phi_{J}$ is an equivalence for each $J\sft I^{a}$. Indeed, 
\begin{equation*}
(i_! \Phi)(J)=\bigoplus_{\om\in\Om} \Phi(\om(J))=\bigoplus_{\om\in\Om}\cH_{\cG^\circ} \otimes_{\cH_{L_{\om(J)}}} \cH_\cG.
\end{equation*}
We have embeddings 
\begin{equation*}
i_{\om}: \cH_{\cG^\circ} \otimes_{\cH_{L_{\om(J)}}} \cH_\cG\to \cH_{\cG}\ot_{\cH_{L_{J}}}\cH_{\cG}
\end{equation*}
given by $x\ot y\mapsto (x\star C_{\dot\om})\ot (C_{\dot\om^{-1}}\star y)$ (which is again canonically independent of the lifting $\dot\om$). The functor $\phi_{J}$ is the direct sum of $i_{\om}$
\begin{equation*}
\op i_{\om}: \bigoplus_{\om\in\Om}\cH_{\cG^\circ} \otimes_{\cH_{L_{\om(J)}}} \cH_\cG\to \cH_{\cG}\ot_{\cH_{L_{J}}}\cH_{\cG},
\end{equation*}
which is easily seen to be an equivalence.  Since  $\phi_{J}$ is an equivalence for all $J\in \cD$, \eqref{rewrite colimD} is an equivalence.

%
\end{proof}

Recall that $\Om$ acts on any $\cH_{\cG}$-bimodule $\cM$ by conjugation, and it acts on $\cH_{\cG^{\c}}$ by monoidal auto-equivalences, compatible with the bimodule structure on $\cM$. These actions  induce an action of $\Om$ on the Hochshild homology $hh(\cH_{\cG^{\c}}, \cM)$.

\begin{cor}\label{c:coin} For any $\cG$-bimodule $\cM$, there is a canonical equivalence between the $\Om$-coinvariants on $hh(\cH_{\cG^{\c}}, \cM)$ and $hh(\cH_{\cG}, \cM)$:
\begin{equation}\label{Om coin}
hh(\cH_{\cG^{\c}}, \cM)_{\Om}\isom hh(\cH_{\cG}, \cM).
\end{equation}
In particular, 
\begin{equation*}
hh(\cH_{\cG^{\c}}, \cH_{\cG})_{\Om}\simeq hh(\cH_{\cG}).
\end{equation*}
\end{cor}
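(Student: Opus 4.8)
The plan is to read off the corollary by combining the two colimit presentations already available: Proposition~\ref{p:hh of colim gen}, which gives $\colim_{\cD}F\simeq hh(\cH_{\cG},\cM)$ for the functor $F:\cD\to\St^{L}_{k}$, $J\mapsto hh(\cH_{L_{J}},\cM)$, and Corollary~\ref{cor:hh of colim}, which gives $\colim_{\cD^{\c}}F^{\c}\simeq hh(\cH_{\cG^{\c}},\cM)$ for the restriction $F^{\c}=F\circ i$ along $i:\cD^{\c}\incl\cD$. The bridge between them is the observation that $\cD$ is, by its very construction, the Grothendieck construction (unstraightening) of the $\Om$-action on the poset $\cD^{\c}$ of finite type subsets $J\sft I^{a}$; hence a colimit over $\cD$ is an iterated colimit — first over $\cD^{\c}$, then over $B\Om$ — and the outer step is exactly the formation of homotopy $\Om$-coinvariants. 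So it suffices to make this iterated-colimit identification precise and to verify it is compatible with the trace maps and with the $\Om$-action named in the statement.

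To execute the bridge I would introduce the functor $q:\cD\to B\Om$ that is constant on objects and sends a morphism labelled by $\om\in\Om$ to $\om$, and check that it is a coCartesian fibration: its fibre over the unique object is $i:\cD^{\c}\incl\cD$, a coCartesian lift of $\om$ at $J$ is the morphism $J\to\om(J)$ labelled by $\om$, and the associated straightening $B\Om\to\Cat_{\oo}$ is the permutation action of $\Om$ on $\cD^{\c}$. Then the standard computation of colimits over a coCartesian fibration, together with the fact that left Kan extension along $q$ can be computed fibrewise, gives
\begin{equation*}
\colim_{\cD}F\;\simeq\;\colim_{B\Om}\big(q_{!}F\big)\;\simeq\;\big(\colim_{\cD^{\c}}F^{\c}\big)_{\Om}\;\simeq\;hh(\cH_{\cG^{\c}},\cM)_{\Om},
\end{equation*}
where $(q_{!}F)(\ast)\simeq\colim_{\cD^{\c}}F^{\c}$ carries the $\Om$-action induced by the equivariant structure of $F^{\c}$, and $\colim_{B\Om}$ of a functor out of $B\Om$ is homotopy coinvariants. (This is the same circle of Kan-extension ideas already used in the proof of Proposition~\ref{p:hh of colim gen}, just applied to $q$ rather than to $i$.)

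The main obstacle — the one place that needs genuine care rather than formal nonsense — is to identify the $\Om$-action produced this way on $\colim_{\cD^{\c}}F^{\c}\simeq hh(\cH_{\cG^{\c}},\cM)$, namely the \emph{diagrammatic} one that permutes the indices $J$ and acts on each $hh(\cH_{L_{J}},\cM)$, with the \emph{global} $\Om$-action of the statement, which acts on $\cH_{\cG^{\c}}$ by the monoidal auto-equivalences $c_{\om}$ of \eqref{Om action HG} and on $\cM$ by conjugation. I would deduce their agreement from the $\Om$-equivariance of Theorem~\ref{thm:colim}: the monoidal equivalence $\colim^{\ot}_{J\sft I^{a}}\cH_{L_{J}}\isom\cH_{\cG^{\c}}$ intertwines the conjugation action of $\Om$ on $\cH_{\cG^{\c}}$ with the action on the colimit that sends the term $\cH_{L_{J}}$ to $\cH_{L_{\om(J)}}$ (using the $c_{\om}$'s to identify a term with its image); combined with the conjugation action on $\cM$, this exhibits the equivalences of Corollary~\ref{cor:hh of colim}, and hence of Proposition~\ref{p:hh of colim gen}, as $\Om$-equivariant, and the two actions match. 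Compatibility of the trace maps $\cM\to hh(\cH_{\cG^{\c}},\cM)_{\Om}\to hh(\cH_{\cG},\cM)$ is then formal from the commuting triangles recorded in Proposition~\ref{p:hh of colim gen} and Corollary~\ref{cor:hh of colim}. The last displayed equivalence is the special case $\cM=\cH_{\cG}$, using $hh(\cH_{\cG},\cH_{\cG})=hh(\cH_{\cG})$.
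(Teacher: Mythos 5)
Your proposal is correct and follows essentially the same route as the paper's proof: the paper likewise exhibits $\pi:\cD\to\BB\Om$ as a coCartesian fibration with fiber $\cD^{\c}$ and invokes \cite[Proposition 4.3.3.10]{lurieHigherToposTheory2009a} to obtain $\colim_{\cD}\simeq\colim_{\BB\Om}\pi_{!}\simeq(\colim_{\cD^{\c}})_{\Om}$, combined with Corollary~\ref{cor:hh of colim} and Proposition~\ref{p:hh of colim gen}. The only difference is that you spell out the identification of the diagrammatic $\Om$-action (permuting the $J$'s) with the global one (via the $c_{\om}$'s), a point the paper compresses into the phrase ``this functor has a canonical $\Om$-equivariant structure''; making that explicit is a reasonable addition but does not change the argument.
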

\begin{proof}
Let $\cC:\cD^{\c}\to \Cat_{\infty}$ be the functor given by $\cC_{J}=hh(\cH_{L_{J}}, \cM)$. Then this functor has a canonical $\Om$-equivariant structure, hence $\colim_{J\in \cD^{\c}}\cC_{J}$ carries an action of $\Om$. There is a natural map
\begin{equation}\label{coin gen}
(\colim_{J\in \cD^{\c}} \cC_{J})_{\Om}\to\colim_{J\in \cD}\cC_{J}.
\end{equation}
By Corollary \ref{cor:hh of colim} and Proposition \ref{p:hh of colim gen}, the two sides above are equivalent to the two sides of \eqref{Om coin}. Thus it suffices to show that \eqref{coin gen} is an equivalence. Note that $\cD=\cD^{\c}/\Om$. Consider the projection $\pi: \cD\to \BB\Om$, which is a coCartesian  fibration. The left Kan extension $\pi_{!}\cC$ is $\colim_{J\in \cD^{\c}} \cC_{J}$ as a category with $\Om$-action. By \cite[Proposition 4.3.3.10]{lurieHigherToposTheory2009a}, we have
\begin{equation*}
\colim_{\cD}\cC\simeq \colim_{\BB\Om}\pi_{!}\cC\simeq (\colim_{\cD^{\c}}\cC)_{\Om}.
\end{equation*}
\end{proof}


\subsubsection{Geometry of trace}\label{sss:geom trace}
For $J \sft I^a$, define the ind-stack
\begin{equation*}
\cY_{J}:=\frac{\cP^{u}_{J}\bs\cG/\cP_J^u}{L_J},
\end{equation*}
where $\frac{\cdot}{L_{J}}$ denotes the quotient by the conjugation action of $L_{J}$. 
Consider the horocycle correspondence
\beq\label{eq:horo corr in geom of tr}
\xymatrix{
\cI^{u}\bs\cG/\cI^u & \ar[l]_-{\d_J}  \frac{\cP^{u}_{J}\bs\cG/\cP^u_{J}}{U_J\sft}  \ar[r]^-{\pi_J} &  \frac{\cP^{u}_{J}\bs\cG/\cP_J^u}{L_J}=\cY_{J}}
\eeq

 
 To simplify the notation, we will set
 \begin{equation*}
\cH_{\cG, J} =  \Sh_\cN( \cY_{J}):=\colim_{w\in \{W_{J}\bs \tilW/W_{J}\}}\Sh_\cN\left( \frac{\cP^{u}_{J}\bs \cG_{\le w}/\cP_J^u} {L_J}\right).
\end{equation*}
Here the colimit is taken over longest representatives in the  $W_{J}$-double cosets of $\tilW$ (so that $\cG_{\le w}$ is a union of $\cP_{J}$-double cosets). The notation $\Sh_{\cN}(-)$ means, viewed as sheaves on $\cP_{J}^{u}\bs\cG/\cP_J^u$,  the singular support has nilpotent image under the moment map for the $L_{J}\times L_{J}$-action by left and right translations. 

Note that when $J=\vn$, $\cH_{\cG,\vn}$ imposes an $\Ad(H)$-equivariance structure on sheaves on $\cI^{u}\bs \cG/\cI^{u}$. 

\begin{remark} We will see in Section~\ref{ss:geom pieces} that $\cH_{G,J}$ is closely related to the notion of {\em parabolic character sheaves} for the loop group $\cG$ defined by Lusztig in \cite{lusztigParabolicCharacterSheaves}. 
\end{remark}

Consider the functor  
$$
\xymatrix{
\pi_{J!}\delta_J^*:\cH_\cG=\Sh_{\bimon}(\cI^{u}\bs\cG/\cI^u)  \ar[r] & \Sh(\cY_{J}). }
$$
It is easy to check that the image of $\pi_{J!}\delta_J^*$ lands in the full subcategory $\Sh_{\cN}(\cY_{J})=\cH_{\cG,J}$ (it suffices to check on each $\cP_{J}$-double coset of $\cG$). Hence we get a functor
\begin{equation}\label{eq:horo functor in geom of tr}
\xymatrix{\pi_{J!}\delta_J^*:\cH_\cG\ar[r] & \cH_{\cG,J}.}
\end{equation}

 The following result gives a geometric interpretation of the partial cocenters $hh(\cH_{L_J}, \cH_{\cG})$. It is a special case of Theorem~\ref{thm: geom descent ind}, which we will state and prove in Section~\ref{sect:horo desc}.  

\begin{theorem} \label{thm:coequal J} For $J\sft I^{a}$, the functor $\pi_{J!}\delta_J^*$ 
 fits into a commutative diagram with the trace map $\tr$ inducing a horizontal equivalence
\begin{equation*}
\xymatrix{
\ar[d]_-{\tr}\cH_\cG \ar[dr]^-{\pi_{J!}\delta_J^*}\\
hh(\cH_{L_J}, \cH_{\cG})\ar[r]^-\sim & \cH_{\cG, J}
}
\end{equation*} 
\end{theorem}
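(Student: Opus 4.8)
The plan is to deduce Theorem~\ref{thm:coequal J} from the general descent result Theorem~\ref{thm: geom descent ind} by checking that its hypotheses are met in the present geometric situation. Concretely, one needs to recognize the bimodule $\cH_\cG$ as one ``coming from geometry'' in the sense required there: the monoidal category $\cH_{L_J}$ acts on $\cH_\cG$ through the convolution embedding $\cH_{L_J}\incl\cH_{\cG^\circ}\subset\cH_\cG$, i.e. by the parahoric $\cP_J$, and this action is realized on the geometric model $\cI^u\bs\cG/\cI^u$ by left and right convolution with the smooth pro-unipotently-generated subgroup $\cP_J$. The first step, then, is to set up the correspondence data: the horocycle correspondence \eqref{eq:horo corr in geom of tr}, the quotient stack $\cY_J = (\cP^u_J\bs\cG/\cP^u_J)/L_J$, and the subcategory $\cH_{\cG,J}=\Sh_\cN(\cY_J)$, and to verify the elementary compatibilities: that $\delta_J$ is (pro-)smooth, that $\pi_J$ has the ``almost ind-proper'' property for $H$-monodromic (indeed $L_J$-monodromic) sheaves exactly as in the verifications carried out for $\pi$ in Section~2.1 via the $H$-torsor factorization and \eqref{push H torsor}, and that $\pi_{J!}\delta_J^*$ indeed lands in $\Sh_\cN(\cY_J)$ — the latter being checked $\cP_J$-double-coset by $\cP_J$-double-coset, as indicated in the text preceding \eqref{eq:horo functor in geom of tr}.

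The heart of the argument is then the identification $hh(\cH_{L_J},\cH_\cG)\simeq\cH_{\cG,J}$ together with the compatibility with the trace map. Following the paradigm of Theorem~\ref{thm: geom descent ind}, the Hochschild homology $\cH_{L_J}\otimes_{\cH_{L_J}\otimes\cH_{L_J}^{op}}\cH_\cG$ is computed by the (relative) bar complex whose terms are $\cH_\cG\otimes_{\cH_{L_J}}\cdots$; geometrically each tensor over $\cH_{L_J}$ is a convolution over the parahoric $\cP_J$, and the cyclic bar construction for the $\cH_{L_J}$-bimodule $\cH_\cG$ is realized by the simplicial space whose realization is precisely the horocycle correspondence space $(\cP^u_J\bs\cG/\cP^u_J)/U_J$ fibered over $\cY_J$. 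One passes through a $\beta$-version (contracting the $U_J$-part, which is pro-unipotent hence cohomologically contractible and accounts for the monodromic structure) and then the $L_J$-adjoint-quotient part, matching the two sides; the key inputs are again the base-change identities available for $\pi_J$ on the monodromic subcategory and the fact that convolution over $\cP_J$ descends to a pull-push along the quotient by $\Ad(L_J)$. The commutativity of the triangle with $\tr$ follows because, under these identifications, the trace map $\cH_\cG\to hh(\cH_{L_J},\cH_\cG)$ is by construction the $0$-simplex of the bar complex composed with the augmentation, which unwinds to $\delta_J^*$ followed by $\pi_{J!}$.

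I expect the main obstacle to be purely organizational rather than conceptual: making precise the sense in which $\cH_\cG$ is a bimodule ``of geometric origin'' to which Theorem~\ref{thm: geom descent ind} applies, given that $\cG$ is an ind-scheme and $\cP_J$ is pro-algebraic, so that one is dealing with colimits over $\tilW$ (or over longest representatives in $W_J\bs\tilW/W_J$) of honest finite-dimensional stacks. One must check that all the functors in sight ($\delta_J^*$, $\pi_{J!}$, the convolutions) are compatible with these colimit presentations and that the nilpotent singular-support conditions are preserved — this is where the ``almost ind-proper'' property of $\pi_J$, and the stratification of $\cG$ by $\cP_J$-double cosets with $\cG_{\le w}/\cP_J$ an affine space, do the work, reducing everything to the finite-type statement proved in Section~\ref{sect:horo desc}. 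A secondary point requiring care is the twist/shift bookkeeping (the $[-r]$'s coming from $H$-torsors and the choice of normalization of $\pi_{J!}$ versus $\pi_{J*}$), but these are routine given the conventions already fixed. Finally, the last sentence of the statement — identifying the transition functor $hh(\cH_{L_J},\cH_\cG)\to hh(\cH_{L_{J'}},\cH_\cG)$ for $J\subset J'$ with the horocycle correspondence $\cH_{\cG,J}\to\cH_{\cG,J'}$ — follows by naturality of the construction in Theorem~\ref{thm: geom descent ind} applied to the inclusion $\cH_{L_J}\incl\cH_{L_{J'}}$, realized geometrically by $\cP_{J'}\supset\cP_J$ and the obvious maps of horocycle spaces.
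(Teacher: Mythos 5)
Your proposal is correct and matches the paper's approach: the paper's own proof of Theorem~\ref{thm:coequal J} consists entirely of observing it is a special case of the ind-version of horocycle descent, Theorem~\ref{thm: geom descent ind}, applied to the ind-stack $Z=\cP_J^u\bs\cG/\cP_J^u$ with its $L_J\times L_J$-action, so that $N\bs Z/N=\cI^u\bs\cG/\cI^u$ and $Z/\Delta L_J=\cY_J$, with the stratification by $\cP_J$-double cosets supplying the data required by that theorem. Your identification of the trace map with $\pi_{J!}\delta_J^*$ via the augmentation of the relative bar complex, and your remark that the functoriality over $J\subset J'$ follows from naturality of the construction, are exactly how the paper derives those facts.
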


%
%
%

Substituting Theorem~\ref{thm:coequal J} into Corollary \ref{cor:hh of colim} and Proposition \ref{p:hh of colim gen}, we immediately obtain:

\begin{cor}\label{cor:coequal J geom} We have equivalences
\begin{eqnarray*}
\colim_{J\in \cD^{\c}}   \cH_{\cG, J} \simeq  & hh(\cH_{\cG^{\c}}, \cH_{\cG}), \\
\colim_{J\in \cD}   \cH_{\cG, J} \simeq  & hh(\cH_{\cG}).
\end{eqnarray*}
Moreover, for each $J \sft I^a$, the equivalence  above naturally extends to a commutative diagram
\begin{equation*}
\xymatrix{
&\ar[dl]_-{\pi_{J!}\delta_J^*} \ar[d] \cH_\cG \ar@/^1pc/[dr]^-{\tr}\\
 \cH_{\cG, J}  \ar[r] &  \colim_{\cD}   \cH_{\cG, J'} \ar[r]^-\sim  & hh(\cH_{\cG})
}
\end{equation*} 
where the left horizontal arrow is the natural map. 
\end{cor}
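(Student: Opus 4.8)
The plan is to obtain the corollary by a formal substitution: feed the geometric identification of each partial cocenter from Theorem~\ref{thm:coequal J} into the abstract colimit presentations of Hochschild homology from Corollary~\ref{cor:hh of colim} and Proposition~\ref{p:hh of colim gen}. Conceptually nothing new is required; the work lies in keeping track of which bimodule structures and which structure maps are in play.

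First I would specialize the two colimit formulas to $\cM = \cH_{\cG}$. Restricting the regular bimodule along the monoidal inclusion $\cH_{\cG^{\c}}\subset\cH_{\cG}$ makes $\cH_{\cG}$ an $\cH_{\cG^{\c}}$-bimodule, so Corollary~\ref{cor:hh of colim} gives $\colim_{J\in\cD^{\c}} hh(\cH_{L_J},\cH_{\cG})\simeq hh(\cH_{\cG^{\c}},\cH_{\cG})$, together with the commuting triangle relating the partial trace maps $\cH_{\cG}\to hh(\cH_{L_J},\cH_{\cG})$, the colimit structure maps, and the total trace $\cH_{\cG}\to hh(\cH_{\cG^{\c}},\cH_{\cG})$. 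Viewing $\cH_{\cG}$ instead as its own regular $\cH_{\cG}$-bimodule, Proposition~\ref{p:hh of colim gen} gives $\colim_{J\in\cD} hh(\cH_{L_J},\cH_{\cG})\simeq hh(\cH_{\cG},\cH_{\cG}) = hh(\cH_{\cG})$, again with the compatible trace maps. These are the two displayed equivalences, up to renaming the partial cocenters.

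Second I would invoke Theorem~\ref{thm:coequal J}: for each $J\sft I^a$ it furnishes an equivalence $hh(\cH_{L_J},\cH_{\cG})\simeq\cH_{\cG,J}$ carrying the trace $\tr\colon\cH_{\cG}\to hh(\cH_{L_J},\cH_{\cG})$ to the horocycle functor $\pi_{J!}\delta_J^*\colon\cH_{\cG}\to\cH_{\cG,J}$. The substitution then produces the diagram in the statement, once one knows the equivalences are compatible with the transition maps. Concretely, the transition maps $hh(\cH_{L_J},\cH_{\cG})\to hh(\cH_{L_{J'}},\cH_{\cG})$ for $J\subset J'$ (and, for the $\cD$-indexed colimit, the additional $\Om$-action assembled from the auto-equivalences $c_\om$) must correspond under these equivalences to the natural pull-push functors $\cH_{\cG,J}\to\cH_{\cG,J'}$ coming from the horocycle correspondences for $\cP_J\subset\cP_{J'}$. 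This functoriality is exactly the refinement of Theorem~\ref{thm:coequal J} recorded after Theorem~\ref{thm:intro geom coeq J} (and proved as part of Theorem~\ref{thm: geom descent ind}); granting it, the per-$J$ triangles assemble into a cocone over $\cD^{\c}$ (resp.\ $\cD$), and passing to the colimit yields the asserted commuting diagram.

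The only real obstacle is organizational: Corollary~\ref{cor:hh of colim} and Proposition~\ref{p:hh of colim gen} are statements about diagrams of categories indexed by $\cD^{\c}$ and $\cD$, not merely about individual objects, so Theorem~\ref{thm:coequal J} has to be upgraded to an equivalence of such diagrams before the colimits can be identified. Since all the cited results are (or will be) stated with this naturality built in, the corollary indeed follows immediately.
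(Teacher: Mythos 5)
Your proposal is correct and follows the same approach as the paper: the paper's proof is literally ``Substituting Theorem~\ref{thm:coequal J} into Corollary~\ref{cor:hh of colim} and Proposition~\ref{p:hh of colim gen}, we immediately obtain,'' which is exactly the specialization-and-substitution you spell out. Your explicit flagging of the naturality issue (that Theorem~\ref{thm:coequal J} must identify the whole diagram of partial cocenters, not just the individual terms) is a sound observation and is indeed what the paper tacitly relies on via Theorem~\ref{thm: geom descent ind}.
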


\sss{Connected components}\label{sss:HG comp}
For $\om\in\Om$, let $\cH_{\cG}^{\om}$ be the full subcategory of $\cH_{\cG}$ consisting of sheaves supported on the $\om$-component of $\cG$. Similarly, define $\cH^{\om}_{\cG,J}$. Note the action of $\Om$ on $\cH_{\cG}$ preserves each $\cH^{\om}_{\cG}$. Therefore we have a decomposition by support
\begin{equation*}
hh(\cH_{\cG})=\bigoplus_{\om\in\Om}hh(\cH_{\cG})^{\om}
\end{equation*}
where
\begin{equation*}
hh(\cH_{\cG})^{\om}\simeq \colim_{J\in\cD}\cH_{\cG,J}^{\om}.
\end{equation*}


\subsection{Descended trace of  Whittaker object}\label{sss:des tr Wh} 

 Recall that $\Wh_\cG$ is naturally a coalgebra in $\cH_{\cG}$,  therefore it makes sense to take its descended trace  $\ol \tr(\Wh_\cG) \in hh(\cH_\cG)$. Let
\begin{equation*}
\Wh_{\cG/\cG}:=\ol \tr(\Wh_\cG) \in hh(\cH_\cG).
\end{equation*}
The goal of this subsection is to calculate  $\Wh_{\cG/\cG}$ in terms of character sheaves on $G$.

\sss{Reduction from $\cG$ to $G$}
Recall the monoidal functor $i_{!}: \cH_{G}\to \cH_{\cG}$.  It induces a functor by passing to  cocenters
\begin{equation}\label{define a}
\xymatrix{a: \Sh_\cN(G/G)\ar[r]^-{\sim} & hh(\cH_{G}) \ar[r]^-{hh(i_{!})} &  hh(\cH_{\cG})
}
\end{equation}
where the first equivalence is given by Theorem \ref{thm:CS G}.

Recall $I \subset I^a$ are the simple roots of $G$. The corresponding maximal parahoric $\cP_{I} \subset \cG $ is the arc group $\cG_0 = G\tl{t}$
with pro-unipotent radical $\cP^u_{I} \subset \cP_I$ the arcs based at the identity $\cG_0^u= \ker (G\tl{t} \to G)$. By writing $hh(i_{!})$ as the composition 
$$hh(\cH_{G})\to hh(\cH_{G}, \cH_{\cG})\to hh(\cH_{\cG}),$$
and using Corollary~\ref{cor:coequal J geom} (applied to $J=I$), the functor $a$ factors as the composition
\begin{equation*}
\xymatrix{a: \Sh_\cN(G/G)\ar@{^{(}->}[r]^-{i_{G/G}} & \cH_{\cG,I} \ar[r] &  hh(\cH_{\cG})
}
\end{equation*}
where $i_{G/G}$ is the full embedding given by the direct image along the natural map
$$\f{G}{G}\to \frac{ \cG^{u}_{0}\bs \cG/\cG_0^u}  {G}=\cY_{I}.$$

%
%

\begin{lemma}\label{l:Wh cG Wh G} We have the following relation between the descended traces of $\Wh_{G}$ and $\Wh_{\cG}$:
\begin{equation*}
\ov\tr(\Wh_{\cG})\simeq a(\ov\tr(\Wh_{G}))\in hh(\cH_{\cG}).
\end{equation*}
\end{lemma}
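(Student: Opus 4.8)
The plan is to deduce the identity from the functoriality of the descended trace under the monoidal functor $i_!\colon \cH_G\to\cH_\cG$, together with the definition of $a$. By Lemma~\ref{l:const loops} we have $\Wh_\cG\simeq i_!\Wh_G$, and since $i_!$ is monoidal, the coalgebra structure on $\Wh_\cG$ is the image under $i_!$ of the coalgebra structure on $\Wh_G$ constructed in Section~\ref{sec:Whittaker_coalgebra}. So it suffices to prove $\ov\tr(\Wh_\cG)\simeq hh(i_!)(\ov\tr(\Wh_G))$: the lemma then follows, since by \eqref{define a} the map $a$ is $hh(i_!)$ precomposed with the equivalence $\Sh_\cN(G/G)\simeq hh(\cH_G)$ of Theorem~\ref{thm:CS G}.

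The point requiring care is that the descended trace of a coalgebra is defined as a limit (a totalization), whereas $hh(i_!)$, being a morphism in $\St^L_k$, is only guaranteed to preserve colimits. I would handle this exactly as anticipated in the discussion after Definition~\ref{def:desc tr}, by passing to dual categories. The categories $\cH_G$ and $\cH_\cG$ are compactly generated, with convolution preserving the subcategories of compact (i.e.\ constructible monodromic) objects, so $\cH_G\simeq\Ind((\cH_G)_c)$ and likewise for $\cH_\cG$ as monoidal categories. The objects $\Wh_G$ and $\Wh_\cG=i_!\Wh_G$ are compact — $\Wh_G$ corepresents the continuous functor $\cW_G$, and $i_!$ preserves compacts — and they lie in the discrete subcategory generated by the unit and the Whittaker object (Section~\ref{sec:Whittaker_coalgebra}), in which all convolution powers decompose into finitely many copies of $e$ and $\Wh$, so the bimodule-compactness hypothesis needed there is satisfied. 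Hence, for both $G$ and $\cG$, under the canonical identification $hh(\cH)^\vee_c\simeq hh(\cH)_c$, the coalgebra descended trace $\ov\tr(\Wh)$ corresponds to the algebra descended trace $\ov\tr(a_{\Wh})$, where $a_{\Wh}$ is $\Wh$ viewed as an algebra object of the dual monoidal category $\cH^\vee\simeq\Ind(\cH_c^{op})$.

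Now restricting $i_!$ to compacts and passing to opposites and ind-completions produces a monoidal functor $i_!^\vee\colon\cH_G^\vee\to\cH_\cG^\vee$ carrying $a_{\Wh_G}$ to $a_{\Wh_\cG}$, whose effect on cocenters is identified, via $hh(-)^\vee_c\simeq hh(-)_c$, with $hh(i_!)$. Applying Lemma~\ref{l: dtr inv} to $i_!^\vee$ and the regular bimodule $a_{\Wh_G}$ gives $hh(i_!^\vee)(\ov\tr(a_{\Wh_G}))\simeq\ov\tr(a_{\Wh_\cG})$. Transporting this back across the duality and combining with the previous paragraph yields $hh(i_!)(\ov\tr(\Wh_G))\simeq\ov\tr(\Wh_\cG)$, which together with the identification $a=hh(i_!)\circ(\text{Thm.}~\ref{thm:CS G})$ finishes the proof.

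The main obstacle is purely the bookkeeping of the second and third paragraphs: verifying the compactness hypotheses (most delicately that $\Wh$ is compact as a bimodule over itself inside the dual category) and checking that the duality identifications ($hh(\cH)^\vee_c\simeq hh(\cH)_c$, the translation of the coalgebra descended trace into the algebra descended trace, and the identification of $hh(i_!^\vee)$ with $hh(i_!)$) are natural in the monoidal functor $i_!$, so that Lemma~\ref{l: dtr inv} can legitimately be applied on the algebra side and the conclusion transported back.
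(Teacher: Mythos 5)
Your proposal is correct and follows the same route as the paper: identify $\Wh_\cG\simeq i_!\Wh_G$ via Lemma~\ref{l:const loops}, then invoke the functoriality of the descended trace (Lemma~\ref{l: dtr inv}) under the monoidal functor $i_!$. The paper's own proof is exactly this but stated in two lines, silently relying on the coalgebra-to-algebra translation from the ``Descended trace for coalgebras'' discussion; your version just makes that bookkeeping explicit.
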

\begin{proof}
By Lemma~\ref{l:const loops}, the universal affine Whittaker sheaf is the extension by zero of the finite Whittaker finite sheaf $\cW_\cG \simeq i_{!}\Wh_G$. The statement then follows from Lemma~\ref{l: dtr inv}.
\end{proof}



\sss{Whittaker character sheaf} 

Consider the diagram
\begin{equation*}
\xymatrix{
\AA^1 & \ar[l]_-{\chi} U^{-}/U^{-} \ar[r]^-{r_{-}} & G/G
}
\end{equation*}
where $r_{-}$ is induced by the inclusion $U^{-} \subset G$, and $\chi$ is induced by the same-named non-degenerate character $\chi: U \to U/[U, U] \to \Ga=\AA^1$ used in Section~\ref{sss:Whit G}.

Let  $\varphi_{\chi, 1}: \Sh(U^{-}/U^{-}) \to k\lmod$ denote the vanishing cycles at the identity $1\in U^{-}$ with respect  to the function  $\chi:U^{-}/U^{-}\to \AA^1$.

\begin{defn} \label{def: whit char sh}
\begin{enumerate}
\item The {\em Whittaker functor} on character sheaves is  the composition
\begin{equation*}
\xymatrix{
{\cW_{G/G}}:   \Sh_\cN(G/G) \ar[r] &  k\lmod &
\cW_{G/G}(\cF) = \varphi_{\chi, 1}  r_{-}^!\cF.
}\end{equation*}

 \item The {\em Whittaker character sheaf} ${\Wh_{G/G} }\in  \Sh_\cN(G/G) $ is the object corepresenting the Whittaker functor
 \begin{equation*}
\xymatrix{
\cW_{G/G}(\cF) \simeq \Hom_{\Sh_{\cN}(G/G)}(\Wh_{G/G}, \cF), & 
\text{for all } \cF \in \Sh_\cN(G/G). 
}\end{equation*}

%
%
%
%
%
%

\end{enumerate}
\end{defn}

%
%

Now we arrive at the main result of this section.
A generalization in the context of nodal degenerations of curves will appear in~\cite{nadlerCompatibilitiesAutomorphicGluing}.

\begin{theorem}\label{thm: dtr of whit}
 \begin{enumerate}
 
 \item
 For the trace map 
\begin{equation*}
\xymatrix{
\tr_{G}=\g: \cH_G = \Sh_\cN(U\bs G/U)  \ar[r] & \Sh_\cN(G/G) \simeq hh(\cH_G)
 }
\end{equation*}
there is a canonical identification of the descended trace  of the universal finite Whittaker sheaf  $\ol \tr(\Wh_{G}) \in hh(\cH_G) $ 
with the Whittaker character sheaf $\Wh_{G/G}\in  \Sh_\cN(G/G)$.

\item  We have a canonical identification of the descended trace of $\Wh_{\cG}$:
\begin{equation*}
\Wh_{\cG/\cG}=\ov\tr(\Wh_{\cG})\simeq a(\Wh_{G/G}) \in hh(\cH_{\cG}),
\end{equation*}
where $a$ is defined in \eqref{define a}.


\end{enumerate}
\end{theorem}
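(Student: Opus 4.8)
The two parts are logically linked: part (2) will follow from part (1) together with Lemma~\ref{l:Wh cG Wh G} and Lemma~\ref{l: dtr inv}, so the real content is part (1). The strategy for (1) is to use the formula for the descended trace \eqref{eq:desc tr formula}, which expresses $\ol\tr(\Wh_G)$ as a geometric realization of the simplicial object $[n]\mapsto \tr_G(\Wh_G\star e^{\star n})$ — but since $e$ is the monoidal unit, $\Wh_G\star e^{\star n}\simeq \Wh_G$ and the simplicial object is essentially constant, so that in fact $\ol\tr(\Wh_G)\simeq \tr_G(\Wh_G)=\g(\Wh_G)$. (One should be slightly careful here: $\Wh_G$ is being treated as a coalgebra, not as a regular bimodule over an algebra; so more precisely I will invoke the coalgebra descended-trace formula from Section on ``Descended trace for coalgebras'', using that the comultiplication $\beta:\Wh_G\to \Wh_G\star\Wh_G$ together with the counit realizes the cosimplicial object $[n]\mapsto \tr_G(\Wh_G^{\star n})$, and that the limit over $\Delta$ is computed by the $[0]$-term because the whole cosimplicial diagram lives in the classical subcategory generated by $e$ and $\Wh_G$ described in Section~\ref{sec:Whittaker_coalgebra}, where everything is strictly determined and the totalization collapses.) So the first reduction is: \emph{$\ol\tr(\Wh_G)\simeq \g(\Wh_G)$}, and the problem becomes identifying $\g(\Wh_G)$ with $\Wh_{G/G}$.

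**Identifying $\g(\Wh_G)$ with $\Wh_{G/G}$.** The cleanest route is to show that $\g(\Wh_G)$ corepresents the Whittaker functor $\cW_{G/G}$ on $\Sh_\cN(G/G)$, i.e.\ that for all $\cF\in\Sh_\cN(G/G)$ there is a natural equivalence $\Hom_{\Sh_\cN(G/G)}(\g(\Wh_G),\cF)\simeq \cW_{G/G}(\cF)$. Recall $\g=\pi_{G!}\delta_G^*$ has a right adjoint, which by Theorem~\ref{thm:CS G} is the ``co-horocycle'' functor; call it $\g^R$. Then $\Hom_{\Sh_\cN(G/G)}(\g(\Wh_G),\cF)\simeq \Hom_{\cH_G}(\Wh_G,\g^R\cF)$, and since $\Wh_G$ corepresents $\cW_G$ (Definition~\ref{def: univ fin whit}) this is $\cW_G(\g^R\cF)=\varphi_{\chi,1}\,r_-^!(\g^R\cF)[-\dim r_-]$. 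So the heart of the matter is the geometric identity
\begin{equation*}
\varphi_{\chi,1}\,r_-^!\,\g^R\cF[-\dim r_-]\;\simeq\;\varphi_{\chi,1}\,r_-^!\cF
\end{equation*}
where on the left $r_-:U^-\to U\bs G/U$ is the finite Whittaker slice and on the right $r_-:U^-/U^-\to G/G$ is the adjoint-quotient Whittaker slice of Definition~\ref{def: whit char sh}. This is a base-change computation: one writes out $\g^R=\delta_{G*}\pi_G^!$ (up to a shift, using that $\pi_G$ is ``almost proper'' for monodromic sheaves as in the discussion around \eqref{push H torsor}), and then analyzes the composite correspondence
$$U^-\;\xleftarrow{}\;?\;\xrightarrow{}\;G/G$$
obtained by base-changing the horocycle correspondence $U\bs G/U\xleftarrow{\delta_G} G/U\xrightarrow{\pi_G} G/G$ along $r_-:U^-\to U\bs G/U$. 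The key point is that pulling back the $G/U$ in the middle along the Whittaker slice $U^-\hookrightarrow U\bs G/U$ produces (a space equivalent to) $U^-$ itself sitting over $G/G$ via the Whittaker slice $U^-/U^-\to G/G$ — this is exactly the statement that the horocycle space restricted to the Whittaker slice is again the Whittaker slice, which is where the transversality of $i_-:U^-\hookrightarrow G/U$ to $B$-orbits (noted in Section~\ref{sss:Whit G}) is used. The shifts must be tracked: the $[-\dim r_-]$ on the finite side and the shift in $\pi_{G!}$ vs $\pi_{G*}$ should cancel against the $[2\dim r_-]$ coming from $r_-^!\simeq r_-^*[2\dim r_-]$ on monodromic sheaves, leaving the clean statement in Definition~\ref{def: whit char sh} with no shift.

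**The main obstacle, and part (2).** I expect the main difficulty to be the careful bookkeeping in the base-change/transversality step just described: making precise the equivalence of correspondence spaces (the middle term of the restricted horocycle correspondence is only equivalent to $U^-$ after identifying $U^-$-orbits correctly, and there are stacky quotients by $U$, $U^-$, and $H$-monodromicity conditions floating around), and checking that vanishing cycles $\varphi_{\chi,1}$ commute with the relevant pushforwards — which holds because $\pi_G$ is proper-like on the monodromic subcategory and $\varphi$ commutes with proper pushforward, but the monodromicity hypotheses need to be verified at each stage. There may also be a subtlety in matching the character $\chi$ used on the two sides and in confirming that the microlocal/microstalk description (the Lemma relating $\cW_G\simeq\Xi$) is compatible with the horocycle transport, though I would try to avoid invoking that and argue directly with vanishing cycles. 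Once (1) is established, part (2) is immediate: by Lemma~\ref{l:Wh cG Wh G}, $\ol\tr(\Wh_\cG)\simeq a(\ol\tr(\Wh_G))$, and by (1) $\ol\tr(\Wh_G)\simeq \Wh_{G/G}$, so $\Wh_{\cG/\cG}=\ol\tr(\Wh_\cG)\simeq a(\Wh_{G/G})$ as claimed.
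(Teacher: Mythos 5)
Your reduction of part (2) to part (1) via Lemma~\ref{l:Wh cG Wh G} and Lemma~\ref{l: dtr inv} is correct and matches the paper exactly. But the argument you offer for part (1) has a serious gap at the very first step.

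The claim that $\ol\tr(\Wh_G)\simeq\tr_G(\Wh_G)=\g(\Wh_G)$ --- i.e.\ that the cosimplicial limit computing the descended trace of the coalgebra $\Wh_G$ collapses to its $[0]$-term --- is not correct, and the justification you give does not support it. The cosimplicial object is $[n]\mapsto\g(\Wh_G^{\star(n+1)})$, with coface maps built from the comultiplication $\Wh_G\to\Wh_G\star\Wh_G$; it lives in $hh(\cH_G)\simeq\Sh_\cN(G/G)$, not in $\cH_G$, so the fact that the additive subcategory of $\cH_G$ generated by $e$ and $\Wh_G$ is classical tells you nothing about the totalization. Moreover, even for cosimplicial objects in an ordinary category the totalization is not the $[0]$-term unless the object is constant, and this one is not: $\Wh_G^{\star 2}\not\simeq\Wh_G$, and the two coface maps $\g\Wh_G\rightrightarrows\g(\Wh_G\star\Wh_G)$ are genuinely distinct. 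For the same reason, the follow-up claim $\g(\Wh_G)\simeq\Wh_{G/G}$ is also suspect: the paper produces only a canonical \emph{map} $\g\Wh_G\simeq\g_H\g_H^\ell\Wh_{G/G}\to\Wh_{G/G}$ from the counit of adjunction, and it is the totalization of the full cobar tower, not the single term, that recovers $\Wh_{G/G}$.

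The paper's actual proof proceeds differently. Working relative to $\cH_H$-bimodules, its key technical input is Proposition~\ref{prop:whit comp}, which shows that the comultiplication-adjoint maps $\alpha^H_0:\g_H^\ell\Wh_{G/G}\to\Wh_{G,H}$ and $\alpha^H_{n,i}:m_{n+1,i}^\ell\Wh_{G,H}^{(n)}\to\Wh_{G,H}^{(n+1)}$ are isomorphisms. This identifies the cosimplicial object $[n]\mapsto\g(\Wh_G^{\star(n+1)})$ term-by-term with the Barr--Beck coresolution $[n]\mapsto(\g_H\g_H^\ell)^{n+1}\Wh_{G/G}$, and it is the comonadicity of the adjunction $(\g_H^\ell,\g_H)$ that makes this totalize to $\Wh_{G/G}$. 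Your ``base-change / transversality of the Whittaker slice'' instinct is aimed in the right direction --- Proposition~\ref{prop:whit comp} is indeed proved by restricting the horocycle correspondence to the Whittaker slice and killing all but the open $U^-$-orbit stratum of $\cB$ with vanishing cycles --- but the identity you would need ($\cW_G\circ\g^R\simeq\cW_{G/G}$, for the right adjoint $\g^R$) is not the one the paper proves ($\cW_{G,H}[-2\nu]\simeq\cW_{G/G}\circ\g_H$, used to compute the \emph{left} adjoint $\g_H^\ell$), and even if you had it, the totalization step would still be missing.
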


\begin{proof}
 (2) follows  immediately from (1) by Lemma~\ref{l:Wh cG Wh G}.  
 
To prove (1), it will be convenient to view $\cH_G$ as an algebra in $\cH_H = \Sh_0(H)$-bimodules. Here $\Sh_0$ 
means sheaves with singular support within the zero-section, or equivalently local systems. Note $\cH_H \subset \cH_G$ is the full monoidal subcategory generated by the monoidal unit $\cH_H = \langle 1_{\cH_G}\rangle \subset \cH_G$. 

Recall $hh(\cH_G)$ is canonically independent of whether we work absolutely or in $ \cH_H$-bimodules. To be more precise, set $\cH_G^{(n)} = \cH_G \otimes  \cdots \otimes \cH_G $ ($n$ copies of  $\cH_G$) and 
$\cH_G^{(n)_H} = \cH_G \otimes_{\cH_H}  \cdots \otimes_{\cH_H} \cH_G $ ($n$ copies of  $\cH_G$). So in particular $\cH_G = \cH_G^{(1)} = \cH_G^{(1)_H}$.  Set also
$ \cH_{G, H}^{(n)} = hh(\cH_H,  \cH_G^{(n)_H})$, so  in particular 
\begin{equation*}
\cH_{G,H}= \cH_{G, H}^{(1)} = \Sh_\cN\left(\f{U\bs G/U}{H}\right)
\end{equation*}
Here as usual $\Sh_{\cN}$ means sheaves with nilpotent singular support, or equivalently monodromic sheaves with respect to the remaining $H$-action.

Then 
the natural map from the absolute to $\cH_H$-relative Hochschild  complexes induces an equivalence on colimits
\beq\label{eq:hh diags related}
 \xymatrix{
\ar[d]_-\simeq hh(\cH_G)  & \ar[l]_-\gamma
 [ \ar[d]_-{q_! = q^{(1)}_!} \cH_G  & \ar[d]_-{q^{(2)}_!} \ar@<-0.5ex>[l]\ar@<0.5ex>[l] \cH_G^{(2)} & \ar[d]_-{q^{(3)}_!} \ar@<-1ex>[l]  \ar[l] \ar@<1ex>[l] 
  \cH_G^{(3)} \cdots]
  \\
  hh(\cH_G)   
 & \ar[l]_-{\gamma_H}  [  \cH_{G, H} & \ar@<-0.5ex>[l]\ar@<0.5ex>[l]  \cH_{G, H}^{(2)} & \ar@<-1ex>[l]  \ar[l] \ar@<1ex>[l] 
   \cH_{G, H}^{(3)} \cdots]
 }
 \eeq 
 Here the maps $q_!^{(n)}$ are the $!$-pushforwards along the natural quotient maps.  
Moreover,  the augmentations are given by  $\g = {\pi_{G!}\delta_G^*}$
and $\g_H = \pi_{G,H!}\delta^*_{G,H}$ as appear in the diagram
\begin{equation*}
\xymatrix{
 \f{G}{G}  & \ar[d]  \ar[l]_-{\pi_G}  \ar[r]^-{\d_G}  \f{G}{U} & \ar[d]_-q U\bs G/U    \\
 &  \ar[ul]^-{\pi_{G,H}}   \f{G}{B}     \ar[r]^-{\d_{G,H}}  & \f{U\bs G/U}{H} 
}
\end{equation*}
with Cartesian square. Note  by base-change, $\g = {\pi_{G!}\delta_G^*}$ indeed admits the natural factorization 
\begin{equation*}
\xymatrix{
\g: \cH_{G}  \ar[r]^-{q_!} & \cH_{G,H} \ar[rr]^-{\g_H = \pi_{G,H!}\delta^*_{G,H}} &&\Sh_\cN(\f{G}{G}) 
 }
\end{equation*}

 For $1 \leq i \leq n$, let $m_{n, i}:  \cH_{G, H}^{(n)} \to  \cH_{G, H}^{(n-1)}$ denote the face
 map of the lower simplicial diagram  \eqref{eq:hh diags related} given by  convolving the  cyclically-ordered $i$th and $(i+1)$st terms. (By convention, we have $m_{1, 1} = \gamma_H$,  and $\cH_{G, H}^{(0)} = Sh_{\cN}(G/G)$.)

Note by standard base-change identities, the natural base-change map is an isomorphism $m_{n, i}^\ell m_{n, i} \simeq m_{n+1, i+1} m_{n+1, i}^\ell$, for all $1 \leq i \leq n$.
Let 
  $\g_H^{(n)} : \cH_{G, H}^{(n)} \to hh(\cH_G)$ denote the canonical map given by $\gamma_H$ applied to any
 cyclically-ordered total convolution.

Set $\Wh_G^{(n)} = \Wh_G \otimes \cdots \otimes \Wh_G$ ($n$ copies of $\Wh_G$) and $\Wh^{(n)}_{G, H} = q^{(n)}_! \Wh_G^{(n)}$. In particular, we have $\Wh_G = \Wh_G^{(1)}$ and write $\Wh_{G, H} =  \Wh_{G, H}^{(1)}$. By convention, set $\Wh_{G, H}^{(0)} = \Wh_{G/G} \in \cH_{G, H}^{(0)}$.

 The map $\alpha$ of \eqref{eq:map_alpha} gives 
\begin{equation*} 
 \alpha^H: m^\ell \Wh_{G,H}  \to \Wh_{G,H}^{(2)},
\end{equation*} 
which yields for any $n \geq 1$, and $ 1 \leq i \leq n$ a map:
\begin{equation*} 
 \alpha^H_{n,i}: m_{n+1, i}^\ell \Wh_{G, H}^{(n)} \to \Wh^{(n+1)}_{G, H} 
\end{equation*} 
Similarly, we have $\alpha^H_{0}: \gamma_H^\ell \Wh_{G/G} \to \Wh_{G,H} $  (in fact, it is constructed in the next lemma).

By adjunction, we obtain 
\begin{equation*} 
 \beta^H_{n,i}:  \Wh_{G, H}^{(n)} \to m_{n+1, i} \Wh^{(n+1)}_{G, H} 
\end{equation*} 
Note that, by construction, this is exactly the natural map induced by the coalgebra structure of $\Wh_G$ (after applying $q_!^{(n)}$).
Similarly, by adjunction, we have $\beta^H_{0}:  \Wh_{G/G} \to \gamma_H \Wh_{G,H} $.


Now by Proposition~\ref{prop:whit comp} below, $\alpha^H_{0}$ and all $\alpha^H_{n,i}$ are isomorphisms.
Thus unwinding the identities,  we conclude the canonical resolution of $\Wh_{G/G}$ given by the monad $T = \gamma_H \gamma_H^\ell$ is precisely  
 the resolution calculating the descended trace of $\Wh_G \in \cH_G$ regarded as a coalgebra:
 $$
 \xymatrix{
\Wh_{G/G}  
\isom    [\gamma_H\gamma_H^\ell \Wh_{G/G}  \ar@<-0.5ex>[r]\ar@<0.5ex>[r] & (\gamma_H\gamma_H^\ell)^2 \Wh_{G/G} \ar@<-1ex>[r]  \ar[r] \ar@<1ex>[r] & 
(\gamma_H\gamma_H^\ell)^3 \Wh_{G/G} \cdots]
}
$$
$$
\xymatrix{
\isom    [\g_H \g^{\ell}_H \Wh_{G/G}  \ar@<-0.5ex>[r]\ar@<0.5ex>[r] & \g_H^{(2)} \g^{\ell}_H { }^{(2)}  \Wh_{G/G} \ar@<-1ex>[r]  \ar[r] \ar@<1ex>[r] & 
\g_H^{(3)} \g^{\ell}_H { }^{(3)} \Wh_{G/G} \cdots]
}
$$
$$
\xymatrix{
   \simeq   [\gamma_H \Wh_{G, H}  \ar@<-0.5ex>[r]\ar@<0.5ex>[r] &  \g_H^{(2)} \Wh_{G,H}^{(2)} \ar@<-1ex>[r]  \ar[r] \ar@<1ex>[r] & 
   \g_H^{(3)}\Wh_{G, H}^{(3)} \cdots]
 }
$$
$$
\xymatrix{
   \simeq   [\gamma \Wh_{G}  \ar@<-0.5ex>[r]\ar@<0.5ex>[r] & \gamma(\Wh_{G} \star\Wh_{G})  \ar@<-1ex>[r]  \ar[r] \ar@<1ex>[r] & 
  \g(\Wh_{G} \star\Wh_{G} \star \Wh_{G})  \cdots]
 }
$$

\end{proof}

\begin{prop}\label{prop:whit comp}
The maps   $\alpha^H_{0}: \gamma_H^\ell \Wh_{G/G} \to \Wh_{G,H} $ and $ \alpha^H_{n,i}: m_{n+1, i}^\ell \Wh_{G, H}^{(n)} \to \Wh^{(n+1)}_{G, H} $, for all $n \geq 1$ and $1 \leq i \leq n$,  are isomorphisms.
\end{prop}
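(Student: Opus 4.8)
\emph{Strategy and reduction.} The plan is to strip away the Hochschild combinatorics and reduce to two geometric compatibilities of the Whittaker functional, each of which is then proved by a base change together with a localization of the Whittaker vanishing cycles. First, for $n\ge 1$ the face map $m_{n+1,i}$ is the $\cH_H$-relative convolution on the $i$th and $(i+1)$th tensor factors and the identity on the remaining $n-1$ factors; since $q^{(\bullet)}_!$ is compatible with $\boxtimes_{\cH_H}$ and with left adjoints of convolution, and $\Wh^{(m)}_{G,H}=q^{(m)}_!(\Wh_G^{\boxtimes_{\cH_H} m})$, the map $\alpha^H_{n,i}$ is the one obtained from $\alpha^H:=\alpha^H_{1,1}$ by tensoring with $n-1$ spectator copies of $\Wh_{G,H}$ and, when $i>1$, transporting along the cyclic-rotation equivalence of the Hochschild diagram; hence $\alpha^H_{n,i}$ is an equivalence as soon as $\alpha^H$ is. It therefore suffices to treat $\alpha^H$ and $\alpha^H_0$. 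For $\alpha^H_0$ itself: using $\gamma=\gamma_H\circ q_!$, a map $\gamma_H^\ell\Wh_{G/G}\to\Wh_{G,H}$ is by adjunction a map $\Wh_{G/G}\to\gamma\Wh_G$ in $\Sh_\cN(G/G)$, and we take the canonical one coming from corepresentability, i.e. the one induced by evaluating at $\id_{\Wh_G}$ the natural comparison $\cW_G\Rightarrow\cW_{G/G}\circ\gamma$ of Whittaker functionals built from the Whittaker correspondences.

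\emph{Translation into statements about $\cW_G$.} Unwinding the adjunctions $(\gamma^\ell_H,\gamma_H)$, $(q_!,q^R)$, $(m^\ell,m)$ and the corepresentability of $\Wh_{G/G}$, $\Wh_{G,H}$, the claim that $\alpha^H_0$ is an equivalence becomes: the comparison map $\cW_G(\cF)\to\cW_{G/G}(\gamma\cF)$ is an equivalence for all $\cF\in\cH_G$, in the appropriate $\cH_H$-relative sense and up to the normalizing shift built into $\gamma^\ell_H$. Likewise, $\alpha^H$ being an equivalence becomes: the lax-monoidal structure map of $\cW_G$, taken in the $\cH_H$-relative (i.e. $R=k[\xcoch(H)]$-linear) sense, $\cW_G(\cF_1)\otimes_R\cW_G(\cF_2)\to\cW_G(\cF_1\star_{\cH_H}\cF_2)$, is an equivalence for all $\cF_1,\cF_2\in\cH_G$. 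Passing to the $\cH_H$-relative statement is exactly what repairs the failure of the absolute lax structure — visible already on the monoidal unit, where $\cW_G(e)=R$ — and is the reason the whole argument is run inside the categories $\cH^{(\bullet)}_{G,H}=hh(\cH_H,\cH_G^{(\bullet)_H})$ rather than absolutely.

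\emph{Proof of the two compatibilities.} Both are instances of ``the Whittaker vanishing-cycles functional commutes with a pull--push along a correspondence,'' and both follow from the microlocal description of $\cW_G$ as the microstalk $\Xi$ at the generic covector $\xi\in T^*_{U/U}(G/U)$ (Lemma in \S\ref{sss:Whit G}), together with its character-sheaf analogue for $\cW_{G/G}$ (\S\ref{ss:FS Wh}). For the monoidality: $\cF_1\star\cF_2=\pi_!\delta^*(\cF_1\boxtimes\cF_2)$, microstalks pull back along the smooth map $\delta$ and push forward along the (monodromically ind-proper) map $\pi$ by a sum over preimage covectors, and the non-degeneracy of $\chi$ together with the transversality of $U^-$ to the Bruhat stratification forces the differential of $\pi$ to be an isomorphism onto a single sheet over the covector in question; this yields $\cW_G(\cF_1\star\cF_2)\simeq\cW_G(\cF_1)\otimes_R\cW_G(\cF_2)$ after the relative normalization. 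For the horocycle compatibility: base change along the (non-representable) fibre product $\f{G}{U}\times_{\f{G}{G}}(U^-/U^-)$ presents $\cW_{G/G}(\gamma\cF)$ as vanishing cycles of a pull--push; the non-degeneracy of $\chi$ then makes these Whittaker vanishing cycles localize onto the transverse slice $U^-$ sitting inside that fibre product (the restriction of $\chi$ has no critical points off the slice on the relevant stratum, a stationary-phase/Morse argument), giving $\cW_{G/G}(\gamma\cF)\simeq\cW_G(\cF)$ up to shift. It remains to identify the abstractly-produced equivalences with the specific comparison maps $\alpha^H$, $\alpha^H_0$; this is a routine unwinding of adjunctions and of the coalgebra structure on $\Wh_G$, with the key simplification that, by \S\ref{sec:Whittaker_coalgebra}, all of this takes place in a classical subcategory, so there is no higher-coherence ambiguity.

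\emph{Main obstacle.} The crux is the localization/transversality input: that over the distinguished Whittaker covector $\xi$ (resp. along the transverse slice $U^-$ inside the relevant fibre product) the pertinent differentials are isomorphisms onto a single sheet and the non-transverse strata contribute nothing to the Whittaker vanishing cycles. This is the only point where the genericity of the character $\chi$ enters essentially; the remaining work — the reduction of the general $\alpha^H_{n,i}$ to $\alpha^H$, the adjunction bookkeeping, and the careful matching of cohomological shifts so that the produced equivalences are exactly the maps $\alpha^H$, $\alpha^H_0$ — is formal.
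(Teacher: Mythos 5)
Your overall strategy matches the paper's: reduce all $\alpha^H_{n,i}$ to $\alpha^H_{1,1}$ and $\alpha^H_0$ by base-change identities, then translate the two claims by corepresentability and adjunction into the statements that the Whittaker functional is ($\cH_H$-relatively) monoidal and that $\cW_G\simeq \cW_{G/G}\circ\gamma$ up to a shift, and finally prove these by a vanishing-cycles localization along the horocycle correspondence. Your observation that the $\cH_H$-relative normalization is what makes monoidality (rather than mere lax monoidality) hold, visible already on the unit, is also the right diagnostic.

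The gap is in the crux that you yourself flag: you invoke the microstalk characterization of $\cW_G$ together with a ``stationary-phase/Morse argument'' to assert that the non-transverse part of the correspondence contributes nothing to the Whittaker vanishing cycles, but you do not actually establish that vanishing. The paper does not use the microstalk lemma for this proof (it is recorded only as a standard observation); instead it stratifies the factor $\cB=G/B$ in the middle term of the horocycle diagram by $U^-$-orbits $\cB^w$, restricts $\widetilde\delta$ to each stratum, and proves concretely that for $w\ne 1$ the vanishing cycles $\varphi_0\,\widetilde f_*\,\widetilde\delta_w^!\cF_w$ vanish. The mechanism is specific: $w\ne1$ forces $U^-_w=U^-\cap {}^wB$ to contain a simple root subgroup $U_{-\alpha_i}\simeq\AA^1$, the action provides a splitting $U^-\simeq U_{-\alpha_i}\times U^-_0$ along which $\widetilde f=\chi\circ\widetilde\pi$ is the linear projection to $U_{-\alpha_i}\simeq\AA^1$, and the pushforward $\widetilde\pi_*\widetilde\delta_w^!\cF_w$ is constant along that $\AA^1$, so its vanishing cycles at $0$ vanish. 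This is the content that ``the restriction of $\chi$ has no critical points off the slice'' must encode, and an appeal to stationary phase or to microstalk functoriality does not by itself supply it --- you still have to produce, for every $w\ne 1$, the contracting direction along which the sheaf is locally constant and $\widetilde f$ is linear with nonzero derivative. A second, smaller issue is that you run the base change over $\f{G}{U}\times_{\f{G}{G}}U^-/U^-$, whereas $\gamma_H$ lives over $\f{G}{B}$; these differ by an $H$-torsor, which is harmless given the $\cH_H$-relative setting, but the shift bookkeeping (the $[-2\nu]$ appearing in $\cW_{G,H}(-)[-2\nu]\simeq\cW_{G/G}(\gamma_H(-))$, coming from $\delta$ smooth of relative dimension $\nu$) needs to be made precise rather than left as ``up to the normalizing shift.''

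So: correct reduction, correct compatibilities, correct geometric mechanism; but the one nontrivial lemma (vanishing on non-open strata) is asserted rather than proved, and that is precisely the step where the genericity of $\chi$ and the Bruhat stratification actually do work.
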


\begin{proof} By standard identities, it suffices to prove this for $\alpha^H_{0}$ and $  \alpha^H_{1, 1}$.
We will give an argument for $\alpha^H_{0}$; a similar but simpler argument
works for $  \alpha^H_{1, 1}$.

Set $\cW_{G,H} =\Hom(\Wh_{G,H}, -)$, $\cW_{G/G} =  \Hom(\Wh_{G/G}, -)$. 
We will prove there is a canonical isomorphism of functors 
\begin{equation*}
\xymatrix{
\cW_{G,H}(-)[-2\nu]\simeq \cW_{G/G}( \g_{H}(-)):  \cH_{G,H}  \ar[r] &   k\lmod 
}
\end{equation*}
where $\nu=\dim N$. In particular, we will then obtain $ \g_{H}^\ell(\Wh_{G/G}) \simeq \Wh_{G, H}$   by adjunction.

Consider the commutative diagram whose right hand square is Cartesian
\beq\label{eq:main diag}
\xymatrix{
\ar[d]_-{s} U^-\bs (G/U \times G/U)/H & \ar[l]_-{\wt\d}  (U^- \times \cB)/U^- \ar[d]_-r \ar[r]^-{\wt\pi} & U^-/U^- \ar[d]_-r\\
G\bs (G/U \times G/U)/H  & \ar[l]_-{\d}  (G \times \cB)/G  \ar[r]^-{\pi} & G/G
}
\eeq
Here $\cB = G/B$ is the flag variety of $G$, and $G$ acts on  $G \times \cB $ by the adjoint action on $G$ and the translation action on $\cB$. Thus $(G \times \cB)/G$ is isomorphic to the  quotient of $G$ by the adjoint action of $B$. Similarly, $U^-$ acts on $U^- \times \cB$ by the adjoint action on $G$ and the translation action on $\cB$.  From this perspective, $\pi(g, g_1) = g$, $\wt \pi(u, g_1) = u$, and $\delta(g, g_1) = (gg_1, g_1)$, $\wt \delta(u, g_1) = (ug_1, g_1)$.

Set $\wt f = \chi \circ \tilde \pi: (U^{-}\times \cB)/U^{-}\to \AA^{1}$. By base change we have
\begin{equation*}
\cW_{G/G}(\tau(\cF))\simeq \phi_{0}f_{*}r^{!}\pi_{*}\d^{*}(\cF)\simeq \phi_{0}\wt f_{*} r^{!}\d^{*}(\cF).
\end{equation*}
Since $\d$ is smooth of relative dimension $\nu=\dim N$, we have  $\d^{*}(\cF)\simeq \d^{!}\cF[-2\nu]$. Therefore
\begin{equation}\label{WhGch}
\cW_{G/G}(\tau(\cF))\simeq \phi_{0}\wt f_{*} r^{!}\d^{!}(\cF)[-2\nu]\simeq \phi_{0}\wt f_{*}\wt\d^{!} s_{}^{!}\cF[-2\nu].
\end{equation}

Now consider the leftward map $\wt \d: (U^- \times \cB)/U^-  \to U^-\bs (G/U \times G/U)/H$ at the top of ~\eqref{eq:main diag}. 

%

Stratify $\cB$ by $U^-$-orbits  $\cB = \sqcup_{w\in W} \cB^w$ where  $\cB^{1} = U^- B/B$ denotes the open $U^-$-orbit, and in general $\cB^{w}=U^{-}\dot{w}\simeq U^{-}/U^{-}_{w}$ where $U^{-}_{w}=U^{-}\cap {}^{w}B$
 where ${}^w B = \dot w B \dot w^{-1}$.

Stratify 
 $(U^- \times \cB)/U^-$ by the pullback of the $U^-$-orbits on $\cB$. So we have the strata
 $(U^- \times U^-/U^-_w)/U^-$, in particular the open stratum $(U^- \times U^-)/U^- \simeq U^-$.
Stratify 
$ U^-\bs (G/U \times G/U)/H$ by the  pullback of the $U^- \times U^-$-orbits on $\cB \times \cB$. 
So we have the strata $ U^-\bs (U^-/U^-_w \times U^-/U^-_{w'})/H$.

The map $\wt\d$ restricted to the $w$-stratum takes the form
\begin{equation*}
\xymatrix{
\wt\d_{w}: (U^- \times \cB^w)/U^-  \ar[r]  & U^-\bs (\cB^w \times_{\BB H} \cB^w).
}
\end{equation*}

We claim that for $w\ne1$, for any $\cF_{w}\in \Sh(U^-\bs (\cB^w \times_{\BB H} \cB^w))$, we have
\begin{equation}\label{Fw van}
\phi_{0}\wt f_{*}\wt\d_{w}^{!}\cF_{w}\simeq 0.
\end{equation} 
To prove the claim, 
note that $w\ne 1$ implies $U^{-}_{w}$ contains $U_{-\a_{i}}\simeq \AA^1$ for some simple root $\a_{i}$.
We are studying the correspondence
\begin{equation*}
\xymatrix{
U_w^-\bs U^-/U^-_w & \ar[l]_-{\wt \d_w} U^-/U_w^-   \ar[r]^-{\wt \pi} & U^-/U^- \ar[r]^-{f}  & \AA^1
}
\end{equation*}
where the second and third quotients are by the adjoint action. 
Let $U^-_0/U^- \subset U^-/U^-$ denote the pre-image of $0 \in \AA^1$. Then the action provides an isomorphism $U^- \simeq U_{-\alpha_i}   \times U^-_0 $ with $\wt f = f \circ \wt \pi$ given simply by the projection to $U_{-\alpha_i}  \simeq \AA^1$. 
Now $U_{-\a_{i}} \subset U^{-}_{w}$ implies that $\wt \pi_{*}\wh\d_{w}^{!}\cF_{w}$ is constant along  $U_{-\alpha_i}  \simeq \AA^1$, and 
 hence
$\phi_{0}\wh f_{*} \pi_{1*} \wh \d_{w}^{!}\cF_{w}\simeq 0 $. 

By the claim, we have
\begin{equation*}
\phi_{0}\wt f_{*}\wt\d^{!}s^!\cF \simeq \phi_{0}\wh f_{*}\wt\d^{!}_{1}\cF_{1}
\end{equation*}
 where $\cF_{1}$ is the restriction of $s^{!}\cF$ to the open stratum
 $U^-\bs (\cB^1 \times_{\BB H} \cB^1)$. Observe that the composition $s\circ \wt \delta_1$  is nothing more than the composition 
 \begin{equation*}
 \xymatrix{
 U^- \ar[r]^-{r_-} &  U\bs G/U \ar[r]^-q & (U\bs G/U)/H  
 }
 \end{equation*}
 used to define $\cW_{G, H}$.
 
 We conclude that 
\begin{equation*}
\phi_{0}\wt f_{*}\wt\d^{!}s^!\cF \simeq \phi_{0}\wh a_{*}\wt\d^{!}_{1}\cF_{1}\simeq \cW_{G, H}(\cF).
\end{equation*}
Combined with \eqref{WhGch} we get a canonical isomorphism
\begin{equation*}
\cW_{G,H}(\cF)[-2\nu]\simeq \cW_{G/G}(\tau(\cF)).
\end{equation*}
\end{proof}

\section{Horocycle descent}\label{sect:horo desc}

This section contains a proof of Theorem~\ref{thm:coequal J}, or more precisely its generalization to Theorem~\ref{thm: geom descent ind}. 


\subsection{Preliminaries}


\subsubsection{Horocycle diagrams}

Let $G$ be a connected reductive group, $B\subset G$ a Borel subgroup, $N = [B, B]$ its unipotent radical, and $H = B/N$ the universal Cartan.

Consider the basic horocycle diagram
\begin{equation}\label{abs hc}
\xymatrix{
\BB G & \ar[l]_-\epsilon \BB B \ar[r]^-\delta & \BB B \times_{\BB H} \BB B
}
\end{equation}
as a diagram over $\BB(G\times G) \simeq \BB G \times \BB G$.
Note $\e$ is smooth and proper, with fibers isomorphic to $G/B$, and $\d$ is smooth, with fibers isomorphic to~$N$.


Let $Z$ be an ind-stack  with a $G\times G$-action.  We often turn the second $G$-action as a right action on $Z$. For subgroups $G_{1}\subset G$ and $G_{2}\subset G$, we write $G_{1}\bs Z/G_{2}$ instead of $Z/(G_{1}\times G_{2})$.  

We can base-change diagram \eqref{abs hc} along $Z/(G\times G)\to \BB(G\times G)$ and get a $Z$-horocycle diagram 
\begin{equation}\label{general hc}
\xymatrix{Z/\D G & Z/\D B \ar[r]^-{\d}\ar[l]_-{\e} & (N\bs Z/N)/\D H}
\end{equation}
where we write $\Delta G, \Delta B$ to emphasize the diagonal action.

\subsubsection{Horocycle adjunctions}\label{sss:horo adj}

Let $\nu=\dim N$, and define functors
\begin{eqnarray*}
hc_{!}=\d_{!}\e^{*}[-2\nu]: \Sh(Z/\D G)\to \Sh((N\bs Z/N)/\D H)\\
ch=\e_{*}\d^{*}=\e_{!}\d^{*}: \Sh((N\bs Z/N)/\D H)\to \Sh(Z/\D G)\\
hc_{*}=\d_{*}\e^{!}: \Sh(Z/\D G)\to \Sh((N\bs Z/N)/\D H)
\end{eqnarray*}
Since $\e$ is proper and $\d$ is smooth of relative dimension $\nu$, we have adjunctions $(hc_{!},ch)$ and $(ch,hc_{*})$.

Assume $Z$ is smooth and let $\L\subset T^{*}Z$ be a closed conic $G\times G$-invariant subset. For any subgroup $G'\subset G\times G$,  consider the full subcategory $\Sh_{\L}(Z/G')\subset \Sh(Z/G')$  of $G'$-equivariant complexes $\cF$ on $Z$ with singular support satisfying $\ssupp(\cF)\subset \L$.

The following statement is a special case of \cite[Lemma 1.2]{mirkovicCharacteristicVarietiesCharacter1988}.
\begin{lemma}\label{l:hc ss} The functors $hc_{!}$ and $hc_{*}$ send $\Sh_{\L}(Z/\D G)$ to $\Sh_{\L}((N\bs Z/N)/\D H)$, and the functor $ch$ sends $\Sh_{\L}((N\bs Z/N)/\D H)$ to $\Sh_{\L}(Z/\D G)$. In particular, if we let $hc_{\L,!}: \Sh_{\L}(Z/\D G)\to\Sh_{\L}((N\bs Z/N)/\D H)$ be the restriction of $hc_{!}$, and similarly define $ch_{\L}$ and $hc_{\L,*}$, then there are adjunctions $(hc_{\L,!}, ch_{\L})$ and $(ch_{\L}, hc_{\L,*})$.
\end{lemma}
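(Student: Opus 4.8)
The statement to prove is Lemma \ref{l:hc ss}: that $hc_!$, $hc_*$ preserve the subcategory of sheaves with singular support in (the image of) $\Lambda$, and similarly for $ch$; from these the adjunctions $(hc_{\Lambda,!}, ch_\Lambda)$ and $(ch_\Lambda, hc_{\Lambda,*})$ follow formally by restriction of the already-established adjunctions $(hc_!, ch)$ and $(ch, hc_*)$ on the ambient categories $\Sh(Z/\Delta G)$ and $\Sh((N\bs Z/N)/\Delta H)$. So the entire content is the singular support bound, which is the cited special case of \cite[Lemma 1.2]{mirkovicCharacteristicVarietiesCharacter1988}.

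First I would set up the microlocal bookkeeping. Working $G\times G$-equivariantly, the singular support conditions on $\Sh(Z/\Delta G)$ and on $\Sh((N\bs Z/N)/\Delta H)$ should be phrased, after pulling back to $Z$, as conditions cut out by the same conic $G\times G$-invariant set $\Lambda\subset T^*Z$ together with the requirement of being monodromic/equivariant for the relevant groups (on one side $\Delta G$, on the other $N\times N$ with residual $\Delta H$-monodromy). The functors $hc_! = \delta_!\epsilon^*[-2\nu]$, $ch = \epsilon_*\delta^*$, $hc_* = \delta_*\epsilon^!$ are built from the proper smooth map $\epsilon: Z/\Delta B\to Z/\Delta G$ with fiber $G/B$ and the smooth map $\delta: Z/\Delta B\to (N\bs Z/N)/\Delta H$ with fiber $N$. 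The key point is that these are base changes of the fixed diagram \eqref{abs hc} over $\BB(G\times G)$, so the correspondence geometry is uniform in $Z$ and controlled entirely by the geometry of $G/B$ and $N$.

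The heart of the argument is the standard microlocal estimate for pull–push along such a correspondence: for a smooth map $f$, $f^*$ (and $f^!$) moves singular support by the ``conormal to the fibers'' direction, i.e.\ $\SS(f^*\cF)\subset df^{-1}(\SS(\cF))$ in the sense of the Kashiwara--Schapira pullback bound (an equality up to the zero section here since $f$ is smooth); for a proper map $g$, $\SS(g_*\cG) = \SS(g_!\cG)\subset g_\circ(\SS(\cG))$, the image under the natural correspondence on cotangent bundles. Applying this along $\delta$ then $\epsilon$ (for $hc_!$ and $hc_*$) and along $\epsilon$ then $\delta$ (for $ch$), one checks that because $\Lambda$ is $G\times G$-invariant — in particular invariant under the infinitesimal action that generates the conormal directions of the fibers of $\delta$ and of $\epsilon$ — the estimate stays inside $\Lambda$ rather than enlarging it. Concretely: the extra covectors introduced by $\delta^*$ lie along the orbit directions of the (second) $N$-action, which are already ``absorbed'' by $\Lambda$ being $N$-invariant; and $\epsilon_*$ along the proper $G/B$-fibration can only produce covectors in the $G$-orbit directions, again inside $\Lambda$. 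I would cite \cite[Lemma 1.2]{mirkovicCharacteristicVarietiesCharacter1988} for this precise invariance computation rather than reproduce it. This invariance-plus-microlocal-bound step is the main obstacle, in that it is the only non-formal input; everything else is diagram chasing.

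Once the three functors are known to restrict to the $\Lambda$-subcategories, I would conclude by noting that $\Sh_\Lambda(Z/\Delta G)\subset \Sh(Z/\Delta G)$ and $\Sh_\Lambda((N\bs Z/N)/\Delta H)\subset \Sh((N\bs Z/N)/\Delta H)$ are full subcategories, and that restricting a pair of adjoint functors to full subcategories that they respectively preserve yields an adjunction between the subcategories (the unit and counit of $(hc_!, ch)$, resp.\ $(ch, hc_*)$, automatically restrict). This gives the asserted adjunctions $(hc_{\Lambda,!}, ch_\Lambda)$ and $(ch_\Lambda, hc_{\Lambda,*})$, completing the proof.
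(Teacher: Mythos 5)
Your proposal takes the same route as the paper: the paper gives no proof at all for this lemma, merely stating it as a special case of \cite[Lemma 1.2]{mirkovicCharacteristicVarietiesCharacter1988}, and you likewise defer the singular-support computation to that citation and then close with the (correct, standard) observation that an adjoint pair restricts to an adjoint pair between preserved full subcategories.

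One point in your sketch is worth correcting, since it misrepresents where the cited lemma does its work. You invoke the proper-pushforward estimate $\SS(g_*\cG)\subset g_\circ(\SS(\cG))$ and then say you ``apply this along $\delta$ then $\epsilon$ (for $hc_!$ and $hc_*$).'' But $hc_!=\delta_!\epsilon^*[-2\nu]$ and $hc_*=\delta_*\epsilon^!$, and in both cases the pushforward is along $\delta$, which (as you yourself note) is smooth with fiber $N$ and hence \emph{not} proper; the proper-pushforward bound simply does not apply to $\delta_!$ or $\delta_*$. The real content of Mirkovi\'c--Vilonen's Lemma 1.2 is precisely that the $G\times G$-invariance of $\Lambda$ (hence its $N$-invariance) forces the sheaves to be suitably monodromic in the $\delta$-fiber direction, so that the non-proper pushforward along $\delta$ nonetheless obeys the expected estimate. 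For $ch=\epsilon_*\delta^*$ there is no such subtlety, since the pushforward $\epsilon_*$ is proper. Since you do end up citing MV for the hard step, the proof is not wrong, but your framing suggests the standard bounds suffice when in fact the key direction falls outside their hypotheses.
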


\subsubsection{Unit diagrams}

Consider the basic unit diagram
\begin{equation}\label{abs unit}
\xymatrix{
B\bs G/B \simeq \BB B \times_{\BB G} \BB B  & \ar[l]_-d \BB B \ar[r]^-p & \BB H
}
\end{equation}
as a diagram over $\BB(H\times H) \simeq \BB H \times \BB H$. Here $d$ is the relative diagonal and $p$ is the natural projection.
Note $d$ is a closed embedding and $p$ is smooth, with fibers isomorphic to $\BB N$.

Let $Z$ be an ind-stack with a $H\times H$-action.  We can base-change diagram \eqref{abs unit} along $H\bs Z/H\to \BB(H\times H)$ and get a $Z$-unit diagram 
\begin{equation}\label{general unit}
\xymatrix{
 Z' := Z \times^{H \times H} N\bs G/N  &Z \times^{ H \times  H} N\bs B/N  \simeq Z/\D B \ar[r]^-{p}\ar[l]_-{d} & Z/\Delta H
}
\end{equation}
where we write $\Delta H, \Delta B$ to emphasize the diagonal action. In forming the middle term $Z/\D B$, the action of $\D B$ factors through $\D H$.

\subsubsection{Unit adjunctions}\label{sss:unit adj}

Note $p^*: \Sh(Z/\D H) \to \Sh(Z/\D B)$ is an equivalence since the $\Delta B$-action on $Z$ factors through $\Delta H$ and the kernel is the unipotent $\Delta N$. Recall $\nu=\dim N$, so $-\nu = \dim \BB N$.
Define functors
\begin{eqnarray*}
u_{\ell}=p_{!} d^{*}[2\nu]: \Sh(Z')\to \Sh(Z/\Delta H)\\
u=d_{*} p^{*}=d_{!} p^{*}: \Sh(Z/\Delta H)\to \Sh(Z')\\
u_{r}=p_* d^!: \Sh(Z')\to \Sh(Z/\Delta H)
\end{eqnarray*}
Since $d$ is proper and $p$ is smooth of relative dimension $-\nu$, we have adjunctions $(u_{\ell},u )$ and $(u,u_{r})$.

Assume $Z$ is smooth and let $\L\subset T^{*}Z$ be a closed conic $H\times H$-invariant subset. Consider the full subcategory $\Sh_{\L}(Z/\Delta H)\subset \Sh(Z/\Delta H)$  of $\Delta H$-equivariant complexes $\cF$ on $Z$ with singular support satisfying $\ssupp(\cF)\subset \L$.
Consider as well the full subcategory $\Sh_{\L}(Z')\subset \Sh(Z')$  of $ H \times H$-equivariant complexes $\cF$ on $ Z \times N\bs G/N  $ with singular support satisfying $\ssupp(\cF)\subset \L \times \cN'$, where $\cN'\subset T^*(N\bs G/N)$ denotes the $N\times N$-reduction of $G\times \cN^* \subset G\times \frg^* \simeq T^*G$.

\begin{lemma}\label{l:unit ss} The functors $u_{\ell}$ and $u_{r}$ send $\Sh_{\L}(Z')$ to $\Sh_{\L}(Z/\Delta H)$, and the functor $u$ sends $\Sh_{\L}(Z/\Delta H)$ to $\Sh_{\L}(Z')$. In particular, if we let $u_{\L,\ell}: \Sh_{\L}(Z')\to\Sh_{\L}(Z/\D H)$ be the restriction of $u_{\ell}$, and similarly define $u_{\L}$ and $u_{\L,r}$, then there are  adjunctions $(u_{\L,\ell}, u_{\L})$ and $(u_{\L}, u_{\L,r})$.
\end{lemma}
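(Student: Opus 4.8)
The statement is a singular-support bookkeeping lemma, entirely parallel to Lemma~\ref{l:hc ss}, and I would prove it by reducing the two assertions to standard microlocal estimates for the two maps $d$ and $p$ appearing in the unit diagram~\eqref{general unit}. The key point is that $p:Z/\D B\simeq Z/\D H\times\BB N\to Z/\D H$ has pro-unipotent kernel, so $p^*$ is fully faithful and $\ssupp$-preserving in a suitably strong sense, while $d:Z/\D B\incl Z'$ is a closed embedding whose conormal estimate is exactly the product estimate $\L\times\cN'$ recorded in the definition of $\Sh_\L(Z')$. First I would treat $u=d_*p^*$: since $p^*$ manifestly lands in $\Sh_\L(Z/\D B)$ (identified with $\Sh_\L(Z/\D H)$ via $p^*$, because $\D B$ acts through $\D H$), it remains to check that $d_*=d_!$ takes $\Sh_\L(Z/\D B)$ into $\Sh_\L(Z')$. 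Here $d$ is the base change of the closed embedding $\BB B\incl N\bs G/N$ (relative diagonal), and pushforward along a closed embedding moves singular support into the image of the conormal variety of the embedding; the estimate that the extra directions are contained in $\cN'$ is precisely the content of $\cN'$ being the $N\times N$-reduction of $G\times\cN^*$. This gives the first adjunction $(u_{\L,\ell},u_\L)$ once we know the two other functors preserve the subcategories.

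For $u_r=p_*d^!$: the functor $d^!$ on a closed embedding again moves singular support along the conormal of $d$, so $d^!$ sends $\Sh_\L(Z')$ into $\Sh_{\L'}(Z/\D B)$ for a slightly enlarged $\L'$; but then $p_*$, being pushforward along the (smooth, pro-unipotent-fibered) map $p$ with $p^*$ an equivalence, collapses the $\BB N$-directions and lands back in $\Sh_\L(Z/\D H)$. Concretely one observes $p_*d^!\simeq p_*d^*[-2\nu]\simeq u_\ell[-4\nu]$ up to shift in this pro-unipotent situation, so $u_\ell$ and $u_r$ preserve the subcategories simultaneously; it is cleanest to first show $u_\ell=p_!d^*[2\nu]$ preserves $\Sh_\L$, using that $d^*$ is $\ssupp$-friendly for the \emph{transverse} stratification (this is where one invokes that $d$ is a closed embedding transverse to the relevant orbit structure, exactly as in Lemma~\ref{l:hc ss}), and that $p_!$ along a smooth map with pro-unipotent fibers does not enlarge singular support in the base. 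Once all three functors $u_\L$, $u_{\L,\ell}$, $u_{\L,r}$ are seen to land in the right subcategories, the adjunctions $(u_{\L,\ell},u_\L)$ and $(u_\L,u_{\L,r})$ are simply the restrictions of the ambient adjunctions $(u_\ell,u)$, $(u,u_r)$ from Section~\ref{sss:unit adj}, since adjunctions restrict to full subcategories as soon as both functors preserve them.

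The argument is formally dual to the proof of Lemma~\ref{l:hc ss}, and indeed the cleanest route is to deduce this lemma from that one: the unit diagram~\eqref{general unit} for $Z$ (with $H\times H$-action) is, after the equivalence $Z\times^{H\times H}N\bs G/N$, nothing but the horocycle diagram~\eqref{general hc} applied to the $G\times G$-ind-stack $\wt Z:=Z\times^{H\times H}(G\times G)$, up to rewriting $N\bs\wt Z/N$ and the various quotients. Under this dictionary $\cN'$ on $N\bs G/N$ matches the nilpotent cone appearing in Lemma~\ref{l:hc ss}, and $u_\ell,u,u_r$ match $hc_!,ch,hc_*$; so the statement follows by transport. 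I expect the main obstacle to be purely notational: carefully matching the quotient stacks and verifying that the conic subset $\L\times\cN'$ in the definition of $\Sh_\L(Z')$ corresponds correctly to the $G\times G$-invariant subset used in Lemma~\ref{l:hc ss} after induction, i.e.\ that the ``extra'' microlocal directions introduced by $G/B$ in the horocycle setting are precisely the $\cN'$-directions here. Once that bookkeeping is pinned down there is no further content. If one prefers a self-contained proof, the only genuinely non-formal input is the singular-support estimate for $d^*$ along the closed embedding $d$, which is the transversality statement already used (in the finite case) in Section~\ref{sss:Whit G}, applied fiberwise over $\BB H$.
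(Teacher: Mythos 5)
Your overall structure is the same as the paper's: show each of $u$, $u_\ell$, $u_r$ preserves the $\Sh_\L$-subcategories, then observe the ambient adjunctions restrict. For $u$, your argument (conormal of the closed embedding $d$ is nilpotent, hence lands in $\cN'$) is in the right spirit, though the paper is sharper: it writes $p^*\cF\simeq\cF\boxtimes\cF_0$ with $\cF_0$ the constant sheaf on $N\bs B/N$, factors $d=\id_Z\times d_0$, and concludes $\ssupp(d_!(\cF\boxtimes\cF_0))=\ssupp(\cF)\times\ssupp(d_{0!}\cF_0)\subset\L\times\cN'$ because $d_{0!}\cF_0$ is $H\times H$-bimonodromic. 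This box-product argument is what makes the containment in $\cN'$ precise.

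For $u_\ell$ and $u_r$ your argument has a genuine error. You propose to pass between $u_\ell$ and $u_r$ via $p_*d^!\simeq p_*d^*[-2\nu]$, i.e.\ $d^!\simeq d^*[-2\nu]$. But $d$ is a closed embedding, not a smooth map, and for closed embeddings $d^!$ is \emph{not} a shift of $d^*$ in general (they differ by a genuine functor, not a shift, unless the sheaf is non-characteristic for $d$). You also invoke ``transversality'' of $d$ to the relevant stratification; but $d$ is \emph{not} non-characteristic here: the conormal of $N\bs B/N\subset N\bs G/N$ consists of nilpotent covectors, hence lies inside $\cN'$, so it meets $\ssupp(\cF)\subset\L\times\cN'$ nontrivially outside the zero section. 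Transversality fails, and if your argument relied on it, it would break. The paper's proof sidesteps this entirely: it applies the Kashiwara--Schapira pullback estimate (for a not-necessarily-non-characteristic map) to $d=\id_Z\times d_0$, and uses that the estimate respects the product decomposition $T^*Z\times T^*(N\bs G/N)$. Because $\ssupp(\cF)\subset\L\times\cN'$ is itself a product, the $T^*Z$-component of $\ssupp(d^*\cF)$ (and of $\ssupp(d^!\cF)$) stays inside $\L$, regardless of what happens in the second factor; then, since $p$ is an $N$-gerbe, the singular support of $p_!d^*\cF$ and $p_*d^!\cF$ on $Z/\D H$ is exactly this $T^*Z$-component. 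So the essential point is the product structure, not transversality, and $u_\ell$ and $u_r$ must each be estimated directly rather than one deduced from the other by a shift.

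Your proposed reduction to Lemma~\ref{l:hc ss} via the induced stack $\wt Z=Z\times^{H\times H}(G\times G)$ is an interesting idea, but as stated it does not obviously produce the unit diagram: in the horocycle diagram for $\wt Z$ the map from the middle term to $\wt Z/\D G$ is smooth and proper with fibers $G/B$, whereas the unit map $d$ is a closed embedding, so the two diagrams do not match under the induction (indeed the middle term $\wt Z/\D B$ fibers over $B\bs G/B$ and is strictly larger than $Z/\D B$). You rightly flag this dictionary as requiring verification; I would not expect it to close without further work, and the direct argument via the product-structure estimate is shorter.
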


\begin{proof}
First, we show $u$ respects the singular support conditions. Given $\cF\in Sh_{\L}(Z/\Delta H)$, viewed as a $\Delta H$-equivariant complex on $Z$, note $p^*\cF \simeq \cF \boxtimes \cF_0$ where $\cF_0$ denotes the constant sheaf on  $N\bs B/N$. Let $d_0:\BB B \to N\bs G/B$ be the closed embedding so that $d = \id \times d_0$. Then $\ssupp(d_!(\cF \boxtimes \cF_0)) = \ssupp(\cF) \times \ssupp(d_{0!}\cF_{0}) \subset  \L \times \cN'$ since $d_{0!}\cF_0$  is $H\times H$-bimonodromic hence has singular support in $\cN'$.

 Finally, we show $u_\ell, u_r$ respect the singular support conditions. 
Given $\cF\in Sh_{\L}(Z')$, view $\cF$ as an $H \times H$-equivariant complex on $Z \times N\bs G/N$. Using the estimate of singular support for pullbacks $d^{*}\cF$ and $d^{!}\cF$ as in \cite[Corollary 6.4.4, Remark 6.2.8]{kashiwaraSheavesManifoldsVolume1990}, we see that $ss(d^{*}\cF)$ and $ss(d^{!}\cF)$ are both contained in $\L$ when viewed as sheaves on $Z$. Therefore the same is true for $p_{!}d^{*}\cF$ and $p_{*}d^{!}\cF$ since $p$ is an $N$-gerbe. 
\end{proof}

\subsection{Descent for smooth stacks}
Let $Z$ be a smooth stack with a left $G\times G$-action. We will write the first $G$-action as a left action and turn the second $G$-action as a right action.

Let $\L\subset T^{*}Z$ be a closed $G\times G$-invariant  subset such that under the moment map $\mu: T^{*}Z\to \frg^{*} \times \frg^*$, we have $\mu(\L)\subset \cN^{*} \times \cN^*$ where $\cN^{*} \subset \frg^*$ denotes the nilcone in the dual to the Lie algebra.

\begin{ex}\label{ex: split}
In many situations of interest, one has $Z=Z_{-}\times Z_{+}$, $\L = \L_- \times \L_+$, where $Z_{\pm}$ are smooth stacks with $G$-action,
and
 $\L_{\pm}\subset T^{*}Z_{\pm}$ is a closed $G$-invariant  subset such that under the moment map $\mu_{\pm}: T^{*}Z_\pm\to \frg^{*}$, we have $\mu_{\pm}(\L_{\pm})\subset \cN^{*}$.
 In this case, we view the action of $G$ on $Z_{-}$ as a right action and the one on $Z_{+}$ as a left action.
 Then the $Z$-horocycle diagram \eqref{general hc} and $Z$-unit diagram \eqref{general unit} 
 take the form 
\begin{equation}\label{split hc}
\xymatrix{Z_- \times^G Z_+  & Z_- \times^B Z_+ \ar[r]^-{\d}\ar[l]_-{\e} & Z_-/N \times^H N\bs Z_+
}
\end{equation}
\begin{equation*}
\xymatrix{
 Z' = Z_- \times^{H} N\bs G/N \times^H Z_+  &Z_- \times^{  H} N\bs B/N \times^H Z_+ \simeq Z_- \times^B Z_+ \ar[r]^-{p}\ar[l]_-{d} & Z_- \times^H Z_+
}
\end{equation*}
\end{ex}

\begin{ex}\label{ex: adj}

A basic example is $Z = G$, with its natural $G\times G$-action, and $\L = G \times \cN^* \subset G\times \frg^*$.
\end{ex}


The category $\Sh_{\L}(N\bs Z/N)$ as a $\cH_{G}$-bimodule. The assumptions on $\L$ imply that any object of $\Sh_{\L}(N\bs Z/N)$  is $H\times H$-monodromic, therefore we  can also regard $\Sh_{\L}(N\bs Z/N)$ as a $\cH_{G}$-bimodule  in  $\cH_{H}=\Sh_{0}(H)$-bimodules
since  $\Sh_{0}(H) = \Mod(\End(e))\subset \cH_{G}$ is the full monoidal subcategory generated by the monoidal unit $e\in \cH_{G}$.  Note that
\begin{equation*}
\xymatrix{hh(\cH_{H}, \Sh_{\L}(N\bs Z/N))\ar[r]^-{\sim} & \Sh_{\L}((N\bs Z/N)/\D H).}
\end{equation*}

%
%
%
%

%
%
%

Here is the main technical theorem of this section.
A  generalization to ``nilpotent categorical bimodules" will appear in~\cite{nadlerCompatibilitiesAutomorphicGluing};
in particular the assumption here that $\L\subset T^{*}Z$ is Lagrangian is not necessary but allows the proof to call on the generalities of Proposition~\ref{p:cont 4 functors}.

\begin{theorem}\label{thm: geom descent}
Let $Z$ be a smooth stack with a $G\times G$-action.
Let $\L\subset T^{*}Z$ be a closed  $G\times G$-invariant  conic Lagrangian such that under the moment map $\mu: T^{*}Z\to \frg^{*} \times \frg^*$, we have $\mu(\L)\subset \cN^{*} \times \cN^*$.

 Then there is a canonical equivalence 
\begin{equation*}
\xymatrix{
hh(\cH_{G}, \Sh_{\L}(N\bs Z/N)) \ar[r]^-\sim &  \Sh_{\L}(Z/\Delta G)
}
\end{equation*}
such that the functor $ch$ defined in Section~\ref{sss:horo adj} factors as the composition
\begin{equation*}
\xymatrix{ch: \Sh_{\L}((N\bs Z/N)/\D H) \simeq  hh(\cH_{H}, \Sh_{\L}(N\bs Z/N))\ar[r] & hh(\cH_{G}, \Sh_{\L}(N\bs Z/N))\simeq\Sh_{\L}(Z/\D G).}
\end{equation*}

\end{theorem}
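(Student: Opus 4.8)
The plan is to compute $hh(\cH_G, \Sh_\Lambda(N\backslash Z/N))$ by means of the bar complex relative to the monoidal subcategory $\cH_H = \Sh_0(H) = \langle e\rangle \subset \cH_G$ generated by the unit. Since $hh(\cH_H,-)$ of a geometric bimodule is computed by the diagonal $H$-quotient (as recorded just before the theorem), this presents $hh(\cH_G, \cM)$, for $\cM = \Sh_\Lambda(N\backslash Z/N)$, as the geometric realization of the simplicial category $[n] \mapsto hh\bigl(\cH_H,\ \cM \otimes_{\cH_H} \cH_G^{\otimes_{\cH_H} n}\bigr)$, with face maps induced by convolution on the $\cH_G$-factors (and by the $\cM$-action on the outer factors), degeneracies by unit insertion, and augmentation the natural map $hh(\cH_H, \cM) \to hh(\cH_G, \cM)$ through which we must factor $ch$.

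First I would geometrize the terms. Writing $\cH_G = \Sh_\cN(N\backslash G/N)$ and using a Künneth formula over $\cH_H = \Sh_0(H)$ --- to the effect that, for the geometric $\cH_H$-bimodules in sight, $- \otimes_{\cH_H} -$ is modeled by the contracted product $\times^{H}$ of the underlying stacks, carrying the combined conic-Lagrangian singular-support condition --- one identifies $\cM \otimes_{\cH_H} \cH_G^{\otimes_{\cH_H} n}$ with $\Sh_{\Lambda_n}(\cW_n)$, where $\cW_n$ is the iterated contracted product of $N\backslash Z/N$ with $n$ copies of $N\backslash G/N$ (appropriately interpreted), and hence the $n$-th term of the simplicial category with $\Sh_{\Lambda_n}(\cW_n/\Delta H)$. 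The hypothesis $\mu(\Lambda) \subset \cN^* \times \cN^*$ is exactly what makes the convolution and unit maps preserve these singular-support conditions throughout --- this is the content of Lemmas~\ref{l:hc ss} and~\ref{l:unit ss} --- while the Lagrangian hypothesis permits invoking Proposition~\ref{p:cont 4 functors}, so that all four functors in sight are continuous and the bar-complex manipulations are legitimate.

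Next I would identify the simplicial stack $\cW_\bullet/\Delta H$ together with its augmentation. The face maps are pull-push along multiplication of adjacent $G$-factors (and the $Z$-action at the ends), the degeneracies are insertions of units; this is precisely the bar resolution of $Z/\Delta G$ by $Z/\Delta B$ via the horocycle correspondence $Z/\Delta G \xleftarrow{\ \epsilon\ } Z/\Delta B \xrightarrow{\ \delta\ } (N\backslash Z/N)/\Delta H$, so that the augmentation map $\cW_0/\Delta H = (N\backslash Z/N)/\Delta H \to Z/\Delta G$ is realized by $ch = \epsilon_! \delta^*$. It then remains to prove that $\Sh_\Lambda(-)$ sends this colimit of stacks to the colimit of categories. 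I would establish this as a Barr--Beck comonadicity statement for the adjunction $(hc_!, ch)$ of Section~\ref{sss:horo adj} on the $\Lambda$-categories: $\epsilon$ is proper, so $\epsilon_* = \epsilon_!$ and base change holds; $hc_!$ furnishes the comonadic resolution; and the continuity from Proposition~\ref{p:cont 4 functors}, together with the usual Beck--Chevalley identities for the horocycle and unit diagrams, lets one match that comonadic resolution term-by-term with the bar complex above. This yields the equivalence $hh(\cH_G, \cM) \simeq \Sh_\Lambda(Z/\Delta G)$ and, by construction of the matching, the asserted factorization of $ch$ through $hh(\cH_H, \cM) \to hh(\cH_G, \cM)$. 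The ind-stack generalization (Theorem~\ref{thm: geom descent ind}) then follows by passing to the colimit over the ind-structure, using continuity again.

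The main obstacle is the combination of the Künneth identification and the final comonadicity step: one must simultaneously (i) show that tensoring over $\cH_H = \Sh_0(H)$ is modeled by contracted products of stacks on the geometric bimodules at hand, (ii) track singular supports so that every term and every structure map of the bar complex stays inside the prescribed $\Lambda$-categories, and (iii) verify the Beck--Chevalley and continuity hypotheses needed to recognize the geometric bar resolution of $Z/\Delta G$ by $Z/\Delta B$ as the bar complex computing the Hochschild homology. Point (iii) is where the Lagrangian condition is genuinely used, via Proposition~\ref{p:cont 4 functors}.
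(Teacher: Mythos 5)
Your overall strategy matches the paper's: present $hh(\cH_G,\cM)$ via the bar complex relative to $\cH_H = \langle e\rangle$, geometrize the terms via a K\"unneth identification over $\cH_H$ so that the simplicial category is realized by sheaves on a simplicial stack of contracted products, track the singular-support conditions using $\mu(\L)\subset \cN^*\times\cN^*$, and then show the augmented cosimplicial diagram (obtained by passing to right adjoints) is a limit diagram via Lurie's criterion~\cite[Cor.\ 4.7.6.3]{lurieHigherAlgebra2012}, with Proposition~\ref{p:cont 4 functors} supplying the continuity hypothesis. This is exactly the structure of Lemmas~\ref{l:aug hoch gps} and~\ref{l:Cbu} together with the verification in Section~\ref{sss:fin descent}.

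There is, however, a genuine gap: you never establish that the (co)augmentation functor is \emph{conservative}, and this is the one hypothesis of Lurie's criterion that is not formal. The cosimplicial diagram is obtained by passing to right adjoints of a simplicial diagram whose augmentation is $ch = \e_!\d^*$, so the relevant adjunction is $(ch, hc_*)$ (not $(hc_!, ch)$ as you write, although the discrepancy is a shift and does not change the issue), and the functor whose conservativity must be shown is $hc_* = \d_*\e^!\colon \Sh_\L(Z/\D G)\to \Sh_\L((N\bs Z/N)/\D H)$. Properness of $\e$ and base change, which is all you adduce, do not yield this: after all, $hc_*$ is a transport across a correspondence with positive-dimensional fibers and there is no a priori reason for it to detect zero objects. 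The paper supplies the missing input by a Springer-theoretic computation: it shows $ch\circ hc_*$ contains the identity as a direct summand, by base-changing the Springer resolution $\tfrac{N}{\Ad B}\to\tfrac{G}{\Ad G}$ over $Z/\D G$ and using that the Springer sheaf $\pi_!k$ contains the skyscraper $i_*k[-2\nu]$ at $1\in G$ as a direct summand --- this is essentially~\cite[Thm.\ 3.6]{mirkovicCharacteristicVarietiesCharacter1988}. Without this geometric fact the Barr--Beck / Lurie verification does not close, so your proposal as written proves only the easier half of the descent statement (the bar-resolution matches the horocycle bar complex term-by-term), not that the totalization recovers $\Sh_\L(Z/\D G)$.
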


\begin{ex}
In the setting of Example~\ref{ex: split}, the theorem gives an equivalence
\begin{equation*}
\xymatrix{
\Sh_{\L_{+}}(Z_{-}/N)\ot_{\cH_{G}}\Sh_{\L_{-}}(N \bs Z_{+})\ar[r]^-\sim &  \Sh_{\L}(Z_- \times^G Z_+).
}
\end{equation*}
%
\end{ex}

\begin{ex}
In the setting of Example~\ref{ex: adj}, the theorem gives the equivalence
\begin{equation*}
\xymatrix{
hh(\cH_{G}) \ar[r]^-\sim &  \Sh_{\cN}(G/G)
}
\end{equation*}
that we stated in Theorem~\ref{thm:CS G}.
%
\end{ex}

\begin{remark} In \cite{nadlerCompatibilitiesAutomorphicGluing} we will prove an abstract version of Theorem \ref{thm: geom descent} where $\Sh_{\L}(Z)$ is replaced with a $\Sh(G)$-bimodule category satisfying a certain nilpotence condition reflecting that $\L$ maps to $\cN^{*}\times \cN^{*}$ under the moment map. In particular, one can remove the assumption that $\L$ is a Lagrangian in Theorem \ref{thm: geom descent}.
\end{remark}

%


 To prove Theorem~\ref{thm: geom descent}, we will identify each term of the (relative) Hochschild complex computing $hh(\cH_{G}, \Sh_{\L}(N\bs Z/N))$ as sheaves on a certain space, and augment the Hochschild complex by adding the term $\Sh_{\L}(Z/\D G)$. Finally we will apply Lurie's criterion \cite[Corollary 4.7.6.3]{lurieHigherAlgebra2012} to verify that the augmented Hochschild complex is a colimit diagram.

\subsubsection{Nonlinear diagrams}
To start, we present some general patterns for constructing the (relative) Hochschild complex for spaces, following~\cite{ben-zviNonlinearTraces}.  In the subsequent Section~\ref{sss:case of groups}, we specialize to the case we will use in the proof of Theorem~\ref{thm: geom descent}.

Let $K$ be an algebraic group. Let $\Corr^{K}$ be the category of stacks with  $K$-action with morphisms given by $K$-equivariant correspondences. 

Let $\Corr^{K\times K}$ be the monoidal category of stacks with  $K \times K$-action with morphisms given by 
$K \times K$-equivariant correspondences. The monoidal structure on $\Corr^{K\times K}$ is given by $U \star V:=U \times^{K} V$, where the quotient of $K$ uses the second action of $K$ on $U$ and the first action of $K$ on $V$. Note $K$ with its regular $K \times K$-action is the monoidal unit in $\Corr^{K\times K}$.

Note that $\Corr^{K}$ is naturally a module category for $\Corr^{K\times K}$: for $U\in\Corr^{K\times K}$ and $V\in \Corr^{K}$, we define the action of $U$ on $V$ to be $U\star V:=U \times^{K} V\in \Corr^{K}$ (quotient using the second action of $K$ on $U$ and the action of $K$ on $V$; the first action of $K$ on $U$ induces a $K$-action on $U\star V$).

Consider a diagram of stacks with Cartesian square
\beq\label{eq:abstract setup}
\xymatrix{
  X_0 \ar[d]_-{p_0} \ar[r]^-{b'} &  X \ar[r]^-f  \ar[d]^-p & Y \\
\pt \ar[r]^-b & \BB K & 
}
\eeq
In other words, we are given a stack $X_0$ with $K$-action and a $K$-invariant map $X_0\to Y$.




Given \eqref{eq:abstract setup}, observe that $A =  X_0 \times_Y  X_0$ is naturally an algebra object in $\Corr^{K\times K}$ with product given by the correspondence
\beq\label{eq:alg mult}
\xymatrix{
A \star A = (X_0\times_Y  X_0) \times^K ( X_0\times_Y  X_0)  & \ar[l]_-{\d}
 X_0\times_Y   X \times_Y  X_0 \ar[r]^-{\pi_f} &  X_0 \times_Y  X_0 = A.
}
\eeq
Here $\d$ is induced by the natural map $X = X_0/K \to  X_0 \times^K  X_0$ of the middle factors (which in turn is induced by the diagonal map $X_0\to X_0\times X_0$), and $\pi_f$ is induced by the projection of the middle factor $f: X\to Y$. The unit of $A$ is given by the correspondence
\beq\label{eq:alg unit}
\xymatrix{
K = \pt \times_{\BB K} \pt  & \ar[l]_-{\e_p}
\pt \times_{\BB K}  X \times_{\BB K} \pt \simeq X_0\times_X X_0 \ar[r]^-{\d_f} &  X_0\times_Y  X_0 = A.
}
\eeq
Here $\e_p$ is induced by $p : X\to \BB K$, and $\d_f$ is induced by  $f:X\to  Y$. Note swapping the factors of $A$ gives an equivalence with its monoidal opposite.

Given any map of stacks $q:W\to Y$, observe that $W_{0}:= X_0\times_Y W$ is naturally an $A$-module object in  $\Corr^{K}$ with action given by the correspondence
\beq\label{eq:mod mult}
\xymatrix{
A \star W_{0}= ( X_0\times_Y  X_0) \times^K (  X_0\times_Y W)  & \ar[l]_-{\delta}
 X_0\times_Y  X \times_Y W \ar[r]^-{\pi_f} &   X_{0}\times_Y W=  W_{0}
}
\eeq
where $\delta$ and $\pi_{f}$ are defined in the same as in \eqref{eq:alg mult}.

Similarly, given any map of stacks $q_1 \times q_2:W\to Y \times Y$,  $W_{0}:=X_0\times_Y W \times_{Y} X_0$ is naturally an $A$-bimodule object in $\Corr^{K\times K}$.

The following is elementary to check; we leave further details to the reader. We refer to Section~\ref{ss:hh} for the terminology of  relative Hochschild complex, which we borrow here for the monoidal category $\Corr^{K\times K}$.

\begin{lemma}\label{l:aug hoch}
Given any map of stacks $q_1 \times q_2:W\to Y \times Y$, we have an augmented simplicial object $B(A, W)_\bullet$ in $\Corr^{K\times K}$ such that:
 \begin{enumerate}
 \item The underlying simplicial object of $B(A, W)_\bullet$ is the relative Hochschild complex for the $A$-bimodule $W_{0}$ in $\Corr^{K\times K}$ (relative to the unit object $K\in \Corr^{K\times K}$ which is also an algebra object, and the unit map $K\to A$ given in \eqref{eq:alg unit} is a map of algebras):
  \begin{equation*}
 \xymatrix{
\cdots \ar@<1.5ex>[r] \ar@<0ex>[r] \ar@<-1.5ex>[r]  &  \ar@<0.75ex>[l] \ar@<-0.75ex>[l]      W_0 \times^{K \times K} A \ar@<0.75ex>[r] \ar@<-0.75ex>[r] 
 & \ar@<0ex>[l]   W_0/\Delta K 
 }
 \end{equation*}
  
 \item The augmentation map $B(A, W)_0 \to B(A, W)_{-1}$ is given by the correspondence
 \begin{equation*}
 \xymatrix{
  W_0/\Delta K =   ( X_0\times_Y W_{} \times_Y  X_0)/\Delta K & \ar[l]_-\delta W_{} \times_{Y \times Y} X
  \ar[r]^-{\pi_f} & W_{} \times_{Y \times Y} Y
 }
 \end{equation*}
where $\delta$ is induced by the natural map $ X = X_0/K \to  X_0 \times^K  X_0$, and $\pi_f$ is induced by $f:X\to Y$. 
 
\end{enumerate}
  
\end{lemma}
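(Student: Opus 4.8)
The plan is to verify Lemma~\ref{l:aug hoch} by unwinding the definitions of the algebra and bimodule structures set up in \eqref{eq:alg mult}, \eqref{eq:alg unit}, and \eqref{eq:mod mult}, and then identifying term-by-term the relative Hochschild complex of the $A$-bimodule $W_0 = X_0 \times_Y W \times_Y X_0$ in the monoidal category $\Corr^{K\times K}$. Since $\Corr^{K\times K}$ has a completely explicit description---objects are stacks with $K\times K$-action, morphisms are equivariant correspondences, and the monoidal product is the balanced product $U\star V = U\times^K V$---all the structural verifications reduce to checking commutativity of diagrams of stacks up to canonical equivalence, which is routine but needs to be assembled coherently.

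First I would recall that the relative Hochschild complex of a bimodule $M$ over an algebra $A$ in a monoidal category $\cC$, relative to a unit-preserving subalgebra $\cC' \to A$ (here $\cC' = K$, the monoidal unit, viewed as an algebra via the unit map $K \to A$ of \eqref{eq:alg unit}), has $n$-th term $hh(\cC', M \otimes_{\cC'} A^{\otimes n})$; in the present situation $\otimes_{K}$ in $\Corr^{K\times K}$ is $\times^K$, and $hh(K, -)$ is the $\Delta K$-quotient of the underlying correspondence-object regarded as a $K\times K$-object via the cyclic identification. Concretely, $A^{\star n} = (X_0\times_Y X_0)^{\star n} \simeq X_0 \times_Y X \times_Y \cdots \times_Y X \times_Y X_0$ (with $n-1$ copies of $X$ in the middle, using the diagonal maps $X = X_0/K \to X_0\times^K X_0$), so $W_0 \star A^{\star n} \simeq X_0 \times_Y W \times_Y X \times_Y \cdots \times_Y X \times_Y X_0$ with the appropriate $K\times K$-action, and its $\Delta K$-quotient is exactly the $n$-th term of the claimed simplicial object. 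The face maps are the correspondences $\delta$ (inserting a diagonal $X_0 \to X_0\times^K X_0$, i.e.\ collapsing an $X$) and $\pi_f$ (projecting an interior $X\to Y$); the degeneracies insert the unit via \eqref{eq:alg unit}. I would present $B(A,W)_\bullet$ as this simplicial object augmented by $W\times_{Y\times Y} Y$, with the augmentation correspondence described in part (2): the left leg $\delta$ is the relative diagonal $W\times_{Y\times Y} X \to (X_0\times_Y W\times_Y X_0)/\Delta K$ (which makes sense because $W\times_{Y\times Y}X = W\times_{Y\times Y}(X_0\times^{K\times K}(\pt\times\pt)) \simeq (X_0\times_Y W\times_Y X_0)/\Delta K$ after the cyclic rotation), and the right leg $\pi_f$ is induced by $f:X\to Y$.

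The substance of the proof is then (i) confirming that $A = X_0\times_Y X_0$ with product \eqref{eq:alg mult} and unit \eqref{eq:alg unit} is genuinely an associative algebra object in $\Corr^{K\times K}$---associativity follows from the compatibility of the two iterated fiber products $X_0\times_Y X\times_Y X\times_Y X_0$ obtained from the two bracketings, and unitality from a base-change identity comparing $\pt\times_{\BB K}X\times_{\BB K}\pt$ with $X_0\times_X X_0$; (ii) confirming $W_0$ with \eqref{eq:mod mult} is an $A$-bimodule, which is the same kind of fiber-product bookkeeping; and (iii) checking that the simplicial identities (the various $d_id_j = d_{j-1}d_i$, etc.) hold, and that the augmentation is simplicial (i.e.\ $d_0$ and $d_1$ out of $B(A,W)_0$ become equal after composing with the augmentation). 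All of these are statements that two correspondences of stacks agree, and since a correspondence is determined by its apex together with its two legs up to equivalence, each reduces to exhibiting a canonical equivalence of the apices compatible with the legs; this is where the phrase ``elementary to check'' earns its keep. The main obstacle I anticipate is purely organizational: keeping track of which copy of $K$ is being quotiented in each balanced product $\star$, and making sure the cyclic rotation used to pass between $W_0\star A^{\star n}$ as a $K\times K$-object and its $\Delta K$-quotient is applied consistently---a sign-of-the-permutation-type bookkeeping rather than a genuine mathematical difficulty. I would therefore spell out the $n=0$ and $n=1$ cases in full (recovering the face maps and the augmentation exactly as displayed) and then remark that the general case is the evident extension, leaving the coherence of the higher simplicial data to the cited framework of~\cite{ben-zviNonlinearTraces}.
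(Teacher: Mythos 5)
There is a genuine gap: the concrete identification of the Hochschild complex terms rests on the false assertion that the diagonal map $X = X_0/K \to X_0\times^K X_0$ is an isomorphism. Since $X_0\to X$ is a $K$-torsor one has $X_0\times^K X_0 \simeq X\times_{\BB K}X$, and the map in question is the relative diagonal $X\to X\times_{\BB K}X$, which is not an equivalence. This is already visible in the specialization of Section~\ref{sss:case of groups}: with $X_0=\BB N$, $X=\BB B$, $K=H$, one has $\dim(X_0\times^K X_0)=-2\dim N-\dim H$ while $\dim X=-\dim N-\dim H$, off by $\dim N$; consistently, Lemma~\ref{l:Cbu} records $B(A,Z)_n \simeq (Z\times G^n)/(B\times_H B)^{n+1}$, whereas your formula would produce smaller stacks with $\BB B$ in place of each $\BB B\times_{\BB H}\BB B$. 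So your claims that $A^{\star n}\simeq X_0\times_Y X\times_Y\cdots\times_Y X\times_Y X_0$ (for $n\geq 2$) and that $W_0/\Delta K\simeq W\times_{Y\times Y}X$ are both incorrect; the correct expressions retain the factors $X_0\times^K X_0\simeq X\times_{\BB K}X$.

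What you have correctly written down is the apex of each face-map \emph{correspondence}, not the terms of the simplicial object. The product \eqref{eq:alg mult} is a correspondence with apex $X_0\times_Y X\times_Y X_0$ and a genuinely non-invertible leg $\delta$ into $A\star A = X_0\times_Y(X_0\times^K X_0)\times_Y X_0$; the objects $B(A,W)_n = hh(K, W_0\star A^{\star n})$ of the simplicial diagram keep the balanced products, and the smaller $X$-fiber-products appear only as apices of the morphisms between them. In particular the left leg $\delta$ of the augmentation in part (2) is not a relabeling but a proper map, which is precisely why the augmented diagram carries nontrivial content. Once you keep these two layers distinct, the rest of your outline---verifying associativity and unitality of $A$ from \eqref{eq:alg mult} and \eqref{eq:alg unit}, the bimodule axioms for $W_0$ from \eqref{eq:mod mult}, and the simplicial identities by base-change of correspondences---is indeed the right way to make ``elementary to check'' precise.
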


\subsubsection{Our case of interest}\label{sss:case of groups} We specialize the preceding constructions when the initial diagram
\eqref{eq:abstract setup} takes the form
\begin{equation*}
\xymatrix{
\BB N \ar[d]_-{p_0} \ar[r]^-{b'} & \BB B \ar[r]^-f  \ar[d]^-p & \BB G \\
\pt \ar[r]^-b & \BB H & 
}
\end{equation*}
where the maps are the embeddings $U\subset B\subset G$ and projection $B\surj H$. So here $K$ is simply the universal Cartan $H$.

Inside of $\Corr^{H\times H}$, we have the algebra $A = \BB N \times_{\BB G} \BB N \simeq N\bs G/N$ with the multiplication diagram~\eqref{eq:alg mult} given by usual convolution
\begin{equation*}
\xymatrix{
A \star A = ( N\bs G/N) \times^H ( N\bs G/N)  & \ar[l]_-{\d}
 N\bs G \times^B G/N  \ar[r]^-{\pi_f} &  N\bs G/N  = A
}
\end{equation*}
and the unit diagram \eqref{eq:alg unit} taking the form
\beq\label{H alg unit}
\xymatrix{
H  & \ar[l]_-{\e_p}
 N\bs B/N  \ar[r]^-{\d_f} &  N\bs G/ N = A.
}
\eeq
Given a stack $Z$ with $G$-action, applying the general discussion about 
$A$-modules to the induced map $q: W=G\bs Z\to \BB G$, then we have $W_0 = N\bs Z\in \Corr^{H}$ is naturally an $A$-module with action \eqref{eq:mod mult}  given by usual convolution
\begin{equation*}
\xymatrix{
A \star   W_0 = ( N\bs G/N) \times^H ( N\bs Z)  & \ar[l]_-{\delta}
N\bs G \times^B Z\ar[r]^-{\pi_f} &   N\bs Z=  W_0.
}
\end{equation*}
Similarly, given a stack $Z$ with $G\times G$-action, applying the general discussion about $A$-bimodules to the map $q_1 \times q_2:W=G\bs Z/G \to \BB G \times \BB G$, then 
 $W_0 =  N\bs Z/N \in \Corr^{H\times H}$ is naturally an $A$-bimodule object.

Now Lemma~\ref{l:aug hoch} takes the following form.

\begin{lemma}\label{l:aug hoch gps}
Given a stack $Z$ with $G\times G$-action, we have an augmented simplicial object $B(A, Z)_\bullet$ (in the notation of Lemma \ref{l:aug hoch} it should be called $B(A,W)_{\bu}$, where $W=G\bs Z/G$) in $\Corr^{H\times H}$ such that:
 \begin{enumerate}
 
 \item The underlying simplicial object of $B(A, Z)_\bullet$ is the relative Hochschild complex of the $A$-bimodule $N\bs Z/N$ (relative to the algebra map $H\to A$ in $\Corr^{H\times H}$ given by the unit diagram \eqref{H alg unit}):
  \begin{equation*}
 \xymatrix{
\cdots \ar@<1.5ex>[r] \ar@<0ex>[r] \ar@<-1.5ex>[r]  &  \ar@<0.75ex>[l] \ar@<-0.75ex>[l]      (N\bs Z/N) \times^{H \times H} (N\bs G/N) \ar@<0.75ex>[r] \ar@<-0.75ex>[r] 
 & \ar@<0ex>[l]   (N\bs Z/N) /\Delta H.
 }
 \end{equation*}
  
 \item The augmentation map $B(A, Z)_0 \to B(A, Z)_{-1}$ is given by the horocycle correspondence
 \begin{equation*}
 \xymatrix{
 (N\bs Z/N) /\Delta H & \ar[l]_-\delta Z/\Delta B
  \ar[r]^-{\pi_f} & Z/\Delta G.
 }
 \end{equation*}
 \end{enumerate}
  
\end{lemma}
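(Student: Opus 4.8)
The plan is to deduce Lemma~\ref{l:aug hoch gps} as the special case of Lemma~\ref{l:aug hoch} obtained by specializing the abstract data \eqref{eq:abstract setup} to $K = H$ and to the diagram displayed at the start of Section~\ref{sss:case of groups}, namely $X_0 = \BB N$, $X = \BB B$, $Y = \BB G$, with $f:\BB B\to\BB G$ induced by $N\subset B\subset G$ and $p:\BB B\to\BB H$ induced by $B\surj H$, together with the auxiliary map $q_1\times q_2 : W = G\bs Z/G\to\BB G\times\BB G$ coming from the two $G$-actions on $Z$. Most of the required identifications are already recorded in Section~\ref{sss:case of groups}: the algebra object $A = \BB N\times_{\BB G}\BB N$ of $\Corr^{H\times H}$ is identified with $N\bs G/N$, its multiplication correspondence \eqref{eq:alg mult} with the usual convolution, and its unit correspondence \eqref{eq:alg unit} with \eqref{H alg unit}; and the $A$-bimodule $W_0 = \BB N\times_{\BB G}(G\bs Z/G)\times_{\BB G}\BB N$ in $\Corr^{H\times H}$ is identified with $N\bs Z/N$, by base-changing the two $G$-torsor structures on $Z$ to $N$-torsors, with its bimodule action given by convolution.

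Granting these, part~(1) is immediate: the relative Hochschild complex of Lemma~\ref{l:aug hoch}(1) (relative to the unit algebra $K = H$ and the algebra map $H\to A$ of \eqref{H alg unit}) has $n$-th term $W_0\times^{H\times H}A^{\star n}$, which under the above identifications is $(N\bs Z/N)\times^{H\times H}(N\bs G/N)^{\star n}$; in particular the $0$-th term is $(N\bs Z/N)/\Delta H$, exactly as displayed.

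For part~(2) I would unwind the augmentation of Lemma~\ref{l:aug hoch}(2). Its target is $W\times_{Y\times Y}Y$, the fiber product along the diagonal $Y\to Y\times Y$; base-changing $G\bs Z/G$ along the diagonal $\BB G\to\BB G\times\BB G$ identifies this with $Z/\Delta G$. The apex of the augmentation correspondence is $W\times_{Y\times Y}X = (G\bs Z/G)\times_{\BB G\times\BB G}\BB B$, which by the same diagonal base change followed by base change along $\BB B\to\BB G$ becomes $Z/\Delta B$. Under these identifications the leg $\delta$ of Lemma~\ref{l:aug hoch}(2), induced by $X = X_0/K\to X_0\times^K X_0$, becomes the natural map $Z/\Delta B\to (N\bs Z/N)/\Delta H$, and the leg $\pi_f$, induced by $f$, becomes $Z/\Delta B\to Z/\Delta G$; this is precisely the horocycle correspondence \eqref{general hc}.

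The one step requiring genuine care — the ``main obstacle'', such as it is — is checking that all of these identifications of stacks are compatible with the algebra and bimodule structures in $\Corr^{H\times H}$ and carry the abstract legs $\delta$, $\pi_f$ to the honest convolution and horocycle correspondences (and similarly the degeneracies to unit insertions). Since $N$, $B$, $H$ are ordinary algebraic groups and every fiber product appearing is a classical one, this reduces to a routine diagram chase, essentially the content of the remark that Lemma~\ref{l:aug hoch} is ``elementary to check''.
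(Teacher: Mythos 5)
Your proposal is correct and is essentially the paper's own argument: the paper states Lemma~\ref{l:aug hoch gps} precisely as the specialization of Lemma~\ref{l:aug hoch} to the diagram at the start of Section~\ref{sss:case of groups} (with $K=H$, $X_0=\BB N$, $X=\BB B$, $Y=\BB G$, and $W=G\bs Z/G\to\BB G\times\BB G$), and you unwind the same base-change identifications $W_0\simeq N\bs Z/N$, $W\times_{Y\times Y}X\simeq Z/\Delta B$, $W\times_{Y\times Y}Y\simeq Z/\Delta G$ to read off parts (1) and (2). Your closing remark about the "routine diagram chase" matches the paper's own attitude, as Lemma~\ref{l:aug hoch} itself is stated with "The following is elementary to check; we leave further details to the reader."
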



\subsubsection{Augmented Hochschild complex} 
 For the next step towards the proof of  Theorem~\ref{thm: geom descent}, we shall take the categories of sheaves termwise for the augmented simplicial object $B(A, Z)_{\bullet}$ in $\Corr^{H\times H}$ provided by Lemma~\ref{l:aug hoch gps}, and impose singular support conditions to obtain the augmented Hochschild complex for the $\cH_{G}$-bimodule $\Sh_{\L}(N\bs Z/N)$.

Consider the monoidal category $\cH_{H}=\Sh_{0}(H)$. For any $X\in \Corr^{H\times H}$, $\Sh(X)$ is a bimodule for $\cH_{H}$. Thanks to \cite{gaitsgoryStudyDerivedAlgebraic2019}, passing to categories of sheaves gives a monoidal functor
\begin{equation*}
\Corr^{H\times H}\to \Bimod_{\cH_{H}}(\St)
\end{equation*}
where the target is the 2-category of stable presentable $\oo$-categories that are $\cH_{H}$-bimodules (and the monoidal structure is tensor product over the middle copy of $\cH_{H}$). For a morphism from $X$ to $Y$ in $\Corr^{H\times H}$, i.e., a $H\times H$-equivariant correspondence 
\begin{equation*}
\xymatrix{ X & C\ar[l]_{p}\ar[r]^{q} & Y
}
\end{equation*}
the functor $\Sh(X)\to \Sh(Y)$ is given by $q_{!}p^{*}$. Passing to the categories of all sheaves termwise for $B(A, Z)_{\bullet}$. We obtain an augmented simplicial object $\Sh(B(A, Z))_{\bullet}$ in $\Bimod_{\cH_{H}}(\St)$.

Now we impose singular support conditions.  Let $\L_{-1} \subset T^*(Z/\Delta G)$ be the $\Delta G$-reduction of $\L$. For $n\ge0$, $B(A, Z)_{n}$ can be written as the quotient
\begin{equation*}
B(A, Z)_{n}\simeq (Z\times G^{n})/(B\times_{H}B)^{n+1}
\end{equation*}
where for $n\ge1$, the $i$th factor ($0\le i\le n$) of $B\times_{H}B$ acts on $Z\times G^{n}$ by
\begin{equation*}
(b,b')\cdot (z,g_{1},\cdots, g_{n})=\begin{cases}(zb, b'^{-1}g_{1},\cdots, g_{n}) & i=0,\\
(z, \cdots, g_{i}b, b'^{-1}g_{i+1},\cdots, g_{n}) & 1\le i\le n-1 \\
(b'^{-1}z, g_{1},\cdots, g_{n-1},  g_{n}b) & i=n.
 \end{cases}
\end{equation*}
For $n=0$ the action of $B\times_{H}B$ acts on $Z$ is by $(b,b')\cdot z=b'^{-1}zb$.

For $n\geq 0$,  set $\L_{n}  \subset T^* B(A, Z)_{n}\simeq T^*((Z \times G^{n})/(B \times_H B)^{n+1})$ be the reduction of $\L \times (G \times \cN^*)^n$.

\begin{lemma}\label{l:Cbu} The augmented simplicial category $\Sh(B(A, Z))_{\bullet}$ restricts to an augmented simplicial object $\cC_{\bu}$ in $\Bimod_{\cH_{H}}(\St^{L}_{k})$ with terms $\cC_{n}=\Sh_{\L_{n}}(B(A,Z)_{n})$ for $n\ge-1$.  Moreover:
\begin{enumerate}
\item Let $\cM=\Sh_{\L}(N\bs Z/N)$. Then the underlying simplicial object of of $\cC_{\bu}$ is equivalent to the relative Hochschild complex $B(\cH_{G}, \cM)_\bullet$:
  \begin{equation*}
 \xymatrix{
\cdots \ar@<1.5ex>[r] \ar@<0ex>[r] \ar@<-1.5ex>[r]  &  \ar@<0.75ex>[l] \ar@<-0.75ex>[l]   
  \cM \otimes_{\cH_{H} \ot \cH_{H}^{op}} \cH_{G} 
\ar@<0.75ex>[r] \ar@<-0.75ex>[r]  & \ar@<0ex>[l]  
    \cM \ot_{\cH_{H} \ot \cH_{H}^{op}} \cH_{H}.
 }
 \end{equation*}
\item The augmentation map $\cC_{0} \to \cC_{-1}$ is given by the transform
 \begin{equation*}
 \xymatrix{
ch:\cM \ot_{\cH_{H} \ot \cH_{H}^{op}} \cH_{H}\simeq \Sh_{\L_{0}}((N\bs Z/N)/\D H) \ar[r] &  \Sh_{\L_{-1}}(Z/\Delta G)
 }
 \end{equation*}
 associated to the horocycle transform construction \eqref{general hc} applied to $Z$.
\end{enumerate}
\end{lemma}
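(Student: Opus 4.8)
The statement to prove is Lemma~\ref{l:Cbu}, asserting that the termwise sheaf categories of the augmented simplicial object $B(A,Z)_\bullet$, once we impose the prescribed singular support conditions, organize into an augmented simplicial object $\cC_\bullet$ in $\Bimod_{\cH_H}(\St^L_k)$ which is the relative Hochschild complex of the $\cH_G$-bimodule $\cM = \Sh_\L(N\bs Z/N)$, augmented by the horocycle transform $ch$.

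The plan is as follows. First I would verify that passing to sheaf categories with the singular support constraints actually produces functors of stable presentable $k$-linear categories with left adjoints, and that these are $\cH_H$-bimodule functors; this uses the fact (invoked via~\cite{gaitsgoryStudyDerivedAlgebraic2019}) that $\Corr^{H\times H}\to \Bimod_{\cH_H}(\St)$ is monoidal together with the singular-support estimates of Lemma~\ref{l:hc ss} and Lemma~\ref{l:unit ss} applied to each structure map of the simplicial object. The key point is that the face maps of $B(A,Z)_\bullet$ are built out of the correspondences $\delta$ and $\pi_f$ of~\eqref{eq:alg mult} and~\eqref{eq:mod mult}, which are exactly the building blocks of horocycle/convolution diagrams, so the singular support conditions $\L_n$ are preserved — this is where one cites \cite[Cor. 6.4.4, Rmk 6.2.8]{kashiwaraSheavesManifoldsVolume1990} for pullbacks and the properness/smoothness of the relevant maps for pushforwards, just as in the proofs of Lemmas~\ref{l:hc ss} and~\ref{l:unit ss}.

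Next I would identify the underlying simplicial object of $\cC_\bullet$ with the relative Hochschild complex $B(\cH_G,\cM)_\bullet$. For $n\geq 0$ we have the presentation $B(A,Z)_n \simeq (Z\times G^n)/(B\times_H B)^{n+1}$, and imposing $\L_n$ gives $\cC_n = \Sh_{\L_n}(B(A,Z)_n)$. On the other hand the $n$-th term of the relative Hochschild complex is $\cM\otimes_{\cH_H\otimes\cH_H^{op}}\cH_G^{\otimes_{\cH_H} n}$; using $\cH_G = \Sh_\cN(N\bs G/N)$ regarded as an $\cH_H$-bimodule and $hh(\cH_H,\Sh_\L(N\bs W/N))\simeq \Sh_\L((N\bs W/N)/\Delta H)$ for any $G$-space $W$ (as recorded before the theorem), one computes the iterated tensor product of sheaf categories via Künneth / descent for sheaves on quotient stacks, matching it with $\Sh_{\L_n}$ of the presented stack. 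Comparing the face and degeneracy maps: the face maps of the Hochschild complex are convolutions, realized geometrically by the $\delta,\pi_f$ correspondences of Lemma~\ref{l:aug hoch gps}, while the degeneracies insert units; these are exactly the structure maps of $B(A,Z)_\bullet$ under $\Sh$. This is essentially a bookkeeping verification once the tensor products of sheaf categories are identified, so I would phrase it as ``unwinding the definitions'' using Lemma~\ref{l:aug hoch gps}.

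Finally, part (2) is immediate from Lemma~\ref{l:aug hoch gps}(2): the augmentation $B(A,Z)_0 \to B(A,Z)_{-1}$ is the horocycle correspondence $(N\bs Z/N)/\Delta H \xleftarrow{\delta} Z/\Delta B \xrightarrow{\pi_f} Z/\Delta G$, and applying $\Sh$ with singular supports $\L_0$, $\L_{-1}$ gives $\pi_{f!}\delta^* = ch$ (up to the normalizing shift absorbed into the conventions of Section~\ref{sss:horo adj}), landing in $\Sh_{\L_{-1}}(Z/\Delta G)$ by Lemma~\ref{l:hc ss}. I expect the main obstacle to be the careful identification in part (1) of the iterated relative tensor products $\cM \otimes_{\cH_H\otimes\cH_H^{op}}\cH_G^{\otimes_{\cH_H}n}$ with $\Sh_{\L_n}$ of the quotient stack $(Z\times G^n)/(B\times_H B)^{n+1}$ — i.e. checking that tensoring over $\cH_H$ at each intermediate stage computes exactly the $N$-averaged sheaf category with the correct nilpotent singular support condition, and that no spurious singular directions appear — rather than the comparison of simplicial structure maps, which is formal.
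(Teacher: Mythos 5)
Your proposal is essentially correct and follows the same overall strategy as the paper: Künneth for the terms, then verify that face and degeneracy maps respect the singular-support constraints using Lemmas~\ref{l:hc ss} and~\ref{l:unit ss}. Two refinements are worth pointing out.

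First, the mechanism for singular-support preservation is cleaner than your sketch suggests. You propose to re-run the Kashiwara--Schapira estimates ``just as in'' the proofs of those two lemmas; the paper instead arranges things so that the lemmas apply \emph{verbatim}: for the injection $[n-1]\to[n]$ missing $i$, the corresponding face map of $B(A,Z)_\bullet$ is literally the horocycle transform $ch$ of construction~\eqref{general hc} applied to the auxiliary $G\times G$-space $Z^i_n=(Z\times G^n)/B^i_n$, where $B^i_n\subset(B\times_H B)^{n+1}$ is obtained by deleting the $i$th factor, with the left-over $G\times G$-action sitting where that factor used to act. Each degeneracy is likewise the unit functor $u$ of construction~\eqref{general unit} applied to the auxiliary $H\times H$-space $Z_{n,i}=(Z\times G^n)/B_{n,i}$, where $B_{n,i}$ replaces the $i$th factor by $N\times N$. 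This auxiliary-space reduction is the one piece of substance your sketch leaves implicit.

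Second, you misjudge where the weight of the argument lies. The identification of $\cC_n$ with the Hochschild-complex terms is handled by a single citation to a categorical Künneth formula for sheaves with prescribed singular support on twisted products (Lemma~A.4.3 of~\cite{nadlerAutomorphicGluingFunctor}); it is not the ``main obstacle.'' The content of the proof is exactly the singular-support preservation via the auxiliary spaces just described. Finally, your parenthetical worry about a normalizing shift in part~(2) is unnecessary: $ch=\epsilon_!\delta^*$ carries no shift (the shift $[-2\nu]$ occurs only in $hc_!$ and $u_\ell$).
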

\begin{proof} The description of $\cC_{n}$ in terms of $\cM$ and $\cH_{G}$ follows from the categorical K\"unneth formula for sheaves with prescribed singular support conditions on twisted products $X_{1}\times^{H}X_{2}$, see \cite[Lemma A.4.3]{nadlerAutomorphicGluingFunctor}. It remains to check that the prescribed singular supports are respected by the given functors in $\Sh(B(A, Z))_{\bullet}$. 

For $n\geq 0$, and the injection $\ph: [n-1]\to [n]$ whose image misses $i$, the corresponding face map of $B(A, Z)_{\bullet}$ is given by the functor $ch$ resulting from the horocycle transform construction \eqref{general hc} applied to $Z^i_n = (Z \times G^n)/B^i_n$. Here  
$B^i_n \subset (B\times_{H}B)^{n+1}$ is  the subgroup 
where we replace the $i$th  factor by the trivial group and  keep the other factors unchanged. The $G\times G$-action on $Z^i_n$ is the natural action along where the $i$th factor of $(B\times_{H}B)^{n+1}$ originally acted. 
By Lemma~\ref{l:hc ss},  the associated horocycle functors, in particular the face map  $ch$,  respect the prescribed singular support.
%

Similarly, for $n\geq 0$, and the surjection $\ph: [n+1]\to [n]$ that identifies $i$ and $i+1$, 
 the corresponding degeneracy map of $B(A, Z)_{\bullet}$  respects the singular support as follow. Observe   
 the degeneracy map is given by the unit functor $u$ (see Section~\ref{sss:unit adj})
  resulting from the unit transform construction \eqref{general unit} applied to $Z_{n, i} = (Z \times G^n)/B_{n, i}$. 
  Here  
$B_{n, i} \subset (B\times_{H}B)^{n+1}$ is  the subgroup 
where we replace the $i$th  factor by the group $N \times N$ and  keep the other factors unchanged. The $H\times H$-action on $Z_{n, i}$ is the natural action along where the $i$th factor of $(B\times_{H}B)^{n+1}$ originally acted.
%
%
By Lemma~\ref{l:unit ss},  
the associated unit functors, in particular the degeneracy map   $u$,  respect the prescribed singular support.
\end{proof}






\sss{Finish of the proof of Theorem~\ref{thm: geom descent}}\label{sss:fin descent}
 To prove Theorem~\ref{thm: geom descent}, it remains to prove that the the augmented simplicial object $\cC_{\bu}$ exhibits $\cC_{-1}=\Sh_{\L}(Z/\D G)$ as the colimit of the underlying simplicial object of $\cC_{\bu}$, i.e., the Hochschild complex $B(\cH_{G}, \cM)_{\bu}$. Equivalently, letting $\cC^{\bu}$  be the augmented cosimplicial object obtained from $\cC_{\bu}$ by passing to right adjoints (note these are available by
Lemmas~\ref{l:hc ss} and \ref{l:unit ss}), it suffices to show that $\cC^{\bu}$ exhibits $\cC^{-1}$ as the limit of the underlying cosimplicial object $\{\cC^{n}\}_{n\ge0}$. For this it suffices to check that $\cC^\bullet$ satisfies the following strong form of the criteria of \cite[Corollary 4.7.6.3]{lurieHigherAlgebra2012}: 
\begin{enumerate}
\item The augmentation map $d_{-1}^0 = hc_*:\cC^{-1} \to \cC^0$ is: (a) conservative
and (b) continuous, i.e. preserves colimits; 
\item The following commutative squares are left adjointable for any order-preserving map $\alpha:[m] \to [n]$ (where $m,n\ge-1$)
\begin{equation*}
\xymatrix{
\ar[d]_-\alpha \cC^m \ar[r]^-{d_m^0} & \cC^{m+1}\ar[d]^-{\alpha'}\\
\cC^n \ar[r]^-{d_n^0} & \cC^{n+1}
}
\end{equation*}
Here $d_{m}^{0}$ is the inclusion $[m]\to [m+1]$ that whose image misses $0$; $d_{n}^{0}$ is defined similarly; and $\a': [m+1]\to [n+1]$ is the map defined by $\a'(0)=0$ and $\a'(i+1)=\a(i)+1$ for $i\in [m]$. Left  adjointability of the above square means the face maps $d_m^0$, $d_n^0$ admit respective left adjoints $(d_m^0)^\ell$, $(d_n^0)^\ell$ (which in our case are already given by the construction of $d_m^0$, $d_n^0$ as right adjoints), and the associated base change map
is an equivalence
\begin{equation*}
\xymatrix{
(d_n^0)^\ell \circ \alpha' \ar[r]^-\sim & \alpha\circ (d_m^0)^\ell.
}
\end{equation*}


\end{enumerate}

(1a) First, we check $d_{-1}^0  = hc_{*}$ is conservative.  It suffices to show that  $ch\circ hc_{*}$  contains the identity functor as a direct summand.  This is essentially \cite[Theorem 3.6]{mirkovicCharacteristicVarietiesCharacter1988}. 

We use notation from the diagram \eqref{general hc}. By definition $ch\c hc_{*}=\e_{!}\d^{*}\d_{*}\e^{!}$. The fiber square along $\d$ can be identified with
\begin{equation*}
\xymatrix{     Z/{\D B} & (Z \times N)/\D {B} \ar[r]^-{p_{1}}\ar[l]_-{a_{1}} &   Z/\D B.
}
\end{equation*}
Here  the $\D B$-action on $N$ is by conjugation. The map $a_{1}$ is the action map of $N$ on $Z$ via $N \times \{1\} \to G \times G$,  and $p_{1}$ is the projection.  Since $\d$ is smooth of relative dimension $\nu=\dim N$, $\d^{*}\d_{*}\cong p_{1*}a_{1}^{*}\cong p_{1*}a_{1}^{!}[-2\nu]$. Hence
\begin{equation*}
ch\c hc_{*}\cong\e_{*}p_{1*}a_{1}^{!}\e^{!}[-2\nu]=p'_{1*}a'^{!}_{1}[-2\nu]
\end{equation*}
where $p'_{1}=\e\c p_{1}, a'_{1}=\e\c a_{1}$. We have a commutative  diagram
\begin{equation*}
\xymatrix{  & (Z \times N)/\D {B}  \ar[d]^-{\pi}\ar[dr]^{p'_{1}}\ar[dl]_-{a'_{1}} & \\
Z/\D G  &  (Z \times G)/\D G   \ar[r]^-{\pi_{1}}\ar[l]_-{\a_{1}}  &    Z/\D B} 
\end{equation*}
 Here the $\D G$-action (resp. $\D B$-action) on $G$ (resp. $N$) is by conjugation,  $\pi_{1}$ is projection, and $\a_{1}$ is the action map of $G$ on $Z$ via $G \times \{1\} \to G \times G$.
The map $\pi$ is the base change of the Springer resolution $ \frac{N}{\Ad(B)}\to \frac{G}{\Ad(G)}$. Hence for $\cF\in \Sh_{\L}(Z/\D G)$, we have
\begin{equation*}
ch( hc_{*}(\cF))\cong p'_{1*}a'^{!}_{1}\cF[-2\nu]\cong \pi_{1*}\pi_{*}\pi^{!}\a^{!}_{1}\cF[-2\nu]\cong \pi_{1*}\uHom(\pi_{!}k, \a^{!}_{1}\cF)[-2\nu].
\end{equation*}
The Springer sheaf contains the skyscraper sheaf at $1\in G$ as a direct summand, hence $\pi_{!}k$ contains $i_{*}k[-2\nu]$ as a direct summand, where $i:Z/\D G\incl ( Z\times G)/\D G$ corresponds to the inclusion of $1$ into $G$. Therefore $ch( hc_{*}(\cF))$ contains as a direct summand
\begin{equation*}
\pi_{1*}\uHom(i_{*}k[-2\nu], \a^{!}\cF)[-2\nu]\cong \pi_{1*}i_{*}i^{!}\a^{!}\cF\cong \cF.
\end{equation*}
We have shown that  $ch\circ hc_{*}$  contains the identity functor as a direct summand. 

(1b) follows from Proposition \ref{p:cont 4 functors}. 

(2) We will check the required left adjointability for the augmentation map $\alpha = d_{-1}^0:[-1] \to [0]$; the verification for other maps is similar.

Consider the relevant categories and functors
\begin{equation*}
\xymatrix{\Sh_{\L}(Z/\D G) \ar@<-1ex>[r]_-{hc_{*}} & \Sh_{\L}((N\bs Z/N)/\D H)\ar@<-1ex>[l]_-{ch}\ar@<1ex>[r]^-{hc_{*0}} &  \Sh_{\L}(N\bs Z/N) \otimes_{\cH_{H} \otimes \cH_{H}^{op}} \cH_{G} \ar@<1ex>[l]^-{ch_{1}}.
}
\end{equation*}
Here $ch_{1}$ is the $\cH_{G}$-action on $  \Sh_{\L}((N\bs Z/N)/\D H)$ induced by the right $G$-action on $Z$, and $hc_{*0}$ is right adjoint to the $\cH_{G}$-action on $  \Sh_{\L_{}}((N\bs Z/N)/\D H)$ induced by the left $G$-action on $Z$.
 
We seek to show the natural adjunction transformation
\begin{equation}\label{bc tau}
\xymatrix{
\t: ch_{1}\c hc_{*0}\ar[r] &  hc_{*}\c ch
}\end{equation}
 is an equivalence.  Indeed, by proper base change, $hc_{*}\c ch=\d_{*}\e^{!}\e_{!}\d^{*}$ 
can be identified with the functor $c_{*}p^{!}[-2\nu]$ constructed from the diagram
\begin{equation}\label{act1}
\xymatrix{(N\bs Z/N)/\D H & \D B\bs (Z\times  G/B) \ar[r]^-{c}\ar[l]_-{p} & (N\bs Z/N)/\D H}
\end{equation}
where in the middle term, $b\in \D B$ acts by $b(z,gB)=(bzb^{-1}, bgB)$, and the maps $p$ and $c$ are defined by $p(z,g)=z$ and $c(z, g)=gzg^{-1}$. 

On the other hand,  $ch_{1}\c hc_{*0}$ can be identified with the functor $m_{1*}m_{0}^{!}[-2\nu]$ constructed from the diagram
\begin{equation}\label{act2}
\xymatrix{(N\bs Z/N)/\D H & \frac{Z\times^{B}G}{\D B}\ar[r]^-{m_{1}}\ar[l]_-{m_{0}} & (N\bs Z/N)/\D H}
\end{equation}
where in the middle term, $b\in \D B$ acts by $b(z,g)=(bz, gb^{-1})$, and the maps $m_{0}$ and $m_{1}$ are defined by $m_{0}(z,g)=gz$ and $m_{1}(z, g)=zg$. 

We have an isomorphism between the two diagrams \eqref{act1} and \eqref{act2} that is the identity on $(N\bs Z/N)/\D H$ and on the middle terms it takes the form $(z,gB)\mapsto (g^{-1}z,g)\in \frac{Z\times^{B}G}{\D B}$. This isomorphism identifies $c_{*}p^{!}[-2\nu]$ with $m_{1*}m_{0}^{!}[-2\nu]$. One checks that $\tau$ is the composition
\begin{equation*}
ch_{1}\c hc_{*0}\simeq m_{1*}m_{0}^{!}[-2\nu] \simeq c_{*}p^{!}[-2\nu]\simeq hc_{*}\c ch.
\end{equation*}
Therefore $\t$ is an equivalence. 
This concludes the proof of Theorem~\ref{thm: geom descent}.

\subsection{Singular and ind-version} We will generalize Theorem~\ref{thm: geom descent} to the situation of stratified ind-stacks (Theorem~\ref{thm: geom descent ind}).

\sss{Setup} Let $Z$ be an ind-stack equipped with a partition into smooth locally closed substacks $Z=\sqcup_{\a\in P} Z_{\a}^{\c}$ indexed by a poset $P$. For $\a\in P$, assume $\{\b\in P|\b<\a\}$ is finite and  $Z_{\a}:=\cup_{\b\le \a} Z_{\b}^{\c}$ is closed in $Z$. Let $i_{\a}^{\c}: Z_{\a}^{\c}\to Z$ be the inclusion.

Assume $Z$ has a $G\times G$-action preserving each $Z_{\a}\subset Z$. Let $\L_{\a}\subset T^{*}Z_{\a}^\circ$ be a closed $G\times G$-invariant  conic Lagrangian such that under the moment map $\mu: T^{*}Z_{\a}^\circ\to \frg^{*} \times \frg^*$, we have $\mu(\L_{\a})\subset \cN^{*} \times \cN^*$. 

Assume for $\b < \a\in P$, the composition $(i^\circ_{\b})^* (i^\circ_{\a})_*: \Sh(Z^\circ_{\a}) \to \Sh(Z_{\b}^\circ)$ takes $\Sh_{\L_{\a}}(Z^\circ_{\a})$ to $\Sh_{\L_{\b}}(Z_{\b}^\circ)$. Define $\Sh_{\L}(Z)$ to be the full subcategory of objects $\cF$ such that $(i_{\a}^\circ)^*\cF\in  \Sh_{\L_{\a}}(Z^\circ_{\a})$, for all~$\a\in P$. 

 For any $Q\subset P$ that is locally down-closed, i.e., $Q=Q_{1}\setminus Q_{2}$ where $Q_{1}$ and $Q_{2}$ are down-closed, let $Z_{Q}=\cup_{\a\in Q}Z^{\c}_{\a}$ be the corresponding locally closed substack of $Z$, we can define the full subcategory $\Sh_{\L}(Z_{Q})\subset \Sh(Z_{Q})$ in the same way by requiring objects to have image under $(i^{\c}_{\a})^{*}$ lying in $\Sh_{\L_{\a}}(Z^\circ_{\a})$.  Now for a locally down-closed $Q$ and a downclosed subset $Q'\subset Q$ with complement $Q''=Q\setminus Q'$, the assumptions guarantee that the usual functors in the recollement diagram of $\Sh(Z_{Q}), \Sh(Z_{Q'})$ and $\Sh(Z_{Q''})$ restrict to a recollement diagram
\begin{equation*}
\xymatrix{
\Sh_{\L}(Z_{Q''}) \ar@<3ex>[r]^{j_!} \ar@<-3ex>[r]^{j_*} &  \ar[l]_{j^! = j^*}  \Sh_{\L}(Z_{Q})\ar@<3ex>[r]^{i^*}  \ar@<-3ex>[r]^{i^!}  &   \ar[l]_{i_* = i_!}   \Sh_{\L}(Z_{Q'})}
\end{equation*}
where $i: Z_{Q'}\incl Z_{Q}$ and  $j:Z_{Q''}\incl Z_{Q}$ are the closed and open inclusions. From this we see that $\Sh_{\L}(Z)$ admits a {\em stratification} indexed by $P$ in the sense of Section~\ref{sss:strat}, with strata categories  $\Sh_{\L_{\a}}(Z^{\c}_{\a})$ for $\a\in P$.

For any subgroup $G'\subset G\times G$,  let $\Sh_{\L}(Z/G')\subset \Sh(Z/G')$  denote  the full subcategory 
 of $G'$-equivariant complexes $\cF$ on $Z$ whose underlying object lies in $\Sh_{\L}(Z)$.
The assumptions on $\L$ imply that any object of $\Sh_{\L}(N\bs Z/N)$  is $H\times H$-monodromic. The $G\times G$ actions on $Z$ equip $\Sh_{\L}(N\bs Z/N)$ with a $\cH_{G}$-bimodule structure.

%
%
%
%
%
%

\begin{theorem}\label{thm: geom descent ind} With the above setup, there is a canonical equivalence of stable $\oo$-categories
\begin{equation*}
\xymatrix{
hh(\cH_{G}, \Sh_{\L}(N\bs Z/N)) \ar[r]^-\sim &  \Sh_{\L}(Z/\Delta G)
}
\end{equation*}
such that the functor $ch$ defined in Section~\ref{sss:horo adj} factors as the composition
\begin{equation*}
\xymatrix{ch: \Sh_{\L}((N\bs Z/N)/\D H) \simeq  hh(\cH_{H}, \Sh_{\L}(N\bs Z/N))\ar[r] & hh(\cH_{G}, \Sh_{\L}(N\bs Z/N))\simeq\Sh_{\L}(Z/\D G).}
\end{equation*}
\end{theorem}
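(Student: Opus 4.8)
The plan is to deduce Theorem~\ref{thm: geom descent ind} from the already-proved smooth case Theorem~\ref{thm: geom descent} by an induction on the poset $P$, using the recollement diagrams for $\Sh_{\L}(Z)$ together with the fact that both sides of the desired equivalence are compatible with stratifications. The key observation is that both $\cF\mapsto hh(\cH_G,-)$ and $\Sh_{\L}(Z/\Delta G)$ see the stratification $Z=\sqcup_\alpha Z^\circ_\alpha$: on the automorphic side, $N\bs Z/N$ inherits a stratification by $N\bs Z^\circ_\alpha/N$, and since $hh(\cH_G,-)=\cH_G\otimes_{\cH_G\otimes\cH_G^{op}}(-)$ is a colimit, hence exact and continuous in the bimodule variable, it carries recollement sequences of $\cH_G$-bimodules to recollement (semiorthogonal) sequences; on the geometric side, the recollement for $\Sh_{\L}(Z_Q)$ recorded in the setup does the same. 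So the first step is to set up the two compatible $P$-indexed stratifications (in the sense of Section~\ref{sss:strat}) and check that the functor $ch$, being a pull-push along the horocycle correspondence \eqref{general hc}, commutes with the open/closed restriction functors $j^*,i^*,i_*$ up to the expected base-change isomorphisms — this is where Lemma~\ref{l:hc ss} and the singular-support hypotheses on the $\L_\alpha$ are used, exactly as in the proof of Lemma~\ref{l:Cbu}.

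Next I would reduce to the case of a single stratum and its open complement: given $Q'\subset Q$ down-closed with open complement $Q''$, assuming the theorem for $Z_{Q'}$ and $Z_{Q''}$, deduce it for $Z_Q$. Applying $hh(\cH_G,-)$ to the recollement triangle of $\cH_G$-bimodules
\begin{equation*}
\Sh_{\L}(N\bs Z_{Q''}/N)\longrightarrow \Sh_{\L}(N\bs Z_{Q}/N)\longrightarrow \Sh_{\L}(N\bs Z_{Q'}/N)
\end{equation*}
produces a fiber sequence of Hochschild homology categories, and naturality of the equivalence in Theorem~\ref{thm: geom descent} (applied stratum-by-stratum, where each $Z^\circ_\alpha$ is smooth with Lagrangian $\L_\alpha$ mapping to the nilcone) identifies the outer terms with $\Sh_{\L}(Z_{Q''}/\Delta G)$ and $\Sh_{\L}(Z_{Q'}/\Delta G)$, compatibly with the recollement of $\Sh_{\L}(Z_Q/\Delta G)$. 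Then the middle terms are identified by the two-out-of-three property for the fiber sequences, and one checks the identification is compatible with $ch$ by comparing the augmentation maps on associated graded pieces. The colimit over all of $P$ (using that each $\{\beta<\alpha\}$ is finite and the $Z_\alpha$ are closed, so the stratification is well-behaved) then yields the statement for $Z$ itself; formally this is an instance of gluing equivalences of stratified categories along a common poset, for which the abstract machinery is collected in Appendix~\ref{s:str sh}.

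There are two points that require care. The first is purely bookkeeping but genuinely the main obstacle: making the identification \emph{canonical} and \emph{functorial} across the strata, i.e. checking that the equivalences from Theorem~\ref{thm: geom descent} for the various $Z^\circ_\alpha$ assemble into a single equivalence rather than just a collection of unrelated ones. This amounts to verifying that the descent diagram $\cC_\bullet$ of Lemma~\ref{l:Cbu} can be built $P$-equivariantly — that is, that the simplicial object computing $hh(\cH_G,\Sh_{\L}(N\bs Z/N))$ restricts termwise to the recollement structure, which follows because each $\cC_n$ is a sheaf category on a stratified ind-stack $B(A,Z)_n$ with compatible singular-support conditions, and the face and degeneracy maps (horocycle and unit transforms) respect these by Lemmas~\ref{l:hc ss} and~\ref{l:unit ss}. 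The second point is continuity: one must know $hh(\cH_G,-)$ commutes with the relevant (filtered) colimits indexing the ind-structure, which holds because we work in $\St^L_k$ and all functors in sight are continuous, together with Proposition~\ref{p:cont 4 functors} guaranteeing continuity of the horocycle functors. Once these are in place, the factorization of $ch$ through $hh(\cH_H,-)\to hh(\cH_G,-)$ is inherited stratum-by-stratum from Theorem~\ref{thm: geom descent}, completing the proof.
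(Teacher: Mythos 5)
Your overall strategy diverges from the paper's, and the divergence contains a genuine gap. You open by asserting that $hh(\cH_G,-)=\cH_G\otimes_{\cH_G\otimes\cH_G^{op}}(-)$ ``is a colimit, hence exact and continuous in the bimodule variable, it carries recollement sequences of $\cH_G$-bimodules to recollement (semiorthogonal) sequences.'' This inference is not valid. A recollement in $\St^L_k$ is strictly more data than a cofiber sequence, and $\St^L_k$ is \emph{not} stable --- the paper flags exactly this danger in the remark following Proposition~\ref{p:recoll}, with an explicit counterexample of a cosheaf of recollements whose cosections fail to form a recollement. Applying a continuous functor (or forming a colimit of categories) to a recollement does not automatically produce a recollement: one needs the face and degeneracy maps of the bar complex to be \emph{morphisms of recollements} in the precise adjointability sense of Section~\ref{sss:rec}, and this must be checked, not inferred from continuity. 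Consequently the ``two-out-of-three for fiber sequences'' step you invoke to identify the middle terms is not available until those adjointability squares are verified; but verifying them is essentially the whole content of the proof, not a corollary of exactness.

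To your credit, your penultimate paragraph partly recognizes this and retreats to a different plan: ``verify that the descent diagram $\cC_\bullet$ of Lemma~\ref{l:Cbu} can be built $P$-equivariantly,'' i.e.\ that each $\cC_n$ carries the stratified singular-support condition and the face/degeneracy maps respect it. That is in fact the paper's actual argument, and it is not a secondary bookkeeping point --- it is the proof. The paper never glues the smooth-case equivalences stratum-by-stratum via recollements at all. Instead it directly extends the augmented simplicial object $\Sh(B(A,Z))_\bullet$ to the ind-stack, defines $\cC_n\subset\Sh(B(A,Z)_n)$ by the stratum-wise singular-support condition, shows the face and degeneracy maps preserve $\cC_\bullet$ by proving (via proper base change) that they intertwine the $!$-extensions $i^\circ_{\a,n!}$ from each stratum, uses that $\cC_n$ is generated under colimits by the images of these $!$-extensions, and then reruns the Lurie criterion verification of Section~\ref{sss:fin descent} verbatim. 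No recollement of Hochschild homology categories is ever formed. If you want to keep the recollement-gluing framing, you would need to supply the adjointability verifications that make $hh(\cH_G,-)$ of a recollement of bimodules into a recollement; the paper's route avoids this by working one level down, with the simplicial object itself rather than its colimit.
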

\begin{proof}

 We explain that the steps in the proof of Theorem \ref{thm: geom descent} can be made to work for the ind-stack $Z$ with slight modifications. The augmented simplicial diagram in Lemma \ref{l:aug hoch gps} makes sense for the ind-stack $Z$. Now for each term $B(A,Z)_{n}=(Z\times G^{n})/(B\times_{H}B)^{n+1}$ (where $n\ge0$), we assign the full subcategory category $\cC_{n}\subset \Sh(B(A,Z)_{n})$ consisting of objects $\cF$ whose pullback to $Z^{\c}_{\a}\times G^{n}$ has singular support contained in $\L_{\a}\times (G\times \cN^{*})^{n}$, for all $i\in P$. Let $\cC_{-1}=\Sh_{\L}(Z/\Delta G)$ as is already defined. 

With this definition of $\cC_{\bu}\in \Bimod_{\cH_{H}}(\St^{L}_{k})$, we claim that the statement of Lemma \ref{l:Cbu} holds. We need to check the maps in the augmented simplicial object $\Sh(B(A,Z)_{\bu})$ preserve the subcategories $\cC_{\bu}$. 

For $\a\in P$, applying Lemma \ref{l:Cbu} to $Z_{\a}^{\c}$ together with the Lagrangian $\L_{\a}$, we get an augmented simplicial object $\cC_{\a,\bu}$ in $\Bimod_{\cH_{H}}(\St^{L}_{k})$ whose terms are $\Sh_{\L_{\a,n}}(B(A,Z^{\c}_{\a})_{n})$. Let $i^{\c}_{\a, n}: B(A,Z^{\c}_{\a})_{n}\incl B(A,Z)_{n}$ be the locally closed embedding induced by $i^{\c}_{\a}$. We claim that the functors $i^{\c}_{\a, n!}: \cC_{\a,n}\to \Sh(B(A,Z)_{n})$ induce a functor of augmented simplicial objects
\begin{equation*}
i^{\c}_{\a,\bu!}: \cC_{\a,\bu}\to \Sh(B(A,Z)_{\bu}).
\end{equation*}
Indeed, for an injection $\ph: [n-1]\to [n]$, the corresponding face maps $ch_{\a,\ph}: \cC_{\a,n}\to \cC_{\a,n-1}$ and $ch_{\ph}: \Sh(B(A,Z)_{n})\to \Sh(B(A,Z)_{n-1})$ are given by special cases of the $ch$ functor defined in Section~\ref{sss:horo adj}. The isomorphism
\begin{equation*}
ch_{\ph}\c i^{\c}_{\a,n!}\simeq i^{\c}_{\a,n-1!}\c ch_{\a,\ph}
\end{equation*}
follows from proper base change. For a surjection $\ph: [n+1]\to[n]$,  the corresponding degeneracy maps $u_{\a,\ph}: \cC_{\a,n}\to \cC_{\a,n+1}$ and $u_{\ph}: \Sh(B(A,Z)_{n})\to \Sh(B(A,Z)_{n+1})$ are given by special cases of the unit functor $u$ defined in Section~\ref{sss:unit adj}. The isomorphism
\begin{equation*}
u_{\ph}\c i^{\c}_{\a,n!}\simeq i^{\c}_{\a,n+1!}\c u_{\a,\ph}
\end{equation*}
again follows from proper base change.

Now observe that for $n\ge-1$, $\cC_{n}\subset \Sh(B(A,Z)_{n})$ is generated by the images of $i^{\c}_{\a,n!}$ (for $\a\in P$) under colimits. We have checked above that the face and degeneracy maps for $\Sh(B(A,Z)_{\bu})$ preserve the images of $i^{\c}_{\a,\bu!}$ for fixed $\a\in P$. Since the face and degeneracy maps are continuous (they are left adjoints), they preserve $\cC_{\bu}$, therefore we get an augmented simplicial object $\cC_{\bu}$, and the analog of Lemma \ref{l:Cbu} holds.

The last step is to check that the augmented cosimplicial object $\cC^{\bu}$, obtained from $\cC_{\bu}$ by passing to right adjoints, satisfies Lurie's criterion \cite[Corollary 4.7.6.3]{lurieHigherAlgebra2012} for a limit diagram. The same argument as in Section~\ref{sss:fin descent} works for the current situation without change.
\end{proof}

\section{Harder-Narasimhan subcategories}\label{s:hn subcats}


This main result of this section is Theorem~\ref{thm:main in text} giving a  recollement structure on the cocenter $hh(\cH_\cG)$ of the universal affine Hecke category $\cH_\cG$. The properties of this recollement mirror those of the recollement structure on the Betti Langlands automorphic category $\Sh_\cN(\Bun_G(E))$ for a genus one curve $E$ induced by the Harder-Narasimhan stratification of $\Bun_G(E)$. Most basically, the filtrations of both recollements are indexed  by Newton points $\NP\subset \xcoch(T)_{\QQ}^{+}$ (see Section~\ref{sss:np}).
In a sequel, we will prove that $hh(\cH_\cG)$ is equivalent to $\Sh_\cN(\Bun_G(E))$ so that
the recollements match.  In this paper, we will focus on the specific consequence stated in
Theorem~\ref{th:ff}
 that the associated graded for the minimum index $0\in \NP$ embeds fully faithfully in  $hh(\cH_\cG)$.

\subsection{Combinatorial pieces} 


Our starting point for the analysis of $hh(\cH_\cG)$ is the colimit description of  Corollary~\ref{cor:coequal J geom}:
\begin{equation*}
\xymatrix{
\colim_{\cD}   \cH_{\cG, J} \ar[r]^-\sim  & hh(\cH_{\cG})
}
\end{equation*}
We will begin with the group theory  of B\'edard and Lusztig~\cite{lusztigParabolicCharacterSheaves} 
underlying the natural decomposition
of each $ \cH_{\cG, J}$ into pieces.

\sss{Combinatorial pieces}\label{sss:comb piece}
Let $W^{a}$ be the affine Weyl group of $\cG$ and $\tilW=\xcoch(T)\rtimes W$ be the extended affine Weyl group. For $J\sft I^{a}$, let $W_{J}\subset W^{a}$ be the subgroup generated by $J$; it is the Weyl group of the Levi $L_{J}$ of the parahoric subgroup $\cP_{J}$. Let ${}^{J}\tilW$ (resp. $\tilW^{J}$) be the set of minimal length elements in the cosets $W_{J}\bs \tilW$ (resp. $\tilW/W_{J}$). Let ${}^{J}\tilW^{J'}$ be the set of minimal length elements in the double cosets $W_{J}\bs \tilW/W_{J'}$.

For a group $\G$ and a subgroup $\G' \subset \Gamma$, let $\frac{\G}{\G'}$ denote the set of $\G'$-conjugacy classes in $\G$. More generally, if $\d$ is an automorphism of $\G$, let $\frac{\G}{\Ad_{\d}(\G')}$ denote the set of orbits of $\G'$ acting on $\G$ by twisted conjugation $\g'\cdot \g=\g'\g\d(\g'^{-1})$. If $\g\in \G$, we use $\f{\G}{\Ad_{\g}(\G')}$ to mean the $\f{\G}{\Ad_{\Ad(\g)}(\G')}$.

Let us first review a combinatorial procedure of B\'edard, as found in \cite{lusztigParabolicCharacterSheaves}. Let $J\sft I^{a}$. Let $\cS_{J}$ be the set of sequences $(J_{n},J'_{n},u_{n})_{n\ge0}$ such that
\begin{enumerate}
\item $J_{0}=J$ and $J_{n}=J_{n-1}\cap \Ad(u_{0}\cdots u_{n-1})J_{n-1}$ for $n\ge1$.
\item $J'_{0}=J$ and $J'_{n}=J_{n-1}\cap \Ad(u_{0}\cdots u_{n-1})^{-1}J_{n-1}$ for $n\ge1$.
\item $u_{n}\in {}^{J'_{n}}(W_{J_{n-1}})^{J_{n}}$ for $n\ge0$.
\end{enumerate}
We call elements in $\cS_{J}$ {\em combinatorial $J$-pieces}. Note that $J_{n}$ and $J'_{n}$ stabilize for $n$ large hence $u_{n}=1$ for $n$ large. As explained in \cite[Prop.~2.5]{lusztigParabolicCharacterSheaves}, the map $(J_{n}, J'_{n},u_{n})_{n\ge0}\mapsto u_{0}u_{1}\cdots u_{m}$ (for $m\gg 0$) defines a bijection
\begin{equation}\label{SJ}
\cS_{J}\isom {}^{J}\tilW.
\end{equation}

For $u\in {}^{J}\tilW$, we denote the corresponding element in $\cS_{J}$ by 
\begin{equation*}
\f{u}{J}\in \cS_{J}.
\end{equation*}

\sss{The map $\s_{J}$}
For any $J\sft I^{a}$, we define a map
\begin{equation*}
\s_{J}: \frac{\tilW}{W_{J}}\to \cS_{J}
\end{equation*}
as follows.  For $c\in \frac{\tilW}{W_{J}}$, we set $c_{0}=c, J_{0}=J'_{0}=J$ and $u_{0}\in {}^{J}\tilW^{J}$ to be the $W_{J}$-double coset containing the $W_{J}$-conjugacy class $c$. We will inductively construct a sequence $(c_{n}, J_{n}, J'_{n}, u_{n})_{n\ge0}$ satisfying the following conditions:
\begin{enumerate}
\item $(J_{n}, J'_{n}, u_{n})_{n\ge0}\in \cS_{J}$.
\item For $n\ge1$, $c_{n}\in \frac{W_{J_{n-1}}}{\Ad_{u_{0}\cdots u_{n-1}}(W_{J'_{n}})}$ is characterized by
\begin{equation}\label{def cn}
c_{n}=\{x\in W_{J_{n-1}}|u_{0}\cdots u_{n-1}x\in c\}.
\end{equation}
\item For $n\ge1$, $u_{n}\in {}^{J'_{n}}(W_{J_{n-1}})^{J_{n}}$ is the minimal length element in the $(W_{J'_{n}}, W_{J_{n}})$-double coset of $c_{n}$.
\end{enumerate}

These conditions clearly characterize $(c_{n}, J_{n}, J'_{n}, u_{n})_{n\ge0}$ uniquely, given the initial terms $(c_{0}=0, J_{0}=J,J'_{0}=J, u_{0})$ as above. To confirm this inductive procedure is well-defined, we need to show that, given $(c_{i}, J_{i},J'_{i}, u_{i})_{0\le i\le n-1}$ satisfying the above conditions (so that $J_{n}$ and $J'_{n}$ can already be defined using $J_{n-1}$ and $u_{0}\cdots u_{n-1}$ as dictated by the requirement that $(J_{n}, J'_{n}, u_{n})_{n\ge0}\in \cS_{J}$), the set $c_{n}$ defined using \eqref{def cn} is non-empty and consists of a single $(u_{0}\cdots u_{n-1})$-twisted conjugacy class under $W_{J'_{n}}$. 

First, to see $c_{n}$ is non-empty: from the inductive hypothesis, we know that for $x\in W_{J_{n-2}}$ (we understand $W_{J_{-1}}$ to be $\tilW$),  $u_{0}\cdots u_{n-2}x\in c$ if and only if $x\in c_{n-1}$. From the construction of $u_{n-1}$, we see any $x\in c_{n-1}$ can be written as $x=au_{n-1}b$ for some $a\in W_{J'_{n-1}}$ and $b\in W_{J_{n-1}}=\Ad(u_{0}\cdots u_{n-2})W_{J'_{n-1}}$. Take any such $x=au_{n-1}b$ and
\begin{equation*}
u_{0}\cdots u_{n-2}x=u_{0}\cdots u_{n-2}au_{n-1}b=a'u_{0}\cdots u_{n-1}b
\end{equation*}
where $a'=\Ad(u_{0}\cdots u_{n-2})a\in W_{J_{n-1}}$. The above element then lies in the same $W_{J_{n-1}}$-conjugacy class as $u_{0}\cdots u_{n-1}ba'$. This implies $u_{0}\cdots u_{n-1}W_{J_{n-1}}\cap c\ne\vn$.  

Next, to see $c_{n}$ is a single $(u_{0}\cdots u_{n-1})$-twisted conjugacy class under $W_{J'_{n}}$: suppose $y,y'\in W_{J_{n-1}}$ are such that $u_{0}\cdots u_{n-1}y, u_{0}\cdots u_{n-1}y'\in c$. By the inductive hypothesis, $u_{n-1}y, u_{n-1}y'\in c_{n-1}$, hence there exists $z\in W_{J'_{n-1}}$ such that $u_{n-1}y=zu_{n-1}y'\Ad(u_{0}\cdots u_{n-2})z^{-1}$. Let $z'=u_{n-1}^{-1}zu_{n-1}$. Then
\begin{equation}\label{yy'}
y=u_{n-1}^{-1}zu_{n-1}y'\Ad(u_{0}\cdots u_{n-2})z^{-1}=z'y'\Ad(u_{0}\cdots u_{n-1})z'^{-1}.
\end{equation}
Now $z'=y\Ad(u_{0}\cdots u_{n-2})zy'^{-1}$. Note $z'\in \Ad(u_{n-1}^{-1})W_{J'_{n-1}}$ and $y\Ad(u_{0}\cdots u_{n-2})zy'^{-1}\in W_{J_{n-1}}$.  Therefore $z'\in \Ad(u_{n-1}^{-1})W_{J'_{n-1}}\cap W_{J_{n-1}}$. Since $u_{n-1}^{-1}\in {}^{J_{n-1}}(W_{J_{n-2}})^{J'_{n-1}}$, we have $\Ad(u_{n-1}^{-1})W_{J'_{n-1}}\cap W_{J_{n-1}}=W_{\Ad(u_{n -1})^{-1}J'_{n-1}\cap J_{n-1}}$. Note that $\Ad(u_{n -1})^{-1}J'_{n-1}=\Ad(u_{0}\cdots u_{n-1})^{-1}J_{n-1}$ hence $\Ad(u_{n -1}^{-1})J'_{n-1}\cap J_{n-1}=\Ad(u_{0}\cdots u_{n -1})^{-1}J_{n-1}\cap J_{n-1}=J_{n}'$. We conclude that $z'\in W_{J'_{n}}$. The equation \eqref{yy'} implies $y$ and $y'$ lie in the same $(u_{0}\cdots u_{n-1})$-twisted conjugacy class of $W_{J'_{n}}$ in $W_{J_{n-1}}$. 

This completes the definition of the map $\s_{J}$.

Composing $\s_{J}$ with the bijection \eqref{SJ} we get a map
\begin{equation*}
p_J: \frac{\tilW}{W_{J}}\xr{\s_{J}} \cS_{J}\isom {}^{J}\tilW.
\end{equation*}

\begin{lemma}\label{l:type of coset} Let $(J_{n},J'_{n}, u_{n})_{n\ge0}\in \cS_{J}$ and $u=u_{0}u_{1}\cdots u_{n}\in {}^{J}\tilW$ (for large $n$) be its image under the bijection \eqref{SJ}. Let $K$ be the stable value of $J_{n}$. 
\begin{enumerate}
\item (Compare \cite[Lemma 1.4]{heGstablePiecesWonderful2007}) $K$ is the largest subset of $J$ that is stable under $\Ad(u)$, and $K$ is also the stable value of $J'_{n}$.

We denote $K$ by $I(J,u)$.
 
\item The preimage of $\f{u}{J}$ under $\s_J$ consists of $W_{J}$-conjugacy classes of $uy$ where $y\in W_{K}$. Two elements $uy$ and $uy'$ ($y,y'\in W_{K}$) are in the same $W_{J}$-conjugacy class if and only if $y,y'$ lie in the same $u$-twisted conjugacy class of $W_{K}$, i.e., $y\mt uy$ gives a canonical bijection
\begin{equation*}
\f{uW_{K}}{W_{K}}\cong\frac{W_{K}}{\Ad_{u}(W_{K})}\isom \s_J^{-1}(\f{u}{J}).
\end{equation*}
\end{enumerate}
 \end{lemma}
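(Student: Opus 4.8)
The plan is to prove the two assertions by carefully tracking the recursive procedure that defines $\s_J$, using the bijection \eqref{SJ} and the combinatorics of minimal-length double-coset representatives. For part (1), I would first show $K := I(J,u)$ is $\Ad(u)$-stable: since $J_n$ stabilizes, say $J_n = J_{n+1} = \cdots = K$ for $n \geq N$, and $K = J_N \cap \Ad(u_0\cdots u_N)K$, iterating gives $K \subseteq \Ad(u_0 \cdots u_m)K$ for all $m \geq N$; but $u_m = 1$ for $m$ large, so $u_0\cdots u_m = u$ for $m \gg 0$, hence $\Ad(u)K = K$. For maximality, suppose $K' \subseteq J$ is any subset with $\Ad(u)K' = K'$; I would show by induction on $n$ that $K' \subseteq J_n$ and $K' \subseteq J'_n$. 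The base case $n=0$ is $K' \subseteq J = J_0 = J'_0$. For the inductive step, $J_n = J_{n-1} \cap \Ad(u_0\cdots u_{n-1})J_{n-1}$; one needs $\Ad(u_0\cdots u_{n-1})$ restricted to the relevant root subsystem to agree with $\Ad(u)$ on $K'$ — this follows because the tail factors $u_n, u_{n+1}, \ldots$ lie in Weyl groups of $J_n, J_{n+1}, \ldots$ which contain $K'$, so $\Ad(u_n u_{n+1} \cdots)$ fixes $K'$ pointwise (as subsets of simple reflections); hence $\Ad(u_0 \cdots u_{n-1})K' = \Ad(u \cdot (u_n u_{n+1}\cdots)^{-1})K' = \Ad(u)K' = K'$. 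This gives $K' \subseteq J_n$ for all $n$, hence $K' \subseteq K$. The statement that $K$ is also the stable value of $J'_n$ follows from the symmetric recursion $J'_n = J_{n-1} \cap \Ad(u_0\cdots u_{n-1})^{-1}J_{n-1}$ and the same argument, noting $\Ad(u)^{-1}K = K$; alternatively one cites \cite[Lemma 1.4]{heGstablePiecesWonderful2007} as the statement indicates. I expect this part to be largely bookkeeping once the key observation (tail factors fix $K$ pointwise) is isolated.

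For part (2), I would argue as follows. By construction of $\s_J$, an element $c \in \frac{\tilW}{W_J}$ maps to $\f{u}{J}$ iff the double-coset $u_0 \in {}^J\tilW^J$ of $c$ equals the double coset of $u$, and then recursively the minimal-length representatives $u_1, u_2, \ldots$ extracted from $c_1, c_2, \ldots$ match those of $\f{u}{J}$. Unwinding the recursion: $c$ contains an element of the form $u_0 u_1 \cdots u_m y$ with $y$ ranging over $W_{J_m}$, and once $J_m = K$ stabilizes this says $c$ is the $W_J$-conjugacy class of $uy$ for some $y \in W_K$. Conversely every such class maps to $\f{u}{J}$ because running the procedure on $[uy]_{W_J}$ reproduces the same tower $(J_n, J'_n, u_n)$: at each stage $y \in W_K \subseteq W_{J_n}$, so $y$ does not affect which double coset $c_n$ lies in. This establishes the surjection $\{[uy]_{W_J} : y \in W_K\} \twoheadrightarrow$, wait rather $\s_J^{-1}(\f{u}{J}) = \{[uy]_{W_J} : y \in W_K\}$. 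For the injectivity/bijectivity statement, I need: $[uy]_{W_J} = [uy']_{W_J}$ iff $y, y'$ are $\Ad_u$-twisted conjugate in $W_K$. The direction ($\Leftarrow$) is immediate since $W_K \subseteq W_J$. For ($\Rightarrow$), suppose $w (uy) w^{-1} = uy'$ with $w \in W_J$; I would use the recursive uniqueness clause in the definition of $\s_J$ (the part of the text verifying $c_n$ is a single twisted conjugacy class), applied with $c = [uy]_{W_J} = [uy']_{W_J}$, to descend $w$ step by step: at stage $n$ the "error" element $z'$ lands in $\Ad(u_{n-1}^{-1})W_{J'_{n-1}} \cap W_{J_{n-1}} = W_{J'_n}$, exactly as in the already-given computation, and after stabilization $z' \in W_K$ conjugates (twisted by $u$) $y$ to $y'$. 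Finally $y \mapsto uy$ is manifestly a well-defined map from twisted conjugacy classes $\frac{W_K}{\Ad_u(W_K)}$ to $\s_J^{-1}(\f{u}{J})$, and the preceding shows it is a bijection; the identification $\f{uW_K}{W_K} \cong \frac{W_K}{\Ad_u(W_K)}$ is the standard one $y \mapsto uy$ intertwining $\Ad_u$-twisted conjugation on $W_K$ with $W_K$-conjugation restricted to the coset $uW_K$.

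The main obstacle I anticipate is the descent argument in part (2)($\Rightarrow$): making precise that a conjugating element $w \in W_J$ witnessing $[uy]_{W_J} = [uy']_{W_J}$ can be chosen (or modified) to lie in $W_K$ up to $\Ad_u$-twisted conjugacy. The text already carries out exactly this kind of step-by-step reduction in establishing that $c_n$ is a single twisted conjugacy class under $W_{J'_n}$ (the computation culminating in equation \eqref{yy'} and the identity $\Ad(u_{n-1}^{-1})W_{J'_{n-1}} \cap W_{J_{n-1}} = W_{J'_n}$), so the strategy is to invoke that computation inductively: after $n$ steps the conjugator lives in $W_{J'_n}$, and since $J'_n \to K$ stabilizes, after finitely many steps it lives in $W_K$. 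The only care needed is to check that the twist picked up at each stage assembles correctly into $\Ad_u$-twisted conjugation in the limit, which again follows from $u_n = 1$ for $n$ large and $u_0 \cdots u_m = u$ for $m \gg 0$. Everything else — well-definedness of $y \mapsto uy$, the standard identification $\f{uW_K}{W_K} \cong \frac{W_K}{\Ad_u(W_K)}$, and the surjectivity in part (2) — is routine.
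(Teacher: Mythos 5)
Your handling of part (1) stability and part (2) is sound — the part (2) unwinding is a more verbose version of what the paper treats as immediate from the construction of $\s_J$, and the descent argument you sketch for the $(\Rightarrow)$ direction does follow the same computation the text carries out when verifying $c_n$ is a single twisted conjugacy class. However, the maximality argument in part (1) has a genuine gap.

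You assert the inductive step ($K'\subseteq J_{n-1}$ implies $K'\subseteq J_n$) follows because ``the tail factors $u_n, u_{n+1},\ldots$ lie in Weyl groups of $J_n, J_{n+1},\ldots$ which contain $K'$, so $\Ad(u_n u_{n+1}\cdots)$ fixes $K'$ pointwise.'' This is not justified and is false in general. Writing $x=u_n u_{n+1}\cdots\in W_{J_{n-1}}$, there is no reason for $\Ad(x)$ to preserve $K'$ as a set of simple roots: an arbitrary element of a standard parabolic subgroup of a Coxeter group does not send a given subset of simple roots to itself (a simple reflection $s_\b$ already moves any adjacent simple root to a non-simple one). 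The claim only becomes trivially true once $n$ is past stabilization, where $x=1$, but that is precisely the range where the inductive step is vacuous, so the argument establishes nothing new in the cases that matter.

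The paper closes the inductive step by a different mechanism that your proposal omits. One writes $u=(u_0\cdots u_{n-1})x$ with $x\in W_{J_{n-1}}$, and for $\a_i\in K_1$ uses $\Ad(u)$-stability to write $\a_i=u\a_{i'}=(u_0\cdots u_{n-1})\b$ with $\b=x\a_{i'}\in\Phi_{J_{n-1}}$. The essential input is that $u_0\cdots u_{n-1}\in\tilW^{J_{n-1}}$, so it takes positive roots of $\Phi_{J_{n-1}}$ to positive affine roots. Since $\a_i$ is simple (hence positive), $\b$ must be positive, and in fact simple in $\Phi_{J_{n-1}}$ — a non-simple positive root of $\Phi_{J_{n-1}}$ would map to a sum of two positive affine roots and so could not equal a simple root. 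Hence $\b\in J_{n-1}$ and $\a_i\in J_{n-1}\cap\Ad(u_0\cdots u_{n-1})J_{n-1}=J_n$. Note this argument never claims $\Ad(x)$ preserves $K_1$; indeed $\b=x\a_{i'}$ may well differ from $\a_{i'}$ — the point is only that $\b$ remains a simple root of $J_{n-1}$. You would need to replace your ``tail fixes $K'$'' step with a length-positivity argument of this kind.
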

\begin{proof}
(1) For $n$ large so that $J_{n-1}=K$, we have  $J_{n}=J_{n-1}\cap \Ad(u)J_{n-1}$ and $J_{n-1}=J_{n}$, hence $J_{n}$ is stable under $\Ad(u)$. Also $J'_{n}=J_{n-1}\cap \Ad(u)^{-1}J_{n-1}=J_{n-1}=K$. 

Conversely, if $K_{1}\subset J$ is stable under $\Ad(u)$, we show inductively that $K_{1}\subset J_{n}$ for all $n\ge0$, hence $K_{1}\subset K=I(J,u)$.  For $n=0$, this is clear. Assume $K_{1}\subset J_{n-1}$ for some $n\ge1$. Let $s\in K$ and let $\a_{s}$ be the corresponding simple root. For $K'\sft I^{a}$, let $\Phi_{K'}$ be the sub root system of the affine roots of $\cG$ spanned by $K'$.  Since $K_{1}$ is stable under $\Ad(u)$,  $\a_{i}=u\a_{i'}$ for some $\a_{i}\in K_{1}$. Write $u=u_{0}\cdots u_{n-1}x$ where $x\in W_{J_{n-1}}$, then $\a_{i}=u_{0}\cdots u_{n-1} \b$ where $\b=x\a_{i'}\in \Phi_{J_{n-1}}$. Since $u_{0}\cdots u_{n-1}\in \tilW^{J_{n-1}}$, $u_{0}\cdots u_{n-1}$ sends $\Phi^{+}_{J_{n-1}}$ to positive roots, therefore $\b$ is also a simple root in $\Phi_{J_{n-1}}$ for otherwise $u_{0}\cdots u_{n-1} \b$ cannot be a simple root. The equality $\a_{i}=u_{0}\cdots u_{n-1} \b$ then implies $\a_{i}\in\Ad(u_{0}\cdots u_{n-1})J_{n-1}$. Therefore $\a_{i}\in J_{n-1}\cap \Ad(u_{0}\cdots u_{n-1})J_{n-1}=J_{n}$ for any $\a_{i}\in K_{1}$, hence $K_{1}\subset J_{n}$. 

(2) is immediate from the construction of $\s_{J}$.
\end{proof}

\sss{Relevant affine subspaces}\label{sss:rel aff sp}
Let $\frA$ be the standard apartment with the action of $\tilW$. By a {\em relevant affine subspace} of $\frA$, we will mean the intersection of a set of affine root hyperplanes.  Let $\frE$ be the set of relevant affine subspaces of $\frA$, and let $\un\frE$ be the set of $W^{a}$-orbits on $\frE$. Each subset $K\sft I^{a}$ gives a relevant affine subspace $\frA(K)=\{x\in \frA|\a(x)=0, \forall \a\in K\}$. Every relevant affine subspace is $\Wa$-conjugate to one of the form $\frA(K)$. This induces a surjection $\{K\sft I^{a}\}\surj \un\frE$. This map may not be a bijection in general: for $K,K'\sft I^{a}$, $\frA(K)$ is in the $\Wa$-orbit of $\frA(K')$ if and only if there exists $w\in \Wa$ such that $wKw^{-1}=K'$.

For $E\in \frE$,  we denote its image in $\un\frE$ by $[E]$. The set $\un\frE$ is partially ordered such that $[E]\ge [E']$ if and only if $E\supset wE'$ for some $w\in \Wa$.

For $J\sft I^{a}$, let $\frE_{J}$ be the subset of  $\frE$ consisting of relevant affine subspaces that contain $\frA(J)$. Clearly $W_{J}$ acts on $\frE_{J}$. For $K\subset J$, we have $\frA(K)\in \frE_{J}$.

\begin{defn} Let $J\sft I^{a}$ and $\f{u}{J}\in \cS_{J}$. Let $K=I(J,u)\subset J$ as specified in Lemma~\ref{l:type of coset}.
\begin{enumerate}
\item We define the {\em $J$-type}  of $\f{u}{J}$ to be the element $\t_{J}(\f{u}{J})\in W_{J}\bs \frE_{J}$ given by the image of $\frA(K)$, i.e., the relevant affine space $\frA(K)$ up to $W_{J}$-action.

\item We define the {\em coarse type} of $\f{u}{J}$ to be the element $\t(\f{u}{J})\in \un \frE=\Wa\bs\frE$  given by  the image of $\frA(K)$,  i.e., the relevant affine space $\frA(K)$ up to $\Wa$-action.
\end{enumerate}
\end{defn}
Taking the coarse type of a $J$-piece defines a map
\begin{equation*}
\t\colon \ \cS:=\coprod_{J\sft I^{a}}\cS_{J}\to \un\frE. 
\end{equation*}

Next, we give a way to compute the $J$-type starting from any $W_{J}$-conjugacy class of $\tilW$. For $w\in \tilW$ and $J\sft I^{a}$, the set $\frE_{J}^{w}:=\{E\in \frE_{J}|w(E)=E\}$ is non-empty (since $\frA\in \frE_{J}^{w}$) and closed under intersection, hence has a unique minimal element which we denote by $E_{J,w}$. The next lemma explains the relation between $J$-type and $E_{J,w}$. 

\begin{lemma}\label{l:EJw}
Let $J\sft I^{a}, u\in {}^{J}\tilW$. 
\begin{enumerate}
\item Let $K=I(J,u)$, then $E_{J,uy}=\frA(K)$ for any $y\in W_{K}$.
\item Suppose $w\in \tilW$ is such that $\s_{J}(w)=\f{u}{J}\in\cS_{J}$. Then the $J$-type of $\f{u}{J}$ is the $W_{J}$-orbit of $E_{J,w}$.
\item Suppose $w, w'\in \tilW$. Then $\s_{J}(w)=\s_{J}(w')$ if and only if $w'$ is $W_{J}$-conjugate to an element $w''$ with $E_{J,w}=E_{J,w''}$ and $w|_{E_{J,w}}=w''|_{E_{J,w''}}$.
\end{enumerate}
\end{lemma}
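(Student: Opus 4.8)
The three assertions of Lemma~\ref{l:EJw} are increasingly refined statements about how the combinatorial map $\s_J$ interacts with the stabilizer-subspaces $E_{J,w}$, and they should be proved in the order (1), (2), (3), with each relying on the previous. For part (1), I would proceed as in the proof of Lemma~\ref{l:type of coset}(1): since $K=I(J,u)$ is $\Ad(u)$-stable, the affine subspace $\frA(K)\in\frE_J$ is fixed by $u$, and since for $y\in W_K$ the element $y$ fixes $\frA(K)$ pointwise, $\frA(K)$ is fixed by $uy$; hence $\frA(K)\in\frE^{uy}_J$ and so $E_{J,uy}\subseteq\frA(K)$. Conversely, if $E\in\frE_J$ is fixed by $uy$, then writing $E=\frA(K')$ up to $W_J$ one shows the set of affine roots vanishing on $E$ that also lie in $\Phi_J$ is a $u$-stable subset (using that $y\in W_K$ and that $uy$ and $u$ have the ``same'' action on the relevant directions), forcing it to be contained in $\Phi_K$ by the maximality in Lemma~\ref{l:type of coset}(1); this gives $E\supseteq\frA(K)$, hence $E_{J,uy}=\frA(K)$. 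The key input is the root-system combinatorics already deployed in Lemma~\ref{l:type of coset}: that $u_0\cdots u_{n-1}\in\wt W^{J_{n-1}}$ sends positive roots of $\Phi_{J_{n-1}}$ to positive roots.

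For part (2), suppose $\s_J(w)=\f{u}{J}$. By Lemma~\ref{l:type of coset}(2), $w$ is $W_J$-conjugate to $uy$ for some $y\in W_K$, say $w = x(uy)x^{-1}$ with $x\in W_J$. Then $E_{J,w} = x(E_{J,uy}) = x\frA(K)$ by part (1) and the evident $W_J$-equivariance $E_{J,x w x^{-1}} = x E_{J,w}$ for $x\in W_J$ (which holds because $\frE_J$ is $W_J$-stable and conjugation intertwines the fixed-point conditions). Hence the $W_J$-orbit of $E_{J,w}$ equals the $W_J$-orbit of $\frA(K)$, which is by definition the $J$-type $\t_J(\f{u}{J})$. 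This step is essentially bookkeeping once (1) is in hand.

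For part (3), the ``only if'' direction: if $\s_J(w)=\s_J(w')=\f{u}{J}$, then by Lemma~\ref{l:type of coset}(2) both $w$ and $w'$ are $W_J$-conjugate to elements of $uW_K$; writing $w$ itself as $W_J$-conjugate to $uy$ and $w'$ as $W_J$-conjugate to $uy'$ with $y,y'\in W_K$, I would take $w'' = $ (the $W_J$-conjugate of $w'$ lying in $uW_K$, suitably further adjusted) so that $E_{J,w}=E_{J,w''}=\frA(K)$ after translating $w$ into $uW_K$ as well --- more carefully, one conjugates both into the single subspace $\frA(K)$ and observes that on $\frA(K)$ the element $u$ acts, and $W_K$ acts trivially, so $w|_{E_{J,w}}$ and $w''|_{E_{J,w''}}$ both equal the linear action of $u$ on $\frA(K)$. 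For the ``if'' direction: if $w'$ is $W_J$-conjugate to $w''$ with $E_{J,w}=E_{J,w''}=:E$ and $w|_E=w''|_E$, I must show $\s_J(w)=\s_J(w'')$. Since $\s_J$ is $W_J$-conjugation invariant, it suffices to treat $w'=w''$. The strategy is to show that the common subspace $E$ together with the restricted action determines the combinatorial piece: $E$ is $W_J$-conjugate to some $\frA(K_0)$ with $K_0$ the set of simple roots in $\Phi_J$ vanishing on (that conjugate of) $E$, and $K_0$ is forced to be $\Ad$-stable under the relevant element because $E$ is fixed; then Lemma~\ref{l:type of coset}(1) identifies $K_0$ as $I(J,u)$ for the appropriate $u$, and the restricted linear action pins down the $u$-twisted conjugacy class via the bijection in Lemma~\ref{l:type of coset}(2).

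\textbf{Main obstacle.} The delicate point is the ``if'' direction of part (3): knowing only $E_{J,w}=E_{J,w''}$ and equality of the restricted actions, one must recover the full $J$-piece, which a priori remembers more than just the minimal fixed subspace. The resolution should be that $E_{J,w}$ plus the linear action on it determines exactly the data $(K=I(J,u), \text{ the }u\text{-twisted class of }y)$ that indexes $\s_J^{-1}(\f{u}{J})$ via Lemma~\ref{l:type of coset}(2) --- i.e.\ the fiber of $\s_J$ over a $J$-piece is \emph{canonically} the set of twisted conjugacy classes $\f{uW_K}{W_K}$, and two elements $w,w''$ landing in the same such class is precisely detected by $(E_{J,\cdot}, (\cdot)|_{E_{J,\cdot}})$ up to $W_J$. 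Making this last equivalence precise --- in particular checking that no two distinct twisted classes in $W_K$ can give the same restricted action on $\frA(K)$ after allowing $W_J$-conjugacy --- is where the real work lies, and I expect it to reduce to a careful analysis of how $W_J/W_K$ acts and the fact that elements of $W_K$ act trivially on $\frA(K)$ while genuinely permuting the twisted classes.
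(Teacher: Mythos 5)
The sketch gets (2) right and correctly identifies the overall structure, but there are two substantive problems.

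First, in part (1) the key inclusion $\frA(K)\subseteq E_{J,uy}$ is where the real content lies, and your argument for it has a gap. You claim that if $E\in\frE_J$ is fixed by $uy$, then ``the set of affine roots vanishing on $E$ that also lie in $\Phi_J$ is a $u$-stable subset'' and that one can then invoke the maximality from Lemma~\ref{l:type of coset}(1). But $E$ is only $uy$-stable a priori, not $u$-stable, and $uy$ does not preserve $\Phi_J$, so $uy$-stability of $E$ gives no direct information about $u$ acting on a subset of $J$. Moreover the maximality of $K$ applies to $u$-stable subsets of the set of simple roots $J$, not to arbitrary $u$-stable sets of roots in $\Phi_J$; to pass from ``roots vanishing on $E$'' to a subset of $J$ one must first replace $E$ by a standard representative, and then the $u$-stability of that representative must be established by a genuine argument. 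The paper's proof handles both issues simultaneously with a geometric device you don't mention: it considers $\Span(E\cap\cF)=\frA(K')$ for the fundamental alcove $\cF$, observes that $u\cF$ lies in the $W_J$-chamber $C_J$ and that $yE=E$ (because $E\subset\frA(K)$ and $y\in W_K$), so $u$ carries $\Span(E\cap\cF)$ to $\Span(E\cap u\cF)\subset\Span(E\cap C_J)=\frA(K')$, forcing $u\frA(K')=\frA(K')$ and (using $u\in{}^J\tilW$ to preserve positivity) $\Ad(u)K'=K'$. This geometric step is the whole point, and ``$uy$ and $u$ have the same action on the relevant directions'' papers over it. Note also that without first establishing $E_{J,uy}\subset\frA(K)$ one cannot even assert $y$ fixes $E$ --- so the order of the two inclusions matters, whereas your sketch treats the converse as a standalone argument about arbitrary $uy$-stable $E\in\frE_J$, which fails.

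Second, your ``Main obstacle'' paragraph misreads what part (3) asserts and manufactures a difficulty that isn't there. You worry about ``checking that no two distinct twisted classes in $W_K$ can give the same restricted action on $\frA(K)$.'' But distinct twisted classes \emph{do} give the same restricted action: all elements of $uW_K$ restrict to $u|_{\frA(K)}$ because $W_K$ acts trivially on $\frA(K)$. That is not a problem --- it is exactly consistent with the statement, since all of $uW_K$ lies in the single fiber $\s_J^{-1}(\f{u}{J})$. Part (3) is not trying to recover the $W_J$-conjugacy class of $w$ from $(E_{J,w},w|_{E_{J,w}})$, only the $J$-piece $\s_J(w)$, and once (1) and (2) are proved, (3) is quick: reduce by $W_J$-conjugation to $w=u$; then $E_{J,w''}=\frA(K)$ and $w''|_{\frA(K)}=u|_{\frA(K)}$ together force $w''u^{-1}$ to fix $\frA(K)$ pointwise, hence $w''u^{-1}\in W_K$ and so $w''\in uW_K$, giving $\s_J(w'')=\f{u}{J}$ by Lemma~\ref{l:type of coset}(2); conversely Lemma~\ref{l:type of coset}(2) and part (1) give the other direction directly. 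Your proposed route for (3) --- re-deriving the $\Ad$-stable set $K_0$ from scratch and re-running the bijection argument --- is not wrong in spirit but does unnecessary work and, as written, would inherit the same gap as in (1) when asserting $\Ad$-stability.
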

\begin{proof} 
Let $K=I(J,u)$. Then the $J$-type of $\f{u}{J}$ is the $W_{J}$-orbit of $\frA(K)$. By Lemma \ref{l:type of coset}, $w=xuyx^{-1}$ where $y\in W_{K}$ and $x\in W_{J}$. We have $E_{J,w}=xE_{J,uy}$. Therefore it suffices to consider the case $w=uy$. Hence (2) follows from (1).

We prove (1). Since $\Ad(u)K=K$, $u$ stabilizes $\frA(K)$, hence $uy$ stabilizes $\frA(K)$. By the minimality of $E:=E_{J,uy}$, we have $E\subset \frA(K)$. We claim that $E=\frA(K)$. Let $\cF$ be the fundamental alcove. Consider the relevant affine subspace $\Span(E\cap \cF)$ spanned by $E\cap \cF$. We have $\Span(E\cap \cF)=\frA(K')$ for some $K'\sft I^{a}$. Since $\frA(J)\subset E\subset \frA(K)$, we have $K\subset K'\subset J$. Note that $\frA(K')=\Span(E\cap C_{J})$, where $C_{J}\subset\frA$ is the dominant $W_{J}$-chamber centered along $\frA(J)$. On the other hand,  since $u\in {}^{J}\tilW$, we have $u\cF\in C_{J}$,  and therefore $\Span(uyE\cap u\cF)=\Span(E\cap u\cF)\subset \Span(E\cap C_{J})=\frA(K')$. The action of $u$ sends $\Span(E\cap \cF)=\Span(yE\cap \cF)$ (note that $y\in W_{K}$ acts by identity on $\frA(K)$) isomorphically to $\Span(uyE\cap u\cF)=\Span(E\cap u\cF)$, hence we must have $\Span(E\cap u\cF)=\frA(K')$, and in particular $u$ stabilizes $\frA(K')$. Now $u$ stabilizes the affine roots $\Phi_{K'}$ spanned by $K'\subset J$, and $u\in {}^{J}\tilW$ implies that $u^{-1}$ sends positive roots $\Phi^{+}_{J}\subset \Phi_{J}$ to positive affine roots, therefore $u$ preserves $\Phi^{+}_{K'}$, and hence preserves $K'$. By the maximality of $K$ as a $u$-stable subset of $J$, we must have $K'=K$. Therefore, $E\supset \Span(E\cap \cF)=\frA(K')=\frA(K)$, forcing $E=\frA(K)$.

We prove (3). The statement is invariant under $W_{J}$-conjugation of $w$ and $w'$ separately. Therefore we may assume $w=u$. Let $K=I(J,u)$. Suppose $w'$ is $W_{J}$-conjugate to $w''$ with $\frA(K)=E_{J,w''}$ and $w|_{\frA(K)}=w''|_{\frA(K)}$. Then $w''=uy$ for some $y\in W_{K}$, hence $\s_{J}(w')=\s_{J}(w'')=\f{u}{J}$. Conversely, suppose $\s_{J}(w')=\f{u}{J}$, then by Lemma \ref{l:type of coset}, $w'$ is $W_{J}$-conjugate to $uy$ for some $y\in W_{K}$, hence $E_{J,uy}=\frA(K)$ by (1) and $uy|_{\frA(K)}=u|_{\frA(K)}$ since $y\in W_{K}$ acts by identity on $\frA(K)$.
\end{proof}

\begin{prop}\label{p:attach pieces} Let $J\subset J'\sft I^{a}$. Let $\pi^{J'}_{J}: \f{\tilW}{W_{J}}\to \f{\tilW}{W_{J'}}$ be the projection. Then there is a unique map $\d^{J'}_{J}: \cS_{J}\to \cS_{J'}$ making the following diagram commutative
\begin{equation*}
\xymatrix{ \f{\tilW}{W_{J}}\ar[d]^{\s_{J}} \ar[r]^{\pi^{J'}_{J}} & \f{\tilW}{W_{J'}}\ar[d]^{\s_{J'}}\\
\cS_{J}\ar[r]^{\d^{J'}_{J}} & \cS_{J'}}
\end{equation*}
In particular, for $J\subset J'\subset J''\sft I^{a}$, $\d^{J''}_{J'}\c\d^{J'}_{J}=\d^{J''}_{J}$.
\end{prop}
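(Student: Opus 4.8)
The plan is to construct $\delta^{J'}_J$ by descent along the surjection $\sigma_J$, with Lemma~\ref{l:EJw}(3) serving as the combinatorial engine.

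First I would record that $\sigma_J\colon \f{\tilW}{W_J}\to\cS_J$ is surjective: by Lemma~\ref{l:type of coset}(2) the fibre $\sigma_J^{-1}(\f{u}{J})$ is nonempty for every $\f{u}{J}\in\cS_J$ (it is in bijection with $\f{uW_K}{W_K}$). Consequently a map $\delta^{J'}_J$ with $\delta^{J'}_J\circ\sigma_J=\sigma_{J'}\circ\pi^{J'}_J$ is unique if it exists, and it exists precisely when $\sigma_{J'}\circ\pi^{J'}_J$ is constant on the fibres of $\sigma_J$. So the only thing to prove is: if $w,w'\in\tilW$ satisfy $\sigma_J(w)=\sigma_J(w')$, then the $W_{J'}$-conjugacy classes of $w$ and $w'$ have the same image under $\sigma_{J'}$.

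The geometric input is a monotonicity property of the spaces $E_{J,w}$. Since $J\subset J'$ we have $\frA(J')\subset\frA(J)$, hence $\frE_J\subset\frE_{J'}$ and $\frE^w_J\subset\frE^w_{J'}$ for every $w\in\tilW$; as both families are nonempty and closed under intersection with unique minimal members, this forces $E_{J',w}\subseteq E_{J,w}$, and moreover $E_{J',w}\in\frE_{J'}$. Now suppose $\sigma_J(w)=\sigma_J(w')$. By Lemma~\ref{l:EJw}(3) I may replace $w'$ by a $W_J$-conjugate (which lies in the same $W_{J'}$-conjugacy class, so its $\sigma_{J'}$-value is unchanged) so that $E_{J,w}=E_{J,w'}=:E$ and $w|_E=w'|_E$. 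Since $E_{J',w}$ and $E_{J',w'}$ are both contained in $E$ and $w,w'$ agree on $E$, each of these two relevant subspaces is stable under both $w$ and $w'$; minimality inside $\frE_{J'}$ then gives $E_{J',w}=E_{J',w'}$, and $w,w'$ restrict to the same map on it because they already agree on the larger set $E$. Applying Lemma~\ref{l:EJw}(3) with $J'$ in place of $J$ yields $\sigma_{J'}(w)=\sigma_{J'}(w')$, which produces the desired $\delta^{J'}_J$.

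Finally the cocycle identity is formal. For $J\subset J'\subset J''\sft I^a$ one has $\pi^{J''}_J=\pi^{J''}_{J'}\circ\pi^{J'}_J$, so for any $w\in\tilW$,
\begin{align*}
\delta^{J''}_{J'}\bigl(\delta^{J'}_J(\sigma_J(w))\bigr)
&=\delta^{J''}_{J'}\bigl(\sigma_{J'}(\pi^{J'}_J w)\bigr)
=\sigma_{J''}\bigl(\pi^{J''}_{J'}\pi^{J'}_J w\bigr)\\
&=\sigma_{J''}\bigl(\pi^{J''}_J w\bigr)
=\delta^{J''}_J(\sigma_J(w)),
\end{align*}
and surjectivity of $\sigma_J$ upgrades this pointwise equality to $\delta^{J''}_{J'}\circ\delta^{J'}_J=\delta^{J''}_J$. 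I expect the only delicate point to be the third paragraph: one must be sure that shrinking the ambient family $\frE_J$ to $\frE_{J'}$ is compatible with the ``same type plus same restriction'' description of $\sigma$-fibres coming from Lemma~\ref{l:EJw}(3), and that the intermediate $W_J$-conjugation is harmless at the coarser $W_{J'}$ level; everything else is bookkeeping.
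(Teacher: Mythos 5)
Your proposal is correct and uses essentially the same idea as the paper: both reduce existence of $\delta^{J'}_J$ to the criterion of Lemma~\ref{l:EJw}(3). The only (minor) difference is that the paper first normalizes $w$ to the explicit form $uy$ with $y\in W_K$ and invokes Lemma~\ref{l:EJw}(1) to identify $E_{J,u}=\frA(K)$, whereas you bypass the explicit computation by observing the monotonicity $E_{J',w}\subseteq E_{J,w}$ (valid because $\frE^w_J\subset\frE^w_{J'}$) and running the minimality argument symmetrically in $w,w'$; both verifications are sound and of comparable length.
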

\begin{proof} Since $\s_{J}$ is surjective, $\d^{J'}_{J}$ is unique if it exists.

For the existence of $\d^{J'}_{J}$, we need to show the following. Let $w\in \tilW$ and $\s_{J}(w)=\f{u}{J}$. We need to show that $\s_{J'}(w)$ depends only on $u$ and not on $w$. By Lemma \ref{l:type of coset}, up to $W_{J}$-conjugacy we may assume $w=uy$ for some $y\in W_{K}$, where $K=I(J,u)$. We need to show that $\s_{J'}(u)=\s_{J'}(uy)$ for all $y\in W_{K}$. For this, we use the criterion in Lemma \ref{l:EJw}(3). By Lemma \ref{l:EJw}(1), $E_{J,u}=E_{J,uy}=\frA(K)$. Therefore $E_{J', u}, E_{J',uy}\subset \frA(K)$. Since $y$ acts by identity on $\frA(K)$, we have $E_{J',u}=E_{J',uy}$, and $u|_{E_{J',u}}=uy|_{E_{J',uy}}$. Therefore $\s_{J'}(u)=\s_{J'}(uy)$ by Lemma \ref{l:EJw}(3). 
\end{proof}

\begin{lemma}\label{l:ell tau under closure}
Let $J\subset J'\sft I^{a}$, $u\in {}^{J}\tilW$ and $u'\in {}^{J'}\tilW$ such that $\d^{J'}_{J}(\f{u}{J})=\f{u'}{J'}$. Then
\begin{enumerate}
\item $\ell(u)\ge\ell(u')$.
\item A representative of the $J$-type of $\f{u}{J}$ (as a relevant subspace containing $\frA(J)$, up to $W_{J}$-action) contains a representative of the $J'$-type of $\frac{u'}{J'}$. In particular, $\t(\f{u}{J})\ge \t(\frac{u'}{J'})$ under the partial order on $\un\frE$ defined in Section~\ref{sss:rel aff sp}.
\end{enumerate}
\end{lemma}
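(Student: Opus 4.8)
The plan is to reduce both statements to the combinatorial description of the map $\sigma_J$ in terms of relevant affine subspaces given in Lemma~\ref{l:EJw}, together with the explicit recursive structure of B\'edard's procedure. First I would fix notation: let $K = I(J,u)$ and $K' = I(J',u')$, so that by Lemma~\ref{l:EJw}(1) we have $E_{J,u} = \frA(K)$ and $E_{J',u'} = \frA(K')$. By hypothesis $\delta^{J'}_{J}(\f{u}{J}) = \f{u'}{J'}$, which by Proposition~\ref{p:attach pieces} means: for any $w \in \tilW$ with $\sigma_J(w) = \f{u}{J}$, we have $\sigma_{J'}(w) = \f{u'}{J'}$. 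In particular we may take $w = u$ itself, since $\sigma_J(u) = \f{u}{J}$ (this is clear from the construction of $\sigma_J$, as $u\in {}^J\tilW$ and the procedure starting from $c = $ the $W_J$-conjugacy class of $u$ immediately produces the sequence corresponding to $\f{u}{J}$). Thus $\sigma_{J'}(u) = \f{u'}{J'}$.

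For part (2): applying Lemma~\ref{l:EJw}(2) with the element $w = u$ and the subset $J'$, the $J'$-type of $\sigma_{J'}(u) = \f{u'}{J'}$ is the $W_{J'}$-orbit of $E_{J',u}$. Now $E_{J',u}$ is by definition the minimal element of $\frE^{u}_{J'} = \{E \in \frE_{J'} \mid u(E) = E\}$, while $E_{J,u} = \frA(K)$ is the minimal element of $\frE^{u}_{J} = \{E \in \frE_{J} \mid u(E) = E\}$. Since $J \subset J'$, every affine subspace containing $\frA(J')$ also contains $\frA(J)$, so $\frE_{J'} \subset \frE_{J}$, hence $\frE^u_{J'} \subset \frE^u_J$; therefore the minimal element $E_{J,u}$ of the larger set is contained in the minimal element $E_{J',u}$ of the smaller set, i.e.\ $\frA(K) = E_{J,u} \subseteq E_{J',u}$. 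So a representative $\frA(K)$ of the $J$-type of $\f{u}{J}$ is contained in the representative $E_{J',u}$ of the $J'$-type of $\f{u'}{J'}$. Passing to $\Wa$-orbits and recalling the definition of the partial order on $\un\frE$ (namely $[E] \ge [E']$ iff $E \supseteq wE'$ for some $w \in \Wa$), this gives $\t(\f{u}{J}) \ge \t(\f{u'}{J'})$, proving (2).

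For part (1): the inequality $\ell(u) \ge \ell(u')$ should follow from the fact that $u' \in {}^{J'}\tilW$ is the minimal-length element of the double coset $W_{J'} u' W_{J'}$, combined with the relation between $u$ and $u'$ coming from $\sigma_{J'}(u) = \f{u'}{J'}$. Concretely, by Lemma~\ref{l:type of coset}(2) applied with $J'$ in place of $J$: since $\sigma_{J'}(u) = \f{u'}{J'}$, the element $u$ is $W_{J'}$-conjugate to $u' y$ for some $y \in W_{K'}$, where $K' = I(J',u')$. Writing $u = x u' y x^{-1}$ with $x \in W_{J'}$ and $y \in W_{K'} \subseteq W_{J'}$, we get $u \in W_{J'} u' W_{J'}$, so $u$ lies in the $(W_{J'},W_{J'})$-double coset of $u'$; since $u'$ is the minimal-length representative of this double coset, $\ell(u') \le \ell(u)$.

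The main obstacle I anticipate is verifying cleanly that $\sigma_J(u) = \f{u}{J}$ and $\sigma_{J'}(u) = \f{u'}{J'}$ — i.e.\ that evaluating the map $\sigma_J$ on (the conjugacy class of) the distinguished representative $u$ recovers the piece $\f{u}{J}$ — and making sure the step ``$u$ is $W_{J'}$-conjugate to $u'y$ for $y\in W_{K'}$'' is exactly what Lemma~\ref{l:type of coset}(2) delivers in this setting rather than requiring an extra argument. Everything else is a matter of unwinding the minimality characterizations of $E_{J,w}$ and of the minimal-length double coset representatives, together with the monotonicity $\frE_{J'}\subseteq\frE_J$.
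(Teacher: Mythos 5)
Both parts of your proof contain genuine errors.

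\textbf{Part (2): the set inclusion is backwards.}  You claim that since $J\subset J'$, ``every affine subspace containing $\frA(J')$ also contains $\frA(J)$,'' and conclude $\frE_{J'}\subset\frE_{J}$.  This is the wrong direction.  When $J\subset J'$, more root-vanishing conditions are imposed in defining $\frA(J')$, so $\frA(J')\subseteq \frA(J)$; hence a subspace $E$ containing $\frA(J)$ automatically contains $\frA(J')$, giving $\frE_J\subseteq\frE_{J'}$ (not the reverse).  Consequently $\frE^u_J\subseteq\frE^u_{J'}$, and since $E_{J',u}$ is the unique minimal element of the \emph{larger} set $\frE^u_{J'}$, it is contained in $E_{J,u}\in\frE^u_{J'}$, i.e.\ $E_{J',u}\subseteq E_{J,u}$.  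This is the containment the lemma asserts.  Your argument derives the opposite, $E_{J,u}\subseteq E_{J',u}$, which would give $\t(\f{u}{J})\le\t(\f{u'}{J'})$ under the stated partial order; you then write down the desired $\ge$ anyway, so there is an additional sign-flip at the end that is unjustified from your own intermediate steps.  Once the set inclusion is corrected, the rest of your argument (using Lemma~\ref{l:EJw}(1)--(2) and the fact that $\sigma_{J'}(u)=\f{u'}{J'}$) goes through and in fact is exactly the paper's proof of (2).

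\textbf{Part (1): the double-coset minimality claim is false, and the true statement is not elementary.}  From Lemma~\ref{l:type of coset}(2) you correctly get that $u$ is $W_{J'}$-conjugate to $u'y$ for some $y\in W_{K'}$, hence $u\in W_{J'}u'W_{J'}$.  But the assertion that ``$u'$ is the minimal-length representative of this double coset'' does not follow from $u'\in{}^{J'}\tilW$: membership in ${}^{J'}\tilW$ only guarantees minimality in the \emph{left} coset $W_{J'}u'$, whereas double-coset minimality would additionally require $u'\in\tilW^{J'}$, which fails in general for the $u'$ produced by B\'edard's algorithm (it is $u_0$, the first step in the algorithm, that lies in ${}^{J'}\tilW^{J'}$; the full product $u'=u_0u_1\cdots u_m$ typically does not).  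What is actually needed --- and what the paper invokes --- is the theorem of He and Nie (\cite[Theorem~2.5]{heMinimalLengthElements2014}) that $u'$ has minimal length among all elements of the set $\sigma_{J'}^{-1}(\f{u'}{J'})=\{xu'yx^{-1}:x\in W_{J'},\,y\in W_{K'}\}$.  This is a substantive combinatorial result about conjugacy classes in Coxeter groups, not something that drops out of coset-minimality.  Since $u$ lies in this preimage, He--Nie gives $\ell(u')\le\ell(u)$.  Your route would require reproving (a strengthening of) their theorem, so this is a real gap.

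In summary, the framing via Lemma~\ref{l:EJw} and Lemma~\ref{l:type of coset} is the right one and matches the paper's approach, but (2) has a reversed inclusion and (1) rests on a false minimality claim where a nontrivial external input is required.
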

\begin{proof}
(1) By \cite[Theorem 2.5]{heMinimalLengthElements2014}, $u'$ has minimal length among $\s_{J'}^{-1}(\f{u'}{J'})$. Since $u\in \s_{J'}^{-1}(\f{u'}{J'})$ by construction, we see that $\ell(u)\ge\ell(u')$.

(2) By Lemma \ref{l:EJw}, the $J$-type of $\f{u}{J}$ is represented by $E_{J,u}$, and the $J'$-type of $\f{u'}{J'}$ is represented by $E_{J',u}$. Clearly  $E_{J',u}\subset E_{J,u}$.
\end{proof}

\begin{defn} Let $J\subset J'\sft I^{a}$. Let $\f{u}{J}\in \cS_{J}$ and  $\d^{J'}_{J}(\f{u}{J})=\f{u'}{J'}\in \cS_{J'}$. 
\begin{enumerate}
\item We say that $\frac{u}{J}$ is {\em quasi-$J'$-reduced} if $\ell(u)=\ell(u')$.
\item We say that $\frac{u}{J}$ is {\em $J'$-reduced} if $\ell(u)=\ell(u')$, and $\t(\f{u}{J})=\t(\f{u'}{J'})$. 
\end{enumerate}
\end{defn}

%
%
%

\sss{Newton point}\label{sss:np} Recall the {\em Newton point} of $w\in \tilW$ is a point $\nu(w)\in \xcoch(T)_{\QQ}^{+}$ (rational dominant cone) characterized by the following property:   for sufficiently divisible $n$, $w^{n}\in \xcoch(T)$ is in the same $W$-orbit as the translation element given by $n\nu(w)$. The Newton point is constant on each $\tilW$-conjugacy class. Therefore we have a map
\begin{equation*}
\nu: \frac{\tilW}{\tilW}\to \xcoch(T)_{\QQ}^{+}.
\end{equation*}
Let $\NP\subset \xcoch(T)_{\QQ}^{+}$ be the image of this map. 

 We also have the natural projection
\begin{equation*}
\k: \tilW\to\tilW/W^{a}=:\Om
\end{equation*}
that factors through the conjugacy classes of $\tilW$ because $\Om$ is abelian. Combining $\nu$ and $\k$ we get the enhanced Newton map
\begin{equation*}
\wt\nu: \frac{\tilW}{\tilW}\to \NP\times \Om.
\end{equation*}
Let $\wt\NP\subset \NP\times\Om$ be the image of $\wt\nu$.

\begin{lemma}\label{l:nu} For $J\sft I^{a}$, there is a unique map $\wt\nu_{J}: \cS_{J}\to \wt\NP$ such that the following diagram is commutative
\begin{equation}\label{nuJ}
\xymatrix{ \frac{\tilW}{W_{J}}\ar[d]\ar[r]^{\s_{J}} & \cS_{J}\ar[d]^{\wt\nu_{J}}\\
\frac{\tilW}{\tilW}\ar[r]^{\wt\nu} & \wt\NP
}
\end{equation}
where the left vertical map is the natural projection. Moreover, for $J\subset J'\sft I^{a}$, we have $\nu_{J'}\c\d^{J'}_{J}=\wt\nu_{J}: \cS_{J}\to \wt\NP$.
\end{lemma}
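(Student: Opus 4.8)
The plan is to reduce the statement to an assertion purely about $\tilW$ and its conjugacy classes, where it becomes a consequence of the construction of $\s_J$ in Section~\ref{sss:rel aff sp} together with the conjugation-invariance of the Newton map. First I would address existence and uniqueness of $\wt\nu_{J}$. Since $\s_J:\frac{\tilW}{W_J}\to\cS_J$ is surjective (it is the composite of the surjection onto $\cS_J$ implicit in its definition, or more simply because every $\f{u}{J}$ is $\s_J(u)$ by Lemma~\ref{l:type of coset}(2)), uniqueness is immediate: the diagram \eqref{nuJ} forces $\wt\nu_{J}(\s_J(c))=\wt\nu(c)$ for every $c\in\frac{\tilW}{W_J}$, which determines $\wt\nu_{J}$ on all of $\cS_J$. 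For existence, I must check that $\wt\nu$ is constant on each fiber $\s_J^{-1}(\f{u}{J})$. By Lemma~\ref{l:type of coset}(2), with $K=I(J,u)$, this fiber consists exactly of the $W_J$-conjugacy classes of elements $uy$ with $y\in W_K$. So the task is to show $\wt\nu(uy)=\wt\nu(u)$ for all $y\in W_K$, where $\wt\nu(w)=(\nu(w),\k(w))$.

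The $\Om$-component is trivial: $\k:\tilW\to\Om=\tilW/W^a$ sends $W^a$ to the identity, and $y\in W_K\subset W^a$, so $\k(uy)=\k(u)$. For the Newton-point component I would invoke the geometry of the apartment exactly as in the proof of Lemma~\ref{l:EJw}. By Lemma~\ref{l:EJw}(1), $E_{J,uy}=\frA(K)=E_{J,u}$, and on this relevant affine subspace $uy$ and $u$ act by the same affine transformation, because $y\in W_K$ acts by the identity on $\frA(K)$. The Newton point of an element $w\in\tilW$ is read off from the translation part of the action of $w$ on the minimal $w$-stable affine subspace $E_{\vn,w}$ (equivalently, $w$ acts on $E_{\vn,w}$ with no fixed point only in the direction of the translation $\nu(w)$, and a sufficiently divisible power of $w$ is translation by $n\nu(w)$ up to $W$-conjugacy). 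Since $uy$ and $u$ agree on $\frA(K)\supset E_{\vn,u}\cap E_{\vn,uy}$ — more precisely, I would argue that the affine subspace on which $u$ (resp.\ $uy$) realizes its minimal displacement lies inside $\frA(K)$, where the two transformations coincide — their translation parts coincide, hence $\nu(uy)=\nu(u)$. Alternatively, and perhaps more cleanly, one observes that $uy$ and $u$ have the same image in the finite quotient after taking a large power: writing $\frA(K)$ as $v+\frA(K)_{0}$ for the linear span $\frA(K)_0$, the common restriction to $\frA(K)$ determines $\nu$ since $\nu(w)$ only depends on $w$ through its action on any $w$-stable affine subspace containing $E_{\vn,w}$, and $\frA(K)$ is such a subspace for both. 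This gives $\wt\nu(uy)=\wt\nu(u)$ and hence the well-definedness of $\wt\nu_{J}$.

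For the compatibility $\nu_{J'}\c\d^{J'}_{J}=\wt\nu_{J}$ (here $\nu_{J'}$ should read $\wt\nu_{J'}$), I would simply paste the defining squares. By Proposition~\ref{p:attach pieces}, $\d^{J'}_{J}\c\s_J=\s_{J'}\c\pi^{J'}_{J}$ where $\pi^{J'}_{J}:\frac{\tilW}{W_J}\to\frac{\tilW}{W_{J'}}$ is the projection. Composing with $\wt\nu_{J'}$ and using the $J'$-version of \eqref{nuJ} on the right, together with the fact that the outer square built from the two projections $\frac{\tilW}{W_J}\to\frac{\tilW}{W_{J'}}\to\frac{\tilW}{\tilW}$ commutes, we get $\wt\nu_{J'}\c\d^{J'}_{J}\c\s_J=\wt\nu_{J'}\c\s_{J'}\c\pi^{J'}_{J}=\wt\nu\c(\text{projection to }\tfrac{\tilW}{\tilW})=\wt\nu_{J}\c\s_J$. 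Since $\s_J$ is surjective, $\wt\nu_{J'}\c\d^{J'}_{J}=\wt\nu_{J}$, as desired. The only genuine point requiring care — the main obstacle — is the claim that $\nu(uy)=\nu(u)$ for $y\in W_K$; everything else is formal diagram-chasing. I expect this to follow from the description of the Newton point via the action on $\frA(K)$ already implicit in He--Nie's work and used in Lemma~\ref{l:EJw}, but one should state precisely which invariant-subspace characterization of $\nu$ is being used, and verify that $E_{\vn,u}\subset\frA(K)$ (which holds because $\frA(K)=E_{J,u}\supset E_{\vn,u}$, the latter inclusion since any $w$-stable relevant subspace containing $\frA(\vn)=\frA$ trivially contains $E_{\vn,w}$ — in fact $E_{\vn,w}$ is the minimal $w$-stable one, so $E_{\vn,w}\subseteq E_{J,w}$).
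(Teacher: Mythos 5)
Your argument is correct in outline and reaches the same conclusion as the paper's proof, but it takes a genuinely different route, and there is a side confusion you should fix.

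\textbf{Comparison of approaches.} The paper proves $\nu(uy)=\nu(u)$ for $y\in W_K$ (with $K=I(J,u)$) by a direct group-theoretic computation: $(uy)^n = \bigl(\prod_{i=1}^n \Ad(u^i)y\bigr)\cdot u^n$, and taking $n$ divisible by $m\cdot|W_K|$ (where $m$ is the order of $\Ad(u)$ on $K$) one sees that the left product collapses to $1$, so $(uy)^n=u^n$ outright, whence $\nu(uy)=\nu(u)$. Your proof instead uses the apartment: both $u$ and $uy$ stabilize $\frA(K)$ and act on it by the same affine transformation (since $y$ fixes $\frA(K)$ pointwise), and you appeal to the fact that the Newton point of an element $w$ can be read off from the action of $w$ on any $w$-stable affine subspace of positive dimension (because a sufficiently divisible power $w^n$ is a translation by a vector lying in the tangent space of that subspace, and that vector is determined by $w^n$ restricted to the subspace). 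Both arguments are valid and end up proving $(uy)^n=u^n$ for $n$ divisible enough; the paper's is shorter and purely algebraic, while yours is geometric but requires making the "Newton point determined by action on a stable subspace" lemma explicit — which you do not quite do, and which in fact requires a small verification that the translation vector of $w^n$ lies in the linear span of the stable subspace. Your handling of the $\Om$-component and of the compatibility $\wt\nu_{J'}\c\d^{J'}_{J}=\wt\nu_J$ via a diagram chase through Proposition~\ref{p:attach pieces} is correct and matches what the paper leaves implicit.

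\textbf{The error to fix.} Your "alternatively, and perhaps more cleanly" paragraph misidentifies $E_{\vn,w}$. From the definition in Section~\ref{sss:rel aff sp}, $\frA(\vn)=\frA$, so $\frE_{\vn}=\{\frA\}$ and $E_{\vn,w}=\frA$ for every $w$. Thus the inclusion $E_{\vn,w}\subseteq E_{J,w}$ you assert runs in the \emph{wrong} direction (one has $E_{J,w}\subseteq E_{\vn,w}=\frA$), and the condition "$\frA(K)$ is a $w$-stable subspace containing $E_{\vn,u}$" holds only when $K=\vn$. Fortunately your primary argument does not need this; what it needs is the cleaner and correct statement that $\nu(w)$ is determined by the restriction of $w$ to any $w$-stable affine subspace, which you should state and verify (as above) in place of the confused $E_{\vn,w}$ discussion.
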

\begin{proof} Since $\s_{J}$ is surjective, the uniqueness of $\wt\nu_{J}$ is clear: it sends $\f{u}{J}$ to $\wt\nu(u)$,where $u\in {}^{J}\tilW$. To show the diagram \eqref{nuJ} is commutative, by Lemma \ref{l:type of coset},  it suffices to show that $\nu(uy)=\nu(u)$ and $\k(uy)=\k(u)$ for all $y\in W_{K}$, where $K=I(J,u)$. Since $W_{K}\subset W^{a}$, we have $\k(uy)=\k(u)\in \Om$. Now we show $\nu(uy)=\nu(u)$. For any $n\ge1$ we have $(uy)^{n}=\Ad(u)y\cdot \Ad(u^{2})y\cdots \Ad(u^{n})y\cdot u^{n}$. Each $\Ad(u^{i})y\in W_{K}$ since $K$ is stable under $\Ad(u)$. Let $m$ be the order of $\Ad(u)$ on $K$. Then if $n$ is divisible by $m|W_{K}|$, $\Ad(u)y\cdot \Ad(u^{2})y\cdots \Ad(u^{n})y=(\Ad(u)y\cdot \Ad(u^{2})y\cdots \Ad(u^{m})y)^{n/m}\in (W_{K})^{n/m}=\{1\}$. Therefore $(uy)^{n}=u^{n}$ for $n$ sufficiently divisible. This implies $\nu(uy)=\nu(u)$.
\end{proof}

\sss{Straight elements}
Following Krammer \cite{krammerConjugacyProblemCoxeter2009}, one says $w\in \tilW$ is {\em straight} if $\ell(w^{n})=n\ell(w)$ for all $n\ge1$. A conjugacy class $c\in \f{\tilW}{W_{a}}$ is called straight if it contains a straight element. 

\begin{lemma}\label{l:leng bound}
Let $w\in \tilW$ with Newton point $\nu\in \NP$. Then $\ell(w)\ge\j{2\r,\nu}$, and the equality holds if and only if $w$ is a straight element. 
\end{lemma}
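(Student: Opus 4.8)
The plan is to deduce both assertions from two standard ingredients: the length formula for translations in $\tilW$ and the defining property of the Newton point. Recall that for $\lambda\in\xcoch(T)$ the translation $t_\lambda\in\tilW$ satisfies $\ell(t_\lambda)=\sum_{\alpha>0}|\langle\alpha,\lambda\rangle|$, a quantity that is $W$-invariant in $\lambda$ and equal to $\j{2\r,\lambda}$ when $\lambda$ is dominant; and that $\ell$ is subadditive on $\tilW$ (it is subadditive on $\Wa$, and $\Omega$ acts on $\Wa$ by length-preserving automorphisms), so that $\ell(w^{n})\le n\,\ell(w)$ for all $n\ge1$.

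First I would pin down a sufficiently divisible power of $w$ explicitly. Write $w=t_\lambda\sigma$ with $\lambda\in\xcoch(T)$ and $\sigma\in W$, and choose $m\ge1$ divisible by the order of $\sigma$ and large enough that the defining property of $\nu=\nu(w)$ applies to $w^{m}$. Since the order of $\sigma$ divides $m$, $w^{m}=t_{\mu_{m}}$ is then literally a translation, with $\mu_{m}$ in the $W$-orbit of $m\nu$, so $\mu_{m}^{+}=m\nu$ (as $\nu$ is dominant); and for $m\mid n$ one gets $w^{n}=t_{\mu_{n}}$ with $\mu_{n}=\tfrac{n}{m}\mu_{m}$, hence $\mu_{n}^{+}=n\nu$. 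Therefore $\ell(w^{n})=\ell(t_{\mu_{n}})=\j{2\r,n\nu}=n\,\j{2\r,\nu}$ for all such $n$. Combining with subadditivity, $n\,\j{2\r,\nu}=\ell(w^{n})\le n\,\ell(w)$, which yields $\ell(w)\ge\j{2\r,\nu}$.

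For the equality statement I would argue both directions. If $w$ is straight then $\ell(w^{n})=n\,\ell(w)$ for all $n$; taking $n$ sufficiently divisible and comparing with the computation above gives $n\,\ell(w)=n\,\j{2\r,\nu}$, hence $\ell(w)=\j{2\r,\nu}$. Conversely, suppose $\ell(w)=\j{2\r,\nu}$. The Newton point is homogeneous, i.e. $\nu(w^{n})=n\nu$ for every $n\ge1$ (immediate from the defining property of the Newton point applied to $w^{n}$), so the inequality already established, applied to $w^{n}$, gives $\ell(w^{n})\ge\j{2\r,n\nu}=n\,\j{2\r,\nu}=n\,\ell(w)$; together with subadditivity this forces $\ell(w^{n})=n\,\ell(w)$ for all $n$, i.e. $w$ is straight. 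This recovers the content of \cite[\S2]{heMinimalLengthElements2014} and of \cite{krammerConjugacyProblemCoxeter2009}.

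I do not expect a serious obstacle; the argument is a repackaging of known facts. The one step demanding care is the bookkeeping in the second paragraph --- verifying that a sufficiently divisible power of $w$ is \emph{literally} the translation $t_{\mu_{n}}$ and that its dominant coweight is precisely $n\nu$ --- which is just an unwinding of the definition of the Newton point together with the translation length formula, but it is where one must be attentive to the distinction between $\xcoch(T)_{\QQ}$ and $\xcoch(T)$ and to the $W$-orbit ambiguity inherent in the Newton point.
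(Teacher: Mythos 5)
Your proof is correct and follows the same approach as the paper's: compute $\ell(w^n)$ for sufficiently divisible $n$ via the translation length formula, deduce the inequality from subadditivity, and obtain the equality criterion by applying the inequality to powers of $w$ using homogeneity of the Newton point. You have filled in some details the paper leaves implicit --- most usefully the backward direction of the equality claim, where you correctly note that $\nu(w^m) = m\nu(w)$ lets you bound $\ell(w^m)$ from below; the paper simply asserts this direction without comment.
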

\begin{proof}
For any translation element $t_{\l}\in \tilW$ corresponding to a dominant $\l\in \xcoch(T)$, we have $\ell(t_{\l})=\j{2\rho, \l^{+}}$ for the unique dominant element $\l^{+}$ in the $W$-orbit of $\l$. Therefore, if $w\in \tilW$ has Newton point $\nu\in \NP$, $w^{n}$ is conjugate to $t_{n\nu}$ for sufficiently divisible $n$, hence $\ell(w^{n})=\j{2\r, n\nu}$. Therefore $\ell(w)\ge \f{1}{n}\ell(w^{n})=\j{2\rho, \nu}$, and equality holds if and only if $w$ is a straight element. 
\end{proof}

By \cite[Theorem 3.3]{heMinimalLengthElements2014}, there is a unique straight $W^{a}$-conjugacy class for each enhanced Newton point $\wt\nu\in \wt\NP$.

\subsection{The space $\frB$} We will introduce a topological space $\frB$ obtained by gluing copies of quotients of the apartment in the building of $\cG$ and passing to quotients, using the combinatorial pieces for the affine Weyl group. The space $\frB$ will serve as an organizational tool of subquotient categories of $\cH_{\cG,J}$ that we study in the next section.

When $G$ is almost simple, $\frB$ will be a {\em $\Delta$-complex} (a weaker notion than a simplicial complex), which is a union of simplices where the intersection of two simplices is not necessarily a common face. In general, $\frB$ will be a union of poly-simplices (product of simplices).


\sss{$\cD^{\c}$-sets}\label{sss:Pset} Recall from Section~\ref{sss:general} that $\cD^{\c}$ denotes the poset of finite type subsets $J\sft I^{a}$ under inclusion. A $\cD^{\c}$-set $X$ is a functor
\begin{equation*}
X: \cD^{\c}\to \Sets.
\end{equation*}
In other words, it is an assignment $J\mapsto X_{J}\in\Sets$ for each $J\sft I^{a}$, and a map $X_{J}\to X_{J'}$ whenever $J\subset J'$, compatible with three-term inclusions. 
 
For a $\cD^{\c}$-set $X$, $\Tot(X):=\coprod_{J\sft I^{a}}X_{J}$ is a poset whose relations are of the form $x_{J}\le y_{J'}$, if $J\subset J'\sft I^{a}$ and $x_{J}\in X_{J}$ maps to $y_{J'}\in X_{J'}$ under the map $X_{J}\to X_{J'}$.

For a $\cD^{\c}$-set $X$, we shall define a topological space $|X|$ as follows. For $J\in\cD^{\c}$, we let $|\D_{J}|\subset \frA$ be the standard facet in the (reduced) apartment for $T$ indexed by $J$ (so that for $J=\vn$, $|\D_{J}|$ is the fundamental alcove in $\frA$). Then $|\D_{J}|$ is isomorphic to a product of simplices with codimension $\#J$ in $\frA$. Let $|X|$ be the space obtained as quotient of $\sqcup_{J\sft I^{a}}X_{J}\times |\D_{J}|$ by the relation $(x_{J}, t)\sim (y_{J'}, t)$ where $J\subset J'$, $x_{J}\mapsto y_{J'}$ under $X_{J}\to X_{J'}$ and $t\in |\D_{J}|\subset |\D_{J'}|$. The image of $\{x_{J}\}\times |\D_{J}|$ in $|X|$  (for $x_{J}\in X_{J}$) is called a {\em $J$-facet} of $|X|$; they are parametrized by $X_{J}$.

When $G$ is almost simple of rank $r$, each $J$-facet of $|X|$ is a simplex of dimension $r-\#J$, and $|X|$ is a $\D$-complex. In general, $|X|$ is a union of poly-simplices. 

The set $\Tot(X)$ is the set of all facets of $|X|$. The partial order on $\Tot(X)$ is {\em the opposite} of the closure order of faces.


%

\begin{exam}
\begin{enumerate}
\item Let $\d$ be the $\cD^{\c}$-set given by the constant functor valued in the singleton set. The geometric realization $|\d|$ can be identified with the fundamental alcove in $\frA$.
\item The standard apartment $\frA$ of $\cG$ gives rise to a $\cD^{\c}$-set $\Fac(\frA)$: $\Fac(\frA)_{J}$ is the set of $J$-facets in $\frA$, and the transition maps $\Fac(\frA)_{J}\to \Fac(\frA)_{J'}$ sends a $J$-facet $F$ to the unique $J'$-facet in its closure. Then we have a canonical homeomorphism $|\Fac(\frA)|\cong \frA$ respecting the poly-simplicial structures.

\item The same construction of (2) applies to any closed subset $E\subset \frA$ that is a union of facets. It gives a $\cD^{\c}$-subset $\Fac(E)\subset \Fac(\frA)$, and $|\Fac(E)|$ is identified with $E$ as a subspace of $|\Fac(\frA)|\cong \frA$.
\end{enumerate}

\end{exam}

\begin{remark}\label{r:sd geom real} For a $\cD^{\c}$-set $X$, the poset $\Tot(X)$ also has a geometric realization $|\Tot(X)|$ (see Section~ \ref{sss:poset}). Then $|\Tot(X)|$ is always a simplicial complex, and it is a subdivision of $|X|$. When $G$ is almost simple, $|\Tot(X)|$ is the barycentric subdivision of $|X|$, i.e., $|\Tot(X)|\cong sd(|X|)$ as simplicial complexes.
\end{remark}

\sss{Construction of $\frB$} 

We define a $\cD^{\c}$-set $\cS$ whose value at $J\sft I^{a}$ is $\cS_{J}$, and for  $J\subset J'\sft I^{a}$, the transition map is $\d^{J'}_{J}: \frB_{J}=\cS_{J}\to \cS_{J'}=\frB_{J'}$ defined in Proposition \ref{p:attach pieces}.

Let $\frB=|\cS|$ be the geometric realization of $\frB$. For $\f{u}{J}\in\cS_{J}$, we denote by $\frB(\f{u}{J})$ the corresponding $J$-facet of $\frB$, with interior $\frB(\f{u}{J})^{\c}$.


\begin{remark} For any finite or affine Weyl group $W$ with an automorphism $\s$ preserving the simple reflections, we have the notion of $\s$-twisted $J$-pieces and one can similarly define $\frB(W; \s)$. In our setting, the normal slice to a facet $\frB(\f{u}{J})$ in $\frB$ is isomorphic to $\frB(W_{K}; u)$ for the finite Weyl group $W_{K}$ (where $K=I(J,u)$) under the $u$-twisted conjugation action of $W_{K}$.
\end{remark}

\sss{Functions on $\frB$} We have attached three invariants to each element in $\Tot(\cS)=\coprod_{J\sft I^{a}}\cS_{J}$:
\begin{enumerate}
\item The length function $\ell: \Tot(\cS)\to \ZZ_{\ge0}$.
\item The coarse type $\t: \Tot(\cS)\to \un\frE$.
\item The enhanced Newton map $\wt\nu: \Tot(\cS)\to \wt\NP\subset \NP\times\Om$.
\end{enumerate}

We may consider these functions as piece-wise continuous functions on the geometric realization $\frB$:
\begin{enumerate}
\item $\ell: \frB\to \ZZ_{\ge0}$. 
\item $\t: \frB\to \un\frE$.
\item $\wt\nu: \frB\to \wt\NP$.
\end{enumerate}
such that the value of $\ell, \t$ and $\nu$ on  $\frB(\f{u}{J})^{\c}$ is $\ell(\f{u}{J}), \t(\f{u}{J})$ and $\nu_{J}(\f{u}{J})$ respectively. 

By Lemma \ref{l:ell tau under closure}, the functions $\ell$ and $\t$ on $\frB$ are both lower semicontinuous (non-increasing under specialization). By Lemma \ref{l:nu}, the function $\wt\nu$ is locally constant on $\frB$. We have decompositions of the $\cD^{\c}$-set $\cS$ and the space $\frB$:
\begin{equation*}
\cS=\coprod_{\wt\nu\in \wt\NP}\cS_{\wt\nu},\quad \frB=\coprod_{\wt\nu\in \wt\NP}\frB_{\wt\nu}
\end{equation*}
where $\cS_{\wt\nu, J}$ is the set of $\f{u}{J}$ such that $\wt\nu(\f{u}{J})=\wt\nu$, and $\frB_{\wt\nu}=|\cS_{\wt\nu}|$. Note each $\frB_{\wt\nu}\subset \frB$ is open and closed.

\sss{Essential part}
Fix $\wt\nu=(\nu,\om)\in \wt\NP$. By Lemma \ref{l:leng bound}, any piece $\f{u}{J}$ that appears in $\frB_{\wt\nu}$ satisfies $\ell(u)\ge \j{2\rho,\nu}$. Let $\cS^{\hs}_{\wt\nu,J}\subset \cS_{J}$ be the subset consisting of $\f{u}{J}$ with $\wt\nu(u)=\wt\nu$ and $\ell(u)=\j{2\r,\nu}$. By Lemma \ref{l:leng bound} and Lemma \ref{l:ell tau under closure}, we see that the assignment $J\mapsto \cS^{\hs}_{\wt\nu,J}$ defines a $\cD^{\c}$-subset $\cS_{\wt\nu}^{\hs}$ of $\cS_{\wt\nu}$. Let $\frB^{\hs}_{\wt\nu}=|\cS_{\wt\nu}^{\hs}|$ and call it the {\em essential part} of $\frB_{\wt\nu}$. By Lemma \ref{l:leng bound}, $\frB_{\wt\nu}^{\hs}$ is the closure of the maximal facets of $\frB_{\wt\nu}$ indexed by straight elements.


\begin{exam}\label{ex: SL2 B} For $G=\SL_{2}$, $\frB$ is a graph. Since $G$ is simply-connected, $\Om=0$ and $\wt\NP=\NP$. The connected components of $\frB$ are indexed by the possible Newton points $\nu\in\ZZ_{\ge0}$. Note that $\Wa=\j{s_{0},s_{1}}$. Let $T_{n}=(s_{1}s_{0})^{n}$ for $n\in \ZZ$. 

For $\nu>0$, $\frB_{\nu}$ is a cycle with two nodes and two edges: 
\begin{equation*}
\xymatrix{ \f{T_{-n}}{\{s_{1}\}}  \ar@/^/@{-}[r]^-{\f{T_{n}}{\vn}}\ar@/_/@{-}[r]_-{\f{T_{-n}}{\vn}} & \f{T_{n}}{\{s_{0}\}}
}
\end{equation*}
There is only one conjugacy class with Newton point $n>0$, namely that of $T_{n}$. In this case, we have $\frB^{\hs}_{\nu}=\frB_{\nu}$.

For $\nu=0$, the graph $\frB_{\nu}$ is an infinite linear tree: 
\begin{equation*}
\xymatrix{\f{1}{\{s_{1}\}}\ar@{-}[r]^-{\f{s_{1}}{\vn}} \ar@{-}[d]_-{\f{1}{\vn}}& 
\f{s_{1}}{\{s_{0}\}} \ar@{-}[r]^-{\f{s_{0}s_{1}s_{0}}{\vn}} & \f{s_{0}s_{1}s_{0}}{\{s_{1}\}} \ar@{-}[r]^-{\f{s_{1}s_{0}s_{1}s_{0}s_{1}}{\vn}} &  \cdots\\
\f{1}{\{s_{0}\}} \ar@{-}[r]^-{\f{s_{0}}{\vn}} & \f{s_{0}}{\{s_{1}\}} \ar@{-}[r]^-{\f{s_{1}s_{0}s_{1}}{\vn}} & \f{s_{1}s_{0}s_{1}}{\{s_{0}\}} \ar@{-}[r]^-{\f{s_{0}s_{1}s_{0}s_{1}s_{0}}{\vn}}& \cdots }
\end{equation*}
We draw it in three segments to reflect that there are three conjugacy classes in $W^{a}$ with Newton point $0$: the identity conjugacy class (corresponding to the vertical edge), the conjugacy class of $s_{1}$ (upper ray) and the conjugacy class of $s_{0}$ (lower ray). In this case, $\frB^{\hs}_{\nu}$ consists of the edge labelled $\f{1}{\vn}$ and its two end points. 
\end{exam}

\begin{exam} Consider the case $G=\PGL_{2}$. Now $\tilW\cong T_{1/2}^{\ZZ}\rtimes\j{s_{1}}$, where $T_{1/2}^{2}=T_{1}$ in the notation of Example \ref{ex: SL2 B}. Let $\om=s_{1}T_{1/2}$ be the length zero element in $\tilW-W^{a}$. Then for $n\ge1$ odd, $T_{n/2}=s_{1}s_{0}\cdots s_{1}\om$ (length $n$). We have $\NP=\frac{1}{2}\ZZ_{\ge0}$. The set $\wt\NP\subset \NP\times \Om\cong \frac{1}{2}\ZZ_{\ge0}\times \ZZ/2\ZZ$ is
\begin{equation*}
\wt\NP=\{(n/2,n\textup{ mod } 2)|n\in\ZZ_{\ge0}\}\cup \{(0,1)\}.
\end{equation*}

For $\wt\nu=(n,0)\in \wt\NP$, $\frB_{\wt\nu}$ is the same as $\frB_{n}$ described in Example \ref{ex: SL2 B}. 

For $\wt\nu=(n/2, 1)$ where $n\ge1$ odd, $\frB_{\wt\nu}$ is of the form
\begin{equation*}
\xymatrix{ \f{T_{-n/2}}{\{s_{1}\}}  \ar@/^/@{-}[r]^-{\f{T_{n/2}}{\vn}}\ar@/_/@{-}[r]_-{\f{T_{n/2}}{\vn}} & \f{T_{-n/2}}{\{s_{0}\}}
}
\end{equation*}
There is only one conjugacy class with Newton point $n/2>0$, namely that of $T_{n/2}$. In this case we have $\frB_{\wt\nu}^{\hs}=\frB_{\wt\nu}$.

For $\wt\nu=(0,1)$, $\frB_{\wt\nu}$ is an infinite linear tree
\begin{equation*}
\xymatrix{ \cdots \ar@{-}[r] & \f{s_{0}\om s_{0}}{\{s_{1}\}}\ar@{-}[r]^-{\f{s_{0}\om s_{0}}{\vn}} & \f{\om}{\{s_{0}\}}\ar@{-}[r]^-{\f{\om}{\vn}} & \f{\om}{\{s_{1}\}}\ar@{-}[r]^-{\f{s_{1}\om s_{1}}{\vn}} & \f{s_{1}\om s_{1}}{\{s_{0}\}}\ar@{-}[r] & \cdots}
\end{equation*}
There is only one $W^{a}$-conjugacy class in $\tilW$ with enhanced Newton point $\wt\nu=(0,1)$, namely the conjugacy class of $\om$.  In this case, $\frB^{\hs}_{\wt\nu}$ consists of the edge labelled $\f{\om}{\vn}$ and its two end points. 
\end{exam}

\sss{Linear structure on $\frB$}
We shall describe $\frB$ as glued from copies of quotients of the apartment $\frA$.
  
For a relevant affine subspace $E\in \frE$, let $W^{E}\subset \Aff(E)$ be the affine transformations of $E$ of the form $w|_{E}$ for some $w\in\Stab_{\tilW}(E)$. Let $W_{E}\subset \Wa$ be the subgroup that fixes $E$ pointwise (this is a parabolic subgroup of $\Wa$). Note that we have a short exact sequence
\begin{equation}\label{WE}
1\to W_{E}\to \Stab_{\tilW}(E)\to W^{E}\to 1.
\end{equation}

Consider the category $\wt\cA$ whose objects are pairs $(E, \wt w)$ where $E\in \frE$ and $\wt w\in \Stab_{\tilW}(E)$; its morphisms are defined by
\begin{equation}\label{mor wt A}
\Mor_{\wt \cA}((E,\wt w), (E',\wt w'))=\{g\in \Wa|gE\subset E', \wt w'|_{gE}=g\c (\wt w|_{E})\c g^{-1}\in\Aff(gE)\}
\end{equation}
with the evident composition. 

Let $\cA$ be the category with objects pairs $(E, w)$ where $E\in \frE$ and $w\in W^{E}$, and morphisms defined in the way similar to \eqref{mor wt A}
\begin{equation*}
\Mor_{\cA}((E,w), (E',w'))=\{g\in \Wa|gE\subset E', w'|_{gE}=g\c  w\c g^{-1}\in\Aff(gE)\}.
\end{equation*}

The obvious functor  $\wt\cA\to \cA$ sending $(E,\wt w)$ to $(E,w)$, where $w\in W^{E}$ is the image of $\wt w$, is an equivalence. The category $\wt\cA$ will only play an auxiliary role.

For each $(E,\wt w)\in \wt \cA$, we define a map of $\cD^{\c}$-sets
\begin{equation*}
\ph_{E,\wt w}: \Fac(E)\to \cS
\end{equation*}
as follows. It sends a $J$-facet $F=xF_{J}$ of $E$ (where $F_{J}$ is the standard $J$-facet, $x\in \Wa/W_{J}$) to $x^{-1}\wt wx$, well-defined up to $W_{J}$-conjugacy. We let $\ph_{E,\wt w}(F)$ to be $\s_{J}(x^{-1}\wt wx)\in \cS_{J}$.

\begin{lemma}\label{l:ph comp}
\begin{enumerate}
\item For $(E,\wt w)\in \wt\cA$, the map $\ph_{E,\wt w}$ depends only on the image of $\wt w$ in $W^{E}$. In particular, for $(E,w)\in \cA$, there is a well-defined map  of $\cD^{\c}$-sets
\begin{equation*}
\ph_{E,w}: \Fac(E)\to \cS.
\end{equation*}
\item Suppose $g: (E,w)\to (E',w')$ is a morphism in $\cA$, then
\begin{equation*}
\ph_{E,w}=(\ph_{E',w'}|_{\Fac(gE)})\c g: \Fac(E)\to \cS.
\end{equation*}
\end{enumerate}
\end{lemma}
\begin{proof}
We shall prove the following statement which implies both (1) and (2). Let $g: (E,\wt w)\to (E', \wt w')$ be a morphism in $\wt\cA$, then
\begin{equation}\label{wt ph comp g}
\ph_{E,\wt w}=(\ph_{E',\wt w'}|_{\Fac(gE)})\c g: \Fac(E)\to \cS.
\end{equation}
To see \eqref{wt ph comp g} implies (1), take $E=E'$ and suppose $\wt w, \wt w'\in \Stab_{\tilW}(E)$ both have the same image $w\in W^{E}$. Then $g=1$ gives an isomorphism $g=1: (E,\wt w)\isom (E', \wt w')$ in $\wt \cA$. In this case, \eqref{wt ph comp g} reads $\ph_{E,\wt w}=\ph_{E,\wt w'}$, which implies (1). It is easy to see that \eqref{wt ph comp g} also implies (2).

Let us prove \eqref{wt ph comp g}.
Let $F=xF_{J}\subset E$ be a $J$-facet. On the one hand, $\ph_{E,\wt w}(F)=\s_{J}(x^{-1}\wt wx)\in \cS_{J}$. On the other hand, $gF=gx F_{J}\subset E'$ is a $J$-facet of $E'$, and $\ph_{E',\wt w'}(g(F))= \s_{J}((gx)^{-1}\wt w'gx)=\s_{J}(x^{-1}(g^{-1}\wt w'g)x)$. Since $g: (E,\wt w)\to (E',\wt w')$ is a morphism in $\wt\cA$, we have $g^{-1}\wt w'g\in \Stab_{\tilW}(E)$ and it has the same image as $\wt w$ in $W^{E}$. By the exact sequence \eqref{WE} we can write $g^{-1}\wt w'g=\wt wy$ for some $y\in W_{E}$. We reduce to showing
\begin{equation}\label{same image under sJ}
\s_{J}(x^{-1}\wt wx)=\s_{J}(x^{-1}\wt wyx).
\end{equation}
For this we use the criterion in Lemma \ref{l:EJw}(3). Let $E_{1}=E_{J, x^{-1}\wt wx}$ and $E_{2}=E_{J, x^{-1}\wt wyx}$. Then $E_{1}$ is the minimal relevant affine subspace that contains $\frA(J)$ and stable under $x^{-1}\wt wx$. Now $x^{-1}E$ contains $\frA(J)$ and is stable under $x^{-1}\wt wx$, hence $E_{1}\subset x^{-1}E$. Since $y\in W_{E}$, $x^{-1}E$ is stable under $x^{-1}yx$, hence also stable under $x^{-1}\wt wyx$. This implies $E_{2}\subset x^{-1}E$. Moreover, because $y$ fixes $E$ pointwise, the actions of $x^{-1}\wt wx$ and $x^{-1}\wt wyx$ on $x^{-1}E$ are the same. Therefore both $E_{1}$ and $E_{2}$ are equal to the minimal relevant affine subspace that contains $\frA(J)$ and contained in $x^{-1}E$, and stable under $x^{-1}\wt wx|_{x^{-1}E}=x^{-1}\wt wyx|_{x^{-1}E}$. By Lemma \ref{l:EJw}(3), \eqref{same image under sJ} holds.
\end{proof}


By the above lemma, $\ph_{E,w}$ is invariant under $\Aut_{\cA}(E,w)$. Using the transition maps $g:E\to E'$ for $g\in \Mor_{\cA}((E,w), (E',w'))$, we may form the colimit in the category of $\cD^{\c}$-sets
\begin{equation*}
\colim_{(E,w)\in \cA} \Fac(E).
\end{equation*}
Lemma \ref{l:ph comp} implies that the maps $\{\ph_{E,w}\}$ together induce a map of $\cD^{\c}$-sets
\begin{equation*}
\ph: \colim_{(E,w)\in \cA} \Fac(E)\to \cS.
\end{equation*}
Taking geometric realizations, we get a map
\begin{equation*}
|\ph|: \colim_{(E,w)\in \cA} E\to\frB.
\end{equation*}

\begin{lemma}\label{l:glue apts} The map $\ph$ is an isomorphism of $\cD^{\c}$-sets. Consequently, the map $|\ph|$ is a  homeomorphism respecting the facet structures. 
\end{lemma}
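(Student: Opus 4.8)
The statement asserts that the natural map of $\cD^{\c}$-sets $\ph\colon \colim_{(E,w)\in\cA}\Fac(E)\to\cS$ is an isomorphism, and hence that the induced map on geometric realizations $|\ph|$ is a homeomorphism preserving the facet structure. Since geometric realization of $\cD^{\c}$-sets is computed levelwise and commutes with colimits, the second assertion is a formal consequence of the first: once $\ph$ is an isomorphism of $\cD^{\c}$-sets, $|\ph|$ is a levelwise bijection on facets, and one checks it is a homeomorphism by the standard argument that both sides carry the quotient topology from $\sqcup_{J}X_{J}\times|\Delta_{J}|$. So the whole content is in showing $\ph_{J}\colon (\colim_{(E,w)\in\cA}\Fac(E))_{J}\to\cS_{J}$ is a bijection for each $J\sft I^{a}$.

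First I would unwind the source. The colimit $\colim_{(E,w)\in\cA}\Fac(E)$ at level $J$ is the set of equivalence classes of pairs $((E,w),F)$ with $(E,w)\in\cA$ and $F$ a $J$-facet of $E$, where two such are identified if related by a morphism in $\cA$; since $\Wa$ acts transitively on $J$-facets of $\frA$ of a fixed type and the morphisms include all of $\Wa$, every class has a representative with $F=F_{J}$ the standard $J$-facet, and $\ph_{J}$ then sends $((E,w),F_{J})$ to $\s_{J}(w)\in\cS_{J}$ where $w\in W^{E}$ is regarded (via a lift) as an element of $\tilW$, well-defined modulo $W_{J}$-conjugacy by the argument in Lemma~\ref{l:ph comp}. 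The point is to describe precisely when two pairs $((E,w),F_{J})$, $((E',w'),F_{J})$ become identified: by the morphism description in $\cA$, they are identified iff there is $g\in\Stab_{\tilW}(F_{J})=W_{J}$ (up to the kernel acting trivially) conjugating the relevant data, which should reduce to saying that $w$ and $w'$ are $W_{J}$-conjugate \emph{after} the equivalence that collapses $W_{E}$. That is exactly the relation that the map $\s_{J}\colon \f{\tilW}{W_{J}}\to\cS_{J}$ quotients by, as encoded in Lemma~\ref{l:EJw}(3): $\s_{J}(w)=\s_{J}(w')$ iff $w'$ is $W_{J}$-conjugate to a $w''$ with $E_{J,w}=E_{J,w''}$ and $w|_{E_{J,w}}=w''|_{E_{J,w''}}$.

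The key steps, in order, are: (1) show $\ph_{J}$ is surjective — given $\f{u}{J}\in\cS_{J}$ with $u\in{}^{J}\tilW$, take $E=E_{J,u}$, the minimal relevant affine subspace containing $\frA(J)$ stable under $u$ (nonempty and well-defined by the discussion preceding Lemma~\ref{l:EJw}), set $w=u|_{E}\in W^{E}$, and observe $\ph_{J}((E,w),F_{J})=\s_{J}(u)=\f{u}{J}$ by construction; (2) show $\ph_{J}$ is injective — suppose $((E,w),F_{J})$ and $((E',w'),F_{J})$ have $\s_{J}(\tilde w)=\s_{J}(\tilde w')$ for lifts $\tilde w,\tilde w'\in\tilW$; by Lemma~\ref{l:EJw}(3) there is $x\in W_{J}$ with $\tilde w'$ conjugate to $w''$ satisfying $E_{J,\tilde w}=E_{J,w''}$ and matching restrictions, and I must promote this to an honest morphism in $\cA$ between $(E,w)$ and $(E',w')$; the content here is Lemma~\ref{l:EJw}(1), which tells us $E_{J,u}=\frA(I(J,u))$ is intrinsic to the piece, so both $E$ and $E'$ (after the $W_{J}$-conjugation) are forced to be this canonical subspace, and then $w,w'$ agree on it by the twisted-conjugacy part of Lemma~\ref{l:type of coset}; (3) deduce $\ph$ respects the transition maps $\d^{J'}_{J}$ — this is Proposition~\ref{p:attach pieces} combined with the compatibility of $E_{J,w}$ under enlarging $J$ to $J'$ (Lemma~\ref{l:EJw}(2) and the inclusion $E_{J',u}\subset E_{J,u}$ from Lemma~\ref{l:ell tau under closure}), so that $\ph$ is genuinely a map of $\cD^{\c}$-sets and not merely levelwise bijective.

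\textbf{Main obstacle.} I expect the delicate point to be step (2), specifically matching up the abstract colimit identification in $\cA$ with the combinatorial identification provided by $\s_{J}$: one must verify that the morphism set $\Mor_{\cA}((E,w),(E',w'))$ is exactly large enough — that the short exact sequence \eqref{WE} ($W_{E}$ acting trivially on $E$) means the colimit already collapses precisely the $W_{E}$-ambiguity, which is the same ambiguity as "$w|_{E_{J,w}}$ determines the piece" in Lemma~\ref{l:EJw}(3). A subtle corner is that $E$ in a pair $(E,w)\in\cA$ need not be minimal among subspaces stable under $w$, so for injectivity I cannot assume $E=E_{J,w}$; I handle this by noting that any $(E,w)$ admits a morphism (the identity on the ambient $\frA$) to... no — rather, the key is that $E_{J,w}\subseteq E$ always, and the facet $F_{J}\subset E$ with $F_{J}\subset\frA(J)\subset E_{J,w}$ already lies in $E_{J,w}$, so the value $\s_{J}(w)$ only sees $w|_{E_{J,w}}$; thus I should first argue that $((E,w),F_{J})$ is equivalent in the colimit to $((E_{J,w}, w|_{E_{J,w}}),F_{J})$ via the inclusion $E_{J,w}\hookrightarrow E$ as a morphism in $\cA$ (checking this inclusion is indeed a morphism: $w$ stabilizes $E_{J,w}$ and restricts compatibly, which is the defining property of $E_{J,w}$). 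Once every class is thereby normalized to have $E=E_{J,w}=\frA(I(J,w))$ minimal, injectivity of $\ph_{J}$ follows cleanly from Lemma~\ref{l:EJw}(1) and~(3). I would write this normalization lemma first and then the bijectivity becomes short.
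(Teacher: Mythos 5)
Your plan is correct and follows essentially the same approach as the paper: use Lemma~\ref{l:EJw}(3) to translate equality of $\s_{J}$-values into existence of a $W_{J}$-conjugation matching the $E_{J,\cdot}$'s and the restrictions, then realize that conjugation as an honest morphism in $\cA$. The paper does this a touch more directly — for injectivity it works with arbitrary $F_{i}=x_{i}F_{J}$ and immediately produces a \emph{common source} $(E,w)$ with $E=E_{J,x_{1}^{-1}w_{1}x_{1}}$ and two morphisms $x_{1},x_{2}$ out of it, avoiding your preliminary normalization $F=F_{J}$ and $E=E_{J,\tilde w}$; and for surjectivity it takes $E=\frA$ and cites surjectivity of $\s_{J}$ rather than minimizing to $E=E_{J,u}$. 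Your normalization step (the inclusion $E_{J,\tilde w}\hookrightarrow E$ as a morphism with $g=1$, and $x^{-1}$ to move $F=xF_{J}$ to $F_{J}$) does work, so the argument is sound, just a bit longer. One small imprecision worth correcting: you write $E_{J,w}=\frA(I(J,w))$, but $I(J,u)$ is defined only for $u\in{}^{J}\tilW$, and for general $w$ Lemma~\ref{l:EJw}(1)--(2) gives only that $E_{J,w}$ is $W_{J}$-conjugate to $\frA(I(J,u))$ where $\s_{J}(w)=\f{u}{J}$ — not equal to it. This does not affect the argument (you only use that $E_{J,\tilde w}\subset E$ and that $E_{J,\tilde w_{1}}=E_{J,w''}$ after adjusting by $W_{J}$), but the equality as stated is false.
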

\begin{proof}
We need to show for each $J\sft I^{a}$, the map on the set of $J$-facets 
\begin{equation*}
\ph_{J}: \colim_{(E,w)\in \cA} \Fac_{J}(E)\to \cS_{J}
\end{equation*}
is a bijection.

To see $\ph_{J}$ is surjective, note that for $E=\frA$ and $w\in \tilW=W^{E}$, $\ph_{\frA,w}$ is the composition of $\wt\ph_{\frA,w}: \Fac_{J}(\frA)=W^{a}/W_{J}\to \f{\tilW}{W_{J}}$ (sending $x\mapsto x^{-1}wx$) and $\s_{J}$. As $w$ runs over all of $\tilW$, the images of $\wt\ph_{\frA,w}$ cover all of $\f{\tilW}{W_{J}}$, therefore the images of $\ph_{\frA,w}$ cover all of $\cS_{J}$ since $\s_{J}$ is surjective. 

Now we show $\ph_{J}$ is injective. Let $(E_{1},w_{1}), (E_{2}, w_{2})\in \cA$ and $F_{i}=x_{i}F_{J}\in\Fac_{J}(E_{i})$ for $i=1,2$ such that $\ph_{J}(F_{1})=\ph_{J}(F_{2})\in \cS_{J}$. This means $\s_{J}(x_{1}^{-1}w_{1}x_{1})=\s_{J}(x_{2}^{-1}w_{2}x_{2})$. Now we invoke Lemma \ref{l:EJw}(3). Upon right multiplying $x_{2}$ by an element in $W_{J}$, we may assume $E_{J, x_{1}^{-1}w_{1}x_{1}}=E_{J, x_{2}^{-1}w_{2}x_{2}}$, which we denote by $E$, and that $x_{1}^{-1}w_{1}x_{1}|_{E}=x_{2}^{-1}w_{2}x_{2}|_{E}$, which we denote by $w\in W^{E}$. Then we have morphisms $x_{1}: (E,w)\to (E_{1}, w_{1})$ and $x_{2}: (E,w)\to (E_{2},w_{2})$ in $\cA$, and these morphisms send $F_{J}$ (which is a facet of $E$, because by definition $E$ contains $\frA(J)$) to $F_{1}$ and $F_{2}$ respectively. This means $F_{1}$ and $F_{2}$ are already identified in the colimit $\colim_{(E,w)\in \cA} \Fac_{J}(E)$.  This proves $\ph_{J}$ is injective.
\end{proof}

\begin{remark} Lemma \ref{l:glue apts} tells us that $\frB$ can be constructed as follows: for each $W^{a}$-conjugacy class in $\tilW$, take a representative $w$ and form the quotient space $\frA/C_{W^{a}}(w)$. Then $\frB$ is obtained from the disjoint union of $\frA/C_{W^{a}}(w)$ ($w$ running over representatives of $W^{a}$-conjugacy classes in $\tilW$) by gluing along relevant subspaces of dimension less that that of $\frA$.
\end{remark}

\sss{He-Nie function} Following the idea of He and Nie \cite[]{heMinimalLengthElements2014}, we now define a function $f: \frB\to \RR_{\ge0}$ as follows. For $(E,w)\in \cA$, we have the function $f_{E, w}: E\to \RR_{\ge0}$ defined by $x\mapsto \|x-wx\|^{2}$, using a fixed $W$-invariant positive definitive quadratic form $\|\cdot\|^{2}$ on $V=\xcoch(T)_{\RR}$. For any morphism $g: (E,w)\to (E',w')$ in $\cA$, one checks that 
\begin{equation*}
f_{E, w}=f_{E',w'}\c g: E\to \RR_{\ge0}.
\end{equation*}
Therefore $\{f_{E,w}\}_{(E,w)\in \cA}$ gives a piecewise smooth function on $\colim_{(E,w)\in \cA}E\cong \frB$, which we denote by 
$$f: \frB\to \RR_{\ge0}.$$ 

Moreover, using the quadratic form $\|\cdot\|^{2}$ restricted to $E$, the differential $df_{E,w}$ turns into a gradient vector field $\nb f_{E,w}$ on $E$. Since $f_{E,w}$ is quadratic, $\nb f_{E,w}$ is a linear vector field. The next lemma shows that for varying $(E,w)\in \cA$, the vector fields $\nb f_{E,w}$ assemble to a piecewise-linear continuous vector field $\nb f$ on $\frB$. 

\begin{lemma}\label{l:gradf}
If $g: (E,w)\to (E', w')$ is a morphism in $\cA$, then $g_{*}$ takes the vector field $\nb f_{E,w}$ on $E$ to the vector field $\nb f_{E',w'}|_{gE}$ on $gE$.
\end{lemma}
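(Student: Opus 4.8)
The statement to prove is Lemma~\ref{l:gradf}: for a morphism $g:(E,w)\to(E',w')$ in $\cA$, the pushforward $g_*$ carries the linear vector field $\nb f_{E,w}$ on $E$ to $\nb f_{E',w'}|_{gE}$. The plan is to reduce this to the already-established compatibility $f_{E,w}=f_{E',w'}\circ g$ of the underlying functions together with the observation that $g$ acts on $E$ by an isometry onto its image $gE$. First I would record that a morphism $g$ in $\cA$ is, by definition, (the restriction to $E$ of) an element of $\Wa$ taking $E$ isomorphically onto the affine subspace $gE\subset E'$; since $\Wa$ acts on $\frA$ by isometries for the fixed $W$-invariant quadratic form $\|\cdot\|^2$, the map $g:E\to gE$ is an isometry of Euclidean affine spaces, and hence $g_*$ intertwines the gradient operators on $E$ and on $gE$ (the gradient of a smooth function is computed using the metric, and isometries commute with taking gradients).

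Then the argument is: on $gE$ we have the two functions $f_{E',w'}|_{gE}$ and $(f_{E,w})\circ g^{-1}$, and by the compatibility $f_{E,w}=f_{E',w'}\circ g$ proved just before the lemma (i.e.\ $f_{E',w'}|_{gE}=f_{E,w}\circ g^{-1}$), these agree. Taking gradients on $gE$ with respect to the restricted metric, and using that $g_*$ commutes with $\nb$ by the isometry property, we get
\begin{equation*}
g_*(\nb f_{E,w})=\nb(f_{E,w}\circ g^{-1})=\nb\bigl(f_{E',w'}|_{gE}\bigr)=\bigl(\nb f_{E',w'}\bigr)\big|_{gE},
\end{equation*}
where the last equality uses that $gE$ is an affine subspace of $E'$ on which $w'$ restricts to an affine transformation, so that $f_{E',w'}|_{gE}$ is the same quadratic function one would write down intrinsically on $gE$, and the gradient of a restriction along a totally geodesic (affine) subspace is the tangential part of the ambient gradient --- but since $\nb f_{E',w'}$ is already tangent to $gE$ (because $w'$ preserves $gE$, so the vector field $x\mapsto$ (the vector from $x$ toward the axis, suitably scaled) stays inside $gE$), no projection is needed. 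I would spell out this last point: for $x\in gE$, $w'x\in gE$, so $x-w'x$ lies in the linear subspace directing $gE$, hence $\nb f_{E',w'}(x)$, which is a scalar multiple of $(w'-1)$ applied appropriately to $x$, lies in that subspace.

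The only genuinely substantive step is checking that $\nb f_{E',w'}$ is tangent to the invariant subspace $gE$ and that restriction of the quadratic function commutes with restriction of its gradient; both are elementary linear algebra once one writes $f_{E',w'}(x)=\|x-w'x\|^2$ and differentiates, using that $w'$ is affine and preserves $gE$. I do not expect a real obstacle here: the lemma is essentially a formal consequence of (i) the function-level compatibility already in hand, (ii) $\Wa$ acting by isometries, and (iii) the elementary fact that gradients of quadratic functions invariant under an affine isometry restrict correctly to invariant affine subspaces. The main thing to be careful about is keeping straight which metric (ambient on $\frA$ versus restricted to $E$, $E'$, $gE$) is used at each stage --- but since all are restrictions of the one fixed $W$-invariant form, they are mutually compatible and this bookkeeping is routine. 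Consequently the vector fields $\nb f_{E,w}$ glue, along the transition maps of the diagram $(E,w)\mapsto E$, to a single piecewise-linear continuous vector field $\nb f$ on $\colim_{(E,w)\in\cA}E\cong\frB$, as asserted.
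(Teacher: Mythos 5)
Your overall strategy matches the paper's: both reduce via the compatibility $f_{E,w}=f_{E',w'}\circ g$ and the fact that $g$ acts as an isometry, and then check that the ambient gradient of $f_{E',w'}$ is tangent to $gE$. The paper carries this out by first writing down the explicit formula
$(\nabla f_{E,w})(x)=2(\overline{w}-1)^{*}(w-1)(x)$
for the gradient (where $\overline{w}$ is the linear part and $*$ is the metric adjoint), reducing to $g=1$, and then observing that since $w'$ preserves $E$, both $\overline{w'}-1$ and its adjoint preserve the direction space $V_E$, so the formula restricts correctly.

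The gap in your write-up is the tangency step. You argue that since $x-w'x$ lies in the direction space of $gE$, so does $\nabla f_{E',w'}(x)$, describing the gradient as ``a scalar multiple of $(w'-1)$ applied appropriately to $x$.'' This is not correct as stated: differentiating $\|x-w'x\|^2$ gives $2(\overline{w'}-1)^{*}(w'-1)(x)$, which is generally \emph{not} a scalar multiple of $(w'-1)(x)$, and $(w'-1)(x)\in V_{gE}$ alone does not force the gradient into $V_{gE}$. What you actually need is that $(\overline{w'}-1)^{*}$ carries $V_{gE}$ into $V_{gE}$. This does hold, but it requires an additional input: $\overline{w'}$ is orthogonal (being the linear part of an element of $\widetilde{W}$ acting isometrically on the apartment), hence $(\overline{w'}-1)^{*}=\overline{w'}^{-1}-1$, and since $\overline{w'}$ restricts to an automorphism of $V_{gE}$, so does $\overline{w'}^{-1}$. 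Without this observation (or equivalently, without the explicit gradient formula plus the adjoint stability the paper records) the tangency claim is unsupported, and the argument as you have written it does not close.
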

\begin{proof} Let $(E,w)\in \cA$. Let $V_{E}\subset \xcoch(T)_{\RR}$ be the vector space parallel to $E$, so that $w-1$ is a map $E\to V_{E}$. Let $\ov w\in \End(V_{E})$ be the linear part of $w$. Direct calculation shows 
\begin{equation}\label{nbf}
(\nb f_{E,w})(x)=2(\ov w-1)^{*}(w-1)(x), \quad \forall x\in E.
\end{equation}
Here $(\ov w-1)^{*}\in \End(V_{E})$ is the adjoint of $(\ov w-1)\in \End(V_{E})$ under the quadratic form $\|\cdot \|^{2}$ on $V_{E}$.

Replacing $(E,w)$ by $(gE, gwg^{-1})$, we may assume that $E\subset E'$, $w'(E)=E$, $w'|_{E}=w$ and $g$ is the identity element. Since $w'$ preserves $E$, the endomorphism $\ov w-1$ of $V_{E'}$ preserves $V_{E}$, and so is its adjoint. Hence by \eqref{nbf}, $\nb f_{E'
,w'}|_{E}=\nb f_{E,w}$.
\end{proof}

Let $\Crit(f)\subset \frB$ be the vanishing locus of $\nb f$. Let $\Crit(f)_{\wt\nu}=\Crit(f)\cap \frB_{\wt\nu}$.

\begin{lemma}\label{l:ess contains crit}
For any $\wt\nu\in\wt\NP$, the critical locus $\Crit(f)_{\wt\nu}$ is contained in the essential part $\frB^{\hs}_{\wt\nu}$.
\end{lemma}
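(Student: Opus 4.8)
The strategy is to show that any point $x \in \Crit(f)_{\wt\nu}$, which lies in the interior $\frB(\f{u}{J})^{\c}$ of some $J$-facet, must satisfy $\ell(u) = \j{2\r,\nu}$, i.e. $\f{u}{J} \in \cS^{\hs}_{\wt\nu, J}$, which is exactly the condition defining $\frB^{\hs}_{\wt\nu}$. Using Lemma~\ref{l:glue apts}, I would work in a chart: realize $x$ as the image of a point in some $E$ under the quotient map $E \to \frB$ for a pair $(E,w) \in \cA$ with $w \in W^{E}$ a lift of the element governing the facet. It is most convenient to take $E = \frA$ and $w \in \tilW$ a representative of the $\Wa$-conjugacy class with Newton point $\nu$; then $x$ corresponds to a point of $\frA$ fixed (as a facet) by the appropriate element, and the critical condition $\nb f = 0$ pulls back to $\nb f_{\frA, w} = 0$ at that point, by Lemma~\ref{l:gradf}.

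\textbf{Key computation.} By formula~\eqref{nbf}, $(\nb f_{E,w})(y) = 2(\ov w - 1)^{*}(w-1)(y)$, so $y$ is a critical point of $f_{E,w}$ precisely when $(w-1)(y) \in \ker (\ov w - 1)^{*} = (\Im(\ov w - 1))^{\perp}$; since $(w-1)(y) \in \Im(\ov w - 1) + (\text{translation part})$, unwinding this forces $(w-1)(y)$ to be the orthogonal projection of the translation vector onto $\ker(\ov w - 1)$, i.e. $(w-1)(y)$ is a $\ov w$-fixed vector. Geometrically this says: at a critical point $y$, the vector $wy - y$ is invariant under the linear part $\ov w$, equivalently the straight line through $y$ and $wy$ is translated by $w$ in a direction fixed by $\ov w$. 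This is exactly the situation analyzed by He and Nie: a point $y$ with $wy - y \in V^{\ov w}$ is a point where $w$ acts with ``minimal displacement'', and the facet $F$ containing $y$ then has the property that the relative position $p := x_F^{-1} w x_F \in \tilW$ (where $F = x_F F_J$) is a minimal length element in its conjugacy class with $\ell(p) = \j{2\r, \nu}$. Concretely: translating by the $\ov w$-fixed vector $v := wy - y$ realizes $w$ (up to conjugacy fixing the facet) as a straight element, so its length equals $\j{2\r, \nu}$ by Lemma~\ref{l:leng bound}; and by construction $\s_J$ of this element is $\f{u}{J}$, giving $\ell(u) = \j{2\r, \nu}$.

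\textbf{Main obstacle.} The delicate point is the translation from the analytic critical condition $(w - 1)(y) \in V^{\ov w}$ to the combinatorial statement $\ell(u) = \j{2\r, \nu}$, which must be done carefully with respect to which element $u \in {}^J\tilW$ and which facet $F$ one is looking at — in particular one must check that the minimal-displacement point $y$ lies in the facet whose combinatorial piece is $\f{u}{J}$, not merely in its closure, and that the equality $\ell(u) = \ell(\text{straight representative})$ genuinely holds rather than just $\ell(u) \ge \j{2\r,\nu}$. I expect this to require invoking the He-Nie theory precisely: by \cite[Theorem 2.5]{heMinimalLengthElements2014} (already cited in the proof of Lemma~\ref{l:ell tau under closure}), $u$ has minimal length in $\s_J^{-1}(\f{u}{J})$, and a point $y \in F^{\c}$ with $\nb f_{E,w}(y) = 0$ witnesses, via the displacement argument above, that $u$ is conjugate to a straight element of the same length. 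I would also need to check compatibility with the chart changes in $\cA$: a priori $x$ may be hit by several pairs $(E,w)$, but Lemma~\ref{l:glue apts} guarantees the combinatorial data $\f{u}{J}$ is well-defined, and Lemma~\ref{l:gradf} guarantees the critical condition is chart-independent, so it suffices to verify the claim in one convenient chart. Once the length identity is established, membership of $x$ in $\frB^{\hs}_{\wt\nu}$ is immediate from the definition of the essential part, and since this holds for every $x \in \Crit(f)_{\wt\nu}$ we conclude $\Crit(f)_{\wt\nu} \subset \frB^{\hs}_{\wt\nu}$.
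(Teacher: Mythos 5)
Your proposal takes essentially the same route as the paper's proof: lift $x$ to a critical point $y$ of $f_{\frA,\tilw}$ in a chart, interpret the critical condition via the formula~\eqref{nbf}, and invoke He--Nie's analysis of minimal length elements to conclude. The high-level idea and the linear algebra (the equivalence of $\nb f_{E,w}(y)=0$ with $(w-1)(y)\in V^{\ov w}$) are correct.

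However, there is a genuine gap at the crux. You cite \cite[Theorem~2.5]{heMinimalLengthElements2014}, which only tells you that $u$ has minimal length among $\s_J^{-1}(\f{u}{J})$; this gives the inequality $\ell(u)\le\ell(w')$ for any $w'$ projecting to $\f{u}{J}$, but does not by itself pin down $\ell(u)=\j{2\r,\nu}$. What is actually needed --- and what the paper invokes --- is \cite[Lemma~2.6]{heMinimalLengthElements2014}, which says that for an alcove $A$ whose closure meets the critical locus of $f_{\frA,\tilw}$ (suitably situated), the relative position $\tilw_A = pos(A,\tilw A)$ has length exactly $\j{2\r,\nu}$. Your ``displacement argument'' paraphrases the geometric content of this lemma but is not a proof of it; the passage from ``$wy-y$ is $\ov w$-fixed'' to ``$u$ is conjugate to a straight element of the same length'' is precisely the nontrivial statement He--Nie establish. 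Relatedly, you gloss over which lift $\tilw$ and which adjacent alcove $A$ one must take: the paper first chooses $(E,w)$ with $E$ minimal, then picks a connected component $C$ of $\frA\setminus\bigcup_{E\subset H}H$ and the unique lift $\tilw$ preserving $C$, and only then takes $A\subset C$ with $y\in\ov A$. Not every alcove containing $y$ in its closure will satisfy $\ell(\tilw_A)=\j{2\r,\nu}$, so this choice is not mere bookkeeping. Once that alcove is in hand, the paper concludes not by showing $\ell(u)=\j{2\r,\nu}$ directly, but by observing that $x$ lies in the closure of the maximal facet $\frB(\f{\tilw_A}{\vn})$, which lies in $\frB^{\hs}_{\wt\nu}$ by the lower semicontinuity of $\ell$ (Lemma~\ref{l:ell tau under closure}) combined with the bound of Lemma~\ref{l:leng bound}; your direct assertion that $\ell(u)=\j{2\r,\nu}$ is of course equivalent, but you would need to supply the same chain of reasoning to justify it.
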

\begin{proof}
Let $x\in \Crit(f)_{\wt\nu}$, then there exists some $(E,w)\in \cA$ (where $w\in W^{E}$) such that $x$ is the image of a critical point $y$ of $f_{E,w}$ under $|\ph_{E,w}|: E\to \frB$. We choose such a $(E,w)$ with $E$ minimal. Choose a connected component $C$ of $\frA-\cup_{E\subset H}H$ (remove all affine root hyperplanes that contain $E$). Let $\tilw\in \Stab_{\tilW}(E)$ be the lifting of $w$ such that $\tilw(C)=C$. Then $g=1$ gives a morphism $(E,w)\to (\frA,\tilw)$ in $\cA$. We have $y\in \Crit(f_{E,w})=E\cap \Crit(f_{\frA,\tilw})$. In particular, $\nu=\nu(\tilw)$. Apply \cite[Lemma 2.6]{heMinimalLengthElements2014} to the subspace $\Crit(f_{E,w})$ of $\Crit(f_{\frA,\tilw})$, and an alcove $A\subset C$ that contains $y$ in its closure, we conclude that $\tilw_{A}:=pos(A,\tilw A)$ (relative position, which is in the same $W^{a}$-conjugacy class of $\tilw$) has length $\j{2\r, \nu}$. The image of $A$ under $|\ph_{\frA,\tilw}|: \frA\to \frB$ is the maximal facet indexed by $\tilw_{A}\in \cS_{\vn}=\tilW$. Since $\ell(\tilw_{A})=\j{2\r, \nu}$, the closure of the maximal facet $\frB(\f{\tilw_{A}}{\vn})$ is in the essential part $\frB^{\hs}_{\wt\nu}$. On the other hand, $y\in \ov A$ implies $x$ is in the closure of $\frB(\f{\tilw_{A}}{\vn})$, hence $x\in \frB^{\hs}_{\wt\nu}$.
\end{proof}

\begin{remark} It is likely that $\frB_{\wt\nu}^{\hs}$ is the smallest $\cD^{\c}$-subset of $\frB_{\wt\nu}$ whose geometric realization contains $\Crit(f)_{\wt\nu}$. In other words, it should be true that for any straight element $w$, the critical locus of $f_{\frA, w}: \frA\to \RR$ intersects the interior of the fundamental alcove.
\end{remark}

Below we prove a topological property of certain subsets of $\frB_{\wt\nu}$. It is a key ingredient in the proof of Theorem \ref{thm:main in text}.  Fix $\wt\nu=(\nu,\om)\in \wt\NP$.

\begin{defn}\label{def:downward} Let $\wt\nu\in\wt\NP$. A $\cD^{\c}$-subset $\cS'\subset \cS_{\wt\nu}$ is a called {\em downward} if it satisfies:
\begin{itemize}
\item $\Tot(\cS')$ is finite.
\item $\cS'$ contains $\cS^{\hs}_{\wt\nu}$.
\item Let $\f{u}{J}\in \cS_{\wt\nu,J}, \f{u'}{J'}\in \cS_{\wt\nu,J'}$ satisfy $\ell(u)\le \ell(u')$ and $\t(\f{u}{J})\le \t(\f{u'}{J'})$. If $\f{u'}{J'}\in \cS'_{J'}$, then $\f{u}{J}\in \cS'_{J}$.
\end{itemize}
A subspace $\frB'\subset \frB_{\wt\nu}$ is called {\em downward} if it is of the form $|\cS'|$ for a downward $\cD^{\c}$-subset $\cS'$ of $\cS_{\wt\nu}$. 
\end{defn}

\begin{exam}\label{ex:downward}
\begin{enumerate}
\item $\frB^{\hs}_{\wt\nu}$ is the smallest downward subspace of $\frB_{\wt\nu}$. 
\item Let $n\ge\j{2\r, \nu}$ and $[E]\in \un\frE$. Let $\cS_{\wt\nu,\le(n,[E])}\subset \cS_{\wt\nu}$ be the $\cD^{\c}$-subset consisting of $\f{u}{J}\in \cS_{\wt\nu,J}$ such that $\ell(u)\le n$ and $\t(\f{u}{J})\le[E]$. The fact that this is a $\cD^{\c}$-subset follows from Lemma \ref{l:ell tau under closure}.

When $n>\j{2\r, \nu}$, $\cS_{\wt\nu,\le(n,[E])}$ is downward. When $n=\j{2\r, \nu}$ and $[E]=[\frA]$, we have $\cS_{\wt\nu,\le(\j{2\r,\nu},[\frA])}=\cS^{\hs}_{\wt\nu}$. We denote $\frB_{\wt\nu,\le(n,[E])}=|\cS_{\wt\nu,\le(n,[E])}|$.

\item By definition, any downward subspace $\frB'\subset \frB_{\wt\nu}$ is a finite union of the form
\begin{equation}\label {downclosed union}
\frB'=\frB^{\hs}_{\wt\nu}\cup\left(\bigcup_{i}\frB_{\wt\nu, \le(n_{i}, [E_{i}])}\right)
\end{equation}
where $n_{i}>\j{2\r, \nu}$.
\end{enumerate}
\end{exam}

By Lemma \ref{l:ess contains crit}, for any downward $\frB'_{\wt\nu}\subset \frB_{\wt\nu}$, we have $\Crit(f)_{\wt\nu}\subset \frB^{\hs}_{\wt\nu}\subset \frB'_{\wt\nu}$.


\begin{prop}\label{p:contractible} Fix $\wt\nu=(\nu,\om)\in \wt\NP$. Let  $\frB'\subset \frB_{\wt\nu}$ be a downward subspace. Then the inclusion $\Crit(f)_{\wt\nu}\incl \frB'$ admits a deformation retract. In particular, both inclusions $\Crit(f)_{\wt\nu}\subset \frB^{\hs}_{\wt\nu}\subset \frB'$ are homotopy equivalences.
\end{prop}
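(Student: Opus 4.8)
The plan is to construct the deformation retract explicitly using the gradient flow of the He--Nie function $f$ restricted to $\frB'$, in the spirit of \cite{heMinimalLengthElements2014}. First I would observe that, by Lemma~\ref{l:gradf}, the vector fields $\nb f_{E,w}$ glue to a continuous, piecewise-linear vector field $\nb f$ on all of $\frB$, and hence restrict to a vector field on the closed downward subspace $\frB'\subset\frB_{\wt\nu}$ once we check that $-\nb f$ is inward-pointing (or tangent) along the boundary facets of $\frB'$. This is where the ``downward'' hypothesis enters: a facet $\frB(\f{u}{J})$ lies in the complement of $\frB'$ only if its length $\ell(u)$ and coarse type $\t(\f{u}{J})$ are ``too large'', and by Lemma~\ref{l:ell tau under closure} these invariants are non-increasing under specialization (passing to the closure), so the gradient flow of $f$ — which I will show decreases $f$ and moves toward the critical locus — cannot push a point out of $\frB'$. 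More precisely, I would argue facet by facet: on the open facet $\frB(\f{u}{J})^\c$, identified via $|\ph_{E,w}|$ with an open piece of $E/\!\sim$, the flow of $-\nb f_{E,w}$ is the linear contraction toward $\mathrm{Crit}(f_{E,w})=E\cap\mathrm{Crit}(f_{\frA,\tilw})$, and a point leaving this facet can only enter a facet of strictly smaller length (a face in the closure), which by downwardness still lies in $\frB'$.

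The second main step is to run the negative gradient flow $\Psi_t:\frB'\to\frB'$ for $t\in[0,\infty)$ and show it converges, as $t\to\infty$, to a retraction onto $\mathrm{Crit}(f)_{\wt\nu}$. Since each $f_{E,w}(x)=\|x-wx\|^2$ is a non-negative quadratic form on the affine space $E$, its negative gradient flow is a contraction onto the affine subspace $\mathrm{Crit}(f_{E,w})$, exponentially convergent; the only subtlety is that a flow line may cross several facets of $\frB'$ before stabilizing, so I would need a finiteness argument — using that $\Tot(\cS')$ is finite (part of the definition of downward) and that $\ell$ strictly drops each time the flow line crosses into a new facet — to conclude that each flow line enters its limiting facet after finitely many transitions and then converges there. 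Reparametrizing $t\mapsto t/(1-t)$ to get a flow on $[0,1]$, setting $\Psi_1$ to be the limit map, and checking $\Psi_1$ is the identity on $\mathrm{Crit}(f)_{\wt\nu}$ (immediate, since critical points are fixed) and $\Psi_0=\mathrm{id}_{\frB'}$, gives the deformation retract onto $\mathrm{Crit}(f)_{\wt\nu}$. Because $\mathrm{Crit}(f)_{\wt\nu}\subset\frB^\hs_{\wt\nu}\subset\frB'$ by Lemma~\ref{l:ess contains crit} and Example~\ref{ex:downward}(1), the same flow restricted to the intermediate space $\frB^\hs_{\wt\nu}$ (which is itself downward, hence flow-invariant) shows $\mathrm{Crit}(f)_{\wt\nu}\hookrightarrow\frB^\hs_{\wt\nu}$ is also a deformation retract, and therefore all three inclusions in the chain are homotopy equivalences.

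The main obstacle I anticipate is the careful verification that the glued vector field $\nb f$ genuinely restricts to $\frB'$ — i.e. that the flow does not escape $\frB'$ through a ``bad'' boundary facet, and more fundamentally that the flow is even well-defined as a continuous semi-flow on the singular space $\frB'$ (which is only a $\Delta$-complex, or a union of poly-simplices, not a manifold). Near a facet of high codimension, several top-dimensional pieces meet, the flow lines from different pieces must match up continuously (which is exactly the content of Lemma~\ref{l:gradf}, so this should be fine), but one must also ensure the flow does not ``stick'' pathologically or fail to be defined for all positive time. I would handle this by the facet-by-facet bookkeeping above: on each closed facet the flow is an honest linear contraction with explicit formula \eqref{nbf}, transitions between facets happen only in the length-decreasing direction and only finitely often, and continuity across shared faces is guaranteed by the compatibility lemma. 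A secondary, more technical point is confirming that $-\nb f$ is tangent or inward along $\partial\frB'$ precisely when $\frB'$ is downward — the content of the argument is that $f$ restricted to a facet attains its infimum on the critical subspace of that facet, and crossing out of a facet can only decrease $\ell$, landing in a face that downwardness forces to remain in $\frB'$; I expect this to reduce to the semicontinuity statements of Lemmas~\ref{l:ell tau under closure} and \ref{l:ess contains crit} together with the explicit quadratic nature of $f_{E,w}$.
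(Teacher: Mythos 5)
Your overall strategy --- run the negative gradient flow of the He--Nie function $f$ on $\frB'$ and let it retract onto $\Crit(f)_{\wt\nu}$ --- is exactly the paper's, but the two steps you flag as your anticipated obstacles are where the real content lies, and your proposed resolutions leave genuine gaps. The claim that ``a point leaving this facet can only enter a facet of strictly smaller length (a face in the closure)'' is false: the flow spends only measure-zero time on lower-dimensional facets and generically crosses from one top-dimensional facet straight into an adjacent one across a wall, whose length can be equal (a cyclic shift gives $\ell(w_{A'})=\ell(w_A)$) or a priori larger. Lemma~\ref{l:ell tau under closure} controls $\ell$ and $\t$ only under passage to a face \emph{in the closure}, and says nothing about the adjacent facet on the other side of a wall. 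The ingredient you are missing is He--Nie's wall inequality \cite[Lemma 2.1]{heMinimalLengthElements2014}: at a generic wall separating adjacent alcoves $A,A'$ with $\ell(w_{A'})=\ell(w_A)+2$, the gradient $\nb f_{\frA,w}$ points strictly toward $A'$. The paper establishes flow-invariance of $\frB_{\wt\nu,\le(n,[E])}$ (and hence of any downward $\frB'$) by combining this with a density argument restricting attention to flow lines that avoid codimension-two intersections of hyperplanes; semicontinuity of $\ell,\t$ under specialization alone cannot deliver this.

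Your finiteness argument for convergence is likewise both incorrect and unnecessary. Since cyclic shifts produce adjacent alcoves of equal length, ``$\ell$ strictly drops each time the flow line crosses into a new facet'' fails, so $|\Tot(\cS')|<\infty$ does not bound the number of crossings. What actually makes convergence go through is that any flow line lies in the image of a single apartment $|\ph_{\frA,w}|:\frA\to\frB_{\wt\nu}$ (because $\nb f$ is tangent to such images by Lemma~\ref{l:gradf}), so it is the image of the linear flow of the quadratic $f_{\frA,w}$. Convergence as $t\to-\infty$ and, crucially, the contracting behavior of the flow in a neighborhood of $\Crit(f)_{\wt\nu}$ --- which is what makes the reparametrized homotopy $H(s,x)=\Phi_{\log(1-s)}(x)$ extend continuously to $s=1$, a continuity point your proposal glosses over --- are precisely \cite[Lemma 2.3]{heMinimalLengthElements2014}, not a combinatorial stabilization argument.
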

\begin{proof} 
Abbreviate $\Crit(f)_{\wt\nu}$ by $C_{\wt\nu}$. By Lemma \ref{l:gradf}, the gradient flow of $f$ is well-defined as a flow  $\Phi_{t}$ on $\frB_{\wt\nu}$, for $t\in \RR$. The deformation retract will be constructed using the flow $\Phi_{t}$.

\begin{claim} The downward subspace $\frB'$ is stable under the  flow $\Phi_{t}$,  for  $t\le 0$ non-positive.
\end{claim}
\begin{proof}[Proof of Claim] Since $\frB'$ is a union of the form \eqref{downclosed union}, it suffices to show that $\frB^{\hs}_{\wt\nu}$ and $\frB_{\wt\nu, \le (n,[E])}$ (where $n>\j{2\r, \nu}$ )are stable under the flow $\{\Phi_{t}\}_{t\ge0}$.  Since $\frB^{\hs}_{\wt\nu}=\frB_{\wt\nu, \le (\j{2\r,\nu},[\frA])}$, we suffices to show that $\frB_{\wt\nu, \le (n,[E])}$ is stable under  the flow $\{\Phi_{t}\}_{t\ge0}$ whenever $n\ge\j{2\r,\nu}$.

First consider the case $E=\frA$. Since every point of $\frB_{\wt\nu}$ lies in the image of  $|\ph_{\frA, w}|: \frA\to \frB$ for some $w\in \tilW$ with $\wt\nu(w)=\wt\nu$, and the flow stays inside the image of $|\ph_{\frA, w}|$, we only need to prove the same statement for $\frA$ with respect to the flow defined by $\nb f_{\frA, w}$. Let $A_{0}\subset \frA$ be the (closed) fundamental alcove. For any alcove $A=yA_{0}$, let $w_{A}=y^{-1}wy$. Let $\frA_{w,\le n}\subset \frA$ be the union of alcoves $A$ such that $\ell(w_{A})\le n$. Then $\frA_{w,\le n}=|\ph_{\frA,w}|^{-1}(\frB_{\wt\nu, \le(n, [\frA])})$. We only need to show that $\frA_{w,\le n}$ is stable under the flow $\Phi_{t}$ of $\nb f_{\frA,w}$ for $t\le 0$. Let $\frZ\subset \frA$ be the union of all $H\cap H'$ where $H$ and $H'$ run over distinct affine root hyperplanes. Let $U
\subset \frA_{w,\le n}$ be the subset of $x\in \frA_{w,\le n}$ such that $\Phi_{t}(x)\notin \frZ$ for all $t\le 0$. Then $U$ is dense in $\frA_{w,\le n}$ (using that $\frZ$ has codimension two in $\frA$, so $\cup_{t\ge0}\Phi_{t}(\frZ)$ has codimension at least one in $\frA$).  Therefore it is enough to show that $\Phi_{t}(x)\in \frA_{w,\le n}$ for $x\in U$ and $t\le0$. Suppose this is not the case, then for some  $x\in U$ and some $t\le0$,  $\Phi_{t}(x)$ lies in an alcove $A$ with $\ell(w_{A})>n$. Let $t_{0}$ be the supremum of such $t$. Then $\Phi_{t_{0}+\e}(x)\in A$ with $\ell(w_{A})\le n$ for small $\ep>0$ and $\Phi_{t_{0}-\e}(x)\in A'$ with $\ell(w_{A'})> n$ for small $\ep>0$. Moreover, $x_{0}:=\Phi_{t_{0}}(x)$ is on the common face $A\cap A'$ of $A$ and $A'$, and does not lie on any other affine root hyperplane. Let $\bn$ be a normal vector of $H$ that points to $A'$. Then 
\begin{equation}\label{normal derivative}
\j{\nb f_{\frA,w}(x_{0}), \bn}\ge0.
\end{equation}
We have $w_{A'}=sw_{A}s$ where $s$ is the simple reflection determined by hyperplane $H=\Span(A\cap A')$. Hence $\ell(w_{A'})=\ell(w_{A})+2$. Applying \cite[Lemma 2.1]{heMinimalLengthElements2014}, we see that $\j{\nb f_{\frA,w}(x_{0}), \bn}>0$ (note that $x_{0}=\Phi_{t_{0}}(x)$ is a regular point of $A\cap A'$ since $x_{0}\notin \frZ$).  This contradicts \eqref{normal derivative}.

Now we consider the case of a general $[E]\in\un\frE$. For any $x\in\frB_{\wt\nu, \le (n,[E])}$ we may find $(E,w)\in \cA$ (with  $E\in [E]$) such that $x$ lies in the image of $|\ph_{E,w}|: E\to \frB$. Hence it is enough to prove the analogous statement for $E$ with respect to the gradient flow of $f_{E,w}$. Let $\wt w\in \tilW$ be a lifting of $w$. Then $|\ph_{E,w}|$ is the composition $E\incl \frA\xr{|\ph_{\frA,\wt w}|}\frB$. In particular,  $E\cap |\ph_{E,w}|^{-1}(\frB_{\wt\nu, \le(n,[E])})=\frA\cap  |\ph_{\frA,\tilw}|^{-1}(\frB_{\wt\nu, \le(n,[E])})$. Moreover, the gradient flow of $f_{E,w}$ is the same as the gradient flow of $f_{\frA,\wt w}$ restricted to $E$ by Lemma \ref{l:gradf}. Therefore the case $[E]=[\frA]$ proved above implies the case of a general $[E]$.
\end{proof}



Now for any $x\in \frB_{\wt\nu}$, $\lim_{t\to-\infty}\Phi_{t}(x)\in C_{\wt\nu}$. Indeed, we may assume $x$ lies in the image of  $|\ph_{\frA, w}|: \frA\to \frB_{\wt\nu}$ for some $w\in \tilW$, and the corresponding statement is \cite[Lemma 2.3]{heMinimalLengthElements2014}. Moreover, the calculation in {\em loc.cit.} shows that the flow is contracting in a neighborhood of $C_{\wt\nu}$ as $t\to -\infty$. This implies that the map $H:[0,1)\times \frB'\to \frB'$ given by $H(s,x)=\Phi_{\log(1-s)}(x)$ can be extended to a continuous function $H: [0,1]\times \frB'\to \frB'$ by letting $H(1,x)=\lim_{t\to -\infty}\Phi_{t}(x)\in C_{\wt\nu}$. Then $H$ gives a deformation retract from $\frB'$ to $C_{\wt\nu}$. This proves the proposition.
\end{proof}

\subsection{Geometric pieces and sheaves on them}\label{ss:geom pieces}

Here we turn to the geometry indexed by the prior combinatorics, in particular sheaves on geometric pieces and the natural functors between them.

\sss{Cyclic reduction}
Consider the following general situation. Let $G$ be an algebraic group, and $P, P'\subset G$ two parabolic subgroups with unipotent radicals $P^{u}$ and $P'^{u}$ and Levi quotients $L$ and $L'$ respectively.  Let $x\in G$. Let $\d: L'\isom L$ be an isomorphism. Let $Q'=\Im(P\cap \Ad(x^{-1})P'\to L)\subset L$, $Q=\d(\Im(P'\cap \Ad(x)P\to L'))\subset L$. Then $Q,Q'$ are parabolic subgroups of $L$. Let $Q^{u}$ and $Q'^{u}$ be the unipotent radicals of $Q$ and $Q'$ and $M$ and $M'$ be their Levi quotients. Then $M'=(P\cap \Ad(x^{-1})P')^{\red}$ (where $(-)^{\red}$ denotes Levi quotient).  We have a canonical isomorphism
\begin{equation*}
\d x: M'=(P\cap \Ad(x^{-1})P')^{\red}\xr{\Ad(x)}(P'\cap \Ad(x)P)^{\red}\cong \Im(P'\cap \Ad(x)P\to L')\xr{\d} M.
\end{equation*}

\begin{lemma}[Cyclic reduction]\label{l:cyc red}
There is a canonical map
\begin{equation}\label{e:cyc red}
\f{P'^{u}\bs (P'xP)/P^{u}}{\Ad_{\d }(L')}
\to \f{Q'^{u}\bs L/Q^{u}}{\Ad_{\d x}(M')} 
\end{equation}
sending $p'xp$ to $\ov p\d(\ov p')$ (where $\ov p$ is the image of $p$ under $P\surj L$; similar for $\ov p'$). This map is a gerbe for the unipotent group $P'^{u}\cap \Ad(x)P^{u}$.

We will refer to the above map~\eqref{e:cyc red}  as a {\em cyclic reduction}.
\end{lemma}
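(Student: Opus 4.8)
The statement is purely about the geometry of the double coset $P'xP$ and its quotient stack, so the natural strategy is to produce the map on the level of groupoids of points and then check the gerbe claim by a local-triviality argument. First I would set up notation carefully: write $\ov{(-)}\colon P\surj L$ and $\ov{(-)}'\colon P'\surj L'$ for the Levi quotients, observe that every element of $P'xP$ can be written as $p'xp$ with $p\in P$, $p'\in P'$, and note that the ambiguity in such a presentation is exactly $(p',p)\sim(p'n', nx^{-1}\cdot(\text{nothing}))$... more precisely two presentations $p'_1xp_1=p'_2xp_2$ differ by an element of $P'\cap\Ad(x)P$ inserted between $x$ and $p$: there is $y\in P'\cap\Ad(x)P$ with $p'_2=p'_1y$ and $p_2=(x^{-1}y^{-1}x)p_1$. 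The assignment $p'xp\mapsto \ov p\,\d(\ov p')$ is then checked to be well-defined modulo the relevant quotient on the target: an element $y\in P'\cap\Ad(x)P$ has image in $L$ (via $P\surj L$ applied to $x^{-1}yx$) landing in $\Im(P\cap\Ad(x^{-1})P'\to L)=Q'$, and its image in $L$ via $\d$ applied to $P'\surj L'$ lands in $\d(\Im(P'\cap\Ad(x)P\to L'))=Q$, and the two changes are intertwined precisely by the isomorphism $\d x\colon M'\to M$ in the way needed for the target quotient stack. This is the routine-but-fiddly bookkeeping step.

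Second, I would promote this from a map of sets of points to a map of stacks: the source $\f{P'^u\bs(P'xP)/P^u}{\Ad_\d(L')}$ is the quotient of $P'xP$ by the group $P'^u\times P^u\rtimes L'$ acting through $P'$ on the left (via $P'^u$ and $\Ad_\d$ of $L'$) and $P^u$ on the right, and similarly for the target; the formula $p'xp\mapsto\ov p\d(\ov p')$ is manifestly algebraic, and the compatibility with the two group actions has just been verified, so descent gives the morphism of stacks. Here I would be slightly careful that the ``$\Ad_\d(L')$'' action on the source really does match the ``$\Ad_{\d x}(M')$'' action on the target under the map — this is exactly the content of the canonical isomorphism $\d x\colon M'\xrightarrow{\sim} M$ displayed just before the lemma, so it is forced.

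Third, the gerbe claim: I would argue that the fiber of the map \eqref{e:cyc red} over a point is $\BB(P'^u\cap\Ad(x)P^u)$. The cleanest way is to exhibit the source as a torsor/gerbe explicitly. Choose a Levi $L$ of $P$ and $L'$ of $P'$; then $P'xP\cong P'^u\times(P'\cap\Ad(x)P)\cdot x\cdot(\text{complement})$... actually the slick statement is: the map $P'xP\to Q'^u\bs L/Q^u$ sending $p'xp\mapsto\ov p\d(\ov p')$ has all fibers isomorphic, and the kernel of the induced map on the relevant quotient groups is $P'^u\cap\Ad(x)P^u$, which is unipotent (an intersection of a unipotent radical with a conjugate of another). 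So I would identify the map on stacks with a base change of $\BB H_1\to\BB H_2$ where $H_2/H_1\cong$ (the thing acting freely-transitively on fibers) and $H_1=P'^u\cap\Ad(x)P^u$; being a gerbe for a unipotent group then follows because such a map is automatically smooth with contractible unipotent fibers. The main obstacle I anticipate is purely organizational: disentangling which intersections of $P,P',\Ad(x)P,\Ad(x^{-1})P'$ give $Q,Q',M,M'$ and verifying that the unipotent part left over after passing to all these Levi quotients is exactly $P'^u\cap\Ad(x)P^u$; this is a standard but error-prone computation with parabolic subgroups and their conjugates, best done by picking compatible Levi decompositions and tracking the root groups (in the Coxeter-combinatorial application, $x$ is a Weyl group element and all of this becomes a statement about root subsets, which is where Lemma~\ref{l:type of coset} and the $B\acute{e}$dard combinatorics feed in). Everything else is formal descent.
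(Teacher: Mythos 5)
Your proposal is correct and takes essentially the same route as the paper: identify the ambiguity in writing an element of $P'xP$ as $p'xp$ with the group $P'\cap\Ad(x)P$, verify well-definedness of $p'xp\mapsto\ov p\,\d(\ov p')$ using the isomorphism $\d x\colon M'\to M$, and show that the residual ambiguity after passing to Levi quotients is exactly $P'^u\cap\Ad(x)P^u$. The paper packages the bookkeeping more cleanly by starting from the $P'\times P$-equivariant isomorphism $P'\times^{P'\cap\Ad(x)P}P\isom P'xP$ (your ``ambiguity'' observation promoted to a twisted product) and then quotienting by $P'^u$, $P^u$, and $\Ad_\d(L')$ simultaneously; the gerbe claim comes down to the identity $(P'\cap\Ad(x)P)/(P'^u\cap\Ad(x)P^u)\cong\d^{-1}(Q)\times_{M'}Q'$, which is the root-group computation you flag as the main fiddly point. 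One small slip in your phrasing of the gerbe step: you want to compare to $\BB K\to\BB(K/N)$ for the normal subgroup $N=P'^u\cap\Ad(x)P^u\subset K=P'\cap\Ad(x)P$ (an $N$-gerbe), not to $\BB H_1\to\BB H_2$ for a subgroup inclusion, which is not a gerbe.
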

\begin{proof}
The left $P'$-translation and right $P$-translation on $P'xP$ gives a $P'\times P$-equivariant isomorphism
\begin{equation}\label{PxP}
P'\times^{P'\cap \Ad(x)P}P\isom P'xP,  \quad  (p',p)\mapsto p'xp
\end{equation}
where the action of $P'\cap \Ad(x)P$ on $P$ is by $\Ad(x^{-1}): P'\cap \Ad(x)P\isom \Ad(x^{-1})P'\cap P$ and left translation.

Now quotienting both sides of \eqref{PxP} by left $P'^{u}$, right $P^{u}$ and $\Ad_{\d}(L')$-actions, we get isomorphisms
\begin{equation}\label{Lq1}
\f{L'\times^{P'\cap \Ad(x)P}L}{\Ad_{\d}(L')}\xleftarrow{\sim}\f{P'^{u}\bs P'\times^{P'\cap \Ad(x)P}P/P^{u}}{\Ad_{\d}(L')}\isom \f{P'^{u}\bs P'xP/P^{u}}{\Ad_{\d}(L')}.
\end{equation}
Here the action on $P'\cap \Ad(x)P$ on $L'$  is via the projection $P'\cap \Ad(x)P\to L'$, whose image is $\d^{-1}(Q)\subset L'$, and right translation on $L'$. Similarly, the action on $P'\cap \Ad(x)P$ on $L$  is via the projection $P'\cap \Ad(x)P\isom \Ad(x^{-1})P'\cap P\to Q'\subset L$ and left translation. The normal subgroup $P'^{u}\cap \Ad(x)P^{u}$ acts trivially on $L'\times L$, and
\begin{equation}\label{PQ}
(P'\cap \Ad(x)P)/(P'^{u}\cap \Ad(x)P^{u})\cong \d^{-1}(Q)\times_{M'}Q'
\end{equation}
where the projection $\d^{-1}(Q)\to M'$ is the composition $\d^{-1}(Q)\xr{\d} Q\surj M\xr{(\d x)^{-1}}M'$.

Using \eqref{PQ} we get a $P'^{u}\cap \Ad(x)P^{u}$-gerbe
\begin{equation*}
\f{L'\times^{P'\cap \Ad(x)P}L}{\Ad_{\d}(L')}\to \f{L'\times^{\d^{-1}(Q)\times_{M'}Q'}L}{\Ad_{\d}(L')}=\f{(L'/\d^{-1}(Q^{u}))\times^{M'}(Q'^{u}\bs L)}{\Ad_{\d}(L')}.
\end{equation*}
Via the map $(\ell',\ell)\mapsto \ell\d(\ell')$ (for $\ell\in L, \ell'\in L'$), we have an isomorphism
\begin{equation*}
\f{(L'/\d^{-1}(Q^{u}))\times^{M'}(Q'^{u}\bs L)}{\Ad_{\d}(L')}\isom\f{Q'^{u}\bs L/Q^{u}}{\Ad_{\d x}(M')}.
\end{equation*}
Composing all these isomorphisms we get a $P'^{u}\cap \Ad(x)P^{u}$-gerbe
\begin{equation*}
\f{P'^{u}\bs (P'xP)/P^{u}}{\Ad_{\d }(L')}
\to\f{Q'^{u}\bs L/Q^{u}}{\Ad_{\d x}(M')}.
\end{equation*}
\end{proof}

Lemma \ref{l:cyc red} gives an equivalence of categories by pullback
\begin{equation}\label{Sh cyc red}
\Sh(\f{Q'^{u}\bs L/Q^{u}}{\Ad_{\d x}(M')})\simeq \Sh(\f{P'^{u}\bs (P'xP)/P^{u}}{\Ad_{\d }(L')}).
\end{equation}

\sss{Nilpotent sheaves under cyclic reduction}\label{sss:nilp cyc red}
Next we show that sheaves with nilpotent singular support correspond to each other under the above equivalence. To simplify the statements, we introduce the following terminology: For a group $H$ acting on a smooth variety $X$, and $\cF\in \Sh(X)$, we say $\cF$ is $H$-nilpotent if $\mu_{H}(SS(\cF))$ lies in the nilpotent cone of $\frh^{*}=(\Lie H)^{*}$. When there is ambiguity as to how $H$ acts on $X$, we will specify $\cF$ is $H$-nilpotent for which $H$-action.

Let
\begin{equation*}
\Sh_{\cN}(\f{P'^{u}\bs (P'xP)/P^{u}}{\Ad_{\d }(L')})\subset \Sh(\f{P'^{u}\bs (P'xP)/P^{u}}{\Ad_{\d }(L')})
\end{equation*}
be the full subcategory consisting of objects that are $L'$-nilpotent for left translation by $L'$ (equivalently, $L$-nilpotent for the right translation). Similarly define the full subcategory
\begin{equation*}
\Sh_{\cN}(\f{Q'^{u}\bs L/Q^{u}}{\Ad_{\d x}(M')})\subset \Sh(\f{Q'^{u}\bs L/Q^{u}}{\Ad_{\d x}(M')})
\end{equation*}
using either $M'$-nilpotence for the left translation or $M$-nilpotenve for the right translation.

\begin{lemma}\label{l:ShN cyc red} Under the equivalence \eqref{Sh cyc red}, the full subcategories $\Sh_{\cN}(\f{P'^{u}\bs (P'xP)/P^{u}}{\Ad_{\d }(L')})$ and $\Sh_{\cN}(\f{Q'^{u}\bs L/Q^{u}}{\Ad_{\d x}(M')})$ correspond. In particular, pullback via the cyclic reduction map induces an equivalence
\begin{equation*}
\Sh_{\cN}(\f{Q'^{u}\bs L/Q^{u}}{\Ad_{\d x}(M')})\simeq \Sh_{\cN}(\f{P'^{u}\bs (P'xP)/P^{u}}{\Ad_{\d }(L')}).
\end{equation*}
\end{lemma}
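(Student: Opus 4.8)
The statement follows by combining Lemma~\ref{l:cyc red} with a microlocal analysis of the cyclic reduction gerbe, and it is essentially a bookkeeping exercise once the right microlocal estimate is in place. The plan is to reduce the equivalence of categories $\Sh_\cN \simeq \Sh_\cN$ to two assertions: (i) the pullback along the cyclic reduction map in~\eqref{Sh cyc red} sends objects with nilpotent singular support to objects with nilpotent singular support, and (ii) the quasi-inverse (which exists because the cyclic reduction is a gerbe for a unipotent group, hence pullback is an equivalence of the full sheaf categories) also preserves the nilpotent singular support condition. Because the cyclic reduction factors, in the proof of Lemma~\ref{l:cyc red}, as a chain of $P'^u \cap \Ad(x)P^u$-gerbes and honest isomorphisms, it suffices to check that along each elementary step in that chain the moment map images of singular supports are controlled. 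So the first step I would take is to unwind the chain of isomorphisms~\eqref{Lq1}, \eqref{PQ} used to build the cyclic reduction, and identify at each stage the relevant group action whose nilpotence condition must be tracked.

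The key point is a standard fact of the following shape: if $X$ is a smooth variety with an action of a reductive group $L$, and $P^u \subset L$ is the unipotent radical of a parabolic $P \subset L$ with Levi quotient $M$, then for the induced $M$-action on $P^u\bs X$ (when it is free enough), a sheaf on $P^u\bs X$ is $M$-nilpotent if and only if its pullback to $X$ is $L$-nilpotent; this is because the moment map for $L$ restricted to the subset coming from the $P^u$-quotient factors through (a shift of) the moment map for $M$, together with the fact that a parabolic's Lie algebra meets the nilcone of $\frl^*$ in the preimage of the nilcone of $\frm^*$ under $\frp^* \to \frm^*$. I would isolate this as a preliminary lemma (it is a variant of the kind of estimate recorded in the discussion around Lemma~\ref{l:hc ss} and Lemma~\ref{l:unit ss}, and is the same mechanism as in \cite{mirkovicCharacteristicVarietiesCharacter1988} and \cite[Corollary 6.4.4]{kashiwaraSheavesManifoldsVolume1990}), then apply it to each of the two ``legs'' of the cyclic reduction: the leg $\f{L'\times^{P'\cap\Ad(x)P}L}{\Ad_\d(L')} \to \f{Q'^u\bs L/Q^u}{\Ad_{\d x}(M')}$ on the $L$-side (right translation, parabolic $Q$, Levi $M$) and symmetrically on the $L'$-side (left translation, parabolic $\d^{-1}(Q')$, Levi $M'$). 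Since $P'^u \cap \Ad(x)P^u$ is unipotent, the intermediate gerbe step does not change singular support conditions at all — pullback along a unipotent gerbe is an equivalence preserving $\ssupp$ up to the obvious identification — so only the reductive quotient steps require the preliminary lemma.

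The main obstacle I anticipate is purely notational rather than conceptual: correctly matching up which moment map (for $L$, for $M$, for $M'$, for $L'$) controls which factor, given that the cyclic reduction simultaneously does a left reduction by $P'^u$, a right reduction by $P^u$, and changes the conjugation-equivariance group from $\Ad_\d(L')$ to $\Ad_{\d x}(M')$ via the twisting isomorphism $\d x$. In particular one must check that the twisted conjugation equivariance condition is compatible with the singular support condition — but equivariance for a (possibly twisted) group action imposes no extra constraint beyond what the quotient already encodes, and the statement ``$L'$-nilpotent for left translation $\iff$ $L$-nilpotent for right translation'' on $\f{P'^u\bs(P'xP)/P^u}{\Ad_\d(L')}$ is a consequence of the fact that on the locus $P'xP$ the left $L'$ and right $L$ moment maps differ only by the (invertible, up to the nilcone) adjoint action of $x$ composed with $\d$. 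Once that identification is made precise — which I would do by pulling everything back to $P'\times^{P'\cap\Ad(x)P}P$ where both actions are transparent — the two inclusions $\ssupp(\text{pullback}) \subset \cN$ and $\ssupp(\text{pushforward along the gerbe}) \subset \cN$ follow, and hence the claimed equivalence of full subcategories. I would close by remarking that this is exactly the input needed to propagate the nilpotent singular support condition across the stratification of parahoric character sheaves studied in the next subsection.
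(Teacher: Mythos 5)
Your proposal is correct and takes essentially the same approach as the paper: both pull everything back to the group-level covers (the paper uses $P'\times P$, you propose $P'\times^{P'\cap\Ad(x)P}P$) where the $L$, $L'$, $M$, $M'$ actions are transparent, track moment-map images of singular supports through the cyclic reduction, and conclude via the observation that an element of $\frn_{Q'}^{\perp}\subset\frl^{*}$ is nilpotent in $\frl^{*}$ if and only if its projection to $\frakm'^{*}$ is nilpotent. The paper organizes the verification as a four-term chain of equivalent conditions on the pullback $\wt\cF$ rather than two symmetric ``legs'', but the underlying microlocal lemma — your preliminary lemma about parabolics and nilcones — is exactly the one the paper uses.
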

\begin{proof}
Choose a section to $P\surj L$ and realize $L$ as a Levi subgroup of $P$. Similarly realize $L'$ as a subgroup of $P'$, $M$ as a subgroup of $Q'$ and $M$ as a subgroup of $Q$. 

We have a commutative diagram
\begin{equation*}
\xymatrix{ P'\times P \ar[d]^{\pi_{x}}\ar[r]^{p'\times p}& L'\times L\ar[r]^{m_{\d}} & L\ar[d]^{\pi}\\
\f{P'^{u}\bs (P'xP)/P^{u}}{\Ad_{\d }(L')} \ar[rr]^{c} &&\f{Q'^{u}\bs L/Q^{u}}{\Ad_{\d x}(M')})}
\end{equation*}
where the bottom arrow is the cyclic reduction map \eqref{e:cyc red}, $p,p'$ are the projections and $m_{\d}(\ell',\ell)=\ell\d(\ell')$, $\pi_{x}(g',g)=g'xg$, and $\pi$ is the quotient map.

Let $\cF\in \Sh(\f{Q'^{u}\bs L/Q^{u}}{\Ad_{\d x}(M')})$, and $\wt\cF=\pi^{*}\cF$ be its pullback to $L$. Let $\cK=c^{*}\cF\in\Sh(\f{P'^{u}\bs P'xP/P^{u}}{\Ad_{\d}(L')})$,
 and $\wt\cK=\pi_{x}^{*}\cK$ be its pullback to $P'\times P$. By definition, $\cF\in \Sh_{\cN}(\f{Q'^{u}\bs L/Q^{u}}{\Ad_{\d x}(M')})$ if and only if $\wt\cF$ is $M'$-nilpotent for the left translation of $M'$ on $L$; $\cK\in \Sh_{\cN}(\f{P'^{u}\bs P'xP/P^{u}}{\Ad_{\d}(L')})$ if and only if $\wt\cK$ is $L$-nilpotent for the right translation actions on the $P$-factor of $P'\times P$. Therefore, we reduce to prove that the following are equivalent:
 \begin{enumerate}
\item $\wt\cF$ is $M'$-nilpotent for the left translation on $L$;
\item $\wt\cK=(p'\times p)^{*}m_{\d}^{*}\wt\cF$  is $L$-nilpotent  for the right translation on the $P$-factor.
\end{enumerate}
Since $p'\times p$ is smooth and equivariant for the right translation of $L$, (2) is equivalent to
\begin{enumerate}
\item[(3)] $m_{\d}^{*}\wt\cF$ is $L$-nilpotent for the right translation on the $L$-factor.
\end{enumerate}
It is easy to see that on $L'\times L$, $L$-nilpotence with respect to the left translation is equivalent to $L$-nilpotence with respect to the right translation. Also $m_{\d}$ is equivariant with respect to the left translation action of $L$ (but not the the right translation), therefore $(3)$ is equivalent to
\begin{enumerate}
\item[(4)] $\wt\cF$ is $L$-nilpotent for the left translation.
\end{enumerate}
It remains to prove (1)$\iff$(4).  Let $\mu_{L}: T^{*}L\to \frl^{*}$ be the moment map for the left transaltion of $L$. Recall $\wt \cF$ is $Q'^{u}$-equivariant for the left translation action, so $\mu_{L}(SS(\wt\cF))\in \frn_{Q'}^{\bot}$ (here $\frn_{Q'}=\Lie Q'^{u}$). Then (1) means the image of $\mu_{L}(SS(\wt\cF))$ under the projection $\frn_{Q'}^{\bot}\to \frakm'^{*}$ is nilpotent, which is equivalent to saying that $\mu_{L}(SS(\wt\cF))$ lies in the nilpotent cone of $\frn_{Q'}^{\bot}$, which is (4).
\end{proof}

\sss{Geometric pieces}\label{sss:geom piece}
Recall from Section~\ref{sss:geom trace} that
\begin{equation*}
\cY_{J}=\frac{\cP^{u}_{J}\bs \cG/\cP_{J}^{u}}{L_{J}}
\end{equation*}
regarded as an ind-stack over $k$. For $J\sft I^{a}$, Lusztig \cite[\S3]{lusztigParabolicCharacterSheaves} defined a stratification of $\cY_{J}$ indexed by $\cS_{J}$: for $\f{u}{J}\in \cS_{J}$, $\cY(\f{u}{J})$ is the locally closed substack of $\cY_{J}$ defined as the image of the projection 
\begin{equation*}
\cY(\f{u}{J})=\Im(\cI u\cI \subset \cG\to \cY_{J}).
\end{equation*}
We shall call $\cY(\f{u}{J})$ {\em geometric $J$-pieces}. For each $w\in \tilW$ and any lifting $\dot w\in N_{\cG}(T)$, $\dot w\in \cY(\frac{u}{J})$ if and only if $\s_J(\dot w)=\f{u}{J}$. 


Below we recall an inductive construction of $\cY(\f{u}{J})$ that leads to a description of it in terms of a twisted adjoint quotient, also due to Lusztig.

Let $(J_{n}, J'_{n}, u_{n})\in \cS_{J}$ corresponding to $u\in {}^{J}\tilW$. We describe the geometric piece $\cY(\f{u}{J})$ using cyclic reduction.

Choose a lifting $\dot u_{n}$ for each $u_{n}$ in $N_{L_{J_{n-1}}}(T)$ (here $L_{J_{-1}}$ is understood to be $\cG$). For $n\ge0$, let $P'_{n+1}\subset L_{J_{n}}$ be the standard parabolic subgroup whose Levi is $L_{J'_{n+1}}$; let  $P_{n+1}\subset L_{J_{n}}$ be the standard parabolic subgroup whose Levi is $L_{J_{n+1}}$. For $n\ge0$ define
\begin{equation*}
\cZ_{n}(u_{0},\cdots, u_{n})=\f{P'^{u}_{n+1}\bs L_{J_{n}}/P^{u}_{n+1}}{\Ad_{\dot u_{0}\cdots \dot u_{n}}(L_{J'_{n+1}})}.
\end{equation*}
If we define $L_{J_{-1}}=\cG$, $P'_{0}=P_{0}=\cP_{J}$, then we can define $\cZ_{-1}$ using the above formula, and have
$$\cZ_{-1}=\f{\cP^{u}_{J}\bs \cG/\cP_{J}^{u}}{L_{J}}=\cY_{J}.$$
We have maps
\begin{equation*}
\xymatrix{\cZ_{n}(u_{0},\cdots, u_{n}) & \cZ_{n}(u_{0},\cdots, u_{n+1}):=\f{P'^{u}_{n}\bs P'_{n}u_{n+1}P_{n}/P^{u}_{n}}{\Ad_{\dot u_{0}\cdots \dot u_{n}}(L_{J'_{n+1}})}\ar[r]\ar@{_{(}->}[l] & \cZ_{n+1}(u_{0},\cdots, u_{n+1})}
\end{equation*}
where the second map is the cyclic reduction in Lemma \ref{l:cyc red}. The geometric piece $\cY(\f{u}{J})$ is defined to be the fiber product for $n\gg0$
\begin{equation}\label{defn piece}
\cY(\f{u}{J}):=\cZ_{-1}(u_{0})\times_{\cZ_{0}(u_{0})}\cZ_{0}(u_{0}, u_{1})\times_{\cZ_{1}(u_{0},u_{1})}\cdots \times_{\cZ_{n-1}(u_{0},\cdots, u_{n-1})}\cZ_{n-1}(u_{0},\cdots, u_{n}).
\end{equation}

The ``shape'' of the geometric piece $\cY(\f{u}{J})$ is described by Lusztig:
\begin{prop}[Lusztig {\cite[3.14]{lusztigParabolicCharacterSheaves}}] Let $(J_{n},J'_{n}, u_{n})_{n\ge0}\in \cS_{J}$. Choose liftings $\dot u_{n}\in N_{L_{J_{n-1}}}(T)$ for $n\ge0$ such that  $\dot u_{n}=1$ whenever $u_{n}=1$ (as usual, we understand $L_{J_{-1}}=\cG$).  Let $\dot u=\dot u_{0}\cdots \dot u_{n}$ for $n\gg0$.  Let $K=I(J,u)\subset J$. Then there is a canonical map
\begin{equation*}
\pi_{J, u}:\cY(\f{u}{J})\to \f{L_{K}}{\Ad_{\dot u}(L_{K})}
\end{equation*}
which is an iterated gerbe for (pro-)unipotent groups. 

One can eliminate the choice of $\dot u$ by writing the right side as $\f{uL_{K}}{L_{K}}$.
\end{prop}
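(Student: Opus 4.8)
The plan is to use the inductive fiber-product presentation~\eqref{defn piece} as the working definition of the geometric piece $\cY(\f{u}{J})$, to produce $\pi_{J,u}$ as the projection onto the last factor of that fiber product, and to show it is an iterated gerbe for (pro-)unipotent groups by traversing the zigzag underlying~\eqref{defn piece} one step at a time. Recall that~\eqref{defn piece} is the limit of the diagram
\[
\cZ_{-1}(u_{0})\xrightarrow{c_{0}}\cZ_{0}(u_{0})\xleftarrow{\iota_{1}}\cZ_{0}(u_{0},u_{1})\xrightarrow{c_{1}}\cZ_{1}(u_{0},u_{1})\xleftarrow{\iota_{2}}\cdots
\]
in which each $\iota_{j+1}$ is a locally closed embedding and each $c_{j}$ is a cyclic reduction in the sense of Lemma~\ref{l:cyc red}.

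Two facts drive the argument. First, by Lemma~\ref{l:cyc red} the cyclic reduction $c_{j}\colon \cZ_{j-1}(u_{0},\dots,u_{j})\to\cZ_{j}(u_{0},\dots,u_{j})$ --- which is the cyclic reduction inside $L_{J_{j-1}}$ along the parabolics $P'_{j},P_{j}$ and a lift of $u_{j}$ --- is a gerbe for the unipotent group $P'^{u}_{j}\cap\Ad(\dot u_{j})P^{u}_{j}$; this group is pro-unipotent for $j=0$, where $P'_{0}=P_{0}=\cP_{J}$ has pro-unipotent radical $\cP^{u}_{J}$, and a genuine unipotent algebraic group for $j\ge1$. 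Second, the base change of a locally closed embedding along any gerbe is again a gerbe for the same group, being the restriction of the gerbe to a locally closed substack. Combining these and inducting on $m$, the truncation of~\eqref{defn piece} ending at the factor $\cZ_{m-1}(u_{0},\dots,u_{m})$ is an iterated (pro-)unipotent gerbe over that factor: given an iterated gerbe over $\cZ_{j}(u_{0},\dots,u_{j+1})$, compose with $c_{j+1}$ to obtain an iterated gerbe over $\cZ_{j+1}(u_{0},\dots,u_{j+1})$, then base-change $\iota_{j+2}$ to obtain one over $\cZ_{j+1}(u_{0},\dots,u_{j+2})$.

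Next I would run the combinatorial stabilization from Lemma~\ref{l:type of coset}: the sequences $J_{n}$ and $J'_{n}$ stabilize to $K=I(J,u)$ and $u_{n}=1$ for $n\gg0$, so by the normalization $\dot u_{n}=1$ whenever $u_{n}=1$ the products $\dot u_{0}\cdots\dot u_{n}$ stabilize to $\dot u$. For $n\ge N$ large, $J_{n}=J_{n+1}=J'_{n+1}=K$ forces $P'_{n+1}=P_{n+1}=L_{K}$ with trivial unipotent radical, so $\cZ_{n-1}(u_{0},\dots,u_{n})=\f{L_{K}}{\Ad_{\dot u}(L_{K})}$ and all further embeddings and cyclic reductions are identities. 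Hence the tail of~\eqref{defn piece} contributes nothing, $\cY(\f{u}{J})$ is the truncation through step $N$, and the last-factor projection $\pi_{J,u}\colon\cY(\f{u}{J})\to\cZ_{N-1}(u_{0},\dots,u_{N})=\f{L_{K}}{\Ad_{\dot u}(L_{K})}$ is an iterated (pro-)unipotent gerbe by the previous step. For the final reformulation I would transport the target along the canonical bijection $\f{L_{K}}{\Ad_{\dot u}(L_{K})}\isom\f{\dot u L_{K}}{L_{K}}$, $g\mapsto\dot u g$, which is available since $\Ad(\dot u)K=K$ gives $\dot u L_{K}\dot u^{-1}=L_{K}$, and is independent of the chosen lift because $\dot u L_{K}=uL_{K}$. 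Canonicity of $\pi_{J,u}$ itself I would get by checking that replacing each $\dot u_{n}$ by $t_{n}\dot u_{n}$ with $t_{n}\in T$ alters each $\cZ_{j}$ by an explicit translation isomorphism compatible with all $c_{j}$ and $\iota_{j}$ and inducing the identity on $\f{uL_{K}}{L_{K}}$.

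The main obstacle I anticipate is not the gerbe bookkeeping, which is formal, but confirming that Lemma~\ref{l:cyc red} applies with the correct input at every stage: one must check that the parabolics $Q',Q\subset L_{J_{j}}$ produced by $c_{j}$ are precisely $P'_{j+1},P_{j+1}$ and that the reduced Levi groups $M',M$ are $L_{J'_{j+1}},L_{J_{j+1}}$, using the recursions $J_{j+1}=J_{j}\cap\Ad(u_{0}\cdots u_{j})J_{j}$, $J'_{j+1}=J_{j}\cap\Ad(u_{0}\cdots u_{j})^{-1}J_{j}$ and $u_{j+1}\in{}^{J'_{j+1}}(W_{J_{j}})^{J_{j+1}}$. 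This is exactly Lusztig's verification that the construction~\eqref{defn piece} is well posed~\cite[\S3]{lusztigParabolicCharacterSheaves}; since it is a statement purely about the group schemes $\cP_{J},L_{J}$ and their parabolics, it transfers to the present universal-monodromic (Betti) setting unchanged, and I would quote it while reproducing the one key compatibility linking the output of $c_{j}$ to the input of the next stage.
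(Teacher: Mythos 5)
Your proposal is correct and takes essentially the same route as the paper's one-sentence proof, which projects onto a late factor of the fiber-product presentation~\eqref{defn piece} and composes with one more cyclic reduction; you have simply expanded the bookkeeping (the iterated-gerbe verification via Lemma~\ref{l:cyc red} and base change, and the stabilization $J_n=J'_n=K$, $u_n=1$ that trivializes the tail) that the paper leaves implicit.
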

\begin{proof}
Let $n\gg0$ such that $J_{n}=K$. In \eqref{defn piece},  projection to the last factor followed by cyclic reduction
\begin{equation*}
\cY(\f{u}{J})\to \cZ_{n-1}(u_{0},\cdots, u_{n})\to \cZ_{n}= \f{L_{K}}{\Ad_{\dot u}(L_{K})}
\end{equation*}
gives the desired map, which is an iterated gerbe for (pro-)unipotent groups. 
\end{proof}


\begin{cor}\label{c:sh geom piece} For each piece $\f{u}{J}$ with $K=I(J,u)$, pullback along $\pi_{J,u}$ induces an equivalence of categories
\begin{equation}\label{sh geom piece}
\pi^{*}_{J,u}: \Sh(\f{uL_{K}}{L_{K}})\simeq \Sh(\f{L_{K}}{\Ad_{\dot u}(L_{K})})\isom\Sh(\cY(\f{u}{J})).
\end{equation}

\end{cor}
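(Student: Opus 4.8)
The plan is to deduce this immediately from the Proposition just stated, which exhibits $\pi_{J,u}\colon \cY(\f{u}{J})\to \f{L_K}{\Ad_{\dot u}(L_K)}$ as an iterated gerbe for (pro-)unipotent groups, together with the standard fact that pullback along a (pro-)unipotent gerbe is an equivalence on sheaf categories. The left-hand equivalence $\Sh(\f{uL_K}{L_K})\simeq\Sh(\f{L_K}{\Ad_{\dot u}(L_K)})$ requires no argument: for any lift $\dot u\in N_{\cG}(T)$ the assignment $\dot u\ell\mapsto\ell$ identifies the conjugation quotient of the coset $\dot u L_K\subset\cG$ with the $\Ad(\dot u)$-twisted adjoint quotient of $L_K$, and the symbol $\f{uL_K}{L_K}$ is precisely the notation recording this stack independently of the choice of $\dot u$. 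So the content of the corollary is the second equivalence, induced by $\pi^{*}_{J,u}$.

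Next I would recall the general principle. Let $V$ be a (pro-)unipotent algebraic group over our characteristic-zero field $k$, and let $f\colon X\to Y$ be a $V$-gerbe of stacks (in particular any $V$-torsor, or the projection $X\to X/V$). Then $f^{*}\colon\Sh(Y)\to\Sh(X)$ is an equivalence. For $V$ unipotent this holds because in the analytic topology $V$ is isomorphic to an affine space, hence contractible, so the constant-sheaf functor $\Sh(\pt)\to\Sh(\BB V)$ is an equivalence; faithfully flat descent along $f$ then upgrades this to the case of an arbitrary $V$-gerbe. For $V$ pro-unipotent, write $V=\lim_i V_i$ with each $V_i$ unipotent, factor $f$ as a cofiltered limit of $V_i$-gerbes $f_i\colon X_i\to Y$, and observe that each $f_i^{*}$ is a fully faithful equivalence, so that the transition functors in $\Sh(X)\simeq\colim_i\Sh(X_i)$ are equivalences and hence $f^{*}$ is an equivalence; alternatively one may quote the descent formalism of \cite{gaitsgoryStudyDerivedAlgebraic2019}.

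Finally I would apply this with $f=\pi_{J,u}$. By the preceding Proposition, $\pi_{J,u}$ is an iterated gerbe for (pro-)unipotent groups; unwinding the fiber-product presentation \eqref{defn piece}, only finitely many $u_n$ are nontrivial, so for $n\gg0$ the cyclic reductions are isomorphisms and $\pi_{J,u}$ is the finite composition of the cyclic-reduction maps of Lemma~\ref{l:cyc red}, each banded by a unipotent group $P'^{u}_{n+1}\cap\Ad(\dot u_{n+1})P^{u}_{n+1}$ (restricted along the locally closed immersions $\cZ_n(u_0,\dots,u_{n+1})\incl\cZ_n(u_0,\dots,u_n)$ appearing inside the fiber product), together with the tower of gerbes for the pro-unipotent radicals $\cP^{u}_J,\,P^{u}_n,\,P'^{u}_n$ occurring in the construction. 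Each layer induces an equivalence on $\Sh$ by the previous paragraph, and composing them yields $\pi^{*}_{J,u}\colon\Sh(\f{L_K}{\Ad_{\dot u}(L_K)})\isom\Sh(\cY(\f{u}{J}))$, as claimed.

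I expect the only mildly technical point to be the bookkeeping around the pro-unipotent (and ind-algebraic) structures: making the cofiltered-limit argument in the second paragraph precise, and checking that although $\cY(\f{u}{J})$ sits inside the ind-stack $\cY_J$, it is genuinely an iterated pro-unipotent gerbe over the finite-type stack $\f{L_K}{\Ad_{\dot u}(L_K)}$, so that the sheaf-theoretic assertions literally apply to it. Everything else is formal, and in fact the corollary is essentially a restatement of the Proposition at the level of sheaf categories.
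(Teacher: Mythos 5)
Your proposal is correct, and it takes exactly the route the paper implicitly intends: the corollary carries no proof in the paper precisely because it is regarded as an immediate consequence of the preceding proposition exhibiting $\pi_{J,u}$ as an iterated (pro-)unipotent gerbe, together with the standard fact that $\Sh(\BB V)\simeq\Sh(\pt)$ for $V$ (pro-)unipotent over $\CC$ in the analytic topology. Your supporting remarks — unwinding the notation $\f{uL_K}{L_K}$, observing that only finitely many $u_n$ are nontrivial so $\pi_{J,u}$ factors as a finite composite of cyclic-reduction gerbes, and handling the pro-unipotent layers by the cofiltered-limit/descent argument — are exactly the bookkeeping that a reader would supply, and are sound.
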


%

\sss{Partial order}
Let $\ge$ be the Bruhat partial order on $\tilW$. In \cite[3.8, 3.9, 3.13]{heGstablePiecesWonderful2007}, a partial order $\ge_{J}$ on ${}^{J}\tilW$ is defined as follows. For $u,u'\in {}^{J}\tilW$, define $u\ge_{J}u'$ if there is a $u''$ in the same $W_{J}$-conjugacy class of $u'$ such that $u\ge u''$. 

Let $w,w'\in \tilW$ be in the same $W_{J}$-conjugacy class. Recall from {\em loc. cit.} that $w'$ is said to be obtained from $w$ by a $J$-cyclic shift if $\ell(w')=\ell(w)$ and $w'=sws$ for some simple reflection $s\in J$ such that either $\ell(sw)=\ell(w)-1$ or $\ell(ws)=\ell(w)-1$. Denote $w\sim_{J}w'$ if $w'$ can be obtained from $w$ by a sequence of $J$-cyclic shifts.   It is shown in {\em loc. cit.} that $u\ge_{J}u'$ if and only if there exists $u''\sim_{J}u'$ such that $u\ge u''$.

\begin{theorem}[He, {\em loc. cit.} Theorem 4.5]\label{th:He closure} For $u\in {}^{J}\tilW$, the closure of $\cY(\f{u}{J})$ is the union of $\cY(\f{u'}{J})$ for $u\ge_{J}u'$.
\end{theorem}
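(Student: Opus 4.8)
\textbf{Proof plan for Theorem~\ref{th:He closure} (He's closure formula for geometric pieces).}

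The statement to be proved asserts that $\overline{\cY(\f{u}{J})} = \bigsqcup_{u \ge_J u'} \cY(\f{u'}{J})$ inside $\cY_J$. Since the entire content is attributed to He, the role of this passage in the paper is to transcribe (and perhaps lightly re-derive) the result in the present universal-monodromic/Betti setting, so the plan is to organize the argument around the inductive construction of $\cY(\f{u}{J})$ in \eqref{defn piece} together with the combinatorial order $\ge_J$ recalled just before the statement.

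First I would reduce to the two basic moves that generate the closure relation. The key combinatorial input, already recalled from \cite{heGstablePiecesWonderful2007}, is that $u \ge_J u'$ holds if and only if there is $u'' \sim_J u'$ with $u \ge u''$ in the Bruhat order, and moreover that passing from $u$ to an element obtained by a single $J$-cyclic shift, or to an element directly below in Bruhat order (in the appropriate $W_J$-twisted-conjugacy sense), are the elementary steps. Correspondingly, on the geometric side there are two facts to establish: (a) \emph{$J$-cyclic shift invariance}: if $w \sim_J w'$ then the geometric pieces $\cY(\f{u}{J})$ attached to the two (equal) combinatorial pieces agree --- but since $\s_J(w) = \s_J(w')$ when $w \sim_J w'$ by Lemma~\ref{l:EJw}, this is automatic at the level of pieces; the substance is that cyclic shift corresponds to an explicit isomorphism of the defining stacks $\cZ_n$, which is exactly a cyclic reduction in the sense of Lemma~\ref{l:cyc red}; and (b) \emph{the closure of a single Bruhat-cover}: using the Iwahori stratification $\cG = \bigsqcup_{w} \cI w \cI$ and the classical fact $\overline{\cI w \cI} = \bigsqcup_{w' \le w} \cI w' \cI$, one pushes forward along $\cG \to \cY_J$ and tracks which pieces $\cY(\f{u'}{J})$ meet the image of $\overline{\cI u \cI}$. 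Because $\cY(\f{u}{J})$ is by definition the image of $\cI u \cI$ in $\cY_J$, and taking images commutes with taking closures for the (ind-)finite-type maps in play, the closure $\overline{\cY(\f{u}{J})}$ is the image of $\overline{\cI u \cI} = \bigsqcup_{v \le u} \cI v \cI$; one then has to identify, for $v \le u$, which combinatorial piece $\s_J(v)$ lands in, and show the set of such $\s_J(v)$ is exactly $\{\f{u'}{J} : u \ge_J u'\}$.

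The main step --- and the one I expect to be the genuine obstacle --- is this last identification: that the image in $\cS_J$ of $\{v \in \tilW : v \le u\}$ under $\s_J$ coincides with the down-set $\{u' \in {}^J\tilW : u \ge_J u'\}$. One inclusion ($\subseteq$) follows from the characterization of $\ge_J$ via Bruhat order plus $J$-cyclic shifts, but the reverse inclusion requires the non-trivial input that every $u'$ with $u \ge_J u'$ can be reached by a chain in which each link either decreases Bruhat length by a controlled amount or is a $J$-cyclic shift, \emph{and} that such chains are realized geometrically by incidences of Iwahori cells inside $\cY_J$. This is precisely where I would invoke He's analysis (Theorem 4.5 of \emph{loc. cit.} and the surrounding lemmas on minimal length elements and cyclic shifts), together with the compatibility of the cyclic-reduction isomorphisms \eqref{e:cyc red} with closures. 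One subtlety specific to the present setup is that $\cY_J$ is an ind-stack and the pieces are only locally closed; so I would be careful to work one $\cP_J$-double coset at a time (using the colimit presentation $\cY_J = \colim_w$ over longest $W_J$-double-coset representatives), on each of which the relevant maps are of finite type and the classical closure arguments apply verbatim. Finally, having matched the index sets, the disjointness of the pieces (Lusztig, \cite[\S3]{lusztigParabolicCharacterSheaves}) gives that $\overline{\cY(\f{u}{J})}$ is the disjoint union claimed, completing the proof.
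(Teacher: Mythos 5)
The paper does not prove this statement; it is cited verbatim from He \cite{heGstablePiecesWonderful2007} and used as a black box, so there is no internal argument to compare yours against. That said, your proposed sketch has a concrete gap that is worth flagging. The claim that ``taking images commutes with taking closures for the (ind-)finite-type maps in play'' is not justified, and it is not obviously true: the projection $\overline{\cI u\cI} \to \cY_J$ factors through a quotient by $L_J$-conjugation (and the $\cP_J^u$-quotient), neither of which is proper, and the image of a closed set under a non-proper map need not be closed. Continuity gives you $\Im(\overline{\cI u\cI}) \subseteq \overline{\cY(\f{u}{J})}$ for free, and together with your analysis of which pieces appear in the image this settles one inclusion, namely that every $\cY(\f{u'}{J})$ with $u\ge_J u'$ lies in the closure. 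But the reverse inclusion --- that no additional pieces appear, equivalently that $\bigcup_{u\ge_J u'}\cY(\f{u'}{J})$ is closed --- is exactly what requires a genuine argument, and your sketch smuggles it in by assuming the map is closed. This is one of the two substantive points of He's proof, not a routine reduction.

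The second substantive point you correctly identify as the combinatorial matching $\{\s_J(v): v\le u\} = \{\f{u'}{J}: u\ge_J u'\}$, but your treatment of it defers entirely to ``He's analysis (Theorem 4.5 of \emph{loc.\ cit.}\ and the surrounding lemmas)''. Since the theorem you are trying to prove \emph{is} He's Theorem 4.5, invoking it at this step makes the sketch circular rather than an independent derivation. Note also that the inclusion $\subseteq$ is subtler than it may appear: if $v\le u$ and $\s_J(v)=\f{u'}{J}$, then by Lemma~\ref{l:type of coset} the element $v$ is $W_J$-conjugate to $u'y$ for some $y\in W_{I(J,u')}$, not to $u'$ itself; upgrading this to the existence of a $W_J$-conjugate of $u'$ lying below $u$ in Bruhat order uses the He--Nie minimal-length machinery in an essential way and is not a one-line deduction.
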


\sss{The functor $\ch^{J'}_{J}$} 
Let $J\subset J'\sft I^{a}$. Then the image of $\cP_{J}$ under the projection $\cP_{J'}\to L_{J'}$ is a parabolic subgroup $P^{J'}_{J}\subset L_{J'}$. Consider the diagram 
\begin{equation}\label{ind diagram Y}
\xymatrix{ \cY_{J}=\frac{\cP_{J}^{u}\bs \cG/\cP^{u}_{J}}{L_{J}} & \frac{\cP_{J'}^{u}\bs \cG/\cP^{u}_{J'}}{P^{J'}_{J}}\ar[l]_-{q^{J'}_{J}}\ar[r]^-{p^{J'}_{J}} & \frac{\cP_{J'}^{u}\bs \cG/\cP^{u}_{J'}}{L_{J'}}=\cY_{J'}
}
\end{equation}
Define 
\begin{equation*}
\ch^{J'}_{J}=(p^{J'}_{J})_{!}(q^{J'}_{J})^{*}: \Sh(\cY_{J})\to \Sh(\cY_{J'}).
\end{equation*}


Finally, we recall the following  key geometric input we will need  to understand the natural transforms between  sheaves on different geometric pieces. We are grateful to Xuhua He for  providing it  in a recent paper~\cite{heGENERALIZATIONCYCLICSHIFT}. Note that the sheaves involved in the following theorem need not have nilpotent singular support.

\begin{theorem}[He, \cite{heGENERALIZATIONCYCLICSHIFT}]\label{th:ch} Let $J\subset J'\sft I^{a}$, $u\in {}^{J}\tilW$ and $\f{u'}{J'}=\d^{J'}_{J}(\f{u}{J})$. Let $K=I(J,u)\subset J$ and $K'=I(J',u')\subset J'$. Let $i_{J,u}: \cY(\f{u}{J})\incl \cY_{J}$ and $i_{J',u'}: \cY(\f{u'}{J'})\incl \cY_{J'}$ be the inclusions. 
\begin{enumerate}
\item Suppose $\f{u}{J}$ is not quasi-$J'$-reduced (namely $\ell(u)>\ell(u')$). Then the image of $\ch^{J'}_{J}\c i_{J,u!}: \Sh(\cY(\f{u}{J}))\to \Sh(\cY_{J'})$ is supported on the closed substack $\cY_{J', <\ell(u)}=\cup_{\ell(v)<\ell(u)}\cY(\f{v}{J'})$.

\item\label{th ch:qred} Suppose $\f{u}{J}$ is quasi-$J'$-reduced (recall this means $\ell(u)=\ell(u')$). Then there is a unique $x\in W_{J'}\cap \tilW^{K'}$ such that $x^{-1}ux=u'$ and $K_{1}:=x^{-1}(K)\subset K'$ (note that $u'(K_{1})=K_{1}$). 

Let
\begin{equation*}
\Ad(x^{-1}): \Sh(\f{uL_{K}}{L_{K}})\to \Sh(\f{u'L_{K_{1}}}{L_{K_{1}}})
\end{equation*}
be the functor induced by conjugation by $\dot x^{-1}$, where $\dot x\in N_{L_{J'}}(T)$ is any lifting of $x$. \footnote{Two liftings of $x$ differ by multiplication by $t\in T\subset L_{K}$, therefore the resulting functors $\Sh(\f{uL_{K}}{L_{K}})\to \Sh(\f{u'L_{K_{1}}}{L_{K_{1}}})$ are canonically isomorphic.}

Consider also the induction functor
\begin{equation*}
{}^{u'}\Ind^{K'}_{K_{1}}=b^{K'}_{K_{1}, !}(a^{K'}_{K_{1}})^{*}: \Sh(\f{u'L_{K_{1}}}{L_{K_{1}}})\to \Sh(\f{u'L_{K'}}{L_{K'}})
\end{equation*}
given by the correspondence
\begin{equation*}
\xymatrix{\f{u'L_{K_{1}}}{L_{K_{1}}} & \f{u'P^{K'}_{K_{1}}}{P^{K'}_{K_{1}}}\ar[l]_-{a^{K'}_{K_{1}}}\ar[r]^-{b^{K'}_{K_{1}}} & \f{u'L_{K'}}{L_{K'}}}.
\end{equation*}
Then the outer square of the following diagram is commutative
\begin{equation*}
\xymatrix{\Sh(\f{uL_{K}}{L_{K}})\ar[r]^-{\Ad(x^{-1})} \ar[d]^{\pi_{J,u}^{*}}_{\wr} & \Sh(\f{u'L_{K_{1}}}{L_{K_{1}}}) \ar[r]^-{{}^{u'}\Ind^{K'}_{K_{1}}} & \Sh(\f{u'L_{K'}}{L_{K'}})\ar[d]^{\pi_{J',u'}^{*}}_{\wr}\\
\Sh(\cY(\f{u}{J}))\ar@{-->}[rr]^-{\g^{J',u'}_{J,u}}\ar[d]^{i_{J,u!}} && \Sh(\cY(\f{u'}{J'}))\ar[d]^{i_{J',u'!}}\\
\Sh(\cY_{J})\ar[rr]^-{\ch^{J'}_{J}} && \Sh(\cY_{J'})
}
\end{equation*}
The same is true when $i_{J,u!}$ and $i_{J',u'!}$ are replaced with $i_{J,u*}$ and $i_{J',u'*}$ respectively.

We define the functor $\g^{J',u'}_{J,u}:\Sh(\cY(\f{u}{J}))\to \Sh(\cY(\f{u'}{J'}))$ as the composition
\begin{equation}\label{ind on pieces}
\g^{J',u'}_{J,u}: \Sh(\cY(\f{u}{J}))\xr{(\pi^{*}_{J,u})^{-1}}\Sh(\f{uL_{K}}{L_{K}})\xr{\Ad(x^{-1})}\Sh(\f{u'L_{K'}}{L_{K'}})\xr{\pi^{*}_{J',u'}}\Sh(\cY(\f{u'}{J'})).
\end{equation}

\item\label{th ch:red} In particular, when $\f{u}{J}$ is $J'$-reduced (which implies $K_{1}=K'$ in the above notation), then $\g^{J',u'}_{J,u}$ is an equivalence.
\end{enumerate}
\end{theorem}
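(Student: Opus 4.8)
\textbf{Proof proposal for Theorem~\ref{th:ch}.}

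The plan is to treat parts (1), (2), (3) together by carefully unwinding the inductive fiber-product description \eqref{defn piece} of the geometric pieces and the explicit form of cyclic reduction in Lemma~\ref{l:cyc red}. First I would set up the basic one-step comparison: given $J\subset J'\sft I^{a}$, describe the map $\ch^{J'}_{J}=(p^{J'}_{J})_{!}(q^{J'}_{J})^{*}$ in terms of the parabolic $P^{J'}_{J}\subset L_{J'}$ whose Levi is $L_{J}$, and observe that restricting the correspondence \eqref{ind diagram Y} to a single $\cP_{J}$-double coset $\cI u\cI$ inside $\cG$ produces, after applying cyclic reduction to both the source and target geometric pieces, a correspondence between twisted adjoint quotients of the form $\f{uL_{K}}{L_{K}}$ and $\f{u'L_{K'}}{L_{K'}}$. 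The combinatorial input here is exactly Proposition~\ref{p:attach pieces}: the double coset $W_{J'}uW_{J'}$ meets ${}^{J}\tilW$ in a set of elements, and the minimal one $u'=\d^{J'}_{J}(\f{u}{J})$ together with the intermediate elements $xu'$ with $x\in W_{J'}\cap\tilW^{K'}$ organizes the stratification of the image of $\cI u\cI$ in $\cY_{J'}$ by length $\ell(xu')$. The key length identity $\ell(u)\geq\ell(u')$ of Lemma~\ref{l:ell tau under closure}(1) then gives the dichotomy: if $\ell(u)>\ell(u')$ the whole image lands in $\cY_{J',<\ell(u)}$, proving (1); if $\ell(u)=\ell(u')$ there is a \emph{unique} shortest such element and one isolates the ``top stratum'' contribution, which is \eqref{ind on pieces}.

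Next, for part (2), I would verify the commutativity of the outer square by reducing to a single cyclic-reduction step and then iterating. The point is that both $\pi^{*}_{J,u}$ and $\pi^{*}_{J',u'}$ are defined as compositions of pullbacks along (pro-)unipotent gerbes coming from \eqref{defn piece}, so it suffices to check that each elementary square built from one instance of the cyclic reduction map \eqref{e:cyc red} commutes with the corresponding elementary piece of $\ch^{J'}_{J}$ and of the induction functor ${}^{u'}\Ind^{K'}_{K_{1}}$. Here the explicit formula in Lemma~\ref{l:cyc red} --- sending $p'xp\mapsto \bar p\,\delta(\bar p')$ --- lets one track exactly how a $\cP_{J'}$-orbit closure inside a $\cP_{J}$-double coset pushes forward: the parabolic $P^{K'}_{K_{1}}\subset L_{K'}$ with Levi $L_{K_{1}}$ is precisely the image of the relevant piece of $P^{J'}_{J}$ under iterated cyclic reduction. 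The conjugation by $\dot x^{-1}$ enters because passing from the stratum indexed by $u$ inside $\cY_{J'}$ to the standard coordinates $\f{u'L_{K'}}{L_{K'}}$ requires moving $u$ to its minimal-length double-coset representative $u'$, and $x\in W_{J'}\cap\tilW^{K'}$ with $x^{-1}ux=u'$ is the unique element realizing this while respecting the parahoric structure (uniqueness and $K_{1}=x^{-1}(K)\subset K'$ follow from the standard theory of minimal-length coset representatives, e.g.\ from He's analysis in \cite{heGstablePiecesWonderful2007}, together with the fact that $K'=I(J',u')$ is the maximal $\Ad(u')$-stable subset of $J'$). The statement with $i_{J,u*}$, $i_{J',u'*}$ in place of the $!$-versions follows by the same argument using proper base change, since the relevant maps between pieces are (pro-)unipotent gerbes hence have $!=*$ up to shift and twist.

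Part (3) is then immediate: when $\f{u}{J}$ is $J'$-reduced we have $\ell(u)=\ell(u')$ \emph{and} $\t(\f{u}{J})=\t(\f{u'}{J'})$, which forces the relevant affine subspaces $\frA(K)$ and $\frA(K')$ to be $\tilW$-conjugate; combined with $K_{1}\subset K'$ and $|K_{1}|=|K|=|K'|$ (the last equality from the type condition) this gives $K_{1}=K'$, so the induction functor ${}^{u'}\Ind^{K'}_{K_{1}}$ is induction along the parabolic $P^{K'}_{K'}=L_{K'}$, i.e.\ the identity, and $\Ad(x^{-1})$ is an equivalence, whence $\g^{J',u'}_{J,u}$ is an equivalence. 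I expect the main obstacle to be the bookkeeping in the iterated cyclic-reduction step of part (2): one must check that the sequence of parabolics $(P'_{n},P_{n})$ in the definition of $\cY(\f{u}{J})$ and the corresponding sequence for $\cY(\f{u'}{J'})$ are compatible under the comparison in exactly the way that produces $P^{K'}_{K_{1}}$ at the end, and that the conjugating elements multiply up correctly to $x$. This is essentially the content of \cite{heGENERALIZATIONCYCLICSHIFT}, and the cleanest route is to quote that paper's main result for the underlying stacks and then merely note that all functors in sight are defined by $*$-pullback and $!$-pushforward along the maps appearing there, so the categorical statement follows formally.
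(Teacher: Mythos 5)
Your proposal lands on essentially the same approach as the paper: the actual proof is a reference to He's results in \cite{heGENERALIZATIONCYCLICSHIFT} (parts (1) and (2) cite \cite[4.2(a)]{heGENERALIZATIONCYCLICSHIFT} and \cite[Theorem 4.3]{heGENERALIZATIONCYCLICSHIFT}, with \cite[4.5]{heGENERALIZATIONCYCLICSHIFT} handling the loop-group extension), and you correctly conclude that "the cleanest route is to quote that paper's main result." The preliminary sketches you give of the underlying iterated cyclic-reduction argument, the uniqueness of $x$, and the deduction of $K_1=K'$ in part (3) are all in the right spirit.

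One small imprecision worth flagging, although it does not affect your conclusion since you fall back on He's paper anyway: your explanation of why the $*$-version of part (2) follows (``the relevant maps between pieces are (pro-)unipotent gerbes hence have $!=*$ up to shift and twist'') is not the argument the paper uses, and the gerbe observation is not where the subtlety lies. The issue in passing from $i_{J,u!}$ to $i_{J,u*}$ is whether $\ch^{J'}_J i_{J,u*}\cF$ is a $*$-extension from $\cY(\f{u'}{J'})$, i.e.\ whether base change can be applied in the correspondence \eqref{ind diagram Y}. The paper deduces the $!$-version from the support control \eqref{supp control} together with properness of $p^{J'}_J$, and the $*$-version from \eqref{supp control} together with smoothness of $q^{J'}_J$; the (pro-)unipotent gerbe structure of $\pi_{J,u}$ is relevant for identifying sheaves on pieces with sheaves on twisted adjoint quotients, but the base-change step involves $p^{J'}_J$ and $q^{J'}_J$, which are not gerbes.
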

\begin{proof}
In the following we will quote results from \cite{heGENERALIZATIONCYCLICSHIFT} where the author primarily works with a finite dimensional reductive group rather than a loop group. However, \cite[4.5]{heGENERALIZATIONCYCLICSHIFT} it is explained that the results there extend to loop groups in a straightforward way.

(1) is  proved in \cite[4.2(a)]{heGENERALIZATIONCYCLICSHIFT}.

(2) From the diagram of the proof of \cite[Theorem 4.3]{heGENERALIZATIONCYCLICSHIFT}, we see that (using notation from \eqref{ind diagram Y})
\begin{equation}\label{supp control}
p^{J'}_{J}(q_{J}^{J'})^{-1}(\cY(\f{u}{J}))\subset \cY(\f{u'}{J'}).
\end{equation}
Using proper base change and the fact that $p^{J'}_{J}$ is proper, \eqref{supp control} implies that for any $\cF\in \Sh(\cY(\f{u}{J}))$, $\ch_{J}^{J'}i_{J,u!}\cF$ is a $!$-extension from $\cY(\f{u'}{J'})$. Therefore, to prove the statement, it suffices to show that $i^{*}_{J',u'}\ch_{J}^{J'}i_{J,u!}\cF$ is canonically isomorphic to $\g^{J',u'}_{J,u}\cF$. This is exactly the statement of \cite[Theorem 4.3]{heGENERALIZATIONCYCLICSHIFT}.

For the $*$-version, using \eqref{supp control} and the fact that $q^{J'}_{J}$ is smooth, we see that for any $\cF\in \Sh(\cY(\f{u}{J}))$, $\ch_{J}^{J'}i_{J,u*}\cF$ is a $*$-extension from $\cY(\f{u'}{J'})$. Therefore, it suffices to show that $i^{*}_{J',u'}\ch_{J}^{J'}i_{J,u*}\cF$ is canonically isomorphic to $\g^{J',u'}_{J,u}\cF$. This is proved in the same way as the calculations towards the end of the proof of \cite[Theorem 4.3]{heGENERALIZATIONCYCLICSHIFT}, using that $q^{J'}_{J}$ is smooth to justify the base change steps.

(3) follows directly from (2).
\end{proof}

\sss{Nilpotent sheaves} For $J\sft I^{a}$, recall that $\cH_{\cG,J}=\Sh_{\cN}(\cY_{J})$.  For $J\subset J'\sne I^{a}$, it is standard to check that $\ch^{J'}_{J}$ sends $\cH_{\cG,J}$ to $\cH_{\cG,J'}$.

For each geometric piece $\f{u}{J}$, we have the equivalence $\Sh(\cY(\f{u}{J}))\simeq \Sh(\f{uL_{K}}{L_{K}})$ given in Corollary \ref{c:sh geom piece}. Consider the subcategory $\Sh_{\cN}(\f{uL_{K}}{L_{K}})\subset \Sh(\f{uL_{K}}{L_{K}})$ consisting of sheaves whose pullback to $uL_{K}$ is $L_{K}$-nilpotent for the left transaltion (equivalently, $L_{K}$-nilpotent for the right translation), using the terminology introduced in Section~\ref{sss:nilp cyc red}. We define
\begin{equation*}
\Sh_{\cN}(\cY(\f{u}{J}))\subset \Sh(\cY(\f{u}{J}))
\end{equation*}
to be the full category corresponding to $\Sh_{\cN}(\f{uL_{K}}{L_{K}})\cong \Sh_{\cN}(\f{L_{K}}{\Ad_{\dot u}(L_{K})})$ under the equivalence \eqref{sh geom piece}.

\begin{prop}\label{p:SSN on pieces}
For $\f{u}{J}\in \cS_{J}$ and $i_{J,u}: \cY(\f{u}{J})\incl \cY_{J}$ be the inclusion. Then:
\begin{enumerate}
\item The category $\cH_{\cG,J}=\Sh_{\cN}(\cY_{J})$ consists of objects $\cF\in \Sh(\cY_{J})$ such that $i_{J,u}^{*}\cF\in \Sh_{\cN}(\cY(\f{u}{J}))$ for all $\f{u}{J}\in \cS_{J}$. Alternatively, $\Sh_{\cN}(\cY_{J})$ consists of objects $\cF\in \Sh(\cY_{J})$ such that $i_{J,u}^{*}\cF\in \Sh_{\cN}(\cY(\f{u}{J}))$ for all $\f{u}{J}\in \cS_{J}$. 
\item The functors $i_{J,u!}$ and $i_{J,u*}$ send $\Sh_{\cN}(\cY(\f{u}{J}))$ (defined above) to $\Sh_{\cN}(\cY_{J})=\cH_{\cG,J}$.
\end{enumerate}
\end{prop}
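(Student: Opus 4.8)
\textbf{Proof proposal for Proposition~\ref{p:SSN on pieces}.}

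The plan is to reduce both assertions to the local statements about cyclic reductions established in Section~\ref{sss:nilp cyc red}, in particular Lemma~\ref{l:ShN cyc red}, together with the recollement machinery for stratified ind-stacks set up just before Theorem~\ref{thm: geom descent ind}. The essential point is that the stratification of $\cY_J$ by geometric pieces $\cY(\f{u}{J})$ for $\f{u}{J}\in\cS_J$, with closure order governed by He's Theorem~\ref{th:He closure}, fits the axiomatic framework of Section~\ref{sss:strat}: for each $\f{u}{J}$ the set $\{\f{u'}{J}\mid u\ge_J u'\}$ is finite, and the corresponding union of pieces is closed. So once we pin down what ``nilpotent singular support'' means on each stratum, part~(1) is a formal consequence of the recollement, and part~(2) follows from the estimates on singular support under the recollement functors.

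First I would verify that the subcategory $\Sh_{\cN}(\cY(\f{u}{J}))\subset \Sh(\cY(\f{u}{J}))$, as defined via the equivalence $\pi^*_{J,u}$ of Corollary~\ref{c:sh geom piece}, is exactly the category of sheaves with nilpotent singular support in the intrinsic sense coming from the ambient structure on $\cY_J$ — namely, sheaves on $\cP^u_J\bs\cG/\cP^u_J$ (restricted to the relevant $\cP_J$-double cosets) whose singular support has nilpotent image under the moment map for the $L_J\times L_J$-action. This is where Lemma~\ref{l:ShN cyc red} does the work: the iterated construction \eqref{defn piece} of $\cY(\f{u}{J})$ as a fiber product of cyclic reductions, combined with the fact that each $\cZ_n(u_0,\dots,u_{n+1})\to\cZ_{n+1}(u_0,\dots,u_{n+1})$ is a cyclic reduction map, lets us propagate the nilpotence condition step by step down the chain. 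Unwinding, $L_J$-nilpotence for the outermost term translates inductively into $L_K$-nilpotence on the innermost term $\f{L_K}{\Ad_{\dot u}(L_K)}$, which by definition is $\Sh_{\cN}(\cY(\f{u}{J}))$. Since the $\pi_{J,u}$ are iterated (pro-)unipotent gerbes, pullback along them neither creates nor destroys nilpotence (pullback along a unipotent gerbe is fully faithful with image the equivariant sheaves, and the moment-map condition is preserved), so the identification is clean.

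With that identification in hand, part~(1) is the statement that an object $\cF\in\Sh(\cY_J)$ has nilpotent singular support iff all its $*$-restrictions $i_{J,u}^*\cF$ do; the ``alternatively'' clause (with $!$-restrictions) follows because the two generate the same full subcategory under the recollement, exactly as in the discussion of $\Sh_\L(Z)$ preceding Theorem~\ref{thm: geom descent ind}. For part~(2), I would invoke the recollement diagram: $i_{J,u}$ is a locally closed embedding, and one checks that $i_{J,u!}$ and $i_{J,u*}$ of an object with nilpotent singular support on the stratum again has nilpotent singular support on $\cY_J$. The $!$-extension is handled by noting $i_{J,u}^* i_{J,u!}=\id$ on the stratum and $i_{J,u'}^* i_{J,u!}$ for $u'\ne u$ is computed by the recollement triangles, which only involve restrictions to smaller strata where the singular support estimate of \cite[Corollary 6.4.4, Remark 6.2.8]{kashiwaraSheavesManifoldsVolume1990} applies (as in the proof of Lemma~\ref{l:unit ss}); the $*$-extension is dual. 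The main obstacle I anticipate is the bookkeeping in the inductive cyclic-reduction argument of the previous paragraph — making sure the moment-map / nilpotence condition is tracked correctly through each twisted adjoint quotient and each change of Levi $L_{J_n}\to L_{J_{n+1}}$, since the groups acting shrink at each stage and one must confirm that ``nilpotent for $L_{J_n}$'' restricts to ``nilpotent for $L_{J_{n+1}}$'' compatibly with the parabolic $P_{n+1}$-equivariance. This is exactly the content of Lemma~\ref{l:ShN cyc red} applied repeatedly, so it is not a new difficulty, but the iteration needs to be spelled out carefully.
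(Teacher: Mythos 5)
Your proposal follows the same core strategy as the paper: identify, by iterating the cyclic-reduction Lemma~\ref{l:ShN cyc red} down the chain $\cZ_{-1} \to \cZ_0(u_0) \to \cZ_1(u_0,u_1) \to \cdots$, the ambient notion of $L_J$-nilpotent singular support with the $L_K$-nilpotence on the twisted adjoint quotient $\f{uL_K}{L_K}$. That is exactly the content of the paper's inductive Claim, and it is the correct key idea.

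Two minor slips in the framing. First, you call part~(1) ``a formal consequence of the recollement'' once the per-stratum identification is made. It is not quite formal: the definition of $\cH_{\cG,J}$ is a single ambient moment-map condition (nilpotence of $\mu(\ssupp)$ for the $L_J\times L_J$-action on $\cP_J^u\bs\cG/\cP_J^u$), whereas what must be shown is that this can be checked by $*$-restriction to each piece. The bridge is the observation, used at every step of the paper's Claim, that for a sheaf on an $H$-space, $H$-nilpotence of singular support is equivalent to $H$-nilpotence of each restriction to an $H$-invariant stratification; this is what lets the paper go from $\wt\cZ_n(u_0,\dots,u_n)$ to the strata $\wt\cZ_n(u_0,\dots,u_{n+1})$ before applying cyclic reduction. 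You subsume this into ``formal'', and it is the only place where actual geometry of singular support is used; it is worth spelling out. Second, your treatment of part~(2) is more complicated than necessary: given (1) in both the $*$- and $!$-forms, (2) is immediate because for a locally closed embedding $i_{J,u}$, the functor $i_{J,u!}$ has zero $*$-restriction to every other stratum (so all $*$-restrictions are either $\cG$ or $0$, all nilpotent), and dually $i_{J,u*}$ has zero $!$-restriction to every other stratum. No Kashiwara--Schapira singular support bound and no recollement triangle computation is needed; the paper's terse ``and (2) follows'' is justified precisely because this is formal from (1).
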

\begin{proof}
We prove (1) and (2) follows. For (1), we give the argument for the $*$-pullback statement, and the $!$-version is similar. Recall the notation $\cZ_{n}(u_{0},\cdots, u_{n})$ and $\cZ_{n}(u_{0},\cdots, u_{n+1})$ introduced in Section~\ref{sss:geom piece}, for $n\ge-1$, and $(u_{0},u_{1},\cdots )$ the sequence of elements in $\tilW$ that appear in any combinatorial piece. Define $\Sh_{\cN}(\cZ_{n}(u_{0},\cdots, u_{n}))$ and $\Sh_{\cN}(\cZ_{n}(u_{0},\cdots, u_{n+1}))$ using left $L_{J'_{n+1}}$-nilpotence. Then $\cH_{\cG,J}=\Sh_{\cN}(\cZ_{-1})$.

Fix $n\ge-1$ and $(u_{0},\cdots, u_{n})$ that appear as the first $n+1$ terms of a combinatorial piece (so that $J_{0}=J, J_{1}, J'_{1},\cdots, J_{n+1}, J'_{n+1}$ are defined according the recipe in Section~\ref{sss:comb piece}). We have a stratification
\begin{equation*}
\cZ_{n}(u_{0},\cdots, u_{n})=\bigcup_{u_{n+1}\in {}^{J'_{n+1}}W_{J_{n}}^{J_{n}}}\cZ_{n}(u_{0},\cdots, u_{n},u_{n+1}).
\end{equation*}
Convention: $W_{-1}:=\tilW$. We also have the cyclic reduction maps
\begin{equation*}
c_{u_{n+1}}: \cZ_{n}(u_{0},\cdots, u_{n},u_{n+1})\to \cZ_{n+1}(u_{0},\cdots, u_{n},u_{n+1}).
\end{equation*}
\begin{claim} For any $n\ge-1$ , $\cF\in \Sh(\cZ_{n}(u_{0},\cdots, u_{n}))$ lies in $\Sh_{\cN}(\cZ_{n}(u_{0},\cdots, u_{n}))$ if and only if for all $u_{n+1}\in {}^{J'_{n+1}}W_{J_{n}}^{J_{n}}$, the sheaf $\cF_{u_{n+1}}^{\flat}\in \Sh(\cZ_{n+1}(u_{0},\cdots, u_{n},u_{n+1}))$ corresponding to $\cF|_{\cZ_{n}(u_{0},\cdots, u_{n},u_{n+1})}$ under cyclic reduction (i.e., $c_{u_{n+1}}^{*}\cF_{u_{n+1}}^{\flat}\cong \cF|_{\cZ_{n}(u_{0},\cdots, u_{n},u_{n+1})}$) lies in $\Sh_{\cN}(\cZ_{n+1}(u_{0},\cdots, u_{n+1}))$. 
\end{claim}
\begin{proof}[Proof of Claim]
Indeed, let $\wt\cZ_{n}(u_{0},\cdots, u_{n})=P'^{u}_{n+1}\bs L_{J_{n}}/P^{u}_{n+1}$, and let $\wt\cZ_{n}(u_{0},\cdots, u_{n+1})$ be the preimage of $\cZ_{n}(u_{0},\cdots, u_{n+1})$ in $\wt\cZ_{n}(u_{0},\cdots, u_{n})$. Then $\wt\cZ_{n}(u_{0},\cdots, u_{n+1})$ for various $u_{n+1}$ give a stratification of $\wt\cZ_{n}(u_{0},\cdots, u_{n})$ that is stable under the left translation by $L_{J'_{n+1}}$. By definition, $\cF\in \Sh_{\cN}(\cZ_{n}(u_{0},\cdots, u_{n}))$ if and only if its pullback $\wt\cF$ on $\wt\cZ_{n}(u_{0},\cdots, u_{n})$ is left $L_{J_{n+1}'}$-nilpotent, which happens if and only if $\wt\cF|_{\wt\cZ_{n}(u_{0},\cdots, u_{n+1})}$ is  left $L_{J_{n+1}'}$-nilpotent for all $u_{n+1}$, if and only if $\cF|_{\cZ_{n}(u_{0},\cdots, u_{n+1})}\in \Sh_{\cN}(\cZ_{n}(u_{0},\cdots, u_{n+1}))$ for all $u_{n+1}$. By Lemma \ref{l:ShN cyc red}, $\cF|_{\cZ_{n}(u_{0},\cdots, u_{n+1})}\in \Sh_{\cN}(\cZ_{n}(u_{0},\cdots, u_{n+1}))$ if and only if $\cF_{u_{n+1}}^{\flat}\in \Sh_{\cN}(\cZ_{n+1}(u_{0},\cdots, u_{n+1}))$. This proves the claim.
\end{proof}
We continue with the proof of the proposition. Note that for each combinatorial piece $(J_{n}, J'_{n}, u_{n})\in \cS_{J}$, $J_{n}$  will stabilizes for $n\ge r$, where $r$ is the semisimple rank of $G$. Each $\f{u}{J}\in\cS_{J}$ gives an $(r+1)$-tuple $(u_{0},\cdots, u_{r})$, and by the above remark, $u$ is determined by $(u_{0},\cdots, u_{r})$. For each $u\in \cS_{J}$, we define $\cF^{\flat}_{u}\in \Sh_{\cN}(\cZ_{r}(u_{0},\cdots, u_{r}))$ to be the result of restricting and apply cyclic reduction successively. In other words, under the unipotent gerbe $\pi_{J,u}: \cY(\f{u}{J})\to \cZ_{r}(u_{0},\cdots, u_{r})$, $\pi_{J,u}^{*}\cF^{\flat}_{u}\simeq \cF|_{\cY(\f{u}{J})}$. Using the Claim in the previous paragraph repeatedly for $n=-1,0,\cdots, r-1$, we conclude that for $\cF\in \Sh(\cY_{J})$, $\cF\in \Sh_{\cN}(\cY_{J})$ if and only if $\cF_{u}^{\flat}\in \Sh(\cZ_{r}(u_{0},\cdots, u_{r}))$ lies in $\Sh_{\cN}(\cZ_{r}(u_{0},\cdots, u_{r}))$, for all $J$-pieces $\f{u}{J}$. By definition, the latter is the same as saying $\cF|_{\cY(\f{u}{J})}=i_{J,u}^{*}\cF$ lies in  $\Sh_{\cN}(\cY(\f{u}{J}))$ for all $\f{u}{J}\in \cS_{J}$.
\end{proof}

For any locally closed substack $Z\subset \cY_{J}$ that is a union of geometric $J$-pieces, we define $\Sh_{\cN}(Z)\subset \Sh(Z)$ to be the full subcategory consisting of $\cF\in \Sh(Z)$ such that $i_{J,u}^{*}\cF\in \Sh_{\cN}(\cY(\f{u}{J}))$ whenever $\f{u}{J}\subset Z$. By Proposition \ref{p:SSN on pieces}, for $Z=\cY_{J}$, this definition of $\Sh_{\cN}(\cY_{J})$ coincides with the old one which is $\cH_{\cG,J}$.

\begin{cor}\label{c:nilp sh recoll YJ} The full subcategories $\Sh_{\cN}(Z)$ for closed unions of geometric pieces $Z\subset \cY_{J}$ give a stratification structure on $\cH_{\cG, J}$ indexed by the poset $(\cS_{J}, \le_{J})$ (in the sense recalled in Section~\ref{sss:strat}), such that the strata category corresponding to $\f{u}{J}\in \cS_{J}$ is $\Sh_{\cN}(\cY(\f{u}{J}))$. 
\end{cor}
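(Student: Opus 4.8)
The plan is to reduce the statement to the standard recollement formalism applied stratum by stratum, using He's closure theorem (Theorem~\ref{th:He closure}) to identify the topology of $\cY_{J}$ with the combinatorics of $(\cS_{J},\le_{J})$, and Verdier duality to handle the ``$*$''--type functors. First I would fix conventions: by Theorem~\ref{th:He closure} the locally closed unions of geometric $J$-pieces in $\cY_{J}$ are exactly the substacks $Z_{Q}=\bigcup_{\f{u}{J}\in Q}\cY(\f{u}{J})$ for $Q\subset\cS_{J}$ locally down-closed with respect to $\le_{J}$, with $Z_{Q}$ closed precisely when $Q$ is down-closed. Since $\cY_{J}$ is a filtered union of (pro-smooth stacks over) finite-type stacks each containing only finitely many pieces, all constructions below take place at the finite-type level and pass to the colimit; I would suppress this reduction. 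By Proposition~\ref{p:SSN on pieces}, $\cF\in\Sh(Z_{Q})$ lies in $\Sh_{\cN}(Z_{Q})$ iff $i_{J,u}^{*}\cF$ (equivalently $i_{J,u}^{!}\cF$) lies in $\Sh_{\cN}(\cY(\f{u}{J}))$ for every $\f{u}{J}\in Q$. To exhibit the stratification structure in the sense of Section~\ref{sss:strat}, following the template of the proof of Theorem~\ref{thm: geom descent ind}, it is enough to show that for down-closed $Q'\subset Q$ with open complement $Q''=Q\setminus Q'$, the six functors of the usual recollement among $\Sh(Z_{Q''})$, $\Sh(Z_{Q})$, $\Sh(Z_{Q'})$ restrict to the subcategories $\Sh_{\cN}(-)$; the strata are then $\Sh_{\cN}(Z_{\{\f{u}{J}\}})=\Sh_{\cN}(\cY(\f{u}{J}))$ by construction.

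Next I would dispatch the straightforward functors. Restriction along the open inclusion $j\colon Z_{Q''}\incl Z_{Q}$ and along the closed inclusion $i\colon Z_{Q'}\incl Z_{Q}$ ($j^{*}=j^{!}$ and $i^{*}$) manifestly preserves the stratum-wise condition, since pieces of $Z_{Q''}$ and of $Z_{Q'}$ are pieces of $Z_{Q}$. The pushforward $i_{*}=i_{!}$ preserves $\Sh_{\cN}$ because it leaves stalks over pieces of $Z_{Q'}$ unchanged and is zero over pieces of $Z_{Q''}$. For $j_{!}$ I would use that for a complementary open--closed pair one has $i^{*}j_{!}=0$; combined with $j^{*}j_{!}\cong\id$ this shows $j_{!}\cF$ satisfies the stratum-wise condition over all of $Z_{Q}$ whenever $\cF\in\Sh_{\cN}(Z_{Q''})$, so $j_{!}$ preserves $\Sh_{\cN}$.

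It remains to treat $j_{*}$ and $i^{!}$, which I would deduce from the previous cases via Verdier duality $\DD$; checking that $\DD$ preserves the nilpotence condition is the step I expect to require the most care. Using Corollary~\ref{c:sh geom piece}, $\Sh_{\cN}(\cY(\f{u}{J}))$ is the pullback along the iterated (pro-)unipotent gerbe $\pi_{J,u}\colon\cY(\f{u}{J})\to\f{L_{K}}{\Ad_{\dot u}(L_{K})}$ of $\Sh_{\cN}(\f{L_{K}}{\Ad_{\dot u}(L_{K})})$; since $\DD$ commutes with smooth (pro-)pullback up to shift and a rank-one twist, both of which preserve the nilpotence condition, $\DD$-stability of $\Sh_{\cN}(\cY(\f{u}{J}))$ reduces to that of $\Sh_{\cN}(\f{L_{K}}{\Ad_{\dot u}(L_{K})})$, which holds because the condition there---that the moment-map image $\mu_{L_{K}}(SS(-))$ of the pullback to $L_{K}$ lie in the nilpotent cone---depends only on the singular support, which $\DD$ preserves up to the antipodal map, under which the nilpotent cone is invariant. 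Combined with $i_{J,u}^{*}\DD\cong\DD\,i_{J,u}^{!}$ and the interchangeability of the $*$- and $!$-characterizations in Proposition~\ref{p:SSN on pieces}, this gives $\DD$-stability of every $\Sh_{\cN}(Z_{Q})$; hence $j_{*}\cong\DD\,j_{!}\,\DD$ and $i^{!}\cong\DD\,i^{*}\,\DD$ preserve $\Sh_{\cN}$ as well. (Alternatively, one could invoke the general stability of conical singular-support conditions under the six operations along stratified maps, but the duality argument is self-contained given the results already available.) The genuine subtlety to spell out is that Verdier duality on the ind-pro-stack $\cY_{J}$ must be handled at the finite-type level---on each $\cG_{\le w}$-stratum, where it is well behaved---and that the orientation twist arising from the pro-unipotent gerbes does not affect singular support; both points are routine but deserve a remark. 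Assembling all of this, the recollement restricts to the $\Sh_{\cN}(-)$, which is precisely the asserted stratification structure on $\cH_{\cG,J}$ with strata $\Sh_{\cN}(\cY(\f{u}{J}))$.
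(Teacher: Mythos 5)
You have the right structure and dispatch most of the cases correctly, but the final step — handling $j_*$ and $i^!$ via Verdier duality — is both unnecessary and introduces a genuine technical gap, so it undercuts what would otherwise be a clean argument.

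The unnecessary part first. You already correctly recorded, from Proposition~\ref{p:SSN on pieces}(1), that $\Sh_{\cN}(Z_Q)$ is characterized \emph{both} by the $*$-restrictions $i_{J,u}^{*}\cF$ lying in $\Sh_{\cN}(\cY(\f{u}{J}))$ and by the $!$-restrictions $i_{J,u}^{!}\cF$ lying there. Once you have the $!$-characterization in hand, $j_*$ and $i^!$ are handled in exactly the same one-line way that you handled $j_!$ and $i^*$ from the $*$-characterization: for $j_*$ use $j^!j_*\cong\id$ together with $i^!j_*=0$, and for $i^!$ use that $i_{J,u}^{!}(i^!\cF)=i_{J,u}^{!}\cF$ for pieces $\f{u}{J}$ in the closed part. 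No duality is needed; the two halves of Proposition~\ref{p:SSN on pieces}(1) are there precisely so that the $(*,!)$-pairs $(j_!,j_*)$ and $(i^*,i^!)$ can be treated symmetrically. This is the argument the paper intends.

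The problematic part. In the universal monodromic setting, sheaves have stalks of the form $k[\xcoch(H)]$-modules, typically of infinite rank, and $\cY_J$ is an ind-stack. Classical Verdier duality $\DD$ is not a naive anti-autoequivalence of $\Sh_{\cN}(\cY_J)$ here: on infinite-rank local systems, the biduality map fails without some renormalization, and the paper never sets up or uses a duality of this kind. Your remark that the step is ``routine'' conceals this issue. So if you insist on the duality route, you owe a definition of the duality functor in this setting and a proof that it preserves $\Sh_{\cN}$ and intertwines the relevant adjoints --- that is more work than the whole corollary, and the paper's design of Proposition~\ref{p:SSN on pieces}, with both the $*$- and $!$-characterizations, is exactly what lets one avoid it. One minor omission in either route: to land in the category $\St^{L}_{k}$ used for stratifications in Section~\ref{sss:strat}, one should also note that all six functors are continuous, which is supplied by Proposition~\ref{p:cont 4 functors}.
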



\subsection{Semi-orthogonal decomposition of the cocenter}\label{ss:cocenter recoll} 

In this section, we prove Theorem \ref{thm:intro ff} and its generalization Theorem \ref{thm:main in intro} which describes the cocenter $hh(\cH_{\cG})$ up to taking ``associated graded'' indexed by enhanced Newton points.

\sss{Colimit of character sheaves}\label{sss:colim CS} Let $J\sft I^{a}$. 
By Lemma \ref{c:sh geom piece}, we have a canonical equivalence $\Sh(\cY(\f{1}{J}))\simeq \Sh(L_{J}/L_{J})$. 
By definition, we have 
$$\Sh_{\cN}(\cY(\f{1}{J}))\simeq \Sh_{\cN}(L_{J}/L_{J}).$$ We remark that $\Sh_{\cN}(L_{J}/L_{J})$ can be viewed as a version of the category of character sheaves on $L_{J}$ allowing sheaves with infinite-dimensional stalks. 

By Proposition \ref{p:SSN on pieces}, the closed embedding $i_{J,1}: \cY(\f{1}{J})\incl \cY_{J} $ gives a fully faithful embedding
\begin{equation*}
\io_{J}=i_{J,1*}: \Sh_{\cN}(L_{J}/L_{J})\simeq\Sh_{\cN}(\cY(\f{1}{J})) \incl \cH_{\cG,J}.
\end{equation*}
For $J\subset J'\sft I^{a}$, there is the induction functor of character sheaves defined by Lusztig:
\begin{equation*}
\Ind_{J}^{J'}: \Sh_{\cN}(L_{J}/L_{J})\to \Sh_{\cN}(L_{J'}/L_{J'}).
\end{equation*}
Via the embeddings $\io_{J}$ and $\io_{J'}$, the induction functor is intertwined with $\ch^{J'}_{J}: \cH_{\cG,J}\to \cH_{\cG,J'}$. Therefore they induce a functor on colimits
\begin{equation}\label{io CS to colim}
\io: \colim_{\cD}\Sh_{\cN}(L_{J}/L_{J})\to \colim_{\cD}\cH_{\cG,J}
\end{equation}
where $\cD$ is the groupoid $\cD^{\c}/\Om$ introduced in Section~\ref{sss:general}. Combining the above functor with the equivalence to $hh(\cH_{\cG})$ given by Corollary \ref{cor:coequal J geom}, we get
\begin{equation}\label{CS to hh}
\colim_{J\in \cD}\Sh_{\cN}(L_{J}/L_{J})\to hh(\cH_{\cG}).
\end{equation}

\begin{theorem}\label{th:ff} The functor $\io$ is fully faithful.
\end{theorem}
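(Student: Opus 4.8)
The plan is to prove fully-faithfulness of $\io$ by leveraging the semi-orthogonal decomposition of each $\cH_{\cG,J}$ by geometric pieces (Corollary~\ref{c:nilp sh recoll YJ}) together with the combinatorial/topological control provided by the space $\frB$ and the function $f$. The key point is that, after passing to the colimit over $\cD$, only the piece $\f{1}{J}$ survives (up to the action of $\Om$), and this is exactly the statement that the essential part $\frB^{\hs}_{0}$ captures the whole colimit at Newton point $0$. Concretely, first I would reduce to the connected component: since $\Om$ permutes the pieces and preserves Newton points, by Corollary~\ref{c:coin} we may replace $\cD$ by $\cD^{\c}$ and work with $\colim_{\cD^{\c}}\Sh_{\cN}(L_J/L_J)\to \colim_{\cD^{\c}}\cH_{\cG^{\c},J}$, which is the part of the colimit indexed by facets with $\wt\nu=(0,1)$ inside $\frB$. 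The facets with $\wt\nu=(0,1)$ are precisely the $\f{1}{J}$ for $J\sft I^a$ (these are the straight-element facets with $\nu=0$), so $\frB^{\hs}_{0}\cong|\d|$ is the fundamental alcove and $\io$ is the map induced on colimits by restricting the cosheaf of categories $J\mapsto \cH_{\cG^{\c},J}$ to the full subcosheaf supported on $\f{1}{J}$.

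The main technical input is a contraction principle for cosheaves of categories, which the excerpt promises in Appendix~\ref{s:str sh} as Theorem~\ref{th:contracting cosheaf}. I would apply it as follows. Assemble the strata categories $\Sh_{\cN}(\cY(\f{u}{J}))$, as $\f{u}{J}$ ranges over combinatorial pieces with $\wt\nu=(0,1)$, into a cosheaf of categories on the poset $\Tot(\cS_{(0,1)})$ of facets of $\frB_{(0,1)}$, with transition functors the $\g^{J',u'}_{J,u}$ of Theorem~\ref{th:ch}; the semi-orthogonal decomposition (Corollary~\ref{c:nilp sh recoll YJ}) and He's local constancy (Theorem~\ref{th:ch}\eqref{th ch:red}) say this is an honest cosheaf whose stalks over the essential (straight) facets are the character-sheaf categories. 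The function $f=f_{0}$ has critical locus $\Crit(f)_{0}=\frB^{\hs}_{0}$ by Lemma~\ref{l:ess contains crit}, and by Proposition~\ref{p:contractible} the inclusion $\frB^{\hs}_{0}\hookrightarrow\frB_{0}$ (restricted to any downward subspace) is a deformation retract coming from the gradient flow of $f_{0}$. Feeding this into the abstract contraction principle, the colimit of the cosheaf over $\frB_{0}$ agrees with the colimit over $\frB^{\hs}_{0}$, and the latter colimit — over the poset of facets of a single alcove — is computed by the colimit $\colim_{\cD^{\c}}\Sh_{\cN}(L_J/L_J)$ with transition maps the parabolic inductions $\Ind^{J'}_J$, exactly matching the source of $\io$. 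Since the stalks of the cosheaf at the essential facets are fully faithful subcategories of $\cH_{\cG^{\c},J}$ by Proposition~\ref{p:SSN on pieces}, tracking through the identifications shows $\io$ is identified with an inclusion of a full subcosheaf into the total cosheaf, whose colimit is fully faithful.

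The hard part will be verifying the hypotheses of the contraction principle: one must check that the transition functors $\g^{J',u'}_{J,u}$ between non-essential pieces are suitably compatible (up to the equivalences of Theorem~\ref{th:ch}\eqref{th ch:red} along the flow lines) so that the cosheaf is genuinely "locally constant along the gradient flow" in the precise sense required, and that the subcategory generated by the essential pieces is preserved under all transition functors — this uses Theorem~\ref{th:ch}(1) (non-quasi-reduced pieces push into lower-length strata) to see that, after Morse-theoretically collapsing, the non-essential strata contribute nothing. A secondary subtlety is bookkeeping: one must match the induction functors $\ch^{J'}_J$ restricted to the piece $\f{1}{J}$ with Lusztig's $\Ind^{J'}_J$ on character sheaves (this is Theorem~\ref{th:ch}\eqref{th ch:qred} specialized to $u=u'=1$, $K=K'=J$, $x=1$), and to check the nilpotent singular support conditions are preserved throughout, which is exactly Proposition~\ref{p:SSN on pieces} and Lemma~\ref{l:ShN cyc red}. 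Once these compatibilities are in place, fully-faithfulness of $\io$ follows formally; the fact that $\io$ is moreover the associated-graded piece $hh(\cH_{\cG})_0$ of the full recollement (Theorem~\ref{thm:main in intro}) is then obtained by running the same argument Newton-point by Newton-point and assembling along the linear order on $\NP$.
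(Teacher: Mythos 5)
Your proposal captures the right high-level ingredients — reduce to $\cD^{\c}$ via Corollary~\ref{c:coin}, organize the strata by combinatorial pieces (Corollary~\ref{c:nilp sh recoll YJ}), apply the contraction principle using the gradient flow of the He--Nie function (Proposition~\ref{p:contractible}), and feed in He's Theorem~\ref{th:ch}. However, there are two problems, one cosmetic and one substantive.

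The cosmetic one: the facets $\f{1}{J}$ for $J\sft I^a$ have enhanced Newton point $\wt\nu(1)=(0,0)$, not $(0,1)$ (in the $\PGL_2$ example, $(0,1)$ is the $\omega$-component, indexed by a genuinely different conjugacy class). Same set of facets, but the label is wrong.

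The substantive one is the leap from ``the colimit of the essential cosheaf equals the colimit of the full cosheaf at Newton point $0$'' to ``hence $\io$ is fully faithful.'' This does not follow formally. In general, a termwise fully faithful map of cosheaves of categories need \emph{not} induce a fully faithful functor on colimits — the colimit can introduce new morphisms, and there is no abstract reason why the subcategory generated by the pieces $\f{1}{J}$ should sit fully faithfully inside $\colim_J\cH_{\cG,J}$ without additional control. What makes this work in the paper is that the inclusion is the \emph{closed} end of a semi-orthogonal decomposition: in the proof of Theorem~\ref{th:semi-orth hh G neutral} one produces a chain $\cC_{\le 0,\hs}\xr{\k_0}\cC_{\le 0}\xr{i_1}\cC_{\le 1,\hs}\xr{\k_1}\cdots$ and shows both that each $\k_n$ is an equivalence (this is where your contraction-principle argument belongs) \emph{and} that each $i_n$ extends to a recollement, the latter resting on Proposition~\ref{p:recoll} plus the adjointability statements coming from He's Theorem~\ref{th:ch}(1)(2). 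Fully-faithfulness of $\io$ is then the assertion that the first filtered piece $\cC_{\le 0}$ of this semi-orthogonal decomposition embeds in $\cC=\colim_n\cC_{\le n}$, which is a consequence of the recollement structure propagating through the colimit. Your proposed ordering — prove fully-faithfulness first, then assemble the recollement Newton-point-by-Newton-point afterward — is inverted: you need the recollement at every step of the length filtration, not just at $n=0$, before you can conclude anything about fully-faithfulness. So the recollement bookkeeping is not an afterthought but the load-bearing wall; without it the argument as written has a genuine gap.
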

This is the specific statement we seek for  this paper; it is an easy consequence of a general theorem describing all of $hh(\cH_{\cG})$ up to ``taking associated graded'', which we will state and prove next.  Below we will take the main step towards such a description of $hh(\cH_{\cG})$ by considering $hh(\cH_{\cG^{\c}}, \cH_{\cG})$ first.

\sss{Semi-orthogonal decomposition of $hh(\cH_{\cG^{\c}}, \cH_{\cG})$} 
In this section, we arrive at our first approximation to the main goal, as encapsulated in Theorem~\ref{th:semi-orth hh G neutral}.  

Instead of the cocenter $hh(\cH_{\cG})$, we consider $hh(\cH_{\cG^{\c}}, \cH_{\cG})$, which is equivalent to $\colim_{J\sft I^{a}}\cH_{\cG,J}$ by Corollary \ref{cor:coequal J geom}. By the discussion in Section~\ref{sss:HG comp}, $\cH_{\cG}$ decomposes into the direct sum of $\cH_{\cG^{\c}}$-bimodules $\cH^{\om}_{\cG}$ according to the connected components of $\cG$ indexed by $\om \in \Om$, we have a decomposition
\begin{equation*}
hh(\cH_{\cG^{\c}}, \cH_{\cG})=\bigoplus_{\om\in \Om}hh(\cH_{\cG^{\c}}, \cH^{\om}_{\cG}).
\end{equation*}

For $\wt\nu\in\wt\NP$, recall the essential part $\frB^{\hs}_{\wt\nu}\subset \frB_{\wt\nu}$, whose $J$-facets are indexed by the subset $\cS^{\hs}_{J,\wt\nu}\subset \cS_{J}$ consisting of $\f{u}{J}$ such that $\wt\nu(u)=\wt\nu$ and $\ell(u)=\j{2\r, \nu}$. Recall $\Tot(\cS^{\hs}_{\wt\nu})=\coprod_{J\sft I^{a}}\cS^{\hs}_{J,\wt\nu}$ is a poset as defined in Section~\ref{sss:Pset}, and the order is opposite to the closure order of facets in $\frB^{\hs}_{\wt\nu}$. If $\f{u}{J}\le\f{u'}{J'}$ in $\Tot(\cS^{\hs}_{\wt\nu})$, we have the functor 
\begin{equation*}
\g^{J',u'}_{J,u}: \Sh_{\cN}(\cY(\f{u}{J}))\to \Sh_{\cN}(\cY(\f{u'}{J'})).
\end{equation*}
defined in Theorem \ref{th:ch} (see \eqref{ind on pieces}, and here we restrict to sheaves with nilpotent singular support).

Using the functors $\g^{J',u'}_{J,u}$ we may form the colimit
\begin{equation}\label{nu part hh}
\colim_{\f{u}{J}\in \Tot(\cS_{\wt\nu}^{\hs})}\Sh_{\cN}(\cY(\f{u}{J})).
\end{equation}
For $\wt\nu=(0,0)$, $\Tot(\cS_{\wt\nu}^{\hs})$ consists of $\f{1}{J}$ for $J\sft I^{a}$, and can be identified with the poset $\cD^{\c}$. In this case, the above colimit is the same as $\colim_{J\sft I^{a}}\Sh_{\cN}(L_{J}/L_{J})$ using the induction functors.

\begin{theorem}\label{th:semi-orth hh G neutral}  Fix $\om\in \Om$. Then $hh(\cH_{\cG^{\c}}, \cH^{\om}_{\cG})$ admits a semi-orthogonal decomposition indexed by non-negative integers
\begin{equation*}
hh(\cH_{\cG^{\c}}, \cH_{\cG}^{\om})_{0}\incl hh(\cH_{\cG^{\c}},\cH_{\cG}^{\om})_{\le 1}\incl\cdots hh(\cH_{\cG^{\c}},\cH^{\om}_{\cG})_{\le n}\incl\cdots\incl hh(\cH_{\cG^{\c}},\cH_{\cG}^{\om})=\bigcup_{n\ge0}hh(\cH_{\cG^{\c}},\cH_{\cG}^{\om})_{\le n}.
\end{equation*}
In particular, each inclusion $hh(\cH_{\cG^{\c}},\cH_{\cG}^{\om})_{\le n}\incl hh(\cH_{\cG^{\c}},\cH_{\cG}^{\om})$ extends to a recollement.

For each $n\ge0$, the $n$-th associated graded category $hh(\cH_{\cG^{\c}}, \cH^{\om}_{\cG})_{n}$ has the following description: it is the direct sum
\begin{equation*}
hh(\cH_{\cG^{\c}}, \cH^{\om}_{\cG})_{n}\simeq\bigoplus_{\wt\nu=(\nu,\om)\in\wt\NP, \j{2\r,\nu}=n}hh(\cH_{\cG^{\c}}, \cH_{\cG}^{\om})_{\nu},
\end{equation*}
and for $\wt\nu=(\nu,\om)\in\wt\NP$, 
\begin{equation*}
hh(\cH_{\cG^{\c}},\cH^{\om}_{\cG})_{\nu}=\colim_{\f{u}{J}\in \Tot(\cS^{\hs}_{\wt\nu})}\Sh_{\cN}(\cY(\f{u}{J})).
\end{equation*}

\end{theorem}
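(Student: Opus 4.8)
The plan is to deduce Theorem~\ref{th:semi-orth hh G neutral} from the combinatorial and topological results established about $\frB_{\wt\nu}$ in the preceding subsections, together with the contraction principle for cosheaves of categories of Appendix~\ref{s:str sh} (Theorem~\ref{th:contracting cosheaf}). The starting point is the identification
\[
hh(\cH_{\cG^{\c}}, \cH^{\om}_{\cG}) \simeq \colim_{J\sft I^{a}} \cH^{\om}_{\cG,J}
\]
from Corollary~\ref{cor:coequal J geom}, and the observation that each $\cH^{\om}_{\cG,J}$ carries, by Corollary~\ref{c:nilp sh recoll YJ}, a stratification indexed by $(\cS_{J}^{\om}, \le_{J})$ with strata categories $\Sh_{\cN}(\cY(\f{u}{J}))$. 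Since the index categories $\cS_{J}$ for varying $J\sft I^{a}$ assemble (via the maps $\d^{J'}_{J}$ of Proposition~\ref{p:attach pieces}) into the $\cD^{\c}$-set $\cS$, and $\Tot(\cS)$ is the poset of facets of $\frB = \coprod_{\wt\nu}\frB_{\wt\nu}$, the whole colimit is organized by a cosheaf of categories on $\Tot(\cS)^{op}$, with stalks $\Sh_{\cN}(\cY(\f{u}{J}))$ and corestriction maps given by the functors $\g^{J',u'}_{J,u}$ of Theorem~\ref{th:ch}\eqref{th ch:qred}. The enhanced Newton map $\wt\nu$ being locally constant (Lemma~\ref{l:nu}) splits everything as a direct sum over $\wt\nu=(\nu,\om)\in\wt\NP$, and within each summand the length function $\ell$, which by Lemma~\ref{l:leng bound} is bounded below by $\j{2\r,\nu}$, provides the filtration: set $hh(\cH_{\cG^{\c}},\cH^{\om}_{\cG})_{\le n}$ to be the full subcategory generated by the images of $\tr_{J}$ composed with $i_{J,u!}$ for $\f{u}{J}$ of Newton point with $\j{2\r,\nu}\le n$.

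The first main step is to establish that this is indeed a semi-orthogonal decomposition, i.e.\ that each $hh(\cH_{\cG^{\c}},\cH^{\om}_{\cG})_{\le n}$ is a full subcategory whose inclusion extends to a recollement, with associated graded as stated. This requires the general formalism of stratified categories and cosheaves from Appendix~\ref{s:str sh}: one checks that the pieces indexed by $\ell(u) = m$ for fixed $m$ contribute a single layer, using that the functors $\ch^{J'}_{J}$ either preserve length (the quasi-reduced case of Theorem~\ref{th:ch}\eqref{th ch:qred}) or strictly decrease it (Theorem~\ref{th:ch}(1)); the latter gives the necessary orthogonality between layers. The key point making the associated graded layer clean is that, by Theorem~\ref{th:ch}\eqref{th ch:red}, on the $J'$-reduced pieces the transition functors $\g^{J',u'}_{J,u}$ are \emph{equivalences}, so the cosheaf restricted to the reduced locus is locally constant in the appropriate sense; this is exactly the local constancy property invoked in Section~\ref{sss:intro semi} and derived from He's result~\cite{heGENERALIZATIONCYCLICSHIFT}.

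The second and crucial step is to identify the $n$-th associated graded with $\bigoplus_{\j{2\r,\nu}=n}\colim_{\f{u}{J}\in\Tot(\cS^{\hs}_{\wt\nu})}\Sh_{\cN}(\cY(\f{u}{J}))$, i.e.\ to show that only the \emph{essential} part $\frB^{\hs}_{\wt\nu}$ contributes. Naively the colimit over all of $\Tot(\cS_{\wt\nu})$ with the length filtration would give a colimit over a larger poset; the content is that passing to the subposet $\Tot(\cS^{\hs}_{\wt\nu})$ does not change the colimit of the associated-graded cosheaf. This is where the contraction principle enters: by Proposition~\ref{p:contractible}, the inclusion $\frB^{\hs}_{\wt\nu}\incl\frB'$ of any downward subspace is a homotopy equivalence (in fact a deformation retract onto $\Crit(f)_{\wt\nu}$), realized concretely by the gradient flow of the He--Nie function $f$. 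Combined with the local constancy of the relevant cosheaf of categories along the flow lines — which holds because along a gradient trajectory one only ever crosses walls in the length-non-increasing direction, and the reduced-case equivalences of Theorem~\ref{th:ch}\eqref{th ch:red} apply — Theorem~\ref{th:contracting cosheaf} identifies the colimit over the downward poset with the colimit over $\Tot(\cS^{\hs}_{\wt\nu})$. Running this for the downward subspaces $\frB_{\wt\nu,\le(n,[E])}$ exhausting $\frB_{\wt\nu}$ (Example~\ref{ex:downward}(3)) and taking the colimit over $n$ assembles the full filtration.

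The main obstacle I expect is precisely this last step: verifying that the cosheaf of categories $\f{u}{J}\mapsto\Sh_{\cN}(\cY(\f{u}{J}))$ satisfies the hypotheses of the abstract contraction principle — namely that the corestriction functors associated to the specializations realized by the gradient flow are equivalences (or at least that the relevant Kan extension / colimit comparison holds), and that the Morse-theoretic stratification of $\frB_{\wt\nu}$ by sublevel sets of $f$ is compatible with the categorical recollement. This couples the delicate combinatorics of pieces (Lemmas~\ref{l:ell tau under closure}, \ref{l:EJw}, \ref{l:glue apts}) with the geometric input of Theorem~\ref{th:ch} and the general categorical machinery of Appendix~\ref{s:str sh}; making the bookkeeping of which pieces are $J'$-reduced versus merely quasi-$J'$-reduced match the wall-crossing behavior of $\nb f$ is the technical heart. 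Once that compatibility is in hand, Theorem~\ref{th:ff} follows immediately by taking $n=0$, $\nu=0$, $\om=0$, where $\Tot(\cS^{\hs}_{(0,0)}) = \cD^{\c}$, the pieces $\cY(\f{1}{J})$ are $L_{J}/L_{J}$ up to unipotent gerbes, and the colimit is $\colim_{\cD^{\c}}\Sh_{\cN}(L_{J}/L_{J})$; the passage to $\cD = \cD^{\c}/\Om$ and the full cocenter $hh(\cH_{\cG})$ is then the $\Om$-coinvariants statement of Corollary~\ref{c:coin}.
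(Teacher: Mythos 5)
Your proposal is essentially the same as the paper's proof: both proceed by filtering the colimit $\colim_{J}\cH_{\cG,J}$ by the length of the pieces, invoking He's Theorem~\ref{th:ch} to control the behavior of $\ch^{J'}_{J}$ across the length filtration (strict drop in the non-quasi-reduced case for orthogonality, the $\g^{J',u'}_{J,u}$-equivalences in the reduced case for local constancy), and then using the gradient-flow deformation retract of Proposition~\ref{p:contractible} together with the contraction principle (Theorem~\ref{th:contracting cosheaf} / Corollary~\ref{c:contracting cosheaf}) to cut the associated graded down to the essential part $\frB^{\hs}_{\wt\nu}$. You correctly identify the technical heart as verifying the cosheaf-recollement and local-constancy hypotheses of the contraction principle.

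Two imprecisions in your setup deserve comment. First, the opening claim that ``the whole colimit is organized by a cosheaf of categories on $\Tot(\cS)^{\opp}$ with stalks $\Sh_{\cN}(\cY(\f{u}{J}))$ and corestrictions $\g^{J',u'}_{J,u}$'' is not literally available: the functors $\g^{J',u'}_{J,u}$ are defined only for quasi-$J'$-reduced pieces, and for general $J\subset J'$ the transition functor $\ch^{J'}_J$ does \emph{not} respect the stratification of $\cH_{\cG,J}$ by pieces --- this is exactly the complication the paper flags. Second, defining $hh(\cH_{\cG^{\c}},\cH^{\om}_{\cG})_{\le n}$ by the condition $\j{2\r,\nu}\le n$ on the Newton point does not give a closed (hence recollement-compatible) filtration: a piece of large length with small $\j{2\r,\nu}$ can have in its closure pieces with larger $\j{2\r,\nu}$. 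The paper avoids both issues by filtering each $\cY_{J}$ by the closed substacks $\cY_{J,\le n}$ (length $\le n$, closed by Theorem~\ref{th:He closure}) and interpolating with the intermediate $\cY_{J,\le n,\hs}$; the filtration that realizes the SOD is by length, and the identification of the $n$-th graded piece with the $\nu$-indexed direct sum is precisely the content of the contraction step (paper's Claim (2)), not something built into the filtration from the start. With these corrections your outline matches the paper's two-claim structure.
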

\begin{proof}
In the argument we shall treat $hh(\cH_{\cG^{\c}}, \cH_{\cG})$ as a whole. It is clear that the resulting semi-orthogonal decomposition induces a semi-orthogonal decomposition for each summand $hh(\cH_{\cG^{\c}}, \cH^{\om}_{\cG})$.

Abbreviate $\cH_{\cG, J}$ by $\cC_{J}$. Let $\cY_{J, \le n}$ be the union of geometric pieces $\cY(\f{u}{J})$ with $\ell(u)\le n$. By Theorem \ref{th:He closure}, $\cY_{J,\le n}$ is closed in $\cY_{J}$. Let $\cC_{J,\le n}=\Sh_{\cN}(\cY_{J,\le n})$ (the meaning of $\Sh_{\cN}$ is defined in the paragraph preceding Corollary \ref{c:nilp sh recoll YJ}).  By Theorem \ref{th:ch}(1)(2), for $J\subset J'\sft I^{a}$, the functors $\ch_{J}^{J'}$ send $\cC_{J,\le n}$ to $\cC_{J',\le n}$. Therefore we may form the colimit
\begin{equation*}
\cC_{\le n}=\colim_{J\sft I^{a}}\cC_{J, \le n}
\end{equation*}
using the functors $\ch^{J'}_{J}$. 

For $\wt\nu\in \wt\NP$, define $\cY_{J,\wt\nu,\hs}$ to be the union of $\cY(\f{u}{J})$ where $\wt\nu(u)=\wt\nu$ and $\ell(u)=\j{2\r, \nu}$. Let $\cY_{J,\le n, \hs}$ be the substack of $\cY_{J,\le n}$
\begin{equation*}
\cY_{J,\le n, \hs}=\cY_{J,\le n-1}\cup(\bigcup_{\wt\nu=(\nu,\om)\in \wt\NP, \j{2\r,\nu}=n}\cY_{J,\nu,\hs}).
\end{equation*}
Again by Theorem \ref{th:He closure}, $\cY_{J,\le n, \hs}$ is closed in $\cY_{J}$. Define $\cC_{J,\le n, \hs}=\Sh_{\cN}(\cY_{J,\le n, \hs})$. Theorem \ref{th:ch}(1)(2), $\ch^{J'}_{J}$ sends $\cC_{J,\le n, \hs}$ to $\cC_{J',\le n,\hs}$.  Therefore we may form the colimit
\begin{equation*}
\cC_{\le n,\hs}=\colim_{J\sft I^{a}}\cC_{J, \le n,\hs}
\end{equation*}
using the functors $\ch^{J'}_{J}$.  

For each $J\sft I^{a}$, we have natural inclusions
\begin{equation*}
\cC_{J,\le 0,\hs}\incl \cC_{J,\le 0}\incl \cC_{J,\le 1,\hs}\incl\cdots\incl \cC_{J,\le n-1}\incl \cC_{J,\le n,\hs}\incl \cC_{J,\le n}\incl \cdots
\end{equation*}
These inclusions are compatible with the functors $\ch^{J'}_{J}$, hence we get functors between the colimits over $J$:
\begin{equation*}
\cC_{\le 0,\hs}\xr{\k_{0}}\cC_{\le 0}\xr{i_{0}}\cC_{\le 1,\hs}\xr{\k_{1}}\cdots\to\cC_{\le n-1}\xr{i_{n}} \cC_{\le n,\hs}\xr{\k_{n}} \cC_{\le n}\to \cdots
\end{equation*}
For each $J$, we have $\cC_{J}\simeq\colim_{n}\cC_{J,\le n}$. Commuting the order of taking colimits, we have
\begin{equation*}
hh(\cH_{\cG})\simeq \colim_{J\in\cD^{\c}}\cC_{J}\simeq \colim_{n}\cC_{\le n}.
\end{equation*}
The assertion of the theorem will follow from the two claims below:
\begin{enumerate}
\item For $n\ge0$, the functor $i_{n}: \cC_{\le n-1}\to \cC_{\le n,\hs}$ is fully faithful and extends to a recollement diagram
\begin{equation}\label{rec C n hs}
\xymatrix{\cC_{n,\hs}    \ar@<3ex>[r]^{j_{n!}}  \ar@<-3ex>[r]^{j_{n*}} 
 &   \ar[l]_{j^{!}_{n}=j^{*}_{n}} \cC_{\le n,\hs} \ar@<3ex>[r]^{i_{n}^{*}} \ar@<-3ex>[r]^{i_{n}^{!}}   &      \ar[l]_{i_{n}} \cC_{\le n-1} }
\end{equation}
where the category $\cC_{n,\hs}$ is canonically equivalent to the direct sum of $hh(\cH_{\cG^{\c}}, \cH^{\om}_{\cG})_{\nu}$ defined using \eqref{nu part hh}, for $\om\in\Om$ and $\j{2\r, \nu}=n$.

\item For $n\ge0$, the functor $\k_{n}: \cC_{\le n, \hs}\to \cC_{\le n}$ is an equivalence.
\end{enumerate}

We first prove Claim (1). Let $\wt\NP_{n}$ be the set of $\wt\nu=(\nu,\om)\in \wt\NP$ such that $\j{2\r, \nu}=n$. For $J\sft I^{a}$, let 
$$\cY_{J,n,\hs}=\coprod_{\wt\nu\in \wt\NP_{n}}\cY_{J,\wt\nu}=\coprod_{\wt\nu\in \wt\NP_{n}}\left(\coprod_{\f{u}{J}\in \cS_{J,\wt\nu}^{\hs}}\cY(\f{u}{J})\right).$$
Let $\cC_{J,n,\hs}=\Sh_{\cN}(\cY_{J,n,\hs})$. The decomposition of $\cY_{J,n,\hs}$ above  gives a decomposition
\begin{equation}\label{CJn}
\cC_{J,n,\hs}=\bigoplus_{\wt\nu\in \wt\NP_{n}}\cC_{J,\wt\nu,\hs}=\bigoplus_{\wt\nu\in\wt\NP_{n}}\left(\bigoplus_{\f{u}{J}\in \cS_{J,\wt\nu}^{\hs}}\Sh_{\cN}(\cY(\f{u}{J}))\right).
\end{equation}
Then $\cC_{J, \le n, \hs}$ carries a recollement structure
\begin{equation}\label{CJn hs recoll}
\xymatrix{\cC_{J, n,\hs}  \ar@<3ex>[r]^{j_{J, n!}}  \ar@<-3ex>[r]^{j_{J, n*}} & \ar[l]_{j^{!}_{J, n}=j^{*}_{J, n}} \cC_{J, \le n,\hs} \ar@<3ex>[r]^{i_{J, n}^{*}} \ar@<-3ex>[r]^{i_{J, n}^{!}}  & \ar[l]_{i_{J, n}}        \cC_{J, \le n-1}   
}
\end{equation}

For $J\subset J'\sft I^{a}$,  $\f{u}{J}\in\cS^{\hs}_{\wt\nu,J}$, $\f{u}{J}$ is quasi-$J'$-reduced since $\ell(u)=\ell(u')=\j{2\r, \nu}$ if $\f{u'}{J'}=\s_{J}^{J'}(\f{u}{J})$. By  Theorem \ref{th:ch}\eqref{th ch:qred}, the functor $\ch^{J'}_{J}$ respects the recollement structure on $\cC_{J, \le n,\hs}$ and induces the functor $\op_{\f{u}{J}\in \cS^{\hs}_{J,\wt\nu}}\g^{J',u'}_{J,u}: \cC_{J, n,\hs, \wt\nu)}\to \cC_{J', n,\hs}$. By Proposition \ref{p:recoll} of Appendix \ref{s:str sh}, we conclude that that the colimit $\cC_{\le n,\hs}=\colim_{J\sft I^{a}}\cC_{J, \le n,\hs}$ also has a recollement structure by taking termwise colimits of \eqref{CJn hs recoll}. This gives the recollement diagram \eqref{rec C n hs}. Moreover, the category $\cC_{n,\hs}$ in \eqref{rec C n hs} is the direct sum over $\wt\nu\in \wt\NP_{n}$ of $\cC_{\wt\nu,\hs}:=\colim_{J\sft I^{a}}\cC_{J, \wt\nu, \hs}$. By \eqref{CJn} and the description of $\ch^{J'}_{J}$ on $\cC_{J, n,\hs}$ in terms of $\g^{J',u'}_{J,u}$, we conclude that $\cC_{\nu,\hs}$ is canonically equivalent to \eqref{nu part hh}.

Now we prove Claim (2). We fix $n\in\ZZ_{\ge0}$. Let $\Sig_{<n}\subset \un\frE\times\wt\NP$ be the set of pairs $([E],\wt\nu)\in\un\frE\times\wt\NP$ such that $\j{2\r, \nu}<n$. Then $\Sig_{<n}$ has a partial order $([E], \wt\nu)\le ([E'], \wt\nu)$ if $[E]\le [E']$ in $\un\frE$ (no comparison if the Newton points are different). We extend this partial order to a total order on $\Sig_{<n}$ and denote it by $\preceq$.

For $J\sft I^{a}$, let $\cS_{J,(n,[E],\wt\nu)}$ be the set of combinatorial pieces $\f{u}{J}\in \cS_{J}$ such that $\ell(u)=n, \t(\f{u}{J})=[E]$ and $\wt\nu(u)=\wt\nu$. For $([E],\wt\nu)\in \Sig_{<n}$, define a finite subset of $\cS_{J}$ by
\begin{equation*}
\cS_{J,\preceq(n, [E], \wt\nu)}=\cS_{J, \le n, \hs}\cup \left(\bigcup_{\Sig_{<n}\ni ([E'],\wt\nu')\preceq([E], \wt\nu)}\cS_{J,(n,[E'],\wt\nu')}\right).
\end{equation*}
By Lemma \ref{l:ell tau under closure}, for $J\subset J'\sft I^{a}$, the map $\d^{J'}_{J}$ send $\cS_{J,\preceq(n, [E], \wt\nu)}$ to $\cS_{J',\preceq(n, [E], \wt\nu)}$. Therefore the assignment $J\mapsto \cS_{J,\preceq(n, [E], \wt\nu)}$ gives a $\cD^{\c}$-subset $\cS_{\preceq(n, [E], \wt\nu)}$ of $\cS_{\wt\nu}$. Let 
\begin{equation*}
\frB_{\preceq(n, [E], \wt\nu)}=|\cS_{\preceq(n, [E], \wt\nu)}|\subset \frB_{\wt\nu}.
\end{equation*}
Let $\cS_{J,\prec(n, [E], \wt\nu)}=\cS_{J,\preceq(n, [E], \wt\nu)}\setminus\cS_{J,(n,[E],\wt\nu)}$. Thus we get a $\cD^{\c}$-set $\cS_{\prec(n, [E], \wt\nu)}$ and its geometric realization $\frB_{\prec(n, [E], \wt\nu)}$.

Let $\cY_{J, \preceq(n, [E], \wt\nu)}\subset \cY_{J}$ be the union of geometric pieces $\cY(\f{u}{J})$ where $\f{u}{J}\in \cS_{J,\preceq(n, [E], \wt\nu)}$. By Theorem \ref{th:He closure}, $\cY_{J, \preceq(n, [E], \wt\nu)}$ is a closed substack of $\cY_{J}$. Similarly we define the closed substack $\cY_{J, \prec(n, [E], \wt\nu)}\subset \cY_{J}$. The category of nilpotent sheaves
\begin{equation*}
\cC_{J, \preceq(n, [E], \wt\nu)}:=\Sh_{\cN}(\cY_{J, \preceq(n, [E], \wt\nu)}), \quad \cC_{J, \prec(n, [E], \wt\nu)}:=\Sh_{\cN}(\cY_{J, \prec(n, [E], \wt\nu)})
\end{equation*}
are defined as in the paragraph preceding Corollary \ref{c:nilp sh recoll YJ}. Theorem \ref{th:ch} implies that $\ch^{J'}_{J}$ sends $\cC_{J, \preceq(n, [E], \wt\nu)}$ to $\cC_{J', \preceq(n, [E], \wt\nu)}$. We form the colimit
\begin{equation*}
\cC_{\preceq(n,[E], \wt\nu)}:=\colim_{J\sft I^{a}}\cC_{J, \preceq(n, [E], \wt\nu)}.
\end{equation*}
Similarly define $\cC_{\prec(n,[E], \wt\nu)}$. Since $\preceq$ is a total order, $\cC_{\prec(n,[E], \wt\nu)}=\cC_{\preceq(n,[E'],\wt\nu')}$ if $([E'],\wt\nu')$ is a predecessor of $([E], \wt\nu)$, or if $([E],\wt\nu)$ is the minimal element of $\Sig_{<n}$, $\cC_{\prec(n,[E], \wt\nu)}\simeq \cC_{\le n, \hs}$.

Since $\cC_{J, \le n}=\colim_{([E], \wt\nu)\in \Sig_{<n}}\cC_{J, \preceq(n,[E], \wt\nu)}$, taking colimit over $J$ we get
\begin{equation*}
\cC_{\le n}\simeq\colim_{([E], \wt\nu)\in \Sig_{<n}}\cC_{\preceq(n,[E], \wt\nu)}.
\end{equation*}
To show that $\cC_{\le n, \hs}\to \cC_{\le n}$ is an equivalence, it suffices to show that for each $([E],\wt\nu)\in\Sig_{<n}$, the natural functor
\begin{equation*}
\k_{[E], \wt\nu}: \cC_{\prec(n,[E], \wt\nu)}\to \cC_{\preceq(n,[E], \wt\nu)}
\end{equation*}
is an equivalence. For this we apply the contraction principle for cosheaves on posets that we proved in Theorem \ref{th:contracting cosheaf}, in the form of Corollary \ref{c:contracting cosheaf}.

To set the stage for applying Corollary \ref{c:contracting cosheaf}, we define a poset $\cI$ whose underlying set is 
$$\cI:=\cD^{\c}\sqcup \Tot(\cS_{(n,[E],\wt\nu)}).$$
Let $f: \cI\to \cD^{\c}$ be the projection that is the identity on $\cD^{\c}$ and the natural projection on $\Tot(\cS_{(n,[E],\wt\nu)})$. Let $s:\cD^{\c}\incl \cI$ be the inclusion. For the partial order, we include all order relations from  $\cD^{\c}$ and $\Tot(\cS_{(n,[E],\wt\nu)})$, and take the partial order generated by the following extra relations:
\begin{itemize}
\item For any $J\in \cD^{\c}$ and $\f{u}{J}\in \cS_{J,(n,[E],\wt\nu)}$, we declare $s(J)<\f{u}{J}$.
\item For any $J\subset J'$ in $\cD^{\c}$, $\f{u}{J}\in \cS_{J,(n,[E],\wt\nu)}$ such that $\d_{J}^{J'}(\f{u}{J})\in \cS_{J', \prec(n,[E],\wt\nu)}$, we declare $\f{u}{J}<s(J')$.
\end{itemize}
By construction, $f$ is a coCartesian map of posets, and every fiber $f^{-1}(J)$ has a unique minimal element $s(J)$. 

We next define a cosheaf $\cF$ on $\cI$ in the sense of Section~\ref{sss:cosh}.
\begin{itemize}
\item For $J\in \cD^{\c}$, let $\cF_{s(J)}=\cC_{J, \prec(n,[E],\wt\nu)}=\Sh_{\cN}(\cY_{J, \prec(n,[E],\wt\nu)})$.
\item For $\f{u}{J}\in \cS_{J,(n,[E],\wt\nu)}$, let $\cF_{\f{u}{J}}=\Sh_{\cN}(\cY_{J, \prec(n,[E],\wt\nu)}\cup \cY(\f{u}{J}))$. Note that $\cY_{\prec(n,[E],\wt\nu)}\cup \cY(\f{u}{J})$ is closed in $\cY_{J}$. 
\end{itemize}
The transition functors are defined as follows:
\begin{itemize}
\item For $J\subset J'\in\cD^{\c}$, the functor $\cF_{s(J)}\to \cF_{s(J')}$ is given by $\ch^{J'}_{J}$.

\item For $J\in \cD^{\c}$ and $\f{u}{J}\in \cS_{J,(n,[E],\wt\nu)}$, the functor $\cF_{s(J)}\to \cF_{\f{u}{J}}$ is $\io_{*}:\Sh_{\cN}(\cY_{J, \prec(n,[E],\wt\nu)})\incl \Sh_{\cN}(\cY_{J, \prec(n,[E],\wt\nu)}\cup \cY(\f{u}{J}))$ for the closed embedding $\io: \cY_{J, \prec(n,[E],\wt\nu)}\incl \cY_{J, \prec(n,[E],\wt\nu)}\cup\cY(\f{u}{J})$.

\item For $\f{u}{J}\le \f{u'}{J'}$ in $\Tot(\cS_{(n,[E],\wt\nu)})$ (which means $\d^{J'}_{J}(\f{u}{J})=\f{u'}{J'}$), the functor $\cF_{\f{u}{J}}\to \cF_{\f{u'}{J'}}$ is given by $\ch^{J'}_{J}$. Here we use Theorem \ref{th:ch}(2) to verify that $\ch^{J'}_{J}$ indeed sends sheaves supported on $\cY(\f{u}{J})$ to sheaves supported on $\cY(\f{u'}{J'})$ since $\ell(u)=\ell(u')=n$.

\item For $\f{u}{J}< s(J')$, where $J\subset J'$,  $\f{u}{J}\in\cS_{J,(n,[E],\wt\nu)}$ and $\d^{J'}_{J}(\f{u}{J})\in \cS_{J',\prec(n,[E],\wt\nu)}$, the functor $\cF_{\f{u}{J}}\to \cF_{s(J')}$ is again given by $\ch^{J'}_{J}$. Here we again use Theorem \ref{th:ch}(1)(2) to verify that $\ch^{J'}_{J}$ indeed sends sheaves supported on $\cY(\f{u}{J})$ either to sheaves supported on $\cY_{J',\le n-1}\subset \cY_{J', \prec(n,[E],\wt\nu)}$ if $\ell(u')<\ell(u)=n$, or to sheaves supported on $\cY(\f{u'}{J'})\subset \cY_{J', \prec(n,[E],\wt\nu)}$ if $\ell(u')=\ell(u)=n$.
\end{itemize}

By construction, $s^{*}\cF$ is the cosheaf on $\cD^{\c}$ whose value at $J$ is $\cC_{J, \prec(n,[E],\wt\nu)}$ and transition functors are given by $\ch^{J'}_{J}$. Therefore
\begin{equation}\label{sF}
\cC_{\prec(n,[E],\wt\nu)}=\colim_{\cD^{\c}}s^{*}\cF.
\end{equation}

On the other hand, we claim that for any $J\in\cD^{\c}$, $(\int_{f}\cF)_{J}\simeq \cC_{J,\preceq(n,[E],\wt\nu)}$ and this assignment extends to an equivalence of cosheaves on $\cD^{\c}$ (for the definition of $\int_{f}\cF$ for a coCartesian map $f$, see Section~\ref{sss:push cosh}). Indeed, since $f^{-1}(J)=\{s(J)\}\cup \cS_{J,(n,[E],\wt\nu)}$ as a poset has $s(J)$ as the minimal element and no extra relations besides those already in $\cS_{J,(n,[E],\wt\nu)}$, $(\int_{f}\cF)_{J}$ is the pushout of $\cF_{s(J)}$ diagonally embedded into $\op\cF_{\f{u}{J}}$ (sum over all $\f{u}{J}\in \cS_{J,(n,[E], \wt\nu)}$). In other words, $(\int_{f}\cF)_{J}$ is the pushout of $\cC_{J,\prec(n,[E],\wt\nu)}$ diagonally embedded into $\op\Sh_{\cN}(\cY_{J,\prec(n,[E],\wt\nu)}\cup\cY(\f{u}{J}))$ for all $\f{u}{J}\in \cS_{J,(n,[E], \wt\nu)}$. The latter is precisely $\cC_{J,\preceq(n,[E],\wt\nu)}$ because of the open-closed decomposition 
\begin{equation*}
\cY_{J,\preceq(n,[E],\wt\nu)}=\left(\coprod_{\f{u}{J}\in \cS_{J,(n,[E], \wt\nu)}}\cY(\f{u}{J})\right)\sqcup\cY_{J,\prec(n,[E],\wt\nu)}.
\end{equation*}
The transition maps $(\int_{f}\cF)_{J}\to (\int_{f}\cF)_{J'}$ are given by $\ch^{J'}_{J}$, therefore we get
\begin{equation}\label{f!F}
\colim_{\cD^{\c}}(\int_{f}\cF)_{J}\simeq \colim_{\cD^{\c}}\cC_{J,\preceq(n,[E],\wt\nu)}=\cC_{\preceq(n,[E],\wt\nu)}.
\end{equation}

Now we would like to apply Corollary \ref{c:contracting cosheaf} to the situation of $f:\cI\to \cD^{\c}$ with section $s$, and the cosheaf $\cF$ on $\cI$. The conclusion would be that the natural map $\colim_{\cD^{\c}}s^{*}\cF\to \colim_{\cD^{\c}}\int_{f}\cF$ is an equivalence. Combining with \eqref{sF} and \eqref{f!F}, we conclude that 
\begin{equation*}
\k_{[E], \wt\nu}: \cC_{\prec(n,[E], \wt\nu)}=\colim_{\cD^{\c}}s^{*}\cF\to \colim_{\cD^{\c}}\int_{f}\cF=\cC_{\preceq(n,[E], \wt\nu)}
\end{equation*}
is an equivalence, proving Claim (2).

It thus remains to check that the conditions for applying Theorem \ref{th:contracting cosheaf} are satisfied.

The first condition: we define $\cL$ to be the cosheaf on $\Tot(\cS_{(n,[E],\wt\nu)})=\cI\setminus s(\cJ)$ whose value at $\f{u}{J}$ is $\Sh_{\cN}(\cY(\f{u}{J}))$, and the transition functors are given by $\ch^{J'}_{J}$. We have a natural map $\b: \cF|_{\Tot(\cS_{(n,[E],\wt\nu)})}\to \cL$ termwise given by open restriction $\Sh_{\cN}(\cY_{J,\prec(n,[E], \wt\nu)}\cup\cY(\f{u}{J}))\to \Sh_{\cN}(\cY(\f{u}{J}))$.

With this definition of $\cL$, we need to check that
\begin{equation*}
\xymatrix{\cL & \ar[l]_-{\b} \cF|_{\Tot(\cS_{(n,[E],\wt\nu)})} & \ar[l]_-{\a} (s_{!}s^{*}\cF)|_{\Tot(\cS_{(n,[E],\wt\nu)})}}
\end{equation*}
is a recollement of cosheaves on $\Tot(\cS_{(n,[E],\wt\nu)})$.
By Lemma~\ref{l:pull is s!}, $s_{!}s^{*}\cF\simeq f^{*}s^{*}\cF$, therefore we can replace the rightmost term above by $(f^{*}s^{*}\cF)|_{\Tot(\cS_{(n,[E],\wt\nu)})}$. Termwise at $\f{u}{J}\in \cS_{J,(n,[E],\wt\nu)}$, they fit into a recollement
\begin{equation}\label{recYuJ}
\xymatrix{\Sh_{\cN}(\cY(\f{u}{J})) \ar@<1ex>[r]\ar@<-1ex>[r]& \ar[l]\Sh_{\cN}(\cY_{J,\prec(n,[E], \wt\nu)}\cup\cY(\f{u}{J})) \ar@<1ex>[r]\ar@<-1ex>[r]& \ar[l]\Sh_{\cN}(\cY_{J,\prec(n,[E], \wt\nu))}}
\end{equation}
by the open-closed decomposition $\cY(\f{u}{J})\cup\cY_{J,\prec(n,[E], \wt\nu)}$. All functors above are continuous by Proposition~\ref{p:cont 4 functors}, therefore \eqref{recYuJ} is a recollement in $\St^{L}_{k}$. For $\f{u}{J}\le \f{u'}{J'}$ in $\Tot(\cS_{(n,[E],\wt\nu)})$,  $\ch^{J'}_{J}$ induces a morphism of between the recollement  \eqref{recYuJ} and its counterpart for $\f{u'}{J'}$, which follows from Theorem \ref{th:ch}(2). This verifies the first condition.

The second condition: by Theorem \ref{th:ch}(3), $\cL$ is locally constant. 

The third condition: we need to check that $|s(\cD^{\c})|\subset |\cI|$ is a homotopy equivalence. Let $\cK:=\Tot(\cS_{\preceq(n,[E], \wt\nu)})$; $\cK':=\Tot(\cS_{\prec(n,[E], \wt\nu)})$. 
Then $\cI\setminus \cD^{\c}=\Tot(\cS_{(n,[E], \wt\nu)})=\cK\setminus \cK'$. For any $\cD^{\c}$-set $X$, let $\cD^{\c}\lhd X$ be the poset whose underlying set is $\cD^{\c}\sqcup  X$ with the added relation $J<x$ for any $x\in X_{J}$. Then $|\cD^{\c}\lhd X|$ is homeomorphic to the mapping cylinder $[0,1]\times |X|\coprod_{\{0\}\times |X|} |\cD^{\c}|$ of the projection $|X|\to |\cD^{\c}|$. Now we can write $\cI$ as a pushout of posets
\begin{equation*}
\cI\cong (\cD^{\c}\lhd \cK)\coprod_{\cD^{\c}\lhd \cK'}\cD^{\c}
\end{equation*}
where the map $\cD^{\c}\lhd \cK'\to \cD^{\c}$ is the identity on $\cD^{\c}$ and the projection on $\cK'$. Under this presentation, the inclusion $s: \cD^{\c}\incl\cI$ corresponds to the inclusion of the second factor $\cD^{\c}$. Taking geometric realizations we get
\begin{equation*}
|\cI|\cong ([0,1]\times |\cK|)\coprod_{\{0\}\times |\cK|\cup [0,1]\times |\cK'|}|\cD^{\c}|.
\end{equation*}
To show $|\cD^{\c}|\incl |\cI|$ is a homotopy equivalence, it suffice to show $\{0\}\times |\cK|\cup [0,1]\times |\cK'|\incl [0,1]\times |\cK|$ is a homotopy equivalence, or $|\cK'|\incl |\cK|$ is a homotopy equivalence. 

By Remark \ref{r:sd geom real}, $|\cK|$ is a subdivision of $\frB_{\wt\nu,\preceq(n,[E])}$, and $|\cK'|$ is a subdivision of $\frB_{\wt\nu,\prec(n,[E])}$. Therefore it suffices to check that the inclusion $\frB_{\wt\nu,\prec(n,[E])}\incl\frB_{\wt\nu,\preceq(n,[E])}$ is a homotopy equivalence. Both $\frB_{\wt\nu,\prec(n,[E])}$ and $\frB_{\wt\nu,\preceq(n,[E])}$ are downward subsets of $\frB_{\wt\nu}$ in the sense of Definition \ref{def:downward} (see Example \ref{ex:downward}). Therefore by Proposition \ref{p:contractible}, the inclusions $\Crit(f)_{\wt\nu}\incl |\frB_{\wt\nu,\prec(n,[E])}|$ and $\Crit(f)_{\wt\nu}\incl \frB_{\wt\nu,\preceq(n,[E])}$ are both homotopy equivalences, hence the inclusion $\frB_{\wt\nu,\prec(n,[E])}\incl \frB_{\preceq(n,[E])}$ is also a homotopy equivalence. This verifies the second condition and completes the proof of the theorem.

\end{proof}

\begin{remark} We expect that $hh(\cH_{\cG})$ has a recollement structure indexed by the stronger poset $(\wt\NP, \le)$.  Under Conjecture \ref{thm:betti genus one}, this recollement structure should correspond to the Harder-Narasimhan stratification on $\Bun_{G}(E)$, which is also indexed by $\wt\NP$.
\end{remark}

\sss{Semi-orthogonal decomposition of $hh(\cH_{\cG})$}
\label{sec:semi_orth_decomp}
Now consider $hh(\cH_{\cG})$. By the discussion in Section~\ref{sss:HG comp}, $hh(\cH_{\cG})$ again decomposes into a direct sum $hh(\cH_{\cG})^{\om}$ indexed by $\om\in \Om$.

Identify $\Om$ with length zero elements in $\tilW$, then $\Om$ acts on $\tilW$ by conjugation. The length function $\ell: \tilW\to \ZZ_{\ge0}$ and the enhanced Newton point function $\wt\nu: \tilW\to \wt\NP$ are $\Om$-invariant. Therefore for $J\sft I^{a}$ and $\om\in\Om$, we have a bijection
\begin{equation*}
d_{\om}: \cS_{J}\isom \cS_{\om(J)}
\end{equation*}
sending $\f{u}{J}$ to $\f{\om u\om^{-1}}{\om(J)}$.

For $\wt\nu\in\wt\NP$,  the map $d_{\om}$ induces a bijection 
\begin{equation*}
\cS_{J,\wt\nu}^{\hs}\isom \cS_{\om(J), \wt\nu}^{\hs}.
\end{equation*}
This induces an action of $\Om$ on the $\cD^{\c}$-set $\cS^{\hs}_{\wt\nu}$, and on the total set $\Tot(\cS^{\hs}_{\wt\nu})$. We can form the groupoid $[\Tot(\cS^{\hs}_{\wt\nu})/\Om]$ whose objects are $\Tot(\cS^{\hs}_{\wt\nu})$ and morphisms between $\f{u}{J}$ and $\f{u'}{J'}\in \Tot(\cS^{\hs}_{\wt\nu})$ is the set of $\om\in\Om$ such that $d_{\om}(\f{u}{J})=\f{u'}{J'}$.   

The action of $c_{\om}$ on $\cH_{\cG}$ (see Section~\ref{Om action HG}) induces an equivalence $\cH_{\cG, J}\to \cH_{\cG, \om(J)}$, which restricts to an equivalence $\Sh_{\cN}(\cY(\f{u}{J}))\isom \Sh_{\cN}(\cY(d_{\om}(\f{u}{J})))$.  Therefore, the assignment $\f{u}{J}\mapsto \Sh_{\cN}(\cY(\f{u}{J}))$ also extends to a functor
\begin{equation*}
\Sh_{\cN}(\cY(-)): [\Tot(\cS^{\hs}_{\wt\nu})/\Om] \to \St^{L}, \quad \f{u}{J}\mapsto \Sh_{\cN}(\cY(\f{u}{J})).
\end{equation*}
We can then form the colimit
\begin{equation*}
\colim_{\f{u}{J}\in [\Tot(\cS^{\hs}_{\wt\nu})/\Om]}\Sh_{\cN}(\cY(\f{u}{J})).
\end{equation*}
For $\wt\nu=(0,0)$, $[\Tot(\cS_{\wt\nu}^{\hs})/\Om]$ can be identified with $\cD$. In this case, the above colimit is the same as $\colim_{J\in\cD}\Sh_{\cN}(L_{J}/L_{J})$ under the induction functors.

\begin{theorem}\label{thm:main in text} 
For each $\om\in\Om$, the category $hh(\cH_{\cG})^{\om}$ admits a semi-orthogonal decomposition indexed by non-negative integers
\begin{equation*}
hh(\cH_{\cG})^{\om}_{0}\incl hh(\cH_{\cG})^{\om}_{\le 1}\incl\cdots hh(\cH_{\cG})^{\om}_{\le n}\incl\cdots\incl hh(\cH_{\cG})^{\om}=\bigcup_{n\ge0}hh(\cH_{\cG})^{\om}_{\le n}.
\end{equation*}
In particular, each inclusion $hh(\cH_{\cG})^{\om}_{\le n}\incl hh(\cH_{\cG})^{\om}$ extends to a recollement.

For $n\ge0$, the $n$-th associated graded category $hh(\cH_{\cG})_{n}$ has the following description: it is the direct sum
\begin{equation*}
hh(\cH_{\cG})^{\om}_{n}\simeq\bigoplus_{\wt\nu=(\nu,\om)\in \wt\NP, \j{2\r,\nu}=n}hh(\cH_{\cG})_{\wt\nu}
\end{equation*}
where, for $\wt\nu\in \wt\NP$,
\begin{equation*}
hh(\cH_{\cG})_{\wt\nu}= \colim_{\f{u}{J}\in [\Tot(\cS^{\hs}_{\wt\nu})/\Om]}\Sh_{\cN}(\cY(\f{u}{J})).
\end{equation*}
In particular, the functor $\io$ from \eqref{io CS to colim} is identified with the embedding $hh(\cH_{\cG})_{0}\simeq\colim_{J\in\cD}\Sh_{\cN}(L_{J}/L_{J})\incl hh(\cH_{\cG})^{0}$. In particular, $\io$ is fully faithful.
\end{theorem}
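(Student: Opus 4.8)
\textbf{Plan of proof of Theorem~\ref{thm:main in text}.}
The strategy is to derive Theorem~\ref{thm:main in text} from Theorem~\ref{th:semi-orth hh G neutral} by passing to $\Om$-coinvariants, in exactly the way Corollary~\ref{c:coin} relates $hh(\cH_{\cG^{\c}},-)$ to $hh(\cH_{\cG})$. Concretely, I would first recall from Section~\ref{sss:HG comp} the decomposition $hh(\cH_{\cG})=\bigoplus_{\om\in\Om}hh(\cH_{\cG})^{\om}$, and likewise the decomposition of $hh(\cH_{\cG^{\c}},\cH_{\cG})=\bigoplus_{\om}hh(\cH_{\cG^{\c}},\cH_{\cG}^{\om})$. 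The key point is that $\Om$ acts on $hh(\cH_{\cG^{\c}},\cH_{\cG})$ (by the conjugation action $c_{\om}$ on $\cH_{\cG^{\c}}$ together with the compatible action on the bimodule $\cH_{\cG}$, as recorded just before Corollary~\ref{c:coin}), and by Corollary~\ref{c:coin} the $\Om$-coinvariants recover $hh(\cH_{\cG})$. So I would show that the semi-orthogonal filtration of $hh(\cH_{\cG^{\c}},\cH_{\cG})$ constructed in Theorem~\ref{th:semi-orth hh G neutral} is $\Om$-equivariant and that $\Om$ permutes the Newton strata within each fixed filtration step, so that the coinvariants inherit a semi-orthogonal filtration with the asserted associated graded.

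In more detail, the filtration pieces $hh(\cH_{\cG^{\c}},\cH_{\cG})_{\le n}$ were built as colimits over $\cD^{\c}$ of the full subcategories $\cC_{J,\le n}=\Sh_{\cN}(\cY_{J,\le n})$, where $\cY_{J,\le n}$ is cut out by the $\Om$-invariant length condition $\ell(u)\le n$. Since $\Om$ acts on $\cD^{\c}$ and on each $\cS_{J}$ via $d_{\om}:\f{u}{J}\mapsto\f{\om u\om^{-1}}{\om(J)}$ preserving both $\ell$ and the coarse type $\t$ (these are $\Om$-invariant, as noted in Section~\ref{sec:semi_orth_decomp}), the subcategories $\cC_{J,\le n}$, $\cC_{J,\le n,\hs}$, and $\cC_{J,n,\hs}$ are permuted compatibly with the transition functors $\ch^{J'}_{J}$, which are themselves intertwined with $c_{\om}$ up to the canonical identifications. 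Hence the whole diagram $\cC_{\le 0,\hs}\to\cC_{\le 0}\to\cdots$ of Theorem~\ref{th:semi-orth hh G neutral}, together with its recollements \eqref{rec C n hs}, carries a canonical $\Om$-action. Taking $\Om$-coinvariants is exact (it is a colimit over $\BB\Om$, and in the stable presentable setting colimits preserve the recollement/semi-orthogonal structure — this is precisely the content of Proposition~\ref{p:recoll} of Appendix~\ref{s:str sh}, applied $\Om$-equivariantly), so we obtain the filtration $hh(\cH_{\cG})^{\om}_{\le n}$ and its recollements. The $n$-th graded piece is $(hh(\cH_{\cG^{\c}},\cH_{\cG})_{n})_{\Om}$, and since $hh(\cH_{\cG^{\c}},\cH_{\cG})_{n}=\bigoplus_{\wt\nu=(\nu,\om),\,\j{2\r,\nu}=n}hh(\cH_{\cG^{\c}},\cH_{\cG}^{\om})_{\nu}$ with $\Om$ acting by permuting the $\cH^{\om}_{\cG}$-summands and, within a fixed $\om$, acting on the colimit $\colim_{\f{u}{J}\in\Tot(\cS^{\hs}_{\wt\nu})}\Sh_{\cN}(\cY(\f{u}{J}))$ via the $\Om$-action on $\Tot(\cS^{\hs}_{\wt\nu})$, the coinvariants are computed summand-by-summand: the $\om$-summand of $hh(\cH_{\cG})_n$ is $\bigoplus_{\wt\nu=(\nu,\om),\,\j{2\r,\nu}=n}\big(\colim_{\Tot(\cS^{\hs}_{\wt\nu})}\Sh_{\cN}(\cY(-))\big)_{\Om}$. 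Finally, by the same argument as in Corollary~\ref{c:coin} (using \cite[Proposition 4.3.3.10]{lurieHigherToposTheory2009a} for the coCartesian fibration $[\Tot(\cS^{\hs}_{\wt\nu})/\Om]\to\BB\Om$), this coinvariant colimit equals $\colim_{\f{u}{J}\in[\Tot(\cS^{\hs}_{\wt\nu})/\Om]}\Sh_{\cN}(\cY(\f{u}{J}))=:hh(\cH_{\cG})_{\wt\nu}$, giving the stated formula.

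For the last sentence: when $\wt\nu=(0,0)$ we have $\Tot(\cS^{\hs}_{(0,0)})=\cD^{\c}$ with the $\Om$-action for which $[\Tot(\cS^{\hs}_{(0,0)})/\Om]\cong\cD$, and $\Sh_{\cN}(\cY(\f{1}{J}))\simeq\Sh_{\cN}(L_J/L_J)$ with transition functors the Lusztig induction functors (Section~\ref{sss:colim CS}), so $hh(\cH_{\cG})_{(0,0)}\simeq\colim_{\cD}\Sh_{\cN}(L_J/L_J)$; and the identification of the map $\io$ of \eqref{io CS to colim} with the inclusion of the first filtered piece $hh(\cH_{\cG})^{0}_{0}\incl hh(\cH_{\cG})^{0}$ is by unwinding the construction of the filtration (the functor $\io$ was built from exactly the embeddings $\io_J=i_{J,1*}$ into the smallest stratum, whose colimit over $\cD$ is $hh(\cH_{\cG})_{0}$). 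Full faithfulness of $\io$ is then automatic: the first step of a semi-orthogonal decomposition is always a fully faithful subcategory. I expect the main obstacle to be the careful bookkeeping of the $\Om$-equivariant structures — in particular verifying that the recollement diagrams \eqref{rec C n hs} and \eqref{CJn hs recoll}, and the application of the contraction principle Theorem~\ref{th:contracting cosheaf}, are all canonically $\Om$-equivariant so that the coinvariants functor may be applied levelwise. This is not conceptually deep but requires organizing the $\Om$-action on all the auxiliary posets ($\cS_{J}$, $\Sig_{<n}$, the poset $\cI$ in the proof of Claim~(2)) and checking that it is compatible with $f$, $s$, and the cosheaf $\cF$; once that is in place, the conclusion follows formally as above. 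A cleaner alternative, which I would consider, is to redo the proof of Theorem~\ref{th:semi-orth hh G neutral} verbatim but replacing $\cD^{\c}$ by $\cD$ throughout (using Proposition~\ref{p:hh of colim gen} and Corollary~\ref{cor:coequal J geom} in place of Corollary~\ref{cor:hh of colim}), which avoids explicit coinvariants at the cost of re-running the contraction argument over $\cD$; since the combinatorial inputs (Lemmas~\ref{l:ell tau under closure}, \ref{l:nu}, Proposition~\ref{p:contractible}, Theorem~\ref{th:ch}) are all visibly $\Om$-compatible, this goes through with only notational changes.
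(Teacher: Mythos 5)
Your proposal follows essentially the same route as the paper: take the $\Om$-stable filtration from Theorem~\ref{th:semi-orth hh G neutral}, apply Corollary~\ref{c:coin} and Proposition~\ref{p:recoll} to pass to $\Om$-coinvariants, and then rewrite the resulting graded pieces as colimits over the groupoids $[\Tot(\cS^{\hs}_{\wt\nu})/\Om]$ by the same coCartesian-fibration argument as in Corollary~\ref{c:coin}. One small inaccuracy: you say $\Om$ acts by \emph{permuting} the $\cH^{\om}_{\cG}$-summands, but as recorded in Section~\ref{sss:HG comp} the $\Om$-action \emph{preserves} each summand $\cH^{\om}_{\cG}$; this does not affect your final formula (you correctly compute the $\om$-summand of the coinvariants), but the phrasing should be corrected.
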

\begin{proof}
Again we will prove the statement for the whole $hh(\cH_{\cG})$, which implies the statement for each summand $hh(\cH_{\cG})^{\om}$. By Corollary \ref{c:coin} we have $hh(\cH_{\cG})\simeq hh(\cH_{\cG^{\c}}, \cH_{\cG})_{\Om}$. Each filtration step $hh(\cH_{\cG^{\c}}, \cH_{\cG})_{\le n}$ in $hh(\cH_{\cG^{\c}}, \cH_{\cG})$ constructed in Theorem \ref{th:semi-orth hh G neutral} is stable under $\Om$. By Proposition \ref{p:recoll}, taking $\Om$-coinvariants (which is the same as taking colimits over $\BB\Om$) gives a semi-orthogonal decomposition of $hh(\cH_{\cG})$ with filtration pieces
\begin{equation*}
hh(\cH_{\cG})_{\le n}\simeq(hh(\cH_{\cG^{\c}}, \cH_{\cG})_{\le n})_{\Om}
\end{equation*}
and associated graded category
\begin{equation*}
hh(\cH_{\cG})_{n}\simeq(hh(\cH_{\cG^{\c}}, \cH_{\cG})_{n})_{\Om}
\end{equation*}
By the description of $hh(\cH_{\cG^{\c}}, \cH_{\cG})_{n}$ given in Theorem \ref{th:semi-orth hh G neutral}, we see $hh(\cH_{\cG})_{n}$ is a direct sum over those $\wt\nu=(\nu,\om)\in \wt\NP$ (such that $\j{2\r,\nu}=n$) of
\begin{equation*}
(hh(\cH_{\cG^{\c}}, \cH_{\cG}^{\om})_{\nu})_{\Om}\simeq \left(\colim_{\f{u}{J}\in \Tot(\cS^{\hs}_{\wt\nu})}\Sh_{\cN}(\cY(\f{u}{J}))\right)_{\Om}.
\end{equation*}
The same argument as in the proof of Corollary \ref{c:coin} allows us to rewrite the right side as a colimit over the groupoid $[\Tot(\cS^{\hs}_{\wt\nu})/\Om]$.
\end{proof}


\section{Endomorphisms of Whittaker functional}

Throughout this section, we assume the coefficient field $k$ is algebraically closed and $\mathrm{char}(k)=0$.

\begin{notation}
	\label{section:notations}
	Fix $\tau $ a complex number with imaginary part $\Im(\tau) \neq 0$. Let $G$ be a reductive group  over $\CC$ with maximal torus $T$  and $\frt=\Lie T$. In this section, we shall adopt the following notations:
	\begin{enumerate}
		\item 
		\begin{itemize}
			\item $\Phi$ the set of roots of $(G,T)$;
			\item $\widetilde{\Phi}=\ZZ \times \Phi$;
			\item $\widehat{\Phi}=\ZZ \times \ZZ \times \Phi$.
		\end{itemize}
		
		\item  Each $\alpha \in \Phi,\widetilde{\Phi}$ or $\widehat{\Phi}$ defines an affine linear function on $\frt$ as follows
		\begin{itemize} 
			\item $\alpha \in \Phi,$ defines a linear functional $\alpha:\frt \to \CC$;
			\item  $\alpha=(n,\ov\alpha) \in \widetilde{\Phi}$ defines an affine linear function $\alpha:=\ov\alpha+n$;
			\item $\alpha=(n,m,\ov\alpha) \in \widehat{\Phi}$ defines an affine linear function $\alpha:= \ov\alpha+n+m\tau$.
		\end{itemize}
		
		Under the above notations, let $H_\alpha= \{x \in \frt: \alpha(x)=0\}$, and $s_\alpha$ the reflection about $H_\alpha$ defined by $s_{\a}(x)=x-\a(x)\ov\a^{\vee}$.
						
		For  $\alpha \in \Phi$, put $\frg_\alpha \subset \frg$ the root space, and for $\alpha=(n,\ov\a) \in \widetilde{\Phi}$ or $\a=(n,m,\ov\a)\in\widehat{\Phi},$ put $\frg_\alpha=\frg_{\ov\alpha}$. 
		
		\item  For $J \subset \Phi,\widetilde{\Phi}$ or $\widehat{\Phi}$, 	
		put $\epsilon_J =  \cap_{\alpha \in S} H_\alpha$,  and define the following sets of affine subspaces of $\frt$
		\begin{itemize} 
			\item $\frS = \{ \epsilon_J :  J \subset \Phi \}$;
			\item $\tilfrS=\{\epsilon_J: J \subset \widetilde{\Phi}\}$ \footnote{Note that by definition $\tilfrS$ is  in natural bijection with the set $\frE$ of relevant affine subspaces in the standard apartment $\frA=\frt_{\RR}$, see Section~\ref{sss:rel aff sp}.};
			\item $\hatfrS=\{\epsilon_J: J \subset \widehat{\Phi}\}$.
		\end{itemize}
		\item 	\begin{itemize}
			\item $W$ the Weyl group of $(G,T)$, it naturally acts $\frt$;
			\item $\widetilde{W} = \xcoch(T) \rtimes W$, it acts on $\frt$ via $(\lambda,w) \cdot x = \lambda+w(x)$;
			\item $\widehat{W}= (\xcoch(T) \times \xcoch(T) )\rtimes W$ (with the diagonal action of $W$), it acts on $\frt$ via $(\lambda_1,\lambda_2,w)\cdot x = \lambda_1 +\tau \lambda_2 +w(x)$.
		\end{itemize}
		\item   \begin{itemize}
			\item For $\epsilon \in \frS$, put $\Phi_\epsilon:=\{ \alpha \in \Phi: \alpha(\epsilon) =0 \}$, and $W_\epsilon=\j{s_\alpha: \alpha  \in \Phi_{\e} } \subset W$;
			\item For $\epsilon \in \tilfrS$, put $\Phi_\epsilon:=\{ \alpha \in \widetilde{\Phi}: \alpha(\epsilon) =0 \}$,  and $W_\epsilon=\j{s_\alpha: \alpha \in \Phi_{\e} } \subset \tilW$;
			\item For $\epsilon \in \hatfrS$, put $\Phi_\epsilon:=\{ \alpha \in \widehat{\Phi}: \alpha(\epsilon) =0 \}$,  and $W_\epsilon=\j{s_\alpha: \alpha\in \Phi_{\e} }\subset \hatW$.
		\end{itemize}
		
		For $\epsilon \in \frS, \widetilde{\frS}$, or $\widehat{\frS}$, 
		put $L_\epsilon $ the connected reductive subgroup of $G$  containing $T$ with roots $\Phi_\epsilon$,  so that it has Weyl group $W_\epsilon$.
		\item  
		\begin{itemize}
			\item For $\epsilon \in \frS,$ put $W^{\epsilon}= N_W(W_\epsilon)/W_\epsilon$;
			\item For $\epsilon \in \tilfrS,$ put $\tilW^{\epsilon}= 			 			
			N_{\tilW}(W_{\epsilon})/W_\epsilon$;
			\item For $\epsilon \in \hatfrS,$ put $\hatW^{\epsilon}= 			 			
			N_{\hatW}(W_{\epsilon})/W_\epsilon.$
		\end{itemize}	
		\item  Put $C_G$ the set of isomorphism classes of cuspidal sheaves on $\calN_G/G$, where $\cN_{G}\subset \frg$ is the nilpotent cone of $G$. For definition, see \cite[\S2]{lusztigFourierTransformsSemisimple1987a}. 
		\begin{itemize}
			\item 	 $\frC=\{(\epsilon,B,F): \epsilon \in \frS, F \in C_{L_\epsilon}, B \supset T$  a Borel subgroup of $L_\epsilon $\};
			\item 	 $\tilfrC=\{(\epsilon,B,F): \epsilon \in \tilfrS, F \in C_{L_\epsilon}, B \supset T$  a Borel subgroup of $L_\epsilon $\};
			\item 	 $\hatfrC=\{(\epsilon,B,F): \epsilon \in \hatfrS, F \in C_{L_\epsilon}, B \supset T$  a Borel subgroup of $L_\epsilon $\}.
		\end{itemize}
		\item We also use the notation $W_G,\frS_G, \frC_G,$ etc, to emphasis the dependence on $G$.
		\item For $J \subset \Phi,\widetilde{\Phi}$ or $\widehat{\Phi}$, 
		put $L_J=L_{\epsilon_J}, W_J=W_{\epsilon_J},  W^{J}=W^{\epsilon_J}, \frz_J=\xcoch(Z(L_{J})^{\c})\ot_{\ZZ}k$, $C_J=C_{{L_{J}}}$, $\tilfrC_J=\tilfrC_{L_J}$, etc.
		
	\end{enumerate}
\end{notation}

\subsection{Combinatorial descriptions of character sheaves} In this section, we shall recall the combinatorial description of the dg category of character sheaves in \cite{liDerivedCategoriesCharactera} (for simply-connected group), and \cite{liDerivedCategoriesCharacterb} (for reductive group).  Using the same idea, we give a combinatorial description of the colimit category $\colim_{J\in \cD}\Sh_{\cN}(L_{J}/L_{J})$, the latter may be thought of as an elliptic version of character sheaves. The main results and the relationship between various versions of character sheaves is summarized in Corollary \ref{cor:induction_and_alpha}.

\subsubsection{Sheaves of categories on affine spaces} Let $\Lambda$ be a free $\ZZ$-module of finite rank, and $\Lambda_\CC$ be complexified affine space. Let $\frF$ be a locally finite set of complex affine subspaces of $\Lambda_\CC$. For any $\epsilon \in \frF$, denote $\overline{\epsilon}$ the linear space parallel to $\epsilon$, and $\frz_\epsilon= (\overline{\epsilon} \cap \Lambda) \otimes_\ZZ k$ the finite dimensional affine space over $k$. Put $\scrQCoh_\epsilon$ the constant sheaf of categories on $\epsilon$ with value $\QCoh(\frz^*_\epsilon[-1])$. For a map of sets $f: \frG \to \frF$, denote $\scrQCoh_{\frG}=\prod_{c \in \frG} \scrQCoh_{f(c)}$, viewed as a sheaf of categories on $\L_{\CC}$. Let $K$ be a discrete group of affine linear transformations acting properly discontinuously on $\Lambda_\CC$, assume that the linear part of $K$ preserves $\Lambda$, and $\frF$ is stable under the $K$-action. Assume also that $K$ acts on $\frG$, such that $f$ is $K$-equivariant, then 
$\scrQCoh_{\frG}$ is naturally a $K$-equivariant sheaf of categories on $\Lambda_\CC$. Therefore, for any $K$-invariant open subset $U\subset \Lambda_\CC$, we have natural $K$-action on $\Gamma(U,\scrQCoh_{\frG})$. Denote the invariant category by $\Gamma(U,\scrQCoh_{\frG})^K$.

\begin{remark}
	\label{rmk:invariant_category}
	Let $A$ be an dg-algebra over $k$, and $K$ be a discrete group acting strictly on $A$.
	The \textit{smash product} $k[K]\#A$ is by definition a dg algebra whose underlying dg vector space is
	 $k[K] \otimes A$, and the multiplication is given by 
	 $(w \otimes a) \cdot (w' \otimes a') = (ww' \otimes w'^{-1}(a)a'),$
	  for all $w \in K$ and $ a \in A$. There is natural equivalence of dg categories (see e.g. \cite[Section 2.2]{liDerivedCategoriesCharactera}): 
	\begin{equation*}
	(k[K]\#A) \textup{-mod}  \simeq   (A \textup{-mod})^K 
	\end{equation*}
	
	Using smash product, we can write $\Gamma(\Lambda_{\CC},\scrQCoh_{\frG})^K$ more concretely as follows:
	 let $\frG\sslash K$ be the set of $K$-orbits on $\frG$, and let $[\frG\sslash K]\subset \frG$ be a set of representatives of each orbit, and denote $K_c$ the stabilizer of $K$ at $c \in \frG$. 
	 Then we have equivalence of categories:  
	 \beq\label{Kinv section QCoh} \Gamma(\Lambda_{\CC},\scrQCoh_\frG)^K \simeq \prod_{c \in [\frG\sslash K]} (\Sym(\frz_{f(c)}[1]) \textup{-mod})^{K_c}
	  \simeq \prod_{c \in [\frG\sslash K]} k[K_c]\#\Sym(\frz_{f(c)}[1]) \textup{-mod}.
	 \eeq
\end{remark}

Let  $K' \subset K$ be a subgroup, and $\frG' \subset \frG$ be a $K'$-stable subset. We have a pair of adjoint functors:
\beq  \label{eq:induction_restriction}   
  \Ind_{\frG'/K'}^{\frG/K} : \Gamma(\Lambda_{\CC}, \scrQCoh_{\frG'})^{K'} \rightleftarrows           \Gamma(\Lambda_{\CC}, \scrQCoh_{\frG})^K  :\Res^{\frG/K}_{\frG'/K'}         
\eeq
And when the context is clear, we shall simply denote them by $\Ind, \Res$.

 Now we take $\Lambda=\xcoch(T)$, and hence $\Lambda_\CC = \frt $. In Notation~\ref{section:notations}, the groups $W, \tilW$ and $\hatW$ naturally act on $\frS, \tilfrS$ and $\hatfrS$ respectively. For any $w \in W, \tilW$ or $\hatW$, denote its image in $W$ by $\ov w$. Therefore  $\Ad(\ov w)$ gives idenfications $L_{\epsilon} \simeq L_{w(\epsilon)}$, and $C_{L_{\epsilon}} \simeq C_{L_{w(\epsilon)}}$.  This gives an action of $W,\tilW$ and $\hatW$ on $\frC, \tilfrC$ and $\hatfrC$ respectively. Let  $f:\frC\to \frS$, $\wt f: \tilfrC\to \tilfrS$ and $\wh f:  \hatfrC\to  \hatfrS$ be the projections. Therefore the categories
$$\Gamma(\frt, \scrQCoh_{\frC})^W, \Gamma(\frt, \scrQCoh_{\wt\frC})^{\wt W}, \mbox{ and }\Gamma(\frt, \scrQCoh_{\wh\frC})^{\wh W}$$
are defined.

We have the following combinatorial description of character sheaves on $G$ and $\frg$:

\begin{theorem} 
	\label{thm:combinatorial_descrpition_character_sheaves} Let $G$ be a connected reductive group over $\CC$. There are equivalences of categories:
	\begin{enumerate}
		\item  $\LL_{\frg}: Sh_\calN(\frg/G)   \simeq   \Gamma(\frt, \scrQCoh_{\frC})^W; $
		\item  $\LL_{G}:   Sh_\calN(G/G) \simeq \Gamma(\frt,\scrQCoh_{\widetilde{\frC}})^{\widetilde{W}}. $
	\end{enumerate}
	
\end{theorem}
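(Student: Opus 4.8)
\textbf{Proof proposal for Theorem~\ref{thm:combinatorial_descrpition_character_sheaves}.}
The plan is to follow the strategy of~\cite{liDerivedCategoriesCharactera}, organizing character sheaves via parabolic induction from cuspidal data on Levi subgroups, and then matching the resulting combinatorics with the explicit model $\Gamma(\frt,\scrQCoh_{\frC})^W$ (resp. $\Gamma(\frt,\scrQCoh_{\widetilde\frC})^{\widetilde W}$). First I would treat the Lie algebra case (1), which is cleaner because $\frg$ has no ``affine'' directions and the combinatorics is governed by the finite arrangement $\frS$ of subspaces cut out by roots. Recall the generalized Springer decomposition: every object of $\Sh_\calN(\frg/G)$ is glued, via the Fourier--Sato transform, from summands supported on the images of parabolic induction $\Ind_{L_\epsilon}^G$ applied to cuspidal sheaves $F\in C_{L_\epsilon}$ tensored with local systems on the connected center $Z(L_\epsilon)^\circ$, equivalently on $\frz_\epsilon$. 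The key structural input is that a cuspidal $F$ on $L_\epsilon$, together with a choice of Borel $B\supset T$ of $L_\epsilon$, produces a ``clean'' object whose endomorphism algebra (as one varies the central parameter) is $\Sym(\frz^*_\epsilon[-1])$, and that the relative Weyl group $W^\epsilon=N_W(W_\epsilon)/W_\epsilon$ acts on this family; the triples $(\epsilon,B,F)$ are exactly the index set $\frC$. Assembling these across all $\epsilon$ and taking $W$-equivariance — using the smash-product description of Remark~\ref{rmk:invariant_category} — gives the category $\Gamma(\frt,\scrQCoh_{\frC})^W$ on the nose.

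The heart of the argument is to produce the functor $\LL_\frg$ and prove it is an equivalence. I would construct it by sending a character sheaf to the collection of its ``cuspidal components'': for each orbit representative $c=(\epsilon,B,F)\in[\frC\sslash W]$, one extracts the $\Hom$-complex from the corresponding induced object, which is naturally a module over $k[W_c]\#\Sym(\frz_{f(c)}[1])$ by functoriality of induction and the action of $W_c=\Stab_W(c)$. Fully faithfulness reduces, after the semiorthogonal/recollement filtration of $\Sh_\calN(\frg/G)$ by supports of geometric pieces (the Lie-algebra analogue of the stratification of $\cY_J$ used in Section~\ref{ss:geom pieces}), to computing $\Hom$ between induced cuspidals on different Levis — and here the disjointness/orthogonality of distinct cuspidal data (a theorem of Lusztig, \cite{lusztigFourierTransformsSemisimple1987a}) forces the off-diagonal $\Hom$'s to match the off-diagonal structure of the target product category, while the diagonal blocks are computed by the clean extension property. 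Essential surjectivity follows from the generalized Springer correspondence: every simple perverse character sheaf appears in some $\Ind_{L_\epsilon}^G F$, so the induced objects generate. Then part (2) for the group $G$ is deduced by the same method applied to $G$ instead of $\frg$: the role of $\frS$ is played by $\tilfrS$ (affine subspaces of $\frt_\CC$ cut out by \emph{affine} roots $\widetilde\Phi$, matching $\frE$ as noted in the footnote), the role of $W$ by $\widetilde W=\xcoch(T)\rtimes W$, and cuspidal sheaves on the Lie algebra of the relevant Levi $L_\epsilon$ feed into the parabolic-induction picture for $G$ through the exponential/logarithm near the relevant semisimple conjugacy classes. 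The bookkeeping is identical once one knows that the pseudo-Levi subgroups $L_\epsilon$ ($\epsilon\in\tilfrS$) exhaust the Levis supporting cuspidal data relevant to $\Sh_\calN(G/G)$, which is again in~\cite{liDerivedCategoriesCharactera,liDerivedCategoriesCharacter}.

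The main obstacle I anticipate is the coherence of the gluing: one must check that the cuspidal-component functor is not merely an equivalence stratum-by-stratum but assembles compatibly with the transition (induction) functors across the recollement, i.e.~that the semiorthogonal filtration of $\Sh_\calN(\frg/G)$ indexed by $(\epsilon)$ (ordered by inclusion of affine subspaces, or equivalently by $W$-orbit closure of pieces) matches the filtration of $\Gamma(\frt,\scrQCoh_\frC)^W$ by the subsets $\{\epsilon'\le\epsilon\}$, with the \emph{same} gluing maps. Concretely this requires: (a) a clean-extension statement — the $\ast$- and $!$-extensions of a cuspidal local system from a stratum agree up to the expected shift, so that the recollement glue functor is computed purely by the $\Sym(\frz^*[-1])$-module structure; and (b) a precise identification of the $W^\epsilon$-action on $\frz_\epsilon$ with the one built into $\scrQCoh_\frC$. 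Both are established in~\cite{liDerivedCategoriesCharactera}, so for the purposes of this paper I would cite them, set up the induction functors and the recollement carefully, and then invoke Proposition~\ref{p:recoll} (or its evident variant) to conclude that an equivalence of associated-graded pieces compatible with glue data upgrades to an equivalence of filtered categories, yielding $\LL_\frg$ and $\LL_G$.
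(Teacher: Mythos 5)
Your proposal is correct and, at the level of ideas, coincides with the route taken by the cited references; but the paper's own proof is substantially leaner than what you describe, and it is worth noting where the leverage actually lies. For part (1), the paper does not reprove the generalized Springer decomposition at all: it simply quotes \cite[Theorem~1.2]{liDerivedCategoriesCharactera}, which already gives the equivalence
\begin{equation*}
Sh_\calN(\frg/G) \;\simeq\; \bigoplus_{(J,F)\in \frD} k[W^J]\#\Sym(\frz_J[1])\textup{-mod}, \qquad \frD:=\{(J,F): J\subset I,\ F\in C_J\},
\end{equation*}
and then does pure bookkeeping: it constructs the map $\widetilde f:\frD\to\frC$, $(J,F)\mapsto (\epsilon_J, B\cap L_J, F)$, shows via \cite[Lemma~3.1]{liDerivedCategoriesCharactera} that the composite $\frD\to\frC\sslash W$ is a bijection (the content being that any two parabolics containing a cuspidal-carrying Levi $L_J$ as a Levi are $G$-conjugate), and identifies $W^J$ with the stabilizer $W_c$ of the corresponding point of $\frC$. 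This translation, plus the smash-product description in Remark~\ref{rmk:invariant_category}, finishes (1); and (2) is a direct citation of \cite[Theorem~1.1.1]{liDerivedCategoriesCharacter}. In short, the ``coherence of the gluing'' and the clean-extension statements that you flag as the main obstacle are the content of the cited theorems and are taken as black boxes here; the only work done in the paper is the re-indexing from $\frD$ to $\frC\sslash W$ with its stabilizer bookkeeping, a step you do not explicitly isolate. Your proposal is a faithful sketch of what goes into the cited results and would lead to a correct but much longer proof; the paper's proof instead recognizes that the results are already available in the precise form needed, modulo a one-paragraph combinatorial reconciliation of the two index sets.
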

\begin{proof}
 (1) This is essentially the generalized Springer decomposition.  We use notations from Section~\ref{sss: g notation}; in particular, $I$ denotes the set of simple roots of $G$ with respect to $T$ and $B$. Let
\begin{equation}\label{def frD}
\frD:=\{(J,F): J \subset I,  F \in C_J=C_{L_{J}}\}.
\end{equation}
By \cite[Theorem 1.2]{liDerivedCategoriesCharactera},  we have an equivalence of categories:
	\begin{equation}
		\label{eq:character_lie_algebra_facet_description}
		Sh_\calN(\frg/G) \simeq \bigoplus_{(J,F)\in \frD} k[W^J]\#\Sym(\frz_J[1]) \textup{-mod}.
	\end{equation}
Let $\wt f:  \frD\to \frC$ be the map sending $(J,F)$ to $(\e_{J}, B\cap L_{J}, F)$, and let $f$ be the composition $\frD \xr{\wt f} \frC \to \frC\sslash W$. Then $f$ is bijective by \cite[Lemma 3.1]{liDerivedCategoriesCharactera} (the content here is: if $L_{J}$ carries a cuspidal local system, then any two parabolics containing $L_{J}$ as a Levi subgroup are conjugate in $G$). One can further identify $W^{J}$ with $W_{c}$, the stabilizer of $c = f(J,F)$ under the $W$-action on $\frC$. Therefore, (1) holds by \eqref{Kinv section QCoh}.  

(2) is proved in \cite[Theorem 1.1.1]{liDerivedCategoriesCharacterb}.
\end{proof}

In view of Remark~\ref{rmk:invariant_category}, Theorem~\ref{thm:combinatorial_descrpition_character_sheaves} implies the following explicit description of the category of character sheaves on the group $G$, analogous to \eqref{eq:character_lie_algebra_facet_description}.

\begin{cor}\label{c:comb CS G} For any connected reductive group $G$ over $\CC$, there is a canonical equivalence
	\begin{equation}\label{CS G blocks}
		Sh_\calN(G/G) \simeq \bigoplus_{\{(J,F): J \sft I^a, F \in C_J\}/\Om} k[\tilW^J]\#\Sym(\frz_J[1]) \textup{-mod}.
	\end{equation}
\end{cor}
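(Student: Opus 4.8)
The statement is a direct corollary of Theorem~\ref{thm:combinatorial_descrpition_character_sheaves}(2), so the plan is to combine that equivalence with the "smash product = invariant category" dictionary of Remark~\ref{rmk:invariant_category}, together with the bookkeeping of connected components encoded by $\Om$. First I would unwind $\Gamma(\frt,\scrQCoh_{\widetilde{\frC}})^{\widetilde{W}}$ using the concrete description \eqref{Kinv section QCoh} with $\Lambda = \xcoch(T)$, $K = \widetilde W$, $\frG = \widetilde{\frC}$, and $f = \widetilde f : \widetilde{\frC}\to\widetilde{\frS}$. This gives
\begin{equation*}
\Gamma(\frt,\scrQCoh_{\widetilde{\frC}})^{\widetilde W} \simeq \prod_{c\in[\widetilde{\frC}\sslash\widetilde W]} k[\widetilde W_c]\#\Sym(\frz_{\widetilde f(c)}[1])\textup{-mod},
\end{equation*}
where $\widetilde W_c$ is the stabilizer of $c$ in $\widetilde W$. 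It then remains to match the index set $[\widetilde{\frC}\sslash\widetilde W]$ with $\{(J,F): J\sft I^a, F\in C_J\}/\Om$ and to identify each stabilizer $\widetilde W_c$ with the corresponding $\widetilde W^J$.

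The key combinatorial step is the analogue of \cite[Lemma 3.1]{liDerivedCategoriesCharactera} in the affine setting: if $L_J$ (for $J\sft I^a$) carries a cuspidal sheaf, then any two parahoric subgroups of $\cG$ having $L_J$ as Levi quotient are conjugate under $\cG$ (equivalently, the relevant affine subspace $\epsilon\in\widetilde{\frS}$ supporting a cuspidal datum is, up to $\widetilde W$, of the form $\epsilon_J$ for a unique $J\sft I^a$ modulo the $\Om$-action). Granting this, the map $(J,F)\mapsto (\epsilon_J, B_J, F)$ from $\{(J,F): J\sft I^a, F\in C_J\}$ to $\widetilde{\frC}$ descends to a bijection $\{(J,F)\}/\Om \isom \widetilde{\frC}\sslash\widetilde W$: surjectivity is exactly the cuspidal-support statement just described (every relevant affine subspace carrying a cuspidal datum is $\widetilde W$-conjugate to some $\epsilon_J$), and injectivity amounts to the fact that $\epsilon_J$ and $\epsilon_{J'}$ are $\widetilde W$-conjugate iff $J' = \omega(J)$ for some $\omega\in\Om$ (cf. the discussion of $\un\frE$ in Section~\ref{sss:rel aff sp}, together with the compatibility of the cuspidal sheaf $F$ under the induced isomorphism $L_J\simeq L_{J'}$). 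Under this bijection, the stabilizer $\widetilde W_c$ of $c = (\epsilon_J,B_J,F)$ sits in an extension by $W_{\epsilon_J}$; since $B_J$ (hence $F$, up to isomorphism) is preserved, one checks $\widetilde W_c$ is identified with the "relative Weyl group" quotient, i.e. with $\widetilde W^J := N_{\widetilde W}(W_{\epsilon_J})/W_{\epsilon_J}$ — here one uses that the cuspidal sheaf $F$ on $\cN_{L_J}/L_J$ is $N_{\widetilde W}(W_{\epsilon_J})$-stable up to isomorphism, again an affine version of the input used in Theorem~\ref{thm:combinatorial_descrpition_character_sheaves}(1). Finally, the $\frz_J = \xcoch(Z(L_J)^\circ)\otimes_\ZZ k$ appearing in the target is precisely $\frz_{\widetilde f(c)} = (\overline{\epsilon_J}\cap\xcoch(T))\otimes_\ZZ k$, so the two products agree block by block, giving \eqref{CS G blocks}.

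The main obstacle is the affine cuspidal-support bijection: one must know that cuspidal data for Levis of parahoric subgroups of $\cG$ are rigid enough that the combinatorial index $\widetilde{\frC}\sslash\widetilde W$ collapses to pairs $(J,F)$ modulo $\Om$, with stabilizers $\widetilde W^J$. This is the content that makes Theorem~\ref{thm:combinatorial_descrpition_character_sheaves}(2) usable in this explicit form; it is essentially established in the cited work of Li \cite{liDerivedCategoriesCharacter}, and the role of the present corollary is just to repackage that statement via Remark~\ref{rmk:invariant_category}. Everything else — the passage between $\Gamma(-)^K$ and smash-product modules, and the identification of $\frz_J$ — is the routine dictionary already recorded in the excerpt, so I would present those steps briskly and concentrate the writeup on the index-set and stabilizer matching.
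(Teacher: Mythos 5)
Your overall strategy is the paper's: the corollary is the smash-product rewrite of Theorem~\ref{thm:combinatorial_descrpition_character_sheaves}(2) via Remark~\ref{rmk:invariant_category}, and what remains is to match the index set $[\tilfrC\sslash\tilW]$ with $\{(J,F)\}/\Om$ and the stabilizers $\tilW_c$ with $\tilW^J$, which is bookkeeping inherited from \cite{liDerivedCategoriesCharacter}. The surjectivity step and the stabilizer identification (using $N$-stability of cuspidals) are correct.

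However, the "fact'' you invoke for injectivity --- that $\epsilon_J$ and $\epsilon_{J'}$ are $\tilW$-conjugate iff $J' = \omega(J)$ for some $\omega \in \Om$ --- is false as stated, and is in fact contradicted by the very passage of Section~\ref{sss:rel aff sp} you cite: the paper says there that $\frA(K)$ and $\frA(K')$ are $W^a$-conjugate iff $K$ and $K'$ are $W^a$-conjugate as subsets of $I^a$, which is strictly weaker than $\Om$-conjugacy. Concretely, for $G=SL_3$ (so $\Om=1$) the three subspaces $\epsilon_{\{s_0\}},\epsilon_{\{s_1\}},\epsilon_{\{s_2\}}$ lie in a single $\tilW$-orbit, while $\{s_0\},\{s_1\},\{s_2\}$ are not $\Om$-related. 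The corollary survives this example only because the Levihoric $L_{\{s_i\}}\cong GL_2$ carries no cuspidal, so these $J$'s simply do not index summands. In general, the correct injectivity statement is: for $J,J'\sft I^a$ whose Levis \emph{do} carry cuspidals, $\tilW$-conjugacy of $(\epsilon_J,B_J,F)$ and $(\epsilon_{J'},B_{J'},F')$ forces $\Om$-conjugacy of $(J,F)$ and $(J',F')$. That is precisely the affine analogue of Li's Lemma~3.1 (rigidity of cuspidal Levi data), not a combinatorial fact about $\un\frE$. Your closing paragraph has the right instinct --- the affine cuspidal-support rigidity is the true content --- but the cuspidality must enter in establishing the conjugacy constraint on the $\epsilon_J$'s themselves, not merely as a downstream ``compatibility'' check on $F$. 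As written, the injectivity step would not go through.
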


\begin{remark}\label{r:supp block} Let $\cF\in Sh_\calN(G/G)$ be an object whose image on the right side of \eqref{CS G blocks} lies in a single summand indexed by $(J,F)$. Then the support of $\cF$ is contained in the closed subset of $G$ consisting of elements whose semisimple parts lie in $\Ad(G)(\exp(\e_{J}))$, where the exponential map $\exp:\frt\to T$ normalized to have kernel $\xcoch(T)$. 
\end{remark}

Now we explain the relationship between character sheaves on the group and on the Lie algebra. There is a natural functor $\beta_1: Sh_\calN(G/G) \to  Sh_\calN(\frg/G) $ defined as follows: let $D$ be a $\Ad(G)$-invariant star-shaped open subset of $\frg$ such that $\exp: D\to G$ is an isomorphism onto its image. Let $j: D/G \hookrightarrow \frg/G$ be the open inclusion. 
Then $j^*: \Sh_\calN(\frg/G) \to \Sh_\calN(D/G)$ is an equivalence of categories, and we put 
\begin{equation}
	\label{eq:beta}
	\beta_1= (j^*)^{-1} \circ \exp|_{D/G}^*: { \Sh_{\cN}(G/G)\to \Sh_{\cN}(\frg/G).}
\end{equation}
The functor $\beta_1$ perserves limit, and therefore { admits a left adjoint $\alpha_1$}.

\begin{prop}
	\label{prop:restriction_G_to_g}
	Under the equivalence in Theorem~\ref{thm:combinatorial_descrpition_character_sheaves},
	the diagrams naturally { commute}:
	$$\xymatrix{
		Sh_\calN(\frg/G)  \ar@<-.5ex>[d]_{\alpha_1}  \ar[r]^-{\simeq}_-{\LL_{\frg}}  &    \Gamma(\frt, \scrQCoh_{\frC})^W  \ar@<-.5ex>[d]_{\Ind} \\
		Sh_\calN(G/G)  \ar[r]^-{\simeq}_-{\LL_{G}}  \ar@<-.5ex>[u]_{\beta_1}  &  \Gamma(\frt, \scrQCoh_{\tilfrC})^{\tilW} \ar@<-.5ex>[u]_{\Res} 
	}$$
\end{prop}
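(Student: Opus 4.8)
The plan is to unwind both sides of the claimed commutative square through the explicit block descriptions provided by Theorem~\ref{thm:combinatorial_descrpition_character_sheaves} and Corollary~\ref{c:comb CS G}, and to identify $\b_1$ and $\io_J$-type restriction functors with the combinatorial $\Res$ of \eqref{eq:induction_restriction}. Since $\b_1$ is defined as a composition of pullback along $\exp$ (restricted to a $\Ad(G)$-invariant star-shaped neighborhood $D$) with the inverse of the restriction equivalence $j^*\colon \Sh_\cN(\frg/G)\isom\Sh_\cN(D/G)$, the first step is to compute what $\exp|_{D/G}^*$ does on generalized Springer blocks. The key point is that the generalized Springer decomposition is compatible with $\exp$: for the pair $(J,F)$ with $J\sft I^a$, the block of $\Sh_\cN(G/G)$ indexed by $(J,F)$ is supported (Remark~\ref{r:supp block}) on elements whose semisimple part lies in $\Ad(G)(\exp(\e_J))$; pulling back along $\exp$ and restricting to $D$ lands in the block(s) of $\Sh_\cN(\frg/G)$ indexed by those $(J',F')\in\frD$ whose relevant subspace $\e_{J'}\in\frS$ lies in the $W$-orbit of (the linear space parallel to a facet of) $\e_J$ meeting $D$. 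Concretely, $\e_J\in\tilfrS$ corresponds under the exponential/translation bookkeeping to a $W$-orbit of subspaces in $\frS$: those $\e_{J'}$ with $J'\subset\Phi_{\e_J}$ giving the parabolic restriction. This is exactly the index bookkeeping that makes $\Res^{\tilfrC/\tilW}_{\frC/W}$ in the sense of \eqref{eq:induction_restriction} (restricting a $\tilfrC$-indexed, $\tilW$-equivariant sheaf of categories to the sub-poset $\frC\subset\tilfrC$ of linear subspaces, with its $W$-action) the right combinatorial model.

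Second, I would carefully match the equivariant structures and the $\Sym(\frz[1])$-module structures. On each block the derived endomorphism algebra is $k[\tilW^J]\#\Sym(\frz_J[1])$ on the group side and $k[W^{J'}]\#\Sym(\frz_{J'}[1])$ on the Lie algebra side, and the functor $\exp^*$ must be checked to respect the $\frz$-coordinates: the monodromy/universal-Cartan directions on the group side become, after $\exp$, precisely the $\Sym(\frz_{J'}[1])$-linear structure on the Lie algebra side, since $\exp$ is a local isomorphism near $\e_J$ and its differential is the identity. The relative Weyl group $\tilW^J=N_{\tilW}(W_{\e_J})/W_{\e_J}$ restricts to $W^{J'}$ on the relevant sub-torus directions, which is the content of passing from $\tilW$-equivariance to $W$-equivariance in \eqref{eq:induction_restriction}. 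Thus one gets $\LL_{\frg}\c\b_1\simeq\Res\c\LL_G$ on objects; the natural transformation is tracked through the construction of $\LL_G$ in \cite[Theorem 1.1.1]{liDerivedCategoriesCharacter}, which is built compatibly with restriction to Levi subgroups and with $\exp$. Passing to left adjoints then gives $\LL_G\c\a_1\simeq\Ind\c\LL_{\frg}$, which is the top square; so it suffices to establish the bottom square, and the two are equivalent by uniqueness of adjoints.

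The main obstacle I anticipate is the coherence of the identification $\LL_G\c\b_1\simeq\Res\c\LL_G$ as an equivalence of functors (not just a pointwise equivalence on blocks): one must produce a canonical natural isomorphism, which requires knowing that the equivalence $\LL_G$ of \cite{liDerivedCategoriesCharacter} is constructed functorially with respect to the parabolic-restriction and $\exp$-pullback operations, and that the block decompositions on both sides are indexed compatibly (the map $\frD\to\frC\sslash W$ being a bijection, and its $\tilW$-equivariant analogue for $G$). The combinatorial subtlety is the precise dictionary between $\e_J\in\tilfrS$ and its image in $\frS$ under $\exp$ near a point of $D$: an element $\e_J\in\tilfrS$ is an affine subspace of $\frt$, and the set of $\frS$-subspaces it ``sees'' after $\exp$ is $\{\e_{J'}: J'\subset\Phi_{\e_J}\}$ up to $W$-action, which is exactly the index set of $\Res^{\tilfrC/\tilW}_{\frC/W}$ restricted to the block. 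Once this dictionary is in place the rest is bookkeeping of smash-product algebras and their morphisms. I would record the compatibility of $\LL_G$ with restriction as a lemma (citing or lightly extending \cite{liDerivedCategoriesCharacter}, \cite{liDerivedCategoriesCharactera}) before assembling the square, since that is where the real work lies.
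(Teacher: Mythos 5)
Your approach is a genuine bottom-up reconstruction, whereas the paper's proof is a two-line appeal to the construction of $\LL_G$ in \cite{liDerivedCategoriesCharacter}: there the functor $\beta_1\colon\Sh_\cN(G/G)\to\Sh_\cN(\frg/G)$ is realized as a limit functor $\lim_{I\in\frF_G}\mathsf{Ch}_G\to\mathsf{Ch}_G(I=0)$, which matches $\Res$ termwise by construction, so the coherence you correctly flag as the hard part is already built into that reference. Your block-by-block unwinding via $\exp^*$ is morally sound and you correctly identify that (i) $\beta_1$ on each generalized Springer block is governed by the support characterization of Remark~\ref{r:supp block}, (ii) the $\Sym(\frz[1])$-structures must be matched, and (iii) the top square follows from the bottom by uniqueness of adjoints --- this last step is exactly what the paper does. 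What your route does not buy you is a proof: you punt the coherence problem to \emph{``citing or lightly extending''} the same references the paper cites more directly, so you have not actually shortened the argument, only re-derived the block bookkeeping.

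One concrete point to tighten: your description of the combinatorics of $\Res$ is muddled. You speak of an $\epsilon_J\in\tilfrS$ ``seeing'' a collection $\{\epsilon_{J'} : J'\subset\Phi_{\epsilon_J}\}$ up to $W$-action after $\exp$. But $\Res^{\tilfrC/\tilW}_{\frC/W}$ is simply restriction along the inclusions $\frC\subset\tilfrC$ and $W\subset\tilW$ --- on a given $\tilfrC$-block indexed by an affine subspace $\epsilon$ it either restricts to the corresponding $\frC$-block (when the $\tilW$-orbit of $\epsilon$ meets $\frS$, i.e.\ contains a linear subspace through $0$) or contributes nothing; there is no one-to-many branching. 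Matching this with $\exp^*$ near $0$ requires only that a group block $(J,F)$, $J\sft I^a$, with $1\in\Ad(G)(\exp(\epsilon_J))$ (equivalently $J\subset I$ up to the $\Omega$-action) corresponds to the Lie algebra block $(J,F)$ with the same data viewed in $\frD$ of \eqref{def frD}. If you rephrase your combinatorial paragraph as a clean bijection of block labels rather than a ``sees a set'' statement, your outline becomes a correct (if longer) re-derivation of the paper's cited result.
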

\begin{proof}
	This follows from the proof of \cite[Theorem 4.4.3]{liDerivedCategoriesCharacterb}: 
	under the notation \textit{loc. cit.}, the functor $Sh_\calN(G/G) \to Sh_\calN(\frg/G)$ is identified with the natural functor $\lim_{I  \in \frF_G} \textsf{Ch}_G \to  \textsf{Ch}_G(I=0)$, which is identified with the functor $\lim_{I  \in \frF_G} \textsf{QCoh}_{\tilfrC}^{\tilW}  \to  \textsf{QCoh}_{\tilfrC}^{\tilW} (I=0),$ which is then further identified with $\Res  :\Gamma(\frt, \scrQCoh_{\tilfrC})^{\tilW} \to \Gamma(\frt, \scrQCoh_{\frC})^W $. The other commutative square follows by adjunction.
\end{proof}

Now let $P \supset T$ be a parabolic subgroup of $G$, with Levi subgroup $L\supset T$. Then we have a pair of adjoint functors by parabolic induction and restriction:
$$  \Ind_{L \subset P}^G : Sh_\calN(L/L) \rightleftarrows Sh_\calN(G/G) : \Res_{L \subset P}^G $$

\begin{prop}[\cite{liDerivedCategoriesCharacterb},Theorem 1.1.2]
	\label{prop:restriction_G_to_L}
	Under the equivalence in Theorem~\ref{thm:combinatorial_descrpition_character_sheaves}(2),
	the diagrams naturally commute:
	$$\xymatrix{
		Sh_\calN(L/L)  \ar@<-.5ex>[d]_{\Ind_{L \subset P}^G}  \ar[r]^-{\simeq}_-{\LL_{L}}   &    \Gamma(\frt, \scrQCoh_{\tilfrC_L})^{\tilW_L}  \ar@<-.5ex>[d]_{\Ind} \\
		Sh_\calN(G/G)  \ar[r]^-{\simeq}_-{\LL_{G}}  \ar@<-.5ex>[u]_{\Res_{L \subset P}^G}  &  \Gamma(\frt, \scrQCoh_{\tilfrC_G})^{\tilW_G} \ar@<-.5ex>[u]_{\Res} 
	}$$
\end{prop}

Next we will prove a double affine version of Theorem~\ref{thm:combinatorial_descrpition_character_sheaves}. Instead of considering character sheaves on the Lie algebra or the group, we consider the colimit $\colim_{J \in \cD} Sh_\cN(L_J/L_J)$. Recall in Section~\ref{sec:semi_orth_decomp} we have identified the  initial filtered piece $hh(\cH_\cG)_0$ of $hh(\cH_\cG)$ with this colimit. From the point of view of Betti geometric Langlands, this colimit can be thought of as an elliptic version of character sheaves, i.e., nilpotent sheaves on the semistable locus of $\Bun_{G}(E)$ for an elliptic curve.

\begin{theorem}
	\label{thm_gluing_character_sheaves}
	There is an equivalence of categories 
	$$\LL_{\cG}:  \colim_{J \in \cD} Sh_\calN(L_J/L_J)  \simeq  \Gamma(\frt,\mathscr{QC}\textup{oh}_{\widehat{\frC}})^{\widehat{W}} $$
\end{theorem}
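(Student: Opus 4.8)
The strategy is to mirror the proof of Theorem~\ref{thm:combinatorial_descrpition_character_sheaves}(2), i.e.\ the group case $\Sh_\cN(G/G)\simeq \Gamma(\frt,\scrQCoh_{\tilfrC})^{\tilW}$ from \cite{liDerivedCategoriesCharacter}, but applied ``one more loop around''. The key structural observation is that the diagram $J\mapsto \Sh_\cN(L_J/L_J)$ over $\cD$, with transition functors given by parabolic induction, is the same kind of data that in \cite{liDerivedCategoriesCharacter} produces the group-level answer: there, $\Sh_\cN(G/G)$ is realized as a limit over $\frF_G$ of the constant-in-$I$ categories $\textsf{Ch}_G$, with $\textsf{Ch}_G(I=0)=\Sh_\cN(\frg/G)$, and the comparison in Proposition~\ref{prop:restriction_G_to_g} and Proposition~\ref{prop:restriction_G_to_L} identifies induction/restriction among these categories with the combinatorial $\Ind$/$\Res$ of \eqref{eq:induction_restriction}. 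The plan is to assemble these identifications over all $J\in\cD$ and compute the colimit.

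First I would set up the combinatorial side. Using Corollary~\ref{c:comb CS G} (or rather its relative form) one has, for each $J\sft I^a$, a canonical equivalence $\Sh_\cN(L_J/L_J)\simeq \Gamma(\frt,\scrQCoh_{\tilfrC_J})^{\tilW_J}$, where $\tilfrС_J$ and $\tilW_J$ are the affine-type combinatorial data attached to $L_J$. By Proposition~\ref{prop:restriction_G_to_L} (applied with $G$ replaced by $L_{J'}$ and $L$ by $L_J$), the parabolic induction $\Sh_\cN(L_J/L_J)\to \Sh_\cN(L_{J'}/L_{J'})$ is intertwined with the combinatorial induction $\Ind_{\tilfrC_J/\tilW_J}^{\tilfrC_{J'}/\tilW_{J'}}$. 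Hence
\begin{equation*}
\colim_{J\in\cD}\Sh_\cN(L_J/L_J)\simeq \colim_{J\in\cD}\Gamma(\frt,\scrQCoh_{\tilfrC_J})^{\tilW_J},
\end{equation*}
and the problem reduces to evaluating the right-hand colimit. Here the $\Om$-action built into $\cD=\cD^\circ/\Om$ must be carried along; by Corollary~\ref{c:coin}-type reasoning (taking $\Om$-coinvariants after the colimit over $\cD^\circ$) it suffices to treat $\colim_{J\sft I^a}$ first and then pass to $\Om$-coinvariants, which on the combinatorial side will be visible as enlarging $\tilW$-data to $\hatW$-data and the alcove to the full apartment.

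Next comes the core computation: $\colim_{J\sft I^a}\Gamma(\frt,\scrQCoh_{\tilfrC_J})^{\tilW_J}\simeq \Gamma(\frt,\scrQCoh_{\hatfrC})^{\hatW}$. The geometric intuition, matching the ``one more loop'' philosophy, is that $\tilfrС_J$ is the datum of cuspidal data on Levis of $L_J$ indexed by relevant \emph{affine} subspaces of the apartment of $\cG$ lying in the $J$-facet, whereas $\hatfrC$ is indexed by relevant \emph{doubly-affine} subspaces (i.e.\ $\epsilon_J$ for $J\subset\hatPhi=\ZZ\times\ZZ\times\Phi$), and $\hatW=(\xcoch(T)\times\xcoch(T))\rtimes W$. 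The colimit over $J\sft I^a$ of the finite-type combinatorial categories, glued by $\Ind$, should ``integrate up'' the first affine direction into the second lattice factor, just as in \cite{liDerivedCategoriesCharacter} the limit over $\frF_G$ integrated the single affine direction. Concretely I would: (i) describe, for each $\hatW$-orbit $c$ on $\hatfrC$ with representative $\epsilon\in\hatfrS$, the ``block'' $k[\hatW_\epsilon]\#\Sym(\frz_\epsilon[1])\textup{-mod}$ via \eqref{Kinv section QCoh}; (ii) match these blocks with the colimit of the corresponding blocks $k[\tilW^J]\#\Sym(\frz_J[1])\textup{-mod}$ over those $J$ and cuspidal data whose associated $\tilW$-orbit lies in the given $\hatW$-orbit, using that $\Ind$ on blocks is, up to the smash-product bookkeeping, the obvious inclusion of deformation parameters; (iii) check the colimit of blocks is again a single block by a direct ``Barr--Beck / idempotent'' argument — the transition functors are fully faithful embeddings (monadic with the right properties), exactly as the fully faithfulness in Theorem~\ref{thm:main in text}/Theorem~\ref{th:ff} shows the colimit of $\Sh_\cN(L_J/L_J)$ is well-behaved. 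Finally, pass to $\Om$-coinvariants to upgrade $\tilW$-bookkeeping to $\hatW$-bookkeeping and obtain $\Gamma(\frt,\scrQCoh_{\hatfrC})^{\hatW}$.

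\textbf{Main obstacle.} The routine part is the block-by-block matching once the indexing is pinned down; the delicate point is the \emph{compatibility of the combinatorial $\Ind$ functors with the gluing maps $\d^{J'}_J$} — that is, verifying that the colimit diagram $J\mapsto \Gamma(\frt,\scrQCoh_{\tilfrC_J})^{\tilW_J}$ really has the structure maps that Proposition~\ref{prop:restriction_G_to_L} predicts, \emph{coherently} over all composable chains $J\subset J'\subset J''$, and that the resulting $\infty$-categorical colimit is computed correctly (not just its homotopy category). Since each $\tilfrС_J$ sits inside $\hatfrC$ only after choosing how the $J$-facet embeds in the apartment, one must show the various choices are identified compatibly — this is precisely the kind of ``choice of lift is canonical up to the right twist'' argument appearing throughout Section~\ref{sss:general} (e.g.\ the independence of $c_{\dot\om}$ from the lift $\dot\om$), and I expect the cleanest route is to phrase the whole colimit as sections of a single $\hatW$-equivariant sheaf of categories $\scrQCoh_{\hatfrC}$ on $\frt$ restricted along the apartment-to-$\cD$ combinatorics, so that the colimit computation becomes an instance of a descent/contraction statement analogous to Theorem~\ref{th:contracting cosheaf}. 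Establishing that reformulation rigorously — bridging the sheaf-of-categories language of \cite{liDerivedCategoriesCharacter} with the poset-colimit language of Section~\ref{s:hn subcats} — is where the real work lies.
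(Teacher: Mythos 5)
Your first reduction — identifying each $\Sh_\cN(L_J/L_J)$ with $\Gamma(\frt,\scrQCoh_{\tilfrC_J})^{\tilW_J}$ via Theorem~\ref{thm:combinatorial_descrpition_character_sheaves}(2), and matching the parabolic induction transition functors with the combinatorial $\Ind$ via Proposition~\ref{prop:restriction_G_to_L} — is exactly what the paper does, and you correctly intuit at the end that the right way to finish is to view everything through the $\hatW$-equivariant sheaf of categories $\scrQCoh_{\hatfrC}$ on $\frt$.

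However, your concretely proposed ``core computation'' has a gap. You claim the transition functors are ``fully faithful embeddings (monadic with the right properties)'', so that the colimit can be assembled block by block. This is false: parabolic induction $\Ind_J^{J'}\colon\Sh_\cN(L_J/L_J)\to\Sh_\cN(L_{J'}/L_{J'})$ is not fully faithful in general (already induction from $\Sh_0(T/T)$ to $\Sh_\cN(G/G)$ identifies Weyl-translates, so the unit of the adjunction is not an equivalence), and on the combinatorial side $\Ind$ is an honest induction of smash-product modules along a non-trivial group extension, not an inclusion. The fully faithfulness proved in Theorem~\ref{th:ff} is a statement about the map from the entire colimit into $hh(\cH_\cG)$, not about the structure maps of the colimit diagram itself. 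So the ``Barr--Beck / idempotent'' step does not get off the ground, and a block-by-block computation of the colimit would require a much more delicate analysis of how $\tilW_J$-orbits on $\tilfrC_J$ organize into $\hatW$-orbits on $\hatfrC$ across all $J$ simultaneously.

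The paper's route avoids this entirely, and is the concrete version of your ``cleanest route'' suggestion — but the decisive ingredient you are missing is a \emph{topological covering argument}, not a contraction. Passing to right adjoints rewrites the colimit as $\lim_{J\in\cD^{\opp}}\Gamma(\frt,\scrQCoh_{\tilfrC_J})^{\tilW_J}$. Then one introduces, for each $J$, the open set $V_J=\frA\times\str(\frA_J)\tau\subset\frt$, observes the isomorphism of topological groupoids $\colim_{J\in\cD}V_J/(\xcoch(T)\rtimes W_J^\tau)\simeq\frt/\hatW$ (this is simply the star cover of the apartment quotient), and uses that a sheaf of categories satisfies descent along open covers to get
\begin{equation*}
\Gamma(\frt,\scrQCoh_{\hatfrC})^{\hatW}\simeq\lim_{J\in\cD^{\opp}}\Gamma(V_J,\scrQCoh_{\hatfrC})^{\xcoch(T)\rtimes W_J^\tau}.
\end{equation*}
Finally, translation by a point $x\in\frA_J\tau$ gives an isomorphism of sheaves $t_{x,*}(\scrQCoh_{\tilfrC_J})|_{V_J}\simeq\scrQCoh_{\hatfrC}|_{V_J}$, identifying each term of this limit with $\Gamma(\frt,\scrQCoh_{\tilfrC_J})^{\tilW_J}$ compatibly with the restriction functors, and the coherence over composable chains is automatic because one is restricting a single sheaf of categories along nested opens. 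This replaces both the block-by-block matching and the $\infty$-coherence concern you flagged.
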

where the colimit is formed using the parabolic induction functors as in Section~\ref{sss:colim CS}.\begin{proof}
We may rewrite the colimit on the left side as a limit
\begin{equation*}
\lim_{J\in \cD^{\opp}} Sh_\calN(L_J/L_J)
\end{equation*}
using the parabolic restriction functors that are right adjoint to the induction functors.

	We view $\widetilde{W}$ as a subgroup of  $ \widehat{W}$ in two ways:    $\widetilde{W}= (\xcoch(T) \times 0) \rtimes W$, and $\widetilde{W}^\tau= (0 \times \xcoch(T)) \rtimes W$. Denote $W^a \subset \widetilde{W}, W^{a,\tau} \subset \widetilde{W}^\tau$ the corresponding affine Weyl group. Let $\frA:=X_*(T)_\RR$. Recall we have fixed an imaginary $\t$ so that $\frt=\frA_{\CC}=\frA\times \frA \t$. For a subset $X\subset \frA$, $X \t$ denotes the corresponding subset of $\frA \t$, which in turn is a subset of $\frt$.  	
	
	The affine space $\frA$ is stratified by facets given by connected components of intersections of affine roots hyperplanes. For any $J \sft I^{a}$, let $\frA_{J}$ be the corresponding facet in the fundamental alcove, i.e.,  $\frA_J= \{x \in \frA | \alpha(x) =0, \beta(x) >0,  \mbox{ for all }\alpha \in J, \beta \in I^a \setminus  J \}$. Let $\str(\frA_J) \subset \frA$ be the star of $\frA_J$, namely, $\str(\frA_J)$ is the union of facets whose closures contain $\frA_J$. Denote the open subset:
	$${V}_J:= \frA \times \str(\frA_J)\tau \subset \frA \times \frA\tau = \frA_{\CC}.$$
	We have an isomorphism of topological groupoids
	\begin{equation*}
		\colim_{J \in \cD^\c}  \str(\frA_J)\tau/W_J= \frA \tau/ W^{a,\tau}. 
	\end{equation*}
Dividing $\Omega$ on both side, we get an isomorphism of topological groupoids
\begin{equation}\label{STJD}
\colim_{J \in \cD}  \str(\frA_J)\tau/W_J \simeq \frA\t/ \widetilde{W}^\tau. 
\end{equation}
	Let $\xcoch(T)  \rtimes W^{\t}_J\subset \wh W$ be the subgroup generated by $\xcoch(T)\times 0$ and $W^{\t}_{J}\subset W^{a,\t}$. Abstractly, it is isomorphic to the semi-direct product  $\xcoch(T)\rtimes W_{J}\simeq \tilW_{J}$, the extended affine Weyl group of $L_{J}$.
	Then \eqref{STJD} induces an isomorphism of topological groupoids
	\begin{equation*}
\colim_{J \in \cD} V_J/ (\xcoch(T)  \rtimes {W}^{\t}_J) \simeq \frt/\widehat{W}.
\end{equation*}
	Since all arrows in the above colimit are open embeddings, it further induces an equivalence
	\begin{equation}
		\label{eq:limit_QCoh_WC}
		\begin{split}
			\Gamma(\frt,\mathscr{QC}\textup{oh}_{\widehat{\frC}})^{\widehat{W}}  &   \simeq  \Gamma(\frt/\widehat{W} ,\mathscr{QC}\textup{oh}_{\widehat{\frC}})  \\
			& \simeq \lim_{J \in \cD^{\opp}}  \Gamma({V}_J/ (\xcoch(T)  \rtimes {W}^{\t}_J), \mathscr{QC}\textup{oh}_{\widehat{\frC}})  \\
			& \simeq \lim_{J \in \cD^{\opp}}  \Gamma(V_J, \mathscr{QC}\textup{oh}_{ \widehat{\frC}})^{ \xcoch(T)  \rtimes {W}^{\t}_J}
		\end{split}
	\end{equation}  
	
	
	On the other hand, choose any { $x \in \frA_{J} \tau$}, and put $t_x:\frt \to \frt$ the translation by $x$. Then there is an canonical isomorphism of sheaves on $V_J$
	\begin{equation}\label{QCoh VJ}
t_{x,*}(\mathscr{QC}\textup{oh}_{\widetilde{\frC}_J})|_{V_J} \simeq \mathscr{QC}\textup{oh}_{\widehat{\frC}}|_{V_J}.
	\end{equation}
 	This is because affine spaces in $\hatfrC$ having nonempty intersections with $V_J$ are precisely those of the form $t_x(\epsilon)$, for $\epsilon$ in $\tilfrC_J$. The isomorphism \eqref{QCoh VJ} is naturally $\xcoch(T) \rtimes W^{\t}_J\simeq\widetilde{W}_J$ equivariant, and induces an equivalence on global sections:
	\begin{equation}
		\label{eq:isomorphism_of_sections}
		\Gamma( {V_J},  \mathscr{QC}\textup{oh}_{\widehat{\frC}})^{\xcoch(T) \rtimes W^{\t}_J} \simeq \Gamma(t^{-1}_x(V_J), \mathscr{QC}\textup{oh}_{\widetilde{\frC}_J} )^{\widetilde{W}_J} \simeq 
		\Gamma(\frt, \mathscr{QC}\textup{oh}_{\widetilde{\frC}_J} )^{\widetilde{W}_J}.
	\end{equation}
	Here, the second equivalence is because $t^{-1}_x(V_J)$ is convex 
	and has nonempty intersection with each affine space in $\tilfrC_J$ 
	 (and hence the intersection is contractible).
	Moreover for $J \subset J'\sft I^{a}$, under (\ref{eq:isomorphism_of_sections}) the restriction map:
	$$ \Gamma(V_J, \mathscr{QC}\textup{oh}_{\widehat{\frC}})^{ \xcoch(T)  \rtimes {W}^{\t}_J}   \to \Gamma(V_{J'}, \mathscr{QC}\textup{oh}_{ \widehat{\frC}})^{\xcoch(T)  \rtimes {W}^{\t}_{J'}}$$
	is identified with the restriction map defined in \eqref{eq:induction_restriction}:
	$$ \Res^{\wt{\frC}_{J'}/\tilW_{J'}}_{\wt{\frC}_{J}/\tilW_{J}}: \Gamma(\frt, \mathscr{QC}\textup{oh}_{\widetilde{\frC}_J} )^{\widetilde{W}_J} \to \Gamma(\frt, \mathscr{QC}\textup{oh}_{\widetilde{\frC}_{J'}} )^{\widetilde{W}_{J'}}  $$
	It is also clear that \eqref{eq:isomorphism_of_sections} is equivariant under the action of $\Om$. Therefore, the functor $\cD^{\opp}\to \St^{R}_{k}$ defined by $J\mapsto \Gamma( {V_J},  \mathscr{QC}\textup{oh}_{\widehat{\frC}})^{\xcoch(T) \rtimes W^{\t}_J}$ can be identified with the functor $J\mapsto \Gamma(\frt, \mathscr{QC}\textup{oh}_{\widetilde{\frC}_J} )^{\widetilde{W}_J}$ (using the restriction functors and the $\Om$-action).
	In summary, we have 
	\begin{equation}
		\label{eq:gluing_character_sheaves}
		\begin{split}
			\Gamma(\frt,\scrQCoh_{\hatfrC})^{\hatW} & \simeq  \lim_{J \in \cD^{\opp}}  \Gamma(V_J, \mathscr{QC}\textup{oh}_{\widehat{\frC}})^{ \xcoch(T)  \rtimes {W}^{\t}_J}     \qquad  \textup{by } \eqref{eq:limit_QCoh_WC}  \\
			& \simeq \lim_{J \in \cD^{\opp}}\Gamma(\frt,\scrQCoh_{\tilfrC_J})^{\tilW_J} \qquad \textup{by } \eqref{eq:isomorphism_of_sections} \\
			& \simeq \lim_{J \in \cD^{\opp}} Sh_\calN(L_J/L_J)   \qquad \textup{by Theorem~\ref{thm:combinatorial_descrpition_character_sheaves} (2) and Prop~\ref{prop:restriction_G_to_L}}
		\end{split}
	\end{equation}
	
\end{proof}

\begin{cor}
	\label{cor:induction_and_alpha}	
	Under Theorem~\ref{thm:combinatorial_descrpition_character_sheaves} and Theorem~\ref{thm_gluing_character_sheaves},
	the diagrams naturally commutes:
	\beq \label{eq:identify_induction_functor}
	\xymatrix{ Sh_\calN(\frg/G) \ar[r]^-{\alpha_1}\ar[d]^{\simeq}_{\LL_{\frg}} & Sh_\calN(G/G) \ar[r]^-{\alpha_2} \ar[d]^{\simeq}_{\LL_{G}} & \colim_{J\in \cD} Sh_\calN(L_J/L_J) \ar[d]^{\simeq}_{\LL_{\cG}} \\
		\Gamma(\frt,\scrQCoh_{\frC})^W 	\ar[r]^{\Ind_1}  & \Gamma(\frt,\scrQCoh_{\tilfrC})^{\tilW} \ar[r]^{\Ind_2}   & 	\Gamma(\frt,\scrQCoh_{\hatfrC})^{\hatW} 			 }    
	\eeq
	where $\alpha_2$ is the natural map into the colimit corresponding to $J=I$, and $\Ind_1, \Ind_2$ are the induction functors defined in \eqref{eq:induction_restriction}.
\end{cor}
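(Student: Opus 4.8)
The statement to be proved, Corollary~\ref{cor:induction_and_alpha}, asserts the commutativity of two squares: the left one comparing $\a_1$ with $\Ind_1$, and the right one comparing $\a_2$ with $\Ind_2$. The left square is almost immediate: it is precisely the adjoint form of Proposition~\ref{prop:restriction_G_to_g}. Indeed, that proposition says the square relating $\beta_1$ with $\Res$ commutes (going the other direction), and passing to left adjoints on all four corners — which exist because $\beta_1$ and $\Res$ preserve limits — yields exactly the square relating $\a_1 = \beta_1^\ell$ with $\Ind_1 = \Res^\ell$. So the first step is just to record this adjunction argument, taking care that the identification $\LL_\frg$, $\LL_G$ of the vertical equivalences is the same one used in Proposition~\ref{prop:restriction_G_to_g}.

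The right square is the substantive part, and the plan is to trace through the construction of $\LL_\cG$ given in the proof of Theorem~\ref{thm_gluing_character_sheaves}. Recall that $\a_2$ is, by definition, the structure map $Sh_\cN(L_I/L_I) = Sh_\cN(G/G) \to \colim_{J\in\cD} Sh_\cN(L_J/L_J)$ into the colimit, where $I\subset I^a$ is the subset of finite simple roots. Dually, under the limit presentation $\colim_{J\in\cD} Sh_\cN(L_J/L_J) \simeq \lim_{J\in\cD^{\opp}} Sh_\cN(L_J/L_J)$ used in the proof, $\a_2$ corresponds (after passing to left adjoints) to the projection $\lim_{J} Sh_\cN(L_J/L_J) \to Sh_\cN(L_I/L_I)$ to the $J=I$ term. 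On the spectral side, the chain of equivalences \eqref{eq:gluing_character_sheaves} identifies $\Gamma(\frt,\scrQCoh_{\hatfrC})^{\hatW}$ with $\lim_{J\in\cD^{\opp}}\Gamma(\frt,\scrQCoh_{\tilfrC_J})^{\tilW_J}$, under which the projection to the $J=I$ factor is exactly $\Res^{\hatfrC/\hatW}_{\tilfrC_I/\tilW_I}$ (the $J=I$ instance of the restriction maps built into the inverse limit diagram), and $\tilfrC_I = \tilfrC_G$, $\tilW_I = \tilW_G$ since $L_I = G$. Passing to left adjoints converts this restriction into $\Ind_2$. So the right square, after taking left adjoints everywhere, becomes the tautological compatibility of the projection-to-$J=I$ maps on both sides of the equivalence \eqref{eq:gluing_character_sheaves} — and that compatibility is built into how $\LL_\cG$ was defined as a limit of the equivalences $\LL_{L_J}$ via Theorem~\ref{thm:combinatorial_descrpition_character_sheaves}(2) and Proposition~\ref{prop:restriction_G_to_L}.

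Concretely, the steps I would carry out are: (i) invoke Proposition~\ref{prop:restriction_G_to_g} and dualize to get the left square; (ii) unwind the definition of $\a_2$ as the colimit structure map and, using the left-adjoint/right-adjoint interchange, reformulate it as the projection from the limit $\lim_{J\in\cD^{\opp}} Sh_\cN(L_J/L_J)$ to the $J=I$ vertex; (iii) do the same on the spectral side, identifying $\Ind_2$ with the left adjoint of the projection $\lim_{J}\Gamma(\frt,\scrQCoh_{\tilfrC_J})^{\tilW_J}\to \Gamma(\frt,\scrQCoh_{\tilfrC_G})^{\tilW_G}$; (iv) check that the equivalence \eqref{eq:gluing_character_sheaves} is, by construction, compatible with these two projection maps — here one must verify that the isomorphism \eqref{eq:isomorphism_of_sections} for $J=I$ together with Proposition~\ref{prop:restriction_G_to_L} identifies the $J=I$ vertex of the spectral limit with $\LL_G$ of $Sh_\cN(G/G)$, which is essentially the content of how $\LL_\cG$ was assembled. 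Finally (v) take left adjoints of the resulting commuting square of projections to conclude.

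\textbf{Main obstacle.} The routine adjunction yoga aside, the one point requiring genuine care is step (iv): one must make sure that the index-$J=I$ term of the limit diagram \eqref{eq:gluing_character_sheaves}, as written, is identified with $Sh_\cN(G/G)$ via \emph{the same} equivalence $\LL_G$ appearing in Theorem~\ref{thm:combinatorial_descrpition_character_sheaves}(2), rather than some $\tilW$-conjugate or twist of it coming from the translation $t_x$ by a point $x\in\frA_I\tau$ used in \eqref{eq:isomorphism_of_sections}. Since $\frA_I$ is a single point (the barycenter-type vertex corresponding to the finite part), $\str(\frA_I) = \frA$ and $V_I = \frA\times\frA\tau = \frt$, so in fact $t_x$ can be taken to be the identity and no twist intervenes; but this normalization must be stated explicitly so that the square genuinely commutes on the nose and not merely up to a coherent isomorphism. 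Once this bookkeeping is pinned down, the corollary follows formally.
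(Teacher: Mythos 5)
Your plan follows the paper's proof essentially verbatim: the left square is the adjoint of Proposition~\ref{prop:restriction_G_to_g}, and the right square follows by tracing $\a_2$ (equivalently, its right adjoint) through the limit presentation \eqref{eq:gluing_character_sheaves} as the projection to the $J=I$ vertex, then dualizing. One small factual slip in your ``obstacle'' paragraph: for $G$ semisimple, $\frA_I=\{0\}$ but $\str(\frA_I)$ is \emph{not} all of $\frA$ (it is only the open star of the origin, a bounded neighborhood), so $V_I\neq\frt$; the reason no twist intervenes is simply that one may choose $x=0\in\frA_I\tau$, so $t_x=\id$, with the second equivalence in \eqref{eq:isomorphism_of_sections} supplying the restriction from $\frt$ to $V_I$. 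This does not affect the validity of your argument.
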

\begin{proof}
	The first square is commutative by Proposition~\ref{prop:restriction_G_to_g}. For the second square, by \eqref{eq:gluing_character_sheaves}, the right adjoint of $\a_{2}$ corresponds to the restriction map $\Gamma(\frt,\scrQCoh_{\hatfrC})^{\hatW}\to \Gamma(V_{I},\scrQCoh_{\wt\frC})^{\tilW}$, and the latter can be identified with $\Gamma(\frt,\scrQCoh_{\wt\frC})^{\tilW}$. Passing to left adjoints gives the commutativity of the second square.
\end{proof}

\subsection{Fourier-Sato transform and Whittaker sheaf}\label{ss:FS Wh} In this section, we study the Lie algebra version of the Whittaker sheaf, and identify its image under the equivalence in Proposition~\ref{prop:restriction_G_to_g}. The main tool here is the Fourier-Sato transform. 

Consider the diagram
\begin{equation}\label{fru}
\xymatrix{       \GG_a     &        \fru^{-}/U^{-}   \ar[l]_-{f}  \ar[r]^-{r_{-}} & \frg/G   }
\end{equation}
where $r_{-}$ is induced by the inclusion $\fru^{-}\incl \frg$, and $f=d\chi: \fru^{-}\to \GG_a$ is the differential of the generic character $\chi: U^{-}\to \Ga$ used to define the Whittaker functor on the group, see Section~\ref{sss:Whit G}. 

\begin{defn}\label{def:Wg}
\begin{enumerate}
\item The Whittaker functor on $\Sh_{\cN}(\frg/G)$ is the functor 
\begin{equation*}
\cW_{\frg/G}: \Sh_{\cN}(\frg/G)\to k\lmod, \quad \calW_{\frg/G}(\cF)= \varphi_{f,0} \circ r_{-}^!\cF.
\end{equation*}

\item The Whittaker sheaf on the Lie algebra is the object $\Wh_{\frg/G}\in \Sh_{\cN}(\frg/G)$ corepresenting the Whittaker functor $\cW_{\frg/G}$, i.e.,
\begin{equation*}
\cW_{\frg/G}(\cF)\simeq \Hom_{\Sh_{\cN}(\frg/G)}(\Wh_{\frg/G}, \cF), \quad \mbox{ for all }\cF\in \Sh_{\cN}(\frg/G).
\end{equation*}
\end{enumerate}
\end{defn}


Let 
$$ \mathbb{T}:  Sh(\calN/G) \simeq  Sh_\calN(\frg/G) $$ 
be the Fourier-Sato transform, normalized to be $t$-exact with respect to the perverse $t$-structures.

\begin{prop}
	\label{prop:Fourier_transform} 
	Let $e \in \fru$ be the regular nilpotent element corresponding to $f\in (\fru^{-})^{*}$ under the invariant form on $\frg$.  Let $i_e:\{e\} \to \calN/G$ be the natural map. 
	Then there is a natural commutative diagram:
	$$ \xymatrix{ Sh_\calN(\frg/G)  \ar@{<-}[r]^{\mathbb{T}} \ar[rd]^{\calW_{\frg/G}} &  Sh(\calN/G) \ar[d]^{i^*_e[r]}  \\
		&		k\lmod   }  
	$$
\end{prop}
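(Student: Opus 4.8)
The claim relates two computations of the same functor after applying Fourier--Sato transform $\mathbb{T}$: the Whittaker functor $\cW_{\frg/G}$ on the one hand, and the $*$-stalk at the regular nilpotent $e$ (shifted by $[r]$) on the other. The natural strategy is to unwind the definition of $\cW_{\frg/G}(\cF) = \varphi_{f,0}\, r_-^!\cF$ and transport each piece of the correspondence \eqref{fru} through the Fourier transform, using the compatibility of $\mathbb{T}$ with vanishing cycles along a linear function. The key input is the standard fact that for a linear function $f$ on a vector space $V$ (here $V = \fru^-$), vanishing cycles $\varphi_{f,0}$ applied to a conic sheaf on $V$ is computed by the Fourier transform evaluated at the covector $df$; more precisely, $\varphi_{f,0}(\cG)$ is the stalk (up to shift) at the point $e \in V^* \simeq \fru$ of the Fourier--Sato transform of $\cG$ on $V^*$. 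So the plan is: (i) reduce from $\frg/G$ to the linear slice $\fru^-$ by analyzing how $r_-^!$ interacts with $\mathbb{T}$; (ii) identify the restriction $r_-^!\cF$ (for $\cF = \mathbb{T}^{-1}(\cE)$ with $\cE$ supported on $\cN/G$) with the Fourier transform on $\fru^-$ of an appropriate restriction of $\cE$; (iii) apply the linear-function/stalk principle.

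First I would set up the diagram carefully. Write $\fru^- \hookrightarrow \frg$ with the $G$-action restricted to $U^-$, so that $r_-: \fru^-/U^- \to \frg/G$ factors as $\fru^-/U^- \to \frg/U^- \to \frg/G$. Since Fourier--Sato transform commutes (up to the appropriate normalization and twist) with restriction to a linear subspace combined with the dual projection — this is the content of the base-change property of $\mathbb{T}$ along the inclusion $\fru^- \hookrightarrow \frg$ and the dual surjection $\frg^* \simeq \frg \twoheadrightarrow (\fru^-)^* \simeq \frg/(\fru^-)^\perp$ — the sheaf $r_-^!\cF$ on $\fru^-$, for $\cF = \mathbb{T}^{-1}\cE$, is (up to shift) the Fourier--Sato transform over $(\fru^-)^* \simeq \frg/\fru$ of the $*$-restriction of $\cE$ to the quotient. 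Here I use the invariant form on $\frg$ to identify $\fru^\perp$ inside $\frg^*\simeq\frg$; the regular nilpotent $e \in \fru$ pairs with $\fru^-$ via $f$ by construction. The $G$-equivariance bookkeeping (matching $U^-$-equivariance on the slice with $G$-equivariance upstairs) is routine: it is the same transversality statement already used in Section~\ref{sss:Whit G}, namely that $\fru^- \hookrightarrow \frg$ is transverse to $G$-orbits, which is why $r_-^!$ and $r_-^*$ agree up to shift.

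Next I would apply vanishing cycles. Having reduced $r_-^!\cF$ on $\fru^-$ to a Fourier transform, the composite $\varphi_{f,0}\, r_-^!\cF$ becomes $\varphi_{f,0}$ of a Fourier-transformed sheaf on $(\fru^-)^*$. By the classical relationship between vanishing cycles along a linear coordinate and the Fourier--Sato transform (see e.g.\ the microlocal picture mentioned in Section~\ref{sss:Whit G} and the reference \cite{nadlerWhittakerFunctionalShifted} cited there, or Kashiwara--Schapira), $\varphi_{f,0}$ composed with Fourier transform is the stalk functor at the covector $df$, suitably shifted. Since $df$ corresponds precisely to $e$ under our identifications, this yields $\varphi_{f,0}\, r_-^!\,\mathbb{T}^{-1}(\cE) \simeq i_e^* \cE [r]$ naturally in $\cE$, which is exactly the asserted commuting diagram after substituting $\cF = \mathbb{T}^{-1}(\cE)$ and recalling $\cW_{\frg/G}(\cF) = \varphi_{f,0}\, r_-^!\cF$. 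I should check that the normalization $[r]$ comes out right: $\mathbb{T}$ is normalized to be $t$-exact, $\cW_{\frg/G}$ has no shift in Definition~\ref{def:Wg}, the microstalk convention normalizes microstalks of perverse sheaves to degree $0$, and $r = \dim H = \dim \fru^-/[\fru^-,\fru^-]$... actually $r$ here is the rank, and the shift should be traced through $\dim \fru^- $ minus corrections; the bookkeeping of the shift is the one genuinely fiddly point and I would pin it down by testing on the unit/constant sheaf.

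\textbf{Main obstacle.} The conceptual content is light, but the main difficulty is getting the normalizations and shifts exactly consistent: four conventions must be reconciled — the $t$-exact normalization of $\mathbb{T}$, the unshifted definition of $\cW_{\frg/G}$, the perverse normalization of $\varphi$ (so that $\varphi[-1]$ or $\varphi$ preserves perversity), and the $[r]$ appearing in the statement. The cleanest route is to prove the unnormalized identity first (Fourier transform of vanishing cycles = stalk at the dual covector, with whatever shift falls out) and then compare both sides on a generator, e.g.\ the object $\mathbb{T}^{-1}$ of the skyscraper at $e$, or the constant sheaf on $\frg/G$, to fix the global shift $[r]$. A secondary (but standard) point is justifying the base-change of $\mathbb{T}$ along the non-closed embedding $\fru^-/U^- \to \frg/G$ in the equivariant/stacky setting; this reduces to the non-equivariant statement on $\frg$ together with smooth descent, since all the maps in sight are either smooth or the transverse linear inclusion.
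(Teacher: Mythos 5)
Your high-level strategy (transport the Whittaker correspondence through the Fourier--Sato transform using compatibility with linear maps, then use the linear vanishing-cycle/Fourier dictionary, then fix shifts by $t$-exactness) matches the paper's. But there is a genuine gap in step (iii). The vanishing-cycle/Fourier-Sato dictionary, applied after transporting $r_-^!$ through $\mathbb{T}$, does \emph{not} produce the stalk $i_e^*\cE$; it produces the (co)stalk of $\pi_*\cE$ at $e$, where $\pi:\frg\to\frg/\frb^-\simeq\fru$ is the linear surjection dual to the inclusion $r_-:\fru^-\hookrightarrow\frg$. That is, you land on the cohomology of $\cE$ along the entire affine fiber $\pi^{-1}(e)=e+\frb^-$, not the value of $\cE$ at the single point $e$. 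Your sentence ``this yields $\varphi_{f,0}\,r_-^!\,\mathbb{T}^{-1}(\cE)\simeq i_e^*\cE[r]$'' asserts the final answer without the argument that collapses the fiber to the point.

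Closing that gap requires a geometric input you do not mention and which is the real content of the paper's proof: since $\cE$ is supported on $\cN$, the relevant locus is $\pi^{-1}(e)\cap\cN=(e+\frb^-)\cap\cN$, and this is precisely the free $U^-$-orbit of $e$ under the adjoint action, hence contractible; because $\cE$ is $G$-equivariant it is constant along this orbit, so the fibral (co)homology reduces to the stalk $i_e^*\cE$. Without the support condition $\Supp(\cE)\subset\cN$ and the transversality statement that $(e+\frb^-)\cap\cN$ is a single contractible orbit, the dictionary does not give what you want. Your ``Main obstacle'' paragraph identifies only the shift bookkeeping as the difficulty; in fact the shifts are the routine part (handled by $t$-exactness exactly as you suggest), and the orbit-contractibility argument is the step that actually carries the proposition.
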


\begin{proof}
	Since $f$ is linear, we have 
	\beq \label{vanf}
	\varphi_{f,0}(\cK)= \varphi_{id,0} (f_*(\cK))
	\eeq
	 for any $\GG_m$-monodromic sheaf $\cK$ on $\fru^{-}$ (where $\GG_m$ acts by dialition). 
	Moreover, for any $\GG_m$-monodromic sheaf $\cE$ on $\GG_a$, we have 
	\beq \label{vanid}
	\varphi_{id,0}(\TT_{\Ga}(\cE))[-1] \simeq i^!_{1}(\cE)
	\eeq
	where $i_{1}: \{1\}\incl \Ga$,  and $\TT_{\Ga}$ is the Fourier-Sato transform for $\Sh(\Ga)$.

	We have the dual maps to \eqref{fru}  $$\xymatrix{       \GG_a  \ar[r]^-{i}    &     \fru   & \frg  \ar[l]_-{\pi} } ,  $$
	where $\pi$ is the quotient map to $\frg/\frb^{-}\cong \fru$, and $i(c)= c e$. 
	For any $\cF \in Sh(\calN/G)$, we have 
	\begin{equation*}
	\calW_{\frg/G} \circ \TT (\cF) 
	\simeq \varphi_{id,0} \circ f_* \circ r_{-}^! \circ \TT (\cF)
	\simeq \varphi_{id,0} \circ \TT_{\Ga} \circ i^! \circ \pi_*(\cF)[2\nu + r -1]
	\end{equation*}
$$	
	\simeq i^!_1 \circ i^! \circ \pi_*(\cF)[2\nu +r ] \simeq R\G( i^!_{\pi^{-1}(e)} \cF)[2 \nu +r ]
	\simeq  R\G(i^!_{\pi^{-1}(e) \cap \calN} \cF)[2\nu + r]
	\simeq i^*_{e} \cF [r] 
$$
for $r=\dim T$, and $\nu= \dim U$,	where the first isomorphism uses \eqref{vanf}, the second uses the standard property of the Fourier-Sato transform under linear maps,  and the third uses \eqref{vanid}. The last isomorphism is because $\pi^{-1}(e) \cap \calN = (e +\frb^{-}) \cap \calN$ is the free orbit of $e$ under the adjoint action of $U^{-}$, which is contractible and along which $\cF$ is contant.
\end{proof}

\begin{prop}
	\label{prop:wh_G_and_wh_g}
	The diagram naturally commutes:
	$$\xymatrix { Sh_\calN(G/G) \ar[r]^{\beta_1} \ar[dr]^{\calW_{G/G}} &   Sh_\calN(\frg/G) \ar[d]^{\calW_{\frg/G}}  \\
		&	k\lmod	 }$$
\end{prop}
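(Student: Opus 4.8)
The claim is that $\cW_{G/G} \simeq \cW_{\frg/G} \circ \beta_1$ as functors $\Sh_\cN(G/G) \to k\lmod$. The plan is to unwind both sides through the definitions and reduce to a local statement near the identity coset of $G$ that is matched, via the exponential map, with a statement near $0 \in \frg$. Recall that $\beta_1 = (j^*)^{-1} \circ (\exp|_{D/G})^*$ where $D \subset \frg$ is an $\Ad(G)$-invariant star-shaped open neighborhood of $0$ on which $\exp$ is an isomorphism onto its image $\exp(D) \subset G$, and $j: D/G \hookrightarrow \frg/G$ is the open inclusion. Since $\Sh_\cN(\frg/G)$ is conical (every object is monodromic for the scaling $\GG_m$-action, because its singular support is nilpotent hence conical), $j^*$ is an equivalence and $\beta_1$ is genuinely "restrict from $G$ to $\exp(D)$, then transport to $\frg$".

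First I would observe that the Whittaker correspondence $G/G \xleftarrow{r_-} U^-/U^- \xrightarrow{\chi} \AA^1$ used to define $\cW_{G/G}$ (Definition~\ref{def: whit char sh}) and the Lie algebra correspondence $\frg/G \xleftarrow{r_-} \fru^-/U^- \xrightarrow{f} \GG_a$ used to define $\cW_{\frg/G}$ (Definition~\ref{def:Wg}) are compatible under $\exp$ in the following sense: after shrinking $D$ so that $D^- := D \cap \fru^-$ maps isomorphically onto $\exp(D^-) = \exp(D) \cap U^-$ under $\exp$, the restriction of $r_-: U^-/U^- \to G/G$ to $\exp(D^-)/U^-$ agrees with $r_-: \fru^-/U^- \to \frg/G$ restricted to $D^-/U^-$ after the identification $\exp: D^- \isom \exp(D^-)$, intertwining the $U^-$-conjugation actions. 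Moreover the generic character $\chi: U^- \to \GG_a$ and its differential $f = d\chi: \fru^- \to \GG_a$ satisfy $\chi \circ \exp \equiv f$ up to higher-order terms; but since vanishing cycles $\varphi_{\bullet, 0}$ at the origin of $\AA^1$ only depend on the restriction of the function to any neighborhood of the zero fiber, and since $f$ is the linear part of $\chi \circ \exp$ at $1 \in U^-$, I would argue that $\varphi_{\chi, 1}$ computed on sheaves pulled back from $G/G$ and restricted near $1$ agrees with $\varphi_{f, 0}$ of the corresponding sheaf on $\fru^-$. The cleanest way to see this is a change-of-coordinates argument: there is an $\Ad(U^-)$-equivariant analytic automorphism of a neighborhood of $1$ in $U^-$ carrying $\chi$ to $f \circ \exp^{-1}$, using that both are non-degenerate characters and that the space of such is connected; alternatively one invokes that $\varphi$ only depends on the Hessian-nondegenerate direction transverse to the critical locus, which is the same for $\chi$ and $f$ near $1$.

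Then the proof assembles as follows: given $\cF \in \Sh_\cN(G/G)$, we have $\cW_{\frg/G}(\beta_1 \cF) = \varphi_{f,0} \, r_-^! (\beta_1 \cF)$. By the compatibility of the correspondences under $\exp$ and the fact that $\beta_1 \cF$ is, by construction, $\exp^*(\cF|_{\exp(D)/G})$ transported to $\frg/G$, we get $r_-^!(\beta_1 \cF) \simeq \exp^*\big((r_-^! \cF)|_{\exp(D^-)/U^-}\big)$ (using that $\exp$ is étale — indeed an open immersion — on $D$, so $\exp^! = \exp^*$ up to the trivial shift by relative dimension zero, and that $!$-pullback commutes with restriction to the open $\exp(D^-)$). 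Applying $\varphi_{f,0}$ and using the step above identifying $\varphi_{f,0} \circ \exp^*$ restricted to such sheaves with $\varphi_{\chi, 1}$, we obtain $\cW_{\frg/G}(\beta_1\cF) \simeq \varphi_{\chi,1}\, r_-^! \cF = \cW_{G/G}(\cF)$, naturally in $\cF$.

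The main obstacle will be making the comparison $\varphi_{f,0} \leftrightarrow \varphi_{\chi,1}$ precise, i.e.\ controlling the effect of passing from the character $\chi$ on the group to its differential $f$ on the Lie algebra inside the vanishing cycles functor. Two robust ways to handle this: (i) use the alternative microlocal descriptions — the lemma in Section~\ref{sss:Whit G} identifies $\cW_G$ with a microstalk functor $\Xi$ at the covector $\xi \in T^*_{U/U}(G/U)$ determined by $d\chi$, and one can run the analogous identification on the Lie algebra side, reducing everything to the statement that the relevant microstalk is insensitive to replacing a function by its linear approximation, which is standard microlocal sheaf theory (the microstalk at a covector depends only on that covector); or (ii) deform $\chi$ to $f\circ\exp^{-1}$ through non-degenerate characters and invoke invariance of vanishing cycles under such deformations together with the fact that the critical loci stay constant (both functions have critical locus exactly $\{1\}$ transversally). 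I would present route (i) as the cleaner argument and relegate (ii) to a remark. Everything else — the compatibility of the $r_-$ maps under $\exp$, the equivariance, the equivalence $j^*$, and naturality — is routine bookkeeping with the definitions already set up in Sections~\ref{sss:Whit G} and~\ref{ss:FS Wh}.
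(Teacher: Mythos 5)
Your central idea — transport everything via $\exp$ on a conjugation-invariant neighborhood $D$ of $0$ and compare the two Whittaker correspondences there — is exactly the paper's approach. But you've manufactured a difficulty where none exists, and this leads you to bring in microstalk invariance, deformation of non-degenerate characters, and change-of-coordinates arguments that are all unnecessary.

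The point you miss: $\chi\circ\exp = f$ \emph{exactly}, not just to first order. Both $\chi$ and $f$ factor through the abelianizations $U^-/[U^-,U^-]$ and $\fru^-/[\fru^-,\fru^-]$ respectively, and the map $\fru^-/[\fru^-,\fru^-]\to U^-/[U^-,U^-]$ induced by $\exp$ is itself the exponential of the abelianized group — a vector group — hence the identity under the canonical linear identification. More concretely, by naturality of the unipotent exponential with respect to the group homomorphism $\pi\colon U^-\to U^-/[U^-,U^-]$, one has $\pi\circ\exp = \exp_{\mathrm{ab}}\circ d\pi$ with $\exp_{\mathrm{ab}}$ the identity, and then $\chi\circ\exp = \bar\chi\circ\pi\circ\exp = \bar\chi\circ d\pi = d\chi = f$ since $\bar\chi$ is linear. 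Once you know this, there is no need to worry about "higher-order terms", nor to invoke microstalks, nor to deform through the space of non-degenerate characters. The diagram
\begin{equation*}
\xymatrix{
\GG_a \ar[d]^{=} & \fru^-\cap D \ar[d]^{\exp}_{\simeq} \ar[l]_-{f} \ar[r]^-{r_0} & D/G \ar[d]^{\exp}_{\simeq} \ar[r]^{j} & \frg/G \\
\GG_a & U^-\cap D' \ar[l]_-{\chi} \ar[r]^-{r'_0} & D'/G \ar[r]^{j'} & G/G
}
\end{equation*}
commutes on the nose, and the claim follows from a one-line computation: $\cW_{\frg/G}\circ\beta_1 = \varphi_{f,0}\,r_0^!\,j^*\circ(j^*)^{-1}\exp^*\,j'^* = \varphi_{\chi,1}\,r_0'^!\,j'^* = \cW_{G/G}$. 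This is what the paper does.

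Your routes (i) and (ii) are not wrong in spirit, but neither is cleanly justified as stated. Route (ii) — deforming $\chi$ to $f\circ\exp^{-1}$ through non-degenerate characters — requires an argument that vanishing cycles are locally constant along the deformation; this needs a noncharacteristicity hypothesis that you assert but don't verify, and is in any case moot since the two characters coincide. Route (i) — "the microstalk at a covector depends only on that covector" — is true, but invoking it here means you must check that the relevant covector is a noncharacteristic point of the singular support for every $\cF\in\Sh_\cN(G/G)$, and you would essentially be redoing the analysis of Section~\ref{ss:FS Wh} from scratch. Neither is needed once you observe the exact equality.
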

\begin{proof}
	Let $D \subset \frg$ as in the definition of $\beta_1$ (c.f (\ref{eq:beta})). Put $D':= \exp(D) \subset G$. Let $f'=\chi:U^{-} \to \GG_a$ and $f=d\chi :\fru^{-} \to \GG_a$, then the diagram commutes:
	$$\xymatrix{   \GG_a  \ar[d]^{=}   &        \fru^{-}\cap D \ar[d]^{\exp}_{\simeq}  \ar[l]_-{f}  \ar[r]^-{r_0} & D/G \ar[d]^{\exp}_{\simeq}  \ar[r]^{j} & \frg/G \\
		\GG_a     &        U^{-}\cap D'   \ar[l]_-{f'}  \ar[r]^-{r'_0} & D'/G   \ar[r]^{j'} & G/G
	}$$
	The Whittaker functor $\cW_{{\frg/G}}$ on the Lie algebra $\frg$ can be identified with $\ph_{f,0}\c r_{0}^{!}\c j^{*}$,  therefore 
	$$\calW_{\frg/G} \circ \beta_1 =  \varphi_{f,0} \circ r_0^! \circ j^* \circ (j^*)^{-1} \circ  \exp|_{D/G}^* \circ j'{}^* =  \varphi_{f',1} \circ r'_0{ }^! \circ j'{}^* = \calW_{G/G}.$$
\end{proof}

\sss{Generalized Springer correspondence}
\label{sec:generalized_springer}
 We recall the generalized Springer correspondence due to Lusztig \cite[Theorem 6.5]{lusztigIntersectionCohomologyComplexes1984}.
For a nilpotent orbit $O$,  let $A_{G}(O)=C_{G}(e)/C_{G}(e)^{\c}$ for $e\in O$, which is a finite group well-defined up to inner automorphisms. For a finite group $\G$, let $\Irr(\G)$ be the set of irreducible representations of $\G$ over $k$. Then the generalized Springer correspondence is a bijection 
\begin{equation*}
\xymatrix{\frP:=\{(O,\chi): O \textup{ a nilpotent orbit of } G, \chi \in \Irr(A_{G}(O))\}\ar[d]^{\GSpr}_{1:1}\\
\wt\frD:=\{(J,F,\theta): J \subset I,  F \in C_J, \theta \in \Irr(W^J)\}.
}
\end{equation*}
Let $\frP^{\reg}\subset\frP$ be the subset of pairs $(O,\chi)$ where $O=O^{\reg}$ is the regular nilpotent orbit. Since $A_G(O^{\reg})=Z(G)/Z(G)^{\c}$, we can identify $\frP^{\reg}$ with $\Irr(Z(G)/Z(G)^{\c})$. 

On the other hand, recall the set $\frD$ from \eqref{def frD}. Let $\frD^{\reg}\subset\frD$ be the subset of those $(J,F)\in\frD$ such that $\Supp(F)=\calN_{L_J}$. 

\begin{lemma}\label{l:IrrZ Dreg} Under the map
\begin{equation}\label{PD}
\frP\xr{\GSpr}\wt\frD\to\frD
\end{equation}
the subset $\frP^{\reg}$ maps bijectively to $\frD^{\reg}$. In particular, it induces a bijection
\begin{equation}\label{IrrZ Dreg}
\Irr(Z(G)/Z(G)^{\c})\xr{1:1} \frD^{\reg}
\end{equation}
sending $\chi$ to the unique pair $(J_{\chi}, F_{\chi})\in \frD^{\reg}$ such that $\GSpr(O^{\reg},\chi)=(J_{\chi}, F_{\chi}, \th_{\chi})$ for some $\th_{\chi}\in \Irr(W^{J_{\chi}})$.
\end{lemma}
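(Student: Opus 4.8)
\textbf{Proof plan for Lemma~\ref{l:IrrZ Dreg}.}

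The plan is to compare two descriptions of the set indexing the ``regular'' part of nilpotent data: the one coming from the generalized Springer correspondence $\GSpr$, and the one coming from the set $\frD$ in \eqref{def frD}. First I would recall precisely what the map $\wt\frD\to\frD$ does: it simply forgets the third coordinate $\th\in\Irr(W^{J})$. So the composite \eqref{PD} sends $(O,\chi)$ to $(J,F)$ where $\GSpr(O,\chi)=(J,F,\th)$ for some $\th$. The content of the lemma is then: (i) $\GSpr$ carries $\frP^{\reg}$ into $\wt\frD^{\reg}:=\{(J,F,\th):\Supp(F)=\cN_{L_J}\}$, i.e.\ the fibres of \eqref{PD} over $\frD^{\reg}$ land in $\frP^{\reg}$ and conversely; and (ii) on $\frP^{\reg}$ the further forgetful map to $\frD^{\reg}$ is a bijection, equivalently, for each $(J,F)\in\frD^{\reg}$ there is a unique $\th$ with $(J,F,\th)=\GSpr(O^{\reg},\chi)$ for some $\chi$.

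The key input is Lusztig's explicit computation of $\GSpr$ on regular and subregular data, specifically the fact (\cite[Theorem~6.5]{lusztigIntersectionCohomologyComplexes1984}, together with the cleanness/cuspidality analysis) that the \emph{regular} nilpotent orbit $O^{\reg}$ corresponds under $\GSpr$ exactly to triples $(J,F,\th)$ in which $F$ is a cuspidal sheaf \emph{supported on all of} $\cN_{L_J}$ --- i.e.\ $F$ is itself the (generalized) ``regular'' cuspidal datum on $L_J$ --- and moreover that for such $(J,F)$ the representation $\th\in\Irr(W^J)$ attached to $(O^{\reg},\chi)$ is the one-dimensional (sign-type) character determined by $\chi$, so that $\chi\mapsto\th$ is a bijection $\Irr(A_G(O^{\reg}))\isom\{\text{the relevant characters of }W^J\}$ compatible with the block decomposition. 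First I would assemble this: the generalized Springer correspondence restricted to the regular orbit is, block by block indexed by $(J,F)$, a bijection from $\Irr(Z(G)/Z(G)^\c)$-constituents onto a set of $\th$'s, and the union over $(J,F)\in\frD^{\reg}$ of these recovers all of $\Irr(Z(G)/Z(G)^\c)$ --- this last statement is precisely the identification $A_G(O^{\reg})=Z(G)/Z(G)^\c$ combined with the fact that the $(J,F)$-blocks partition $\Irr(A_G(O^{\reg}))$ according to which cuspidal support the corresponding character sheaf on $G$ (restricted to $O^{\reg}$) sees.

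Concretely, the steps I would carry out are: (1) Recall the definition of $\GSpr$ and the forgetful maps, and reduce the lemma to the two bulleted claims above. (2) Quote Lusztig's classification of cuspidal pairs and of which pairs $(J,F,\th)$ land in the regular orbit, to establish that $\GSpr^{-1}(\wt\frD^{\reg})=\frP^{\reg}$ and $\GSpr(\frP^{\reg})=\wt\frD^{\reg}$; here $\wt\frD^{\reg}$ projects onto $\frD^{\reg}$ by forgetting $\th$. (3) Fix $(J,F)\in\frD^{\reg}$ and show the fibre of $\frP^{\reg}\to\frD^{\reg}$ over it is a single point: the fibre consists of those $\chi\in\Irr(Z(G)/Z(G)^\c)$ whose Springer image has first two coordinates $(J,F)$; by the structure of the generalized Springer correspondence for $L_J$ carrying the cuspidal $F$, this $\chi$ is uniquely pinned down (the block has a unique element meeting the regular orbit), so we get a well-defined $(J_\chi,F_\chi)$ and a well-defined inverse. (4) Conclude the bijection \eqref{IrrZ Dreg} by combining (2) and (3) with $A_G(O^{\reg})\cong Z(G)/Z(G)^\c$, and note that the displayed characterization ``$\GSpr(O^{\reg},\chi)=(J_\chi,F_\chi,\th_\chi)$ for some $\th_\chi$'' is exactly the construction of the map.

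The main obstacle I anticipate is not conceptual but bookkeeping: matching the normalization conventions between the ambient paper (which works with $\cD^\c$-indexed data, Levihorics, and the sets $\frD,\wt\frD$ as in \cite{liDerivedCategoriesCharactera}) and Lusztig's original indexing of cuspidal pairs and the generalized Springer correspondence, and in particular verifying carefully that ``$\Supp(F)=\cN_{L_J}$'' is genuinely equivalent to ``$(J,F,\th)$ lies in the regular Springer block for some (hence a unique) $\th$''. Once that dictionary is fixed, each step is a direct invocation of the cited classification results.
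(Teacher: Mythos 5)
Your plan is essentially correct and follows the same conceptual route as the paper, namely reading the generalized Springer correspondence through the parabolic induction functor $\Ind_{L_J}^G$. The difference is mostly a matter of how much is proved versus cited. Where you propose to ``quote Lusztig's classification of which pairs $(J,F,\th)$ land in the regular orbit,'' the paper instead gives a short self-contained argument: since $\IC(O^{\reg},\chi)$ is a direct summand of $\Ind_{L_J}^G(F)$, and $\Ind_{L_J}^G(F)$ is supported on $p_J(q_J^{-1}\Supp(F))$, a proper inclusion $\Supp(F)\subsetneq\cN_{L_J}$ would make this support miss $O^{\reg}$, a contradiction. Similarly, for injectivity/surjectivity your phrase ``the block has a unique element meeting the regular orbit'' is the right idea, but the paper pins it down concretely: $\Ind_{L_J}^G(F)$ for $(J,F)\in\frD^{\reg}$ has full support on $\cN_G$ and restricts to a rank-one local system on $O^{\reg}$, hence has exactly one direct summand of the form $\IC(O^{\reg},\chi)$. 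Your approach would also work if those citations are filled in carefully, but the direct support/rank-one argument is both shorter and avoids having to chase normalizations against Lusztig's original indexing, which is the bookkeeping concern you yourself flag.

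One superfluous claim worth dropping: the assertion that the representation $\th_\chi\in\Irr(W^{J_\chi})$ attached to $(O^{\reg},\chi)$ is ``the one-dimensional sign-type character.'' This is not needed anywhere in the lemma (which only requires existence and uniqueness of some $\th_\chi$, never its identification), and it is not something the paper's proof establishes or uses here. Carrying it would add a verification burden for no payoff, and it would also make the ``bijection compatible with the block decomposition'' step in your plan look more circular than it is: you would be close to asserting the conclusion rather than deriving it. If you want to keep your route, the cleanest fix is to replace steps (2)--(3) by the paper's two concrete observations about $\Ind_{L_J}^G(F)$: its support determines the forward map, and its rank on $O^{\reg}$ gives uniqueness.
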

\begin{proof}
Let $(J,F,\th)=\GSpr(O^{\reg}, \chi)$. We claim that $\Supp(F)=\cN_{L_{J}}$. By the construction of the generalized Springer correspondence, $\IC(O^{\reg}, \chi)$ is a direct summand of $\Ind_{L_{J}}^{G}(F)$, where $\Ind_{L_{J}}^{G}$ is the pull-push functor along the diagram (and $P_{J}$ is a parabolic subgroup of $G$ containing $L_{J}$ as a Levi subgroup)
\begin{equation*}
\xymatrix{ \cN_{G}/G & \cN_{P_{J}}/P_{J}\ar[r]^{q_{J}}\ar[l]_{p_{J}} & \cN_{L_{J}}/L_{J}
}
\end{equation*}
If $\Supp(F)$ was properly contained in $\cN_{L_{J}}$, $p_{J}(q^{-1}_{J}\Supp(F))$ would not meet $O^{\reg}$. Therefore we must have $\Supp(F)=\cN_{L_{J}}$. This shows that the map \eqref{PD} sends $\frP^{\reg}$ to $\frD^{\reg}$, and the map \eqref{IrrZ Dreg} is defined. The map \eqref{IrrZ Dreg} is bijective because for each $(J,F)\in \frD^{\reg}$, $\Ind_{L_{J}}^{G}(F)$ has full support on $\cN_{G}$, and its restriction to $O^{\reg}$ has rank one, hence it has a unique direct summand of the form $\IC(O^{\reg}, \chi)$ for some $\chi\in \Irr(Z(G)/Z(G)^{\c})$.
\end{proof}


\begin{prop}\label{p:Wg image}
	Under the equivalence $(\ref{eq:character_lie_algebra_facet_description})$, the { Lie algebra Whittaker sheaf} $$\Wh_{\frg/G} \in Sh_\calN(\frg/G)$$ 
	is identified with 
	$$\bigoplus_{(J_{\chi},F_{\chi}) \in \frD^{\reg}} \Sym(\frz_{J_\chi}[1]) [-r]\in  \bigoplus_{(J,F) \in \frD} k[W^J]\#\Sym(\frz_J[1]) \textup{-mod}\simeq \Sh_{\cN}(\frg/G).$$
	Here we use the bijection in Lemma \ref{l:IrrZ Dreg} to index elements in $\frD^{\reg}$ by $\chi\in \Irr(Z(G)/Z(G)^{\c})$.
	
\end{prop}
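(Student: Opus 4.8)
The plan is to combine three ingredients: the characterization of $\Wh_{\frg/G}$ via the Whittaker functor $\cW_{\frg/G}$, the identification $\cW_{\frg/G}\simeq i_e^*[r]\circ\mathbb{T}$ from Proposition~\ref{prop:Fourier_transform}, and the explicit block decomposition \eqref{eq:character_lie_algebra_facet_description} together with the generalized Springer correspondence recalled in Section~\ref{sec:generalized_springer}. The key observation is that corepresenting a functor translates, block by block, into a computation of the (derived) module corepresenting the restriction of that functor to each summand $k[W^J]\#\Sym(\frz_J[1])\textup{-mod}$, and such restrictions are governed by the stalk $i_e^*$ of the Fourier-Sato transform $\mathbb{T}$, which is exactly how the generalized Springer correspondence is built.

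First I would unwind what the Whittaker functor does on each block. By Proposition~\ref{prop:Fourier_transform}, $\cW_{\frg/G}(\cF)\simeq i_e^*(\mathbb{T}\cF)[r]$, where $e$ is the regular nilpotent corresponding to $f$ and $i_e:\{e\}\to\cN/G$ is the inclusion. Under $\mathbb{T}$, the simple object $\IC(O,\chi)$ on $\cN/G$ is sent to (up to shift) the simple object on $\frg/G$ which, under \eqref{eq:character_lie_algebra_facet_description} and the bijection $\GSpr$, sits in the block $(J,F)$ with $\GSpr(O,\chi)=(J,F,\theta)$ and corresponds there to the simple module $\theta\otimes k_0$ (the irreducible $W^J$-representation $\theta$ placed at the origin of $\Sym(\frz_J[1])$). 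The stalk $i_e^*$ is nonzero precisely on those $\IC(O,\chi)$ with $O=O^{\reg}$, in which case $A_G(O^{\reg})=Z(G)/Z(G)^\circ$ and the stalk computes the multiplicity space, i.e.\ the one-dimensional $\chi$-isotypic line. Hence on the block $(J,F)$, the functor $\cF\mapsto\cW_{\frg/G}(\cF)$ is (up to the global shift $[r]$, which accounts for the $[-r]$ in the statement) the functor corepresented by the module $\Sym(\frz_J[1])$ itself when $(J,F)=(J_\chi,F_\chi)\in\frD^{\reg}$ for some $\chi$, and by the zero module otherwise — because in the former case the Springer datum $\theta_\chi$ is the unique irreducible of $W^{J_\chi}$ appearing in the regular orbit with full support, and the free rank-one module over $\Sym(\frz_{J_\chi}[1])$ corepresents "take the stalk at the origin of the underlying coherent sheaf" appropriately twisted by $\theta_\chi$. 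I would make this precise using Remark~\ref{rmk:invariant_category}: $k[W^J]\#\Sym(\frz_J[1])\textup{-mod}\simeq(\Sym(\frz_J[1])\textup{-mod})^{W^J}$, and the functor "microstalk at $e$" picks out, after $W^J$-descent, the $\theta_\chi$-isotypic component, which is corepresented by $\Sym(\frz_{J_\chi}[1])$ regarded as a $k[W^{J_\chi}]\#\Sym(\frz_{J_\chi}[1])$-module via the idempotent for $\theta_\chi$; but since $\theta_\chi$ is one-dimensional on the regular-orbit summand (and on $\frD^{\reg}$ the relevant $W^J$-representation in the regular orbit is precisely $\theta_\chi$), this module is just the sum of copies of $\Sym(\frz_{J_\chi}[1])$ indexed by $\chi$.

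The assembly step is then formal: a functor $\Sh_\cN(\frg/G)\to k\textup{-mod}$ which is a direct sum over blocks $(J,F)$ of functors corepresented by modules $M_{J,F}$ is itself corepresented by $\bigoplus_{(J,F)}M_{J,F}$, because $\Sh_\cN(\frg/G)$ is the direct sum of the block categories and $\Hom$ out of a direct sum is the product (which agrees with the coproduct here since all but finitely many—indeed all non-$\frD^{\reg}$—terms vanish). This yields $\Wh_{\frg/G}\simeq\bigoplus_{(J_\chi,F_\chi)\in\frD^{\reg}}\Sym(\frz_{J_\chi}[1])[-r]$, matching the claimed formula, with the bijection of Lemma~\ref{l:IrrZ Dreg} used to index $\frD^{\reg}$ by $\Irr(Z(G)/Z(G)^\circ)$.

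\textbf{Main obstacle.} The routine parts are the shift bookkeeping and the direct-sum formalism; the real content, and the step I expect to require the most care, is pinning down \emph{which} module over $k[W^{J_\chi}]\#\Sym(\frz_{J_\chi}[1])$ corepresents the microstalk functor $i_e^*[r]\circ\mathbb{T}$ on each block, i.e.\ verifying that it is the \emph{free} rank-one module $\Sym(\frz_{J_\chi}[1])$ (placed in the right cohomological degree) rather than some twist or shift of it. This amounts to two sub-facts that must be extracted carefully from \cite{liDerivedCategoriesCharactera} and \cite{lusztigIntersectionCohomologyComplexes1984}: (i) the microstalk at the regular nilpotent $e$, transported through the equivalence \eqref{eq:character_lie_algebra_facet_description}, is the functor "$W^{J_\chi}$-invariants of the $\frz_{J_\chi}$-stalk at $0$" composed with projection to the $\theta_\chi$-isotypic part; and (ii) on the summand $(J_\chi,F_\chi)\in\frD^{\reg}$ the representation $\theta_\chi$ occurring in the regular orbit is one-dimensional (equivalently, $\Ind_{L_{J_\chi}}^G F_\chi$ has multiplicity one along $O^{\reg}$), which is exactly the content built into the proof of Lemma~\ref{l:IrrZ Dreg}. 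Once (i) and (ii) are in hand, the identification of the corepresenting object as the free module — and hence the stated formula — follows.
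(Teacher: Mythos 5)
Your overall approach coincides with the paper's: both proofs use Proposition~\ref{prop:Fourier_transform} to transport the question to $\Sh(\cN/G)$, then invoke the block decomposition of Theorem~\ref{thm:combinatorial_descrpition_character_sheaves}(1) and the generalized Springer correspondence, so that the only content left is to identify, block by block, which $k[W^J]\#\Sym(\frz_J[1])$-module the Whittaker functor corepresents.

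The obstacle you correctly flag — pinning down the precise twist and shift of the corepresenting module — is where your sketch is not yet a proof, and your proposed resolution via sub-facts (i) and (ii) does not close it. Knowing that $\theta_\chi$ is one-dimensional (sub-fact (ii)) only tells you the corepresenting module is $\theta_\chi^{*}\otimes\Sym(\frz_{J_\chi}[1])$ up to shift; it does not tell you whether $\theta_\chi$ is trivial, sign, or some other character of $W^{J_\chi}$, and in fact the stalk computation at the regular nilpotent produces the \emph{sign} representation $k_{\sign}$, not the trivial one. Your phrase ``the free rank-one module over $\Sym(\frz_{J_\chi}[1])$ corepresents take the stalk at the origin'' is also not quite right as stated: $\Hom$ out of the free module is the identity functor, not a stalk functor; the Koszul duality of \eqref{KD} is what interchanges ``free module'' with ``skyscraper at origin'', and this introduces both a twist and a degree shift that must be tracked.

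The paper resolves this cleanly, and the move worth noting is that it does \emph{not} try to work out the corepresenting functor abstractly on each block. Instead it first observes (via Proposition~\ref{prop:Fourier_transform}) that $\TT^{-1}\Wh_{\frg/G}$ is the skyscraper $(i_e)_![r]$ supported on the regular orbit, hence lies in $\Sh_c(\cN/G)$, so the whole question reduces to identifying a compact object $M_{J,F}\in A_J\textup{-perf}$ where $A_J=k[W^J]\#\Sym(\frz_J^{*}[-2])$. It then computes $\Hom_{A_J}(M_{J,F},A_J)=\Hom_{\cN/G}(\TT^{-1}\Wh_{\frg/G},\Ind_{L_J\subset P_J}^G F)=i_e^{*}(\Ind F)[r]$, which equals $k_{\sign}$ exactly when $(J,F)\in\frD^{\reg}$ and $0$ otherwise. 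Finally, the contravariant duality $\Hom_{A_J}(-,A_J):A_J\textup{-perf}\simeq A_J^{\opp}\textup{-perf}$ sends $k_{\triv}\mapsto k_{\sign}[r]$, which inverts to give $M_{J,F}\simeq k_{\triv}[-r]$, and under Koszul duality $k_{\triv}\mapsto\Sym(\frz_J[1])$. The sign twist and the shift $[-r]$ both fall out of this duality and are not something you can safely skip; if you want to complete your argument you should follow the same route: identify $\TT^{-1}\Wh_{\frg/G}$ as the shifted skyscraper, compute $\Hom$ against $\Ind_{L_J\subset P_J}^G F$ geometrically, and invert the duality.
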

\begin{proof} 
	Let $A_{J}=k[W^J]\#\Sym(\frz^*_J[-2])$. Let $k_{\triv}$ be the $1$-dimensional $A_{J}$-module in degree $0$ where $W^{J}$ acts trivially. Then $W^{J}$-equvariant Koszul duality gives an equivalence
	\begin{equation}\label{KD}
	\Hom_{\Sym(\frz^*_J[-2])}(k_{\triv}, -): A_{J}\textup{-perf}\simeq k[W^J]\#\Sym(\frz_J[1]) \textup{-mod}_{\textup{fd}} 
	\end{equation}
	where we identify $\Sym(\frz_J[1])$ with $\End_{\Sym(\frz^*_J[-2])}(k_{\triv})^{\opp}$, and $\textup{-mod}_{\textup{fd}}$ stands for the category of finite dimensional modules. Under this equivalence, $k_{\triv}$ corresponds to $\Sym(\frz_J[1])$.

	Let $Sh_{c,\calN}(\frg/G) \subset \Sh_{\cN}(\frg/G)$ be the full subcategory of constructible sheaves. Using \eqref{eq:character_lie_algebra_facet_description} and \eqref{KD}, we have equivalences
	 \beq\label{constr gG}
	 \xymatrix{Sh_{c,\calN}(\frg/G) \simeq   \bigoplus_{(J,F) \in \frD}  k[W^J]\#\Sym(\frz_J[1]) \textup{-mod}_{\textup{fd}}  \ar[r]^-{\eqref{KD}}_-{\sim} & \bigoplus_{(J,F) \in \frD}  A_{J} \textup{-perf}.}
	 \eeq
	 Moreover, the composition
	\begin{equation*}
	Sh_{c}(\calN/G) \xrightarrow{\TT} Sh_{c,\calN}(\frg/G) 	\simeq \bigoplus_{(J,F) \in \frD}  A_{J}\textup{-perf}
	\end{equation*}
	is identified with the functor $\Hom_{\calN/G}(  \oplus_{(J,F) \in \frD} \Ind_{L_J \subset P_J}^G F,-  )$, where we use 
	\begin{equation*}
	\End(\Ind_{L_J \subset P_J}^G F)\simeq A_{J}^{\opp}.
	\end{equation*}

	By Prop~\ref{prop:Fourier_transform}, $\TT^{-1}\Wh_{\frg/G}$ is isomorphic to $(i_{e})_{!}[r]$ where $i_{e}: \{e\}\to \cN/G$ is the inclusion of a regular nilpotent element $e$. In particular, $\TT^{-1}\Wh_{\frg/G}\in \Sh_{c}(\cN/G)$, hence $\Wh_{\frg/G}$ corresponds to a collection of perfect $A_{J}$-modules $M_{J,F}$, one for each $(J,F)\in\frD$.
	
	For any $(J,F)\in \frD$, put $e_J \subset \calN_{L_J}$ a regular nilpotent element. Then by Prop~\ref{prop:Fourier_transform}
	\begin{equation*}\label{eq:Hom_Wh_Spr}
	\Hom_{\calN_G/G}(\TT^{-1}\Wh_{\frg/G},\Ind_{L_J \subset P_J}^G (F)) \simeq i^*_e(\Ind_{L_J \subset P_J}^G (F))[r] = i^*_{e_J}(F)[r]=\begin{cases}
		k_{\sign},  \qquad (J,F) \in \frD^{\reg} \\
		0,    \qquad (J,F) \notin \frD^{\reg}
	\end{cases}
	\end{equation*}
	where $k_{\sign}$ is the $1$-dimensional $\End(\Ind_{L_J \subset P_J}^G (F))$-module where $W^{J}$ acts by the sign representation (this is because we identify the Weyl group actions on Springer fibers via Fourier transform, therefore the regular Springer fiber corresponds to the sign representation). 
	Therefore we have an isomorphism of right $A_{J}$-modules (the right $A_{J}$-action comes from post-composing with the right $A_{J}$-action on itself)
	\begin{equation*}
	\Hom_{A_{J}}(M_{J,F}, A_{J})\simeq \begin{cases}
		k_{\sign},  \qquad (J,F) \in \frD^{\reg} \\
		0,    \qquad (J,F) \notin \frD^{\reg}.
	\end{cases}
\end{equation*}

		Note also that there is an equivalence of categories between left and right $A_{J}$-modules:
    \begin{equation*}
\xymatrix{ \Hom_{A_{J}}(-, A_{J}) : A_{J}\textup{-perf}  \ar[r]^-{\sim} &    A_{J}^{\opp} \textup{-perf}}
\end{equation*}	
under which the $1$-dimensional trivial module $k_{\triv}$ corresponds to $k_{\sign}[r]$ by a Koszul resolution calculation. From this we conclude that
	\begin{equation*}
	M_{J,F}\simeq \begin{cases}
		k_{\triv}[-r],  \qquad (J,F) \in \frD^{\reg} \\
		0,    \qquad (J,F) \notin \frD^{\reg}.
	\end{cases}
	\end{equation*}
	Therefore, under the equivalence \eqref{constr gG}, $\Wh_{\frg/G}$ corresponds to
	\begin{equation*}
	\oplus_{(J,F) \in \frD^{\reg}} k_{\triv}[-r] \in  \bigoplus_{(J,F) \in \frD}  k[W^J]\#\Sym(\frz^*_J[-2]) \textup{-perf},
	\end{equation*}
	which is further identified under Koszul duality \eqref{KD}with  
	\begin{equation*}
	\oplus_{(J,F) \in \frD^{\reg}} \Sym(\frz_J[1])[-r] \in \bigoplus_{(J,F) \in \frD} k[W^J]\#\Sym(\frz_J[1]) \textup{-mod}_{\textup{fd}} \subset \bigoplus_{(J,F) \in \frD} k[W^J]\#\Sym(\frz_J[1]) \textup{-mod}.  
	\end{equation*}
	
\end{proof}

\subsection{Calculation of endormorphisms of the Whittaker sheaf}

Recall from Section~\ref{sss:des tr Wh} the descended trace $\Wh_{\cG/\cG}\in hh(\cH_{\cG})$ of the universal affine Whittaker sheaf $\Wh_{\cG}$. We can now calculate its endomorphism algebra explicitly.


For $\chi \in \textup{Irr}(Z(G)/Z(G)^{\c})$, denote $T_\chi=Z(L_\chi)^{\c}, W_\chi=W^{J_\chi}$. Let $T^\vee_\chi$ the torus over $k$ dual to $T_{\chi}$ (i.e., $\xch(T^{\vee}_{\chi})=\xcoch(T_{\chi})$), and $\frt^\vee_\chi=\Lie(T^\vee_\chi)=\frz^{*}_{J_\chi}$.

Recall the functor $\alpha_2: \Sh_{\cN}(G/G)\to \colim_{J \in\cD} Sh_\calN(L_J/L_J)$ corresponding to $J=I$. Define
\begin{equation*}
\Wh_{G,I}:=\a_{2}(\Wh_{G/G})\in \colim_{J \in\cD} Sh_\calN(L_J/L_J).
\end{equation*}

\begin{theorem}
	\label{thm:end_of_Wh} 
	There is a canonical isomorphism of dg algebras: 
	$$ \End_{\colim_{J \in\cD} Sh_\calN(L_J/L_J)}(\Wh_{G,I}) \simeq \bigoplus_{\chi \in \textup{Irr}(Z(G)/Z(G)^{\c})} \calO(T^\vee_\chi \times T^\vee_\chi \times \frt^\vee_\chi[-1])^{W_\chi} $$
\end{theorem}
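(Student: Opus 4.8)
The strategy is to transport the computation through the combinatorial equivalences established above. By Corollary~\ref{cor:induction_and_alpha}, the functor $\alpha_2: \Sh_\cN(G/G)\to \colim_{J\in\cD}\Sh_\cN(L_J/L_J)$ is identified, under $\LL_G$ and $\LL_\cG$, with the induction functor $\Ind_2: \Gamma(\frt,\scrQCoh_{\tilfrC})^{\tilW}\to \Gamma(\frt,\scrQCoh_{\hatfrC})^{\hatW}$. Hence the first step is to identify $\LL_G(\Wh_{G/G})$ inside $\Gamma(\frt,\scrQCoh_{\tilfrC})^{\tilW}$. For this I would combine Proposition~\ref{prop:wh_G_and_wh_g} (which says $\cW_{G/G}\simeq \cW_{\frg/G}\circ\beta_1$, so by adjunction $\alpha_1(\Wh_{\frg/G})\simeq \Wh_{G/G}$) with Proposition~\ref{prop:restriction_G_to_g} (identifying $\alpha_1$ with $\Ind_1$) and Proposition~\ref{p:Wg image} (identifying $\Wh_{\frg/G}$ in the facet description of $\Sh_\cN(\frg/G)$ as $\bigoplus_{(J_\chi,F_\chi)\in\frD^{\reg}}\Sym(\frz_{J_\chi}[1])[-r]$). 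Chasing through, $\LL_G(\Wh_{G/G})\simeq \Ind_1$ applied to this object, and then $\Wh_{G,I}=\alpha_2(\Wh_{G/G})$ corresponds to $\Ind_2\circ\Ind_1$ of it.

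\textbf{Reduction to a per-block computation.} The second step is to observe that $\Ind_2\circ\Ind_1$ of $\bigoplus_{\chi}\Sym(\frz_{J_\chi}[1])[-r]$ decomposes along $\chi\in\Irr(Z(G)/Z(G)^\circ)$, and that each summand, after the inductions, is supported (in the sense of Remark~\ref{r:supp block}, via the generalized Springer / Bédard combinatorics) on the locus corresponding to the cuspidal datum $(J_\chi,F_\chi)$; concretely, on the $\hatW$-orbit of the affine subspace $\epsilon_{J_\chi}\in\hatfrS$ whose reductive subgroup is $L_\chi$ with $T_\chi=Z(L_\chi)^\circ$. I expect that $\Ind_1$ composed with $\Ind_2$ sends the trivial $k[W^{J_\chi}]\#\Sym(\frz_{J_\chi}[1])$-module (up to the shift $[-r]$) to the object of $\Gamma(\frt,\scrQCoh_{\hatfrC})^{\hatW}$ corepresenting the natural ``basepoint'' functor on the block indexed by $\chi$. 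Using the smash-product description \eqref{Kinv section QCoh}, the endomorphism algebra of this corepresenting object in the $\chi$-block is $\End_{k[\hatW^{\epsilon_{J_\chi}}]\#\Sym(\frz_{J_\chi}^{\vee\vee}\cdots)}(\,\cdot\,)$, which by the standard Koszul-duality bookkeeping (as in the proof of Proposition~\ref{p:Wg image}) becomes $\cO$ of an affine derived scheme.

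\textbf{Identifying the affine derived scheme.} The third step is to name the resulting ring. In the double-affine setting, the stabilizer $\hatW^{\epsilon_{J_\chi}}$ is precisely the relative Weyl group $W_\chi=W^{J_\chi}$, and the ``coordinate directions'' of $\epsilon_{J_\chi}$ inside $\frt=\frA\times\frA\tau$ contribute two copies of the torus $T_\chi$ (one from the $\ZZ$-translation lattice, one from the $\tau\ZZ$-lattice, matching $\widehat{W}=(\xcoch(T)\times\xcoch(T))\rtimes W$) together with the degree $-1$ piece $\frt_\chi^\vee[-1]$ coming from the $\scrQCoh(\frz^*[-1])$ coefficients. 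Taking $W_\chi$-invariants of global functions then yields exactly $\cO(T^\vee_\chi\times T^\vee_\chi\times\frt^\vee_\chi[-1])^{W_\chi}$, and summing over $\chi$ gives the claimed formula. (The shift $[-r]$ in $\Wh_{\frg/G}$ is cohomological bookkeeping only and does not affect the endomorphism algebra.)

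\textbf{Main obstacle.} The delicate point is the second step: controlling the image of $\Wh_{G/G}$ under $\Ind_2\circ\Ind_1$ at the level of the sheaf-of-categories description, i.e.\ verifying that induction of the trivial module on the block $(J_\chi,F_\chi)$ really does land in a single block of $\Gamma(\frt,\scrQCoh_{\hatfrC})^{\hatW}$ and is the expected corepresenting object rather than something with extra multiplicities. This is where the regularity of the nilpotent orbit (Lemma~\ref{l:IrrZ Dreg}) and the ``sign vs.\ trivial'' Koszul-duality swap from the proof of Proposition~\ref{p:Wg image} must be used carefully: one needs that the generalized Springer induction $\Ind_{L_{J_\chi}}^{G}F_\chi$ restricted to the regular orbit has rank one, so that the induced object is genuinely corepresented by $\Sym(\frz_{J_\chi}[1])$-type data. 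I would isolate this as a lemma, proved by computing $\Hom(\Wh_{G,I},\Ind_{L_J\subset P_J}^G(\text{cuspidal}))$ block by block exactly as in Proposition~\ref{p:Wg image}, now one level up in the affine hierarchy, and then invoking the geometric input of He (Theorem~\ref{th:ch}(3)) to know that the relevant transition functors in the colimit over $\cD$ are equivalences on the blocks in question.
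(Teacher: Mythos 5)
Your overall route is the same as the paper's: transport $\Wh_{G,I}$ through Corollary~\ref{cor:induction_and_alpha} and Proposition~\ref{p:Wg image} to reduce to a computation of $\Ind_2\circ\Ind_1$ applied to $\bigoplus_\chi \Sym(\frz_{J_\chi}[1])$ inside $\Gamma(\frt,\scrQCoh_{\hatfrC})^{\hatW}$, then compute the endomorphisms block by block. Your Step~1 is exactly what the paper does. The difference lies in how you propose to handle the block-level computation, and there you are making the argument harder than it needs to be.

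The ``main obstacle'' you flag is a phantom obstacle. Once one accepts Theorem~\ref{thm_gluing_character_sheaves} and Corollary~\ref{cor:induction_and_alpha}, the functors $\Ind_1,\Ind_2$ are the purely algebraic induction functors of \eqref{eq:induction_restriction}, and for each $(\e,B,F)\in\frC$ the three blocks in $\Gamma(\frt,\scrQCoh_{\frC})^W$, $\Gamma(\frt,\scrQCoh_{\tilfrC})^{\tilW}$, $\Gamma(\frt,\scrQCoh_{\hatfrC})^{\hatW}$ are literally $k[W^\e]\#\Sym(\frz_\e[1])\lmod$, $k[\tilW^\e]\#\Sym(\frz_\e[1])\lmod$, $k[\hatW^\e]\#\Sym(\frz_\e[1])\lmod$ with $\tilW^\e=\Lambda_\e\rtimes W^\e$ and $\hatW^\e=(\Lambda_\e\times\Lambda_\e)\rtimes W^\e$. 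So $\Ind_2\circ\Ind_1$ restricted to a $\chi$-block is just $\Ind_{W^{J_\chi}}^{\hatW^{J_\chi}}$ on module categories over the fixed algebra $\Sym(\frz_{J_\chi}[1])$, and it stays in the single block indexed by $(\e_{J_\chi},\cdot,F_\chi)$ automatically --- no support analysis, no appeal to Remark~\ref{r:supp block}, and in particular \emph{no} invocation of Theorem~\ref{th:ch}(3) (which is a geometric input used earlier to build the semi-orthogonal decomposition, not needed here once the combinatorial description of the colimit is in hand). The paper then finishes by a three-line Frobenius-reciprocity / Mackey computation:
\begin{equation*}
\End\bigl(\Ind_{W^{J_\chi}}^{\hatW^{J_\chi}}\Sym(\frz_{J_\chi}[1])\bigr)
\simeq \Hom_{W^{J_\chi}}\bigl(\Sym(\frz_{J_\chi}[1]),\Res\Ind\Sym(\frz_{J_\chi}[1])\bigr)
\simeq \bigl(k[\Lambda_{J_\chi}\times\Lambda_{J_\chi}]\otimes\Sym(\frz_{J_\chi}[1])\bigr)^{W^{J_\chi}},
\end{equation*}
using that $\Lambda_{J_\chi}\times\Lambda_{J_\chi}$ is a normal abelian subgroup of $\hatW^{J_\chi}$ complementing $W^{J_\chi}$; this is where the two torus factors $T^\vee_\chi\times T^\vee_\chi$ come from, confirming the heuristic in your Step~3. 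Your instinct that the block lands where it should is correct, but the justification is algebraic rather than geometric; the lemma you wanted to isolate reduces to the structural fact $\hatW^\e=(\Lambda_\e\times\Lambda_\e)\rtimes W^\e$, which is already encoded in the definition of $\hatW$ and the sheaf-of-categories description.

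One additional small point of sloppiness: the shift $[-r]$ in $\LL_\frg(\Wh_{\frg/G})$ does not just ``not affect the endomorphism algebra'' for generic reasons --- it is a global shift applied uniformly to all blocks, so it cancels inside $\End(-)$; the paper drops it silently for the same reason, and it would be worth saying this explicitly if you write this up.
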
 

\begin{proof} 
	By Proposition~\ref{prop:wh_G_and_wh_g}, we have
	\begin{equation*}
	\Wh_{G,I}=\alpha_2(\Wh_{G/G}) \simeq \alpha_2 \circ \alpha_1 (\Wh_{\frg/G}).
\end{equation*}
	By Corollary~\ref{cor:induction_and_alpha}, the image of $\Wh_{G,I}$ in $\G(\frt,\scrQCoh_{\hatfrC})^{\hatW}$ is 
	\begin{equation}\label{IndIndWhg}
	\Ind_2 \circ \Ind_1(\LL_{\frg}(\Wh_{\frg/G})) \in \G(\frt,\scrQCoh_{\hatfrC})^{\hatW}.
	\end{equation}

	For any $\epsilon \in \frS$, put $\Lambda_\epsilon= \xcoch(T) \cap \epsilon$. 
	For $c=(\epsilon,B,F) \in \frC$, the summand corresponds to $c$ in
	$\Gamma(\frt,\scrQCoh_{\frC})^W$ is equivalent to the category
	$k[W^{\epsilon}]\#\Sym(\frz_\epsilon[1])\lmod$.
	Similarly the summand corresponds to $c$ in $\Gamma(\frt,\scrQCoh_{\tilfrC})^{\tilW} $ 
	(resp. in $\Gamma(\frt,\scrQCoh_{\hatfrC})^{\hatW} $) is equivalent to $k[\tilW^{\epsilon}]\#\Sym(\frz_\epsilon[1])\lmod$ 
	(resp. $k[\hatW^{\epsilon}]\#\Sym(\frz_\epsilon[1])\lmod$).
	We also have $\tilW^\epsilon= \Lambda_\epsilon \rtimes W^\epsilon$, and $\hatW^\epsilon= (\Lambda_\epsilon \times \Lambda_\epsilon)\rtimes W^\epsilon$. Therefore, $\Ind_2 \circ \Ind_1$ can be identified with the direct sum of the following induction functors over $(J,F)\in \frD$:
	\begin{equation*}
	\Ind_{W^{\e}}^{\hatW^{\e}}: k[W^{J}]\#\Sym(\frz_J[1])\lmod\to k[\hatW^{J}]\#\Sym(\frz_J[1])\lmod.
	\end{equation*}
	
	By Proposition~\ref{p:Wg image} and \eqref{IndIndWhg}, the image of $\Wh_{G,I}$ in $\G(\frt,\scrQCoh_{\hatfrC})^{\hatW}$ is
	$$\Ind_2 \circ \Ind_1\left( \bigoplus_{\chi \in \textup{Irr}(Z(G)/Z(G)^{\c})} \Sym(\frz_{J_\chi}[1])\right) =  \bigoplus_{\chi \in \textup{Irr}(Z(G)/Z(G)^{\c})} \Ind_{W^{J_{\chi}}}^{\hatW^{J_{\chi}}}\Sym(\frz_{J_\chi}[1]) \in \G(\frt,\scrQCoh_{\hatfrC})^{\hatW}.$$

	Therefore (all direct sums are over $\chi\in\Irr(Z(G)/Z(G)^{\c})$) 
	\begin{eqnarray*}
			\End(\Wh_{G,I})  & \simeq & 
			\bigoplus_{\chi} \End_{k[\hatW^{J_\chi}]\#\Sym(\frz_{J_\chi}[1])} 
			\left(\Ind_{W^{J_{\chi}}}^{\hatW^{J_{\chi}}}\Sym(\frz_{J_\chi}[1]) \right) \\
			& \simeq & \bigoplus_{\chi} \Hom_{k[W^{J_{\chi}}]\#\Sym(\frz_{J_\chi}[1])}\left(\Sym(\frz_{J_\chi}[1]), \Res^{\hatW^{J_{\chi}}}_{W^{J_{\chi}}}\Ind_{W^{J_{\chi}}}^{\hatW^{J_{\chi}}}\Sym(\frz_{J_\chi}[1])\right) \\
%
%
			& \simeq & \bigoplus_{\chi} \left(k[\Lambda_{J_\chi} \times \Lambda_{J_\chi}  ] \otimes \Sym(\frz_{J_\chi}[1])\right)^{W^{J_\chi}} \\
			& = & \bigoplus_{\chi} \calO(T^\vee_\chi \times T^\vee_\chi \times \frt^\vee_\chi[-1])^{W_\chi}.
	\end{eqnarray*}
	In the last identity we use $\L_{J_{\chi}}=\xcoch(T_{\chi})=\xch(\dT_{\chi})$,  $\frt^{\vee}_{\chi}\cong \frz_{J_\chi}^{*}$ and $W_{\chi}=W^{J_{\chi}}$. 
\end{proof}

\begin{cor}\label{c:end_of_Wh}
There is a canonical isomorphism of dg algebras: 
	$$ \End_{hh(\calH_\calG)}(\Wh_{\calG/\cG}) \simeq \bigoplus_{\chi \in \textup{Irr}(Z(G)/Z(G)^{\c})} \calO(T^\vee_\chi \times T^\vee_\chi \times \frt^\vee_\chi[-1])^{W_\chi}. $$
\end{cor}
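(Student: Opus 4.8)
The plan is to deduce Corollary~\ref{c:end_of_Wh} directly from Theorem~\ref{thm:end_of_Wh} together with the identification $\Wh_{\cG/\cG}\simeq a(\Wh_{G/G})$ of Theorem~\ref{thm: dtr of whit}(2), by reducing the computation of $\End_{hh(\cH_\cG)}(\Wh_{\cG/\cG})$ to a computation inside the full subcategory $hh(\cH_\cG)_0\subset hh(\cH_\cG)$. The key input is Theorem~\ref{thm:main in text} (equivalently Theorem~\ref{th:ff}), which asserts that the natural functor $\io:\colim_{J\in\cD}\Sh_\cN(L_J/L_J)\xrightarrow{\sim} hh(\cH_\cG)_0\incl hh(\cH_\cG)$ is fully faithful. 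So the strategy is: first locate $\Wh_{\cG/\cG}$ in the image of $\io$; then transport the $\End$-computation across $\io$; then invoke Theorem~\ref{thm:end_of_Wh}.

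First I would recall, from the discussion in Section~\ref{sss:des tr Wh} and the factorization of $a$ given just before Lemma~\ref{l:Wh cG Wh G}, that $a$ factors as
\[
\Sh_\cN(G/G)\xrightarrow{i_{G/G}}\cH_{\cG,I}\to hh(\cH_\cG),
\]
and that under the equivalence of Corollary~\ref{cor:coequal J geom} and Theorem~\ref{thm:main in text} the composite $\Sh_\cN(G/G)=\Sh_\cN(\cY(\tfrac{1}{I}))\to hh(\cH_\cG)$ is precisely the structure map $\alpha_2$ of the colimit $\colim_{J\in\cD}\Sh_\cN(L_J/L_J)$ followed by $\io$. Indeed $\alpha_2$ is by definition the map into the colimit associated to $J=I$ (see the statement of Corollary~\ref{cor:induction_and_alpha}), and $\io$ restricted to the $J=I$ term is exactly $i_{G/G}$ composed with $\cH_{\cG,I}\to hh(\cH_\cG)$. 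Combining this with Theorem~\ref{thm: dtr of whit}(2), $\Wh_{\cG/\cG}=a(\Wh_{G/G})=\io(\alpha_2(\Wh_{G/G}))=\io(\Wh_{G,I})$, using the notation $\Wh_{G,I}:=\alpha_2(\Wh_{G/G})$ introduced just before Theorem~\ref{thm:end_of_Wh}.

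Then, since $\io$ is fully faithful by Theorem~\ref{th:ff}/Theorem~\ref{thm:main in text}, and fully faithful functors induce isomorphisms on derived endomorphism dg algebras, I obtain a canonical equivalence of dg algebras
\[
\End_{hh(\cH_\cG)}(\Wh_{\cG/\cG})\;\simeq\;\End_{\colim_{J\in\cD}\Sh_\cN(L_J/L_J)}(\Wh_{G,I}).
\]
Applying Theorem~\ref{thm:end_of_Wh} to the right-hand side gives
\[
\End_{hh(\cH_\cG)}(\Wh_{\cG/\cG})\;\simeq\;\bigoplus_{\chi\in\Irr(Z(G)/Z(G)^\circ)}\cO(T^\vee_\chi\times T^\vee_\chi\times\frt^\vee_\chi[-1])^{W_\chi},
\]
which is the assertion. (One may finally remark, as the paper does after Theorem~\ref{thm:intro end in crit}, that $\Irr(Z(G)/Z(G)^\circ)$ can be replaced by $\Irr(Z(G))$ when $G$ is semisimple, matching the indexing in Corollary~\ref{c:end_of_Wh}; for general reductive $G$ the torus part of $Z(G)$ contributes trivially to cuspidal data so the sum is unchanged.)

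The only genuinely substantive step is the identification $\Wh_{\cG/\cG}\simeq\io(\Wh_{G,I})$, and the potential obstacle there is purely bookkeeping: one must check that the two a priori different descriptions of the map $\Sh_\cN(G/G)\to hh(\cH_\cG)$—the one coming from $hh(i_!)$ in \eqref{define a} and the one coming from the colimit structure map $\alpha_2$ of Theorem~\ref{thm_gluing_character_sheaves}—agree, i.e.\ that the full embedding $i_{G/G}:\Sh_\cN(G/G)\incl\cH_{\cG,I}$ is compatible with identifying $\Sh_\cN(G/G)$ with the $\tfrac{1}{I}$-geometric-piece category inside $\cH_{\cG,I}=\cH_{\cG,J}$ at $J=I$. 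This compatibility is exactly the content of the factorization of $a$ recorded after Lemma~\ref{l:Wh cG Wh G} together with the identification $hh(\cH_\cG)_0\simeq\colim_{J\in\cD}\Sh_\cN(L_J/L_J)$ in Theorem~\ref{thm:main in text}, so no new argument is needed; it just has to be stated carefully so that the functoriality of $\io$ can be applied.
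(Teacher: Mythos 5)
Your proof is correct and follows the same route as the paper's: identify $\Wh_{\cG/\cG}$ with $\io(\Wh_{G,I})$ via Theorem~\ref{thm: dtr of whit}(2) and the factorization of $a$, invoke full faithfulness of $\io$ from Theorem~\ref{th:ff}, and then apply Theorem~\ref{thm:end_of_Wh}. The extra bookkeeping you spell out about matching $a$ with $\io\circ\alpha_2$ is exactly the point the paper also flags, so no substantive difference.
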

\begin{proof}
By Theorem~\ref{thm: dtr of whit}, we have $\Wh_{\calG/\cG} \simeq a(\Wh_{G/G})$ where $a$ is the natural map $Sh_\cN(G/G) \incl \cH_{G,I}\to hh(\cH_{\cG})$. By Theorem~\ref{th:ff}, we have a fully-faithful embedding
\begin{equation}\label{colimCS emb hh}
\colim_{J \in\cD} Sh_\calN(L_J/L_J) \hookrightarrow hh(\calH_\calG).
\end{equation}
Under this embedding, we can identify $\alpha_2: \Sh_{\cN}(G/G)\to \colim_{J \in\cD} Sh_\calN(L_J/L_J)$ (corresponding to the term $J=I$) with $a: Sh_\cN(G/G)\to hh(\cH_{\cG})$. Hence $\Wh_{\cG/\cG}$ can be identified with the image of $\Wh_{G,I}$ under the embedding \eqref{colimCS emb hh}. Therefore
\begin{equation*}
\End_{hh(\cH_{\cG})}(\Wh_{\cG/\cG})\simeq \End_{\colim_{J \in\cD} Sh_\calN(L_J/L_J)}(\Wh_{G,I}).
\end{equation*}
The desired statement now follows from Theorem~\ref{thm:end_of_Wh}.
\end{proof}

%
%
%

\subsection{Additional calculations}\label{ss:additional app}

In this section, we use similar method to compute two additional endomorphism algebras of interest: (i)
the derived spherical Hecke algebra and (ii) endomorphisms of parabolic inductions (for simplicity, we will restrict to inductions from a torus). 
The results in this subsection are only used in Section~\ref{s:further apps}, in which we will explain their spectral counterparts.

For each of the categories $\G(\frt, \scrQCoh_{\frC})^{W}, \G(\frt, \scrQCoh_{\wt\frC})^{\tilW}$ and $\G(\frt, \scrQCoh_{\wh\frC})^{\hatW}$, there is a ``principal block'' (direct summand) corresponding to the affine space $\e=\frt$ and the skyscraper sheaf on $T$. They are described as follows:
\begin{eqnarray*}
k[W]\#\Sym((\dt)^{*}[1])\lmod\simeq k[W]\#\cO(\dt[-1])\lmod\subset \G(\frt, \scrQCoh_{\frC})^{W},\\
k[\tilW]\#\Sym((\dt)^{*}[1])\lmod\simeq k[W]\#\cO(\dT\times\dt[-1])\lmod\subset \G(\frt, \scrQCoh_{\wt\frC})^{\tilW},\\
k[\hatW]\#\Sym((\dt)^{*}[1])\lmod\simeq k[W]\#\cO(\dT\times \dT\times\dt[-1])\lmod\subset \G(\frt, \scrQCoh_{\wh\frC})^{\hatW}.
\end{eqnarray*}

\begin{lemma}\label{l:criterion princ}
A simple perverse sheaf $\cF\in \Sh_{\cN}(G/G)$ lies in the principal block if and only if the following two conditions hold:
\begin{enumerate}
\item The support of $\cF$ (which is closed by definition) contains $1\in G$;
\item Every simple constituent of the Fourier transform $\TT^{-1}(\b_{1}\cF)\in \Sh(\cN/G)$ is a summand of the Springer sheaf $\Spr\in \Sh(\cN/G)$ (normalized to be perverse, i.e., stalks along regular nilpotent elements lie in degree $r$). 
\end{enumerate}
Moreover, when $\cF\in \Sh_{\cN}(G/G)$ is in the principal block, $\LL_{\frg}(\b_{1}\cF)\in k[W]\#\Sym((\dt)^{*}[1])\lmod$ is Koszul dual to
\begin{equation*}
\Hom_{\cN/G}(\Spr, \TT^{-1}(\b_{1}\cF))\in \End(\Spr)^{\opp}\lmod\simeq k[W]\#\Sym(\dt[-2])\lmod.
\end{equation*}
\end{lemma}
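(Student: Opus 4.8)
\textbf{Plan of proof for Lemma~\ref{l:criterion princ}.}

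The plan is to reduce everything to the combinatorial descriptions already established in Theorem~\ref{thm:combinatorial_descrpition_character_sheaves}, Proposition~\ref{prop:restriction_G_to_g}, and Proposition~\ref{prop:Fourier_transform}, together with the generalized Springer correspondence of Section~\ref{sec:generalized_springer}. Recall that under $\LL_{G}$ the category $\Sh_{\cN}(G/G)$ decomposes into blocks indexed by $\{(J,F): J\sft I^{a}, F\in C_{J}\}/\Om$ (Corollary~\ref{c:comb CS G}), and under $\LL_{\frg}$ the category $\Sh_{\cN}(\frg/G)$ decomposes into blocks indexed by $\frD=\{(J,F): J\subset I, F\in C_{J}\}$ via \eqref{eq:character_lie_algebra_facet_description}, with $\b_{1}$ corresponding to $\Res$ (Proposition~\ref{prop:restriction_G_to_g}). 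The principal block on the Lie algebra side is the one indexed by $(J,F)=(\vn,\d_{T})$, i.e.\ the summand $k[W]\#\Sym(\frt^{\vee}[1])\lmod$. Under the Fourier--Sato transform $\TT$, this summand of $\Sh_{c,\cN}(\frg/G)$ corresponds (via $W$-equivariant Koszul duality \eqref{KD}, exactly as in the proof of Proposition~\ref{p:Wg image}) to $\End(\Spr)^{\opp}\lmod$, where $\Spr$ is the (ordinary) Springer sheaf, since $\Spr=\Ind_{T\subset B}^{G}(\d_{T})$ has $\End(\Spr)^{\opp}\simeq k[W]\#\Sym(\frt^{\vee}[-2])$. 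So the block of $\TT^{-1}(\b_{1}\cF)$ living in the principal summand is detected by exactly the condition that its simple constituents are summands of $\Spr$, which is condition (2).

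Now I would carry out the argument in the following steps. First, fix a simple perverse sheaf $\cF\in\Sh_{\cN}(G/G)$ and let $(J,F)$ (well-defined up to $\Om$) be the block it lies in. By Remark~\ref{r:supp block}, the support of $\cF$ consists of elements whose semisimple parts lie in $\Ad(G)(\exp(\e_{J}))$; this contains $1$ if and only if $1\in\exp(\e_{J})$ up to conjugacy, i.e.\ if and only if $\e_{J}$ passes through the origin of $\frt$. Since $\e_{J}$ is $\tilW$-conjugate to a standard affine subspace, $\e_{J}$ passes through $0$ if and only if $J$ is $\tilW$-conjugate to a subset of $I$ (a ``finite'' type occurring in the Lie algebra, i.e.\ the block $(J,F)$ lies in the image of the Lie algebra blocks $\frD$ under the natural map). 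Thus condition (1) is equivalent to: the $\Om$-orbit of $(J,F)$ meets $\frD$, i.e.\ $\b_{1}\cF\neq 0$ and moreover $\cF$ is ``$J$-finite''. Second, among the blocks with $1$ in their support (equivalently with $J$ conjugate into $I$), the principal block $J=\vn$ is singled out by condition (2): if $\cF$ lies in the block $(J,F)$, then by Proposition~\ref{prop:restriction_G_to_g} together with the block structure, $\b_{1}\cF$ lies in the sum of Lie algebra blocks $(J',F')$ with $(J',F')$ obtained from $(J,F)$ by restriction, and by the generalized Springer correspondence the simple constituents of $\TT^{-1}(\b_{1}\cF)$ are of the form $\IC(O,\chi)$ with $(O,\chi)$ in the generalized Springer block attached to $(J',F')$; these are summands of the \emph{ordinary} Springer sheaf precisely when $F'$ is the trivial cuspidal datum on a torus, i.e.\ $J'=\vn$, which forces $(J,F)=(\vn,\d_{T})$. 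Conversely if $\cF$ is in the principal block then $\b_{1}\cF$ lies in the principal Lie algebra block and $\TT^{-1}(\b_{1}\cF)\in\End(\Spr)^{\opp}\lmod$, so all its constituents are summands of $\Spr$; and $1\in\Supp(\cF)$ since $\e_{\vn}=\frt\ni 0$. This establishes the ``if and only if''.

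Third, for the ``moreover'' clause: when $\cF$ is in the principal block, $\b_{1}\cF$ is in the principal block of $\Sh_{\cN}(\frg/G)$, which under $\LL_{\frg}$ is $k[W]\#\Sym(\frt^{\vee}[1])\lmod$. Running the identification in the proof of Proposition~\ref{p:Wg image} (specifically the chain: $\TT$ intertwines this block with $\End(\Spr)^{\opp}\lmod\simeq k[W]\#\Sym(\frt[-2])\lmod$, and Koszul duality \eqref{KD} intertwines the two) shows that $\LL_{\frg}(\b_{1}\cF)$ is Koszul dual to $\Hom_{\cN/G}(\Spr,\TT^{-1}(\b_{1}\cF))$ as a module over $\End(\Spr)^{\opp}\simeq k[W]\#\Sym(\frt[-2])$, which is the asserted formula. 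The main obstacle I anticipate is the careful bookkeeping in Step~2: one must check that the block decomposition of $\Sh_{\cN}(G/G)$ is compatible with $\b_{1}$ and with the generalized Springer correspondence in the precise sense needed (i.e.\ that restriction $\b_{1}$, and then Fourier transform, sends the block $(J,F)$ into the span of generalized Springer packets attached to the Levi data refining $(J,F)$), and that ``summand of the ordinary Springer sheaf'' corresponds on the nose to the torus cuspidal datum. This is essentially a matter of tracing through the statements already recorded in \cite{liDerivedCategoriesCharactera,liDerivedCategoriesCharacter} and Lusztig's generalized Springer theory, but it is where the real content lies; the support statement (1) and the Koszul duality statement are then comparatively formal.
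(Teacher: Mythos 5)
Your overall strategy matches the paper's proof: both proceed via the combinatorial block decomposition from Theorem~\ref{thm:combinatorial_descrpition_character_sheaves} / Corollary~\ref{c:comb CS G}, the compatibility of $\beta_1$ with blocks (Proposition~\ref{prop:restriction_G_to_g}), the Fourier--Sato analysis, and the generalized Springer correspondence to detect the principal block among those with $J\subset I$ up to $\Om$. The treatment of condition~(2) and of the Koszul duality clause is essentially identical to the paper's.

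However, there is a genuine gap in your handling of condition~(1) in the ``only if'' direction. You write that ``By Remark~\ref{r:supp block}, the support of $\cF$ consists of elements whose semisimple parts lie in $\Ad(G)(\exp(\e_{J}))$'' and conclude an ``if and only if''. Remark~\ref{r:supp block} only gives a \emph{containment} of the support in that set, i.e.\ an \emph{upper} bound. This yields one implication: $1\in\Supp(\cF)$ forces $1\in\exp(\e_J)$ up to conjugacy, which is precisely what the converse direction of the lemma needs. But for the forward direction, you need a \emph{lower} bound on $\Supp(\cF)$ to conclude that a simple object in the principal block actually has $1$ in its support. Your justification ``$1\in\Supp(\cF)$ since $\e_{\vn}=\frt\ni 0$'' uses the upper bound in the wrong direction (for $J=\vn$ the upper bound is just $\Supp(\cF)\subset G$, which says nothing). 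The paper closes this gap by a different argument: $\b_1\cF\ne 0$ because the restriction functor $\Res$ is faithful, and every simple constituent of $\b_1\cF$ is a summand of the Grothendieck--Springer sheaf (which has full support on $\frg$), hence $0\in\Supp(\b_1\cF)$ and therefore $1\in\Supp(\cF)$. You would need to insert some such full-support argument (or an explicit description of supports of character sheaves in a given cuspidal block, which is not what Remark~\ref{r:supp block} provides) to make your forward direction complete.
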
 
\begin{proof}
First, assume $\cF$ lies in the principal block. By Proposition~\ref{prop:restriction_G_to_g}, $\beta_1 \cF$ lies in the principal block and is nonzero because the restriction functor $\Res: k[\tilW]\#\Sym((\dt)^{*}[1])\lmod\to k[W]\#\Sym((\dt)^{*}[1])\lmod$ is faithful. Being in the principal block, all simple constituents of $\beta_1 \cF$ are summands of the Grothendieck-Springer sheaf (direct image of constant sheaf under $\frb/B\to \frg/G$). This implies that all simple constituents of $\TT^{-1}(\beta_1 \cF)$ are summands of $\Spr$, which verifies condition (2).
Since all simple summands of the Grothendieck-Springer sheaf have full support, and $\b_{1}\cF\ne0$, its support contains $0$. In particular, the support of $\cF$ contains $1$, which verifies condition (1).


Conversely, assume the conditions (1) and (2) hold. Using Remark \ref{r:supp block} and condition (1), we see that $\LL_{G}(\cF)$ lies in a summand in \eqref{CS G blocks} indexed by $(J,F)$ such that $1\in \exp(\e_{J})$. This implies that, up to changing $J$ by the action of $\Om$, we may arrange $J\subset I$. Under the restriction functor $\Res: \G(\frt, \scrQCoh_{\wt\frC})^{\tilW}\to \G(\frt, \scrQCoh_{\frC})^{W}$, the block indexed by $(J,F)$ with $J\subset I$ on the source maps into the block indexed by the same $(J,F)$ on the target, under the decomposition \eqref{eq:character_lie_algebra_facet_description}.
Therefore by Proposition~\ref{prop:restriction_G_to_g}, $\LL_{G}(\cF)$ lies in the principal block if and only if $\LL_{\frg}(\beta_1 \cF)$ lies in the principal block, which is guaranteed by condition (2).  
\end{proof}

Recall the natural monoidal embedding $i_{!}: \cH_G \to \cH_{\cG}$, and let $a: Sh_\calN(G/G) \simeq hh(\cH_G) \to hh(\cH_{\cG})$ denote the induced functor. 

\begin{lemma}\label{l:endo princ}
Suppose $\cF\in \Sh_{\cN}(G/G)$ lies in the principal block, with the corresponding $k[W]\#\cO(\dT\times\dt[-1])$-module $\LL_{G}(\cF)$. Then there is a canonical equivalence of dg algebras
\begin{equation*}
\End_{hh(\cH_{\cG})}(a(\cF))\simeq \left(\cO(\dT)\ot \End_{\cO(\dT\times\dt[-1])}(\LL_{G}(\cF))\right)^{W}.
\end{equation*}
\end{lemma}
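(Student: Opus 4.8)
The plan is to use the fully faithfulness of the embedding $\colim_{J\in\cD}\Sh_\cN(L_J/L_J)\hookrightarrow hh(\cH_\cG)$ established in Theorem~\ref{th:ff} to reduce the computation of $\End_{hh(\cH_\cG)}(a(\cF))$ to a computation of $\End_{\colim_{J\in\cD}\Sh_\cN(L_J/L_J)}(\alpha_2(\cF))$, and then to transport the latter to the combinatorial side via the equivalence $\LL_\cG$ of Theorem~\ref{thm_gluing_character_sheaves}. As in the proof of Corollary~\ref{c:end_of_Wh}, under the embedding \eqref{colimCS emb hh} the functor $a\colon\Sh_\cN(G/G)\to hh(\cH_\cG)$ is identified with $\alpha_2\colon\Sh_\cN(G/G)\to\colim_{J\in\cD}\Sh_\cN(L_J/L_J)$ (the term $J=I$ of the colimit), so we get a canonical equivalence of dg algebras $\End_{hh(\cH_\cG)}(a(\cF))\simeq\End_{\colim_{J\in\cD}\Sh_\cN(L_J/L_J)}(\alpha_2(\cF))$.

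Next I would trace $\alpha_2(\cF)$ through the diagram of Corollary~\ref{cor:induction_and_alpha}. Since $\cF$ lies in the principal block of $\Sh_\cN(G/G)$, the object $\LL_G(\cF)$ lives in the principal block $k[\tilW]\#\Sym((\dt)^\ast[1])\lmod\simeq k[W]\#\cO(\dT\times\dt[-1])\lmod$ of $\Gamma(\frt,\scrQCoh_{\wt\frC})^{\tilW}$. By Corollary~\ref{cor:induction_and_alpha}, $\LL_\cG(\alpha_2(\cF))=\Ind_2(\LL_G(\cF))$, and by the same bookkeeping used in the proof of Theorem~\ref{thm:end_of_Wh}, on the principal block the functor $\Ind_2$ is the induction functor $\Ind_{\tilW^\frt}^{\hatW^\frt}$ for $\tilW^\frt=\xcoch(T)\rtimes W=\tilW$, $\hatW^\frt=(\xcoch(T)\times\xcoch(T))\rtimes W=\hatW$, i.e.
\begin{equation*}
\Ind_{k[\tilW]\#\Sym((\dt)^\ast[1])}^{k[\hatW]\#\Sym((\dt)^\ast[1])}\colon k[\tilW]\#\Sym((\dt)^\ast[1])\lmod\longrightarrow k[\hatW]\#\Sym((\dt)^\ast[1])\lmod.
\end{equation*}
So $\End_{hh(\cH_\cG)}(a(\cF))\simeq\End_{k[\hatW]\#\Sym((\dt)^\ast[1])}\bigl(\Ind\,\LL_G(\cF)\bigr)$.

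The final step is the algebra manipulation, exactly parallel to the last display-chain in the proof of Theorem~\ref{thm:end_of_Wh}. Using the $(\Ind,\Res)$ adjunction and the identity $\hatW=(\xcoch(T)\times\xcoch(T))\rtimes W$ versus $\tilW=\xcoch(T)\rtimes W$, together with the fact that the smash product over $k[\xcoch(T)]$-cosets is computed by the second copy of $\xcoch(T)$ (whose group algebra is $\cO(\dT)$), I would obtain
\begin{equation*}
\End_{k[\hatW]\#\Sym((\dt)^\ast[1])}\bigl(\Ind\,\LL_G(\cF)\bigr)\simeq\Bigl(\cO(\dT)\otimes\End_{k[W]\#\Sym((\dt)^\ast[1])}\bigl(\LL_G(\cF)\bigr)\Bigr)^{W}\!\Big/\,(\cdots),
\end{equation*}
and then rewrite $\End_{k[W]\#\Sym((\dt)^\ast[1])}(\LL_G(\cF))$, which by definition of the principal block is the $W$-equivariant endomorphism algebra, as $\End_{\cO(\dT\times\dt[-1])}(\LL_G(\cF))$ with its residual $W$-action, giving the claimed formula $\End_{hh(\cH_\cG)}(a(\cF))\simeq\bigl(\cO(\dT)\otimes\End_{\cO(\dT\times\dt[-1])}(\LL_G(\cF))\bigr)^{W}$. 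I expect the main obstacle to be the careful bookkeeping in this last step: making the ``$\Res$ then $\Ind$'' computation for the pair $\tilW\subset\hatW$ precise at the dg level (including the $W$-equivariance and the identification of the extra $\cO(\dT)$ tensor factor with the correct copy of $\xcoch(T)$), and checking that passing from the smash-product description to the $\cO(\dT\times\dt[-1])$-linear description of $\LL_G(\cF)$ is compatible with all the adjunction and base-change isomorphisms used — essentially the same homological-algebra core that appears in Theorem~\ref{thm:end_of_Wh}, but I would want to state it once cleanly (perhaps as a preliminary lemma on induction along $\xcoch(T)\rtimes W\subset(\xcoch(T)\times\xcoch(T))\rtimes W$) so it can be reused.
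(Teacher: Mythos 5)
Your outline follows the paper's proof step for step: identify $a(\cF)$ with $\alpha_2(\cF)$ via Theorem~\ref{th:ff}, transport along $\LL_\cG$, and use Corollary~\ref{cor:induction_and_alpha} to replace $\LL_\cG(\alpha_2(\cF))$ by $\Ind_{\tilW}^{\hatW}\LL_G(\cF)$ in the principal block. Where you hedge (the display ending in ``$/(\cdots)$'' and the worry about ``careful bookkeeping'') the paper has a one-line simplification you should make explicit: because $\hatW=(\xcoch(T)\times\xcoch(T))\rtimes W$ and $\tilW$ sits inside as the first copy, a set of coset representatives for $\tilW\backslash\hatW$ is the second copy of $\xcoch(T)$, so on smash-product modules
\begin{equation*}
\Ind_{\tilW}^{\hatW}\,\LL_G(\cF)\ \cong\ k[\xcoch(T)]\otimes\LL_G(\cF)\ =\ \cO(\dT)\otimes\LL_G(\cF)
\end{equation*}
as an object of $k[W]\#\cO(\dT\times\dT\times\dt[-1])\lmod$, with $\cO(\dT)$ the free rank-one module over the new $\cO(\dT)$ factor of the ring and $W$ acting diagonally. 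Once this is observed, the endomorphism computation is
\begin{equation*}
\End_{k[W]\#\cO(\dT\times\dT\times\dt[-1])}\bigl(\cO(\dT)\otimes\LL_G(\cF)\bigr)\ \simeq\ \Bigl(\End_{\cO(\dT)}(\cO(\dT))\otimes\End_{\cO(\dT\times\dt[-1])}(\LL_G(\cF))\Bigr)^{W},
\end{equation*}
a plain K\"unneth identity (legitimate because $\cO(\dT)$ is free of rank one over the first tensor factor) followed by $W$-invariants, with no quotient term and no need for a separate Frobenius-reciprocity lemma. So the argument goes through; replace the $(\Ind,\Res)$ adjunction unwinding and the ``$/(\cdots)$'' with this explicit tensor description and the worry disappears.
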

\begin{proof}
Note that $a(\cF)$ can be identified with $\a_{2}(\cF)\in \colim_{J\in\cD}\Sh_{\cN}(L_{J}/L_{J})$, the latter identified with the full subcategory $hh(\cH_{\cG})_{0}$ by Theorem~\ref{th:ff}. By Corollary~\ref{cor:induction_and_alpha}, we have
\begin{equation}\label{SigcG ind}
\LL_{\cG}(\a_{2}(\cF))\simeq \Ind_{\tilW}^{\hatW}(\LL_{G}(\cF))\cong \cO(\dT)\otimes \LL_{G}(\cF)\in k[W]\#\cO(\dT\times \dT\times \dt[-1])\lmod.
\end{equation}
From this we see
\begin{eqnarray*}
\End_{hh(\cH_{\cG})}(a(\cF))&\simeq &\End_{k[W]\#\cO(\dT\times\dT\times\dt[-1])}(\LL_{\cG}(\a_{2}(\cF)))\\
&\simeq& \End_{k[W]\#\cO(\dT\times\dT\times\dt[-1])}(\cO(\dT)\ot \LL_{G}(\cF))\\
&\simeq&\left(\cO(\dT)\ot \End_{\cO(\dT\times\dt[-1])}(\LL_{G}(\cF))\right)^{W}.
\end{eqnarray*}
\end{proof}


Write 
 $\tr_G: \cH_{G} \to hh(\cH_{G})$,  $\tr_\cG: \cH_{\cG} \to hh(\cH_{\cG})$ for the trace maps, and recall the natural isomorphism $\tr_\cG |_{\cH_{G}}  \simeq a \circ \tr_G$. 
 

\begin{theorem}\label{thm:more endo}
\begin{enumerate}
\item Let $k_{G/G}$ denote the constant sheaf on $G/G$. There is a canonical equivalence of dg algebras
\begin{equation*}
\End_{hh(\cH_{\cG})} (a(k_{G/G}))  \simeq   \cO(T^\vee \times { (\dt)^{*}[1] \times (\dt)^{*}[2]} )^W.
\end{equation*}

\item Let $e_{\cG}$ be the monoidal unit of the universal affine Hecke category $\cH_{\cG}$. There is a canonical equivalence of dg algebras
\begin{equation*}
\End_{hh(\cH_{\cG})} (\tr_\cG(e_\cG))  \simeq   k[W] \#   \cO(T^\vee \times T^\vee \times \frt^\vee[-1]).
\end{equation*}
\end{enumerate}
\end{theorem}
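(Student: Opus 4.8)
Both parts will follow the same strategy as the proof of Theorem~\ref{thm:end_of_Wh}: identify the automorphic object as the image under $\alpha_2$ of a character sheaf on $G/G$, recognize that character sheaf as living in the principal block, transport the computation to the combinatorial model via $\LL_G$ and $\LL_{\cG}$, and finally compute the relevant $\Ext$-algebra of modules over a smash product using Koszul duality. The key structural input is Lemma~\ref{l:endo princ}, which reduces the computation of $\End_{hh(\cH_\cG)}(a(\cF))$ for $\cF$ in the principal block to $\left(\cO(\dT)\ot \End_{\cO(\dT\times\dt[-1])}(\LL_G(\cF))\right)^W$, together with the identification of $\tr_\cG(e_\cG)$ and $a(k_{G/G})$ as objects $a(\cF)$ for appropriate $\cF$.

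\textbf{Part (1).} First I would observe that $a(k_{G/G}) = a(\tr_G(e_G))$ is \emph{not} quite the right starting point; rather $k_{G/G} = \g(e_G)$ where $\g = \pi_{G!}\delta_G^*$ is the horocycle functor (since the unit $e_G = u_!\cL_\univ$ pushes forward to the constant sheaf on $G/G$ up to a shift/twist — this needs to be checked using the identification of Theorem~\ref{thm:CS G}, $\tr_G \simeq \g$). So $a(k_{G/G}) = \tr_\cG(e_\cG)$ up to a shift, meaning Parts (1) and (2) are computing essentially the same object in two different guises: Part (2) records the answer as a module over a smash product with $k[W]$, while Part (1) records the $W$-invariants. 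Concretely, I would verify $k_{G/G}$ (or its shift) lies in the principal block via Lemma~\ref{l:criterion princ}: its support is all of $G$ hence contains $1$ (condition (1)), and $\TT^{-1}(\beta_1 k_{G/G})$ is (a shift of) the skyscraper $\delta_0$ on $\cN/G$, which is a summand of the Springer sheaf (condition (2)). Then Lemma~\ref{l:criterion princ} identifies $\LL_\frg(\beta_1 k_{G/G})$ with the Koszul dual of $\Hom_{\cN/G}(\Spr, \delta_0) \simeq \Sym((\dt)^*[-2])$-type module — more precisely the trivial or sign one-dimensional module — giving $\LL_G(k_{G/G}) \simeq \cO(\dT)$ (with trivial derived part) as a $k[W]\#\cO(\dT\times\dt[-1])$-module, i.e.\ the free rank-one module over $\cO(\dT)$ placed appropriately. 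Feeding this into Lemma~\ref{l:endo princ} gives $\End(a(k_{G/G})) \simeq \left(\cO(\dT)\ot \End_{\cO(\dT\times\dt[-1])}(\cO(\dT))\right)^W \simeq \left(\cO(\dT)\ot \Sym(\dt[-2]\oplus \dt[-1])\right)^W$, which is $\cO(T^\vee \times (\dt)^*[1]\times (\dt)^*[2])^W$ after matching degree conventions (the two shifted copies of $(\dt)^*$ arising from the self-Ext of the structure sheaf of $\dt[-1]$, namely $\Sym((\dt)^*[1])$ in cohomological degree, together with the additional generator in degree $2$).

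\textbf{Part (2).} For $\tr_\cG(e_\cG)$, since $i_!$ is monoidal the unit $e_\cG = i_!(e_G)$ and $\tr_\cG(e_\cG) = a(\tr_G(e_G)) = a(\g(e_G)) = a(k_{G/G})$ up to the same shift as above. So the content is simply to \emph{not} take $W$-invariants: by \eqref{SigcG ind} in the proof of Lemma~\ref{l:endo princ}, $\LL_\cG(\alpha_2(k_{G/G})) \simeq \cO(\dT)\otimes \LL_G(k_{G/G}) \simeq \cO(\dT\times\dT)$ as a $k[\hatW^{\frt}] = k[W]\#\cO(\dT\times\dT\times\dt[-1])$-module, namely the free rank-one module over $\cO(\dT\times\dT)$. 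Its endomorphism algebra in $k[W]\#\cO(\dT\times\dT\times\dt[-1])\lmod$ is $\End_{k[W]\#\cO(\dT\times\dT\times\dt[-1])}(\cO(\dT\times\dT)) \simeq k[W]\#\left(\cO(\dT\times\dT)\ot \End_{\cO(\dt[-1])}(k)\right) \simeq k[W]\#\cO(\dT\times\dT\times\frt^\vee[-1])$, since $\End_{\cO(\dt[-1])}(k) = \Sym((\dt)^*[1]) = \cO(\frt^\vee[-1])$. This gives the stated formula.

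\textbf{Main obstacle.} The routine parts are the Koszul duality bookkeeping and the degree/shift conventions (keeping track of $[r]$ shifts, perverse normalizations, and which of trivial/sign one-dimensional modules appears). The genuine point requiring care is the \emph{first} step of Part (1): establishing that $a(k_{G/G})$ equals $\tr_\cG(e_\cG)$ (up to explicit shift), i.e.\ that the horocycle image $\g(e_G)$ of the monoidal unit of $\cH_G$ is the constant sheaf on $G/G$. This rests on unwinding the definition $e_G = u_!\cL_\univ$ through the horocycle correspondence $U\bs G/U \xleftarrow{\delta_G} G/U \xrightarrow{\pi_G} G/G$: one must check $\pi_{G!}\delta_G^* u_!\cL_\univ \simeq k_{G/G}$, which amounts to computing the pushforward of the universal local system along $B/B \to G/G$ (a flag-bundle pushforward of a monodromic sheaf), and verifying it collapses to the constant sheaf after accounting for the universal Cartan monodromy trivializing on the unit component. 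I expect this to follow from the base-change identities for $\pi_G$ recorded in Section~\ref{ss:hh} together with the fact that $\cL_\univ$ restricted to the identity $H$-orbit is the regular representation $k[\xcoch(H)]$, but it is the one place where I cannot simply cite a prior result verbatim.
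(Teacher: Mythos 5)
The proposal has a fatal error right at the foundation of Part (1), which then propagates through the entire argument.

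You claim that $\g(e_G)$ — the horocycle image of the monoidal unit — is the constant sheaf $k_{G/G}$ (up to shift), and hence that $a(k_{G/G}) = \tr_\cG(e_\cG)$ so Parts (1) and (2) compute the same object in two guises. This is false. The monoidal unit of the universal finite Hecke category is $e_G = u_!\cL_\univ$, where $\cL_\univ = q_H^*\exp_!\cD_{\frh}$ is the \emph{universal} local system, with stalks isomorphic to the group algebra $k[\xcoch(H)]$. Its horocycle image $\tr_G(e_G) = \g(e_G)$ is therefore the \emph{universal Grothendieck--Springer sheaf}: by base change it is $\Ind_{T\subset B}^G(p_{H!}\exp_!k_\frt[r])$, the parabolic induction of the universal monodromic sheaf on $T/T$, not the constant sheaf. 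Under $\LL_G$ this corresponds to the \emph{free rank-one module} over $k[W]\#\cO(\dT\times\dt[-1])$. In contrast, $\LL_G(k_{G/G})$ is the \emph{one-dimensional trivial module} $k_{\triv}$: the constant sheaf is only a direct summand of $\Ind_{T\subset B}^G(k_{T/T})$, the induction of the trivial local system, and its Fourier transform to $\cN/G$ is a skyscraper at the origin, yielding $k_{\triv}$ after Koszul duality. These are wildly different objects — a point that should already be visible from the statement of the theorem itself, since the two advertised answers are structurally incompatible (one factor of $T^\vee$ with two derived shifts in (1) vs.\ two factors of $T^\vee$ with one derived shift in (2); $W$-invariants vs.\ smash product).

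Because of this conflation, your identification $\LL_G(k_{G/G}) \simeq \cO(\dT)$ in Part (1) is incorrect, and the subsequent endomorphism computation is salvaged only by ad hoc matching rather than by a correct derivation (note your own formula $\End_{\cO(\dT\times\dt[-1])}(\cO(\dT)) \simeq \Sym(\dt[-2]\oplus\dt[-1])$ does not follow from the structure you assigned to $\cO(\dT)$: if it were free over $\cO(\dT)$ that factor would contribute $\cO(\dT)$, not $\Sym(\dt[-1])$, to the $\End$-algebra). The correct route is the one you sketch elsewhere in outline: verify $k_{G/G}$ lies in the principal block via Lemma~\ref{l:criterion princ}, compute $\TT^{-1}(\b_1 k_{G/G})$ as a skyscraper at the origin and thereby identify $\LL_\frg(\b_1 k_{G/G}) = k_{\triv}$ (not the free module), use Proposition~\ref{prop:restriction_G_to_L} to trivialize the lattice action and get $\LL_G(k_{G/G}) = k_{\triv}$, and then feed this into Lemma~\ref{l:endo princ}. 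For Part (2), the starting point should be $\tr_\cG(e_\cG) = a(\tr_G(e_G))$ directly (no identification with $k_{G/G}$), then compute $\LL_G(\tr_G(e_G))$ as the free module using the horocycle/base-change computation, and conclude as you do.
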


\begin{proof}

		
	In both calculations, we are computing $\End(a(\cF))$ for some $\cF\in \Sh_{\cN}(G/G)$. We shall show in both cases $\cF$ lies in the principal block of $Sh_\calN(G/G)$, and identify the corresponding $k[W]\#\cO(\dT\times\dt[-1])$-module $\LL_{G}(\cF)$. Then we conclude the calculation by invoking Lemma \ref{l:endo princ}.

	(1) Note that $\TT^{-1}(\b_{1}k_{G/G})=\TT^{-1}(k_{\frg/G})\simeq k_{0/G}[-\dim G]$, which is a direct summand of $\Spr$. By the criterion in Lemma \ref{l:criterion princ}, $\LL_{G}(k_{G/G})$ lies in the principal direct summand. Under Koszul duality,  $\LL_{\frg}(\b_{1}k_{G/G})=\LL_{\frg}(k_{\frg/G})\in k[W]\#\Sym(\frt[1])\lmod$ corresponds to
	\begin{equation*}
		\Hom_{\cN/G}(\Spr, k_{0/G}[-\dim G])\in  k[W]\#\Sym(\frt^{\vee}[-2])\lmod.
	\end{equation*}
Using the Cartesian diagram
\begin{equation*}
\xymatrix{ \{0\}/B\ar[d]^{\nu_{0}} \ar@{^{(}->}[r]^{i'_{0}} & \fru/B\ar[d]^{\nu}\\
\{0\}/G \ar@{^{(}->}[r]^{i_{0}} & \cN/G
}
\end{equation*}
we have by adjunction and proper base change
\begin{eqnarray*}
\Hom_{\cN/G}(\Spr, i_{0*}k_{0/G}[-\dim G])&=&\Hom(\nu_{!}k_{\fru/B}[-r], k_{0/G}[-\dim G])\\
&\simeq&\Hom(k_{\fru/B}[-r], \nu^{!}i_{0*}k_{0/G}[-\dim G])\\
&\simeq&\Hom(k_{\fru/B}[-r], i'_{0*}\nu_{0}^{!}k_{0/G}[-\dim G])\\
&=&\Hom(k_{\fru/B}, i'_{0*}k_{0/B})\\
&\simeq& \upH^{*}_{B}(\pt)\simeq \Sym(\frt^{\vee}[-2]).
\end{eqnarray*}
This shows that the Koszul dual of $\LL_{\frg}(\b_{1}k_{G/G})$ is $\Sym(\frt^{\vee}[-2])$ as a natural $k[W]\#\Sym(\frt^{\vee}[-2])$-module, hence $\LL_{\frg}(\b_{1}k_{G/G})\simeq k_{\triv}$ as a $k[W]\#\Sym((\dt)^{*}[1])$-module.

In other words, $\LL_{G}(k_{G/G})\in k[\tilW]\#\Sym((\dt)^{*}[1])$ is one-dimensional over $k$ with the trivial action of $k[W]\#\Sym((\dt)^{*}[1])$. Using the fact that $k_{G/G}$ appears as a direct summand of $\Ind_{T\subset B}^{G}(k_{T/T})$, and the compatibility of $\LL_{G}$ with parabolic induction in Proposition \ref{prop:restriction_G_to_L}, we see that the action of the lattice part $\xcoch(T)\subset \tilW$ on $\LL_{G}(k_{G/G})$ is also trivial. We conclude that $\LL_{G}(k_{G/G})\simeq k_{\triv}$ as an object in $k[\tilW]\#\Sym((\dt)^{*}[1])\lmod=k[W]\#\cO(\dT\times\dt[-1])\lmod$. Therefore we have a $W$-equivariant equivalence of dg algebras
\begin{equation*}
\End_{\cO(\dT\times\dt[-1])}(\LL_{G}(k_{G/G}))\simeq \cO((\dt)^{*}[1] \times (\dt)^{*}[2])
\end{equation*}


By Lemma \ref{l:endo princ}, we get
\begin{equation*}
	\End_{hh(\cH_{\cG})} (a(k_{G/G})) \simeq  (\cO(\dT) \otimes \End_{\cO(\dT\times\dt[-1])}(\LL_{G}(k_{G/G})))^W\simeq \cO(T^\vee {\times (\dt)^{*}[1] \times (\dt)^{*}[2]} )^W. 
	\end{equation*}
	

(2) Let $e_{G}$ be the monoidal unit of $\cH_{G}$, then $\tr_{\cG}(e_\cG) \simeq a(\tr_G(e_G))$. We claim that $\tr_G(e_G)$ is in fact the ``universal Grothendieck-Springer sheaf". 
{ Indeed, consider the following commutative diagram with a Cartesian square on the left
\begin{equation*}
\xymatrix{ H \ar[d]^{p_{H}} & \ar[l]_-{q_{H}} U\bs B/U & \ar[l]_-{\d}\f{B}{U}\ar[d] \ar[r]^-{\pi}& \f{G}{G}\ar@{=}[d]\\
\f{H}{H} &  & \ar[ll]_-{q'_{H}}\f{B}{B}\ar[r]^-{\pi'} & \f{G}{G}}
\end{equation*}
By Theorem~\ref{thm:CS G}, $\tr_{G}=\g$ is the horocycle functor.  Therefore
\begin{equation*}
\tr_{G}(e_{G})=\pi_{!}\d^{*}(e_{G})=\pi_{!}\d^{*}q^{*}_{H}(\exp_{!}k_{\frh})[r].
\end{equation*}
By proper base change, we can identify it with
\begin{equation}\label{tr e ind}
\tr_{G}(e_{G})\simeq \pi'_{!}q'^{*}_{H}(p_{H!}\exp_{!}k_{\frh})[r]\simeq \Ind^{G}_{T\subset B}(p_{H!}\exp_{!}k_{\frt}[r]).
\end{equation}
Under the equivalence $\Sh_{\cN}(T/\Ad(T))=\Sh_{0}(T/\Ad(T))\simeq \cO(\dT\times\dt[-1])\lmod$, $p_{H!}\exp_{!}k_{\frt}[r]$ corresponds to the free rank one module $\cO(\dT\times\dt[-1])$. }
By Proposition~\ref{prop:restriction_G_to_L}, $\LL_{G}(\tr_G(e_{G}))\cong k[W]\# \cO(\dT \times \dt[-1])$ is also the free rank one $k[W]\# \cO(\dT \times \dt[-1])$-module. By \eqref{SigcG ind}, $\LL_{\cG}(a(\tr_G(e_{G})))$ is again the free rank one $k[W]\# \cO(\dT \times \dt[-1])$-module. Therefore  $\End(\tr_\cG(e_{\cG}))\simeq \End(a(\tr_G(e_{G})))\simeq \End(\LL_{\cG}(a(\tr_G(e_{G}))))$ is the dg algebra $k[W]\# \cO(T^\vee \times T^\vee \times \frt^\vee[-1]) $.  
\end{proof}

\begin{remark}
The functor $a$ is analogous to the compact induction functor for $p$-adic representations $\textup{c-ind}: G(\cO_{K})\textup{-rep} \to G(K)\textup{-rep}$. { Here $K$ is a local non-archimedean field with valuation ring $\cO_{K}$, and $G$ is a split reductive group over $K$.} In particular, $\End_{hh(\cH_{\cG})} (a(k_{G/G})) $ is analogous to the spherical Hecke algebra $\End_{G(K)}(\textup{c-ind}(k))$.
\end{remark}


\section{Functions on commuting stacks}\label{s:comm}

\subsection{Functions on derived commuting stacks} 
 To deduce spectral applications of our prior automorphic results, we will invoke here  Ansatz \eqref{ans:intro univ aff equiv}. 
 Recall it states in particular that 
 there is a monoidal equivalence of universal affine Hecke categories
\begin{equation*}
\xymatrix{
\Phi:\Ind\Coh(\St_{\dG}) \ar[r]^-\sim &   \cH_{\cG}
}
\end{equation*}
such that 
 $\Phi$ identifies the structure sheaf $\cO_{\St_{\dG}}$ with the universal affine Whittaker object $\Wh_{\cG}$ as coalgebra objects.
 Taking cocenters of both sides of $\Phi$ we get an equivalence
\begin{equation*}
hh(\Phi): hh(\Ind\Coh(\St_{\dG}))\simeq hh(\cH_{\cG}).
\end{equation*}
Combined with \eqref{spec cocenter}, we get an equivalence
\begin{equation}\label{Z2hh}
\Ind\Coh_{\cN}(Z^{2}_{\dG})\simeq  hh(\cH_{\cG}).
\end{equation}

Now we may combine the spectral and automorphic calculations of
Corollary~\ref{c:OZ OSt} and Theorem~\ref{thm: dtr of whit} to obtain:

\begin{cor}\label{c:OZ Wh} Assume Ansatz \eqref{ans:intro univ aff equiv}. Under the equivalence \eqref{Z2hh}, the structure sheaf $\cO_{Z^{2}_{\dG}}\in \Ind\Coh_{\cN}(Z^{2}_{\dG})$ corresponds to  $\Wh_{\cG/\cG}\in hh(\cH_{\cG})$.
\end{cor}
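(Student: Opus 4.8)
The plan is to deduce the corollary by transporting the identity $\ov\tr(\cO_{\St_{\dG}})\simeq \cO_{Z^2_{\dG}}$ of Corollary~\ref{c:OZ OSt} across the equivalence $\Phi$ of Ansatz~\ref{ans:intro univ aff equiv}, using crucially that $\Phi$ carries the coalgebra $\cO_{\St_{\dG}}$ to the coalgebra $\Wh_{\cG}$. Recall that by definition $\Wh_{\cG/\cG}=\ov\tr(\Wh_{\cG})$, so it suffices to show that under $hh(\Phi)$ the object $\ov\tr(\cO_{\St_{\dG}})$ goes to $\ov\tr(\Wh_{\cG})$, and then to compose with \eqref{spec cocenter} to match the equivalence \eqref{Z2hh}.

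First I would record the formal properties of $\Phi$. Since $\Phi$ is a monoidal equivalence, the induced functor $hh(\Phi)\colon hh(\Ind\Coh(\St_{\dG}))\xrightarrow{\sim}hh(\cH_{\cG})$ is an equivalence; in particular it preserves all limits and colimits. By the commutative square \eqref{eq:tr funct} it intertwines the two trace maps, $hh(\Phi)\circ\tr\simeq \tr\circ\Phi$. Finally, being strong monoidal, $\Phi$ carries coalgebra objects to coalgebra objects, and Ansatz~\ref{ans:intro univ aff equiv}(1) states precisely that it identifies $\cO_{\St_{\dG}}$ with $\Wh_{\cG}$ as coalgebras — hence compatibly with comultiplications and counits, with all higher coherences automatic as in Section~\ref{sec:Whittaker_coalgebra}.

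Next I would invoke functoriality of the descended trace for coalgebras under $\Phi$. Using the limit formula
\[
\ov\tr(c)\simeq \lim_{\Delta}\bigl[\,\tr_\cA(c)\rightrightarrows \tr_\cA(c\star c)\rightrightarrows \cdots\,\bigr],
\]
whose coface maps come from the comultiplication of $c$ and whose codegeneracy maps come from the counit, one applies $hh(\Phi)$ termwise: trace compatibility identifies $hh(\Phi)\tr_{\Ind\Coh(\St_{\dG})}(\cO_{\St_{\dG}}^{\star n})\simeq \tr_{\cH_{\cG}}(\Wh_{\cG}^{\star n})$, and the coalgebra compatibility of $\Phi$ matches the coface and codegeneracy maps. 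This produces an equivalence between the cosimplicial diagram computing $\ov\tr(\cO_{\St_{\dG}})$ and the one computing $\ov\tr(\Wh_{\cG})$; since $hh(\Phi)$ preserves limits, we conclude $hh(\Phi)(\ov\tr(\cO_{\St_{\dG}}))\simeq \ov\tr(\Wh_{\cG})=\Wh_{\cG/\cG}$. (Alternatively one may pass to the dual monoidal categories, where both objects become algebras, apply Lemma~\ref{l: dtr inv}, and transport back via the comparison at the end of the discussion of descended traces for coalgebras; this route requires the compact-generation hypotheses recorded there, which one then checks hold on both sides.) Combining with Corollary~\ref{c:OZ OSt}, which identifies $\ov\tr(\cO_{\St_{\dG}})$ with $\cO_{Z^2_{\dG}}$ under \eqref{spec cocenter}, and recalling that \eqref{Z2hh} is the composite of \eqref{spec cocenter} with $hh(\Phi)$, we get that $\cO_{Z^2_{\dG}}$ corresponds to $\Wh_{\cG/\cG}$, as claimed.

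The one point needing genuine care is the functoriality of the coalgebra descended trace across $\Phi$: one must be sure that the monoidality of $\Phi$ together with its compatibility with the \emph{comultiplication} (not just the multiplication) suffices to identify the two limit diagrams termwise and compatibly with every coface and codegeneracy map. This is exactly the content that Ansatz~\ref{ans:intro univ aff equiv}(1) — ``$\Phi$ identifies $\cO_{\St_{\dG}}$ with $\Wh_{\cG}$ \emph{as coalgebra objects}'' — is designed to supply, so once that hypothesis is granted the remaining verifications are routine diagram chases in the setting of continuous functors.
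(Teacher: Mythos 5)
Your proof is correct and reconstructs the argument the paper leaves implicit: the paper merely says the corollary follows by "combining Corollary~\ref{c:OZ OSt} and Theorem~\ref{thm: dtr of whit}," but the real content — which you correctly isolate and justify — is that the coalgebra compatibility of $\Phi$ supplied by Ansatz~\ref{ans:intro univ aff equiv}(1) identifies, term by term and compatibly with all (co)face and (co)degeneracy maps, the cosimplicial limit computing $\ov\tr(\cO_{\St_{\dG}})$ with the one computing $\ov\tr(\Wh_{\cG})$, so that $hh(\Phi)$ carries the spectral descended trace to the automorphic one. Combined with Corollary~\ref{c:OZ OSt} and the factorization of \eqref{Z2hh} through \eqref{spec cocenter} and $hh(\Phi)$, this is exactly the intended proof.
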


And as an immediate consequence of 
Corollary~\ref{c:end_of_Wh}, we further obtain:

\begin{cor}\label{c:derived fun comm}
	Assume Ansatz~\ref{ans:intro univ aff equiv}. Then there is a canonical isomorphism of dg algebras:
	$$ \calO(Z^2_{{G^\vee}})\simeq  \bigoplus_{\chi \in \textup{Irr}(Z(G)/Z(G)^{\c})} \calO(T^\vee_\chi \times T^\vee_\chi \times \frt^\vee_\chi[-1])^{W_\chi}.$$
	In particular, $H^0(\calO(Z^2_{{G^\vee}}))\simeq \bigoplus_{\chi \in \textup{Irr}(Z(G)/Z(G)^{\c})} \calO(T^\vee_\chi \times T^\vee_\chi )^{W_\chi}$ is reduced.
\end{cor}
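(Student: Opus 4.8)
The plan is to deduce the first isomorphism from Corollaries~\ref{c:OZ Wh} and~\ref{c:end_of_Wh}, and then to obtain the underived statement by a short degree count. First I would recall that $\calO(Z^2_{\dG})$ is, by definition, the $E_1$-algebra $\RHom_{\QCoh(Z^2_{\dG})}(\cO_{Z^2_{\dG}},\cO_{Z^2_{\dG}})$ of derived global functions. Under the fully faithful embedding $\QCoh(Z^2_{\dG})\incl \Ind\Coh_{\cN}(Z^2_{\dG})$ of ind-coherent sheaves with singular support in the zero section --- the embedding already invoked in the remark after Proposition~\ref{prop:descended_O} --- this agrees with the endomorphism algebra of $\cO_{Z^2_{\dG}}$ computed inside $\Ind\Coh_{\cN}(Z^2_{\dG})$. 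I would then transport along the equivalence $\Ind\Coh_{\cN}(Z^2_{\dG})\simeq hh(\cH_{\cG})$ of \eqref{Z2hh}: assuming Ansatz~\ref{ans:intro univ aff equiv}, Corollary~\ref{c:OZ Wh} identifies $\cO_{Z^2_{\dG}}$ with the cocenter Whittaker object $\Wh_{\cG/\cG}$, and since any equivalence of stable $\infty$-categories induces an isomorphism of endomorphism $E_1$-algebras, this yields
\begin{equation*}
\calO(Z^2_{\dG})\simeq \End_{hh(\cH_{\cG})}(\Wh_{\cG/\cG})
\end{equation*}
as dg algebras. The right-hand side is computed unconditionally in Corollary~\ref{c:end_of_Wh}, giving the asserted identification with $\bigoplus_{\chi\in\Irr(Z(G)/Z(G)^{\c})}\calO(T^\vee_\chi\times T^\vee_\chi\times\frt^\vee_\chi[-1])^{W_\chi}$.

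For the \emph{in particular}, I would pass to $H^0$ summand by summand. Each factor $\calO(\frt^\vee_\chi[-1])=\Sym^*\big((\frt^\vee_\chi)^*[1]\big)$ is the exterior algebra on generators in cohomological degree $-1$, hence a graded algebra with vanishing differential concentrated in non-positive degrees whose degree-$0$ part is $k$; therefore $\calO(T^\vee_\chi\times T^\vee_\chi\times\frt^\vee_\chi[-1])=\calO(T^\vee_\chi\times T^\vee_\chi)\otimes\Lambda^*\big((\frt^\vee_\chi)^*\big)$ has $H^0=\calO(T^\vee_\chi\times T^\vee_\chi)$. As $W_\chi$ is finite and $k$ has characteristic zero, taking $W_\chi$-invariants is exact and commutes with $H^0$, so $H^0$ of the $\chi$-summand is $\calO(T^\vee_\chi\times T^\vee_\chi)^{W_\chi}$; summing over $\chi$ gives the displayed formula for $H^0(\calO(Z^2_{\dG}))$. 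Reducedness follows at once: $T^\vee_\chi\times T^\vee_\chi$ is a smooth affine variety, so $\calO(T^\vee_\chi\times T^\vee_\chi)$ is reduced; the ring of invariants of a reduced ring under a group action is a subring of a reduced ring, hence reduced; and a finite direct sum of reduced rings is reduced.

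The main point is that the substantive work has been done in the preceding sections, so the only thing requiring care is the bookkeeping needed to upgrade the conclusion from an equivalence of underlying complexes to an isomorphism of \emph{dg algebras}: one must check that the equivalence \eqref{Z2hh} (built from $hh(\Phi)$ and the trace equivalence of Theorem~\ref{thm:spec tr}) is compatible with the $E_1$-structures on endomorphism objects, and that the embedding $\QCoh\incl\Ind\Coh_{\cN}$ does not affect $\RHom$ out of the structure sheaf. Neither is a genuine obstacle; both are standard once one is careful about which category the endomorphisms are taken in.
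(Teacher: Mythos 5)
Your proposal is correct and is essentially the argument the paper intends: the corollary is stated without a separate proof precisely because it is the immediate combination of Corollary~\ref{c:OZ Wh} (which identifies $\cO_{Z^2_{\dG}}$ with $\Wh_{\cG/\cG}$ under \eqref{Z2hh}) and Corollary~\ref{c:end_of_Wh} (which computes $\End(\Wh_{\cG/\cG})$), plus the routine degree count for the $H^0$ claim. Your bookkeeping about the $E_1$-structure being preserved under equivalences, and about $\RHom$ out of $\cO$ being the same whether computed in $\QCoh$ or in $\Ind\Coh_{\cN}$ (via the fully faithful embedding already appearing in the remark after Proposition~\ref{prop:descended_O}), is exactly the right thing to flag and dispatch.
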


The goal of most of the remainder of this section is to deduce from 
Corollary~\ref{c:derived fun comm} the assertions stated in 
Corollary~\ref {intro main cor}, Theorem~\ref{thm:intro classical functions}, and Theorem~\ref{thm:intro Lie mix}. 
In a final section, we deduce further spectral consequences of our other automorphic calculations.


\subsection{Almost commuting pairs of semisimple elements}\label{ss:alm comm} 
Let $\dG$ be a connected reductive group over $\CC$. We first recall some results of Borel, Friedman and Morgan \cite[\S4]{borelAlmostCommutingElements} concerning almost commuting pairs with compact simple and simply-connected Lie groups. We will state an extension of their results to almost commuting pairs of {\em semisimple elements} in complex reductive groups. It is easy to adapt their proof to this situation.

Let $\dGsc$ be the simply-connected cover of $\dGder$.  Let $\wt\dG=\dGsc\times (ZG^{\vee})^{\c}$. Then the natural map $\pi: \wt\dG\to \dG$ is a finite central isogeny whose kernel {\em contains} $\ker(\dGsc\to \dGder)=\pi_{1}(\dGder)$.  For any $c\in \pi_{1}(\dGder)$, let $Z^{2}_{\dG}(c)\subset Z^{2}_{\dG}$ be the open and closed substack of pairs $(x,y)$ in $\dG$ such that, for some (any) liftings $\wt x, \wt y$ of $x$ and  $y$ to $\wt\dG$, $[\wt x,\wt y]=c$, all up to $\dG$-conjugation. An almost commuting pair in $\wt\dG$ refers to a pair $(\wt x, \wt y)\in \wt\dG^{2}$ as above. We have a decomposition
\begin{equation*}
Z^{2}_{\dG}=\coprod_{c\in \pi_{1}(\dGder)}Z^{2}_{\dG}(c).
\end{equation*}

Let $(x,y)\in Z^{2}_{\dG}(c)$ with $x,y$ {\em semisimple}. Consider the simulteneous centralizer $C_{\dG}(x,y)$, which is a reductive (possibly disconnected) subgroup of $\dG$. Let $S\subset C_{\dG}(x,y)$ be a maximal torus. It is shown in \cite[Proposition 4.2.1]{borelAlmostCommutingElements} that the $\dG$-conjugacy class of $S$ is independent of the choice of $(x,y)$. Let us fix such a torus $S$ and denote it by $S_{c}$. 

Let $\dL_{c}=C_{\dG}(S_{c})$, a Levi subgroup of $\dG$. It is known that $S_{c}=Z(\dL_{c})^{\c}$. Let $\dT_{c}=\dL_{c}/\dLder_{c}$ be the quotient torus. We have $\dT_{c}=S_{c}/(S_{c}\cap \dLder_{c})$ ($\dT_{c}$ is denoted $\ov S_{c}$ in \cite{borelAlmostCommutingElements}). Let $W_{c}=W(S_{c}, \dG)=N_{\dG}(S_{c})/\dL_{c}$ be the relative Weyl group of $\dG$ with respect to $S_{c}$.  Then $W_{c}$ also acts on $\dT_{c}$. Fix a pair $(x_{c}, y_{c})\in (\dLder_{c})^{2}$ such that $(x_{c},y_{c})\in Z^{2}_{\dG}(c)$. We have a map
\begin{equation*}
\wt\io_{c}: Z^{2}_{S_{c}}\to Z^{2}_{\dG}(c)
\end{equation*}
sending $(t_{1},t_{2})\in Z^{2}_{\dT_{c}}$ to $(t_{1}x_{c}, t_{2}y_{c})$. 

Now we state the generalized form of results in \cite{borelAlmostCommutingElements} replacing elements in compact groups to semisimple elements in reductive groups; the proofs in {\em loc.cit.} works without change.

\begin{prop}[{\cite[Proposition 4.2.1, Corollary 4.2.2]{borelAlmostCommutingElements}}]\label{p:BFM}
\begin{enumerate}
\item Any other choice of $(x'_{c}, y'_{c})\in (\dL_{c})^{2}$ such that $(x'_{c},y'_{c})\in Z^{2}_{\dG}(c)$ is $\dL_{c}$-conjugate to $(x_{c},y_{c})$. Moreover, $C_{\dLder_{c}}(x_{c},y_{c})$ is finite.
\item The map $\wt\io_{c}$ factors through $Z^{2}_{\dT_{c}}$ and is $W_{c}$-invariant for the diagonal action, inducing a map of derived stacks
\begin{equation*}
\io_{c}: Z^{2}_{\dT_{c}}/W_{c}\to Z^{2}_{\dG}(c).
\end{equation*}
\item The map $\io_{c}$ restricts to a bijection on the set of semisimple closed points:
\begin{equation*}
(\dT_{c}(\CC)\times \dT_{c}(\CC))/W_{c}\bij |Z^{2}_{\dG}(c)(\CC)^{ss}|.
\end{equation*}
Here $|Z^{2}_{\dG}(c)(\CC)^{ss}|$ denotes the set of $\dG$-conjugacy classes of semisimple pairs $(x,y)\in Z^{2}_{\dG}(c)(\CC)$.
\end{enumerate}
\end{prop}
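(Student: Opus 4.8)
\textbf{Proof proposal for Proposition~\ref{p:BFM}.}

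The plan is to reduce everything to the case of \emph{compact} simple simply-connected groups treated in \cite[\S4]{borelAlmostCommutingElements}, and then to observe that the arguments there only used compactness to guarantee that the elements are semisimple and that centralizers are reductive. First I would fix $c\in\pi_1(\dGder)$ and an almost commuting pair of semisimple elements $(x,y)\in Z^2_{\dG}(c)(\CC)$; lifting to $\wt\dG=\dGsc\times(ZG^\vee)^\circ$, the $\dGsc$-components $(\wt x,\wt y)$ satisfy $[\wt x,\wt y]=c$, and since $x,y$ are semisimple so are $\wt x,\wt y$. For statement~(1): given another such pair $(x'_c,y'_c)\in(\dL_c)^2$, the key input is Borel--Friedman--Morgan's structural result that the simultaneous centralizer $C_{\dG}(x,y)$ of a semisimple almost commuting pair has a maximal torus in a single $\dG$-conjugacy class (their Proposition~4.2.1, proved via Sylow-type / connectedness arguments that work verbatim for semisimple elements in a complex reductive group because one can pass to the compact real form or use the algebraic Jordan decomposition directly). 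Then $\dL_c=C_{\dG}(S_c)$ and conjugating $S_c$ back to the standard position forces $(x'_c,y'_c)$ into $\dL_c$ up to conjugacy; the finiteness of $C_{\dLder_c}(x_c,y_c)$ is exactly the statement that the pair has \emph{full} centralizer rank already captured by $S_c$, i.e.\ no further central torus appears inside $\dLder_c$.

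For statement~(2), I would first check that $\wt\io_c$ is well-defined on $Z^2_{\dT_c}$ rather than just on $S_c^2$: the map $(t_1,t_2)\mapsto(t_1x_c,t_2y_c)$ only depends on the images of $t_i$ in $\dT_c=S_c/(S_c\cap\dLder_c)$ because multiplying $t_i$ by an element of $S_c\cap\dLder_c\subset Z(\dLder_c)^\circ$ changes $(t_1x_c,t_2y_c)$ by an element that is absorbed into the $\dLder_c$-conjugation (more precisely, one checks $(zx_c,y_c)$ is $\dL_c$-conjugate to $(x_c,y_c)$ for $z\in S_c\cap\dLder_c$, using again that almost commuting pairs in $\dLder_c$ of the relevant type form a single orbit). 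The $W_c$-invariance is formal: $N_{\dG}(S_c)$ normalizes $\dL_c$, and conjugating $(x_c,y_c)$ by a representative of $w\in W_c$ gives another pair in $(\dL_c)^2$ landing in $Z^2_{\dG}(c)$, hence $\dL_c$-conjugate to $(x_c,y_c)$ by~(1), so the two resulting maps to $Z^2_{\dG}(c)$ agree. This yields the factorization through $Z^2_{\dT_c}/W_c$ as a map of (derived) stacks; functoriality of the derived fiber product makes this automatic once the underlying classical maps are compatible.

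Statement~(3) is where the real content lies and I expect it to be the main obstacle. Surjectivity onto $|Z^2_{\dG}(c)(\CC)^{ss}|$: given a semisimple pair $(x,y)\in Z^2_{\dG}(c)$, conjugate so that a maximal torus of $C_{\dG}(x,y)$ equals $S_c$; then $x,y\in C_{\dG}(S_c)=\dL_c$, and writing $x=t_1\bar x$, $y=t_2\bar y$ with $t_i\in S_c$ and $\bar x,\bar y\in\dLder_c$ semisimple, the pair $(\bar x,\bar y)$ is an almost commuting pair in $\dLder_c$ with the same $c$, hence $\dLder_c$-conjugate to $(x_c,y_c)$ by~(1); absorbing that conjugation shows $(x,y)$ is in the image of $\wt\io_c$. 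Injectivity: if $\io_c(t_1,t_2)$ and $\io_c(t_1',t_2')$ are $\dG$-conjugate, the conjugating element must carry $S_c$ (a maximal torus of the simultaneous centralizer of one pair) to $S_c$ (for the other), hence lies in $N_{\dG}(S_c)$, and its image in $W_c$ realizes the required identification $(t_1,t_2)\sim_{W_c}(t_1',t_2')$ --- here one must be careful that the $\dLder_c$-ambiguity in writing $x=t_1\bar x$ does not change $t_i\in\dT_c$, which follows because $S_c\cap\dLder_c$ is exactly what we quotiented by. The delicate point throughout is the passage from the statement ``a maximal torus of $C_{\dG}(x,y)$ is conjugate to $S_c$'' to the explicit normal-form $(t_1x_c,t_2y_c)$; this is precisely the content of \cite[Corollary~4.2.2]{borelAlmostCommutingElements}, and I would spell out that its proof (reduction to $\dLder_c$, where the pair becomes ``isolated'' in the sense of having finite centralizer, then an orbit-counting/rigidity argument) transfers to the complex semisimple setting without modification, since at no point is compactness used beyond ensuring semisimplicity.
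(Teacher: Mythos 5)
The paper itself gives no proof of Proposition~\ref{p:BFM}: it states explicitly that it is the semisimple/complex reductive analogue of \cite[Proposition~4.2.1, Corollary~4.2.2]{borelAlmostCommutingElements} and that ``the proofs in \emph{loc.\ cit.}\ work without change.'' So your proposal is, in effect, filling in a deferred argument rather than competing with one in the paper. Your overall architecture — use BFM's structure theorem for centralizers of almost commuting pairs to put $(x,y)$ in the normal form $(t_1 x_c, t_2 y_c)$, then read off surjectivity and injectivity — is faithful to the intended BFM argument. A few points where you should tighten the sketch:

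For~(1), your sentence ``conjugating $S_c$ back to the standard position forces $(x'_c,y'_c)$ into $\dL_c$ up to conjugacy'' is not the relevant step: the hypothesis already places $(x'_c,y'_c)$ in $\dL_c$ (in fact the statement should almost certainly read $(\dLder_c)^2$ rather than $(\dL_c)^2$ — otherwise $(s x_c, y_c)$ for $s\in S_c\setminus(S_c\cap\dLder_c)$ is a counterexample, since conjugating by $\ell\in\dL_c$ can only shift $x_c$ by a commutator, which lies in $\dLder_c$). What~(1) really needs is (a) the observation that $S_c$ is automatically a \emph{maximal} torus of $C_{\dG}(x'_c,y'_c)$ (since some maximal torus there is $\dG$-conjugate to $S_c$ by BFM Prop.~4.2.1, and $S_c$ sits inside it with equal dimension), forcing $C_{\dLder_c}(x'_c,y'_c)$ to be finite, and then (b) the rigidity result of BFM Cor.~4.2.2 — that $c$-pairs in $\dLder_c$ with finite centralizer form a single $\dLder_c$-orbit. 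You allude to (b) only at the very end of your discussion of~(3) as ``an orbit-counting/rigidity argument,'' but it is the engine of~(1) and should be front and center there.

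For~(2), your reduction of $S_c\cap\dLder_c$-invariance to~(1) is correct, but you should at least note that one must first check $(zx_c,y_c)\in Z^2_{\dG}(c)$ for $z\in S_c\cap\dLder_c$ — this requires lifting $z$ to the identity component $\wt S_c$ of the preimage of $S_c$ in $\wt\dG$, which is central in $\wt\dL_c$, so the commutator in $\wt\dG$ is unchanged. Also, the phrase ``functoriality of the derived fiber product makes this automatic'' undersells the point: since $S_c$ is commutative, $[t_1 x_c, t_2 y_c]=[x_c,y_c]=1$ identically, so the square of classical maps commutes on the nose and the map of derived stacks is genuinely induced; but to descend through the quotient by the finite group $(S_c\cap\dLder_c)^2\rtimes W_c$ one should exhibit coherent $2$-isomorphisms (the conjugating elements $\ell$ depending on $z$ and $w$), which is a real — if routine — check.

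Your argument for~(3) is correct: for surjectivity, the key verification that $x,y\in C_{\dG}(S_c)=\dL_c$ after conjugating $S_c$ into $C_{\dG}(x,y)$ is exactly right, and for injectivity the step that the conjugating element $g$ may be arranged to normalize $S_c$ again uses that $S_c$ is a maximal torus of both simultaneous centralizers.
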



\subsection{Chevalley restriction theorem for the commuting stack}\label{ss:Chevalley res}
We finish the proof of Theorem \ref{thm:intro classical functions}, which is restated as follows.
\begin{theorem}\label{thm:function c part} Assume Ansatz~\ref{ans:intro univ aff equiv}. Let $c\in \pi_{1}(\dGder)$.
\begin{enumerate}
\item $Z^{2}_{\dG}(c)\ne\vn$. \footnote{Although implicit in \cite{borelAlmostCommutingElements} but we did not find an explicit statement of the non-emptiness of $Z^{2}_{\dG}(c)$. }
\item The map $\io_{c}$ defined in Proposition \ref{p:BFM}(2) induces an isomorphism on classical functions:
\begin{equation*}
\io_{c}^{*}: H^{0}\cO(Z^{2}_{\dG}(c))\isom H^{0}\cO(Z^{2}_{\dT_{c}})^{W_{c}}=\cO(\dT_{c}\times \dT_{c})^{W_{c}}.
\end{equation*}
\end{enumerate}
\end{theorem}
\begin{proof} 
(1) Let $X_{c}=\Spec H^{0}\cO(Z^{2}_{\dG}(c))$, which is understood to be empty if $Z^{2}_{\dG}(c)$ is empty. First, $X_{c}$ is connected if non-empty. Indeed, assuming $Z^{2}_{\dG}(c)\ne\vn$, and suppose for the contrary that $X_{c}$ decomposes into a disjoint union of open and closed non-empty subschemes $X'_{c}\coprod X''_{c}$. Let $\frC^{2}_{\dG}(c)\subset \frC^{2}_{\dG}$ be the preimage of $Z^{2}_{\dG}(c)$ in the commuting scheme $\frC^{2}_{\dG}$. Let $Z^{2}_{\dG}(c)=Z'_{c}\coprod Z''_{c}$ and $\frC^{2}_{\dG}(c)=\frC'_{c}\coprod \frC''_{c}$ be the corresponding decompositions by taking preimages of $X'_{c}$ and $X''_{c}$. Now $\frC'_{c}$ contains a closed $\dG$-orbit, which then consists of semisimple pairs $(x,y)\in \frC^{2}_{\dG}(c)$. This implies $Z'_{c}$ contains a semisimple pair $(x,y)$. By Proposition \ref{p:BFM}(2)(3), $Z'_{c}$ meets the image of $\io_{c}: (\dT_{c}\times \dT_{c})/W\to Z^{2}_{\dG}(c)$. Similarly, $Z''_{c}$ also contains a semisimple pair, hence it also meets the image of $\io_{c}$. But $\dT_{c}\times \dT_{c}$ is irreducible, we get a contraction. This proves that $X_{c}$ is connected if non-empty.

Corollary \ref{c:derived fun comm} shows that $\Spec H^{0}\cO(Z^{2}_{\dG})=\coprod_{c\in \pi_{1}(\dGder)}X_{c}$ has exactly $\#\Irr(Z(G)/Z(G)^{\c})$ connected components. Since $\#\Irr(Z(G)/Z(G)^{\c})=\#\pi_{1}(\dGder)$, and each $X_{c}$ is connected if non-empty, all of them must be non-empty and connected.

(2) We claim that $X_{c}$ is irreducible, reduced and normal. Above we have shown that $X_{c}$ is a connected component of $\Spec H^{0}\cO(Z^{2}_{\dG})$, hence isomorphic to $\Spec\cO(\dT_{\chi}\times \dT_{\chi})^{W_{\chi}}$ for a unique $\chi\in\Irr(Z(G)/Z(G)^{\c})$. In particular, $X_{c}$ is irreducible, reduced and normal.


The map $\io_{c}$ induces a map on coarse moduli spaces
\begin{equation*}
\io'_{c}: (\dT_{c}\times \dT_{c})\sslash W_{c}\to X_{c}.
\end{equation*}
We next claim that $\io'_{c}$ induces a bijection on closed points. Indeed, by a result of Richardson \cite{richardsonConjugacyClassesNtuples1988}, the closed points in $X_{c}$ are in bijection with $\dG$-orbits of $(x,y)\in Z^{2}_{\dG}(c)$ such that the Zariski closure $A(x,y)\subset \dG$ of the subgroup generated by $x$ and $y$ is reductive. Since $x$ commutes with $y$, $A(x,y)$ is a commutative; it is reductive if and only if $x$ and $y$ are both semisimple. Therefore by Proposition \ref{p:BFM}(3), $\io'_{c}$ induces a bijection on closed points.

Since $(\dT_{c}\times \dT_{c})\sslash W $ and $X_{c}$ are reduced and irreducible of finite type over $\CC$, and $\io'_{c}$ is a bijection on closed points, to show $\io'_{c}$ is an isomorphism if suffices to show that $\io'_{c}$ is finite.

We have a map $\wt s=(\wt s_{1},\wt s_{2}): Z^{2}_{\dG}(c)\to (\dG\sslash\dG)^{2}=(\dT\sslash W)^{2}$ by recording the $\dG$-invariants of $x$ and $y$ in a commuting pair $(x,y)$. This induces a map $s=(s_{1},s_{2}): X_{c}\to (\dT\sslash W)^{2}$. We have maps of affine schemes
\begin{equation*}
S_{c}\times S_{c}\xr{p}
(\dT_{c}\times \dT_{c})\sslash W_{c} \xr{\io'_{c}} X_{c}\xr{s}(\dT\sslash W)^{2}
\end{equation*}
where the composition is the square of the natural projection $S_{c}\to \dT\sslash W$, hence finite. Since $p$ is surjective,  $s\c \io'_{c}: (\dT_{c}\times \dT_{c})\sslash W_{c}\to (\dT\sslash W)^{2}$ is also finite, therefore $\io'_{c}$ is finite as well. This finishes the proof.
\end{proof}

\begin{remark} Let $\dG_{\QQ}$ be the split form of $\dG$ over $\QQ$. Then $Z^{2}_{\dG}$ has a $\QQ$-form $Z^{2}_{\dG,\QQ}$. It is easy to deduce from Theorem \ref{thm:intro classical functions} a description of global functions on $Z^{2}_{\dG,\QQ}$. Indeed, we have a decomposition of $Z^{2}_{\dG,\QQ}$ into open and closed  substacks $Z^{2}_{\dG,\QQ}([c])$ indexed by Galois orbits $[c]\in \pi_{1}(\dGder)/\Gal(\ov\QQ/\QQ)$. If $\QQ_{[c]}$ is the fixed field of the stabilizer of any $c\in [c]$ under $\Gal(\ov\QQ/\QQ)$ (so $\QQ_{[c]}$ is the smallest cyclotomic extension of $\QQ$ over which $Z^{2}_{\dG}(c)$ is defined), we have an isomorphism $Z^{2}_{\dG,\QQ}([c])\cong Z^{2}_{\dG,\QQ_{[c]}}(c)$, the latter being the descent of $Z^{2}_{\dG}(c)$ to $\QQ_{[c]}$. For any $c\in [c]$, $\dT_{c}$ has a $\QQ_{[c]}$-form $\dT_{c,\QQ_{[c]}}$. We conclude
\begin{equation*}
H^{0}\cO(Z^{2}_{\dG,\QQ})\isom\bigoplus_{[c]\in \pi_{1}(\dGder)/\Gal(\ov\QQ/\QQ)} \cO(\dT_{c,\QQ_{[c]}}\times \dT_{c,\QQ_{[c]}})^{W_{c}}.
\end{equation*}
\end{remark}

\subsubsection{Identifying $\chi$ and $c$.}
There is a canonical isomorphism
\begin{equation*}
\delta: \Irr(Z(G)/Z(G)^{\c}) \simeq\pi_1(G^{\vee,\der})
\end{equation*}
because both sides are canonically dual to the torsion part of $\xch(T)/\mbox{root lattice of $G$}$.

On the other hand, by Corollary~\ref{c:derived fun comm} and Theorem~\ref{thm:function c part}, we have two expressions of the ring $H^0(\calO(Z^2_{{G^\vee}}))$:
\begin{equation}\label{two cal O}
 \bigoplus_{\chi \in \textup{Irr}(Z(G)/Z(G)^{\c})} \calO(T^\vee_\chi \times T^\vee_\chi )^{W_\chi} \simeq  H^0(\calO(Z^2_{{G^\vee}})) \simeq  \bigoplus_{c \in \pi_1(G^{\vee,\der}) }  \calO(T^\vee_c \times T^\vee_c )^{W_c}.
\end{equation}

Fratila \cite{fratilaStackSemistableGbundles2016} and Bonnaf\'e \cite{bonnafeElementsUnipotentsReguliers2004} observe a mysterious coincidence that the pairs $(\dT_{c},W_{c})$ that appear on the right side of \eqref{two cal O} are exactly those coming from Levi subgroups of $G$ that support cuspidal local systems on the regular nilpotent orbit, i.e., those pairs $(\dT_{\chi}, W_{\chi})$ that appear on the left side of \eqref{two cal O}. They verified this fact by a case-by-case explicit computation. Below we give a uniform conceptual proof of this combinatorial observation.




\begin{prop}
	\label{prop:chi=c} 
	Assume Ansatz~\ref{ans:intro univ aff equiv}.  Let $\chi\in \Irr(Z(G)/Z(G)^{\c})$ and $c=\d(\chi)\in \pi_1(G^{\vee,\der})$. Then the following hold:
\begin{enumerate}
	\item Put $L_\chi := L_{J_\chi}$ as in Section~\ref{sec:generalized_springer}, then $\dL_{c}$ (defined in Section~\ref{ss:alm comm}) is the dual Levi of $L_\chi$ (up to $\dG$-conjugacy).
	\item There is an isomorphism of pairs $(\dT_{\chi}, W_\chi)\simeq (\dT_{c},W_{c})$ compatible with their respective actions. Moreover, such an isomorphism is canonical up to conjugation by $W_{\chi}$.
\end{enumerate}
\end{prop}
\begin{proof} 
(1) Let $\dL_{\chi}\subset \dG$ be the dual Levi to $L_{\chi}$. Note that the canonical map $\pi_{1}(\dLder_{\chi})\to \pi_{1}(\dGder)$ is always injective, and $c=\d(\chi)$ lies in its image. Therefore it makes sense to form $Z^{2}_{\dL_{\chi}}(c)\subset Z^{2}_{\dL_{\chi}}$, which is non-empty by Theorem~\ref{thm:function c part}(1). Viewing $c$ as an element in $\pi_{1}(\dGder)$, we can form the Levi $\dL_{c}$ of $\dG$ by taking a maximal torus $S_{c}\subset C_{\dG}(x,y)$ and take its centralizer. Now $Z(\dL_{\chi})^{\c}$ is a torus in $C_{\dG}(x,y)$, hence we may take $S_{c}$ to contain $Z(\dL_{\chi})^{\c}$. This implies $\dL_{c}=Z_{\dG}(S_{c})\subset Z_{\dG}(Z(\dL_{\chi})^{\c})=\dL_{\chi}$. Since $\dT_{c}$ and $\dT_{\chi}$ are the abelianizations of $\dL_{c}$  and $\dL_{\chi}$ respectively, we have a surjection $\dT_{c}\surj \dT_{\chi}$. In particular, we have the inequality
\begin{equation}\label{dimTc ineq}
\dim \dT_{c}\ge\dim\dT_{\chi}, \mbox{ for all $\chi\in \Irr(Z(G)/Z(G)^{\c})$ and $c=\d(\chi)$}.
\end{equation}
We remark that, since $\dL_{c}$ is a Levi subgroup of $\dL_{\chi}$ up to conjugation, $\dL_{c}$ is conjugate to $\dL_{\chi}$ if and only if $\dim \dT_{c}=\dim \dT_{\chi}$. 

On the other hand, by taking the Krull dimensions of all direct summands on both sides of \eqref{two cal O}, we get
\begin{equation*}
\sum_{\chi \in \textup{Irr}(Z(G)/Z(G)^{\c})}2\dim \dT_{\chi}=\sum_{c \in \pi_{1}(\dGder)}2\dim \dT_{c}.
\end{equation*}
Combined with the inequality \eqref{dimTc ineq}, we are forced to have an equality $\dim \dT_{\chi}=\dim \dT_{\d(\chi)}$ for every $\chi$. As we remarked earlier, this implies that $\dL_{\d(\chi)}$ is conjugate to $\dL_{\chi}$ in $\dG$.


(2) Let $c=\d(\chi)$. Note that $W_\chi= N_{\dG}(\dL_\chi)/\dL_\chi$, $\dT_{\chi}$ is the abelianization of $\dL_{\chi}$, and the same is true if $\chi$ is replaced with $c$. Any $g\in \dG$ that conjugates $\dL_{\chi}$ to $\dL_{c}$ induces an isomorphism of pairs $\io_{g}: (\dT_{\chi}, W_{\chi})\simeq (\dT_{c}, W_{c})$ compatible with the actions. Different choices of  $g$ differ by right multiplication by $N_{\dG}(\dL_\chi)$,  therefore the resulting $\io_{g}$ differ by $W_{\chi}$-conjuation. 
\end{proof}

Combining Corollary~\ref{c:derived fun comm} and Proposition~\ref{prop:chi=c}, we get the following description of the dg ring of derived functions on $Z^2_{{G^\vee}}$ promised in Theorem~\ref{intro main cor}.

\begin{cor}\label{c:derived fun comm c}
	Assume Ansatz~\ref{ans:intro univ aff equiv}. Then there is a canonical isomorphism of dg algebras:
	$$ \calO(Z^2_{{G^\vee}})\simeq  \bigoplus_{c \in  \pi_1(G^{\vee,\der})} \calO(T^\vee_c \times T^\vee_c \times \frt^\vee_c[-1])^{W_c}.$$
\end{cor}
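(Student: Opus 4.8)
\textbf{Proof proposal for Corollary~\ref{c:derived fun comm c}.} The statement is an immediate consequence of two results already in the excerpt, once we match up index sets. The plan is to combine Corollary~\ref{c:derived fun comm}, which gives
\begin{equation*}
\calO(Z^2_{G^\vee})\simeq \bigoplus_{\chi\in\Irr(Z(G)/Z(G)^\c)}\calO(T^\vee_\chi\times T^\vee_\chi\times\frt^\vee_\chi[-1])^{W_\chi},
\end{equation*}
with Proposition~\ref{prop:chi=c}, which identifies the summands of this decomposition with those indexed by $c\in\pi_1(G^{\vee,\der})$.

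First I would invoke the canonical isomorphism $\delta:\Irr(Z(G)/Z(G)^\c)\isom\pi_1(G^{\vee,\der})$ recalled just before Corollary~\ref{c:derived fun comm c}; it reindexes the direct sum. Next, for each $\chi$ with $c=\delta(\chi)$, Proposition~\ref{prop:chi=c}(2) supplies an isomorphism of pairs $(\dT_\chi,W_\chi)\simeq(\dT_c,W_c)$ compatible with the $W$-actions, canonical up to $W_\chi$-conjugation. Since conjugation by an element of $W_\chi$ acts trivially on the $W_\chi$-invariants $\calO(\dT_\chi\times\dT_\chi\times\frt^\vee_\chi[-1])^{W_\chi}$, the induced isomorphism on invariant rings
\begin{equation*}
\calO(T^\vee_\chi\times T^\vee_\chi\times\frt^\vee_\chi[-1])^{W_\chi}\isom \calO(T^\vee_c\times T^\vee_c\times\frt^\vee_c[-1])^{W_c}
\end{equation*}
is canonical. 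Here one also uses that the differential $\frt^\vee_\chi=\Lie(T^\vee_\chi)$ of the pair isomorphism identifies $\frt^\vee_\chi[-1]$ with $\frt^\vee_c[-1]$ as derived schemes with $W$-action, so the shifted-exterior-algebra factor is carried along as well. Taking the direct sum of these canonical isomorphisms over all $\chi$ (equivalently all $c$) and composing with Corollary~\ref{c:derived fun comm} yields the desired equivalence of dg algebras, and the hypothesis Ansatz~\ref{ans:intro univ aff equiv} is inherited from Corollary~\ref{c:derived fun comm}.

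There is essentially no obstacle here: the real content lies in Corollary~\ref{c:derived fun comm} (hence in Theorem~\ref{thm:end_of_Wh}, Theorem~\ref{th:ff}, and Corollary~\ref{c:OZ Wh}) and in the combinatorial matching of Proposition~\ref{prop:chi=c}. The only point requiring a moment's care is checking that the reindexing is compatible with the dg-algebra structure and not merely with the underlying graded vector spaces; this follows because Proposition~\ref{prop:chi=c}(2) gives isomorphisms of \emph{pairs} (groups with $W$-action), which induce ring isomorphisms on invariants functorially, and because the canonicity up-to-conjugation collapses to honest canonicity after passing to invariants. So the proof is a short assembly step rather than a new argument.
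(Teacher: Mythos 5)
Your proposal matches the paper's own route exactly: the paper states Corollary~\ref{c:derived fun comm c} immediately after observing that it follows by "combining Corollary~\ref{c:derived fun comm} and Proposition~\ref{prop:chi=c}," which is precisely your reindexing argument via the bijection $\delta$ and the pair isomorphisms $(\dT_\chi,W_\chi)\simeq(\dT_c,W_c)$. Your additional remark that the up-to-$W_\chi$-conjugation ambiguity disappears on invariants, so the isomorphism of dg algebras is genuinely canonical, is a correct and worthwhile clarification that the paper leaves implicit.
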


\subsection{From groups to Lie algebras}

We will deduce the Lie algebra analogue of the Chevalley restriction theorem from the group case.

Let $\dG$ be a connected reductive group with Lie algebra $\dg$. Let $\frC^{2}_{\dg}$ be the commuting scheme for $\dg$, i.e., it is the (classical) fiber over $0$ of the Lie bracket map $[\cdot,\cdot]: \frg\times\frg\to \frg$. Then $\dG$ acts on $\frC^{2}_{\dg}$ by diagonal adjoint action. We are interested in calculating the $\CC$-algebra $\cO(\frC^{2}_{\dg})^{\dG}$. 

Let $\dT\subset \dG$ be a maximal torus and $\dt=\Lie\dT$.  The embedding $\dt\times\dt\incl \frC^{2}_{\dg}$ induces a map of stacks
\begin{equation*}
\io_{\frg^{\vee}}: (\dt\times\dt)/W\to \frC^{2}_{\dg}/\dG
\end{equation*}

\begin{theorem}\label{thm:Lie alg} Assume Ansatz~\ref{ans:intro univ aff equiv}. The map $\io_{\dg}$ induces an isomorphism on global (i.e.~invariant) functions
\begin{equation}\label{io dg}
\io_{\dg}^{*}: \cO(\frC^{2}_{\dg})^{\dG}\isom \cO(\dt\times\dt)^{W}.
\end{equation}
\end{theorem}
\begin{proof}
For the statement of the theorem we may replace $\dG$ by another group isogenous to it. In particular, we may assume the derived group $\dGder$ is simply-connected.

We have the natural map 
\begin{equation*}
\frs: \frC^{2}_{\dg}\to (\dt\sslash W)^{2}
\end{equation*}
by taking the invariants of both elements in the commuting pair. Let $\wh\frC^{2}_{\dg}$ be the formal completion of $\frC^{2}_{\dg}$ along the fiber $\frs^{-1}(0,0)$ (which classifies commuting pairs of nilpotent elements ).

Analogously, let $\frC^{2}_{\dG}\subset \dG\times\dG$ be the commuting scheme for $\dG$. We have the natural map
\begin{equation*}
s: \frC^{2}_{\dG}\to (\dT\sslash W)^{2}
\end{equation*}
by taking the invariants of both elements in the commuting pair. Let $\wh\frC^{2}_{\dG}$ be the formal completion of $\frC^{2}_{\dG}$ along the fiber $s^{-1}(1,1)$ (which classifies commuting pairs of unipotent elements ).

By Lemma \ref{l:formal completion} below, the exponential map gives a $\dG$-equivariant isomorphism
\begin{equation*}
\exp: \wh\frC^{2}_{\dg}\isom \wh\frC^{2}_{\dG}.
\end{equation*}
Moreover, letting $\wh{\dt\times\dt}$ and $\wh{\dT\times \dT}$ be the formal completions at $(0,0)$ and $(1,1)$ respectively, we have a commutative diagram of formal schemes
\begin{equation*}
\xymatrix{\wh{\dt\times\dt}\ar[r]^-{\io_{\dg}}\ar[d]^{\exp}  & \wh\frC^{2}_{\dg}\ar[d]^{\exp}\\
\wh{\dT\times\dT}\ar[r]^-{\io_{\dG}}  & \wh\frC^{2}_{\dT}}
\end{equation*}
where $\io_{\dG}$ is the obvious analogue of $\io_{\dg}$. The vertical maps are isomorphism. This gives a commutative diagram of algebras
\begin{equation}\label{compl io}
\xymatrix{\cO(\wh\frC^{2}_{\dg})^{\dG}\ar[r]^-{\wh\io_{\dg}^{*}} & \cO(\wh{\dt\times\dt})^{W}\\
\cO(\wh\frC^{2}_{\dG})^{\dG}\ar[r]^-{\wh\io_{\dG}^{*}} \ar[u]^{\cong}& \cO(\wh{\dT\times\dT})^{W}\ar[u]^{\cong}}
\end{equation}
Now the top arrow is obtained from the map of $\frS=\cO(\dt)^{W}\ot \cO(\dt)^{W}$-algebras \eqref{io dg} by completing along the maximal ideal of $\cO(\dt)^{W}\ot \cO(\dt)^{W}$ corresponding to $(0,0)$. Similarly, the bottom row is obtained from the map of $S=\cO(\dT)^{W}\ot \cO(\dT)^{W}$-algebras
\begin{equation*}
\io_{\dG}^{*}: \cO(\frC^{2}_{\dG})^{\dG}\to \cO(\dT\times\dT)^{W}
\end{equation*}
by completing along the maximal ideal of $\cO(\dT)^{W}\ot \cO(\dT)^{W}$ corresponding to $(1,1)$. 

By Theorem \ref{thm:function c part}, $\io_{\dG}^{*}$ is an isomorphism (using that $\dGder$ is simply-connected this follows directly, but in fact this is true without assuming $\dGder$ is simply-connected), hence so is its completed version $\wh\io_{\dG}^{*}$. By the commutative diagram \eqref{compl io} where both vertical arrows are isomorphisms, we conclude that $\wh\io_{\dg}^{*}$ is an isomorphism. 

We would like to deduce that $\io_{\dg}^{*}$ is an isomorphism from the fact that $\wh\io_{\dg}^{*}$ is an isomorphism. Note that we have a $\Gm$-action on $\frC^{2}_{\dg}$ by simultaneously scaling the elements in the commuting pair. This gives a grading $\frR:=\cO(\frC^{2}_{\dg})^{\dG}=\op_{n\ge0} \frR_{n}$. Similarly, $\frT=\cO(\dt\times\dt)^{W}$ has a grading $\frT=\op_{n\ge0}\frT_{n}$ as well as $\frS=\op_{n\ge0} \frS_{n}$. The map $\io_{\dg}^{*}$ is a map of graded $\frS$-algebras:
$$\io_{\dg}^{*}: \frR=\op_{n\ge0}\frR_{n}\to \frT=\op_{n\ge0}\frT_{n}.$$
The map $\wh\io^{*}_{\dg}$ is obtained from $\io^{*}_{\dg}$ by $\frS_{+}$-adic completion. It is clear that the $\frS_{+}$-adic completion of $\frT$ is given by $\wh\frT=\prod_{n\ge0}\frT_{n}$. 

Note that $\frR/\frS_{+}\frR=\cO(\frC^{2}_{\cN^{\vee}})^{\dG}$ where $\frC^{2}_{\cN^{\vee}}=\frs^{-1}(0,0)\subset \cN^{\vee,2}$ is the scheme of commuting nilpotent elements in $\dg$. Similarly, for $R=\cO(\frC^{2}_{\dG})^{\dG}$ and $S_{+}$ the augmentation ideal of $S$, $R/S_{+}R=\cO(\frC^{2}_{\cU^{\vee}})^{\dG}$ where $\frC^{2}_{\cU^{\vee}}=s^{-1}(1,1)\subset \cU^{\vee,2}$ is the the scheme of commuting unipotent elements in $\dG$. Lemma \ref{l:formal completion} below (or rather a simpler version of the same argument) shows that $\frR/\frS_{+}\frR\cong R/S_{+}R$. By Theorem \ref{thm:function c part}, $R/S_{+}R$ is the coordinate ring of the central fiber of $(\dT\times\dT)\sslash W\to (\dT/\sslash W)^{2}$, hence finite dimemsional. We conclude that  $\frR/\frS_{+}\frR$ is finite-dimensional over $\CC$. This implies that $\frS_{+}\frR$ and $\frR_{+}=\op_{n>0}\frR_{n}$ define the same topology on $\frR$. Therefore the $\frS_{+}$-adic completion $\wh\frR$ is $\prod_{n\ge0}\frR_{n}$. The fact that 
$$\io^{*}_{\dg}: \wh\frR=\prod_{n\ge0}\frR_{n}\to \wh\frT=\prod_{n\ge0}\frT_{n}$$ 
is an isomorphism implies its restriction to each degree $\frR_{n}\to \frT_{n}$ is bijective, hence $\io^{*}_{\dg}$ is an isomorphism. This finishes the proof.
\end{proof}

\begin{lemma}\label{l:formal completion}
The exponential map $(x,y)\mapsto (\exp(x), \exp(y))$ induces a $\dG$-equivariant isomorphism of formal schemes
\begin{equation*}
\exp^{2}: \wh\frC^{2}_{\dg}\isom\wh\frC^{2}_{\dG}.
\end{equation*}
\end{lemma}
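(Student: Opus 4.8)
\textbf{Proof plan for Lemma~\ref{l:formal completion}.}

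The strategy is to realize both formal schemes as completions of honest schemes along closed $\dG$-stable subschemes, and to exhibit the exponential map as a $\dG$-equivariant isomorphism between those completions. First I would recall that the exponential map $\exp\colon \dg\to\dG$ restricts to a $\dG$-equivariant isomorphism from a $\dG$-stable open subscheme $D\subset\dg$ containing $\cN^\vee$ (the nilpotent cone) onto a $\dG$-stable open subscheme $D'\subset\dG$ containing $\cU^\vee$ (the unipotent variety); this is standard since $\exp$ restricts to an isomorphism of the nilpotent cone onto the unipotent variety and is \'etale along it, and one can take $D$ to be $\Ad(\dG)$-saturated. Consequently $\exp^2=\exp\times\exp$ is a $\dG$-equivariant (for the diagonal adjoint action) isomorphism $D\times D\isom D'\times D'$.

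Next I would observe that $\exp^2$ carries the commuting locus to the commuting locus in a neighborhood of the relevant fibers. Precisely: if $x,y\in D$ satisfy $[x,y]=0$ then $\exp(x),\exp(y)$ commute; conversely if $\exp(x),\exp(y)$ commute and $x,y$ are both nilpotent, then $[x,y]=0$. More to the point, I want the scheme-theoretic statement: $\exp^2$ identifies $\frC^2_{\dg}\cap(D\times D)$ with $\frC^2_{\dG}\cap(D'\times D')$ as closed subschemes, at least after passing to formal completions along $\frC^2_{\cN^\vee}=\frs^{-1}(0,0)$ and $\frC^2_{\cU^\vee}=s^{-1}(1,1)$ respectively. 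This follows because on a formal neighborhood of a commuting pair $(x_0,y_0)$ of nilpotents, the condition ``$\exp(x)$ and $\exp(y)$ commute'' is equivalent, via the Baker--Campbell--Hausdorff formula, to a condition cutting out the same formal subscheme as $[x,y]=0$: one has $\exp(x)\exp(y)\exp(x)^{-1}\exp(y)^{-1}=\exp\bigl([x,y]+(\text{higher brackets})\bigr)$, and near a point where all brackets are nilpotent, $\exp$ is an isomorphism onto its image, so the commutator vanishes iff $[x,y]+(\text{higher order in }[x,y])$ vanishes, iff $[x,y]=0$ (the higher-order terms all lie in the ideal generated by $[x,y]$). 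Thus the two formal schemes $\wh\frC^2_{\dg}$ and $\wh\frC^2_{\dG}$ are identified by $\exp^2$, compatibly with the $\dG$-action.

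The main obstacle I anticipate is the scheme-theoretic (as opposed to set-theoretic) identification near the central fibers: one must check that $\exp^2$ does not just match the reduced commuting loci but the full non-reduced subschemes cut out by the defining equations, and that the BCH identity expressing the group commutator in terms of Lie brackets is valid at the level of formal schemes in this generality (complex reductive $\dG$, not just unipotent/nilpotent ambient objects). This is handled by working over the formal completion, where $\exp$ is genuinely invertible and BCH converges; the key algebraic input is that the ideal generated by the group-commutator coordinates and the ideal generated by the Lie-bracket coordinates coincide in the completed coordinate ring, which one extracts from the BCH series by a triangular change of generators. Once this is in place, $\dG$-equivariance is immediate from the equivariance of $\exp$, and taking $\dG$-invariants of the resulting isomorphism of formal schemes is what feeds into the proof of Theorem~\ref{thm:Lie alg} above.
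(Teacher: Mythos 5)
Your approach --- working with the formal exponential near the nilpotent cone and comparing ideals via BCH --- is the right picture, but the step you describe as a ``triangular change of generators'' hides a genuine difficulty, and the paper's own proof avoids BCH entirely. Writing $z=\mathrm{BCH}(x,y)$ for the logarithm of the group commutator, the ideal inclusion $(\text{coordinates of }z)\subseteq(\text{coordinates of }[x,y])$ is indeed immediate, since every higher BCH term contains a nested factor of $[x,y]$. But the reverse inclusion is \emph{not} a triangular change of generators in the filtration by the completion ideal: one completes along the whole fiber $\frs^{-1}(0,0)$, the scheme of commuting nilpotent pairs, not at a point, so the coordinates of $x$ and $y$ do \emph{not} lie in the maximal ideal of the completion. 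Writing $z=(1+A)[x,y]$ modulo terms of higher order in $[x,y]$, the operator $A$ is a series with no constant term in $\ad(x),\ad(y)$ whose entries are polynomials in the coordinates of $x,y$; inverting $1+A$ requires showing $A$ is topologically nilpotent, which one must establish separately using that modulo the completion ideal $x,y$ become nilpotent \emph{and} (since one is already on $\frC^2_{\dg}$) their adjoints commute, so that $A$ reduces to a nilpotent operator. Your sketch omits this. A further caution: there is no $\dG$-stable Zariski-open subscheme $D\subset\dg$ on which $\exp$ is a morphism of \emph{schemes}; the exponential of a non-unipotent reductive group is not algebraic, and what exists algebraically is only the formal exponential along $\cN^\vee$, so the argument must be carried out at the level of formal schemes from the start.

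The paper circumvents all of this by a functor-of-points argument on the category $\Nil$ of pairs $(R,I)$ with $I$ a nilpotent ideal, noting that formal schemes embed fully faithfully into $\Fun(\Nil,\Sets)$. For $(R,I)\in\Nil$, any $x\in\dg\otimes R$ whose reduction $\bar x$ is nilpotent is itself nilpotent as a matrix in a faithful representation, so $\exp(x)$ and $\log(g)$ are honest finite sums and are mutually inverse; and the role of BCH is replaced by the elementary observation that $\exp(x)$ is a polynomial in $x$ alone and $\exp(y)$ is a polynomial in $y$ alone, so $(x,y)$ commuting forces $(\exp(x),\exp(y))$ commuting, and conversely via $\log$. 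This gives the isomorphism of formal schemes directly on $\Nil$-points by Yoneda, with no ideal comparison and no BCH series to analyze.
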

\begin{proof} 
Let $\Nil$ be the category of pairs $(R,I)$ where $R$ is a $\CC$-algebra with a nilpotent ideal $I$ (i.e. $I^{N}=0$ for some $N>0$). We write $\ov R=R/I$. Then the category of formal schemes over $\CC$ full faithfully embeds into the category of functors $\Fun(\Nil, \Sets)$. 

The formal completion $\wh\dg$ of $\dg$ along the nilpotent cone $\cN^{\vee}$ is the functor that sends $(R,I)$ to the set of $x\in \dg\ot R$ such that $\ov x$ (the image of $x$ in $\dg\ot \ov R$) lies in $\cN^{\vee}(\ov R)$. Similarly, the  formal completion $\wh\dG$ of $\dG$ along the unipotent variety $\cU^{\vee}$ sends $(R,I)$ to the set of $g\in \dG(R)$ such that $\ov g\in\cU^{\vee}(\ov R)$.

Similarly, the completion $\wh\frC^{2}_{\dg}$ sends $(R,I)$ to the set of commuting pairs $(x,y)\in \frC^{2}_{\dg}(R)$ such that $\ov x,\ov y\in \cN^{\vee}(\ov R)$. The completion $\wh\frC^{2}_{\dG}$ sends $(R,I)$ to the set of commuting pairs $(g,h)\in \frC^{2}_{\dG}(R)$ such that $\ov g,\ov h\in \cU^{\vee}(\ov R)$.

We first check that the exponenital map and logarithm map give inverse isomorphisms between the functors $\wh\dg$ and $\wh\dG$. To construct these maps, we choose a faithful representation $V$ of $\dG$ and embed $\dG$ into $\GL(V)$. We define $\wh\gl(V)$ and $\wh\GL(V)$ as the formal completions of $\gl(V)$ and $\GL(V)$ along the closed subsets of nilpotent and unipotent matrices, viewed as functors on $\Nil$.  Note that any $x\in \wh\gl(V)(R,I)$ is nilpotent (because $x^{n}$ has entries in $I$, hence nilpotent). Similarly, any $g\in \wh\GL(V)(R,I)$ is unipotent (i.e., $g-1$ is nilpotent). The exponential map $\exp: \wh\gl(V)(R,I)\to \wh\GL(V)(R,I)$ is defined using the usual formula $\exp(x)=\sum_{n\ge0}\frac{x^{n}}{n!}$ which is a finite sum since $x$ is nilpotent. The logarithm map $\log: \wh\GL(V)(R,I)\to \wh\gl(V)(R,I)$ is defined using the usual formula $\log(g)=-\sum_{n\ge1}\frac{(1-g)^{n}}{n}$ which is a finite sum since $g-1$ is nilpotent.  Clearly $\exp$ and $\log$ give inverse isomorphisms between the functors $\wh\gl(V)$ and $\wh\GL(V)$.

We claim that $\exp(\wh\dg(R,I))\subset \wh\dG(R,I)$ and $\log(\wh\dG(R,I))\subset \wh\dg(R,I)$. Once this is shown,  $\exp$ and $\log$ then restrict to give inverse bijections between $\wh\dg(R,I)$ and $\wh\dG(R,I)$ functorially for $(R,I)\in \Nil$, hence giving inverse isomorphisms (clearly $\dG$-equivariant) between the formal schemes $\wh\dg$ and $\wh\dG$. To check $\exp(\wh\dg(R,I))\subset \wh\dG(R,I)$, let $\{v_{i}\}$ be a collection of tensors on $V$ such that $\dG$ is defined as the simultaneous stabilizer of $\{v_{i}\}$ in $\GL(V)$. Then $\dg$ is the simultaneous annihilator of $\{v_{i}\}$ in $\gl(V)$. If $x\in \wh\dg(R,I)$, then $xv_{i}=v_{i}$, hence $\exp(x)v_{i}=v_{i}$ and therefore $\exp(x)\in \wh\dG(R,I)$. This proves $\exp(\wh\dg(R,I))\subset \wh\dG(R,I)$. The other inclusion $\log(\wh\dG(R,I))\subset \wh\dg(R,I)$ is proved in the same way.

Now we have a $\dG\times\dG$-equivariant isomorphism $\exp^{2}: \wh\dg\times\wh\dg\isom \wh\dG\times\wh\dG$ with inverse $\log^{2}$. We claim that for any $(R,I)\in \Nil$, $\exp^{2}$ sends $\wh\frC^{2}_{\dg}(R,I)$ to $\wh\frC^{2}_{\dG}(R,I)$, and $\log^{2}$ sends $\wh\frC^{2}_{\dG}(R,I)$ to $\wh\frC^{2}_{\dg}(R,I)$. Once this is checked, $\exp^{2}$ and $\log^{2}$ then restrict to give inverse bijections between $\wh\frC^{2}_{\dg}(R,I)$ and $\wh\frC^{2}_{\dG}(R,I)$ functorially for $(R,I)\in \Nil$, hence giving inverse isomorphisms between the formal schemes $\wh\frC^{2}_{\dg}$ and $\wh\frC^{2}_{\dG}$, as desired.  Now suppose $(x,y)\in \wh\frC^{2}_{\dg}(R,I)$. Since $\exp(x)$ and $\exp(y)$ are polynomials in $x$ and $y$, and $[x,y]=0$, we see that $\exp(x)$ commutes with $\exp(y)$ as elements in $\dG(R)$, hence a fortiori in $\wh\dG(R,I)$. This verifies that $\exp^{2}$ sends $\wh\frC^{2}_{\dg}(R,I)$ to $\wh\frC^{2}_{\dG}(R,I)$. The statement about $\log^{2}$ is proved similarly, using that $\log(g)$ and $\log(h)$ are polynomials in $g$ and $h$. This finishes the proof.  
\end{proof}

Same argument as in the proof of Theorem \ref{thm:Lie alg} proves also the Chevalley restriction theorem for the  Lie algebra-group commuting scheme. Let $\frC_{\dg,\dG}$ be the subscheme of $(x,g)\in \dg\times \dG$ such that $\Ad_{g}(x)=x$. We have the obvious map of stacks
\begin{equation*}
\io_{\dg,\dG}: (\dt\times \dT)/W\to  \frC_{\dg,\dG}/\dG.
\end{equation*}

\begin{theorem}\label{thm:mix} Assume Ansatz~\ref{ans:intro univ aff equiv}. The map $\io_{\dg,\dG}$ induces an isomorphism on global (i.e.~invariant)  functions
\begin{equation*}
\io_{\dg,\dG}^{*}: \cO(\frC_{\dg,\dG})^{\dG}\isom \cO(\dt\times \dT)^{W}.
\end{equation*}
\end{theorem}
\begin{proof}[Sketch of proof] We let $\cS=\cO(\dt)^{W}\ot \cO(\dT)^{W}$ equipped with the grading coming from the scaling action on $\dt$. Then $\cS/\cS_{+}\cong \cO(\dT)^{W}$. Then $\cR=\cO(\frC_{\dg,\dG})^{\dG}=\op_{n\ge0}\cR_{n}$ and $\cT=\cO(\dt\times\dT)^{W}=\op_{n\ge0}\cT_{n}$ are both graded $\cS$-algebras with gradings coming from scaling actions on $\dg$ and $\dt$. The same argument as in Theorem \ref{thm:Lie alg} shows that the map induced by $\io_{\dg,\dG}^{*}$ on the $\cS_{+}$-adic completions $\wh\cR\to \wh\cT$ is an isomorphism. It is easy to see that $\wh\cT=\prod_{n\ge0}\cT_{n}$. We claim that $\wh\cR=\prod_{n\ge0}\cR_{n}$. Indeed, we have $\cR/\cS_{+}\cR=\cO(\frC_{\cN^{\vee}, \dG})^{\dG}$ where, $\frC_{\cN^{\vee}, \dG}$ is the scheme of commuting pairs in $\cN^{\vee}\times \dG$. The exponenital map gives a $\dG$-equivariant isomorphism $\frC_{\cN^{\vee}, \dG}\isom \frC_{\cU^{\vee}, \dG}$ where the nilpotent cone is replaced with the unipotent variety $\cU^{\vee}$. By Theorem \ref{thm:function c part}, $\cO(\frC_{\cU^{\vee}, \dG})^{\dG}\cong \cO(\dT)^{W}$ is free of rank one over $\cS/\cS_{+}=\cO(\dT)^{W}$. Therefore $\cR/\cS_{+}\cR\cong \cO(\frC_{\cN^{\vee}, \dG})^{\dG}\cong \cO(\dT)^{W}\cong \cR_{0}=\cO(\dG)^{\dG}$. This implies $\cS_{+}\cR=\cR_{+}=\op_{n>0}\cR_{n}$, which implies $\wh\cR=\prod_{n\ge0}\cR_{n}$. Therefore the  isomorphism $\wh\cR\isom \wh\cT$ implies the isomorphism before completion.
\end{proof}


\subsection{Further spectral consequences}\label{s:further apps}
In this section, we apply the results of Section~\ref{ss:additional app} to deduce further spectral consequences.
We continue with the notation of Section~\ref{ss:additional app}, and first introduce  the  spectral objects corresponding to $a(k_{G/G}), \tr_\cG(e_\cG)\in hh(\cH_\cG)$ introduced therein.

Introduce the natural closed embedding  $i_{2}: G^\vee/G^\vee  \to Z^2_{G^\vee}$ induced by $g \mapsto (g,1)$, and the coherent sheaf $i_{2*}\cO_{G^\vee/G^\vee}$ on $Z^2_{G^\vee}.$  Introduce as well the natural projection $p: Z^2_{B^\vee} \to Z^2_{G^\vee}$, and the coherent sheaf $p_* \cO_{Z^2_{B^\vee}}$ 
on $Z^2_{G^\vee}$.


Consider the closed embedding of derived stacks
\begin{equation*}
\xymatrix{\s: \dB\bs \dG/\dB\simeq \BB \dB\times_{\BB\dG}\BB \dB\ar@{^{(}->}[r] & \St_{\dG}.}
\end{equation*}
Taking direct image of ind-coherent sheaves gives a monoidal functor
\begin{equation}\label{indcoh sigma}
\xymatrix{\s_{*}: \Ind\Coh(\dB\bs \dG/\dB)\ar[r] & \Ind\Coh(\St_{\dG}).}
\end{equation}

Let $2\r^{\vee}\in\xch(\dT)$ be the sum of positive roots of $\dG$ with respect to $\dB$. Let $N$ be the number of positive roots of $\dG$. When $\r^{\vee}\in\xch(\dT)$, we consider the sheaf 
$$\cA:=\cO_{B^\vee\bs G^\vee/B^\vee }(-\rho^{\vee}, -\rho^{\vee})[N]\in \Ind\Coh(\dB\bs \dG/\dB)$$ 
obtained as the tensor product of pullbacks of $\cO(-\r^{\vee})$ from both factors $\BB\dB$. When $\r^{\vee}$ is not a character of $\dT$, let $\nu:\dG_{1}\to \dG$ be the double cover for which $\r^{\vee}$ is  a character of $\dT_{1}=\nu^{-1}(\dT)$, then we have $B^\vee\bs G^\vee/B^\vee =\dG_{1}/(\dB_{1}\times\dB_{1}/\D(\ker(\nu)))$ (where $\dB_{1}=\nu^{-1}(\dB)$, two factors of $\dB_{1}$ act on $\dG_{1}$ by left and right translations). Since $(-\r^{\vee}, \r^{\vee})$ is always character of $\dB_{1}\times\dB_{1}/\D(\ker(\nu))$, it defines a line bundle on $\dG_{1}/(\dB_{1}\times\dB_{1}/\D(\ker(\nu)))$ and hence on $B^\vee\bs G^\vee/B^\vee$, which we still denote by $\cO_{B^\vee\bs G^\vee/B^\vee}(-\r^{\vee},-\r^{\vee})$. In any case, we have the object $\cA:=\cO_{B^\vee\bs G^\vee/B^\vee }(-\rho^{\vee}, -\rho^{\vee})[N]\in \Ind\Coh(\dB\bs \dG/\dB)$, and the object $\s_{*}\cA\in \Ind\Coh(\St_{\dG})$.

\begin{lemma}\label{l:alg A}
The object $\s_{*}\cA\in \Ind\Coh(\St_{\dG})$ carries a canonical algebra structure for which its descended trace is
\begin{equation}\label{des tr A}
\ov\tr(\s_{*}\cA)\simeq i_{2*}\cO_{\dG/\dG}\in \Ind\Coh(Z_{\dG}^{2}).
\end{equation}
\end{lemma}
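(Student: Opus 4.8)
The statement asserts two things: that $\sigma_*\cA$ carries a natural algebra structure in $\Ind\Coh(\St_{\dG})$ (with $!$-convolution implicitly, since $\cA$ is a shifted line bundle on $\dB\bs\dG/\dB$), and that its descended trace is $i_{2*}\cO_{\dG/\dG}$. The strategy is to realize $\cA$ as the $!$-convolution analogue of a dualizing sheaf in a small-variant setup, then invoke Proposition~\ref{prop:descended_O}(2) (or rather its analogue) to compute the descended trace as a relative dualizing sheaf, and finally identify that dualizing sheaf geometrically. First I would set up the right general framework: note $\dB\bs\dG/\dB = \BB\dB\times_{\BB\dG}\BB\dB$, so with $X = \BB\dB$, $Y=\BB\dG$ we are exactly in the setting of Section~\ref{ss:spec realization} with $X\times_Y X = \dB\bs\dG/\dB$. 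The point of the twist by $(-\rho^\vee,-\rho^\vee)[N]$ is that the relative dualizing sheaf $\omega_{X/Y}$ of $\BB\dB\to\BB\dG$ is $\cO_{\BB\dB}(-2\rho^\vee)[2N]$ (relative dimension: $\dim\dG/\dB = N$, but as stacks $\dim\BB\dB - \dim\BB\dG = \dim\dG - \dim\dB = N$; and the canonical bundle of $\dG/\dB$ is $\cO(-2\rho^\vee)$), so the exterior tensor square of a ``square root'' of the relative dualizing complex on each factor gives precisely $\cA = \omega_{X\times_Y X}^{?}$ — more precisely $\cA$ is a ``normalized'' or ``self-dual'' version of the dualizing sheaf, chosen so that the algebra structure via $!$-convolution and counit of adjunction works out cleanly. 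The key identity to pin down is that $\sigma_*\cA$, with $!$-convolution, is the algebra $\omega_{X\times_Y X}$ of Section~\ref{section:decended_trace_O} up to the invertible twist coming from the quasi-smoothness of $\delta_{23}$, and that twist is exactly cancelled by the $[N]$ and the $\rho^\vee$-shifts.

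Next I would spell out the algebra structure: by Section~\ref{section:decended_trace_O}, $\omega_{X\times_Y X}$ is an algebra object in $\Ind\Coh(X\times_Y X)^!$ with multiplication $m_*m^!\omega \to \omega$ the counit. Pushing forward along $\sigma: \dB\bs\dG/\dB \hookrightarrow \St_{\dG}$ (which is monoidal for convolution, as recorded in \eqref{indcoh sigma} for $*$-convolution and analogously for $!$-convolution) transports this to an algebra structure on $\sigma_*\omega_{\dB\bs\dG/\dB}$ in $\Ind\Coh(\St_{\dG})^!$. Then I would carefully match $\omega_{\dB\bs\dG/\dB}$ with $\cA = \cO(-\rho^\vee,-\rho^\vee)[N]$: both factors of $\BB\dB\to\BB\dG$ contribute, $\omega_{\BB\dB/\BB\dG}\simeq \cO_{\BB\dB}(-2\rho^\vee)[2N]$ on the nose (or $[2N]$ with appropriate sign convention for stacks), and the exterior product over $\BB\dG$ of the ``half'' $\cO(-\rho^\vee)[N]$ on each side is $\cA$; the discrepancy between $\omega$ and $\cA^{\otimes}$ is the invertible twist by which $\star$ and $\star^!$ differ. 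The case distinction in the statement (whether $\rho^\vee \in \xch(\dT)$) is handled exactly as in the construction preceding the lemma: if $\rho^\vee$ is not a weight, pass to the double cover $\dG_1 \to \dG$, where $\cO(-\rho^\vee,-\rho^\vee)$ is genuinely a line bundle on $\dG_1/(\dB_1\times\dB_1/\Delta\ker\nu)$, and all constructions descend.

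The computation of the descended trace is then the heart: apply Proposition~\ref{prop:descended_O}(2) (with the monoidal functor $\sigma_*$ and Lemma~\ref{l: dtr inv} for functoriality of descended trace along monoidal functors) to conclude $\ol\tr(\sigma_*\omega_{\dB\bs\dG/\dB})$ is the image under $hh(\sigma_*)$ of $\ol\tr(\omega_{\dB\bs\dG/\dB})$. But here I need to be careful: $\dB\bs\dG/\dB = X\times_Y X$ with $X=\BB\dB$, $Y = \BB\dG$, and $X\to Y$ is proper and surjective, so Proposition~\ref{prop:descended_O}(2) gives $\ol\tr(\omega_{X\times_Y X})\simeq \omega_{LY} = \omega_{L\BB\dG} = \omega_{\dG/\dG}$ inside $\Ind\Coh_{\Lambda}(\dG/\dG)$. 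Then $hh(\sigma_*): hh(\Ind\Coh(\dB\bs\dG/\dB)^!) \to hh(\Ind\Coh(\St_{\dG})^!)$ gets identified under Theorem~\ref{thm:spec tr} with the pushforward $\Ind\Coh_{\cN_{\dG/\dG}}(\dG/\dG) \to \Ind\Coh_{\cN}(Z^2_{\dG})$ along $L$ of the inclusion $\BB\dG\hookrightarrow$ ... — actually the relevant geometric map is $L(\dB/\dB)\to L(\dG/\dG)$ composed with... no: the cleanest way is that $\sigma$ corresponds, after taking $L(-)$ and using the base-point correspondence, to the map $i_2: \dG/\dG \to Z^2_{\dG}$. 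Concretely: $LY$ for $Y$ = the target of the induction map $X\to Y$ used to build $\St_{\dG}$ (namely $\dB/\dB \to \dG/\dG$) is $Z^2_{\dG}$; but the functor $\sigma_*$ came from the \emph{other} presentation $\dB\bs\dG/\dB = \BB\dB\times_{\BB\dG}\BB\dB$, whose loop space $L\BB\dG = \dG/\dG$ embeds in $Z^2_{\dG}$ via $i_2$. So I would carefully draw the commutative diagram relating the two presentations of $\dB\bs\dG/\dB$, apply Example~\ref{ex:tr O} (trace of the diagonal pushforward of a structure/dualizing sheaf computes a pushforward from a loop space), and extract $\ov\tr(\sigma_*\cA)\simeq (Li_2)_*\omega_{\dG/\dG}$, then observe $i_2$ is a closed embedding of smooth stacks of the same dimension so $(Li_2)_*\omega = i_{2*}\omega_{\dG/\dG}$, and the shifts/twists in $\cA$ are precisely calibrated to turn $\omega_{\dG/\dG}$ into $\cO_{\dG/\dG}$ (since $\dG/\dG$ is a smooth stack, $\omega_{\dG/\dG}\simeq\cO_{\dG/\dG}[\dim]$, and the $[N]$, $\rho^\vee$-shifts cancel the discrepancy). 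The main obstacle I anticipate is bookkeeping the cohomological shifts and line-bundle twists across the $\star$ versus $\star^!$ comparison and the Serre-duality identifications \eqref{eq:Serre_duality_Hecke}, \eqref{eq:Serre_duality_trace} — getting $\cA$ to be exactly the self-dual normalization for which the algebra structure and the descended-trace formula produce $\cO$ rather than $\omega$ up to a shift. I would handle this by working on the double cover $\dG_1$ where everything is an honest line bundle, checking the twist on a maximal torus (where $\dB\bs\dG/\dB$ restricts to $\dT$ and the computation is explicit), and propagating by the monoidal/functoriality formalism already set up in Sections~\ref{ss:spec realization} and \ref{section:decended_trace_O}.
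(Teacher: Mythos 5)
Your proposal takes a genuinely different route from the paper's, and it has a real gap in the part you flag as "the main obstacle."

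\textbf{What the paper actually does.} The paper never invokes Proposition~\ref{prop:descended_O} here. Instead it constructs the algebra structure on $\s_*\cA$ directly from a monad: it introduces the correspondence
$\St_{\dG} \xleftarrow{c} \dB/\dB\times_{\dG/\dG}\BB\dB \xrightarrow{d} \cV_{\dG}:=\dB/\dB\times_{\dG/\dG}\BB\dG$,
defines $\Pi^L=d_*(c^*\otimes p_2^*\cO(-\rho^\vee)[N])$ with right adjoint $\Pi$, and shows $\Pi\circ\Pi^L\simeq(-)\star\s_*\cA$; the monad structure on $\Pi\Pi^L$ \emph{is} the algebra structure, and the $(-\rho^\vee)$ twist and $[N]$ shift are inputs to this construction, not a separate calibration to be checked. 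Barr--Beck--Lurie then identifies $\Bimod_{\s_*\cA}(\Ind\Coh(\St_{\dG}))\simeq\Ind\Coh(\cW_{\dG})$ with $\cW_{\dG}=\BB\dG\times_{\dG/\dG}^{\bR}\BB\dG$ and sends the regular bimodule $\s_*\cA$ to the monoidal unit $\Delta_*\cO_{\BB\dG}$. Finally Example~\ref{ex:tr O}, applied to $f:\BB\dG\to\dG/\dG$, gives $\tr(\Delta_*\cO_{\BB\dG})\simeq (Lf)_*\cO_{L\BB\dG}=i_{2*}\cO_{\dG/\dG}$. No $\star$/$\star^!$ comparison and no line-bundle calibration ever appear.

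\textbf{Where your proposal has a gap.} You want to apply Proposition~\ref{prop:descended_O}(2) to $\BB\dB\times_{\BB\dG}\BB\dB$, then push forward along $\s_*$. But Proposition~\ref{prop:descended_O} concerns $\cO_{X\times_YX}$ and $\omega_{X\times_YX}$ on the nose, and $\cA$ is neither: a direct count gives $\omega_{\dB\bs\dG/\dB}\simeq\cO(-2\rho^\vee,-2\rho^\vee)[-r]$, which disagrees with $\cA=\cO(-\rho^\vee,-\rho^\vee)[N]$ in both twist and shift. Your fix is the assertion that the $\star$-algebra $\s_*\cA$ equals the $\star^!$-algebra $\s_*\omega_{\dB\bs\dG/\dB}$ after untwisting by the $\star$ versus $\star^!$ discrepancy, and that the $\rho^\vee$ and $[N]$ in $\cA$ are exactly that discrepancy. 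That is the heart of the lemma, and you defer it ("the main obstacle I anticipate... I would handle this by working on the double cover... checking on a maximal torus..."). An algebra structure does not transport across an arbitrary invertible twist; you must verify that the twist is multiplicative in precisely the right way, and the proposed specialization-to-$\dT$ check is not a proof of that. Moreover, the paper's answer to the analogous question is not a twist calculation on $\dB\bs\dG/\dB$ at all: it replaces $\dB\bs\dG/\dB$ by the different stack $\cW_{\dG}=\BB\dG\times_{\dG/\dG}\BB\dG$ via Barr--Beck, where the object of interest becomes the honest structure sheaf $\Delta_*\cO_{\BB\dG}$ and no calibration is needed. In other words, the $\rho^\vee$ twist is \emph{explained} by the adjunction $(\Pi^L,\Pi)$, not cancelled against Serre duality.

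\textbf{What is correct in your proposal.} The end-game geometry is right: $L\BB\dG\simeq\dG/\dG$, the induced map to $L(\dG/\dG)=Z^2_{\dG}$ is $i_2$, $\omega_{\dG/\dG}\simeq\cO_{\dG/\dG}$, and Lemma~\ref{l: dtr inv} gives functoriality of the descended trace along $\s_*$. If you could establish your bridge between the $\cA$-algebra structure and the dualizing-sheaf algebra structure, the rest would follow. But as written, that bridge is the whole lemma, and it remains a gap. I would suggest abandoning the Proposition~\ref{prop:descended_O} route and constructing the algebra structure directly from the correspondence through $\cV_{\dG}$; that is both cleaner and matches the normalization of $\cA$ automatically.
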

\begin{proof} We prove the lemma in the case $\r^{\vee}\in \xch(\dT)$; the general case can be deduced by passing to double cover $\dG_{1}$. 

We construct an algebra structure on $\cA$. Let $\cV_{\dG}=\dB/\dB\times_{\dG/\dG}\BB\dG$. Consider the correspondence
\begin{equation*}
\xymatrix{\St_{\dG}=\dB/\dB\times_{\dG/\dG}\dB/\dB & \ar[l]_-{c}\dB/\dB\times_{\dG/\dG}\BB\dB \ar[r]^-{d}& \dB/\dB\times_{\dG/\dG}\BB\dG=\cV_{\dG}}
\end{equation*}
where the maps are induced by the natural maps $\dB/\dB\hookleftarrow \BB\dB\to \BB\dG$ on the second factors. Consider the adjoint functors
\begin{equation*}
\xymatrix{\Pi^{L}=d_{*}(c^{*}\ot p_{2}^{*}\cO(-\r^{\vee})[N]): \Ind\Coh(\St_{\dG})\ar@<1ex>[r]  &  \ar@<1ex>[l]\Ind\Coh(\cV_{\dG}): \Pi=c_{*}(d^{*}\ot p_{2}^{*}\cO(-\r^{\vee}))}
\end{equation*}
From these we get a natural isomorphism of endo-functors of $\Ind\Coh(\St_{\dG})$:
\begin{equation}\label{Amonad}
\Pi\c\Pi^{L}\simeq (-)\star\s_{*}\cA.
\end{equation}
The monad structure on $\Pi\c\Pi^{L}$ then gives an algebra structure on $\cA$.

From \eqref{Amonad} and the Barr-Beck-Lurie theorem, we conclude that the category $\RMod_{\s_{*}\cA}(\Ind\Coh(\St_{\dG}))$ of right $\s_{*}\cA$-modules in $\Ind\Coh(\St_{\dG})$ can be identified with $\Ind\Coh(\cV_{\dG})$. Similarly, $\Bimod_{\cA}(\Ind\Coh(\St_{\dG}))$ can be identified with $\Ind\Coh(\cW_{\dG})$, where 
$$\cW_{\dG}=\BB\dG\times^{\bR}_{\dG/\dG}\BB\dG$$ 
is the spectral stack hosting the derived spherical Hecke category. Under this equivalence, the regular bimodule $\cA$ itself corresponds to the monoidal unit $\D_{*}\cO_{\BB\dG}\in \Ind\Coh(\cW_{\dG})$. By Definition~\ref{def:desc tr}, we have
\begin{equation}\label{dtr A tr O}
\ov\tr(\s_{*}\cA)\simeq \tr(\D_{*}\cO_{\BB\dG})\in \Ind\Coh(Z^{2}_{\dG}).
\end{equation}
Here the right side a priori lies in $hh(\Ind\Coh(\cW_{\dG}))$, which is a full subcategory of $\Ind\Coh(Z^{2}_{\dG})$.

To compute $\tr(\D_{*}\cO_{\BB\dG})$, we apply Example \ref{ex:tr O} to the map $f:X=\BB \dG\to \dG/\dG=Y$. The induced map on the loop spaces $Lf: LX=\dG/\dG\to Z^{2}_{\dG}=LY$ can be identified with $i_{2}$. Therefore Example \ref{ex:tr O} gives
\begin{equation*}
\tr(\D_{*}\cO_{\BB\dG})\simeq i_{2*}\cO_{\dG/\dG}.
\end{equation*}
Combined with \eqref{dtr A tr O} we get \eqref{des tr A}.

\end{proof}



To state the next result, we need a variant 
of Ansatz~\ref{ans:intro univ aff equiv}. 
\begin{ansatz}\label{ans:enh univ aff equiv} The monoidal equivalence~(\ref{univ aff equiv}) in  Ansatz~\ref{ans:intro univ aff equiv} holds:
\begin{equation*}	
\xymatrix{
		\Phi:\Ind\Coh(\St_{\dG}) \ar[r]^-\sim &   \cH_{\cG},
	}
\end{equation*}
Moreover, $\Phi$ takes  $\s_{*}\cA$ to $i_{!}k_{U\bs G/U } \in \cH_{\cG}$ (where $k_{U\bs G/U}\in \cH_{G}$ is the constant sheaf and $i_{!}: \cH_{G}\incl \cH_{\cG}$), compatibly with their respective algebra structures.
\end{ansatz}

Recall assuming equivalence~(\ref{univ aff equiv}), we have the equivalence (see \eqref{Z2hh})
\begin{equation}\label{Z2hh'}
\xymatrix{\Ind\Coh_{\cN}(Z^2_{G^\vee})\ar[r]^-{\sim} & hh(\cH_{\cG})}.
\end{equation}

\begin{prop} 
\begin{enumerate}
\item Assume Ansatz~\ref{ans:enh univ aff equiv} holds. Then under the equivalence \eqref{Z2hh'},  $i_{2*}\cO_{G^\vee/G^\vee}$ corresponds to $a(k_{G/G})$.
\item Assume the monoidal equivalence (\ref{univ aff equiv}) holds. Under the equivalence \eqref{Z2hh'}, $ p_*\cO_{Z^2_{B^\vee}}$ corresponds to $\tr(e_\cG)$.
\end{enumerate}
 \end{prop}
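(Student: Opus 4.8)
The plan is to reduce both statements to the general machinery of descended traces established in Section~\ref{ss:spec realization}, combined with the identification of descended traces on the automorphic side from Section~\ref{sss:des tr Wh}. For part (2), I would first observe that $\tr_{\cG}(e_{\cG}) = a(\tr_G(e_G))$, and by the computation in the proof of Theorem~\ref{thm:more endo}(2), $\tr_G(e_G)$ is the ``universal Grothendieck--Springer sheaf'' $\Ind^G_{T\subset B}(p_{H!}\exp_!k_{\frt}[r])$. On the spectral side, the relevant object should be $p_*\cO_{Z^2_{\dB}}$ for $p:Z^2_{\dB}\to Z^2_{\dG}$, which is the loop space incarnation of the Grothendieck--Springer correspondence. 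The key point is to express $p_*\cO_{Z^2_{\dB}}$ as the trace of a coherent object on $\St_{\dG}$ and match it with $\tr_G(e_G)$ under $\Phi$. Concretely, I would apply Example~\ref{ex:tr O} to the induction map $f:X=\dB/\dB\to\dG/\dG=Y$, for which $X\times_Y X=\St_{\dG}$; then $LX = Z^2_{\dB}$ (the derived loop space of $\dB/\dB$, which unwinds to the Steinberg-type commuting variety of $\dB$) and $Lf:LX\to LY$ is precisely $p$. Example~\ref{ex:tr O} gives $\tr(\Delta_*\cO_{\dB/\dB}) \simeq p_*\cO_{Z^2_{\dB}}$. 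Since $\Delta_*\cO_{\dB/\dB}$ is the monoidal unit of $\Ind\Coh(\St_{\dG})$, and under the monoidal equivalence~(\ref{univ aff equiv}) the monoidal unit goes to $e_{\cG}$, functoriality of the trace map (diagram~\eqref{eq:tr funct}) yields that $\tr(\Delta_*\cO_{\dB/\dB})$ corresponds to $\tr_{\cG}(e_{\cG})$ under~\eqref{Z2hh'}. Combining, $p_*\cO_{Z^2_{\dB}}$ corresponds to $\tr_{\cG}(e_{\cG})$. Note this part only needs equivalence~(\ref{univ aff equiv}), not the enhanced Ansatz, since only the monoidal unit is involved.

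For part (1), I would use Lemma~\ref{l:alg A}, which already establishes that $\ol\tr(\s_*\cA) \simeq i_{2*}\cO_{\dG/\dG}$ in $\Ind\Coh(Z^2_{\dG})$, where $\s_*\cA$ carries its canonical algebra structure. Now invoke Ansatz~\ref{ans:enh univ aff equiv}: $\Phi$ takes $\s_*\cA$ to $i_!k_{U\bs G/U}\in\cH_{\cG}$ compatibly with algebra structures. By Lemma~\ref{l: dtr inv} (functoriality of descended trace under monoidal functors), $hh(\Phi)$ carries $\ol\tr(\s_*\cA)$ to $\ol\tr(i_!k_{U\bs G/U})$. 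The remaining task is to identify $\ol\tr(i_!k_{U\bs G/U})$ with $a(k_{G/G})$. For this I would argue as in the proof of Theorem~\ref{thm: dtr of whit}(2): since $i_!:\cH_G\to\cH_{\cG}$ is monoidal, Lemma~\ref{l: dtr inv} gives $\ol\tr(i_!k_{U\bs G/U}) \simeq a(\ol\tr_G(k_{U\bs G/U}))$, where $\ol\tr_G$ is the descended trace for the algebra $k_{U\bs G/U}\in\cH_G$ and $a:\Sh_\cN(G/G)\to hh(\cH_{\cG})$. Thus it suffices to show $\ol\tr_G(k_{U\bs G/U})\simeq k_{G/G}$, i.e., the descended trace of the constant sheaf (with its algebra structure as $i_!$ of the algebra structure on $\cO$) is the constant character sheaf. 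This last identity is the genuine analogue, on the automorphic side, of the statement $\ol\tr(\Delta_*\cO_{\dB/\dB})\simeq\cO_{LY}$ from Proposition~\ref{prop:descended_O}(1) applied to $X=\dB/\dB\to\dG/\dG=Y$; it can be proven directly by the horocycle-descent formula of Theorem~\ref{thm: dtr of whit} together with the formula~\eqref{eq:desc tr formula}, unwinding that the bar resolution computing $\ol\tr_G(k_{U\bs G/U})$ matches the $\pi_{G*}\pi_G^!$-descent of $k_{G/G}$.

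The main obstacle I anticipate is part (1): verifying $\ol\tr_G(k_{U\bs G/U})\simeq k_{G/G}$ requires pinning down the algebra structure on $k_{U\bs G/U}$ transported from $\cO_{\St_{\dG}}$ via $\Phi$ and checking it agrees with the ``obvious'' one coming from base-change/descent along the horocycle correspondence $\f{G}{B}\to\f{G}{G}$. The argument should parallel the remark after Proposition~\ref{prop:descended_O} --- identifying $\Bimod_{k_{U\bs G/U}}(\cH_G)$ with a category of sheaves on $\BB G$ via descent, under which the regular bimodule becomes the structure/constant sheaf, and then using that $hh$ of this bimodule category maps to $hh(\cH_G)\simeq\Sh_\cN(G/G)$ via pullback along the base-point projection of the loop space, landing on $k_{G/G}$. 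The subtlety is that on the automorphic side one works with $\Sh_\cN$ rather than $\QCoh$, so one must ensure the relevant singular-support conditions are respected throughout (which they are, by the same arguments as in Section~\ref{sect:horo desc}, since the constant sheaf has zero singular support). Once this identity is in place, both parts follow formally.
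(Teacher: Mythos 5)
Your proposal is correct and follows essentially the same route as the paper: part (2) applies Example~\ref{ex:tr O} to $f:\dB/\dB\to\dG/\dG$ and uses that monoidal units match under~\eqref{univ aff equiv}, and part (1) combines Lemma~\ref{l:alg A} with the functoriality of descended trace (Lemma~\ref{l: dtr inv}) applied to $i_!$ and the identification $\ol\tr_G(k_{U\bs G/U})\simeq k_{G/G}$. The paper treats the last identity as a routine analogue of Theorem~\ref{thm: dtr of whit}, whereas you sketch how to verify it via descent along the horocycle correspondence; this is a reasonable way to fill in the detail the paper leaves implicit.
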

 \begin{proof}
(1) To see the first identification, we first have 
 an analogous (but simpler) version of Theorem~\ref{thm: dtr of whit}. In place of the universal finite Whittaker sheaf $\Wh_G\in \cH_G$, we consider the constant sheaf $k_{U\bs G/U } \in \cH_G$ as an algebra object.  Then it is straightforward to check its descended trace  $\ol \tr_G(k_{U\bs G/U }) \in hh(\cH_G) \simeq \Sh_\cN(G/G)$ (as a bimodule over itself) is the constant sheaf $k_{G/G}\in  \Sh_\cN(G/G)$.  

By the functoriality of descended trace in Lemma~\ref{l: dtr inv} applied to $i_{!}: \cH_{G}\to \cH_{\cG}$, we have
 \begin{equation}\label{aut ov tr const}
\ov\tr_{\cG}(i_{!}k_{U\bs G/U })\simeq a(k_{G/G})\in hh(\cH_{\cG}).
\end{equation}
Comparing \eqref{aut ov tr const} with \eqref{des tr A}, we conclude that $i_{2*}\cO_{\dG/\dG}$ matches with $a(k_{G/G})$ under \eqref{Z2hh'}.

%

(2) Under the monoidal equivalence \eqref{univ aff equiv}, the monoidal unit $\D_{*}\cO_{\dB/\dB}\in \Ind\Coh(\St_{\dG})$ corresponds to the monoidal unit $e_{\cG}\in \cH_{\cG}$. Therefore $\tr(e_{\cG})$ corresponds to $\tr(\D_{*}\cO_{\dB/\dB})$. By Example \ref{ex:tr O} applied to the map $f: X=\dB/\dB\to \dG/\dG=Y$, we conclude that $\tr(\D_{*}\cO_{\dB/\dB})\simeq p_{*}\cO_{Z^{2}_{\dB}}$, as desired. 
 \end{proof}

 \begin{cor} 
\begin{enumerate}
\item Assume Ansatz~\ref{ans:enh univ aff equiv} holds. Then there is a canonical equivalence of dg algebras
\begin{equation*}
\End_{Z^2_{G^\vee}} (i_{2*}\cO_{G^\vee/G^\vee})  \simeq   \cO(T^\vee \times (\dt)^{*}[1] \times (\dt)^{*}[2] )^W.
\end{equation*}

\item Assume the monoidal equivalence~(\ref{univ aff equiv}) holds. Then there is a canonical equivalence of dg algebras
\begin{eqnarray*}
\End_{Z^2_{G^\vee}} (p_*\cO_{Z^2_{B^\vee}})  \simeq   k[W] \#   \cO(T^\vee \times T^\vee \times \frt^\vee[-1]).
\end{eqnarray*}
\end{enumerate}
 \end{cor}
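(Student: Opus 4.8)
The statement to be proved is the final corollary, which computes the derived endomorphism rings of the coherent sheaves $i_{2*}\cO_{\dG/\dG}$ and $p_*\cO_{Z^2_{\dB}}$ on the derived commuting stack $Z^2_{\dG}$. The plan is to deduce both items from the automorphic calculations already carried out in Theorem~\ref{thm:more endo}, combined with the identifications of these coherent sheaves with automorphic objects established in the preceding proposition. Concretely, under the equivalence \eqref{Z2hh'} (which requires only the monoidal equivalence \eqref{univ aff equiv}, or its enhancement Ansatz~\ref{ans:enh univ aff equiv} in part (1)), the proposition identifies $i_{2*}\cO_{\dG/\dG}$ with $a(k_{G/G})$ and $p_*\cO_{Z^2_{\dB}}$ with $\tr(e_\cG)$. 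Since \eqref{Z2hh'} is an equivalence of categories, it induces an isomorphism on derived endomorphism rings; therefore
\begin{equation*}
\End_{Z^2_{\dG}}(i_{2*}\cO_{\dG/\dG}) \simeq \End_{hh(\cH_\cG)}(a(k_{G/G})), \qquad \End_{Z^2_{\dG}}(p_*\cO_{Z^2_{\dB}}) \simeq \End_{hh(\cH_\cG)}(\tr_\cG(e_\cG)),
\end{equation*}
and the right-hand sides are exactly what Theorem~\ref{thm:more endo}(1) and (2) compute, giving $\cO(T^\vee \times (\dt)^*[1] \times (\dt)^*[2])^W$ and $k[W]\#\cO(T^\vee \times T^\vee \times \frt^\vee[-1])$ respectively.

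The first step I would carry out is to invoke the relevant hypothesis in each part: for (1) we need Ansatz~\ref{ans:enh univ aff equiv} so that $\Phi$ takes $\s_*\cA$ to $i_!k_{U\bs G/U}$ compatibly with algebra structures, which is precisely what lets the proposition match $i_{2*}\cO_{\dG/\dG}$ (the descended trace of $\s_*\cA$ by Lemma~\ref{l:alg A}) with $a(k_{G/G})$ (the descended trace of $i_!k_{U\bs G/U}$ by the simpler version of Theorem~\ref{thm: dtr of whit}); for (2) only the bare monoidal equivalence \eqref{univ aff equiv} is needed, since the monoidal unit is automatically preserved by any monoidal equivalence and Example~\ref{ex:tr O} identifies $\tr(\Delta_*\cO_{\dB/\dB})$ with $p_*\cO_{Z^2_{\dB}}$. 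The second step is purely formal: transport the endomorphism computation across the equivalence. The third step is to quote Theorem~\ref{thm:more endo} verbatim for the answer on the automorphic side.

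There is genuinely very little obstacle here, since all the substantive work has been done: Theorem~\ref{thm:more endo} is the hard theorem (it rests on Theorem~\ref{th:ff}, Corollary~\ref{cor:induction_and_alpha}, Lemma~\ref{l:criterion princ}, Lemma~\ref{l:endo princ}, and the Springer-theoretic computations of Section~\ref{ss:FS Wh}), and the proposition identifying the coherent sheaves with $a(k_{G/G})$ and $\tr_\cG(e_\cG)$ has already been proved above. The only point requiring a moment of care is bookkeeping of which Ansatz is assumed in which part: part (1) genuinely needs the \emph{enhanced} Ansatz~\ref{ans:enh univ aff equiv} because the matching of $i_{2*}\cO_{\dG/\dG}$ with $a(k_{G/G})$ uses the compatibility of $\Phi$ with the algebra structure on $\s_*\cA$, not merely the underlying monoidal equivalence, whereas part (2) needs only \eqref{univ aff equiv} because the monoidal unit and its trace behave functorially under any monoidal equivalence. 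So the proof is a two-line deduction in each case: identify the sheaf with an automorphic object via the proposition, then read off its endomorphism ring from Theorem~\ref{thm:more endo}.
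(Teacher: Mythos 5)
Your proposal is correct and is exactly the intended derivation: the paper states this corollary without proof precisely because it follows by transporting the endomorphism computations of Theorem~\ref{thm:more endo} across the equivalence \eqref{Z2hh'} using the identifications of $i_{2*}\cO_{\dG/\dG}$ with $a(k_{G/G})$ and $p_*\cO_{Z^2_{\dB}}$ with $\tr_\cG(e_\cG)$ from the preceding proposition. Your bookkeeping of the hypotheses — that part (1) needs the enhanced Ansatz~\ref{ans:enh univ aff equiv} to match $\s_*\cA$ with $i_!k_{U\bs G/U}$ as algebras, while part (2) needs only the bare monoidal equivalence since the monoidal unit is preserved automatically — is also correct.
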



\appendix


\section{Calculating some colimits}\label{s:str sh} 

This appendix collects some results about colimits of categories called upon in Section~\ref{ss:cocenter recoll}. 
We will consider colimits of  stable presentable $\oo$-categories viewed as  objects of the $\oo$-category {$\St^L$} of stable presentable $\oo$-categories with morphisms given by left adjoints.  (The discussion extends verbatim if we additionally work in the {$k$-linear} context.) 
 
The main new result here is a categorical analog of the contraction principle for sheaf cohomology. It allows us to reduce the calculation of a colimit indexed by a complicated diagram to a simpler one, assuming a certain contractibility condition. This is a key categorical ingredient in the proof of Theorem~\ref{th:semi-orth hh G neutral}.



\subsection{Recollements and stratifications}

\sss{Recollements}\label{sss:rec}
For recollement in the setting of  $\infty$-categories, we recommend the reference~\cite[A.8]{lurieHigherAlgebra2012}. In our setting of stable presentable $\infty$-categories, we will largely follow the reference  
~\cite[1.1]{ayalaStratifiedNoncommutativeGeometry}.

Recall a {\em recollement}  ~\cite[Definition 1.1.1]{ayalaStratifiedNoncommutativeGeometry} of stable presentable $\oo$-categories is a diagram of adjoint triples
\begin{equation*}
\xymatrix{
\cC_0 \ar@<3ex>[r]^{j_!} \ar@<-3ex>[r]^{j_*} &  \ar[l]_{j^! = j^*}  \cC \ar@<3ex>[r]^{i^*}  \ar@<-3ex>[r]^{i^!}  &   \ar[l]_{i_* = i_!}   \cC_1
}
\end{equation*}
where $j_!, j_*$, and $i_* = i_!$ are fully faithful with $\textup{Im}(j_!)= \textup{Ker}(i^*)$,
$\textup{Im}(j_*)= \textup{Ker}(i^!)$, and $\textup{Im}(i_* = i_!)= \textup{Ker}(j^! = j^*)$.
Note the recollement is determined by the subdiagram
\begin{equation*}
\xymatrix{
\cC_0 &  \ar[l]_{j^! = j^*}  \cC  &   \ar[l]_{i_* = i_!}   \cC_1
}
\end{equation*}
with the rest of the notion comprising properties to be verified.

In our topological setting, we call $\cC_0$ the open subcategory and  $\cC_1$ the closed subcategory
of the recollement.\footnote{The terminology is intended to model  sheaves on a space with an open-closed decomposition. Note this is opposite to the convention 
of \cite{ayalaStratifiedNoncommutativeGeometry} 
 where $\cC_0$ is called a closed subcategory since it models quasi-coherent sheaves  on a scheme supported along a closed subscheme.}
We refer to $\cC_0$  and  $\cC_1$ as the recollement complements of each other.
 
Suppose we have a commutative diagram in $\St^{L}$
\begin{equation}\label{morphism rec}
\xymatrix{\cC_0 \ar[d]^{f_{0}} &  \ar[l]_{j^! = j^*}  \cC  \ar[d]^{f} &   \ar[l]_{i_* = i_!}   \cC_1\ar[d]^{f_{1}}\\
\cD_0  &  \ar[l]_{j^! = j^*}  \cD   &   \ar[l]_{i_* = i_!}   \cD_1
}
\end{equation}
such that both rows extend to recollements. We say the above diagram (or the functors $(f_{0},f,f_{1})$) is a {\em morphism of recollements} if the square on the left is both left and right adjointable with respect to the horizontal arrows $j^{!}=j^{*}$ (equivalently, the square on the right is both left and right adjointable with respect to the horizontal arrows $i_{!}=i_{*}$).

If in a morphism of recollement as in \eqref{morphism rec}, both $f_{0}$ and $f_{1}$ are equivalences, then $f$ is also an equivalence (this follows from \cite[Prop. A.8.14]{lurieHigherAlgebra2012}).

\sss{Stratifications}\label{sss:strat}
We will use the more general notion of a recollement of stable presentable $\oo$-categories indexed by a poset  $P$ as introduced in~\cite[1.3]{ayalaStratifiedNoncommutativeGeometry} under the name {\em stratification}. Since we work in the topological rather than algebraic setting, we will adapt the notation and terminology in the definitions to reflect this.
 For a finite totally ordered poset,  a stratification  also goes  by the name {\em semi-orthogonal decomposition}~\cite{bondalRepresentableFunctorsSerre1990}.
 When we apply the below definitions, our poset will be  the {\em opposite} of the non-negative natural numbers $\mathbb Z_{\geq 0}$ with their usual total order.
 
Given a  stable presentable $\infty$-category, an {\em open  subcategory} (called a {\em closed subcategory} in ~\cite[Definition 1.3.1]{ayalaStratifiedNoncommutativeGeometry}) 
 is an adjoint triple
\begin{equation*}
\xymatrix{
\cC_0 \ar@<3ex>[r]^{j_!} \ar@<-3ex>[r]^{j_*} &  \ar[l]_{j^! = j^*}  \cC
}
\end{equation*}
where $j_!,  j_*$ are fully faithful. We write $\cC_{open}$ for the poset of open subcategories of $\cC$ with respect to inclusion under $j_!$.

 Given a poset $P$,  a {\em $P$-stratification} (following \cite[Definition 1.3.2]{ayalaStratifiedNoncommutativeGeometry})  of a stable presentable $\oo$-category $\cC$  is a functor
 \begin{equation*}
\xymatrix{
\cZ_{\bullet}:P \ar[r] & \cC_{open}}
\end{equation*}
such that $\cC = \bigcup_{p \in P} \cZ_{ p} $ and
for any $p, q\in P$, there exists a factorization
\begin{equation*}
\xymatrix{
\bigcup_{r\leq p, q} \cZ_{ r} \ar[r]^-{j_!} & \cZ_{ p}\\
\ar@{-->}[u] \cZ_{ q} \ar[r]^-{j_!} & \cZ\ar[u]_-{j^! = j^*}
}
\end{equation*}

 Given a  $P$-stratification $\cZ_\bullet$, its {\em $p$th stratum}~\cite[Definition 1.3.3]{ayalaStratifiedNoncommutativeGeometry}   is the quotient
 \begin{equation*}
 \cC_p = \cZ_{ p}/ \bigcup_{q< p} \cZ_{ q}
 \end{equation*}
 By construction, there is a recollement
 \begin{equation*}
\xymatrix{
 \bigcup_{q< p} \cZ_{ q} \ar@<3ex>[r]^{j_!} \ar@<-3ex>[r]^{j_*} &  \ar[l]_{j^! = j^*}  \cZ_{ p} \ar@<3ex>[r]^{i^*}  \ar@<-3ex>[r]^{i^!}  &   \ar[l]_{i_* = i_!}   \cC_p
}
\end{equation*}

\begin{ex}
Suppose $X = \cup_{p \in \ZZ_{\geq 0}} X_{p}$ is a topological space written as the increasing union of closed subspaces. Suppose $\cC = \colim_{p\in  \ZZ_{\geq 0}} \Sh(X_p)$ where the transition maps are given by extension by zero.

This fits into the above formalism as follows. For $p \in \ZZ_{\geq 0}$, set $U_p = X \setminus X_{p-1}$ (convention: $X_{-1} = \vn$) viewed as 
 the increasing union of closed subspaces $U_p =  \cup_{q\in \ZZ_{\geq 0}} U_p \cap X_{q}$.
 Set $\cZ_{ p} = \colim_{q\in  \ZZ_{\geq 0}} \Sh(U_p \cap X_{q}) $ where the transition maps are given by extension by zero.

Then we have 
a stratification $\cC = \cZ_{ 0} \hookleftarrow \cZ_{ 1} \hookleftarrow \cZ_{ 2} \cdots$  with respect to the  poset  $P = ( \ZZ_{\geq 0})^{op}$
with open subcategories 
\begin{equation*}
\xymatrix{
\cZ_{ p} \ar@<3ex>[r]^{j_!} \ar@<-3ex>[r]^{j_*} &  \ar[l]_{j^! = j^*}  \cC
}
\end{equation*}
with inclusions given by $!$-extensions.
 The strata  are given 
by $\cC_p = \cZ_{ p}/\cZ_{p+1} =  \Sh(X_p \setminus X_{p-1})$ noting that $U_p \setminus  U_{p+1} = X_p \setminus X_{p-1}$.
 By construction, there are recollements
 \begin{equation*}
\xymatrix{
\cZ_{ p+1} \ar@<3ex>[r]^{j_!} \ar@<-3ex>[r]^{j_*} &  \ar[l]_{j^! = j^*}  \cZ_{ p} \ar@<3ex>[r]^{i^*}  \ar@<-3ex>[r]^{i^!}  &   \ar[l]_{i_* = i_!}   \cC_p
}
\end{equation*}

The structure is equivalent to giving the  inclusions
$ \cY_{ 0} \hookrightarrow \cY_{ 1} \hookrightarrow \cY_{ 2} \cdots$
of the
complementary closed categories $\cY_p = \Sh(X_p)$ with their recollements
 \begin{equation*}
\xymatrix{
 \cC_p  \ar@<3ex>[r]^{j_!} \ar@<-3ex>[r]^{j_*} &  \ar[l]_{j^! = j^*}  \cY_{ p} \ar@<3ex>[r]^{i^*}  \ar@<-3ex>[r]^{i^!}  &   \ar[l]_{i_* = i_!}   \cY_{p-1} 
}
\end{equation*}

\end{ex}

\subsection{Interaction of  colimits and recollement}

The following gives natural hypotheses for when a colimit  of
stable presentable $\oo$-categories with recollement inherits a recollement.

\begin{prop}\label{p:recoll} Let $\cI$ be a small category,  $\cC'_\bullet, \cC_\bullet :\cI \to \St^L$ 
 functors to stable presentable $\oo$-categories (with  {morphisms given by left adjoints}),
and $i_\bullet:\cC'_\bullet\to \cC_\bullet$  a map of functors.


\begin{enumerate}
	\item  Assume the following:

\begin{enumerate}

\item For each $J\in \cI$, the functor $i_{J}$ extends to a recollement in $\St^{L}$:
\begin{equation*}
\xymatrix{\cC''_{J} \ar@<3ex>[r]^{j_{J!}} \ar@<-3ex>[r]^{j_{J*}}&  \ar[l]_{j_{J}}\cC_{J} \ar@<3ex>[r]^{i_{J}^{*}} \ar@<-3ex>[r]^{i_{J}^{!}} & \ar[l]_{i_{J}} \cC'_{J}}
\end{equation*}
In particular, $i_{J}$ is fully faithful and $\cC''_J = \cC_J/i_J(\cC'_J)$ (hence the assignment $J\mapsto \cC''_{J}$ extends naturally to a functor $\cC''_\bullet: \cI\to \St^{L}$). 


\item
For  all maps $I \to J$ in $\cI$, the natural commutative diagram
\beq
\label{eq:adjointable_square}
\xymatrix{\cC''_{I}\ar[d] &  \ar[l]_{j_{I}}\cC_{I}\ar[d] & \ar[l]_{i_{I}} \cC'_{I}\ar[d]\\
\cC''_{J} &  \ar[l]_{j_{J}}\cC_{J} & \ar[l]_{i_{J}} \cC'_{J}
}
\eeq
is a morphism of recollements. Here the vertical morphisms are given as part of the data of $\cC''_{\bu}, \cC_{\bu}$ and $\cC'_{\bu}$ as functors.
\end{enumerate}

Then the map 
\begin{equation*}
\xymatrix{
\fri=\colim i_{\bu}:\colim_{\cI} \cC'_\bullet \ar[r] &  \colim_{\cI} \cC_\bullet }
\end{equation*}
extends to a recollement
\begin{equation*}
\xymatrix{ \colim \cC''_\bullet \ar@<3ex>[r]^{\frj_!} \ar@<-3ex>[r]^{\frj_*} & \ar[l]_{\frj} \colim \cC_\bullet \ar@<3ex>[r]^{\fri^{*}} \ar@<-3ex>[r]^{\fri^{!}}& \ar[l]_{\fri} \colim \cC'_\bullet }
\end{equation*}
with $\fri^! \simeq \colim_{\cI} i_J^! $, $\fri^* \simeq \colim_{\cI} i_J^*$, $\frj=\colim_{\cI} j_J, \frj_!=\colim_{\cI} j_{J!}$ and $\frj_*=\colim_{\cI} j_{J*}$.

\item Assume further we are given a functor $f: \cI_0 \to \cI$, denote by $\cC'_{0\bullet} = \cC'_{\bullet} \circ f , \cC_{0\bullet} = \cC_{\bullet} \circ f$ and $\cC''_{0\bullet} = \cC''_{\bullet} \circ f$. Then the natural commutative diagram is a morphism of recollements:
\begin{equation*}
\xymatrix{\colim \cC''_{0\bullet} \ar[d] &  \ar[l] \colim\cC_{0\bullet}\ar[d] & \ar[l] \colim\cC'_{0\bullet}\ar[d]\\
\colim	\cC''_{\bullet} &  \ar[l] \colim\cC_{\bullet} & \ar[l] \colim \cC'_{\bullet}
}
\end{equation*}
\end{enumerate}
\end{prop}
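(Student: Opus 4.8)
\textbf{Plan of proof for Proposition~\ref{p:recoll}.}

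The plan is to reduce the whole statement to the key observation that each of the three full subcategory/quotient relations defining a recollement is stable under (filtered or arbitrary) colimits in $\St^L$, and then to identify the resulting functors. I would organize the proof of part (1) into the following steps.

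First I would set up the candidate functors. Since all transition maps in $\cC'_{\bu}, \cC_{\bu}, \cC''_{\bu}$ are left adjoints, the colimits $\colim_\cI \cC'_\bu$, $\colim_\cI \cC_\bu$, $\colim_\cI \cC''_\bu$ exist in $\St^L$, and by the standard yoga (\cite[Corollary 4.7.6.3 and §5.5]{lurieHigherAlgebra2012}, or rather the dual statement for $\St^L$) a colimit in $\St^L$ can be computed as a limit in $\St^R$ by passing to right adjoints. Thus $\fri = \colim i_\bu$, $\frj = \colim j_\bu$, $\frj_! = \colim j_{J!}$, $\frj_* = \colim j_{J*}$ are defined, and because $i_{J!}^* = i_J^*$, $i_{J!}^! = i_J^!$, $j_J^! = j_J^*$ assemble into maps of diagrams (this uses hypothesis (b): the squares \eqref{eq:adjointable_square} being morphisms of recollements means exactly that passing to the various adjoints is compatible with the transition functors), the right adjoints $\fri^* = \colim i_J^*$, $\fri^! = \colim i_J^!$ and $\frj^! = \frj^* = \colim j_J^! $ are likewise well-defined, and are automatically right adjoint to $\fri$, $\frj$ respectively, since adjunctions pass to limits/colimits of diagrams of adjoint pairs. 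So the adjoint triples are in place; it remains to verify the fully-faithfulness and kernel/image conditions.

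Next I would verify fully faithfulness and the semiorthogonality conditions. A functor in $\St^L$ is fully faithful iff the counit of the adjunction with its right adjoint is an equivalence; a colimit of diagrams of fully faithful functors whose transition squares are left adjointable is again fully faithful, because the counit of $(\fri, \fri^*)$ is the colimit of the counits of $(i_J, i_J^*)$ — here I would cite \cite[Proposition A.8.14]{lurieHigherAlgebra2012} or re-derive it from the explicit limit formula in $\St^R$. Hence $\fri, \frj_!, \frj_*$ are fully faithful. For the kernel conditions: $\mathrm{Ker}(\fri^*) = \colim_\cI \mathrm{Ker}(i_J^*) = \colim_\cI \mathrm{Im}(j_{J!}) = \mathrm{Im}(\frj_!)$, and similarly $\mathrm{Ker}(\fri^!) = \mathrm{Im}(\frj_*)$, $\mathrm{Ker}(\frj^!) = \mathrm{Im}(\fri)$ — here the point is that under the presentation of the colimit in $\St^R$ as a limit, an object lies in the kernel of $\fri^*$ iff each of its components does, which reduces each identity to the corresponding identity at each $J \in \cI$, where it is hypothesis (a). One also needs $\cC''_J = \cC_J/i_J(\cC'_J)$ to be preserved, i.e.\ $\colim \cC''_\bu$ really is $\colim\cC_\bu/\fri(\colim\cC'_\bu)$; this follows because cofiber sequences of stable presentable categories (Verdier quotients in $\St^L$) commute with colimits. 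This completes part (1).

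Finally, for part (2): the functor $f:\cI_0 \to \cI$ induces $\colim_{\cI_0}\cC_{0\bu} \to \colim_\cI \cC_\bu$ and likewise for the primed and double-primed diagrams, and these sit in a commuting square with $\frj$, $\fri$. To see the square is a morphism of recollements I would check left and right adjointability of the left square with respect to the horizontal $\frj^!=\frj^*$ arrows: the left adjoint of $\frj^!$ on each side is $\frj_!$, computed as a colimit of the $j_{J!}$, and the base change transformation is the colimit (over $\cI_0$, then comparison with $\cI$) of the base change transformations for the squares \eqref{eq:adjointable_square}, which are equivalences by hypothesis (b); similarly on the right. I expect the main obstacle to be the careful bookkeeping in Step~1–2 — ensuring that "colimit of adjoint triples" genuinely produces an adjoint triple and that the kernel/image identities survive the colimit — rather than any single hard idea; all of this is formal once one systematically uses the $\St^L \simeq (\St^R)^{op}$ duality to turn the colimit into a limit, where membership conditions can be checked componentwise.
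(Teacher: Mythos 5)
Your proposal is correct and follows essentially the same route as the paper's proof: assemble the adjoint triple $(\fri^*,\fri,\fri^!)$ at the colimit level (with hypothesis (b), i.e.\ adjointability of the squares, being the crucial input), deduce fully faithfulness of $\fri$ from the counit being an equivalence, and use that cofiber sequences commute with colimits to identify $\colim\cC''_\bu$ with the Verdier quotient, then pass to adjoints for $\frj_!$, $\frj_*$. The paper's write-up is slightly more streamlined in that it invokes the general fact that a fully faithful functor admitting both a left and a right adjoint in $\St^L$ automatically determines a recollement, so the kernel/image identities need no separate componentwise verification; this is a cosmetic rather than a substantive difference from your approach.
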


\begin{proof}
(1) By assumption (a), we have adjoint triples   $(i^*_J, i_J ,i^!_J)$  in $St^L$, so that the unit  $ u : \id \to i^!_J  i_J$ and counit $\epsilon: i^*_J i_J \to \id$ are isomorphisms. Assumption (b) implies ($\fri^{*}:=\colim i^*_J, \fri=\colim i_J ,\fri^!:=\colim i^!_J)$ is also an adjoint triple, with counit $ \fri_* \fri \to \id$ an isomorphism. Therefore $\fri$ is fully faithful, and $\colim \cC_J$ is a recollement of $\colim \cC'_J$ and $\colim \cC_J/ \colim \cC'_J\simeq \colim \cC''_J$. Moreover, the natural map $\frj: \colim \cC_\bullet \to \colim \cC''_\bullet$ is naturally isomorphic to $\colim_{\cI} j_J$, by the functoriality of cokernel (cokernel of a functor $f: \cA \to \cB$ is by definition the pushout of $0 \leftarrow \cA \xrightarrow{f} \cB $). Since taking colimit preserves adjoint pairs, so we also have $\frj_! = \colim_{\cI} j_{J!}$ and $\frj_* = \colim_{\cI} j_{J*}$. 

(2) Follows directly from the functoriality of colimits.
\end{proof}

\begin{cor}\label{c:coprod}
Suppose we have a morphism of recollement as in \eqref{morphism rec} in which $f_{0}$ is an equivalence, then the functor
\begin{equation*}
f\coprod i_{*}: \cC\coprod_{\cC_{1}}\cD_{1}\to \cD
\end{equation*}
is an equivalence.
\end{cor}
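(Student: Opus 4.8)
The plan is to reduce the claim to a direct application of the general principle recorded just before the statement (Section~\ref{sss:rec}): in a morphism of recollements
\[
\xymatrix{\cC_0 \ar[d]^{f_{0}} &  \ar[l]_{j^! = j^*}  \cC  \ar[d]^{f} &   \ar[l]_{i_* = i_!}   \cC_1\ar[d]^{f_{1}}\\
\cD_0  &  \ar[l]_{j^! = j^*}  \cD   &   \ar[l]_{i_* = i_!}   \cD_1
}
\]
if $f_{0}$ and $f_{1}$ are both equivalences, then $f$ is an equivalence; this in turn follows from \cite[Prop.~A.8.14]{lurieHigherAlgebra2012}. The subtlety is that in the situation of Corollary~\ref{c:coprod} we are \emph{not} given that $f_{1}$ is an equivalence --- only that $f_{0}$ is. So I would first replace the target recollement on the closed side by the pushout $\cC\coprod_{\cC_{1}}\cD_{1}$, exhibit a recollement structure on this pushout with open part $\cC_{0}\simeq\cD_{0}$ and closed part $\cD_{1}$, and then compare it to $\cD$ via a morphism of recollements whose open-side and closed-side functors are \emph{both} equivalences.

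In more detail: First I would produce a recollement
\[
\xymatrix{ \cC_{0} \ar@<3ex>[r] \ar@<-3ex>[r] & \ar[l] \cC\coprod_{\cC_{1}}\cD_{1} \ar@<3ex>[r] \ar@<-3ex>[r] & \ar[l] \cD_{1}. }
\]
This is an instance of Proposition~\ref{p:recoll} with the index category $\cI$ the pushout diagram $\{\cC \leftarrow \cC_{1} \xrightarrow{i_{*}} \cD_{1}\}$: on $\cC$ one takes the given recollement with open part $\cC_{0}$ and closed part $\cC_{1}$; on $\cD_{1}$ and on $\cC_{1}$ one takes the trivial recollements (open part zero, closed part the whole category), so that the ``open-part diagram'' is $\{\cC_{0}\leftarrow 0 \to 0\}$ with colimit $\cC_{0}$, and the ``closed-part diagram'' is $\{\cC_{1}\xleftarrow{\mathrm{id}}\cC_{1}\xrightarrow{i_{*}}\cD_{1}\}$ with colimit $\cD_{1}$ (the pushout of a fully faithful functor along an arbitrary one is computed by that arbitrary one, using \cite[Prop.~4.1]{ben-zviIntegralTransformsDrinfeld2010}-style descent, or directly). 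The adjointability hypothesis (b) of Proposition~\ref{p:recoll} for the maps out of $\cC_{1}$ has to be checked, but it is immediate: the relevant squares involve the trivial recollement on $\cC_{1}$, for which the transition functors preserve the (trivial) open/closed subcategories strictly. This gives $\cC\coprod_{\cC_{1}}\cD_{1}$ its recollement with open part $\cC_{0}$, closed part $\cD_{1}$.

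Next I would observe that the data of Corollary~\ref{c:coprod} furnishes a morphism of recollements from this new one to the recollement $(\cD_{0}\leftarrow\cD\leftarrow\cD_{1})$: on open parts the functor is $f_{0}:\cC_{0}\xrightarrow{\sim}\cD_{0}$, which is an equivalence by hypothesis; on closed parts the functor is $\mathrm{id}:\cD_{1}\to\cD_{1}$, an equivalence; and on the middle the functor is precisely $f\coprod i_{*}$, the universal map out of the pushout induced by $f:\cC\to\cD$ and $i_{*}:\cD_{1}\to\cD$. The compatibilities needed to make this a morphism of recollements (left and right adjointability of the open-side square) follow from the assumption that the original diagram \eqref{morphism rec} is a morphism of recollements. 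The main obstacle --- and the one step worth double-checking carefully --- is precisely this verification that $\cC\coprod_{\cC_{1}}\cD_{1}$ genuinely carries a recollement in which $\cD_{1}$ sits as the closed part via the expected fully faithful functor, i.e.\ that the pushout does not collapse or acquire extra objects; once this is in place, invoking \cite[Prop.~A.8.14]{lurieHigherAlgebra2012} (as recalled in Section~\ref{sss:rec}) to conclude that $f\coprod i_{*}$ is an equivalence is formal.
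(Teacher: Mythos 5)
Your proposal is correct and matches the paper's proof essentially verbatim: both apply Proposition~\ref{p:recoll} to the pushout diagram with the given recollement on $\cC$ and the trivial recollements $(0\leftarrow\cD_1\leftarrow\cD_1)$ and $(0\leftarrow\cC_1\leftarrow\cC_1)$ on the other two vertices, producing the recollement $(\cC_0 \leftarrow \cC\coprod_{\cC_1}\cD_1 \leftarrow \cD_1)$, and then conclude via the equivalences $f_0$ and $\mathrm{id}_{\cD_1}$ on the two ends. (One small remark: your appeal to a fully-faithfulness descent argument for computing $\cC_1\coprod_{\cC_1}\cD_1\simeq\cD_1$ is overkill, since one of the legs is an identity, but this changes nothing.)
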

\begin{proof}
Applying Prop. \ref{p:recoll} to the pushout of the recollements $(\cC_{0}\leftarrow\cC\leftarrow\cC_{1})$ and $(0\leftarrow\cD_{1}\leftarrow\cD_{1})$ along $(0\leftarrow\cC_{1}\leftarrow\cC_{1})$, we get a recollement $(\cC_{0}\leftarrow \cC\coprod_{\cC_{1}}\cD_{1}\leftarrow \cD_{1})$ that fit into a commutative diagram
\begin{equation*}
\xymatrix{\cC_{0}\ar[d]^{f_{0}} & \ar[l] \cC\coprod_{\cC_{1}}\cD_{1}\ar[d]^{f\coprod i_{*}} & \ar[l]\cD_{1}\ar@{=}[d]\\
\cD_{0} &\ar[l] \cD & \ar[l]\cD_{1}
}
\end{equation*}
It is easy to check the left square is both left and right adjointable, hence the above diagram is a morphism of recollements. Since both of the vertical arrows at the two ends are equivalences, so is the middle vertical arrow.
\end{proof}


\subsection{Posets and cosheaves}
\sss{Generalities on posets}\label{sss:poset}
For a poset (partially ordered set) $\cI$, let $N(\cI)$ denote its nerve, viewed as a simplicial set and hence a quasi-category. We say a subset  $\cJ\subset \cI$ is {\em up-closed} if $i\in \cJ$, $i\le i'$ implies $i'\in \cJ$; similarly, we say a subset $\cJ\subset \cI$ is {\em down-closed} if $i\in \cJ$, $i'\le i$ implies $i'\in \cJ$.

For a poset $\cI$, let $|\cI|$ be the geometric realization of $\cI$, defined as the usual geometric realization of the simplicial set $N(\cI)$ (hence a simplicial complex). In particular, the $r$-dimensional faces of $|\cI|$ are in bijection with non-degenerate $r$-dimensional simplices in $N(\cI)$, which are parametrized by strictly increasing chains $i_{0}<i_{1}<\cdots< i_{r}$ in $\cI$.

A poset $\cI$ is called {\em acyclic} if $|\cI|$ is weakly contractible.

An order-preserving map of posets $f: \cI\to \cJ$ is called {\em coCartesian}, if for any pair  $j\le j'$ in $\cJ$, and any $i\in f^{-1}(j)$, the set $\{i'\in \cI|i\le i', j'\le f(i')\}$ has a unique minimal element $\partial_{j\to j'}(i)$ that maps to $j'$. 

An order-preserving map of posets $f: \cI\to \cJ$  is called a {\em left covering map}, if for each pair $j\le j'$ in $\cJ$, and any $i\in f^{-1}(j)$, there is a unique $i'\in f^{-1}(j')$ such that $i\le i'$. A left covering map is in particular coCartesian. If $f$ is a left covering map, then each fiber $f^{-1}(j)$ is a discrete poset (any pair of distinct elements are incomparable), and $\partial_{j\to j'}$ gives a map $f^{-1}(j)\to f^{-1}(j')$ for each pair $j\le j'$ in $\cJ$, compatible with compositions. Such a structure is simply a functor $\cJ\to \Sets$ (called a $\cJ$-set in Section~\ref{sss:Pset}). Conversely, given a functor $X: \cJ\to \Sets$, if we let $\cI=\coprod_{j\in \cJ}X_{j}$ and define the partial order on $\cI$ to be generated by $i\le X_{j\to j'}(i)$ for all $j\le j'\in \cJ$ and $i\in X_{j}$,  then the natural projection $f: \cI\to \cJ$  is a left covering map of posets. We conclude that a left covering map of posets $f: \cI\to \cJ$ is the same datum as a $\cJ$-set, i.e., a functor $\cJ\to \Sets$.

\sss{Generalities on cosheaves}\label{sss:cosh}
Let $\cI$ be a poset with nerve $N(\cI)$. We call a functor $\cF: \cI\to \St^{L}$ a {\em cosheaf} on $\cI$. In other words, for each $i\in I$, $\cF$ assigns a stable presentable $\oo$-category $\cF_{i}$, and for each arrow $\ph: i\to j$ in $\cI$, there is a functor $\cF_{\ph}: \cF_{i}\to \cF_{j}$, together with higher compatibilities. We call $\colim_{\cI}\cF\in\St^{L}$ the (category of) cosections of $\cF$. 

Let $f: \cI\to \cJ$ be an order-preserving map of posets. Let $\cG$ be a cosheaf on $\cJ$, then we define $f^{*}\cG$ to be the cosheaf on $\cI$ given by the composition $\cI\xr{f}\cJ\xr{\cG}\St^{L}$. If $f$ is injective, then we write $f^{*}\cF$ as $\cF|_{\cI}$.

In the following, we gather some useful tools for calculating colimits. Most of them can be found in Lurie \cite[Section 4]{lurieHigherToposTheory2009a}.

\sss{Pushforward}\label{sss:push cosh}
Let $f: \cI\to \cJ$ be an order-preserving map of posets and $\cF$ be a cosheaf on $\cI$. We denote by $f_{!}\cF: \cJ\to \St^{L}$ a left Kan extension of $\cF$ along the map $f$. Then $\colim_{\cI}\cF\isom \colim_{\cJ}f_{!}\cF$ by the general properties of left Kan extensions. 

Suppose $f:\cI\to \cJ$ is coCartesian, then for $j\le j'$ in $\cJ$, there is a natural functor
\begin{equation*}
\colim_{i\in f^{-1}(j)}\cF_{i}\to \colim_{i'\in f^{-1}(j')}\cF_{i'}
\end{equation*}
induced by the functors (for $i\in f^{-1}(j)$)
\begin{equation*}
\cF_{i}\to \cF_{\partial_{j\to j'}(i)}\to \colim_{i'\in f^{-1}(j')}\cF_{i'}
\end{equation*}
where the second map uses that $\partial_{j\to j'}(i)\in f^{-1}(j')$. These maps give a cosheaf $\int_{f}\cF$ on $\cJ$ whose value at $j$ is $\colim_{i\in f^{-1}(j)}\cF_{i}$.

\begin{prop}[Lurie, special case of {\cite[Proposition 4.3.3.10]{lurieHigherToposTheory2009a}}]\label{p:push Kan}
Suppose $f:\cI\to \cJ$ is a coCartesian map of posets and $\cF$ is a cosheaf on $\cI$. Then $\int_{f}\cF$ is a left Kan extension of $\cF$ along $f$. In particular, there is an equivalence $\colim_{\cI}\cF\simeq \colim_{\cJ}\int_{f}\cF$.
\end{prop}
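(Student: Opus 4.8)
\textbf{Proof proposal for Proposition~\ref{p:push Kan}.}

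The plan is to reduce to the pointwise criterion for left Kan extensions. Recall that $\int_{f}\cF$ is a left Kan extension of $\cF$ along $f$ precisely when, for each $j \in \cJ$, the canonical map
\begin{equation*}
\colim_{\cI \times_{\cJ} (\cJ_{/j})} \cF \longrightarrow \left(\int_{f}\cF\right)_{j}
\end{equation*}
is an equivalence, where $\cJ_{/j}$ is the overcategory (here just the down-set $\{j' \in \cJ : j' \le j\}$) and $\cI \times_{\cJ} (\cJ_{/j})$ is the fiber product of posets, i.e.\ the full subposet of $\cI$ on those $i$ with $f(i) \le j$. So the first step is to fix $j$ and identify the indexing poset $\cK_j := \cI \times_{\cJ}(\cJ_{/j})$; by construction $\int_{f}\cF$ assigns to $j$ the value $\colim_{f^{-1}(j)} \cF$, so I must show that the inclusion $f^{-1}(j) \hookrightarrow \cK_j$ is colimit-cofinal (in the $\infty$-categorical sense of \cite[4.1]{lurieHigherToposTheory2009a}).

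The main step is therefore a cofinality argument, and this is where the coCartesian hypothesis enters. I would show that the functor $r_j : \cK_j \to f^{-1}(j)$ sending $i$ (with $f(i) = j' \le j$) to $\partial_{j' \to j}(i) \in f^{-1}(j)$ is well-defined and order-preserving, using that $f$ is coCartesian: for $i \le i''$ in $\cK_j$ with $f(i) = j'$, $f(i'') = j''$, the minimality property of $\partial_{j' \to j}(i)$ among elements above $i$ lying over $j$, together with functoriality of the $\partial$ maps ($\partial_{j'' \to j} \circ \partial_{j' \to j''} = \partial_{j' \to j}$ when composable), gives $\partial_{j' \to j}(i) \le \partial_{j'' \to j}(i'')$. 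Moreover $r_j$ restricts to the identity on $f^{-1}(j)$, and there is a natural transformation $\id_{\cK_j} \Rightarrow \iota \circ r_j$ (where $\iota : f^{-1}(j) \hookrightarrow \cK_j$) given pointwise by the relation $i \le \partial_{f(i) \to j}(i)$. Such a natural transformation exhibits $\iota$ as a right adjoint to $r_j$ on the level of posets, hence $\iota$ is cofinal (a right adjoint, or more generally a functor admitting such a transformation to a retraction, is colimit-cofinal). Then $\colim_{\cK_j}\cF \simeq \colim_{f^{-1}(j)} \cF|_{f^{-1}(j)} = (\int_f \cF)_j$, and one checks this identification is compatible with the transition maps, which is exactly the definition of the transition maps of $\int_f \cF$ spelled out in Section~\ref{sss:push cosh}. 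This verifies the pointwise criterion, so $\int_f \cF$ is a left Kan extension of $\cF$ along $f$.

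The final equivalence $\colim_{\cI}\cF \simeq \colim_{\cJ}\int_{f}\cF$ is then formal: a left Kan extension $f_! \cF$ always satisfies $\colim_{\cJ} f_! \cF \simeq \colim_{\cI}\cF$ (both compute the colimit over $\cI$, since $\colim_{\cJ} \circ\, f_! = \colim_{\cI}$ as the colimit functor is itself a left Kan extension along $\cI \to *$, which factors through $\cJ$), and left Kan extensions are unique up to equivalence, so $\int_f \cF \simeq f_!\cF$.

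I expect the main obstacle to be the careful bookkeeping in the cofinality step: one must confirm that $\partial_{j' \to j}$ is genuinely functorial in $j$ (compatible with composition of relations in $\cJ$) and order-preserving in $i$ purely from the "unique minimal element" formulation of coCartesian-ness — this is elementary but needs the hypothesis used exactly as stated, and it is the only place where anything beyond abstract nonsense happens. Everything else (the fiber-product description of the comma category for a map of posets, the natural-transformation-implies-cofinal lemma, and the transitivity of left Kan extensions) is standard and can be cited from \cite[Section 4]{lurieHigherToposTheory2009a}.
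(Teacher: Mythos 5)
Your proof is correct. The paper itself does not supply an argument for this proposition --- it simply cites Lurie's Proposition 4.3.3.10, which is the general statement that left Kan extension along a coCartesian fibration is computed fiberwise --- so a direct comparison of proofs is not possible. What you have written is, in effect, an unpacking of Lurie's argument in the elementary poset setting, and every step checks out: the pointwise criterion reduces the problem to showing $f^{-1}(j)\hookrightarrow \cK_j = \{i : f(i)\le j\}$ is cofinal, and the coCartesian hypothesis is used precisely to show that for each $k\in\cK_j$ the comma poset $\{k'\in f^{-1}(j) : k\le k'\}$ has a unique minimum, namely $\partial_{f(k)\to j}(k)$, hence is contractible. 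Your phrasing via an adjunction $r_j\dashv\iota$ is equivalent (in a poset any unit/counit pair automatically satisfies the triangle identities, and the unit $k\le\partial_{f(k)\to j}(k)$ together with $r_j\iota=\id$ is all you need), though one could skip the adjunction language and invoke Quillen's Theorem A directly once the minimum is exhibited. One small simplification: the monotonicity of $r_j$ does not require the composition identity $\partial_{j''\to j}\circ\partial_{j'\to j''}=\partial_{j'\to j}$; it follows immediately from the fact that $i\le i''$ implies $\{i': i''\le i',\ j\le f(i')\}\subset\{i': i\le i',\ j\le f(i')\}$, so the minimum of the larger set is bounded above by the minimum of the smaller one. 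The final transitivity step $\colim_{\cJ}\circ f_! \simeq \colim_{\cI}$ is standard, as you say.
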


Another instance of pushforward is extension by zero. Let $\im: \cI_{1}\subset \cI$ be the inclusion of an up-closed subset. Then for a cosheaf $\cF_{1}$ on $\cI_{1}$, the cosheaf $\im_{!}\cF_{1}$ on $\cI$ is obtained by extension by zero: it assigns to each $i\in \cI\setminus \cI_{1}$ the zero category.  

Dually, let $\jmath: \cI_{0}\subset \cI$ be the inclusion of a down-closed subset. Let $\cF_{0}$ be a cosheaf on $\cI_{0}$. We define $j_{*}\cF_{0}$ to be extension by zero of $\cF_{0}$ as a cosheaf on $\cI$. Then $j_{*}: \Fun(\cI_{0}, \St^{L})\to \Fun(\cI,\St^{L})$ is a right adjoint to the restriction $j^{*}: \Fun(\cI,\St^{L})\to \Fun(\cI_{0},\St^{L})$.

\sss{Subdivision} Let $sd(\cI)$ be the set of non-degenerate simplices in the nerve $N(\cI)$: they are strictly increasing non-empty chains $\bi=(i_{0}< \cdots < i_{r})$ in $\cI$. Equip $sd(\cI)$ with a partial order by defining $\bi\le\bi'$ if and only if $\bi'$ is part of $\bi$. We have a map of posets $\pi: sd(\cI)\to \cI$ sending $\bi=(i_{0}< \cdots < i_{r})$ to $i_{0}$. 

For a cosheaf $\cF$ on $\cI$, let $sd(\cF)=\pi^{*}\cF$ be the pullback cosheaf on $sd(\cI)$.

\begin{lemma}\label{l:sd}
The map $\pi: sd(\cI)\to \cI$ is cofinal. In particular, for any cosheaf $\cF$ on $\cI$, there is a canonical equivalence
\begin{equation*}
\colim_{sd(\cI)}sd(\cF)\simeq \colim_{\cI}\cF.
\end{equation*}
\end{lemma}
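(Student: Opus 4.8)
\textbf{Proof plan for Lemma~\ref{l:sd}.}

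The plan is to show that $\pi: sd(\cI) \to \cI$ is cofinal (in the sense appropriate to $\infty$-categories, i.e.\ the opposite notion from Lurie's ``initial''/``final'' — I would fix the variance at the outset so that cofinal maps preserve colimits), and then the statement about colimits of cosheaves follows formally: for any cosheaf $\cF: \cI \to \St^L$, cofinality of $\pi$ gives $\colim_{sd(\cI)} \pi^*\cF \simeq \colim_{\cI}\cF$, which is exactly the asserted equivalence since $sd(\cF) = \pi^*\cF$ by definition.

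For the cofinality of $\pi$, I would invoke Quillen's Theorem A in the $\infty$-categorical form (Lurie, \cite{lurieHigherToposTheory2009a} Theorem 4.1.3.1): it suffices to check that for every object $i \in \cI$, the comma category $sd(\cI) \times_{\cI} \cI_{i/}$ — whose objects are pairs consisting of a simplex $\bi = (i_0 < \cdots < i_r)$ in $sd(\cI)$ together with an arrow $i \to \pi(\bi) = i_0$ in $\cI$, i.e.\ a witness that $i \le i_0$ — is weakly contractible. Since $\cI$ is a poset, this comma category is itself (the nerve of) a poset: it is the set of strictly increasing chains $(i_0 < \cdots < i_r)$ with $i \le i_0$, ordered by the face relation. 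I would identify this poset with $sd(\cI_{\ge i})$, the subdivision of the sub-poset $\cI_{\ge i} = \{j \in \cI : j \ge i\}$. The key point is then that $|sd(\cP)| \cong |\cP|$ for any poset $\cP$ (the geometric realization of the nerve is homeomorphic to that of its barycentric subdivision), so $|sd(\cI_{\ge i})| \cong |\cI_{\ge i}|$, and $\cI_{\ge i}$ has a minimal element (namely $i$ itself), hence is weakly contractible — a poset with a least element has contractible nerve because $i$ is an initial object and the nerve of a category with an initial object is contractible. Therefore every comma category is weakly contractible and Theorem A applies.

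The main obstacle is bookkeeping around variance and the precise shape of the comma category. One must be careful whether $\pi$ sends $\bi = (i_0 < \cdots < i_r)$ to $i_0$ (the minimum) or $i_r$ (the maximum), and correspondingly whether to use $\cI_{i/}$ or $\cI_{/i}$, and whether ``cofinal'' here means left- or right-cofinal; these choices are forced by the requirement that the map induce an equivalence on colimits of $\St^L$-valued cosheaves (i.e.\ on left colimits), and getting them consistent is the only subtlety. Once the comma category is correctly identified as $sd(\cI_{\ge i})$ (or its opposite, as the case may be), the contractibility is immediate from the existence of a least (or greatest) element. A secondary minor point is the homeomorphism $|sd(\cP)| \cong |\cP|$; this is classical (the barycentric subdivision does not change the underlying space), and I would simply cite it rather than reprove it.
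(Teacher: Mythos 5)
Your proposal is correct, and it shares the skeleton of the paper's argument but diverges in the key contractibility step.

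Both you and the paper apply Quillen's Theorem A (the paper cites \cite[Theorem 4.1.3.3]{lurieHigherToposTheory2009a}) and both correctly identify the comma category $M = sd(\cI)\times_{\cI}\cI_{i/}$ with the poset of strictly increasing non-empty chains $(i_0 < \cdots < i_r)$ in $\cI$ satisfying $i \le i_0$. From there the routes split. You observe that this poset is precisely $sd(\cI_{\ge i})$, and then conclude by citing two standard facts: the homeomorphism $|sd(\cP)| \cong |\cP|$ for any poset $\cP$, and the contractibility of $|\cI_{\ge i}|$ because $\cI_{\ge i}$ has a minimal element $i$ (an initial object). The paper instead gives a self-contained combinatorial argument: it partitions $M = L \sqcup K$ where $L$ consists of chains with $i_0 = i$ and $K$ of chains with $i_0 > i$, shows $L \cong K^{\rhd}$ (right cone) via the ``drop the first element'' bijection with $\{i\}$ as the apex, and identifies $M$ with the pushout $(K\times[1])\coprod_{K\times\{0\}}K^{\rhd}$, whose realization is manifestly a cone over $|K|$ and hence contractible. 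Your identification $M \cong sd(\cI_{\ge i})$ is cleaner and more conceptual, and reduces the claim to two genuinely standard facts; the paper's version avoids appealing to the subdivision homeomorphism as a black box and instead proves a special case of it directly by exhibiting the cone structure. Both proofs are correct, and the variance bookkeeping you flag as a potential hazard is in fact handled consistently on both sides (the $\pi$-image is the minimum $i_0$, the relevant comma category is the under-category $\cI_{i/}$, and the resulting subposet of $\cI$ is $\cI_{\ge i}$, which indeed has a least element).
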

\begin{proof}
To check $\pi$ is cofinal, we apply Quillen's Theorem A as in \cite[Theorem 4.1.3.3]{lurieHigherToposTheory2009a}. For $i\in \cI$, the category $M=sd(\cI)\times_{\cI}\cI_{i/}$ consists of strictly increasing non-empty chains $i_{0}<\cdots <i_{r}$ in $\cI$ such that $i\le i_{0}$. We need to show that the geometric realization $|M|$ of $M$ is contractible. Let $L\subset M$ be the subposet where $i_{0}=i$; let $K\subset M$ be the subposet where $i<i_{0}$. Then $M=L\sqcup K$, and we can identify $L\cong K^{\rhd}$ (the right cone of $K$) because $L$ has a maximal element that is $i$ itself as a chain, and its complement is isomorphic to $K$ via $\bi=(i_{0}<\cdots <i_{r})\to \bi'=(i_{1}<\cdots <i_{r})$. Also we have $\bi\le \bi'$ (where $\bi'\in K$ and $\bi\in L$ is obtained from $\bi'$ by adding $i$), and all the other order relations are consequences of it and the fact that $L$ and $K$ are subposets. We conclude that $M\cong (K\times [1])\coprod_{K\times\{0\}}K^{\rhd}$ (where $[1]$ is the poset $\{0,1\}$). The geometric realization $|M|$ is thus homeomorphic to the cone over $|K|$, hence contractible. 
\end{proof}

\sss{Double colimits} In some situations we want to rewrite $\colim_{\cI} \cF$ as a colimit of colimits, where the inner colimit is taken over a subposet of $\cI$. Lurie's theorem \cite[Corollary 4.2.3.10]{lurieHigherToposTheory2009a} gives a general framework for doing this. We recall the statement of {\em loc.cit.}  in the simplified situation where the colimits are parametrized by posets. Suppose $\cK$ is a poset and $\cF: \cK\to \St^{L}$ is a cosheaf on $\cK$. Let $\cJ$ be another poset with an assignment $\cJ\ni j\mapsto \cK_{j}\subset \cK$ such that $j\le j'$ in $\cJ$ implies $\cK_{j}\subset \cK_{j'}$. 

\begin{prop}[Lurie, special case of {\cite[Corollary 4.2.3.10]{lurieHigherToposTheory2009a}}]\label{p:double colim} Under the above notation, assume: $\cK_{j}$ is up-closed for each $j\in \cJ$, and for each $k\in \cK$, the subposet $\cJ_{k}:=\{j\in \cJ|k\in \cK_{j}\}$ is acyclic. Then the natural maps $\colim_{\cK_{j}}\cF\to \colim_{\cK} \cF$ give an equivalence 
\begin{equation*}
\colim_{j\in \cJ}(\colim_{\cK_{j}}\cF)\simeq \colim_{\cK} \cF.
\end{equation*}
\end{prop}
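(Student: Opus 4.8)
The final statement in the excerpt is Proposition~\ref{p:double colim}, which is noted to be a special case of \cite[Corollary 4.2.3.10]{lurieHigherToposTheory2009a}. Since the proposition is explicitly attributed to Lurie and presented as a specialization of a known general result, the ``proof'' is really a matter of extracting the stated special case from the cited general theorem. Nonetheless I will sketch how I would organize this reduction carefully, since the hypotheses have been translated into the poset setting and one must check the translation is faithful.

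\textbf{Setup and strategy.} The plan is to realize the data $(\cK, \cF, \cJ, j\mapsto \cK_j)$ as an instance of the situation in \cite[\S4.2.3]{lurieHigherToposTheory2009a}. Lurie's corollary concerns a functor $\cF: N(\cK) \to \calC$ into an $\infty$-category admitting the relevant colimits, together with a collection of full subcategories indexed by a poset (or more generally a filtered-type diagram), subject to a covering/acyclicity condition, and concludes that the colimit over $\cK$ is computed as the colimit over $\cJ$ of the partial colimits. First I would record that $\St^L$ admits all small colimits, so $\cF$ and all $\colim_{\cK_j}\cF$ and $\colim_\cK \cF$ exist. Next I would observe that the assignment $j \mapsto \cK_j$ being monotone (i.e.\ $j \le j'$ implies $\cK_j \subseteq \cK_{j'}$, which is exactly hypothesis (b) of the setup paragraph preceding the proposition) makes $j \mapsto N(\cK_j)$ a functor $\cJ \to (\text{subsets of } N(\cK))$, hence the partial colimits $\colim_{\cK_j}\cF$ assemble into a cosheaf on $\cJ$ via the canonical maps induced by the inclusions $\cK_j \hookrightarrow \cK_{j'}$. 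The natural map $\colim_{\cK_j}\cF \to \colim_\cK\cF$ then assembles into a map out of the colimit $\colim_{j\in\cJ}\colim_{\cK_j}\cF \to \colim_\cK\cF$, and the claim is that this is an equivalence.

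\textbf{Key steps in order.} (1) Translate the two hypotheses into Lurie's language. That ``$\cK_j$ is up-closed'' is the poset-theoretic incarnation of $\cK_j$ being a \emph{cosieve} (a subcategory closed under the relevant morphisms out of it) — this is the condition that makes $N(\cK_j) \subseteq N(\cK)$ the kind of subobject Lurie's covering argument requires. (2) That for each $k \in \cK$ the subposet $\cJ_k = \{j \in \cJ : k \in \cK_j\}$ is acyclic, i.e.\ $|\cJ_k|$ is weakly contractible: this is precisely the ``covering'' condition, saying the index category $\cJ$ with its family $\{\cK_j\}$ covers $\cK$ in a homotopically non-degenerate way — each object of $\cK$ sees a contractible space of indices that contain it. Here I would invoke the definition of acyclic poset from Section~\ref{sss:poset} and note it is exactly weak contractibility of the nerve. (3) Apply \cite[Corollary 4.2.3.10]{lurieHigherToposTheory2009a} (or the relevant cofinality/left-Kan-extension statement it rests on, \cite[Theorem 4.2.3.1]{lurieHigherToposTheory2009a} / \cite[Proposition 4.2.3.8]{lurieHigherToposTheory2009a}) to conclude. (4) Finally, identify the comparison map produced by Lurie's theorem with the one described in the statement (the one induced by the structure maps $\colim_{\cK_j}\cF \to \colim_\cK\cF$); this is a standard unwinding of universal properties.

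\textbf{Expected main obstacle.} Since the result is quoted, there is no genuine mathematical obstacle — the real work is bookkeeping: making sure that ``up-closed subset of a poset'' is the correct poset-level version of the categorical hypothesis in Lurie (cosieve, as opposed to its opposite), and that the acyclicity condition matches Lurie's condition with the correct variance. The safest route, which I would take, is to reduce directly to the cofinality statement: the disjoint union $\coprod_{j} N(\cK_j) \to N(\cK)$ (or rather the relevant Grassmann/Grothendieck construction $\int_{\cJ} \cK_\bullet \to \cK$) is cofinal precisely because each fiber over $k$, which is $\cJ_k$, has contractible nerve — this is Quillen's Theorem A in the form used already in Lemma~\ref{l:sd}. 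Then $\colim_\cK \cF \simeq \colim$ over the Grothendieck construction, and computing that colimit by first pushing forward to $\cJ$ (using Proposition~\ref{p:push Kan}, since the projection $\int_\cJ \cK_\bullet \to \cJ$ is coCartesian with fibers $\cK_j$) gives $\colim_{j\in\cJ}\colim_{\cK_j}\cF$. I would present the proof in this self-contained way, citing Lurie only for the cofinality input, so as to avoid any subtle mismatch of conventions with \cite[Corollary 4.2.3.10]{lurieHigherToposTheory2009a}.
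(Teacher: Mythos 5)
The paper supplies no proof of this proposition; the citation to \cite[Corollary 4.2.3.10]{lurieHigherToposTheory2009a} \emph{is} the entire argument. So the substance to assess is your self-contained alternative via cofinality, and there is a genuine gap in it.

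Denote by $\cE = \int_\cJ \cK_\bullet = \{(j,k) : k \in \cK_j\}$ the Grothendieck construction with its product order, and $p:\cE \to \cJ$, $q: \cE \to \cK$ the two projections. You assert that $q$ is cofinal ``because each fiber over $k$, which is $\cJ_k$, has contractible nerve --- this is Quillen's Theorem A in the form used already in Lemma~\ref{l:sd}.'' But the form of Quillen's Theorem A used in Lemma~\ref{l:sd} tests the comma category $k/q$, not the fiber $q^{-1}(k)$. The fiber is indeed $\cJ_k$, but the comma category is $\{(j,k') : k' \in \cK_j,\ k \le k'\}$, which is strictly larger than $\cJ_k$ whenever $k$ is not maximal in $\cK$. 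The two would coincide if $q$ were a Cartesian fibration, but the up-closedness of $\cK_j$ makes $q$ a \emph{coCartesian} fibration, not a Cartesian one, so this identification fails. A symptom of the gap is that your sketch never actually invokes the up-closedness hypothesis.

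There are two ways to repair it. (a) Show $k/q$ is weakly contractible directly: the projection $k/q \to \cK_{\ge k}$, $(j,k') \mapsto k'$, is a coCartesian fibration (this is exactly where up-closedness of $\cK_j$ enters) with fiber over $k'$ equal to $\cJ_{k'}$, which is weakly contractible by hypothesis; since the classifying space of a Grothendieck construction is the homotopy colimit of the classifying spaces of its fibers, $|k/q|$ is weakly equivalent to $|\cK_{\ge k}|$, which is contractible because $k$ is a minimal element. (b) Bypass cofinality of $q$ altogether: $q$ is coCartesian (again by up-closedness), so Proposition~\ref{p:push Kan} applied to $q$ gives $\colim_\cE q^*\cF \simeq \colim_\cK \int_q q^*\cF$; for each $k$, $(\int_q q^*\cF)(k)$ is the colimit over $\cJ_k$ of the constant diagram at $\cF_k$, which is $\cF_k$ by Proposition~\ref{p:loc} since $\cJ_k$ is acyclic, and checking the transition maps identifies $\int_q q^*\cF \simeq \cF$. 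Option (b) stays entirely within the paper's toolbox and is probably what your sketch is implicitly reaching for; either repair completes the argument.
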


As an immediate application, we get a Mayer-Vietoris type theorem for colimits.
\begin{cor}\label{c:MV} Let $\cI$ be a poset.  Suppose $\cI=\cI_{0}\cup \cI_{1}$, where $\cI_{0}$ and $\cI_{1}$ are both up-closed. Let $\cF$ be a cosheaf on $\cI$. Let $\cI_{01}=\cI_{0}\cap \cI_{1}$.  Then the natural maps give an equivalence
\begin{equation*}
 \colim_{\cI_{0}}\cF|_{\cI_{0}}\coprod_{\colim_{\cI_{01}}\cF|_{\cI_{01}}}\colim_{\cI_{1}}\cF|_{\cI_{1}}\isom \colim_{\cI}\cF.
\end{equation*}
\end{cor}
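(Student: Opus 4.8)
The plan is to deduce this directly from Proposition~\ref{p:double colim}. I would take $\cK=\cI$ with its cosheaf $\cF$, and let $\cJ$ be the three-element ``span'' poset with underlying set $\{a,b,c\}$ and relations $c<a$ and $c<b$, with $a,b$ incomparable; for a cosheaf $\cG$ on $\cJ$ one has $\colim_{\cJ}\cG\simeq \cG_a\coprod_{\cG_c}\cG_b$, the pushout of the span $\cG_a\leftarrow\cG_c\to\cG_b$. Define the assignment $j\mapsto\cK_j\subset\cI$ by $\cK_a=\cI_0$, $\cK_b=\cI_1$, $\cK_c=\cI_{01}=\cI_0\cap\cI_1$. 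Since $\cI_{01}\subset\cI_0$ and $\cI_{01}\subset\cI_1$, this is monotone in the sense required by Proposition~\ref{p:double colim}, and the associated functor $j\mapsto\colim_{\cK_j}\cF$ on $\cJ$ is precisely the span $\colim_{\cI_0}\cF|_{\cI_0}\leftarrow\colim_{\cI_{01}}\cF|_{\cI_{01}}\to\colim_{\cI_1}\cF|_{\cI_1}$ induced by the inclusions of the three subposets.

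Next I would verify the two hypotheses of Proposition~\ref{p:double colim}. First, each $\cK_j$ is up-closed: $\cK_a=\cI_0$ and $\cK_b=\cI_1$ are up-closed by assumption, and $\cK_c=\cI_0\cap\cI_1$ is an intersection of up-closed subsets, hence up-closed. Second, for each $k\in\cI$ the subposet $\cJ_k=\{j\in\cJ\mid k\in\cK_j\}$ must be acyclic in the sense of Section~\ref{sss:poset}. Here the covering hypothesis $\cI=\cI_0\cup\cI_1$ enters: if $k\in\cI_0\setminus\cI_1$ then $\cJ_k=\{a\}$, if $k\in\cI_1\setminus\cI_0$ then $\cJ_k=\{b\}$, and if $k\in\cI_{01}$ then $\cJ_k=\cJ$. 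In the first two cases $\cJ_k$ is a point, hence acyclic, and in the last case $|\cJ|$ is homeomorphic to a closed interval (two edges $c$--$a$ and $c$--$b$ glued at $c$), hence contractible. In particular $\cJ_k$ is never empty, so $\cI_0\cup\cI_1=\cI$ is exactly what is needed.

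With both hypotheses in hand, Proposition~\ref{p:double colim} yields an equivalence $\colim_{j\in\cJ}(\colim_{\cK_j}\cF)\isom\colim_{\cI}\cF$, implemented by the natural maps $\colim_{\cK_j}\cF\to\colim_{\cI}\cF$. Unwinding the left-hand side as the pushout over the span poset $\cJ$ produces exactly the comparison map of the statement
\begin{equation*}
\colim_{\cI_0}\cF|_{\cI_0}\coprod_{\colim_{\cI_{01}}\cF|_{\cI_{01}}}\colim_{\cI_1}\cF|_{\cI_1}\isom\colim_{\cI}\cF .
\end{equation*}
The only point demanding any care is to confirm that the comparison map produced by Proposition~\ref{p:double colim} really agrees with the ``natural'' pushout map asserted in the statement; this is a routine compatibility check, since both are induced by the inclusions $\cI_0,\cI_1\hookrightarrow\cI$ together with their common restriction to $\cI_{01}$. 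There is no substantive obstacle beyond this bookkeeping — the mathematical content is entirely carried by Lurie's theorem quoted as Proposition~\ref{p:double colim}.
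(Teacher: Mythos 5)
Your proof is correct and coincides with the paper's one-line argument: the paper invokes Proposition~\ref{p:double colim} with the span poset $\cD_1$ of proper subsets of $\{0,1\}$ (which is isomorphic to your $\{a,b,c\}$ with $c<a$, $c<b$) and the same assignment $\varnothing\mapsto\cI_{01}$, $\{0\}\mapsto\cI_0$, $\{1\}\mapsto\cI_1$. You have simply spelled out the routine verification of the up-closedness and acyclicity hypotheses that the paper leaves implicit.
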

\begin{proof}
Let $\cD_{1}$ be the poset of proper subsets of $\{0,1\}$ under inclusion. Apply Proposition \ref{p:double colim} to the assignment $\vn\mapsto \cI_{01}$, $\{0\}\mapsto \cI_{0}$ and $\{1\}\mapsto \cI_{1}$.
\end{proof}

\sss{Locally constant cosheaves}
A {\em locally constant cosheaf} on $\cI$ is one such that $\cF_{\ph}$ is an equivalence for all arrows $\ph$ in $\cI$. 

\begin{prop}[Lurie, {\cite[Corollary 4.4.4.9]{lurieHigherToposTheory2009a}}]\label{p:loc} Let $f: \cI\incl \cJ$ be an order-preserving map of posets and $\cF$ a locally constant cosheaf on $\cJ$. If $|\cI|\incl |\cJ|$ is a weak homotopy equivalence, then the natural functor
\begin{equation*}
\colim_{\cI}f^{*}\cF\to \colim_{\cJ}\cF
\end{equation*}
is an equivalence.

In particular, if $\cF$ is a cosheaf on an acyclic poset $\cI$, then for any $i\in \cI$, the natural map $\cF_{i}\to \colim_{\cI}\cF$ is an equivalence. 
\end{prop}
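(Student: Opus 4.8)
The plan is to obtain the first assertion as a direct application of \cite[Corollary 4.4.4.9]{lurieHigherToposTheory2009a}, once the hypotheses are put into Lurie's language, and then to read off the ``in particular'' clause as the case of a one-point subposet.

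For the first assertion, I would first record the routine translations: a cosheaf $\cG\colon \cK\to\St^{L}$ on a poset $\cK$ is the same thing as a functor of $\infty$-categories $N(\cK)\to\St^{L}$; the $\infty$-category $\St^{L}$ admits all small colimits; $\cF$ locally constant says exactly that the underlying diagram $N(\cJ)\to\St^{L}$ carries every edge to an equivalence, i.e.~is a \emph{locally constant diagram} in the sense of \cite[\S4.4.4]{lurieHigherToposTheory2009a}; and, since the geometric realization of $N(\cK)$ is the simplicial complex $|\cK|$ and a map of simplicial sets is a weak homotopy equivalence if and only if its geometric realization is, the hypothesis that $|\cI|\incl|\cJ|$ is a weak homotopy equivalence is precisely the hypothesis that $N(f)\colon N(\cI)\to N(\cJ)$ is a weak homotopy equivalence of simplicial sets. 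With these identifications, \cite[Corollary 4.4.4.9]{lurieHigherToposTheory2009a} yields that the canonical map $\colim_{\cI}f^{*}\cF\to\colim_{\cJ}\cF$ is an equivalence. The reason I would cite this corollary rather than reprove it is that it packages the only genuinely $\infty$-categorical input, namely that a locally constant diagram factors through the localization $N(\cJ)\to N(\cJ)[W^{-1}]$ at the class $W$ of all edges, that this localization is an $\infty$-groupoid modelling the homotopy type of $|\cJ|$, and that the colimit is unchanged under it; once one is willing to invoke the identification of these three models of $|\cJ|$, the statement follows formally (by fully faithfulness of restriction along a localization together with the composition law for left Kan extensions along $N(\cI)\to N(\cJ)\to\ast$). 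This bundle of comparisons is the main obstacle to a fully self-contained argument.

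For the ``in particular'' clause, given $i\in\cI$ with $\cI$ acyclic, I would apply the first assertion to the inclusion $f\colon\{i\}\incl\cI$ of the one-element subposet: its geometric realization is the inclusion of a point into $|\cI|$, which is a weak homotopy equivalence exactly because $|\cI|$ is weakly contractible, that is, because $\cI$ is acyclic; and the colimit over the one-element poset of the restriction $f^{*}\cF=\cF_{i}$ is $\cF_{i}$ itself, the comparison map being the canonical structure map $\cF_{i}\to\colim_{\cI}\cF$. Hence this structure map is an equivalence.
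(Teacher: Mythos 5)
Your proof is correct and takes the same route as the paper: the paper simply attributes the result to Lurie's HTT Corollary 4.4.4.9 and offers no further argument, so the only substantive work is the dictionary you supply (cosheaf on a poset $\leftrightarrow$ diagram $N(\cJ)\to\St^L$; locally constant $\leftrightarrow$ edges sent to equivalences; $|\cI|\to|\cJ|$ weak equivalence $\leftrightarrow$ $N(\cI)\to N(\cJ)$ weak equivalence of simplicial sets; $\St^L$ cocomplete), together with the observation that the ``in particular'' clause is the special case $f\colon\{i\}\incl\cI$. Both translations and the specialization are accurate, so there is nothing to add.
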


\sss{Recollement}\label{sss:cosh rec}
Given maps of cosheaves on a poset $\cI$
\begin{equation*}
\xymatrix{ \cF'' & \ar[l]_{\b} \cF & \ar[l]_{\a} \cF'
}
\end{equation*}
we say it is a recollement of cosheaves, if
\begin{enumerate}
\item For each $i\in \cI$, evaluating $\a$ and $\b$ at $i$ gives a recollement in $\St^{L}$:
\begin{equation*}
\xymatrix{\cF''_{i} \ar@<3ex>[r]^{\b_{i!}} \ar@<-3ex>[r]^{\b_{i*}} &  \ar[l]_{\b_{i}}  \cF_{i} \ar@<3ex>[r]^{\a_{i}^*}  \ar@<-3ex>[r]^{\a_{i}^!}  &   \ar[l]_{\a_{i}}  \cF'_{i}}
\end{equation*}
\item For $i\le i'$ in $\cI$, the commutative diagram where the vertical maps are the transition functors for $\cF,\cF'$ and $\cF''$
\begin{equation*}
\xymatrix{\cF''_{i} \ar[d]  &  \ar[l]_{\b_{i}}  \cF_{i}  \ar[d]  &   \ar[l]_{\a_{i}}  \cF'_{i}\ar[d] \\
\cF''_{i'}  &  \ar[l]_{\b_{i'}}  \cF_{i'}    &   \ar[l]_{\a_{i'}}  \cF'_{i'}}
\end{equation*}
is a morphism of recollements.
\end{enumerate}
In this situation, Proposition~\ref{p:recoll} is applicable to conclude that
\begin{equation*}
\xymatrix{\colim_{\cI}\cF''& \ar[l]_{\colim\b} \colim_{\cI}\cF & \ar[l]_{\colim \a} \colim_{\cI}\cF'}
\end{equation*}
extends to a recollement.

\begin{remark} A caution to those who bring intuition of cosheaves with values in a stable category (the $\oo$-category $\St^L$ is itself not stable): one  needs the hypothesis 
of a recollement of cosheaves in the statement of Proposition~\ref{p:recoll}. Else a ``triangle of cosheaves" need not lead to a ``triangle of cosections". 
For a toy example of what could go wrong,
consider the diagram of cosheaves
$$
\xymatrix{
\cQ  & \ar[l]  \cF &\ar[l] \cK
}
$$
on the poset $\D_1^\opp = \{ 0 \leftarrow  \vn \to  1\}$, where each cosheaf is given by a horizontal row of the diagram
\begin{equation}\label{eq: example of cosh}
\xymatrix{
\ar[d]  0  & \ar[l]   0  \ar[d]   \ar[r] &  \Vect  \oplus 0 \ar[d]_-{i} \\
\ar[d] 0  & \ar[l] \ar[d]^-=  \Vect    \ar[r]^-{d} &   \ar[d]^-p  \Vect \oplus \Vect \\
0 & \ar[l]    \Vect     \ar[r]^-\sim  &  0 \oplus \Vect 
}
\end{equation}
Here $d$ is the diagonal embedding, $i$ is inclusion to the first factor, and $p$ is projection to the second factor.  
Note at each element of $\D_1^\opp$, the corresponding vertical sequence in \eqref{eq: example of cosh} is a recollement.
But note the  right squares are not adjointable with respect to the vertical maps, so we do not have a recollement of cosheaves.

Then we have 
\begin{equation*}
\xymatrix{
\ar@{=}[d] \colim_{\D_1^\opp} \cQ & \ar@{=}[d]\ar[l]   \colim_{\D_1^\opp} \cF   & \ar@{=}[d]\ar[l]   \colim_{\D_1^\opp} \cK  \\
0 & \ar[l] 0 & \ar[l] \Vect
}
\end{equation*}
So while $\colim_{\D_1^\opp}  \cQ $ is the cokernel of 
$ \colim_{\D_1^\opp} \cF  \leftarrow \colim_{\D_1^\opp} \cK$, we see $\cK$ is not the kernel of 
$ \colim_{\D_1^\opp} \cQ \leftarrow \colim_{\D_1^\opp} \cF$. Thus we do not obtain a recollement on cosections.
\end{remark}

\subsection{Expansion of cosheaves}
In this section, we prove the invariance of colimits of cosheaves under a certain modification called expansion.

\sss{Expansion of posets}

\begin{defn}\label{d:poset exp} Let $\cI$ be a poset and $\cI'\subset \cI$. 
\begin{enumerate}
\item We call $\cI$ a {\em weak elementary expansion} of $\cI'$ if:
\begin{itemize}
\item $\cI\setminus \cI'$ contains a unique minimal element, which we denote by $0$.
\item Let $\cI'_{>0}=\{i\in \cI'|0<i\}$. Then $\cI'_{>0}$ is up-closed in $\cI$ and acyclic (in particular non-empty).
\end{itemize}
We denote the above situation by $\cI'\nearrow\cI$ or $\cI'\nearrow_{0}\cI$ if want to indicate the minimal element in $\cI\setminus \cI'$.

\item We call $\cI$ an {\em expansion} of $\cI'$ if there is a finite sequence of {\em up-closed} subsets $\cI_{n}\subset \cI$ ($1\le  n\le N$) such that $\cI_{0}=\cI', \cI_{N}=\cI$ and $\cI_{n}$ is a weak elementary expansion of  $\cI_{n-1}$: 
\begin{equation}\label{seq elem exp}
\cI'=\cI_{0}\nearrow \cI_{1}\nearrow \cI_{2}\nearrow\cdots \nearrow \cI_{N}=\cI.
\end{equation}
\end{enumerate}
\end{defn}

Note that a weak elementary expansion may not be an expansion; it is an expansion if $\cI'$ is up-closed in $\cI$.

We give an example of a weak elementary expansion that we use in the paper. Fix $n\geq 0$, and set $[n] =  \{0, \ldots, n\}$. 
Let $\cD_n$ be the partially ordered set of {\em proper} subsets $J \sne[n] $ with respect to inclusion. 


\begin{lemma}\label{lem:contractinle ex} Fix a nonempty $J \subset [n] $. Set $\cD_n^{J}\subset \cD_n$ to be the subsets of $[n]$ {\em not} contained in $J$.  
\begin{enumerate}
\item The inclusion $\cD_{n}^{J}\subset\cD_{n}$ is a weak elementary expansion. 
\item More generally, for any poset $\cI$ with an embedding $\cD_{n}\incl \cI$ whose image is up-closed, $\cI$ is a weak elementary expansion of $\cI':=\cI\setminus (\cD_{n}\setminus\cD_{n}^{J})$. 
\end{enumerate}
\end{lemma}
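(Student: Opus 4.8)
\textbf{Proof strategy for Lemma~\ref{lem:contractinle ex}.}

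The plan is to verify the two bullet conditions in Definition~\ref{d:poset exp}(1) directly, since (2) will follow from (1) by a routine bookkeeping argument once (1) is established. First I would set up notation: the ``new'' elements of $\cD_n$ relative to $\cD_n^J$ are precisely the proper subsets of $[n]$ contained in $J$, i.e. the poset $2^J\setminus\{J\}$ when $J\ne [n]$ (which holds automatically since $J$ is one of the subsets indexing $\cD_n$, hence proper). The candidate minimal element ``$0$'' of $\cD_n\setminus\cD_n^J$ should be the empty set $\varnothing$: indeed $\varnothing\subset J$, so $\varnothing\in\cD_n\setminus\cD_n^J$, and $\varnothing$ is contained in every subset, so it is the unique minimal element of the whole of $\cD_n$, a fortiori of any subset containing it. This verifies the first bullet.

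For the second bullet I must identify $\cD_{n,>\varnothing}^J := \{K\in\cD_n^J : \varnothing < K\}$, check it is up-closed in $\cD_n$, and check it is acyclic. Up-closedness is immediate: if $K\in\cD_n^J$ (so $K\not\subseteq J$) and $K\subseteq K'$ with $K'\in\cD_n$, then $K'\not\subseteq J$ either, so $K'\in\cD_n^J$; and $\varnothing<K\le K'$ gives $\varnothing<K'$. For acyclicity, observe that $\cD_{n,>\varnothing}^J$ consists of all nonempty proper subsets of $[n]$ that are \emph{not} contained in $J$. The key topological fact is that $|\cD_n^J|$ (and its ``$>\varnothing$'' part) is contractible: I would argue this via a standard poset-contractibility criterion --- e.g. exhibit an element $m$ that is comparable to everything, or use the fact that a poset with a minimum or maximum is acyclic, possibly after a conservative zig-zag argument. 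Concretely, since $J\subsetneq[n]$, pick $j_0\in[n]\setminus J$; then the singleton $\{j_0\}$ lies in $\cD_{n,>\varnothing}^J$, and the maps $K\mapsto K\cup\{j_0\}\mapsfrom \{j_0\}$ (well-defined on $\cD_{n,>\varnothing}^J$ because $K\cup\{j_0\}\not\subseteq J$ and $K\cup\{j_0\}$ remains a proper subset only if $K\cup\{j_0\}\ne[n]$ --- this last point needs care, so I would instead restrict attention to the sub-poset avoiding $[n]$, which is automatic since $\cD_n$ already excludes $[n]$) give a zig-zag of natural transformations $\id \Rightarrow (-\cup\{j_0\}) \Leftarrow \mathrm{const}_{\{j_0\}}$, hence $|\cD_{n,>\varnothing}^J|$ deformation retracts to a point. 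This is Quillen's theorem on poset maps inducing homotopies; I would cite the relevant statement (order-preserving maps comparable to the identity induce homotopic realizations).

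\textbf{For part (2),} given $\cD_n\hookrightarrow\cI$ with up-closed image, I would set $\cI'=\cI\setminus(\cD_n\setminus\cD_n^J)=\cI\setminus(2^J\setminus\{J\})$ and again take $0=\varnothing$. The elements of $\cI\setminus\cI'$ are exactly the elements of $\cD_n\setminus\cD_n^J$, which still has $\varnothing$ as unique minimal element (using that $\cD_n$ sits inside $\cI$ as a full sub-poset with up-closed image, so the order relations among these elements are unchanged and nothing in $\cI\setminus\cD_n$ lies below $\varnothing$ --- here I use up-closedness of the image to rule out $\cI\setminus\cD_n$ containing elements $<\varnothing$). Then $\cI'_{>\varnothing}=\{i\in\cI' : \varnothing<i\}$; up-closedness in $\cI$ and acyclicity need to be checked. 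Acyclicity is the main obstacle: unlike part (1), $\cI'_{>\varnothing}$ is not just $\cD_{n,>\varnothing}^J$ but includes all of $\cI\setminus\cD_n$ lying above $\varnothing$. I expect one handles this by noting $\cI'_{>\varnothing}$ has $\cD_{n,>\varnothing}^J$ (or rather its up-closure within $\cI'_{>\varnothing}$) as a deformation retract via the same $j_0$-union trick, \emph{provided} the retraction extends over $\cI\setminus\cD_n$; since the image of $\cD_n$ is up-closed, elements of $\cI\setminus\cD_n$ above $\varnothing$ are above some element of $\cD_n$, and one can push them into $\cD_{n,>\varnothing}^J$ coherently. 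This extension argument --- making the contraction compatible with the extra elements of $\cI$ --- is the part I would expect to require the most care, and I would phrase it as: the inclusion $\cD_{n,>\varnothing}^J\hookrightarrow\cI'_{>\varnothing}$ admits a left or right adjoint, hence induces a homotopy equivalence on realizations (Quillen), reducing part (2) to part (1).
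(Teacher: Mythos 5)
Your verification of the first bullet of Definition~\ref{d:poset exp}(1) and of up-closedness of $(\cD_n^J)_{>\varnothing}$ is fine, but your acyclicity argument for part~(1) has a genuine gap that you flag but do not repair. The map $g\colon K\mapsto K\cup\{j_0\}$ is \emph{not} a well-defined self-map of $\cD_n^J$: take $J$ with $|[n]\setminus J|\geq 2$, pick $j_0, j_1\in[n]\setminus J$ distinct, and set $K=[n]\setminus\{j_0\}$. Then $j_1\in K\setminus J$ so $K\not\subseteq J$ and $K\in\cD_n^J$, but $K\cup\{j_0\}=[n]\notin\cD_n$. Restricting ``to the sub-poset avoiding $[n]$'' does not help, since the problem is that the \emph{image} of $g$ escapes $\cD_n$, not that $[n]$ is in the source. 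There is also no consistent way to redefine $g([n]\setminus\{j_0\})$ as a maximal element while keeping $g$ order-preserving with $g\geq\id$. The paper sidesteps poset maps entirely: it identifies $|\cD_n|$ with (a barycentric subdivision of) the solid $n$-simplex $\Delta_n$, so that $|\cD_n^J|$ becomes the subcomplex $\Delta_n^J=\{x:\exists\, i\notin J,\ x_i=0\}$, and then a linear deformation retraction of $\Delta_n$ onto the face $\Delta_j$ spanned by the vertices in $J$ (collapsing all other vertices to one vertex) restricts to a contraction of $\Delta_n^J$. If you want a purely poset-theoretic proof you would need a different zig-zag; the paper's geometric argument avoids the issue cleanly.

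For part~(2) you overcomplicate things by introducing an ``extension argument'' over $\cI\setminus\cD_n$, which is unnecessary and rests on a misreading of the up-closedness hypothesis. Up-closedness of $\cD_n\subset\cI$ means that anything in $\cI$ lying \emph{above} an element of $\cD_n$ already lies in $\cD_n$; in particular, anything in $\cI$ that is $>\varnothing$ lies in $\cD_n$. Hence $\cI'_{>\varnothing}$ does \emph{not} include any of $\cI\setminus\cD_n$ --- it is exactly $(\cD_n^J)_{>\varnothing}=\cD_n^J$, which is up-closed in $\cI$ (being up-closed in $\cD_n$ which is up-closed in $\cI$) and is acyclic by part~(1). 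That is the whole proof of (2). You also cite up-closedness for the wrong conclusion (``rule out $\cI\setminus\cD_n$ containing elements $<\varnothing$''): up-closedness constrains what can lie \emph{above} $\varnothing$, not below; but nothing below $\varnothing$ matters anyway since $\cI\setminus\cI'=\cD_n\setminus\cD_n^J$ lives entirely inside $\cD_n$, where $\varnothing$ is already the minimum.
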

\begin{proof} 
(1) Clearly $\cD_{n}^{J}$ is up-closed in $\cD_{n}$ and $\cD_{n}\setminus \cD^{J}_{n}$ has the minimal element $\vn$. We need to show that $\cD_{n}^{J}$ is acyclic. We can identify the geometric realization of the nerve $N(\cD_{n})$ with the standard $n$-simplex $\D_{n}$ with vertices $\wh 0,\cdots, \wh n$ in such a way that $J\in \cD_{n}$ corresponds to the barycenter of the face $\s^{J}$ whose vertices are $\{\wh i|i\in [n]\setminus J\}$ (in particular, $\s^{\vn}$ is  the maximal face of $\D_{n}$, and $J_{1} \subset J_{2}$ if and only if $\s^{J_{1}}\supset \s^{J_{2}}$). Under this identification, the geometric realization of $N(\cD_{n}^{J})$ is $\D_{n}^{J}:=\cup_{J'\in \cD^{J}_{n}}\s^{J'}=\cup_{J'\sne[n], J'\not\subset J}\s^{J'}$. We need to show that $\D_{n}^{J}$ is contractible.

Without loss of generality, we may assume $J = \{0,1,\cdots, j\}$, for $j\geq 1$. Let $\D_{j}\subset \D_{n}$ be the face with vertices $\wh 0,\cdots, \wh j$. Then $\D_{n}^{J}$ is the union of faces whose vertices do not contain all vertices $\wh i$ with $j+1\le i\le n$. Define a deformation retraction $\D_n \to \D_j$ by linearly extending the map on vertices given by  $\wh i \mapsto \wh i$, if $i\leq j$, and $\wh i \mapsto \wh j$, if $i> j$. Restriction of this deformation retraction to  $\D_n^J \subset \D_n$ gives a  deformation retraction $\D^J_n \to \D_j$, proving that $\D_{n}^{J}$ is contractible.

(2) Let $\vn\in \cD_{n}$ be the unique minimal element. Then $\cI'_{>\vn}=\cD_{n}^{J}$ is up-closed in $\cD_{n}$ hence also in $\cI$ by assumption, and is acyclic by (1).

%
%
\end{proof}

\sss{Expansion of cosheaves}
\begin{defn}\label{d:cosh exp} Let $\cI$ be a poset and $\cI'\subset \cI$. Let $\ph: \cE\to\cF$ be a map of cosheaves on $\cI$. 
\begin{enumerate}
\item If $\cI'\nearrow_{0} \cI$, we say that $\ph$ is a {\em weak elementary expansion} along $\cI'\nearrow_{0} \cI$ if
\begin{itemize}
\item $\ph|_{\cI'}: \cE|_{\cI'}\to \cF|_{\cI'}$ is an equivalence.
\item For $i\in \cI^{\new}=\cI\setminus \cI'$, the following square is a pushout
\begin{equation*}
\xymatrix{ \cE_{0}\ar[d] \ar[r]^{\ph_{0}} & \cF_{0}\ar[d]\\
\cE_{i}\ar[r]^{\ph_{i}}& \cF_{i}}
\end{equation*}
\end{itemize}
\item Suppose $\cI$ is an expansion of $\cI'$. We call $\ph$ an {\em expansion} along $\cI'\subset \cI$ if there exists a sequence of elementary expansions as in \eqref{seq elem exp}, where $\cI_{n}$ are up-closed, such that:
\begin{itemize}
\item $\ph|_{\cI'}: \cE|_{\cI'}\to \cF|_{\cI'}$ is an equivalence.
\item For each $1\le n\le N$, let $0_{n}\in \cI_{n}\setminus \cI_{n-1}$ be the unique minimal element. Then for any $i\in \cI_{n}\setminus \cI_{n-1}$, the following square is a pushout
\begin{equation*}
\xymatrix{ \cE_{0_{n}}\ar[d] \ar[r]^{\ph_{0_{n}}} & \cF_{0_{n}}\ar[d]\\
\cE_{i}\ar[r]^{\ph_{i}}& \cF_{i}}
\end{equation*}
\end{itemize}
\end{enumerate}
\end{defn}

The key property of an expansion of cosheaves is the invariance of colimit.

\begin{prop}\label{p:pushout}
\begin{enumerate}
\item Let $\cI'\nearrow_{0}\cI$ be a weak elementary expansion of posets, and let $\ph:\cE\to \cF$ be a weak elementary expansion of cosheaves on $\cI$ along $\cI'\nearrow_{0}\cI$. Then $\ph$ induces an equivalence on colimits
\begin{equation}\label{colim unchanged}
\colim\ph: \colim_{\cI}\cE\isom \colim_{\cI}\cF.
\end{equation}
\item The same holds if $\cI'\subset \cI$ is an expansion and $\ph$ is an expansion of cosheaves on $\cI$ along $\cI'\subset \cI$.
\end{enumerate}
\end{prop}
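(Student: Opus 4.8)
\textbf{Proof plan for Proposition~\ref{p:pushout}.}
The strategy is to reduce (2) to (1) immediately and then prove (1) by a Mayer--Vietoris computation. For (2), given the sequence of up-closed elementary expansions $\cI'=\cI_{0}\nearrow\cI_{1}\nearrow\cdots\nearrow\cI_{N}=\cI$ and the pushout conditions on $\ph$ at each stage, it suffices to show that for each $n$ the map $\colim_{\cI_{n-1}}\cE|_{\cI_{n-1}}\to\colim_{\cI_{n-1}}\cF|_{\cI_{n-1}}$ being an equivalence implies the same for $\cI_{n}$. This is exactly the content of (1) applied to the weak elementary expansion $\cI_{n-1}\nearrow_{0_{n}}\cI_{n}$ and the restriction $\ph|_{\cI_{n}}$ (noting the restriction is again a weak elementary expansion of cosheaves, since the pushout squares and the equivalence on $\cI_{n-1}$ are part of the hypotheses); composing the resulting equivalences gives \eqref{colim unchanged}. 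So the whole weight is on (1).

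For (1), write $\cI^{\new}=\cI\setminus\cI'$ with unique minimal element $0$, and set $\cI_{0}^{\new}=\{i\in\cI\mid i\ge 0\}=\{0\}\cup\cI'_{>0}$, which is the up-closure of $0$; note $\cI^{\new}\subset\cI_{0}^{\new}$. The plan is to cover $\cI$ by the two up-closed subsets $\cI'$ and $\cI_{0}^{\new}$. I would first check $\cI=\cI'\cup\cI_{0}^{\new}$: any $i\notin\cI'$ lies in $\cI^{\new}\subset\cI_{0}^{\new}$. Their intersection is $\cI'\cap\cI_{0}^{\new}=\cI'_{>0}$ (an element of $\cI'$ that is $\ge 0$ is strictly $>0$ since $0\notin\cI'$; conversely $\cI'_{>0}\subset\cI'$ and $\subset\cI_{0}^{\new}$). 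Since $\cI'$ and $\cI_{0}^{\new}$ are both up-closed in $\cI$ (the former by hypothesis of Definition~\ref{d:poset exp}(1) restated for $\cI'_{>0}$, the latter as an up-closure), Corollary~\ref{c:MV} applies to both $\cE$ and $\cF$, giving
\begin{equation*}
\colim_{\cI}\cE\simeq\colim_{\cI'}\cE|_{\cI'}\coprod_{\colim_{\cI'_{>0}}\cE|_{\cI'_{>0}}}\colim_{\cI_{0}^{\new}}\cE|_{\cI_{0}^{\new}}
\end{equation*}
and likewise for $\cF$, and $\colim\ph$ is the induced map of pushouts. Since pushouts in $\St^{L}$ are functorial, it is enough to show $\ph$ induces equivalences on all three corners: on $\colim_{\cI'}$ this is immediate because $\ph|_{\cI'}$ is an equivalence, and on $\colim_{\cI'_{>0}}$ because $\cI'_{>0}\subset\cI'$ so $\ph|_{\cI'_{>0}}$ is an equivalence too. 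The remaining corner $\colim_{\cI_{0}^{\new}}\cE|_{\cI_{0}^{\new}}\to\colim_{\cI_{0}^{\new}}\cF|_{\cI_{0}^{\new}}$ is where the pushout hypothesis of Definition~\ref{d:cosh exp}(1) must be used.

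To handle that last corner, I would exploit that $\cI_{0}^{\new}$ has $0$ as its minimal element, so there is a natural map $\cE_{0}\to\colim_{\cI_{0}^{\new}}\cE|_{\cI_{0}^{\new}}$ and similarly for $\cF$; I plan to show that $\colim_{\cI_{0}^{\new}}\cE|_{\cI_{0}^{\new}}$ is the pushout $\cF_{0}\coprod_{\cE_{0}}\big(\colim_{\cI_{0}^{\new}}\cE|_{\cI_{0}^{\new}}\big)$ mapping to $\colim_{\cI_{0}^{\new}}\cF|_{\cI_{0}^{\new}}$ as an equivalence. Concretely: restrict further along $\cI'_{>0}\subset\cI_{0}^{\new}$, which is up-closed with complement exactly $\{0\}$, so again Corollary~\ref{c:MV}-type reasoning (or directly: $\cI_{0}^{\new}$ is the left cone on $\cI'_{>0}$ with cone point $0$) identifies $\colim_{\cI_{0}^{\new}}\cE|_{\cI_{0}^{\new}}\simeq\cE_{0}\coprod_{\cE_{0}}\colim_{\cI'_{>0}}\cE|_{\cI'_{>0}}$ — wait, more carefully, since $\cI'_{>0}$ is acyclic, Proposition~\ref{p:loc} gives $\colim_{\cI'_{>0}}\cE|_{\cI'_{>0}}\simeq\cE_{i}$ for any $i\in\cI'_{>0}$ — but the cleaner route is: the colimit over a poset with a unique minimal element $0$ sitting below an up-closed acyclic $\cI'_{>0}$ is the pushout of $\cE_{0}\to\colim_{\cI'_{>0}}\cE|_{\cI'_{>0}}$ along the structure maps, and because the pushout squares in Definition~\ref{d:cosh exp}(1) hold for every $i\in\cI^{\new}$ and $\ph$ is an equivalence on $\cI'_{>0}$, the cube comparing the $\cE$- and $\cF$-colimit diagrams is a pushout of pushout squares, hence the induced map on total colimits is an equivalence. \textbf{The main obstacle} I anticipate is exactly this last bookkeeping step: identifying $\colim_{\cI_{0}^{\new}}$ correctly as an iterated pushout and verifying that the comparison map assembled from the pointwise pushout squares of Definition~\ref{d:cosh exp}(1) is itself an equivalence of pushouts — this requires care because not every $i\in\cI_{0}^{\new}$ lies in $\cI^{\new}$ (the elements of $\cI'_{>0}$ do not), so the pushout condition is only available on part of the indexing poset, and one must argue that on the remaining part $\cI'_{>0}$ the equivalence of $\ph$ suffices to propagate. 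I expect this to go through by a second application of Corollary~\ref{c:MV}, splitting $\cI_{0}^{\new}$ itself, combined with functoriality of pushouts in $\St^{L}$.
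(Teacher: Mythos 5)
There are multiple gaps, and the missing idea is the one the paper actually relies on. In part~(1), your Mayer--Vietoris cover $\cI=\cI'\cup\cI_{0}^{\new}$ requires $\cI'$ to be up-closed, but Definition~\ref{d:poset exp}(1) only asserts that $\cI'_{>0}$ is up-closed, not $\cI'$; indeed the note right after the definition is precisely that a weak elementary expansion need not have $\cI'$ up-closed, and the general case is genuinely needed --- in the proof of (2) one must apply (1) to $\cI^{\old}=\cI\setminus\cI^{\new}_{n}\nearrow\cI$, where $\cI^{\old}$ typically fails to be up-closed. Even if $\cI'$ happened to be up-closed, your proposed fix for the last corner --- a second MV on $\cI_{0}^{\new}=\cI^{\new}\sqcup\cI'_{>0}$ --- is also blocked, because $\cI^{\new}$ is not up-closed (elements of $\cI'_{>0}$ can lie above elements of $\cI^{\new}$), so Corollary~\ref{c:MV} again does not apply. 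The missing ingredient is the paper's cofinality trick: adjoin a dummy vertex $0'$ to $\cI$ with $0<0'<i$ for $i\in\cI'_{>0}$, set $\cE^{+}_{0'}=\cF_{0}$, show $\cI\hookrightarrow\cI^{+}$ is cofinal (this is exactly where the acyclicity of $\cI'_{>0}$ enters, via Quillen's Theorem A), and then apply the double-colimit formula (Proposition~\ref{p:double colim}) to $\cK=\cI^{+}$, $\cJ=\cI$. With the dummy vertex inserted, the pushout squares from Definition~\ref{d:cosh exp}(1) appear at exactly the right places for $i\in\cI^{\new}$, and one gets $\colim_{\cI^{+}_i}\cE^+\simeq\cF_i$ termwise; this is what your Mayer--Vietoris bookkeeping is trying to produce but cannot without the extra vertex.

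Your reduction of (2) to (1) also has a gap: you claim $\ph|_{\cI_{n}}$ is a weak elementary expansion of cosheaves along $\cI_{n-1}\nearrow\cI_{n}$, citing ``the equivalence on $\cI_{n-1}$'' as part of the hypotheses, but the definition only gives that $\ph|_{\cI'}=\ph|_{\cI_{0}}$ is an equivalence; for $n\ge 2$ we have $\cI_{n-1}\supsetneq\cI'$ and $\ph$ is not known to be an equivalence there. Knowing that colimits agree over $\cI_{n-1}$ (what your induction would give) does not imply the cosheaves agree on $\cI_{n-1}$, so the induction does not self-propagate. The paper avoids this by never restricting the poset: it defines interpolating cosheaves $\cE_{n}$ on all of $\cI$ that equal $\cF$ on $\cI_{n}$ and $\cE$ on $\cI\setminus\cI_{n}$ (with $\cE_{0}\simeq\cE$, $\cE_{N}=\cF$), and shows each $\ph_{n}:\cE_{n-1}\to\cE_{n}$ is a weak elementary expansion along $(\cI\setminus\cI^{\new}_n)\nearrow\cI$, where (1) applies term by term.
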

\begin{proof}
(1) Let $\cI^{\old}=\cI'$ and $\cI^{\new}=\cI\setminus \cI^{\old}$. Let $\cI^{+}=\cI\sqcup \{0'\}$, and extend the partial order from $\cI$ to $\cI^{+}$ by adding the relations $0<0'<i$ for all $i\in \cI^{\old}_{>0}$.  

We first claim that the map of posets $\cI\to \cI^{+}$ is cofinal. Indeed, we apply Quillen's Theorem A  to the map $i$, and note that for any $i\in \cI^{+}$, $\cI\times_{\cI^{+}}\cI^{+}_{\ge i}$ either has a unique minimal element $i$ if $i\in \cI$, or when $i=0'$, $\cI\times_{\cI^{+}}\cI^{+}_{\ge0'}=\{i\in \cI^{\old}|0\le i\}=\cI^{\old}_{>0}$ which is acyclic by assumption. Therefore the assumption for applying Quillen's Theorem A is satisfied. 

Extend $\cE$ to a cosheaf $\cE^{+}$ on $\cI^{+}$ by assigning $\cE^{+}_{0'}:=\cF_{0}$ and the functors $\cE^{+}_{0'}\to \cE^{+}_{i}$ for $i\in \cI^{\old}$ are given by the composition $\cF_{0}\to \cF_{i}\xr{\ph_{i}^{-1}}\cE_{i}$. The cofinality of $\cI$ in $\cI^{+}$ implies
\begin{equation}\label{colim+}
\colim_{\cI} \cE \isom\colim_{\cI^{+}} \cE^{+}.
\end{equation}

Next we would like to apply Proposition \ref{p:double colim} to write $\colim_{\cI^{+}} \cE^{+}$ as a double colimit where the outer colimit is parametrized by $\cI$ again. We apply it to $\cK=\cI^{+}$, $\cJ=\cI$ and the following assignment $\cI\ni i\mapsto \cI^{+}_{i}$: if $0\le i$, let $\cI^{+}_{i}=\{j\in\cI|j\le i\}\cup \{0'\}$; otherwise let $\cI^{+}_{i}=\{j\in\cI|j\le i\}$. We check that the acyclicity condition holds. Let $k\in \cI^{+}$. If $k\in \cI$, then $\{i\in \cI|k\in \cI^{+}_{i}\}$ contains $k$ as the unique minimal element hence is acyclic. If $k=0'$, then $\{i\in \cI|0'\in \cI^{+}_{i}\}$ contains $0'$ as the unique minimal element, hence again is acyclic. Proposition \ref{p:double colim} gives  a natural equivalence
\begin{equation}\label{double colim}
\colim_{i\in \cI}(\colim_{\cI^{+}_{i}}\cE^{+})\simeq \colim_{\cI^{+}}\cE^{+}.
\end{equation}
For each $i\in \cI^{\old}$, $\cI^{+}_{i}$ has a unique maximal element $i$, hence $\colim_{\cI^{+}_{i}}\cE^{+}\simeq \cE^{+}_{i}=\cE_{i}\simeq \cF_{i}$. For $i\in \cI^{\new}$, $\cI^{+}_{i}$ does not intersect $\cI^{\old}_{>0}$ since $\cI^{\old}_{>0}$ is up-closed. This implies $\{0,0', i\}$ is cofinal in $\cI^{+}_{i}$, hence $\colim_{\cI^{+}_{i}}\cE^{+}\simeq \cE_{0'}\coprod_{\cE_{0}}\cE_{i}\simeq \cF_{i}$ by assumption. Combining these calculations we see that the right side of \eqref{double colim} is canonically equivalent to $\colim_{\cI}\cF$. Combined with \eqref{colim+} we conclude that \eqref{colim unchanged} holds.

(2) We would like to reduce to the situation of (1). Let $\cI'=\cI_{0}\nearrow_{0_{1}} \cI_{1}\nearrow_{0_{2}} \cI_{2}\nearrow\cdots \nearrow _{0_{n}}\cI_{N}=\cI$ be a sequence of elementary expansions, with each $\cI_{n}$ up-closed in $\cI$ such that $\ph$ satisfies the conditions in Definition \ref{d:cosh exp}(2). For $1\le n\le N$, let $\cE_{n}$ be the cosheaf on $\cI$ that is $\cF|_{\cI_{n}}$ on $\cI_{n}$ and is $\cE|_{\cI\setminus \cI_{n}}$ on $\cI\setminus \cI_{n}$. For $\cI\setminus \cI_{n}\ni i\le i'\in \cI_{n}$, the transition functor $(\cE_{n})_{i}=\cE_{i}\to (\cE_{n})_{i'}=\cF_{i'}$ is defined to be the composition $\cE_{i}\xr{\ph_{i}}\cF_{i}\to\cF_{i'}$. Then $\cE_{0}=\cE$ and $\cE_{N}=\cF$ and there is a natural map $\ph_{n}: \cE_{n-1}\to \cE_{n}$. It suffices to show that $\ph_{n}$ induces an equivalence on colimits. 

Let $\cI^{\new}_{n}=\cI_{n}\setminus \cI_{n-1}$ and $\cI^{\old}=\cI\setminus \cI^{\new}_{n}$. We claim that $\cI$ is an elementary expansion of $\cI^{\old}$. Indeed, $\cI^{\new}_{n}$ has a unique minimal element $0_{n}$ by assumption, and $\cI^{\old}_{>0_{n}}=(\cI_{n-1})_{>0_{n}}$ because $\cI_{n-1}$ is up-closed in $\cI$. By construction, $\ph_{n}:\cE_{n-1}\to \cE_{n}$ is a weak elementary expansion along $\cI^{\old}\nearrow\cI$ in the sense of Definition \ref{d:cosh exp}(1). Therefore we reduce to the case of (1).
\end{proof}

\subsection{A categorical contraction principle}
If $Y$ is a manifold with a flow $\{\Phi_{t}\}_{t>0}$ that contracts $Y$ to a subspace $Z\subset Y$ as $t\to 0$, (i.e., the flow $\{\Phi_{t}\}_{t>0}$, viewed as an $\RR_{>0}$-action, extends to an action $\{\Phi_{t}\}_{t\geq 0}$ of the monoid $\RR_{\geq 0}$ with $Z = \Phi_0(Y)$) and if $\cF$ is a constructible sheaf on $Y$ locally-constant (hence constant) on flow lines,  then the restriction map gives an isomorphism $R\G(Y,\cF)\cong R\G(Z,\cF|_Z)$. This is usually called the {\em contraction principle} for sheaves.  Below we formulate and prove a categorical analog of this principle.


\begin{theorem}\label{th:contracting cosheaf}
	Let $\cJ \subset \cI$ be a subposet. Denote $s : \cJ \to \cI$ the inclusion and $\cI_0 = \cI \backslash \cJ$. Let $\cF: \cI \to \St^L$ be a functor, denote by $\cL : \cI_0 \to \St^L$ the cokernel of the natural map $(s_! s^* \cF)|_{\cI_0} \to \cF|_{\cI_0}$. Assume that 
	\begin{enumerate}
		\item  The maps of cosheaves on $\cI_{0}$
		\begin{equation*}
			\xymatrix{\cL & \ar[l]_-{\b} \cF|_{\cI_{0}} & \ar[l]_-{\a} (s_!s^{*}\cF)|_{\cI_{0}} 
			}
		\end{equation*}
	is a recollement of cosheaves in the sense of Section~\ref{sss:cosh rec}.
		\item $\cL$ is a locally constant cosheaf on $\cI_0$.
		\item The inclusion $|s|: |\cJ|\incl |\cI|$ is a homotopy equivalence.
	\end{enumerate}
Then the natural map is an equivalence:
$$\colim_{\cJ}s^{*}\cF\simeq \colim_{\cI}\cF.$$
\end{theorem}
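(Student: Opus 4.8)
The idea is to reduce Theorem~\ref{th:contracting cosheaf} to the expansion invariance (Proposition~\ref{p:pushout}) by constructing an auxiliary cosheaf on $\cI$ that interpolates between $s_!s^*\cF$ and $\cF$. The starting observation is that hypothesis~(1) gives, for each $i \in \cI_0$, a recollement
\begin{equation*}
\xymatrix{\cL_i & \ar[l]_-{\beta_i} \cF_i & \ar[l]_-{\alpha_i} (s_!s^*\cF)_i}
\end{equation*}
so that $\cF_i \simeq (s_!s^*\cF)_i \amalg_{\text{(something)}} \cL_i$ — more precisely, gluing $\cF_i$ from its closed subcategory $(s_!s^*\cF)_i$ and open quotient $\cL_i$. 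Since taking colimits over $\cI$ preserves recollements of cosheaves (Section~\ref{sss:cosh rec}, via Proposition~\ref{p:recoll}), we get a recollement
\begin{equation*}
\xymatrix{\colim_{\cI_0}\cL & \ar[l] \colim_{\cI}\cF & \ar[l] \colim_{\cI}(s_!s^*\cF)}
\end{equation*}
and similarly, trivially, $\colim_\cI (s_!s^*\cF) \simeq \colim_\cJ s^*\cF$ because $s_!$ is left Kan extension. Thus the theorem is equivalent to the vanishing $\colim_{\cI_0}\cL \simeq 0$; equivalently, the inclusion $\colim_\cI(s_!s^*\cF) \to \colim_\cI \cF$ is an equivalence.

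\textbf{Key steps.} First I would record the identification $\colim_\cI(s_!s^*\cF)\simeq\colim_\cJ s^*\cF$ and reduce, as above, to showing $\colim_{\cI_0}\cL\simeq 0$. Second, I would use hypothesis~(2): $\cL$ is locally constant on $\cI_0$, so by Proposition~\ref{p:loc} its colimit over each connected component of $\cI_0$ is the constant value $\cL_i$ — but this is not yet zero, so the real work is geometric. The third and central step is to exploit hypothesis~(3): $|\cJ|\hookrightarrow|\cI|$ is a homotopy equivalence. I would translate this into a combinatorial statement via the subdivision $sd(\cI)$ (Lemma~\ref{l:sd}), where a homotopy equivalence of geometric realizations can be realized, after sufficient subdivision, by a sequence of elementary collapses — dually, $\cI$ (or $sd(\cI)$) is built from $\cJ$ by a sequence of weak elementary expansions in the sense of Definition~\ref{d:poset exp}. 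Concretely: because $|\cJ|\hookrightarrow|\cI|$ is a homotopy equivalence of finite simplicial complexes (or a filtered colimit thereof), after barycentric subdivision the pair $(|sd(\cI)|,|sd(\cJ)|)$ admits a CW/simplicial structure exhibiting $sd(\cI)$ as an expansion of $sd(\cJ)$ — each added cell attaches along a contractible (acyclic) up-closed subposet, which is exactly the structure in Lemma~\ref{lem:contractinle ex}. Fourth, I would check that the natural map $sd(s_!s^*\cF)\to sd(\cF)$ (pulled back along $\pi: sd(\cI)\to\cI$) is an \emph{expansion of cosheaves} along $sd(\cJ)\subset sd(\cI)$ in the sense of Definition~\ref{d:cosh exp}: on the old part $sd(\cJ)$ it is an equivalence by construction of $s_!$, and at each newly-added element $i$ with minimal predecessor $0$ in its expansion stage, the square
\begin{equation*}
\xymatrix{(s_!s^*\cF)_0 \ar[d]\ar[r] & \cF_0\ar[d]\\ (s_!s^*\cF)_i\ar[r] & \cF_i}
\end{equation*}
must be a pushout — this is where hypotheses~(1) and~(2) get used: local constancy of $\cL$ makes the ``open part'' of the recollement constant along the contractible attaching region, forcing the pushout. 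Finally, Proposition~\ref{p:pushout}(2) gives $\colim_{sd(\cI)}sd(s_!s^*\cF)\simeq\colim_{sd(\cI)}sd(\cF)$, and Lemma~\ref{l:sd} descends this to $\colim_\cI(s_!s^*\cF)\simeq\colim_\cI\cF$, which is the claim.

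\textbf{Main obstacle.} The hard part is the third/fourth step: converting the topological hypothesis~(3) into the combinatorial expansion data and verifying the pushout condition at each expansion stage. A homotopy equivalence of geometric realizations does not literally produce an expansion of posets — one genuinely needs to pass to a subdivision and invoke simplicial-collapse theory (or argue cell-by-cell using that the relative complex $|\cI|/|\cJ|$ is contractible and can be built by attaching cells along contractible subcomplexes). Matching this cell structure with up-closed acyclic subposets as in Lemma~\ref{lem:contractinle ex}, \emph{and} simultaneously checking that $\cL$ being locally constant makes each attaching square a pushout, requires care: one must ensure the expansion can be chosen so that at each stage the relevant $\cL_0 \to \cL_i$ is an equivalence (guaranteed by hypothesis~(2)) and the recollement of hypothesis~(1) is compatible with the expansion filtration. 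I expect the cleanest route is to first handle the case where $\cI\setminus\cJ$ is finite and $|\cJ|\hookrightarrow|\cI|$ is a deformation retract realized by a sequence of elementary collapses, then pass to the general (filtered) case by a colimit argument, using that all functors in sight are continuous.
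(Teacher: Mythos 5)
You have correctly identified that the proof must combine simplicial-collapse technology with the expansion invariance of Proposition~\ref{p:pushout}, and your reduction to showing that the map $\colim_\cI(s_!s^*\cF) \to \colim_\cI\cF$ is an equivalence is the right opening move. However, there is a genuine gap in the central step.

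The gap is in your third/fourth steps, where you claim that since $|\cJ|\hookrightarrow|\cI|$ is a homotopy equivalence, after subdivision the pair $sd(\cJ)\subset sd(\cI)$ ``admits a simplicial structure exhibiting $sd(\cI)$ as an expansion of $sd(\cJ)$.'' This is false. A homotopy equivalence of simplicial pairs does not give a simplicial collapse, even after arbitrary subdivision: the elementary counterexample is Bing's house with two rooms, which is a contractible $2$-complex that does not collapse to a point (nor does any barycentric subdivision). More generally, collapsibility corresponds to simple-homotopy type, and a simple-homotopy equivalence is a sequence of both expansions and collapses, not a collapse sequence. So the expansion data you propose to feed into Proposition~\ref{p:pushout}(2) simply may not exist. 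You flag the conversion as the ``main obstacle'' and gesture toward it, but the mechanism you propose does not close it.

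The paper's actual proof circumvents this by never trying to collapse all of $Y=sd^2|\cI|$ onto $Z=sd^2|\cJ|$. Instead it uses Whitehead's lemma (Lemma~\ref{l:collapse}), which produces a collapse of the \emph{closed star} $U=\str(Z)\subset Y$ to $Z$ and holds unconditionally, without using hypothesis~(3) at all. The theorem is then proved via a Mayer--Vietoris decomposition $Y=U\cup V$ with $V=Y\setminus\str^\circ(Z)$ and $W=U\cap V$: Claim (2) handles the $U$-piece using the Whitehead collapse and Proposition~\ref{p:pushout} (close to what you attempt, but on the correct smaller region); Claim (1) handles the $V$-piece by an entirely different mechanism, namely the recollement structure on the colimits over $V$ and $W$, where hypothesis~(3) is used to show $W\hookrightarrow V$ is a homotopy equivalence and hypothesis~(2) (local constancy of $\cL$) plus Proposition~\ref{p:loc} then force $\int_W\cL_W\to\int_V\cL_V$ to be an equivalence. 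This two-region decomposition is the essential idea missing from your plan. Separately, you should also be careful with your assertion that passing to colimits yields a recollement $\colim_{\cI_0}\cL\leftarrow\colim_\cI\cF\leftarrow\colim_\cI(s_!s^*\cF)$: hypothesis~(1) gives a recollement of cosheaves only over $\cI_0$, and extending it to a recollement of cosheaves over all of $\cI$ requires checking adjointability of the transition squares for pairs $i\le i'$ with $i\in\cI_0$ and $i'\in\cJ$, which is not automatic (nor is $\cJ$ assumed down-closed).
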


Before proving the theorem, we give a situation where it is easy to compute the left Kan extension $s_{!}s^{*}\cF$ of $s^{*}\cF$. Let $f: \cI\to \cJ$ be a coCartesian map of finite posets such that each fiber $f^{-1}(j)$ contains a unique minimal element $s(j)$. Then $s: \cJ\to \cI$ is a section to $f$. Let $\cI_{0}=\cI\setminus s(\cJ)$. Let $\cF$ be a cosheaf on $\cI$. 

\begin{lemma}\label{l:pull is s!} The pullback $f^{*}s^{*}\cF$ is the left Kan extension $s_{!}(s^{*}\cF)$ of $s^{*}\cF$ along $s: \cJ\to \cI$. 
\end{lemma}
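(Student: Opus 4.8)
The plan is to prove Lemma~\ref{l:pull is s!} directly from the pointwise formula for left Kan extension along $s$. Recall that for a cosheaf $\cG=s^{*}\cF$ on $\cJ$ and the inclusion $s:\cJ\to\cI$, the value of $s_{!}\cG$ at an object $i\in\cI$ is the colimit of $\cG$ restricted to the comma category $\cJ\times_{\cI}\cI_{/i}$, i.e.\ the subposet of $\cJ$ consisting of those $j\in\cJ$ with $s(j)\le i$. So the whole lemma reduces to showing that this comma category has a terminal object, namely $s(f(i))$, and then that the structure maps agree with those of $f^{*}s^{*}\cF$.

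First I would observe that since $f$ is order-preserving and $s$ is a section, $s(f(i))\le i$ would follow if $s(f(i))$ is the minimal element of the fiber $f^{-1}(f(i))$ and this minimal element lies below every element of its fiber that is $\le i$ --- but more to the point I want $s(f(i))$ to be the \emph{maximal} element of the comma poset $\{j\in\cJ : s(j)\le i\}$. For this: if $j\in\cJ$ satisfies $s(j)\le i$, then applying $f$ gives $j=f(s(j))\le f(i)$ in $\cJ$; now $s(j)\le i$ and $j\le f(i)$, and since $f$ is coCartesian and $s(j)\in f^{-1}(j)$ with $j\le f(i)$, there is a (unique) element $\partial_{j\to f(i)}(s(j))\in f^{-1}(f(i))$ with $s(j)\le \partial_{j\to f(i)}(s(j))$; but $s(f(i))$ is the unique minimal element of $f^{-1}(f(i))$, so $s(f(i))\le \partial_{j\to f(i)}(s(j))$ --- this does not yet give $s(j)\le s(f(i))$. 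I would instead argue: from $s(j)\le i$ and the coCartesian property applied to $s(j)\in f^{-1}(j)$ and $j\le f(i)$, the element $\partial_{j\to f(i)}(s(j))$ is characterized as the minimum of $\{k\in\cI: s(j)\le k,\ f(i)\le f(k)\}$; since $i$ itself lies in this set ($s(j)\le i$, $f(i)\le f(i)$), we get $\partial_{j\to f(i)}(s(j))\le i$, and since it lies in $f^{-1}(f(i))$ and $s(f(i))$ is its minimum... Let me take the cleaner route: $j\le f(i)$ in $\cJ$ gives $s(j)\le s(f(i))$ because $s$ is order-preserving. Combined with $s(j)\le i$, this shows $s(f(i))$ is an upper bound in $\cJ$ for $\{j: s(j)\le i\}$, provided $s(f(i))$ itself is in that set, i.e.\ $s(f(i))\le i$. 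And $s(f(i))\le i$ holds because $s(f(i))$ is the minimal element of $f^{-1}(f(i))$ and $i\in f^{-1}(f(i))$. Hence $s(f(i))$ is the terminal (maximal) object of the comma poset $\{j\in\cJ: s(j)\le i\}$.

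With the comma poset having a terminal object, the colimit computing $(s_{!}s^{*}\cF)_{i}$ is simply the value $(s^{*}\cF)_{f(i)}=\cF_{s(f(i))}=(f^{*}s^{*}\cF)_{i}$. To upgrade this to an equivalence of functors $\cI\to\St^{L}$, I would check that the transition maps match: for $i\le i'$ in $\cI$, the map $(s_{!}s^{*}\cF)_{i}\to (s_{!}s^{*}\cF)_{i'}$ induced on colimits is, under the identification with values at the terminal objects, the map $\cF_{s(f(i))}\to\cF_{s(f(i'))}$ coming from $s(f(i))\le s(f(i'))$ (which holds since $f$ and $s$ are order-preserving), and this is exactly the transition map of $f^{*}s^{*}\cF$ for $i\le i'$. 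Since all constructions are natural, the identifications assemble into a natural equivalence $s_{!}s^{*}\cF\simeq f^{*}s^{*}\cF$.

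I do not anticipate a genuine obstacle here; the only delicate point is the bookkeeping in the first step, namely verifying that the comma poset $\{j\in\cJ: s(j)\le i\}$ really is directed toward the single element $s(f(i))$ using the coCartesian hypothesis and the uniqueness of minimal elements in fibers. Once that is pinned down, the rest is formal: pointwise left Kan extensions along colimits with terminal comma objects reduce to evaluation, and naturality is automatic.
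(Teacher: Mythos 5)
Your proof is correct and takes essentially the same approach as the paper's. The paper's proof is just the compressed version of your "cleaner route": the comma poset $\{j\in\cJ: s(j)\le i\}$ has unique maximal element $f(i)$ (since $s(f(i))\le i$ by minimality of $s(f(i))$ in $f^{-1}(f(i))$, and $s(j)\le i$ implies $j=f(s(j))\le f(i)$ since $f$ is order-preserving), so the left Kan extension colimit collapses to evaluation at $f(i)$. A few remarks on your write-up: the detour through the coCartesian structure and $\partial_{j\to f(i)}(s(j))$ is a dead end, as you yourself conclude; the final statement that "$s(f(i))$ is the terminal object of the comma poset" should read "$f(i)$", since the comma poset lives in $\cJ$ rather than $\cI$ (you conflate $j$ with $s(j)$ in a few places); and you should note that $s$ being order-preserving is not a formal consequence of the stated hypotheses but is needed for $s^*\cF$ and $s_!$ to even be defined, so it is an implicit standing assumption (it holds in the paper's application where $s$ is a subposet inclusion).
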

\begin{proof}
For each $i\in \cI$, $\{j\in \cJ|s(j)\le i\}$ has a unique maximal element $f(i)$. Therefore the value of $s_{!}(s^{*}\cF)$ at $i$ is $(s^{*}\cF)_{f(i)}=\cF_{s(f(i))}=(f^{*}s^{*}\cF)_{i}$.
\end{proof}


Since the map $f$ is coCartesian, $f_{!}\cF$ is equivalent to $\int_{f}\cF$ by Proposition~\ref{p:push Kan}.  We get the following corollary from Theorem \ref{th:contracting cosheaf}.



\begin{cor}\label{c:contracting cosheaf} Under the above notations, assume the conditions in Theorem \ref{th:contracting cosheaf} hold. Then the natural maps are equivalences
$$\colim_{\cJ}s^{*}\cF\simeq \colim_{\cI}\cF\simeq \colim_{\cI}\int_{f}\cF.$$
\end{cor}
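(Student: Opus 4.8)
\textbf{Proof plan for Corollary \ref{c:contracting cosheaf}.}

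The statement asserts two equivalences, and both are immediate formal consequences of results stated earlier in the excerpt, so the plan is essentially to check that the hypotheses line up. The first equivalence $\colim_{\cJ}s^{*}\cF\simeq\colim_{\cI}\cF$ is exactly the conclusion of Theorem \ref{th:contracting cosheaf}, applied to the subposet $\cJ=s(\cJ)\subset\cI$ with complement $\cI_{0}=\cI\setminus s(\cJ)$. So the only thing to verify is that the three hypotheses of that theorem are indeed what we are assuming here. Hypotheses (1), (2), (3) of Theorem \ref{th:contracting cosheaf} are the local-constancy and recollement conditions on the cokernel cosheaf $\cL$ of $(s_{!}s^{*}\cF)|_{\cI_{0}}\to\cF|_{\cI_{0}}$, together with the homotopy equivalence $|s(\cJ)|\incl|\cI|$; these are precisely the conditions I would stipulate in the corollary's statement (``assume the conditions in Theorem \ref{th:contracting cosheaf} hold''). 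The one supplementary point worth spelling out is that in the present setup — $f:\cI\to\cJ$ a coCartesian map of finite posets with each fibre $f^{-1}(j)$ having a unique minimal element $s(j)$, and $s$ the resulting section — the left Kan extension $s_{!}s^{*}\cF$ appearing in Theorem \ref{th:contracting cosheaf} has the explicit description $s_{!}s^{*}\cF\simeq f^{*}s^{*}\cF$. This is exactly Lemma \ref{l:pull is s!}, whose proof is the one-line observation that for $i\in\cI$ the poset $\{j\in\cJ\mid s(j)\le i\}$ has $f(i)$ as unique maximal element. Thus the cokernel cosheaf $\cL$ on $\cI_{0}$ is the cokernel of $(f^{*}s^{*}\cF)|_{\cI_{0}}\to\cF|_{\cI_{0}}$, which is the form in which the hypotheses will typically be checked in applications.

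For the second equivalence $\colim_{\cI}\cF\simeq\colim_{\cI}\int_{f}\cF$, I would invoke Proposition \ref{p:push Kan} (Lurie's $\cite[\text{Proposition }4.3.3.10]{lurieHigherToposTheory2009a}$): since $f:\cI\to\cJ$ is coCartesian, the cosheaf $\int_{f}\cF$ on $\cJ$ (whose value at $j$ is $\colim_{i\in f^{-1}(j)}\cF_{i}$) is a left Kan extension of $\cF$ along $f$, and hence the general property of left Kan extensions gives $\colim_{\cI}\cF\simeq\colim_{\cJ}\int_{f}\cF$. Wait — note that the corollary as stated writes the right-hand side as $\colim_{\cI}\int_{f}\cF$, which should read $\colim_{\cJ}\int_{f}\cF$ since $\int_{f}\cF$ is a cosheaf on $\cJ$, not on $\cI$; I would either correct the index to $\cJ$ or interpret it as such, and in the proof simply cite Proposition \ref{p:push Kan}. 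Combining the two, we get the chain $\colim_{\cJ}s^{*}\cF\simeq\colim_{\cI}\cF\simeq\colim_{\cJ}\int_{f}\cF$, which is the assertion.

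There is no serious obstacle here: the corollary is a packaging of Theorem \ref{th:contracting cosheaf} (the substantive result, with its gradient-flow/contractibility input via Proposition \ref{p:contractible}) together with two bookkeeping lemmas. The only thing requiring a moment's care is to make sure that the ``recollement of cosheaves'' hypothesis (1) of Theorem \ref{th:contracting cosheaf} is formulated for the cokernel $\cL$ of the map out of $s_{!}s^{*}\cF = f^{*}s^{*}\cF$ restricted to $\cI_{0}$, and in particular that $\a$ is fully faithful with the expected image — but this is exactly what is assumed, so the proof is a matter of assembling the citations in order: first Lemma \ref{l:pull is s!} to identify $s_{!}s^{*}\cF$, then Theorem \ref{th:contracting cosheaf} for the left equivalence, then Proposition \ref{p:push Kan} for the right equivalence.
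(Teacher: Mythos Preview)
Your proposal is correct and matches the paper's approach exactly: the paper states the corollary without a separate proof, having already noted in the preceding sentences that Lemma~\ref{l:pull is s!} identifies $s_!s^*\cF\simeq f^*s^*\cF$, that Proposition~\ref{p:push Kan} gives $\colim_\cI\cF\simeq\colim_\cJ\int_f\cF$, and that the first equivalence is Theorem~\ref{th:contracting cosheaf}. Your observation that the index $\cI$ on the right-hand colimit should read $\cJ$ is also correct---$\int_f\cF$ is a cosheaf on $\cJ$, and this is a typo in the paper.
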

%

The proof of Theorem \ref{th:contracting cosheaf} will be given after recalling some facts about simplicial complexes.

\sss{Simplicial complexes and exit poset}
For a poset $\cI$ we have a simplicial complex  $|\cI|$. For a simplicial complex $Y$ we define its {\em exit poset} $\frP(Y)$ to be the set of (closed) faces $\s$ of $Y$ with the partial order given by inclusions. From the definition, faces in $|\cI|$ are parametrized by non-degenerate simplices in $N(\cI)$, therefore we get a canonical isomorphism of posets
\begin{equation}\label{sdI}
sd(\cI)\cong \frP(|\cI|)^{\opp}.
\end{equation}

For any subset $\cI'\subset \cI$ with the inherited poset structure, $|\cI'|\subset |\cI|$ is a closed subcomplex. The complement $|\cI|\setminus|\cI'|$ is the open star $\str^{\c}(|\cI\setminus \cI'|)$.

For a simplicial complex $Y$, let $sd(Y)$ be its barycentric subdivision. Then $sd(|\cI|)$ can be identified with $|sd(\cI)|$ as simplicial complexes.

For a simplicial complex $X$, by {\em a cosheaf} $\cF$ on $X$ we mean a cosheaf $\cF$ on $\frP(X)^{\opp}$, i.e., a functor $\cF: \frP(X)^{\opp}\to \St^{L}$ (equivalently, by passing to right adjoints, it is the same datum as a functor $\frP(X)\to \St^{R}$). We will write $\int_{X}\cF$ for $\colim_{\frP(X)^{\opp}}\cF$. We write $\cF_{Z}=\cF|_{\frP(Z)^{\opp}}$ for the restriction of a cosheaf to a subcomplex $Z\subset Y$.

Let $\cF$ be a cosheaf on a poset $\cI$. By \eqref{sdI}, $sd(\cF)$ is a cosheaf on $\frP(|\cI|)^{\opp}$, hence a cosheaf on $|\cI|$. By Lemma \ref{l:sd}, there is a canonical equivalence
\begin{equation*}
\int_{|\cI|}sd(\cF)\simeq \colim_{\cI}\cF.
\end{equation*}

\sss{Simplicial collapse}
Recall a simplex $\tau \subset K$ in a simplicial complex $K$ is called a {\em free face} if there exists a unique maximal simplex $\sigma \subset K$ such that $\tau\sne \sigma$. In this case,  the {\em simplicial collapse} of $K$ along the free face $\tau \subset K$ is defined to be the subcomplex $K' = K \setminus (\cup_{ \tau \subset\gamma \subset \sigma} \gamma^{\c}) \subset K$. Note the pair $K'  \subset K$  implicitly knows the free face $\tau$ as the unique minimal face of $K$ removed to obtain $K'$. 
 Given general  simplicial complexes $K_0\subset K$, by a {\em simplicial collapse} of $K$ to $K_0$, we will mean a  finite sequence of subcomplexes $K_0 \subset \cdots \subset  K_{n-1} \subset K_{n} \subset \cdots \subset K_N =  K$ such that  $K_{n-1}$ results from $K_n$ via a simplicial collapse along a free face  $\tau_n \subset K_n$.
   
\begin{lemma}[Whitehead {\cite[Corollary on page 252]{whiteheadSimplicialSpacesNuclei1939}}]\label{l:collapse}
   For an inclusion of finite simplicial complexes   $Z_0\subset Y_0$,
   after the second barycentric subdivision $Z = sd^2(Z_0) \subset sd^2(Y_0) = Y$, there is always a simplicial collapse of 
  the closed star   $U = \str(  Z) \subset  Y$ to $Z$.
 \end{lemma}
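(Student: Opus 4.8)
\textbf{Plan for the proof of Lemma~\ref{l:collapse}.}

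The assertion is a classical fact from simple-homotopy theory: for any inclusion of finite simplicial complexes $Z_0 \subset Y_0$, after passing to the second barycentric subdivision, the closed star of $Z = sd^2(Z_0)$ in $Y = sd^2(Y_0)$ collapses simplicially onto $Z$. The plan is simply to assemble this from the statement of Whitehead's original result, which is exactly cited in the bracketed reference, so the ``proof'' will essentially be a pointer-and-bookkeeping argument rather than an independent development. First I would recall that Whitehead proved: given a simplicial complex $K$ and a full subcomplex $L \subset K$ (fullness meaning: every simplex of $K$ all of whose vertices lie in $L$ is itself in $L$), the closed simplicial neighborhood $N(L, K) := \str(L)$ collapses to $L$. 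This is the ``Corollary on page 252'' of \cite{whiteheadSimplicialSpacesNuclei1939}. So the only thing that needs checking is that after two barycentric subdivisions, $Z = sd^2(Z_0)$ becomes a full subcomplex of $Y = sd^2(Y_0)$, and then the lemma follows verbatim.

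The key step, then, is the fullness claim. I would argue it in two stages. After one barycentric subdivision, a subcomplex $sd(Z_0) \subset sd(Y_0)$ is ``locally full'' in an appropriate sense but need not literally be full; however, it is a standard fact that one barycentric subdivision of a pair $(Y_0, Z_0)$ already makes $Z_0$ into a full subcomplex provided $Z_0$ was a subcomplex to begin with. Indeed, a vertex of $sd(Y_0)$ is a barycenter $\widehat\sigma$ of a simplex $\sigma$ of $Y_0$, and $\widehat\sigma \in sd(Z_0)$ iff $\sigma \in Z_0$; a simplex of $sd(Y_0)$ is a flag $\sigma_0 \subsetneq \cdots \subsetneq \sigma_k$, and all its vertices lie in $sd(Z_0)$ iff all the $\sigma_i \in Z_0$, in which case the flag itself is a simplex of $sd(Z_0)$. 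Hence $sd(Z_0)$ is full in $sd(Y_0)$. Taking a second barycentric subdivision preserves fullness (a barycentric subdivision of a full subcomplex is full in the barycentric subdivision of the ambient complex, by the same flag argument applied to $sd(Y_0)$ and its full subcomplex $sd(Z_0)$). So I would prove the single lemma: \emph{if $L \subset K$ is a subcomplex then $sd(L) \subset sd(K)$ is full}, and apply it twice (first to $(Y_0, Z_0)$, then to $(sd(Y_0), sd(Z_0))$) — actually one application already yields fullness, but the statement of our lemma is phrased with $sd^2$, presumably because Whitehead's convention or some later normalization wants a subdivided collapse; in any case two applications certainly suffice, and I would not try to optimize.

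The main (mild) obstacle is purely expository: making sure the cited form of Whitehead's corollary is stated in the variant we need — closed star collapsing onto a full subcomplex — and tracking the precise notion of ``closed star'' (the union of all closed simplices meeting $Z$) so that it matches $\str(Z)$ as used elsewhere in the excerpt (e.g.\ in Section~\ref{sss:Pset} and in Proposition~\ref{p:contractible}). There is a subtlety that $\str(Z)$ in our sense is a subcomplex only because $Z$ is full in $Y$ — the closed star of a non-full subcomplex need not be a subcomplex — so the fullness step is doing real work and should be stated clearly. Once fullness is established, the collapse $\str(Z) \searrow Z$ is literally Whitehead's corollary, and the sequence of elementary simplicial collapses it provides is exactly the data required by our Definition of simplicial collapse. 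I would therefore present the proof in three short paragraphs: (i) reduce to fullness; (ii) prove $sd(L)$ is full in $sd(K)$ by the flag argument, hence $Z = sd^2(Z_0)$ is full in $Y = sd^2(Y_0)$; (iii) invoke \cite{whiteheadSimplicialSpacesNuclei1939} to conclude $\str(Z)$ collapses to $Z$.
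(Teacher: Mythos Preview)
Your fullness argument --- one barycentric subdivision makes any subcomplex full, via the flag description of simplices of $sd(K)$ --- is correct and is exactly the paper's first step. But the reduction you propose after that rests on a false statement: ``$L \subset K$ full $\Rightarrow$ $\str(L)$ collapses to $L$'' is not true in general, so you cannot invoke it as the content of Whitehead's corollary and declare the second subdivision a mere convention. For a counterexample, take $K$ to be two $2$-simplices $abc$ and $abd$ glued along the edge $ab$, and $L = \{c, d\}$. Then $L$ is full (there is no edge $cd$ in $K$), and $\str(L) = K$; but $\chi(K) = 4 - 5 + 2 = 1 \ne 2 = \chi(L)$, and elementary simplicial collapses preserve Euler characteristic, so no collapse from $K$ to $L$ exists.

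The second subdivision is therefore not redundant --- it is precisely what makes the collapse possible. The paper uses it as follows. Starting from the full pair $Z_1 \subset Y_1$ obtained after one subdivision, fullness guarantees that each simplex of the open star $\str^\circ(Z_1)$ not lying in $Z_1$ meets $Z_1$ in a single proper face. One then passes to $Z = sd(Z_1) \subset Y = sd(Y_1)$ and orders the simplices of $U \setminus Z$ first according to (reverse inclusion on) the open simplex of $\str^\circ(Z_1) \setminus Z_1$ containing them; this reduces the global collapse to an independent local problem for each such simplex $\tau_1$: inside $\tau = sd(\tau_1)$, collapse $\str(\rho)$ to $\str(\rho) \cap \partial\tau$, where $\rho = sd(\rho_1)$ and $\rho_1 = \tau_1 \cap Z_1$ is a proper face. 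The paper then solves this local problem with an explicit ordering. Your plan becomes correct once you acknowledge that Whitehead's actual result already incorporates this extra subdivision of a full pair; the ``$sd^2$'' in the lemma is doing genuine work, not bookkeeping.
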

   
   \begin{proof}
   We give a sketch here and refer to \cite{whiteheadSimplicialSpacesNuclei1939} for more details.
   
   After the first barycentric subdivision, $Z_1 = sd(Y_0)$ will be a {\em full subcomplex} of $Y_1 = sd(Y_0)$, i.e.~if the vertices of a simplex of $Y_1$ lie in $Z_1$, then the simplex itself lies in $Z_1$. To see this, recall the $k$-simplices $\s\subset Y_1$ are given by $k$-flags of simplices $\s_0 \subset \s_1 \subset \cdots \subset \s_k \subset Y_0$. 
 The vertices of $\s$  are the  $0$-flags $\s_0,  \s_1, \ldots, \s_k \subset Y_0$ that occur in the flag. If these $0$-flags  satisfy $\s_0,  \s_1, \ldots, \s_k \subset Z_1$,  then the simplices satisfy  $\s_0,  \s_1, \ldots, \s_k \subset Z_0$. 
  Thus the $k$-flag representing $\s$ indeed satisfies  $\s_0 \subset \s_1 \subset \cdots \subset \s_k \subset Z_0$, and hence $\s\subset Z_1$. 
 
 Now starting from the full subcomplex  $Z_1 \subset Y_1$,
 after a barycentric subdivision 
  $Z = sd(Z_1) \subset sd(Y_1) = Y$, we can always find a simplicial collapse 
  of  the closed star  $U = \str(  Z) \subset  Y$ to $Z$ as follows. 
  Such a simplicial collapse will be determined by  a total order on the simplices of $U \setminus Z$. We can take any total order compatible with the following partial order. 
  
  First, consider     
   the prior open star $U_1^\circ = \str^\circ(  Z_1) \subset  Y_1$, and the complement  $U_1^\circ \setminus Z_1$.
    Note every relatively open simplex $\s^\circ \subset U \setminus Z$  lies in a unique relatively open  simplex $\hat \s^\circ \subset U_1^\circ \setminus Z_1$.
    
   Partially order the simplices of $U_1^\circ \setminus Z_1$ opposite to their natural order by closure inclusion; so the partial order begins with the maximal simplices of $U_1^\circ \setminus Z_1$ and proceeds  to the minimal. 
  (Note $U_1^\circ \setminus Z_1$ has no vertices -- its simplex closures must contain vertices from both $Z_1$ and its complement -- so its minimal simplices will at least be edges).

Partially  order the simplices $\s^\circ \subset U \setminus Z$  by the above partial order on the simplices $\hat\s^\circ\subset  U_1^\circ \setminus Z_1$ containing them.
  From here, it suffices to  independently define for each $\tau^\circ \subset U_1^\circ \setminus Z_1$  with partial closure $\tau =\ol { \tau^\circ} \subset U_1^\circ$, 
    a simplicial collapse of the subcomplex $\tau \cap U$ to the subcomplex $\partial \tau \cap U$.  
    
    Thus it remains to solve the following general local problem: given a closed simplex $\tau_1$ and a proper closed facet $\rho_1 \subset \tau_1$, consider the barycentric subdivision $\tau = sd(\tau_1)$ and its subcomplex $ \rho = sd(\rho_1) \subset \tau$. Let $U = \str(\rho) \subset \tau$ be the closed star of $\rho \subset \tau$. Then we need a simplicial collapse of $U$ to $U \cap \partial \tau$. To achieve this, we can take any total order on the simplices of $U \setminus (U \cap \partial \tau)$ compatible with the following partial order. First, partially order the simplices of $U \setminus (U \cap \partial \tau)$ opposite to their natural order by closure inclusion; so the partial order begins with the maximal simplices of $U \setminus (U \cap \partial \tau)$ and proceeds  to the minimal. Then refine this partial ordering, by further ordering the simplices of $U \setminus (U \cap \partial \tau)$ starting with those with the maximal number of vertices not in $\rho$ and ending with
    those with the minimal number of vertices not in $\rho$. It is elementary to check this indeed gives 
    a simplicial collapse of $U$ to $U \cap \partial \tau$.
\end{proof}

\sss{Proof of Theorem \ref{th:contracting cosheaf}}
Let $\cE=s_{!}s^{*}\cF$. We need to show that the adjunction map $\a: \cE\to \cF$ induces an equivalence $\colim_{\cI}\cE\simeq \colim_{\cI}\cF$.

Let $Y_{0}=|\cI|$ and $Z_{0}=|\cI_{1}|=|s(\cJ)|$, which are finite simplicial complexes by assumption, and $Z_{0}\subset Y_{0}$ is a closed subcomplex. We denote $Y=sd^2(Y_0)$ and $Z = sd^2(Z_0)$. Then $sd(\cF)$ and $sd(\cE)$ define cosheaves on $Y_{0}$ and $Z_{0}$. We abuse the notation to denote the pullback of $sd(\cF),sd(\cE)$ to $Y$ and $Z$ still by $\cF$ and $\cE$. By Lemma \ref{l:sd}, it suffices to show the natural map $\int_{Y}\cE\to \int_{Y}\cF$ is an equivalence. 

The locally constant cosheaf $\cL$ on $\cI_{0}$ defines a locally constant cosheaf $sd(\cL)$ on $sd(\cI)\setminus sd(s(\cJ))$ via pullback by the first vertex map $sd(\cI)\setminus sd(s(\cJ))\to \cI\setminus s(\cJ)=\cI_{0}$. In other words, $\cL$ pullbacks to a locally constant cosheaf $sd(\cL)$ on $\frP(Y_{0})^{\opp}\setminus \frP(Z_{0})^{\opp}$. Iterating this procedure, we get a locally constant cosheaf $sd^{3}(\cL)$ on $\frP(Y)^{\opp}\setminus \frP(Z)^{\opp}$. For simplicity we still denote it by $\cL$. For a cosheaf $\cF'$ on $Y$, we denote its restriction to $\frP(Y)^{\opp}\setminus \frP(Z)^{\opp}$ by $\cF'_{Y\setminus Z}$.  The assumptions on the original $\cF$ imply the maps $\a$ and $\b$ restricted to $Y\setminus Z$ extend to a recollement of cosheaves on $\frP(Y)^{\opp}\setminus \frP(Z)^{\opp}$: 
\begin{equation*}
\xymatrix{\cL \ar@<2ex>[r]^{} \ar@<-2ex>[r]^{} &  \ar[l]_-{\b}  \cF_{Y\setminus Z} \ar@<2ex>[r]^{}  \ar@<-2ex>[r]^{}  &   \ar[l]_-{\a}  \cE_{Y\setminus Z}}
\end{equation*}

Let $U^\circ  = \str^\circ(  Z) \subset  Y$ be the open star of $Z$, and $U = \str(  Z) \subset  Y$  the closed star of $Z$. Set $V =  Y \setminus U^\circ$ and $W = U \cap V$.   By Corollary \ref{c:MV}, we have
\begin{equation}\label{MV for F}
\int_{U}\cF_{U}\coprod_{\int_{W}\cF_{W}}\int_{V}\cF_{V}\isom \int_{Y}\cF.
\end{equation}
The same applies to $\cE$ in place of $\cF$. We will prove the following claims:
\begin{enumerate}
\item The diagram
\begin{equation}\label{pushout W to V}
\xymatrix{ \int_{W}\cE_{W}\ar[r] \ar[d]& \int_{W}\cF_{W}\ar[d]\\
\int_{V}\cE_{V}\ar[r] & \int_{V}\cF_{V}}
\end{equation}
is a pushout square.
\item The natural map $\int_{U}\cE_{U}\to \int_{U}\cF_{U}$ is an equivalence.
\end{enumerate}
We first show that these two claims imply the proposition. Let $\cD_{1}$ be the poset of proper subsets of $\{0,1\}$. We define a cosheaf $\cA$ on $\cD_{1}$ by
\begin{equation*}
\cA_{\vn}=\int_{W}\cE_{W}, \quad \cA_{\{1\}}=\int_{U}\cE_{U}, \quad \cA_{\{0\}}=\int_{V}\cE_{V}.
\end{equation*}
Define another cosheaf $\cA'$ on $\cD_{1}$ by 
\begin{equation*}
\cA'_{\vn}=\int_{W}\cF_{W}, \quad \cA'_{\{1\}}=\int_{U}\cF_{U}, \quad \cA'_{\{0\}}=\int_{V}\cF_{V}.
\end{equation*}
Then $\a$ induces a map of cosheaves $\cA\to \cA'$. The inclusion $\{\{0\}\}\subset \cD_{1}$ is a weak elementary expansion in the sense of Definition \ref{d:poset exp}(1), and $\cA\to \cA'$ is a weak elementary expansion along $\{\{0\}\}\nearrow \cD_{1}$. Then Proposition \ref{p:pushout}(1) applies to the situation to conclude that $\colim_{\cD_{1}}\cA\isom\colim_{\cD_{1}}\cA'$. By \eqref{MV for F} and its analog for $\cE$, we conclude that $\int_{Y}\cE\isom\int_{Y}\cF$.

We prove (1). By the second assumption, over $V$, the termwise recollement 
\begin{equation}\label{ab rec}
\xymatrix{\cL_{V} & \ar[l]_{\b_{V}} \cF_{V} & \ar[l]_{\a_{V}} \cE_{V}}
\end{equation}
extends to a recollement of cosheaves, i.e., all six-term diagrams relating the values of \eqref{ab rec} at two faces $\s\subset \t$ in $V$ are morphisms of recollements. By Proposition~\ref{p:recoll}(1), we conclude that the colimits also fit into  recollements
\begin{equation*}
\xymatrix{\int_{V}\cL_{V}\ar@<2ex>[r]\ar@<-2ex>[r] & \ar[l]_{\b_{V}} \int_{V}\cF_{V} \ar@<2ex>[r]\ar@<-2ex>[r] & \ar[l]_{\a_{V}} \int_{V}\cE_{V}}
\end{equation*}
The same holds for $W$ in place of $W$, so we have a diagram where the two rows extend to recollements
\begin{equation}\label{int W to V rec}
\xymatrix{\int_{W}\cL_{W}\ar[d]& \ar[l]_{\b_{W}} \int_{W}\cF_{W}\ar[d]  & \ar[l]_{\a_{W}} \int_{W}\cE_{W}\ar[d]\\
\int_{V}\cL_{V} & \ar[l]_{\b_{V}} \int_{V}\cF_{V}  & \ar[l]_{\a_{V}} \int_{V}\cE_{V}}
\end{equation}
Moreover, by Proposition~\ref{p:recoll}(2) this is a morphism of recollements.
Now consider the map
\begin{equation}\label{int W to V}
\int_{W}\cL_{W}\to \int_{V}\cL_{V}.
\end{equation}
We claim that $W\incl V$ is a homotopy equivalence. Indeed, since  the inclusions $Z\subset Y$,  $Z\subset U$ are homotopy equivalences, the inclusion
$U\subset Y$ is as well. Therefore there is a deformation retraction of $Y$ to $U$. (See for example~\cite[Chapter 0]{hatcherAlgebraicTopology2002}.)  Removing  $U^\circ$ then gives a deformation retraction 
 of $V= Y \setminus  U ^\circ$ to $W = U \setminus  U ^\circ$; in particular, the inclusion
 $W\incl V$ is a homotopy equivalence. Since $\cL_{V}$ is locally constant, we can then apply Proposition \ref{p:loc} to conclude that \eqref{int W to V} is an equivalence. Then by Corollary \ref{c:coprod} applied to the morphism of recollements \eqref{int W to V rec}, we conclude that \eqref{pushout W to V} is a pushout diagram.
 
We prove (2). By Lemma \ref{l:collapse}, we have a sequence of subcomplexes $Z=K_{0}\subset K_{1}\subset\cdots K_{N}=U$ such that $K_{n-1}$ is a simplicial collapse of $K_{n}$ along a free face $\t_{n}$, $1\le n\le N$. We claim $\frP(K_{n})^{\opp}$ is a weak elementary expansion of $\frP(K_{n-1})^{\opp}$ with minimal element in $\frP(K_{n})^{\opp}\setminus \frP(K_{n-1})^{\opp}$ given by the unique maximal face $\s_{n}$ in $K_{n}$ containing $\t_{n}$. Indeed, we use notations from Lemma \ref{lem:contractinle ex}. Let $d=\dim \s_{n}$, we can identify $\frP(\s_{n})$ with $\cD_{d}$ after choosing an identification of the vertices of $\s_{n}$ with $[d]$, and then $\frP(K_{n-1}\cap \s_{n})^{\opp}$ is identified with $\cD_{d}^{J}$ for some proper subset $J\subset [d]$ corresponding to $\t_{n}$. Then the partition $\frP(K_{n})^{\opp}=\frP(K_{n-1})^{\opp}\sqcup (\cD_{d}\setminus \cD^{J}_{d})$ satisfies the conditions in Lemma \ref{lem:contractinle ex}(2), which verifies that $\frP(K_{n-1})^{\opp}\nearrow_{\s_{n}}\frP(K_{n})^{\opp}$. Therefore $\frP(Z)^{\opp}\subset \frP(U)^{\opp}$ is an expansion in the sense of Definition \ref{d:poset exp}.

We claim that $\a_{U}: \cE_{U}\to \cF_{U}$ is an expansion of cosheaves along $\frP(Z)^{\opp}\subset \frP(U)^{\opp}$ in the sense of Definition \ref{d:cosh exp}. Indeed,  by construction, $\a_{U}$ is an equivalence when restricted on $Z$. For any two faces $\t\subset \s$ of $U$ not contained in $Z$,  we have a morphism of recollements 
\begin{equation*}
\xymatrix{ \cL_{\s}\ar[d] & \ar[l]_-{\b_{\s}}\cF_{\s} \ar[d] & \ar[l]_-{\a_{\s}}\cE_{\s}\ar[d]\\
\cL_{\t} & \ar[l]_-{\b_{\t}}\cF_{\t} & \ar[l]_-{\a_{\t}}\cE_{\t}
}
\end{equation*}
where the left vertical map is an equivalence since $\cL$ is locally constant. By Corollary \ref{c:coprod}, the right square above is a pushout square. This verifies the second condition for an expansion of cosheaves. Claim (2) now follows from Proposition \ref{p:pushout}. \qed

\section{Functoriality of sheaves with Lagrangian singular support}

Let $k$ be a field of characteristic $0$, and let $X$ be a smooth algebraic stack over $\CC$ of finite type, $\cL \subset T^*X$ be a closed ($\CC^{\times}$-)conic Lagrangian substack. Denote $\Sh(X)$ the $\infty$-category of sheaves of $k$-modules on $X$, and $Sh_\cL(X) \subset Sh(X)$ the full subcategory of sheaves with singular support contained in $\cL$. It is known that $Sh_\cL(X)$ is compactly generated \cite[Cor.~G.7.8]{arinkinStackLocalSystems}, $Sh(X)$ is presentable, and the inclusion $Sh_\cL(X) \to Sh(X)$ preserves both limits and colimits \cite[Cor.~G.7.5]{arinkinStackLocalSystems}  
The goal of this section is to prove the following:

\begin{prop}\label{p:cont 4 functors} 
	 Let $f: X \to Y$ be a representable map between finite type smooth algebraic stacks over $\CC$.
	\begin{enumerate}
		\item Let $\calL_Y \subset T^*Y$ be a closed conic Lagrangian substack, then the functors $f^*,f^!: Sh_{\cL_Y}(Y) \to Sh(X)$ preserve both limits and colimits.
			\item Let $\calL_X \subset T^*X$ be a closed conic Lagrangian substack, then the functors $f_!,f_*: Sh_{\cL_X}(X) \to Sh(Y)$ preserve both limits and colimits.
	\end{enumerate}
\end{prop}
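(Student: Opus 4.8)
\textbf{Proof plan for Proposition~\ref{p:cont 4 functors}.}

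The plan is to reduce each statement to a single soft fact: for a closed conic Lagrangian $\cL$ in $T^{*}Z$ (with $Z$ smooth of finite type), the subcategory $\Sh_{\cL}(Z)\subset\Sh(Z)$ is compactly generated, is closed under limits and colimits in $\Sh(Z)$, and \emph{is dualizable}, with the identification $\Sh_{\cL}(Z)^{\vee}\simeq\Sh_{\cL}(Z)$ realized by Verdier duality on compact objects (this is in \cite{arinkinStackLocalSystems}, cf.\ Cor.~G.7.5 and Cor.~G.7.8). Granting this, a continuous (colimit-preserving) functor out of or into such a category automatically preserves limits as well provided it also admits a right adjoint that is continuous; so the work is to show that all four functors both preserve colimits and have continuous right adjoints when restricted appropriately. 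I would organize the argument around the microlocal estimates for singular support under the six operations, exactly as used earlier in the paper (e.g.\ in Lemma~\ref{l:hc ss} and Lemma~\ref{l:unit ss}, which invoke \cite{kashiwaraSheavesManifoldsVolume1990} and \cite{mirkovicCharacteristicVarietiesCharacter1988}).

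First, for (1): the functors $f^{*}$ and $f^{!}$ on all of $\Sh(Y)$ are continuous (they are left, resp.\ right-adjoint-admitting, but more to the point $f^{!}$ is continuous because $f$ is representable of finite type, so $f^{!}\simeq f^{*}\otimes\omega_{X/Y}[\dots]$ up to a shift/twist when $f$ is smooth, and in general one uses that $f$ factors suitably; continuity of $f^{*}$ is automatic). Restricting a continuous functor to a colimit-closed subcategory keeps it continuous, so $f^{*}|_{\Sh_{\cL_{Y}}(Y)}$ and $f^{!}|_{\Sh_{\cL_{Y}}(Y)}$ preserve colimits. For limits: the inclusion $\Sh_{\cL_{Y}}(Y)\hookrightarrow\Sh(Y)$ preserves limits, and $f^{*}$ preserves limits on $\Sh(Y)$ iff... here is the subtlety --- $f^{*}$ does not preserve arbitrary limits on all of $\Sh(Y)$ unless $f$ is nice. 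The clean route is duality: $f^{*}:\Sh_{\cL_{Y}}(Y)\to\Sh(X)$ is the dual (under Verdier duality on the source and $\Sh(X)\simeq\Sh(X)^{\vee}$ on the target, using $X$ smooth of finite type) of $f_{!}:\Sh(X)\to\Sh_{\cL_{Y}}(Y)$... but $f_{!}$ need not land in $\Sh_{\cL_{Y}}(Y)$. So instead I would argue directly: $f^{!}$ preserves limits always (right adjoint to $f_{!}$); and $f^{*}\simeq \DD_{X}\circ f^{!}\circ\DD_{Y}$ on \emph{constructible} objects, and since $\Sh_{\cL_{Y}}(Y)$ is generated under colimits by constructibles with singular support in $\cL_{Y}$ and is dualizable, the functor $f^{*}$ restricted to it is the ind-extension of a functor on compacts and hence is the dual of a continuous functor, thus itself preserves limits. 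The role of the Lagrangian/compact-generation hypothesis is precisely to make this dualization legitimate.

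For (2): here $f_{!}$ and $f_{*}$ genuinely need the estimate $\ssupp(f_{!}\cF), \ssupp(f_{*}\cF)\subset (\text{image of }\cL_{X}\text{ under the correspondence }T^{*}X\leftarrow X\times_{Y}T^{*}Y\to T^{*}Y)$, so that the targets can in fact be taken to be $\Sh_{\cL_{Y}}(Y)$ for an appropriate closed conic Lagrangian $\cL_{Y}$ built from $\cL_{X}$ and $f$ (this uses properness/representability and finite type, and is where I cite \cite{kashiwaraSheavesManifoldsVolume1990}; note the construction of this $\cL_{Y}$ is the same bookkeeping as in the proof of Lemma~\ref{l:hc ss}). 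Once both functors are known to land in a fixed $\Sh_{\cL_{Y}}(Y)$, continuity of $f_{!}$ is automatic (it is a left adjoint), and continuity of $f_{*}$ follows because $f$ is representable of finite type --- so $f_{*}\simeq f_{!}$ after a shift on each stratum, or more robustly, $f_{*}$ restricted to $\Sh_{\cL_{X}}(X)$ is again obtained by ind-extension from compacts and is Verdier-dual to $f_{!}$ on the duals, hence continuous. Preservation of limits for both then follows by the same duality argument as in (1): $f_{*}:\Sh_{\cL_{X}}(X)\to\Sh_{\cL_{Y}}(Y)$ is a right adjoint (to $f^{*}$ restricted suitably, after checking $f^{*}$ lands in $\Sh_{\cL_{X}}(X)$ --- which it need not, so one instead uses that $f_{*}$ is the Verdier dual of $f_{!}$ acting between the dual categories and $f_{!}$ preserves colimits, whence $f_{*}$ preserves limits).

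\textbf{Main obstacle.} The routine parts are the singular-support estimates (standard, and already used in the paper). The genuine subtlety --- and the step I expect to need the most care --- is the passage ``continuous $\Rightarrow$ limit-preserving'' and vice versa for these restricted functors: this is false in general for presentable categories and only works here because $\Sh_{\cL}(Z)$ is \emph{dualizable} with a self-duality implemented by Verdier duality on compacts, which converts the adjoint functors $(f^{*},f_{*})$ and $(f_{!},f^{!})$ into each other's duals. So the heart of the proof is to set up these dualities carefully (using $Z$ smooth of finite type so that $\Sh(Z)^{\vee}\simeq\Sh(Z)$, and the Arinkin--Gaitsgory compact generation of $\Sh_{\cL}(Z)$), check that the four functors restrict to functors between the appropriate such categories, and then read off that a left adjoint between dualizable categories whose dual is also a left adjoint is automatically bicontinuous. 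I do not anticipate needing any new geometric input beyond what is cited; the difficulty is purely in arranging the formal $\infty$-categorical duality statements so that they apply on the nose.
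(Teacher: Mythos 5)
Your approach is genuinely different from the paper's. The paper proves both statements by reducing to smooth schemes via smooth covers, choosing Whitney stratifications so that the relevant singular support conditions become conormal conditions, and then verifying that the natural comparison map is an equivalence stalk by stalk, using the fact (cited as \cite[Lemma A.1.11]{nadlerAutomorphicGluingFunctor}) that the stalk functors $i_x^{*}:\Sh_{\cM}(X)\to k\lmod$ preserve limits when $\cM$ is conormal to a Whitney stratification. You instead try to reduce everything to compact generation plus Verdier self-duality of $\Sh_{\cL}(Z)$ and read off bicontinuity from a formal dualizability argument. Your route is structurally tidier, but as written it has a gap.

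The gap is in part (1). You want to say that $f^{*}|_{\Sh_{\cL_Y}(Y)}$ is ``the dual of a continuous functor,'' but your target is the full category $\Sh(X)$, and you nowhere establish that $f^{*}$ takes compact objects of $\Sh_{\cL_Y}(Y)$ to compact objects of the target. That is in fact false as stated: the compact objects of $\Sh(X)$ (when it is compactly generated at all) are compactly supported, and $f^{*}$ does not preserve compact support. The fix is exactly the step the paper performs but you skip: observe (via the standard singular-support estimate for pullback) that $f^{*}$ lands inside $\Sh_{\cM}(X)$ for a closed conic Lagrangian $\cM$ built from $\cL_Y$ and $f$ (e.g.\ conormal to a Whitney stratification of $X$ refining the preimage stratification), and then argue within $\Sh_{\cL_Y}(Y)\to\Sh_{\cM}(X)$, where both sides are compactly generated, self-dual under Verdier duality, and $f^{*}$ does preserve compacts. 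Interestingly, you do make exactly this kind of target-refinement in part (2) --- constructing $\cL_Y$ from $\cL_X$ --- so the omission in part (1) reads as an oversight rather than a misconception, but it is the crux: without it, the duality machinery has nothing to grip. Related to this, the identity $f^{*}\simeq\DD_X f^{!}\DD_Y$ only holds on constructible objects; promoting it to the whole of $\Sh_{\cL_Y}(Y)$ requires precisely the compact-generation-and-duality setup on a constructible target, so it should not be invoked as an input but emerges from the framework once the target is refined. With these repairs your proof would go through and would be a legitimate, more ``formal'' alternative to the paper's stalkwise argument; what the paper's approach buys you in exchange is that it never needs to identify compact objects or set up any duality --- it just needs a single black-box fact about stalk functors on constructible categories, plus elementary base change and the proper/open factorization of $f$ in part (2).
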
	
\begin{proof}
(1) We show the statement for $f^*$, the one for $f^!$ follows from similar argument. Since $f^{*}: \Sh(Y)\to \Sh(X)$ is a left adjoint, it preserves colimits; hence $f^{*}|_{\Sh_{\cL_{Y}}(Y)}$ also preserves colimits since the inclusion $\io_{\cL_{Y}}: \Sh_{\cL_{Y}}(Y)\incl \Sh(Y)$ does.

Let $\cF: I\to \Sh_{\cL_{Y}}(Y)$ be a diagram of sheaves. We have the natural map
\begin{equation}\label{f*lim}
\phi: f^{*}(\lim_{i\in I} \cF_{i})\to \lim_{i\in I}f^{*}\cF_{i}
\end{equation}
in $\Sh(X)$. To show it is an equivalence, it suffices to show the same after pulling back to a smooth cover $p_{X}: \wt X\to X$, since $p^{*}_{X}$ is conservative. Let $p_{Y}: \wt Y\to Y$ be a smooth surjective map from a scheme $\wt Y$, and form the Cartesian diagram
\begin{equation*}
\xymatrix{ \wt X\ar[d]^{p_{X}}\ar[r]^{\wt f} & \wt Y\ar[d]^{p_{Y}}\\
X\ar[r]^{f} & Y}
\end{equation*}
Then $p^{*}_{X}\phi$ can be factored as
\begin{equation*}
p^{*}_{X}f^{*}(\lim \cF_{i})= \wt f^{*}p_{Y}^{*}(\lim \cF_{i}) \simeq \wt f^{*}\lim(p_{Y}^{*}\cF_{i}) \to\lim\wt f^{*}p_{Y}^{*}\cF_{i}=  \lim p^{*}_{X}f^{*}\cF_{i}\simeq p^{*}_{X}\lim f^{*}\cF_{i}.
\end{equation*}
Here we use that $p^{*}_{X}$ and $p^{*}_{Y}$ preserves limits (being smooth, they agree with $p^{!}_{X}$ and $p^{!}_{Y}$ up to a shift). To show the above composition is an equivalence, it suffices to show the middle arrow is an equivalence, i.e., $\wt f^{*}: \Sh_{\cL_{\wt Y}}(\wt Y)\to \Sh(\wt X)$ preserves limits (here $\cL_{\wt Y}\subset T^{*}\wt Y$ is the transport of $\cL_{Y}$ under the Lagrangian correspondence between $T^{*}\wt Y$ and $T^{*}Y$). 


Henceforth we assume both $X,Y$ are smooth schemes. For any closed conic Lagrangian $\cL_Y \subset T^*Y$, there is a Whitney stratification $\{{Y_\alpha}\}_{\alpha \in S}$ of $Y$, so that $\cL_{Y}  \subset \cup_{\alpha} T^*_{X_\alpha} X$. So suffices to prove the claim for $\cL_Y$ is conormal to a Whitney stratification. Now pick a Whitney stratification $\{{X_{\b}}\}_{\b\in T}$ of $X$, so that the images of $f^*$ land in $Sh_\cM(X) \subset Sh(X)$, for $\cM= \cup_{\b} T^*_{X_{\b}}X$. Now let $i_x: {x} \to X$ be the inclusion of a point. By \cite[Lemma A.1.11]{nadlerAutomorphicGluingFunctor}, the functor $i^*_x: Sh_\cM(X) \to k\lmod$ preserves limits. Now let $\cF: I\to \Sh_{\cL_{Y}}(Y)$ be a diagram of sheaves in $Sh_{\cL_Y}(Y)$, we need to show the natural map $\phi$ in \eqref{f*lim} is an equivalence. Equivalently, we need to check that $\phi$ induces an equivalence on each stalk. Taking stalks at $x\in X$,  $\phi_x$ can be identified with the composition of equivalences
 \begin{equation*} 
\phi_{x}:  (f^*(\lim \cF_i))_x \simeq (\lim \cF_i)_{f(x)} \simeq \lim (\cF_i)_{f(x)} \simeq \lim f^*(\cF_i)_x \simeq (\lim f^*(\cF_i))_x  
 \end{equation*}
Here the second (resp. last) equivalence uses the fact that $i^{*}_{f(x)}: Sh_{\cL_Y}(Y) \to k\lmod$ (resp. $i_{x}: \Sh_{\cM}(X)\to k\lmod$) preserves limits. This show $\phi$ is an equivalence.
 
 (2) We only prove that $f_!$ preserves limits, and the case of $f_{*}$ follows from a similar argument. By the same reduction using smooth covers as in (1),  we can assume $X,Y$ are smooth schemes. We can further assume $\cL_X$ is conormal to a Whitney stratification $X=\cup_{\a\in S}X_{\alpha}$. Now pick a relative compactification $\overline{X}$ of $f$, i.e, $f $ factors as $X \xrightarrow{j} \overline{X} \xrightarrow{p} Y$, where $j$ is an open embedding, and $p$ is proper. Now suffices to show that $j_!: Sh_{X_\alpha}(X) \to Sh(\overline{X})$ preserves limits, since $p_!=p_*$ preserves limits. Let $\ov X=\cup_{\a'\in S'}\overline{X}_{\alpha'}$ be a Whitney stratification on $\overline{X}$ that refines the partition $\overline{X}= (\cup X_{\alpha}) \cup (\overline{X} \backslash X)$. Define $\cL'_{X}$ and $\cL'_{\ov X}$ to be the conormals to the new stratifications of $X$ and $\ov X$. Now suffices to show that $j_!: Sh_{\cL'_{X}} (X) \to Sh_{\cL'_{\ov X}} (\overline{X})$ preserves limits. Let $\cF:I\to \Sh_{\cL'_{X}}(X)$ be a diagram of sheaves. Consider the natural map $\varphi: j_!(\lim \cF_i) \to \lim j_!(\cF_i)$. We only need to check that $\ph$ is an equivalence on each stalk. For $x \in X$, it is clear that $\varphi_x$ is an isomorphism. For $x \in \overline{X} \setminus X$, we have $(j_!(\lim \cF_i))_x=0$ and  $(\lim j_!(\cF_i))_x=  \lim (j_!(\cF_i))_x=0$ since $i_{x}^{*}$ preserves limits. Therefore $\varphi_x$ is also isomorphism, and hence $\varphi$ is an isomorphism.

\end{proof}

 
\section{Universal version of Tao-Travkin Theorem}\label{app:TT}

The method of  \cite{taoAffineHeckeCategory} can be applied to deduce the following theorem:
\begin{theorem}\label{thm:colimit_of_hecke}
	Let $\cG^{\c}$ be the neutral component of the loop group $\cG=G\lr{t}$ of a connected reductive group $G$ over $\CC$.  Let $\cH_{\cG^{\c}}\subset \cH_{\cG}$ be the full monoidal subcategory of the universal affine Hecke category consisting of sheaves supported on $\cI^{u}\bs \cG^{\c}/\cI^{u}$.
	Then the natural maps induce an equivalence of stable presentable monoidal categories:
	\begin{equation*}
	\xymatrix{
	\colim^{\ot}_{J \sft I^a} \cH_{L_J} \ar[r]^-\sim  & \cH_{\cG^{\c}}
	}
	\end{equation*}
	where the colimit is of stable presentable monoidal categories.
	\end{theorem}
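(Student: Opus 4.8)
\textbf{Proof plan for Theorem~\ref{thm:colimit_of_hecke}.}
The plan is to follow the strategy of Tao--Travkin~\cite{taoAffineHeckeCategory}, adapting each step to the universal monodromic (bimonodromic) setting and to the convention of working with left adjoints in $\St^L_k$. The first step is to reduce from a statement about monoidal categories to a statement about the underlying plain categories: the colimit of monoidal categories in $\Alg(\St^L_k)$ (or, better, in $\Alg(\Bimod_{\cH_H}(\St^L_k))$) is computed on underlying objects by the forgetful functor to $\St^L_k$, provided one knows the relevant colimit is ``stable'' in the appropriate sense. Here the key input is that each transition functor $\cH_{L_J}\to\cH_{L_{J'}}$ (for $J\subset J'$) is fully faithful and admits a continuous right adjoint (a parabolic-restriction type functor), so the colimit over the poset $\cD^\circ$ of finite-type subsets can be analyzed via the associated Bar resolution; I would quote the general nonsense from Section~\ref{ss:hh} together with the fact that $\cH_{L_J}$ is compactly generated with monoidal product preserving compacts.

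The second and main step is the combinatorial--geometric heart: one stratifies $\cI^u\bs\cG^\circ/\cI^u$ by the cells $\cI^u\bs\cG_w/\cI^u$ indexed by $w\in\tilW^\circ$ (the elements in the neutral component), and shows that the natural functor sends the standard/costandard generators of $\colim_{J}\cH_{L_J}$ to the standard/costandard objects $\Delta_w,\nabla_w$ of $\cH_{\cG^\circ}$. Following~\cite{taoAffineHeckeCategory}, the crucial point is that every $w\in W^a$ (hence, after incorporating length-zero elements, every $w\in\tilW^\circ$) can be written, up to the braid relations, in terms of the simple reflections, and each simple reflection $s_i$ together with its ``wall-crossing'' functor already lives in some $\cH_{L_J}$ with $\{i\}=J$ or, for the affine simple reflections, in the corresponding finite-type Levi. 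One then checks that the relations among these generators in $\cH_{\cG^\circ}$ are exactly the relations imposed in the colimit, i.e. the braid relations and the quadratic (Hecke) relations, all of which are relations internal to rank-one or rank-two finite-type Levi subgroups $L_J$ with $\lvert J\rvert\le 2$. Concretely, this amounts to: (a) the natural functor is essentially surjective, because the $\Delta_w$ (equivalently the indecomposable projectives, or the images of the Bott--Samelson objects) generate $\cH_{\cG^\circ}$ under colimits; and (b) the functor is fully faithful, checked on Bott--Samelson generators, where $\Hom$-spaces on both sides are computed by the same convolution of rank-$\le 2$ data.

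The universal-monodromic modifications to carry out are the following. First, all equivariant categories are replaced by bimonodromic ones; the monoidal unit $e=u_!\cL_\univ$ is no longer the skyscraper but the universal local system $\cL_\univ$ on $H$, so the endomorphism ring of the unit is $R=k[\xcoch(H)]$, and everything must be done $R$-linearly (equivalently, in $\cH_H$-bimodules, which is why we phrase the colimit in $\Bimod_{\cH_H}(\St^L_k)$). Second, the ``nearby cycles'' or ``logarithm of monodromy'' elements that generate the universal Hecke algebra over $R$ are the extra generators relative to the non-monodromic case; one must check they too are supported on rank-one Levi subgroups, which is standard (they come from the $H$-action on $\cI/\cI^u$). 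Third, Tao--Travkin assume $G$ simply-connected and work bi-$\cI$-equivariantly; to get the statement for $\cG^\circ$ of an arbitrary connected reductive $G$ one notes that $\cG^\circ$ is generated (as a monoid, up to the relevant completions) by the parahoric subgroups $\cP_J$ with $J\sft I^a$ exactly as in the simply-connected case --- the difference between $W^a$ and the neutral-component part of $\tilW$ is absorbed by allowing length-zero elements inside the finite-type Levi factors --- and the bi-$\cI^u$-equivariant category differs from the bi-$\cI$-equivariant one precisely by the $H$-monodromy bookkeeping.

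I expect the main obstacle to be step two in its full-faithfulness half: verifying that the colimit does not collapse anything, i.e. that the $\Hom$-complexes computed in $\colim_J\cH_{L_J}$ agree with those in $\cH_{\cG^\circ}$. In the non-monodromic setting this is the technical core of~\cite{taoAffineHeckeCategory} and uses a careful induction on length together with a vanishing statement for certain $\Ext$'s between standard objects; transporting it to the bimonodromic setting requires re-running that induction over the base ring $R$ and keeping track of the monodromy filtration, which is the most delicate point. Accordingly, the bulk of the work --- and what I would defer to the appendix proper --- is to re-derive the rank-two relations and the length-induction vanishing over $R$, after which essential surjectivity (via Bott--Samelson generation) and the monoidality of the comparison functor (which is built into the construction of the maps $\cH_{L_J}\to\cH_{\cG^\circ}$ as monoidal functors in $\cH_H$-bimodules) follow formally.
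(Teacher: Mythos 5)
Your plan is directionally correct --- follow Tao--Travkin, work bimonodromically, express the affine Hecke category of $\cG^\circ$ as a colimit of the finite parahoric Hecke categories --- but the execution route you sketch is genuinely different from the paper's, and misidentifies where the actual work lies. The paper does not reconstruct Tao--Travkin's argument via generators and relations, and it does not re-derive the rank-two relations or a length-induction/Ext-vanishing argument over $R=k[\xcoch(H)]$. It passes to Tao--Travkin's convolution Schubert $1$-category $\scI$ of words in $W^a$, replaces the bi-$\cI$-equivariant categories on the convolution Schubert varieties $X_{\bw}$ by the categories $\Sh'(X_{\bw})$ of complexes locally constant on each Bruhat stratum (this is the correct bimonodromic replacement), and then invokes Tao--Travkin's proof of their Theorem~5.4.3 essentially verbatim. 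The only lemma that needs fresh verification in the new setting is the coCartesian square from their Corollary~5.4.2: for a birational map $\bw_1\to\bw_2$ in $\scI$, the square relating $\Sh'(\partial X_{\bw_i})$ and $\Sh'(X_{\bw_i})$ is a pushout. The paper verifies this in two lines: birationality gives $\Sh'(X^\circ_{\bw_1})\simeq\Sh'(X^\circ_{\bw_2})$, so one is precisely in the situation of Corollary~\ref{c:coprod} (a morphism of recollements whose functor on the open parts is an equivalence). That is the entire modification. Your plan re-derives Tao--Travkin's combinatorics where the paper simply transports them, and the ``re-running the length induction over $R$'' that you flag as the main obstacle is not something the paper needs at all.

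There is also a real gap in your step~1. You assert that the colimit in $\Alg(\St^L_k)$ (or in $\Alg(\Bimod_{\cH_H}(\St^L_k))$) is computed on underlying objects by the forgetful functor, citing full faithfulness of the transitions and continuity of their right adjoints. But the forgetful functor from algebra objects to underlying objects only preserves sifted colimits in general, and the poset $\cD^\circ$ of finite-type subsets $J\sft I^a$ is not sifted (it has no pushouts), so this is not automatic, and the hypotheses you list do not deliver it. Tao--Travkin (and the paper by citation) do not need such a claim: the monoidal structure is encoded from the start in the Schubert $1$-category $\scI$, whose objects already record multi-factor convolutions, so the comparison of colimits is built into the combinatorics of $\scI$ rather than deduced from forgetful-functor generalities. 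If you want to pursue a generators-and-relations route instead, you would need to actually prove that the forgetful functor preserves this particular colimit, and that is not a formality. (A minor further point: for $\cG^\circ$ the relevant Weyl group is just $W^a$, with no nontrivial length-zero elements; the $\Om$-part is handled separately in the body of the paper via the equivalences $c_\om$, not by ``absorbing length-zero elements into finite-type Levi factors.'')
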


	In this section, we shall adopt the notations in \textit{loc.cit.}. 
	Denote by $\scI$ the \textit{convolution Schubert 1-category} for the affine Weyl group $W^{a}$, which is denoted $\textup{Word}^{1}_{\mathrm{fr}}$ and defined in \cite[Definition 4.1.1]{taoAffineHeckeCategory}. Objects in $\scI$ are sequences $\bw=(w_{1},\cdots,w_{n})$ where each $w_{i}\in W^{a}$ lies in a finite standard parabolic subgroup.
		
	Recall $\cG_{\le w}= \overline{\cI w\cI} \subset \cG$. For $\bw=(w_1,..., w_n) \in \mathscr{I}$, put $X_{\bw}= \cI^u \backslash \cG_{\le w_{1}} \times^{\cI^u} ... \times^{\cI^u} \cG_{\le w_n}/\cI^u$. Let $X_{\bw}^\circ\subset X_{\bw}$ be the open stratum where $\cG_{\le w_{i}}$ are replaced with $\cG_{w_{i}}=\cI w_{i}\cI$, and let $\partial X_{\bw}=X_{\bw}\setminus X^{\c}_{\bw}$. Then $X_{\bw}$ is stratified into the union of $X_{\bw'}$, for $\bw'=(w'_{1},\cdots, w'_{n})\in\scI$ such that $w'_{i}\le w_{i}$ for each $i$. 
	
	Define $\Sh'(X^{\c}_{\bw})\subset \Sh(X^{\c}_{\bw})$  to be the full subcategory of locally constant sheaves. Denote by $\Sh'(X_{\bw}) \subset \Sh(X_{\bw})$ the full subcategory consisting of sheaves that are locally constant on each stratum $X_{\bw'}$. Similarly define $\Sh'(\pl X_{\bw})$. 	
	
We have a functor $\mathscr{H}:\mathscr{I}  \to \St^L_{k}$, 
which assigns to an object $\bw \in \mathscr{I}$ the category $Sh'(X_{\bw})$, and to a morphism  $\bw_1 \to \bw_2$ in $\mathscr{I} $,
the functor $ \mathscr{H}(\bw_1)  \to  \mathscr{H} (\bw_2) $ given by convolution. This is the universal monodromic analogue of \cite[Corollary 5.3.3]{taoAffineHeckeCategory}.

\begin{proof}[Sketch of proof of Theorem~\ref{thm:colimit_of_hecke}] The proof of \cite[Theorem 5.4.3]{taoAffineHeckeCategory} goes through, except we need to check the analogous statement for \cite[Corollary 5.4.2]{taoAffineHeckeCategory}: for every birational map $\bw_1 \to \bw_2$ in $\mathscr{I}$, the diagram 
\begin{equation*} 
	\xymatrix{  \colim_{\bw_1' \to \bw_1 \textup{ strict emb.}}	\mathscr{H}({\bw'_1})  \ar[r] \ar[d] & \mathscr{H}({\bw_1}) \ar[d]   \\
	\colim_{\bw_2' \to \bw_2 \textup{ strict emb.}}	\mathscr{H}({\bw'_2})  \ar[r]  & \mathscr{H}({\bw_2})  
	}
   \end{equation*}
is coCartesian. Now by same proof as in \cite[Corollary 5.4.2]{taoAffineHeckeCategory}, we have a natural equivalence for every $\bw\in\scI$:
\begin{equation*}
\colim_{\bw' \to \bw \textup{ strict emb.}}\mathscr{H}({\bw'}) \simeq  Sh'(\partial X_{\bw}).
\end{equation*}
Therefore we are left to check the following Lemma, analogous to \cite[Lemma 5.4.1]{taoAffineHeckeCategory}.
\end{proof}

\begin{lemma}
For any birational map $\bw_1 \to\bw_2$ in $\scI$, the natural diagram is coCartesian:
\beq  
\label{lemma:h-descent_Schubert_variety}
 \xymatrix{  Sh'(\partial X_{\bw_1})  \ar[r] \ar[d] & Sh'( X_{\bw_1})\ar[d]   \\
 Sh'(\partial X_{\bw_2})	 \ar[r]  & Sh'( X_{\bw_2}) 
 }
\eeq
\end{lemma}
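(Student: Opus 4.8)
The statement is a local ``$h$-descent'' for the universal monodromic Hecke category along the boundary of a Bott--Samelson-type convolution space, and the plan is to mirror the proof of \cite[Lemma 5.4.1]{taoAffineHeckeCategory}, replacing constructible sheaves with sheaves that are locally constant along each stratum. First I would reduce to the case where $\bw_1 \to \bw_2$ is an \emph{elementary} birational map, i.e.\ one that replaces a single pair $(w_i, w_{i+1})$ of adjacent entries with their product $w_i w_{i+1}$ in the situation where $\ell(w_i w_{i+1}) = \ell(w_i) + \ell(w_{i+1})$ (so that the convolution map on the corresponding two factors is birational). A general birational map in $\scI$ factors as a composition of such elementary ones together with isomorphisms, and a composition of coCartesian squares is coCartesian, so this reduction is harmless. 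By the twisted-product structure of $X_{\bw}$ and the fact that convolution along the other factors is an equivalence onto its image on the relevant subcategories, one further reduces to the model case $\bw_1 = (w_i, w_{i+1})$, $\bw_2 = (w_iw_{i+1})$, i.e.\ to the multiplication map $m: \cG_{\le w_i}\times^{\cI}\cG_{\le w_{i+1}}/\cI^u \to \cG_{\le w_i w_{i+1}}/\cI^u$, pulled back to the bi-$\cI^u$-quotient.

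In that model case, the key geometric input is the same as in \cite{taoAffineHeckeCategory}: the convolution map $m$ is proper, is an isomorphism over the open stratum $X^\circ_{\bw_2}$, and restricts to a proper map $\partial X_{\bw_1}\to \partial X_{\bw_2}$; moreover $m$ is \emph{stratified semismall} (indeed the Demazure-type resolution is well understood), so that $Rm_*$ of the constant sheaf, and more generally of any object of $\Sh'(X_{\bw_1})$, lands in $\Sh'(X_{\bw_2})$. The point is then to show that the square \eqref{lemma:h-descent_Schubert_variety} is a pushout in $\St^L_k$. I would argue this by checking it on the two complementary recollement pieces. Over the open stratum $X^\circ_{\bw_2}$ (equivalently $X^\circ_{\bw_1}$, since $m$ is an isomorphism there), both horizontal functors restrict to the identity on $\Sh'(X^\circ_{\bw_2})$, so the square restricts to a trivially-coCartesian square there. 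Over the closed complement $\partial X_{\bw_2}$, the top-left and bottom-left terms both ``see'' $\Sh'(\partial X_{\bw_1})$ and $\Sh'(\partial X_{\bw_2})$ respectively, and the claim becomes that $\Sh'(X_{\bw_1})$ is glued from $\Sh'(X^\circ_{\bw_1})$ and $\Sh'(\partial X_{\bw_1})$ compatibly under $Rm_*$; this is exactly the recollement
\[
\xymatrix{
\Sh'(X^\circ_{\bw}) \ar@<2ex>[r]\ar@<-2ex>[r] & \ar[l] \Sh'(X_{\bw})\ar@<2ex>[r]\ar@<-2ex>[r] & \ar[l]\Sh'(\partial X_{\bw}),
}
\]
which holds because all six functors are continuous (Proposition~\ref{p:cont 4 functors}, applied to the open embedding $X^\circ_\bw\incl X_\bw$ and the closed embedding $\partial X_\bw\incl X_\bw$, noting that the relevant singular support conditions are the conormals to the Schubert stratifications), together with Corollary~\ref{c:coprod}: the morphism of recollements induced by $m$ is an isomorphism on the open piece, hence the square on total categories is a pushout.

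The monodromic subtlety, which is the only genuine new point compared with \cite{taoAffineHeckeCategory}, is to verify that passing from ``locally constant on each stratum'' sheaves rather than ``$\cI$-equivariant/constructible'' sheaves does not break any of these claims. Concretely: one must check that $Rm_*$ preserves the condition of being locally constant on strata (this follows from the fact that $m$ is a stratified fiber bundle with the Schubert stratifications refined appropriately, combined with proper base change and the homotopy-invariance of $Rm_*$ along the fibers, which are iterated $\PP^1$-bundles), and that the open/closed restriction functors do as well (immediate, as restriction to a locally closed union of strata preserves the local-constancy condition). The shift to the \emph{pro-unipotent} quotient $\cI^u\bs(-)/\cI^u$ rather than $\cI\bs(-)/\cI$ only affects things by an $H\times H$-gerbe, along which everything is $H\times H$-monodromic by construction, so the universal monodromic categories $\Sh'(X_\bw)$ are modules over $\cH_H\ot\cH_H$ and all the functors above are $\cH_H\ot\cH_H$-linear; the coCartesianness can thus be checked after forgetting this linear structure, where it is the statement just proved. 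I expect the main obstacle to be bookkeeping: making sure the Whitney/Schubert stratifications are chosen consistently on $X_{\bw_1}$, $X_{\bw_2}$, $\partial X_{\bw_i}$ and their images, so that $m$ is genuinely stratified and Proposition~\ref{p:cont 4 functors} applies verbatim; once the stratifications are fixed, the recollement argument and the appeal to Corollary~\ref{c:coprod} are formal.
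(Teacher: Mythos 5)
Your proof is correct and rests on the same key mechanism as the paper: the recollement of $\Sh'$ along the open/closed decomposition $X^\circ_{\bw}\sqcup\partial X_{\bw}$, the observation that birationality identifies $X^\circ_{\bw_1}\cong X^\circ_{\bw_2}$, and the pushout criterion of Corollary~\ref{c:coprod}. The preliminary reductions to elementary birational maps and then to the two-factor model case are unnecessary: the paper applies the recollement argument and Corollary~\ref{c:coprod} directly to an arbitrary birational $\bw_1\to\bw_2$, since birationality already gives the equivalence on the open stratum in full generality, making the stratified-semismallness and $\PP^1$-bundle discussion superfluous.
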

\begin{proof} We have a morphism of recollements (see Section~\ref{sss:rec})
\begin{equation*}
\xymatrix{\Sh'(X_{\bw_{1}}^{\c}) \ar[d]\ar@<1ex>[r]\ar@<-1ex>[r]& \ar[l] Sh'( X_{\bw_1})\ar[d] \ar@<1ex>[r]\ar@<-1ex>[r] & \ar[l]Sh'(\partial X_{\bw_l})\ar[d]\\
\Sh'(X_{\bw_{2}}^{\c}) \ar@<1ex>[r]\ar@<-1ex>[r]& \ar[l]Sh'( X_{\bw_2}) \ar@<1ex>[r]\ar@<-1ex>[r]& \ar[l]Sh'(\partial X_{\bw_2})
}
\end{equation*}
Since $\bw_1 \to\bw_2$ is birational, $X_{\bw_{1}}^{\c}\cong X_{\bw_{2}}^{\c}$, the left vertical arrow above is an equivalence. The desired statement follows from Corollary \ref{c:coprod}.
\end{proof}

\bibliographystyle{alpha}
\bibliography{commuting_stack}

\end{document}